\documentclass[11pt]{amsart}
\usepackage{amsmath}
\usepackage{amsfonts}
\usepackage{amssymb}
\usepackage{amsthm}
\usepackage{enumerate}
\usepackage[totalwidth=450pt, totalheight=650pt, centering]{geometry}

\usepackage{bbm}
\usepackage[mathscr]{eucal}
\usepackage{tikz}
\usepackage{bbm}

\usepackage{graphicx}
\usepackage{verbatim}

\usepackage[T2A,T1]{fontenc}
%\newcommand{\Zhe}{\mbox{\usefont{T2A}{\rmdefault}{m}{n}\CYRZH}}
%\newcommand{\Sha}{\mbox{\usefont{T2A}{\rmdefault}{m}{n}\CYRSH}}

%Info  for this:
%http://tex.stackexchange.com/questions/14633/what-packages-will-let-me-use-cyrillic-characters-in-math-mode

\newcommand{\Zhe}{\mathrm{Reg}}
\newcommand{\Sha}{\mathrm{Int}}

%\newcommand{\sha}{\mbox{\usefont{T2A}{\rmdefault}{m}{n}\CYRZH}}
%\usepackage[T2A]{fontenc}
%\usepackage{mathtext}
%\usepackage[koi8-r]{inputenc}

%%CHANGES (July 2015)

%\newcommand{\q}{\left}
%\newcommand{\w}{\right}

%Script letters: Usual \mathcal A is \cA, but there are exceptions:
%Brian uses \sK for mathcal K, \sS for \mathcal S, 
% I prefer \cA for mathcal and \sA for \mathscript

%\addcontentsline{toc}{subsection}{Contents}

%\renewcommand*\contentsname{Contents and selected notation}

\newcommand{\iunit}{\imath}
\newcommand{\w}{}
\newcommand{\q}{}
\renewcommand{\zeta}{u}

\newcommand{\uzetah}{\widehat{u}}
\newcommand{\ups}{u}
\newcommand{\upsh}{\widehat u}
\newcommand{\sU}{{\mathscr U}}
\newcommand{\carl}{{\mathit{carl}}}

%=======================================
%\newcommand{\C}{{\mathbb{C}}}
%\usepackage{bbm} was included  for the next command
\newcommand\bbone{{\mathbbm 1}}

\newcommand{\uzeta}{u}
\newcommand{\R}{{\mathbb{R}}}

\newcommand{\Z}{{\mathbb{Z}}}
\newcommand{\N}{{\mathbb{N}}}
\newcommand{\sS}{\mathcal{S}}

\newcommand{\sK}{\mathcal{K}}
\newcommand{\dil}[2]{#1^{\q(#2\w)} }
\newcommand{\hh}{\tilde{h}}
\newcommand{\grad}{\nabla}

\newcommand{\sE}{\mathcal{E}}
\newcommand{\SI}{\mathrm{SI}}

\newcommand{\Qtt}{\widetilde{\mathcal Q}}
\newcommand{\Qt}{\mathcal Q}
\newcommand{\tcQ}{\widetilde{\mathcal Q}}

\newcommand{\Qb}[2]{\overline{Q}_{#1}\q[#2\w]}

\newcommand{\chiz}{\chi_0}
\newcommand{\chizh}{\widehat{\chiz}}
\newcommand{\chil}{\chi_l}
\newcommand{\chilh}{\widehat{\chi_l}}

\newcommand{\CziRd}{C_0^\infty\q(\R^d\w)}
\newcommand{\Czip}[1]{C_0^\infty\q(#1\w)}

\newcommand{\vsig}{\varsigma}
\newcommand{\vsigt}{\tilde{\vsig}}
\newcommand{\vsigj}{\vsig^{(2^j)}}
\newcommand{\vsigjj}{\vsig_j^{(2^j)}}
\newcommand{\vectsig}{{\vec \vsig}}

\newcommand{\phit}{\tilde{\phi}}

\newcommand{\etaph}{\widehat{\eta'}}
\newcommand{\etah}{\widehat{\eta}}

\newcommand{\vsigh}{\widehat{\vsig}}
\newcommand{\chit}{\tilde{\chi}}

\newcommand{\sKN}[2]{\q\| #2\w\|_{\sK_{#1}}}
\newcommand{\sUN}[1]{\q\|#1\w\|_{\sU}}

\newcommand{\sBp}[1]{\mathcal{B}\ci{#1}}
\newcommand{\sBtp}[2]{\mathcal{B}_{#1}\q(#2\w)}
\newcommand{\sBN}[2]{\q\|#2\w\|_{\sBp{#1}} }
\newcommand{\sBzN}[1]{\q\|#1\w\|_{L^1}}
\newcommand{\fBp}[1]{\mathfrak{B}_{#1}}
\newcommand{\fBtp}[2]{\fBp{#1}\q(#2\w)}
\newcommand{\fBN}[2]{{\q\|#2\w\|_{\fBp{#1} }}}

\newcommand{\BesN}[2]{{\|#1\w\|_{B^{\epsilon}_{1,\infty}(#2\w)}}}
\newcommand{\BesNp}[3]{{\q\|#2\w\|_{B^{#1}_{1,\infty}\q(#3\w)}}}

\newcommand{\RPn}{\R\mathrm{P}^n}

\newcommand{\Loloc}{L^1_{\mathrm{loc}}}
\newcommand{\pr}{\mathrm{proj}}

\newcommand{\LpN}[2]{\q\| #2 \w\|_{L^{#1}}}
\newcommand{\LpRdN}[2]{\q\| #2 \w\|_{L^{#1}\q(\R^d\w)}}

\newcommand{\Ltip}[2]{\q<#1, #2\w>_{L^2}}

\newcommand{\CtN}[1]{\q\|#1\w\|_{C^2}}
\newcommand{\CoN}[1]{\q\|#1\w\|_{C^1}}
\newcommand{\CzN}[1]{\q\|#1\w\|_{C^0}}
\newcommand{\Op}{{\mathrm{Op}}}
\newcommand{\LpOpN}[2]{\q\| #2\w\|_{L^{#1}\rightarrow L^{#1}}}
\newcommand{\LtOpN}[1]{\LpOpN{2}{#1}}

\newcommand{\supp}[1]{\mathrm{supp}\q(#1\w)}
 
\newcommand{\Dilj}{{\mathrm {Dil}_{2^j}}}

\newcommand{\dual}{{\mathrm{dual}}}
\newcommand{\av}{{\mathrm{av}}}
\newcommand{\At}{{\mathrm{At}}}
\newcommand{\Carl}{{\mathrm{Carl}}}
\newcommand{\rmr}{{\mathrm {rt}}}
\newcommand{\rml}{{\mathrm {lt}}}
\newcommand{\ann}{{\mathrm{Ann}}}
%\input{seeger}
% !TEX root =  main.tex

%\newcommand{\supp}{\operatorname{supp}}

%\newcommand{\ls}{\preceq}

\newcommand{\ci}[1]{_{{}_{\scriptstyle{#1}}}}

\newcommand{\tr}[1]{{}^{t}\!#1}
\newcommand{\Be}{\begin{equation}}
\newcommand{\Ee}{\end{equation}}

\newcommand{\Bm}{\begin{multline}}
\newcommand{\Em}{\end{multline}}
\newcommand{\Bea}{\begin{eqnarray}}
\newcommand{\Eea}{\end{eqnarray}}
\newcommand{\Beas}{\begin{eqnarray*}}
\newcommand{\Eeas}{\end{eqnarray*}}
\newcommand{\Benu}{\begin{enumerate}}
\newcommand{\Eenu}{\end{enumerate}}
\newcommand{\Bi}{\begin{itemize}}
\newcommand{\Ei}{\end{itemize}}

\def\sB{{\mathscr {B}}}
\def\sK{{\mathscr {K}}}

\def\sC{{\mathscr {C}}}

\def\a{\alpha}

\def\intslash{\rlap{\kern  .32em $\mspace {.5mu}\backslash$ }\int}
\def\qsl{{\rlap{\kern  .32em $\mspace {.5mu}\backslash$ }\int_{Q_x}}}

\def\vth{\vartheta}

\def\vp{\varpi}

\def\emph#1{{\it #1 }}

\def\diam{{\mathrm{diam}}}

\def\Ga{\Gamma}
\def\ga{\gamma}

\def\loc{{\mathrm {loc}}}

\def\comp{{\mathrm {comp}}}

\def\dist{{\mathrm{dist}}}
\def\diag{{\mathrm{diag}}}

\def\inn#1#2{\langle#1,#2\rangle}
\def\biginn#1#2{\big\langle#1,#2\big\rangle}

\def\noi{\noindent}

\def\lc{\lesssim}
\def\gc{\gtrsim}

%
%Greek letters

\def\eps{\varepsilon}
\def\ep{\epsilon}

\def\ka{\kappa}
            
\def\la{\lambda}             \def\La{\Lambda}
\def\sig{\sigma}             
\def\si{\sigma}              
\def\vphi{\varphi}
             
\def\om{\omega}              \def\Om{\Omega}

\def\fB{{\mathfrak {B}}}

\def\fI{{\mathfrak {I}}}

\def\fK{{\mathfrak {K}}}

\def\fM{{\mathfrak {M}}}

\def\fP{{\mathfrak {P}}}
\def\fQ{{\mathfrak {Q}}}

\def\fS{{\mathfrak {S}}}

%
%

%Attn:\fi can't be defined.

\def\fz{{\mathfrak {z}}}

\def\bbC{{\mathbb {C}}}

\def\bbN{{\mathbb {N}}}

\def\bbR{{\mathbb {R}}}

\def\bbT{{\mathbb {T}}}

\def\bbZ{{\mathbb {Z}}}

\def\cB{{\mathcal {B}}}
\def\cC{{\mathcal {C}}}
\def\cD{{\mathcal {D}}}

\def\cI{{\mathcal {I}}}
\def\cJ{{\mathcal {J}}}

\def\cL{{\mathcal {L}}}

\def\cQ{{\mathcal {Q}}}
\def\cR{{\mathcal {R}}}
\def\cS{{\mathcal {S}}}
\def\cT{{\mathcal {T}}}

\def\cV{{\mathcal {V}}}

\def\tS{{\widetilde{S}}}

\def\tcQ{{\widetilde{\mathcal {Q}}}}

%
%
%
%%roman letters with a tilde

%
%
%
%
%
%
%
%

%\def\R{{\hbox{\bf R}}}
%\def\C{{\hbox{\bf C}}}
%\def\Q{{\hbox{\bf Q}}}
%\def\q{{\hbox{\bf q}}}

%\def\N{{\hbox{\bf N}}}

%\font \roman = cmr10 at 10 true pt

%
\def\be#1{\begin{equation}\label{ #1}}
\def\endeq{\end{equation}}
\def\endal{\end{align}}
\def\bas{\begin{align*}}
\def\eas{\end{align*}}
\def\bi{\begin{itemize}}
\def\ei{\end{itemize}}
\def\eps{\varepsilon}

\def\emph#1{{\it #1}}
\def\textbf#1{{\bf #1}}

\def\compl{\complement}
\def\Dil{\mathrm {Dil}}

%%%%%%%%%%%%%%%%%%%%%%%%%%%

\parskip = 2.5 pt

\newtheorem{thm}{Theorem}[section]
\newtheorem{cor}[thm]{Corollary}
\newtheorem{prop}[thm]{Proposition}
\newtheorem{lemma}[thm]{Lemma}

\newtheorem*{namedtheorem}{\theoremname}
\newcommand{\theoremname}{testing}
\newenvironment{named}[1]{\renewcommand{\theoremname}{#1}
\begin{namedtheorem}}
{\end{namedtheorem}}

\theoremstyle{remark}
\newtheorem{rmk}[thm]{Remark}

\theoremstyle{definition}
\newtheorem{defn}[thm]{Definition}

\theoremstyle{remark}
\newtheorem{example}[thm]{Example}

   \newtheorem*{rem}{Remark}

\numberwithin{equation}{section}
\begin{document}

\title[Multilinear singular integral forms  of Christ-Journ\'e type]{Multilinear singular integral forms \\ of Christ-Journ\'e type}

\author{Andreas Seeger \ \ \ Charles K. Smart \ \ \ Brian Street} 
%\date{\today}

\address{Andreas Seeger \\ Department of Mathematics \\ University of Wisconsin \\480 Lincoln Drive\\ Madison, WI, 53706, USA} \email{seeger@math.wisc.edu}

\address{Charles K. Smart\\ Department of Mathematics \\ University of Chicago
\\734 S. University Avenue\\
Chicago, IL 60637, USA}
\email{smart@math.uchicago.edu}
%\address{Charles Smart\\Department of Mathematics \\ Cornell University \\
%Ithaca, NY 14853-4201}\email{smart@math.cornell.edu}

\address{Brian Street \\ Department of Mathematics \\ University of Wisconsin \\480 Lincoln Drive\\ Madison, WI, 53706, USA} \email{street@math.wisc.edu}

\begin{abstract} We prove $L^{p_1}(\bbR^d)\times \dots \times L^{p_{n+2}}(\bbR^{d})$
polynomial growth estimates
for the Christ-Journ\'e 
 multilinear singular integral forms and suitable generalizations.
 \end{abstract}

\subjclass[2010]{42B20}
\keywords{Multilinear singular integral forms, Christ-Journ\'e operators, mixing flows}

%\date\today
%\date{November 12, 2015}
%\thanks{Research supported in part by the National Science Foundation}
\thanks{A. Seeger was supported in part by NSF grants DMS  1200261 and  DMS 1500162. C. Smart was supported in part by NSF grant  DMS 1461988 and by a Sloan fellowship. B. Street was supported 
in part by NSF grant DMS 1401671.}

\maketitle
%\tableofcontents

%\begin{abstract}
%\input{abstract}
%\end{abstract}

%input "radon.bib"

\setcounter{tocdepth}{2}
%\subsection{Contents}
\tableofcontents
%\newpage
\section{Introduction}
%\input{intro}

%\section{Informal Statement of Results}
%\input{results}

% !TEX root =  main.tex
%\section{Introduction}
\subsection{The  $d$-commutators} 

%$\sha$, $\Sha$, $\Zhe$, $\Sha_\infty^\eps$
%$\sha^{\text{\rm reg},\eps}_1$,
%$\Sha^{\text{\rm reg},\eps}_1$

Let $0<\ep <1$ and let  $\ka\in \cS'(\bbR^d)\cap \Loloc(\bbR^d\setminus \{0\})$  be  a regular Calder\'on-Zygmund convolution kernel on $\bbR^d$, 
%$d\ge 1$,
%i.e.
satisfying
%, with 
%some $\delta\in (0,1)$,
\begin{subequations}\label{regcz}
the standard size and regularity assumptions,
\begin{gather}\label{regczsize}
%\sup_{x\neq 0} |x|^d|\ka(x)|\le C<\infty
|\ka(x)|\le C |x|^{-d},\quad  x\neq 0,
%\Eethe regularity assumption\Be
\\
\label{regczdiff}
 |\ka(x+h)-\ka(x)| \le C \frac{|h|^{\ep}}{|x|^{d+\ep}}, \quad x\neq 0, \,\,|h|\le \frac{|x|}{2},
 \end{gather}
 and the $L^2$ boundedness condition
\Be
\label{regczF}\|\widehat \ka\|_\infty\le C<\infty.
\Ee
%\sup_{\substack{x\neq 0\\ |h|<|x|/2}} |h|^{-\delta}|x|^{d+\delta} 
%|\ka(x+h)-\ka(x)|\le C<\infty
%\\ \label{regczF}\|\widehat \ka\|_\infty\le C<\infty\end{gather}
\end{subequations}
Let  $\|\ka\|_{CZ(\ep)}$ be the smallest constant $C$ for which
%\eqref{regczsize}, \eqref{regczdiff}, \eqref{regczF}  
the three inequalities \eqref{regcz} hold simultaneously.
%in these inequalities.
For convenience, in order to a priori make  sense of some of the expressions in this introduction  the reader may initially 
assume that $\ka$ is compactly supported in $\bbR^d\setminus\{0\}$.

For  $a\in L^1_\loc(\bbR^d)$  
%Let $a\in L^\infty(\bbR^d)$. For almost all choices of  $x,y\in \bbR^d$ 
let $m_{x,y}a$ be the mean of $a$ over the interval connecting $x$ and $y$,  $$m_{x,y}a=\int_0^1 a(sx+(1-s) y) ds.$$  For every $y\in \bbR^d$ this is well defined for almost all $x \in \bbR^d$.
Given $L^\infty$-functions $a_1,\dots, a_n$  on $\bbR^d$ the 
{\it $n$th order $d$-commutator} associated to $a_1,\dots, a_n$, 
%also known as Christ-Journ\'e operator,  
is defined by
$$\sC [a_1,\dots, a_n]f(x)
=\int \ka(x-y) \big(\prod_{i=1}^n m_{x,y}a_i\big)
 f(y) dy.
$$
One may consider $\sC$  as an $(n+1)$-linear operator acting on $a_1,\dots, a_n,f$. Pairing with another function and renaming $a_i=f_i$, $i\le n$, 
 $f=f_{n+1}$ one obtains the {\it Christ-Journ\'e multilinear form} defined by
\Be\begin{aligned}\label{CJform} \La_{\rm CJ} (f_1,\dots, f_{n+2})&= 
\iint \ka(x-y) \big(\prod_{i=1}^n m_{x,y}f_i\big) 
f_{n+1}(y) f_{n+2}(x)\, dx\, dy\,.
\end{aligned}
\Ee

In dimension $d=1$ this operator reduces to the Calder\'on commutator. However the emphasis in this paper is on the behavior in dimension $d\ge 2$ where the Schwartz kernels are considerably less regular. 
Christ and Journ\'e \cite{cj} showed that for $a_i$ with $\|a_i\|_\infty \le 1$
the operator 
$\sC[a_1,\dots,a_n]$ is bounded on $L^p$,  $1<p<\infty$,  with operator norm  $O( n^{\alpha})$, for $\alpha>2$. More precisely,
\Be\label{CJbound}
\big|\La_{\rm CJ}(f_1,\dots, f_{n+2})\big|\le C_{p,\epsilon,\alpha} \|K\|_{CZ(\epsilon)}\,n^{\alpha}
\big(\prod_{i=1}^{n}\|f_i\|_\infty\big) \|f_{n+1}\|_p\|f_{n+2}\|_{p'}, \quad \alpha>2.
\Ee 
For related results on Calder\'on commutators for  $d=1$ see the discussion of previous results in \S\ref{background} below.

The form $\La_{\rm CJ} $  is not symmetric in $f_i$, $i=1,\dots, n+2$,
(see the discussion in \S\ref{towards} below)
%\footnote{incl}
 and it is natural to ask whether the analogous estimates hold for $f_i \in L^{p_i}$, for other choices of $p_i$. 
The problem has been proposed for example 
in \cite{dgy} and \cite{grafakos-czech}, see also \S\ref{background} for our motivation. 
Homogeneity considerations yield the necessary condition
$\sum_{i=1}^{n+2} p_i^{-1}=1$. In this paper we shall establish the following estimate, as a corollary of  a more general result stated  as Theorem \ref{ThmOpResBoundGen} below.
%in  \S\ref{SectionResults}:

\medskip

\begin{thm} \label{newCJthm} 
Suppose that $d\ge 1$,  $1<p_i\le\infty$, $i=1,\dots, n+2$, and $\sum_{i=1}^{n+2} p_i^{-1}=1$. 
Let $\ep>0$ and $\min\{p_1,\dots, p_{n+2}\}\ge 1+\delta$. Then  for $\La$ as in \eqref{CJform}
\Be\label{newCJest}
\big|\La_{\rm CJ} (f_1,\dots, f_{n+2})\big|\le 
%C(\min p_i,\delta) 
C(\delta)  \|\ka\|_{CZ(\ep)} 
 n^2 \log^3(2+n)
\prod_{i=1}^{n+2}\|f_i\|_{p_i}\,.
\Ee
%with$$C(n, p_1,\dots, p_{n+2}) \le  C(\pmin)\, n^2 (\log n)^3\,.$$
\end{thm}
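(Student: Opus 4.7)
The plan is to derive Theorem \ref{newCJthm} as a corollary of the more general Theorem \ref{ThmOpResBoundGen}. The first step is to expand the means in the Christ-Journé form,
\begin{align*}
\Lambda_{CJ}(f_1,\dots,f_{n+2}) = \int_{[0,1]^n}\!\!\!\iint \ka(x-y) \prod_{i=1}^n f_i(s_i x + (1-s_i) y)\, f_{n+1}(y)\, f_{n+2}(x)\,dx\,dy\,d\vec s,
\end{align*}
exhibiting $\Lambda_{CJ}$ as the average over $\vec s \in [0,1]^n$ of an $(n+2)$-linear singular integral form $\Lambda_{\vec s}$ whose input functions are pulled back through the affine maps $L_0(x,y) = x$, $L_{n+1}(x,y) = y$, and $L_i(x,y) = s_i x + (1-s_i)y$ for $1 \le i \le n$.

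Next I would verify that each $\Lambda_{\vec s}$ falls into the class covered by Theorem \ref{ThmOpResBoundGen}, with constants uniform in $\vec s \in [0,1]^n$. The point is that the Calderón-Zygmund kernel $\ka(x-y) = \ka(L_0(x,y)-L_{n+1}(x,y))$ has its singularity on the diagonal $\{x=y\}$, and this singular set is transverse to each of the affine maps $L_i$ (they all agree on the diagonal, but move off it in linearly independent directions). Consequently, the structural hypotheses of Theorem \ref{ThmOpResBoundGen}, together with its Calderón-Zygmund size/smoothness bookkeeping, hold with norms controlled by $\|\ka\|_{CZ(\ep)}$ uniformly in $\vec s$. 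Applying the general theorem and then integrating in $\vec s$ over a set of measure one (and using the triangle inequality) yields the bound \eqref{newCJest}.

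The main obstacle is of course concealed in Theorem \ref{ThmOpResBoundGen} itself: obtaining multilinear bounds with only polynomial growth $n^2\log^3(2+n)$ when every $f_i$ is measured in $L^{p_i}$ with $p_i$ possibly finite, rather than having all of the $n$ coefficient functions in $L^\infty$ as in the classical bound \eqref{CJbound}. This calls for a refined multi-scale analysis that uses Littlewood-Paley or atomic decompositions of all $n+2$ inputs, a careful accounting of which scales actually interact in the form, and exploitation of cancellation coming from the $L^2$-boundedness condition \eqref{regczF} together with the mean-value structure of the $m_{x,y}$'s. The lower bound $\min_i p_i \ge 1+\delta$ is a standard artifact of such multilinear Calderón-Zygmund methods, which do not survive at the endpoint $p_i=1$; the powers $n^2$ and $\log^3(2+n)$ in the bound encode the combinatorial cost of summing over scales and over the $n$-fold product structure of the means.
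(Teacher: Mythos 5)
The proposal has a fundamental gap at the point where you "verify that each $\Lambda_{\vec s}$ falls into the class covered by Theorem \ref{ThmOpResBoundGen}, with constants uniform in $\vec s$" and then "apply the general theorem and integrate in $\vec s$." Theorem \ref{ThmOpResBoundGen} is stated for forms of the shape \eqref{EqnIntroMainOperator}, in which the $\alpha$-integration is already built in; to express a fixed-$\vec s$ form $\Lambda_{\vec s}$ in that shape one would have to take $K(\alpha,v)=\delta_{\vec s}(\alpha)\,\ka(v)$, i.e.\ a Dirac mass in the $\alpha$-variable. Such a $K$ does not lie in $\sK_\eps$: the semi-norms \eqref{Kepsnormeta2}, \eqref{Kepsnorm4} and the $\cB_\eps$ semi-norm \eqref{Beps2} all impose $L^1$-integrability and H\"older-type regularity in $\alpha$, all of which diverge for a point mass. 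Nor can this be remedied by a clever estimate: boundedness of $\Lambda_{\vec s}$ uniformly in $\vec s$ is essentially the problem of uniform bounds for bilinear Hilbert-transform-type operators, which the authors explicitly discuss in item (vii) of \S\ref{background} --- those uniform estimates are known only for $d=1$ (or after the method of rotations for odd homogeneous kernels) and even then only for a restricted range of exponents ($\min p_i>3/2$ in \cite{dgy}). They are entirely out of reach for $d\ge 2$ and $n\ge 2$, and they would not yield the full range $p_i>1$ required here. The ``transversality'' of the maps $L_i$ to the diagonal that you invoke does not rescue this; the crucial analytic input is regularity of $K$ \emph{in} $\alpha$, not a geometric transversality condition, and that regularity is created precisely by the $\alpha$-averaging that you propose to discard.

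The correct route (which is the one the paper follows) is to \emph{not} fix $\vec s$: after the change of variables $\alpha_i = 1-s_i$, one reads off
\[
\Lambda_{\rm CJ}(f_1,\dots,f_{n+2}) = \Lambda[K](f_1,\dots,f_{n+2}), \qquad K(\alpha,v)=\bbone_{[0,1]^n}(\alpha)\,\ka(v),
\]
so the entire $\vec s$-integral becomes the $\alpha$-integral inside $\Lambda[K]$. One then verifies (Example \ref{CJexample}) that this specific $K$ lies in $\sK_\delta$ for every $\delta<\eps$ with $\|K\|_{\sK_\delta}\lc_{\delta,\eps}\|\ka\|_{CZ(\eps)}$; the finiteness of the $\alpha$-regularity semi-norms \eqref{Kepsnormeta2}, \eqref{Kepsnorm4} is automatic because $\bbone_{[0,1]^n}$ has bounded variation, while the $x$-variable semi-norms \eqref{Kepsnormeta1}, \eqref{Kepsnorm3}, \eqref{Kepsnorm5} come from the CZ conditions \eqref{kaFT}, \eqref{Hoerm}. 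Finally Theorem \ref{ThmOpResBoundGen}(ii) gives the bound with constant $n^2\|K\|_{\fK_0}\log^3\!\big(2+n\|K\|_{\fK_\eps}/\|K\|_{\fK_0}\big)$, which after the elementary inequality $a\log^3(2+nb/a)\lc b\log^3(2+n)$ (valid for $0<a\le b$) and the comparisons $\|K\|_{\fK_0}\le \|K\|_{\fK_\delta}\lc\|K\|_{\sK_\eps}\lc\|\ka\|_{CZ(\eps)}$ from Theorem \ref{ThmResDecompKer} produces exactly \eqref{newCJest}.
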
 
Our main interest lies in the higher dimensional cases with $d\ge 2$. Polynomial bounds 
are known for $d=1$, although the precise form of Theorem \ref{newCJthm}  may not have been observed before;  see the discussion about previous results in \S \ref{background}.

\subsection{Background and historical remarks} 
\label{background}

\subsubsection*{Motivation}

Our original motivation  for considering estimates \eqref{newCJest} for $p_i\neq \infty$ for $i\le n$  came from  Bressan's problem (\cite{bressan}) on incompressible mixing flows. 
A  version of the approach chosen by Bianchini \cite{bianchini}
leads in higher dimensions 
to the problem of bounding a  
  trilinear singular integral form with even homogeneous kernels $\kappa$. 
One considers a smooth, time-dependent vector field $(x,t)\mapsto \vec b(x,t)$  
which is periodic, i.e. 
$\vec b(x+k,t)=\vec b(x,t)$  for all  $ (x,t)\in \bbR^d\times \bbR$, $ k\in \bbZ^d\,,$ and  divergence-free, $\sum_{i=1}^d \frac{\partial b_i}{\partial x_i}=0.$
Let $\phi $ be the flow generated by $v$, i.e. we have 
$\frac{\partial}{\partial t} \phi_t(x)= v(\phi_t(x),t)$, $\quad \phi_0(x)=x$,
so that for every $t$ the map $\phi_t $ is a diffeomorphism on
$\bbR^d$
satisfying  $\phi(x+k,t)=k+\phi(x,t)$,
for all $x\in \bbR^d$, $k\in \bbZ^d$.

For small $\eps$ consider the truncated Bianchini semi-norm (\cite{bianchini}) defined by
$$
B_\eps[f]= \int_{\eps}^{1/4} \int_{Q} \Big|f(x)- \intslash_{B_r(x)} f(y) dy \Big| \,dx\, \frac{dr}{r}\,.
$$
%and let $$B_\eps[f]  =\sup_{\eps<1/4}\|f\|_{\cB(\eps)}\,.$$
%\begin{theorem}
%Let $v$, $\phi$ be as above. Then
%$$\|\chi_{\phi_T(A)}\|_{\cB} \le
%\|\chi_{A}\|_{\cB} + C \int_0^T \|Dv(\cdot,t)\|_{L\log L(Q)}\,dt$$
%\end{theorem}
Let $A$ be a measurable subset of $\bbR^d$ which is invariant under translation by vectors in $\bbZ^d$ (thus  $A+\bbZ^d$  can be identified with a measurable subset of $\bbT^d$). Let $A^\compl=\bbR^d\setminus A$.

A calculation  (\cite{hsss})  shows that
\begin{multline}\label{bianchinidifference}
B_\eps[\bbone_{\phi_T(A)}]-
B_\eps[\bbone_{A}] =\\
V_d^{-1}\int_0^T \int_Q f(x,t) 
%f_{\phi_t(A)}(x)
\int_{\eps\le |x-y|\le 1/4}\frac{\inn{x-y}{\vec b(x,t)-\vec b(y,t)}}{|x-y|^{d+2}} f(y,t) \,dy\,dx\, dt
\end{multline}
where $Q=[0,1)^d$, $f(y,t)= \frac 12(\bbone_{\phi_t(A)}- \bbone_{\phi_t(A)^\compl})$ and $V_d$ is the volume of the unit ball in $\bbR^d$.

This calculation leads to   an alternative approach to a result by Crippa and DeLellis \cite{Crippa-deLellis}. 
One has  the following estimate 
involving general (a priori) smooth vector fields $x\mapsto v(x)$ on $\bbR^d$  satisfying $\text{div}(v)=0$.
  % be a smooth divergence free 
% vector field on $\bbR^d$ and l
Let 
$Dv$ denote its total derivative. Then for $1< p_1,p_2, p_3\le \infty$,
$\sum_{i=1}^3p_i^{-1}=1$,
\Be\label{bressanintegral}
\Big| \iint_{\eps<|x-y|<N} \frac {\inn{v(x)-v(y)}{x-y}}{|x-y|^{d+2}} f(y) g(x)\,dy\, dx
\Big| \lc \|Dv\|_{p_1} \|f\|_{p_2}\|g\|_{p_3}.
\Ee
Here the implicit constant is independent of $\eps$ and $N$. One can think of \eqref{bressanintegral} as a trilinear form acting on $f$, $g$ and $Dv$; due to the assumption of zero divergence, the entries are not independent and 
one can reduce to the estimation of $d^2-1$ trilinear forms.
In fact, \eqref{bressanintegral}  can be 
%easily 
 derived from the case $n=1$ of Theorem \ref{newCJthm}, using the choices of
 \Be\label{bressankernels}
 \begin{aligned}\ka_{ij}(x)&= \frac{x_ix_j}{|x|^{d+2}}, \quad i \neq j,\\
 \ka_i(x)&= \frac{x_i^2-x_d^2}{|x|^{d+2}}, \quad 1\le i<d.
 \end{aligned} 
 \Ee
 The case 
with   $f$, $g$ being characteristic functions of sets with finite measure 
and $Dv\in L^{p_1}$ with $p_1$ near $1$ is of particular interest.
Steve Hofmann (personal communication) has suggested that estimates such as \eqref{bressanintegral} can also be obtained from the isotropic version of his off-diagonal $T1$ theorem 
 \cite{hofmann-off-diagonal}.

\subsubsection*{Previous results}
We  list some previous results on the $n+2$-linear form $\La_{\rm CJ}$ in \eqref{CJbound}, including many in dimension $d=1$, covering the classical
Calder\'on commutators.

\smallskip

(i)  The first estimates of the form \eqref{newCJest}, for the case $d=1$ and $n=1$ 
were proved in the seminal paper by A.P. Calder\'on \cite{calderon}.

\smallskip 

(ii) More generally, still in dimension $d=1$, Coifman, McIntosh and Meyer 
\cite{coi-mc-me} proved estimates of the form \eqref{newCJest} for arbitrary $n$, with 
$p_1=\dots=p_n=\infty$ and polynomial bounds $C(n)= O(n^4)$ as $n\to\infty$. This allowed them to establish the $L^2$ boundedness of the Cauchy integral operator on
general Lipschitz curves. See also 
\cite{cdm} for other applications to  related problems of Calder\'on.
Christ and Journ\'e \cite{cj} were able to improve the 
Coifman-McIntosh-Meyer bounds to $C(n)=O(n^{2+\eps})$ 
(and to $O(n^{1+\eps})$ 
for odd kernels $\kappa$).

\smallskip

(iii) Duong, Grafakos and Yan \cite{dgy} developed a rough version of the multisingular integral theory in \cite{grafakos-torres} to cover the estimates
\eqref{newCJest}
 with general exponents
for $d=1$, however  their arguments yield constants $C(n)$ which are  of exponential growth in $n$. 

One should note that \cite{dgy} also  treats  the higher Calder\'on commutators $\sC[f_1,\dots, f_n]$, with target space  $L^p$  where  $p>1/2$. For the bilinear version this had been first done by 
C.P. Calder\'on \cite{cpcalderon}. It would be interesting to obtain appropriate 
similar  results for the 
$d$-commutators.

\smallskip

(iv) Muscalu \cite{muscalu2} recently developed a new approach for proving \eqref{newCJest} in dimension $d=1$, see 
also \cite[Theorem 4.11]{muscalu-schlag2}. An explicit bound for the constant as $A(n, \ell)$ where $\ell$ is the number of indices $j$ such that $p_j\neq \infty$ and, for fixed $\ell$, $n\mapsto A(n,\ell)$ is of polynomial growth. 
%As pointed out to us by Muscalu, complex interpolation (as in ╖15) could be used in his argument as well to remove the dependence on ... and prove polynomial bounds for all admissible sets of exponents.
However, by using complex interpolation (as in \S 15) to the case when $p_j=\infty$ for all but two $j$, one may remove the dependance of $A$ on $\ell$.  This yields polynomial bounds for all admissible sets of exponents, as in our results.
%As pointed out to us by Muscalu, complex interpolation (as in \S\ref{interpolsect}) could  be used in his argument as well to remove the dependence on $\ell$ and prove polynomial bounds for all admissible sets of exponents.
%OLD: However this does not seem to yield effective bounds 
%for the regime $\ell\gc n^\eps$ as $n\to \infty$, in particular not for the choice of equal $p_i$, i.e.  $p_i=(n+2)^{-1}$ for $i=1,\dots, n+2$.

\smallskip

(v) As mentioned above, crucial  results for $d\ge 2$ were obtained by 
Christ and Journ\'e \cite{cj} who established \eqref{newCJest} for 
$p_1=\dots=p_n=\infty$ and $C(n)=O(n^{2+\eps})$. Several  ideas in our proof   can be traced back to  their work.
%  \cite{cj}.

\smallskip

(vi) Hofmann \cite{hofmann} obtained estimates \eqref{newCJest} for operators with rougher kernels $\kappa$, and an extension to weighted norm inequalities; however the induction argument in \cite{hofmann} only gives exponential bounds as $n\to\infty$.

\smallskip

(vii)  For the special case that $\kappa$ is an {\it odd and  homogeneous} singular convolution kernel, estimates of the form \eqref{newCJest} for $d\ge 2$ and $n=1$ have been obtained by using the method of rotation. In 
\cite{dgy},
Duong, Grafakos and Yan 
 use uniform results  on the bilinear Hilbert transforms (\cite{grli}, \cite{th})
to obtain such estimates under the additional restriction $\min (p_1, p_2, p_3)>3/2$,
see also the survey  \cite{grafakos-czech}.

\smallskip

We note that one can modify the argument  in \cite{dgy}
to remove this restriction, and also to obtain a version for $n\ge 2$.
Indeed let $\ka_\Om(x)= |x|^{-d}\Omega({x}/{|x|})$ with $\Omega\in L^1(S^{d-1}) $ and $\Omega(\theta)=-\Omega(-\theta)$.
Let $$\sC_\Om[f_1,\dots, f_n]f_{n+1}(x)
= \int \ka_\Omega(x-y) f_{n+1}(y) \prod_{i=1}^n\int_0^1
f_i((1-s_i)x+s_iy) ds_i\, dy; $$
then
\Be\label{rotation}\sC_\Om[f_1,\dots, f_n]f_{n+1}(x)=
\frac 12 \int_{S^{d-1}} \Omega(\theta)\, \cC_\theta[f_1,\dots, f_n,f_{n+1}](x) \, d\theta
\Ee
where 
 $$\cC_\theta [f_1,\dots, f_{n+1}](x)=p.v.
\int_{-\infty}^\infty  
f_{n+1}(x-s\theta) \Big(\prod_{i=1}^n\int_0^1
f_i(x-u s \theta) du \Big)\frac{ds}s
$$
Now if $e_1=(1,0,\dots,0)$ and  $R_\theta$ is a rotation with $R_\theta e_1= \theta$ we have
$$\cC_\theta [f_1,\dots, f_{n+1}](x)= 
\cC_{e_1} [ f_1\!\circ\! R_\theta, \dots,f_{n+1}\!\circ\! R_\theta] (R_\theta^{-1}x)
$$ and thus the operator norms of $\sC_\theta$ are independent of $\theta$.
One notices that
$$\cC_{e_1}[f_1,\dots, f_{n+1}](x_1,x')= 
p.v. \int_{-\infty}^\infty \frac{1}{x_1-y_1} f_{n+1}(y_1, x') \prod_{i=1}^n
\Big(\int_0^1 f_i((1-u)x_1+u y_1,x') du\Big)  dy_1\,,
$$
the Calder\'on commutator acting in the first variable.
The  one-dimensional results for  the commutators  in \cite{calderon}, \cite{dgy}  can now be applied %to the directional operators $\cC_\theta$ 
to show that for $\sum_{i=1}^{n+2} {p_i}^{-1}=1$, $p_i>1$, 
$$\Big|\int \sC_\Om[f_1,\dots, f_n]f_{n+1}(x)f_{n+2}(x) dx\Big|\lc C(p_1,\dots, p_{n+2})\|\Omega\|_{L^1(S^{d-1})}
\prod_{i=1}^{n+2} \|f_i\|_{L^{p_i}}.$$
Note that the assumption 
$\ka$ odd is crucial in formula  \eqref{rotation} and thus the argument does not seem to be applicable to the $d$-commutators associated with the convolution kernels in 
\eqref{bressankernels}.

(viii) When $n=1$ it is known that the Christ-Journ\'e commutator $\sC[a]$ (with $a\in L^\infty$) is of weak type $(1,1)$. This has been shown 
by Grafakos and Honz\'ik \cite{grafakos-honzik} in two dimensions and by one of the authors \cite{see} in all dimensions. It is an open problem whether the higher degree $d$-commutators $(n\ge 2)$ are of weak type $(1,1)$ in dimension $d\ge 2$. 

\smallskip

%\subsection{Preliminary formulation of the main estimate}
\subsection{Towards a more general result}
\label{towards} 
In order to prove Theorem \ref{newCJthm}
 it suffices to  prove estimate
\eqref{newCJest} for the cases where two of the exponents, say $p_i, p_j$, $1\le i<j\le n+2$ belong to $(1,\infty)$ and the other $n$ exponents are 
equal to $\infty$. 
Equivalently, if $\vp$ is a permutation of $\{1,\dots, n+2\}$
and
$$\La_{\mathrm {CJ}}^\vp(f_1,\dots, f_{n+2})= \La_{\rm CJ}(f_{\vp(1)},\dots, f_{\vp(n+2)})$$
one has to show, for $1<p<\infty$,  the inequalities
\Be\label{newCJestperm}\La_{\mathrm{CJ}}^\vp[f_1,\dots, f_{n+2}]\big|\le 
%C(\min p_i,\delta) 
C_{\delta, p} n^2 (\log n)^3
 \|\ka\|_{CZ(\delta)} 
(\prod_{i=1}^{n}\|f_i\|_{\infty}) \|f_{n+1}\|_p \|f_{n+2}\|_{p'}\,,
\Ee
uniformly in $\vp$.

Formally the operator $\La_{\rm CJ}^\vp$ takes the form 
\Be \label{Lapermute}
\La_{\rm CJ}^\vp(f_1,\dots, f_{n+2})=
 \iiint K^\vp(\alpha, x-y)f_{n+2}(x) f_{n+1}(y) \prod_{i=1}^n f_i(x-\alpha_i(x-y))\: d\alpha\: dx\: dy.
 \Ee
 The case  $\varpi=id$ in \eqref{newCJestperm} is covered already by the original  result of Christ and Journ\'e. 
 Thus by the symmetry in $\{1,\dots, n\}$ and essential symmetry in $\{n+1,n+2\}$
 (with a change of variable $\alpha_j\mapsto(1-\alpha_j)$) two cases remain  of particular interest:

\begin{itemize}

\item
If  $\vp^i$ is the permutation that interchanges $i$ and $n+1$ and leaves all 
$k\notin  \{i,n+1\}$ fixed then the kernel $K^{\vp^i}$ is given by
$$
K^{\vp^i}(\alpha, v)=
\begin{cases}
|\alpha_i|^{d-n-1} \ka(\alpha_i v) &\text{ if } \alpha_i\ge 1,\,\,0\le\alpha_j\le \alpha_i, \,j\neq i,
\\
0 &\text{ otherwise. }
\end{cases}
$$

\item
If $1\le i,j\le n$, $i\neq j$  and $\varpi^{ij}$ is the permutation with $\vp^{ij}(i)=n+1$, 
$\vp^{ij}(j)=n+2$ and  $\vp^{ij}(k)=k$ for  
$k\notin  \{i,j, n+1,n+2\}$ then the kernel $K^{\vp^{ij}}$ is given by
\begin{align*}
K^{\vp^{ij}}(\alpha, v)&=|\alpha_i-\alpha_j|^{d-n-1}\kappa((\alpha_i-\alpha_j)(x-y))
\\
&\qquad\text{ either  if }  \alpha_i\le 0, \,\alpha_j\ge 1,\, \alpha_i\le \alpha_k\le \alpha_j  \text{ for }k\neq i,j;
\\&\qquad\text{ or if }
 \alpha_j\le 0, \,\alpha_i\ge 1,\, \alpha_j\le \alpha_k\le \alpha_i  \text{ for }k\neq i,j;
\\
K^{\vp^{ij}}(\alpha, v)&=0 \text{  otherwise. }
\end{align*}
%and $K^{\vp^{ij}}(\alpha, v)=0$ otherwise.
\end{itemize}
Once \eqref{newCJestperm}  is proved for $\varpi=id$, $\varpi=\varpi^i$, $\varpi=\varpi^{ij}$,  the general result follows by complex interpolation for multilinear operators, see \cite[Theorem 4.4.1]{bergh-lofstrom}.

Thus we want  to  study multilinear forms of the type 
\begin{equation}\label{EqnIntroMainOperator}
%\begin{split}&
\La[K](b_1,\ldots, b_{n+2}) = \iiint K(\alpha, x-y)b_{n+2}(x) b_{n+1}(y) 
\prod_{i=1}^n b_i(x-\alpha_i(x-y))\: d\alpha\: dx\: dy,
%\end{split}
\end{equation}
where $x\in \R^d$, $\alpha\in \R^n$, and $K(\alpha, x)$ is a Calder\'on-Zygmund kernel in the $x$ variable which depends on a parameter $\alpha\in \R^n$. We will 
impose some regularity conditions on the $\alpha$ variable. The basic example, corresponding to the 
Christ-Journ\'e multilinear forms,  is
$$K(\alpha,x)= 
%\chi\ci{[0,1]^n}\!(\alpha)\ka(x)
\bbone\ci{[0,1]^n}\!(\alpha)\ka(x)
$$ where $\ka$ is a regular Calder\'on convolution kernel  satisfying the conditions \eqref{regcz}.

Our goal is to 
\begin{itemize}
\item To introduce a reasonably general class $\sK_\eps$ of kernels  $K(\alpha,x)$, for which linear forms of type  \eqref{EqnIntroMainOperator} are closed under adjoints.  If $\vp$ is a permutation of $\{1,\ldots, n+2\}$, then the multilinear form $\La[K]\q(b_{\vp\q(1\w)}, \ldots, b_{\vp\q(n+2\w) }\w)$ should be  written  as $\La[K^\vp]\q(b_1,\ldots, b_{n+2}\w)$ for a suitable  $K^\vp$, with appropriate bounds on $K^\vp$ in the class $\sK_\eps$.

\smallskip

\item To prove estimates
% similar as in Theorem \ref{ChristJournethm}, 
for this same class of kernels  that cover the estimates for the $d$-commutators in Theorem \ref{newCJthm}.
\end{itemize} 

Roughly the class of admissible kernels  consists of those $K$ for which the norm $\|\cdot\|_{\sK_\eps}$ defined in \eqref{Kepsnorms}, \eqref{EqnResultsKepN} below is finite;  see
 \S\ref{SectionResults} for further discusion of the spaces of distributions on which this definition is made.
 %the precise definition of the class $\sK_\eps$.
The extension to the class $\sK_\eps$ allows us to substantially extend the class of allowable convolution kernels $\ka$ in the definition of the $d$-commutators,
see Example \ref{CJexample} below.

 Let 
$p_1,\ldots, p_{n+2}\in \q(1,\infty\w]$ with $\sum_{j=1}^{n+2} {p_{j}^{-1}}=1$, and 
let  $p_0=\min_{1\leq j\leq n+2} p_j$. 
For  $b_j\in L^{p_j}\q(\R^d\w)$ we shall prove 
the inequality
\begin{equation}\label{EqnIntroMainBound}
\q|\La[K]\q(b_1,\ldots, b_{n+2}\w)\w|\leq C_{p_0, d,\eps} \q\|K\w\|_{\sK_\eps} n^2 \log^3(2+n)\prod_{i=1}^{n+2}\|b_i\|_{p_i}.
\end{equation}
%where $\log^3\q(x\w) = \q(\log\q(x\w)\w)^3$, and $\q\|K\w\|$ is a norm on the class of kernels (yet to be defined).  A key point is that this norm will depend on $n$
The expression on the left hand side makes a priori sense at least for $K$ supported in a compact subset of $\R^N\times (\R^d\setminus\{0\})$ (and this restriction does not enter in the estimate).
The kernels in  $\sK_\eps$ can be thought of sums of dilates of functions in a
weighted  Besov space; this will be made precise in \S\ref{SectionKernels}.
These weighted Besov spaces are  closely related to  Besov spaces of forms on
$\bbR \mathrm{P}^{n+d}$.
This  motivated some of the considerations in \S\ref{SectionKernels} and \S\ref{SectionAdjoints}.

A key point of the $\sK_\eps$ norms is that they depend  on $n$
in a natural way so that the term $ n^2 \log^3(2+n)$ in 
\eqref{EqnIntroMainBound}
does not become trivial.
We shall derive a stronger version in the next section in Theorem \ref{ThmOpResBound} below in which dependence on the  $\sK_\eps$ 
occurs in a very weak (logarithmic) way. In fact one can define an endpoint space
$\fK_0$ 
%which 
%%It will be necessary to drive a stronger 
%estimate involving an endpoint space $\fK_0$
which contains the union of the spaces $\sK_\eps$, so that the inequality
\begin{equation}\label{improvedIntroMainBound}
\q|\La[K]\q(b_1,\ldots, b_{n+2}\w)\w|\leq C_{p_0, d,\eps} \q\|K\w\|_{\fK_0} n^2 
\log^3\big(2+n\frac{\|K\|_{\sK_\eps}}{\|K\|_{\fK_0}} \big) 
\prod_{i=1}^{n+2}\|b_i\|_{p_i}.
\end{equation}
  holds.
A crucial point about the classes $\sK_\eps$  is that 
if $K$ belongs to $K_\eps$ then all $K^\vp$ in 
\eqref{Lapermute} belong to some $\sK_{\ep'}$ class with polynomial bound in $n$. 
One can then see that if 
inequality  \eqref{improvedIntroMainBound}
holds for $(p_1,\dots, p_{n+2})=(\infty, \dots, \infty, p_0,p_0')$ then the same is true for the 
kernels $K^\vp$. This invariance under adjoints will be discussed in \S\ref{SectionAdjoints}.

The strategy of proving \eqref{improvedIntroMainBound} for $p_1=\dots=p_n=\infty$ 
then follows Christ and Journ\'e \cite{cj}, with the main inequalities outlined in \S\ref{SectionOutline}.
The subsequent sections contain the details of the proofs.

% that if $p_1,\ldots, p_{n+2}\in \q(1,\infty\w]$ with $\sum_{j=1}^{n+2} \frac{1}{p_{j}}=1$, then if $p_0=\min_{1\leq j\leq n+2} p_j$ and if $a_j\in L^{p_j}\q(\R^d\w)$,
%\begin{equation}\label{EqnIntroMainBound}
%\q|T\q(a_1,\ldots, a_{n+2}\w)\w|\leq C_{p_0, d} \q\|K\w\| n^2 \log^3\q(2+n\w)\prod_{l=1}^{n+2} \LpN{p_l}{a_l},
%\end{equation}
%where $\log^3\q(x\w) = \q(\log\q(x\w)\w)^3$, and $\q\|K\w\|$ is a norm on the class of kernels (yet to be defined).  A key point is that this norm will depend on $n$
%in a natural way (so that the term $ n^2 \log^3\q(2+n\w)$ does not become trivial).

\section*{Selected Notation}
\label{notation}

\begin{itemize}
\item
We   use the notation $A\lesssim B$ to denote $A\leq C B$, where $C$ is a constant independent of any relevant parameters.
$C$ is allowed to depend on $d$ and $\eps$, but {\it not on $n$.}

\smallskip

\item For two nonnegative numbers $a$, $b$ we occasionally write $a\wedge b=\min\{a,b\}$
and $a\vee b=\max\{a,b\}$

\smallskip

\item
The Euclidean ball in $\bbR^d$ of radius $r$ and with center $x$ is denoted by $B^d(x,r)$.

\smallskip

\item
For a function $g$ on $\bbR^d$ we define dilation operators which leave the $L^1(\R^d)$ norm invariant by
\begin{equation*}
\dil{\\g}{t}\q(x\w):= t^d g\q( tx\w). 
\end{equation*}

\smallskip

\item
For a function $\vsig$ on $\bbR^n\times \bbR^d$ we define dilation operators in the $x$-variable by
\begin{equation*}
\dil{\vsig}{t}\q(\alpha,x\w):= t^d \vsig\q(\alpha ,tx\w). 
\end{equation*}

\smallskip

\item
For a kernel $K$ on  $\bbR^d\times \bbR^d$ we define dilated versions 
by
\begin{equation*}
\mathrm{Dil}_t K (x,y):= t^d K(tx,ty)\,.
\end{equation*}

\smallskip

\item Given  Banach spaces  $E_1, E_2$  we denote by  $\cL(E_1,E_2)$  the Banach space of bounded linear operators from $E_1$ to $E_2$. 

\smallskip

\item We denote by $C^\infty_0(\bbR^d)$
 the space of compactly supported $C^\infty$ functions. 
The subspace 
$C^\infty_{0,0}(\bbR^d)$ consists of all $f\in C^\infty_0(\bbR^d)$ with $\int f(x) dx=0$.

\smallskip

\item Let $\cV$ be an index set, and for each $\nu\in \bbZ$,  let 
$\{\varSigma_N^\nu\}$  be a sequence of operators in $\cL(E_1,E_2)$. 
%Let $\varSigma^\nu$ be a bounded operator.
We say that $\varSigma_N^\nu$  converges in the {\it strong operator topology}
%(as operators $E_1\to E_2$) 
 to $\varSigma^\nu\in \cL(E_1,E_2)$, with {\it equiconvergence with respect to $\cV$}, if for every $f\in E_1$ and every $\eps>0$ there exists a positive integer $N(\eps, f) $ such  that 
 $\|\varSigma_N^\nu f-\varSigma^\nu f\|_{E_2} <\eps$ 
 for all  $N> N(\eps,f)$, $\nu\in \cV$.
 
  Given bounded operators $T_j^\nu\in \cL(E_1,E_2)$, $j\in \bbZ$,   we say that $ \sum_j T_j^\nu$ converges in the strong operator topology,  with  equiconvergence with respect to $\cV$,  if the sequence of partial sums $\varSigma_N =\sum_{j=-N}^N T^\nu_j$ converges 
  in the strong operator topology with equiconvergence with respect to $\cV$.
 
\smallskip

\item Given bounded
%bounded linear functionals $L$,  $L_N$  of bounded 
$k$-linear operators $L$,  $L_N$, defined on a $k$-tuple
$(A_1,\dots, A_k)$ of normed  spaces with values in a normed space $B$,  we say that $L_N$ converges to $L$ in the strong operator topology (as $N\to\infty$)  if
$\|L_N(a_1,\dots, a_k)- L(a_1,\dots, a_k)\|_B\to 0$ for all $(a_1,\dots, a_k)\in A_1\times\cdots\times A_k$. When $B=\bbC$ or $\bbR$ then there is no difference between strong and weak operator topologies, and we omit the word strong.

\smallskip

\item The spaces $L\sS(\bbR^n\times \bbR^d)$
% and $\sK_\eps$, 
are defined in \S\ref{SectionResultsKeps}.

\smallskip

\item The operators $P_k$, $Q_k$, $\cQ_k$ and $\overline Q_k[u]$ are introduced in \S\ref{SectionAuxOps} (although $Q_k$ is already used in earlier sections). The class $\sU$ is  defined in Definition \ref{Udefin}.

\smallskip

\item The semi-norms $\|\cdot\|_{\sK_{\eps,i}}$, $i=1,2,3,4, 5$ and the spaces $\sK_\eps$ are defined in \S\ref{SectionResultsKeps}. The related spaces $\fK_\eps$ are defined in \S\ref{SectionResultsDec}.

\smallskip

\item The semi-norms $\|\cdot\|_{\cB_{\eps,i}}$, $i=1,2,3,4,$ and the spaces $\cB_\eps$ are defined in \S\ref{SectionResultsDec}.

\smallskip

\item The Schur classes 
$\Sha^1$, $\Sha^\infty$, $\Sha_\eps^1$, $\Sha_\eps^\infty$ 
and the regularity classes 
$\Zhe_{\eps, \rml}^1$, $\Zhe_{\eps, \rml}^\infty$, $\Zhe_{\eps, \rmr}^1$, 
$\Zhe_{\eps, \rmr}^\infty$ are defined in \S\ref{schurregularity}.

\smallskip

\item The  singular integral classes  $\SI$, $\SI_\eps^1$, $\SI_\eps^\infty$
and annular integrability classes 
$\ann^1$, $\ann^\infty$, $\ann_\av$ 
are defined in \S\ref{SIclasses}.

\smallskip

\item 
The Carleson condition for operators and norm $\|\cdot\|_{\rm Carl}$ is given in Definition \ref{Carlesoncond}. 
The atomic boundedness condition, with norm $\|\cdot\|_{\At}$ is  given in Definition 
\ref{atomicboundedness}.

\smallskip

\item The $\Op_\epsilon$, $\Op_0$ norms are defined in \S\ref{sumsofdilated}.

\smallskip

\item The notion of a Carleson function and the norm $\|\cdot\|_{carl}$ is given in 
definition \ref{carlesonfunction}.

\end{itemize}

\section{Statements of the main results}\label{SectionResults}

%\input{statements}
	
	%\subsection{Kernels}\label{SectionResultsKernels}	\input{kernels}
	
	%\subsection{Operators}	\input{opresults}
	
	% !TEX root =  main.tex
\subsection{The classes $\sK_\eps$}
\label{SectionResultsKeps}
We first introduce 
certain classes of tempered distributions on $\bbR^n\times \bbR^d$ which satisfy  integrability properties in the first ($\alpha$-)variable and contain all kernels 
 allowable in 
\eqref{EqnIntroMainOperator}.
%First we introduce the functional analysis needed to make sense of the space $\sK_\eps\subset \sS\q(\R^n\times \R^d\w)'$.
For each  $N\in \bbN_0$ consider  the space $M\sS_N'\q(\R^n\times \R^d\w)$
%\subset \sS\q(\R^n\times \R^d\w)'$ 
 defined as normed  spaces of tempered  distributions $K$ on $\R^n\times \R^d$ for which there is $C>0$ so that for all
$f\in \sS\q(\R^n\times \R^d\w)$
%$K\in \sS\q(\R^n\times \R^d\w)'$ for which an inequality of the form
\begin{equation}\label{EqnKerResMSNorm}
\q|\inn{K}{f}\w| \leq C \sup_{\alpha\in \R^n, x\in \R^d} \sum_{\q|\gamma\w|\leq N} \q(1+\q|x\w|\w)^N \q|\partial_x^\gamma f\q(\alpha, x\w)\w|.
\end{equation}
Here $\inn{K}{ f}$ denotes the pairing between distributions and test functions and the minimal $C$ in \eqref{EqnKerResMSNorm} is the norm in 
$M\sS_N'\q(\R^n\times \R^d\w)$.
The space $MS'(\R^n\times\R^d)$ is the space of tempered distributions 
$K$ on $\bbR^n\times \bbR^d$ for which 
\eqref{EqnKerResMSNorm} holds 
 {\it for some} $N\in \N$.
Note that $M\sS'\q(\R^n\times \R^d\w)$ can be seen as an inductive limit of the normed  spaces $M\sS_N'\q(\R^n\times \R^d\w)$,
%Note that $M\sS'\q(\R^n\times \R^d\w)$ can be seen as a Fr\'echet space (one semi-norm for each $N$,
%given by the least $C$ in \eqref{EqnKerResMSNorm}).
and this gives $M\sS'\q(\R^n\times \R^d\w)$ the structure of
a locally convex topological vector space. A net $\{f_\imath\}_{i\in \fI}$  is Cauchy in this topology if there exists  an $N$ so that
all $f_\imath$ belong to $M\sS_N'\q(\R^n\times \R^d\w)$ 
for some fixed $N$ and so that $f_\imath$ is Cauchy in the norm topology of  $M\sS_N'\q(\R^n\times \R^d\w)$.
It is easy to see the normed spaces $M\sS_N'\q(\R^n\times \R^d\w)$ are complete 
and thus  $M\sS'\q(\R^n\times \R^d\w)$ is complete.
Let $M\q(\R^n\w)$ be the space of bounded Borel measures on $\R^n$.  $K\in M\sS'\q(\R^n\times \R^d\w)$ gives rise to a continuous
linear operator $\beta_K:\sS\q(\R^d\w)\rightarrow M\q(\R^n\w)$ defined by
\begin{equation*}
\inn{\beta_K(\phi_2)}{\phi_1}:=\inn{K}{ \phi_1\otimes \phi_2} \text{ for  
$\phi_1\in \sS(\R^n)$, $\phi_2\in \sS(\R^d)$.}
\end{equation*}
Let $L\sS'\q(\R^n\times\R^d\w)$ be the subspace of $M\sS'\q(\R^n\times \R^d\w)$ consisting of those $K$ for which $\beta_K\q(\phi_2\w)\in L^1\q(\R^n\w)$,
for all $\phi_2\in \sS\q(\R^d\w)$.
$L\sS'\q(\R^n\times \R^d\w)$ is a closed subspace of $M\sS'\q(\R^n\times \R^d\w)$ and inherits its complete locally convex topology.

We now define the Banach space $\sK_\eps$ used in \eqref{EqnIntroMainBound}.
For $K\in L\sS'\q(\R^n\times \R^d\w)$ and $\eta\in \sS\q(\R^d\w)$ it makes sense to write $K\q(\alpha,\cdot\w)*\eta$ for the convolution of $K$ and $\eta$ in the $x$-variable. This yields an $L^1$ function
in the $\alpha$ variable, which depends smoothly on $x$.
For $K  \in \Loloc\q(\R^n\times \R^d\w)$, let 
% and\in L\sS'(\R^n\times\R^d)$ we write 
% \Loloc\q(\R^n\times \R^d\w)$ and $t>0$ we write 
$$\dil{K}{t}\q(\alpha,x\w):=t^d K\q(\alpha,t x\w)$$
and we extend this to $L\sS'\q(\R^n\times \R^d\w)$ by continuity in the usual way.
%For $K\in L\sS'\q(\R^n\times \R^d\w)$ and $\eta\in \sS\q(\R^d\w)$ we write $K\q(\alpha,\cdot\w)*\eta\q(x\w)$ to denote the convolution of $K$ with $\eta$ in the $x\in \R^d$ variable.
Fix $\eta\in \sS\q(\R^d\w)$ satisfying
\begin{equation}\label{EqnResultsEta}
\inf_{\theta\in S^{d-1}} \sup_{\tau>0} \q|\etah\q(\tau \theta\w)\w|>0,
\end{equation}
where $\etah$ denotes the Fourier transform of $\eta$.

\begin{defn}
Let $\eta$ be as in  \eqref{EqnResultsEta}, and $0<\eps\le 1$.

\begin{enumerate}[\upshape (i)]
\item
%(i)
 Define
 %, for $\eps\in [0,1]$,
five semi-norms by
\begin{subequations}\label{Kepsnorms}
\begin{align}
\label{Kepsnormeta1}
\|K\|_{\sK_{\eps,1}^\eta}
&:=\sup_{\substack{1\leq i \leq n \\ t>0 }} \int \q(1+\q|\alpha_i\w|\w)^\eps \LpRdN{2}{\eta * \dil{K}{t}\q(\alpha,\cdot\w)} \: d\alpha ,
\\
\label{Kepsnormeta2}
\|K\|_{\sK^\eta_{\eps,2}}
&:=\sup_{\substack{1\leq i\leq n \\ t>0 \\ 0<h\leq 1 }} h^{-\eps} \int \LpRdN{2}{   \eta*\q[  \dil{K}{t}\q(\alpha+he_i , \cdot\w) - \dil{K}{t}\q(\alpha,\cdot\w) \w]  }\: d\alpha,
\\
\label{Kepsnorm3}
\|K\|_{\sK_{\eps,3}}&:=\sup_{\substack{ 1\leq i \leq n \\ R>0   }} \,\,
 \iint\limits_{R\leq \q|x\w|\leq 2R}  (1+|\alpha_i|)^\eps\q| K\q(\alpha, x\w)  \w|\: dx\: d\alpha,
\\
\label{Kepsnorm4}
\|K\|_{\sK_{\eps,4}}&:=
\sup_{\substack{ 1\leq i \leq n \\ R>0 \\ 0<h\leq 1  }} h^{-\eps}\, \iint\limits_{R\leq \q|x\w|\leq 2R}  \q| K\q(\alpha+he_i, x\w) - K\q(\alpha,x\w) \w|\: dx\: d\alpha,
\\
\label{Kepsnorm5}
\|K\|_{\sK_{\eps,5}}&:=
\sup_{\substack{R>2 \\ y\in \R^d}} R^{\eps}\, \iint\limits_{\q|x\w|\geq R\q|y\w|} \q|K\q(\alpha, x-y\w) - K\q(\alpha,x\w)\w|\: dx\: d\alpha\,.
\end{align}
\end{subequations}

\item
%Let $0<\eps\le 1$. 
The space $\sK_\eps$ is  the  subspace of $L\sS'\q(\R^n\times \R^d\w)$ consisting of those $K$ for which the norm
%$\sK_\eps\subset  \sS\q(\R^n\times \R^d\w)'$ is defined to be those distributions %$K\q(\alpha,x\w)$ such that the  norm 
\begin{equation}\label{EqnResultsKepN}
\|K\|_{\sK_{\eps}}:= \|K\|_{\sK^\eta_{\eps,1}}
+\|K\|_{\sK^\eta_{\eps,2}}
+\|K\|_{\sK_{\eps,3}}
+\|K\|_{\sK_{\eps,4}}
+\|K\|_{\sK_{\eps,5}}
\end{equation}
%\begin{split}\sKN{\eps}{K} :=&\sup_{\substack{1\leq i \leq n \\ t>0 }} \int \q(1+\q|\alpha_i\w|\w)^\eps \LpRdN{2}{\eta * \dil{K}{t}\q(\alpha,\cdot\w)} \: d\alpha 
%\\&+\sup_{\substack{1\leq i\leq n \\ t>0 \\ 0<h\leq 1 }} h^{-\eps} \int \LpRdN{2}{   \eta*\q[  \dil{K}{t}\q(\alpha+he_i , \cdot\w) - \dil{K}{t}\q(\alpha,\cdot\w) \w]  }\: d\alpha
%\\&+ \sup_{\substack{1\leq i\leq n \\R>0 }}\:\: \iint\limits_{ R\leq \q|x\w|\leq 2R } \q(1+\q|\alpha_i\w|\w)^\eps \q|K\q(\alpha,x\w)\w|\: dx\: d\alpha 
%\\&+\sup_{\substack{ 1\leq i \leq n \\ R>0 \\ 0<h\leq 1  }} h^{-\eps} \iint\limits_{R\leq \q|x\w|\leq 2R}  \q| K\q(\alpha+he_j, x\w) - K\q(\alpha,x\w) \w|\: dx\: d\alpha
%\\&+ \sup_{\substack{R>2 \\ y\in \R^d}} R^{\eps} \iint\limits_{\q|x\w|\geq R\q|y\w|} \q|K\q(\alpha, x-y\w) - K\q(\alpha,x\w)\w|\: dx\: d\alpha.
%\end{split}
is finite.
\end{enumerate}
\end{defn}

The definition of   $\sKN{\eps}{\cdot}$ 
depends on a choice of $\eta\in \sS\q(\R^d\w)$ satisfying \eqref{EqnResultsEta}.  However,
the equivalence class of the norm does not depend on the choice, and the constants in the equivalences of different choices of $\eta$ will not depend on $n$. This is made explicit in Lemma \ref{LemmaResultsEtaDoesntMatter} below.
%as noted in the next lemma.
%\footnote{Formulate in next section}

%\begin{cor}\end{cor}
%\begin{proof}This follows immediately from Lemma \ref{LemmaResultsEtaDoesntMatter}.\end{proof}

\begin{example}\label{CJexample}
Let $\epsilon\in (0,1)$ and let $\kappa\in \cS'(\bbR^d)\cap L^1_\loc(\bbR^d\setminus\{0\})$ be a convolution kernel in $\bbR^d$ satisfying 
\Be
\label{kaFT} \|\widehat \ka\|_\infty\le C
\Ee and
\Be
\label{Hoerm}
\sup_{R\ge 2} R^\epsilon \sup_{y\in \bbR^d} \int_{|x|\ge R |y|}|\kappa(x-y)-\kappa(x)| dx \le C.
\Ee
Let $$K(x,\alpha)= \chi_{[0,1]^n}(\alpha) \kappa(x)\,.$$
Then $K\in \sK_{\delta}(\bbR^n\times\bbR^d)$ for $\delta<\epsilon$ and
\Be \label{standardCJex} 
\|K\|_{\sK_\delta}\lc_{\delta,\epsilon} C .
\Ee
The details of \eqref{standardCJex} are left to the reader.
\end{example}

We state a preliminary  version of our boundedness result  (see Theorem \ref{ThmOpResBoundGen} below for a more definitive version).

\begin{thm}\label{MainKepsthm} Let $\eps>0$, $\delta>0$ and $\eta$ as in \eqref{EqnResultsEta}.

(i) There is a constant $C=C(d,\delta,\eps,\eta)$ such that the following statement holds a priori for all kernels in $\sK_\eps$ which also belong to $L^1(\bbR^n\times \bbR^d)$.
The multilinear form 
$$\La[K](b_1,\ldots, b_{n+2})
 = \iiint K(\alpha, x-y)b_{n+2}(x) b_{n+1}(y) 
\prod_{i=1}^n b_i(x-\alpha_i(x-y))\: d\alpha\: dx\: dy,$$
satisfies 
\Be \label{LaKbd}
|\La[K](b_1,\ldots,b_{n+2})|\le C n^2 \log^3\!(1+n) \|K\|_{\sK_\eps}
\prod_{i=1}^{n+2} \|b_i\|_{p_i}\,,
\Ee
for all $b_i\in L^{p_i}(\bbR^d)$, $1+\delta<p_i<\infty$, $\sum_{i=1}^{n+2} p_i^{-1}=1$.

(ii) The multilinear form $(K,b_1,\dots, b_{n+2})\mapsto \La[K](b_1,\ldots, b_{n+2})$
extends to a bounded multilinear form on $\sK_\eps\times L^{p_1}\times\dots\times L^{p_{n+2}}$ satisfying \eqref{LaKbd} for all $K\in \sK_\eps$.
\end{thm}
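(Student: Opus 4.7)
The plan is to prove part (i) under the reductions outlined in \S\ref{towards}, and then deduce part (ii) by a density/continuity argument. First I would combine multilinear complex interpolation (\cite[Theorem 4.4.1]{bergh-lofstrom}) with the adjoint invariance of the family $\{\sK_\eps\}_{\eps>0}$ (to be developed in \S\ref{SectionAdjoints}) to reduce matters to the ``Christ--Journ\'e case'' $p_1=\cdots=p_n=\infty$, $p_{n+1}=p$, $p_{n+2}=p'$ with $1<p<\infty$. Concretely, for each permutation $\vp$ of $\{1,\dots,n+2\}$ the kernel $K^\vp$ defined by $\La[K](b_{\vp(1)},\dots,b_{\vp(n+2)})=\La[K^\vp](b_1,\dots,b_{n+2})$ should lie in some $\sK_{\eps'}$ with $\|K^\vp\|_{\sK_{\eps'}}\le Cn^{A}\|K\|_{\sK_\eps}$ for fixed $A$; applying this to the three distinguished base cases $\vp\in\{\mathrm{id},\vp^i,\vp^{ij}\}$ of \S\ref{towards} and interpolating in the dual parameters $1/p_i$ yields all admissible tuples with only polynomial-in-$n$ degradation.

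The core inequality to be established is therefore
\[
|\La[K](b_1,\dots,b_n,f,g)|\lesssim n^2\log^3(1+n)\,\|K\|_{\sK_\eps}\prod_{i=1}^n\|b_i\|_\infty\,\|f\|_p\|g\|_{p'},
\]
valid for $p\in(1+\delta,\infty)$. This I would prove along the Christ--Journ\'e scheme \cite{cj}, with constants carefully tracked and with the weighted-Besov-type semi-norms $\sK_{\eps,i}$ replacing the classical Calder\'on--Zygmund hypotheses. The main steps, in order, are: (1) a Littlewood--Paley decomposition in the $x$-variable, $K=\sum_j\Dil_{2^j}K_j$, where $\sK_{\eps,1}^\eta$ and $\sK_{\eps,3}$ bound the $L^2(d\alpha\,dx)$ mass of each piece and $\sK_{\eps,5}$ supplies the H\"ormander-type tail estimate needed for the Calder\'on--Zygmund extension from $L^2$ to $L^p$; (2) for a fixed scale, a vector-valued $T(1)$/Cotlar--Stein argument in $x$, with the $L^\infty$-inputs $b_i$ absorbed into Carleson functions (norms $\carl$, $\At$) and the auxiliary operator norms $\Op_\eps$, $\Op_0$ to be defined later; (3) almost-orthogonal summation across scales, the H\"older gain in $|j-k|$ coming from the difference semi-norms $\sK_{\eps,2}^\eta$ and $\sK_{\eps,4}$; (4) passage from the bilinear $L^2\times L^2$ endpoint to $L^p\times L^{p'}$ by standard Calder\'on--Zygmund theory.

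The hard part will be steps (2)--(3): driving the $n$-dependence down to $n^2\log^3(1+n)$, whereas the same scheme in \cite{cj} delivered only $n^{2+\eps}$. For this I would work with the endpoint norm $\fK_0\supset\bigcup_\eps\sK_\eps$ of \S\ref{SectionResultsDec} and aim at the refined bound \eqref{improvedIntroMainBound}, in which the dependence on $\|K\|_{\sK_\eps}/\|K\|_{\fK_0}$ is only logarithmic. The two powers of $n$ then come from, respectively, the telescoping identity reducing $\La[K]$ to a form in which all but one factor is replaced by its mean $m_{x,y}b_i$, and the H\"older loss when comparing $m_{x,y}b_i$ with its value at $y$; the three logarithms come from summing over three essentially independent scale parameters, namely the spatial annular scale, the Carleson stopping-time level for the $L^\infty$ coefficients, and the Calder\'on--Zygmund good/bad threshold. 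Balancing these three sums against one another, rather than using a single cruder H\"older inequality, is the mechanism that replaces $n^{\eps}$ by $\log^3(1+n)$.

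Part (ii) is then a standard density/continuity argument. Given $K\in\sK_\eps$, form the approximants $K_N=\chi_N\cdot(K*_x\psi_N)$, where $\psi_N$ is a smooth approximate identity in the $x$-variable and $\chi_N$ is a smooth cutoff to $\{|\alpha|\le N\}\times\{N^{-1}\le|x|\le N\}$. These lie in $L^1\cap\sK_\eps$ with $\|K_N\|_{\sK_\eps}\le C\|K\|_{\sK_\eps}$ uniformly in $N$, and for Schwartz inputs one verifies $\La[K_N](b_1,\dots,b_{n+2})\to\La[K](b_1,\dots,b_{n+2})$ using $K\in L\sS'$ together with dominated convergence in the $\alpha$-integration. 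The uniform estimate from part (i) then passes to the limit, and extending to arbitrary $b_i\in L^{p_i}$ by density of $\sS$ in each $L^{p_i}$ (all $p_i<\infty$ under the hypothesis $p_i>1+\delta$) completes the proof.
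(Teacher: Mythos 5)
Your reduction of part (i) to the permuted Christ--Journ\'e case $(\infty,\dots,\infty,p,p')$ via adjoint stability of the $\sK_\eps$ family and multilinear complex interpolation is exactly the route the paper takes, and the Christ--Journ\'e scheme you sketch (Littlewood--Paley in $x$, Carleson/$T(1)$ reduction absorbing the $L^\infty$ entries, almost-orthogonality across scales, Calder\'on--Zygmund passage from $L^2$ to $L^p$) is also the engine used in the paper's proof of Theorem~\ref{ThmOpResBound}. Your attribution of the $n^2\log^3$ growth to a telescoping identity and three independent scale parameters is a reasonable heuristic, though in the paper the two powers of $n$ are more precisely traced to the $O(n^2)$ pairs $(l_1,l_2)$ in the decomposition~\eqref{Lasums} together with the $n^2$ loss in Theorem~\ref{ThmOpResAdjoints}, and the three logarithms are distributed across the five pieces of Theorem~\ref{main-parts}. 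So, at the level of strategy, part (i) of your plan is aligned with the paper.

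For part (ii), however, you are taking a genuinely different route, and it has a gap. You propose regularizing $K$ itself by $K_N=\chi_N\cdot(K*_x\psi_N)$ and then passing to the limit using uniform $\sK_\eps$ bounds. Two issues. First, the claim $\|K_N\|_{\sK_\eps}\le C\|K\|_{\sK_\eps}$ uniformly in $N$ is not automatic: the semi-norms $\sK_{\eps,1}^\eta$ and $\sK_{\eps,2}^\eta$ are defined via convolution with $\eta$ in $x$, which does not commute with the multiplicative cutoff $\chi_N$, so boundary commutator terms near $|x|\sim N^{\pm1}$ need to be controlled, and $\sK_{\eps,5}$ penalizes the jumps that a truncation introduces. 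Second, in order to say $\La[K_N]\to\La[K]$ you already need a meaning for $\La[K]$ when $K\notin L^1$, which is precisely what part (ii) is supposed to furnish. The paper sidesteps both issues by \emph{defining} $\La[K]$ as the limit of the partial sums $\sum_{|j|\le N}\La[\vsig_j^{(2^j)}]$ with $\vsig_j=(Q_jK)^{(2^{-j})}$; Proposition~\ref{PropKerDecompK} supplies the uniform bound $\sup_j\|\vsig_j\|_{\cB_\delta}\lesssim\|K\|_{\sK_\eps}$ for free (each $\vsig_j$ lies in $L^1$), and Theorem~\ref{ThmOpResBoundGen}(ii) establishes the convergence in the operator topology. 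I would recommend you replace your ad hoc truncations by these Littlewood--Paley approximants, which are already adapted to all five semi-norms and make the extension argument immediate.
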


The proof of Theorem \ref{MainKepsthm} we will
%Both the formulation of our main result and their proofs rely
 heavily  rely  on a decomposition theorem for the class $\sK_\eps$, to which we now turn. 
 This decomposition will specify further part (ii) of the theorem, i.e. describe  how to extend  the result from part (i) to all kernels in $\sK_\eps$.
 
\subsection{Decomposition of kernels in $\bold{\sK_\eps}$}
\label{SectionResultsDec}
In the following definition $e_1,\ldots, e_n$ will denote 
 the standard basis of $\R^n$.
\begin{defn}\label{DefnKerBesovSpace}
For $n,d\in \N$ and $0\le \eps\leq 1$ we define four (semi-)norms
\begin{subequations}\label{Bepsnorms}
\begin{align}
%\sBN{\eps}{\vsig}
\label{Beps1}
\|\vsig\|_{\cB_{\eps,1} }
:= & \max_{1\leq i\leq n} \iint \q(1+\q|\alpha_i\w|\w)^{\eps} \q|\vsig\q(\alpha, v\w)\w|\: d\alpha \: dv,
\\
\label{Beps2}
\|\vsig\|_{\cB_{\eps,2}}
:=&\sup_{\substack{0<h\leq 1 \\ 1\leq i \leq n }} h^{-\eps} \iint \q| \vsig\q(\alpha+he_i,v\w)-\vsig\q(\alpha,v\w) \w|\: d\alpha\: dv,
\\ 
\label{Beps3}
\|\vsig\|_{\cB_{\eps,3} }
:=& \sup_{0<\q|h\w|\leq 1} \q|h\w|^{-\eps} \iint \q|\vsig\q(\alpha, v+h\w)- \vsig\q(\alpha, v\w)\w|\: d\alpha\: dv,
\\
\label{Beps4}
\|\vsig\|_{\cB_{\eps,4}}
:=&\iint \q(1+\q|v\w|\w)^{\eps} \q|\vsig\q(\alpha,v\w)\w|\: d\alpha\: dv.
\end{align}
\end{subequations}
Let 
$\sBtp{\eps}{\R^n\times \R^d}$  be the space of those
$\vsig\in L^1\q(\R^n\times \R^d\w)$ such that the norm 
\begin{equation}\label{Bepsnorm}
\|\vsig\|_{\cB_{\eps} }:= 
\|\vsig\|_{\cB_{\eps,1}} +
\|\vsig\|_{\cB_{\eps,2}} +
\|\vsig\|_{\cB_{\eps,3} }+
\|\vsig\|_{\cB_{\eps,4} }
\end{equation}
is finite.
%\begin{equation*}\begin{split}
%\sBN{\epsilon}{\vsig}:= & \max_{1\leq i\leq n} \iint \q(1+\q|\alpha_i\w|\w)^{\epsilon} \q|\vsig\q(\alpha, v\w)\w|\: d\alpha \: dv
%+ \sup_{0<\q|h\w|\leq 1} \q|h\w|^{-\epsilon} \iint \q|\vsig\q(\alpha, v+h\w)- \vsig\q(\alpha, v\w)\w|\: d\alpha\: dv
%\\&+\sup_{\substack{0<h\leq 1 \\ 1\leq i \leq n }} h^{-\epsilon} \iint \q| \vsig\q(\alpha+he_i,v\w)-\vsig\q(\alpha,v\w) \w|\: d\alpha\: dv
%+\iint \q(1+\q|v\w|\w)^{\epsilon} \q|\vsig\q(\alpha,v\w)\w|\: d\alpha\: dv.
%\end{split}\end{equation*}
\end{defn}

%\begin{rmk}
For $0<\eps<1$ the space $\sBp{\epsilon}$ is a type of Besov space, hence the notation. See also \S\ref{SectionRoleOfRPnSecond} below.
%\end{rmk}
Recall the notation $\dil{\vsig}{t}\q(\alpha, x\w):= t^d \vsig\q(\alpha, tx\w). $ 
\begin{defn} \label{DefLPdec}
\begin{enumerate}[\upshape (i)]
%(i)
\item  Let   $\phi\in C_0^\infty(\bbR^d)$ such that $\int \phi(x) dx=1$, let
 $Q_j$ denote 
the operator of convolution with $2^{jd}\phi(2^j\cdot) - 2^{(j-1)d}\phi(2^{j-1}\cdot) $. When acting on  $K\in LS'(\bbR^n\times\bbR^d) $,
we define $Q_j K$ by taking  the convolution in 
$\bbR^d$.

%(ii)
 \item Set \Be\label{vsigK}\vsig_j[K]:=   (Q_j K)^{(2^{-j})}.\Ee

%(iii)
\item  For $K\in LS'(\bbR^n\times\bbR^d)$ let
\Be\label{fK0def}
 \|K\|_{\fK_0}= \sup_{j\in \bbZ}\|  \vsig_j [K] 
%(Q_j K)^{(2^{-j})}
\|_{L^1(\bbR^n\times \bbR^d)} \,.
\Ee

%(iv) 
\item Let
 $\fK_\eps$ be the space of all  $K\in LS'(\bbR^n\times\bbR^n)$ such 
 that
$$\|K\|_{\fK_\eps}:= \sup_{j\in \bbZ} \|\vsig_j [K] \|_{\cB_\eps(\bbR^n\times \bbR^d)} $$
is finite.
\end{enumerate}
\end{defn}

%As before, we use the notation $A\lesssim B$ to denote $A\leq C B$, where $C$ is a constant independent of any relevant parameters.
%In what follows, $C$ is allowed to depend on $d$ and $\epsilon$, but not on $n$.

The relation between the spaces $\sK_\eps$ and $\fK_\eps$ is given in the following theorem.
\begin{thm}\label{ThmResDecompKer}
(i) A distribution $K\in L\sS'\q(\R^n\times \R^d\w)$ belongs to
 $\bigcup_{0<\eps<1} \sK_\eps$ if and only if there exists an $\eps>0$ and a bounded set $\q\{\vsig_j : j\in \Z\w\}\subset \sBtp{\eps}{\R^n\times \R^d}$ satisfying  $$\int \vsig_j\q(\alpha, v\w)\: dv=0$$ for all $j$, $\alpha$ and 
\begin{equation*}
K= \sum_{j\in \Z} \dil{\vsig_j}{2^j},
\end{equation*}
holds with  convergence in the topology on $L\sS'\q(\R^n\times \R^d\w)$ (and thus also in the sense of  distributions). %(we will see in Lemma ?? that every such sum converges in the sense of distributions).

(ii) Let $K\in \sK_\eps$. Then for $\delta<\eps$, 
$$\|K\|_{\fK_\delta} \le C_{\delta, \eps,d} \|K\|_{\sK_\eps}\,.$$

(iii) Let $K\in \fK_\ep$. Then for $\delta<\ep/2$
$$\|K\|_{\sK_\delta} \le C_{\delta, \eps,d} \|K\|_{\fK_\ep}\,.$$
\end{thm}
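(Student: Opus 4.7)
I will prove (ii) and (iii) separately and derive (i) as a consequence. The central identity behind both directions is
\[
\vsig_j[K](\alpha,v) \;=\; \bigl(\psi *_v \dil{K}{2^{-j}}(\alpha,\cdot)\bigr)(v),
\qquad \psi := \phi - 2^{-d}\phi(\cdot/2),
\]
where $\psi\in C^\infty_0(\R^d)$ is compactly supported and has mean zero. Thus each block $\vsig_j[K]$ is the convolution of a fixed mean-zero test function with the appropriately rescaled kernel, and for a standard choice of $\phi$ (so that $\widehat\psi$ is nondegenerate on rays) Lemma \ref{LemmaResultsEtaDoesntMatter} lets me take $\eta=\psi$ in $\sK_{\eps,1}^\eta,\sK_{\eps,2}^\eta$, at which point the relevant integrand becomes literally $\|\vsig_j[K](\alpha,\cdot)\|_{L^2}$.

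\textbf{Proof of (ii).} I plan to bound each of the four $\sBp{\delta,k}$ semi-norms of $\vsig_j[K]$ uniformly in $j$ by an appropriate piece of $\|K\|_{\sK_\eps}$ (with $\delta<\eps$). For the weighted $L^1$-norm $\sBp{\delta,1}$, I split the $v$-integral at $|v|\sim 1$: on the compact part, Cauchy--Schwarz in $v$ combined with $\sK_{\eps,1}^\eta$ (with $\eta=\psi$) supplies the bound; on the exterior I use the mean-zero of $\psi$ to write $\vsig_j[K](\alpha,v) = \int\psi(u)[\dil{K}{2^{-j}}(\alpha,v-u)-\dil{K}{2^{-j}}(\alpha,v)]\,du$ and invoke $\sK_{\eps,5}$ after undoing the dilation, combined with $\sK_{\eps,3}$ to absorb the $(1+|\alpha_i|)^\delta$-weight at the loss $\delta<\eps$. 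The $\alpha$-H\"older semi-norm $\sBp{\delta,2}$ is handled identically with $\sK_{\eps,2}^\eta,\sK_{\eps,4}$ in place of their counterparts. For $\sBp{\delta,3}$ (H\"older in $v$), I expand $\vsig_j[K](\alpha,v+h)-\vsig_j[K](\alpha,v)$ as convolution of the still-mean-zero $\psi(\cdot+h)-\psi$ with $\dil{K}{2^{-j}}(\alpha,\cdot)$ and balance smoothness of $\psi$ for small $|h|$ against $\sK_{\eps,5}$ in the tail. Finally $\sBp{\delta,4}$ is handled dyadically in $|v|$: on $|v|\sim 2^l$, compact support of $\psi$ combined with $\sK_{\eps,5}$ produces decay $2^{-l\eps}$ in the unscaled variable $x=2^{-j}v$, so the geometric series in $l$ with weight $(1+2^j|x|)^\delta\sim 2^{l\delta}$ converges because $\delta<\eps$.

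\textbf{Proof of (iii), and (i).} Conversely, given $K\in\fK_\eps$, I reassemble each of the five $\sK_\delta$-norms from the uniform $\sBp\eps$-bounds on $\vsig_j[K]$ via $K=\sum_j\dil{\vsig_j[K]}{2^j}$. The $L^2$-pieces $\sK_{\delta,1}^\eta,\sK_{\delta,2}^\eta$ come from Littlewood--Paley almost-orthogonality combined with $\sBp{\eps,1},\sBp{\eps,2}$. The annular pieces $\sK_{\delta,3},\sK_{\delta,4}$ come from summing the pointwise bounds $|\dil{\vsig_j[K]}{2^j}(\alpha,x)|\lesssim 2^{jd}(1+2^j|x|)^{-(d+\eps)}$ which $\sBp{\eps,4}$ supplies. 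The delicate piece is the H\"ormander condition $\sK_{\delta,5}$: for a given translation $y$ and radius $R>2$ I split $\sum_j$ at the critical scale $2^j|y|\sim 1$, applying $\sBp{\eps,3}$ (H\"older in $v$) on the low-frequency side $2^j|y|\le 1$ and $\sBp{\eps,4}$ (localization in $v$) on the high-frequency side $2^j|y|>1$; a Cauchy--Schwarz between these two Besov-type pieces is precisely what forces the two-fold loss $\delta<\eps/2$ and is the principal technical obstacle of the proof. With (ii) and (iii) in hand, (i) is routine: for the ``only if'' direction take $\vsig_j=\vsig_j[K]$ and observe that $\sum_{|j|\le N}Q_jK=\phi_N*K-\phi_{-N-1}*K$ telescopes to $K$ in $L\sS'$ by approximation of the identity; for ``if'' apply (iii) after identifying the $\vsig_j[K]$ of the given decomposition with the original $\vsig_j$ modulo rapidly decaying almost-orthogonality errors.
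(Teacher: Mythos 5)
Your outline of part (ii) follows the same lines as the paper (which works with $\vsig=Q_0K$ and bounds each $\cB_{\delta,k}$ semi-norm by splitting the $v$-integral into two or three regions, using Cauchy--Schwarz and $\sK_{\eps,1}^\psi,\sK_{\eps,2}^\psi$ on the compact part, $\sK_{\eps,3},\sK_{\eps,4}$ on intermediate annuli, and the mean-zero of $\psi$ plus $\sK_{\eps,5}$ in the tail), so that part of your plan is sound even if you compress the paper's three-way splits into two regions.

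There is a genuine gap in your plan for (iii). You claim that $\cB_{\eps,4}$ "supplies" the pointwise bound $|\vsig_j^{(2^j)}(\alpha,x)|\lesssim 2^{jd}(1+2^j|x|)^{-(d+\eps)}$ and propose to sum these over $j$ to obtain the annular conditions $\sK_{\delta,3},\sK_{\delta,4}$. That bound is simply not implied by membership in $\cB_\eps(\R^n\times\R^d)$: the norms in \eqref{Bepsnorms} control weighted $L^1$ quantities and first-order $L^1$ moduli of continuity in $\alpha$ and $v$, and nothing forces $\vsig_j$ to decay pointwise — it need not even be bounded. The difficulty shows up concretely in $\sK_{\delta,3}$: writing $K=\sum_j\vsig_j^{(2^j)}$ and unscaling the annulus $R\leq|x|\leq 2R$ to $2^jR\leq|v|\leq 2^{j+1}R$, the contributions with $2^jR\gg 1$ are controlled by $\cB_{\eps,4}$, but when $2^jR\ll 1$ the annulus has tiny measure and an $L^1$ density $\vsig_j(\alpha,\cdot)$ can concentrate all its mass there. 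The paper handles this by a second Littlewood--Paley decomposition of each $\vsig_j$ \emph{in the $v$-variable}, writing $\vsig_{j,l}=\chi_l*\vsig_j$ with $\widehat\chi_l$ supported where $|\xi|\approx 2^l$, then setting $K_l=\sum_j\vsig_{j,l}^{(2^j)}$ and summing $\|K_l\|_{\sK_\delta}\lesssim 2^{-l(\eps-2\delta)}$. The frequency localization is what produces a Bernstein-type bound $\|\vsig_{j,l}(\alpha,\cdot)\|_\infty\lesssim 2^{ld}\|\vsig_j(\alpha,\cdot)\|_1$ needed to control the small-annulus regime, and it is also needed in the $\sK_{\delta,1}^\eta,\sK_{\delta,2}^\eta$ estimates (there the cancellation $\int\vsig_{j,l}\,dv=0$ and the three-way dichotomy $2^jt\gtrless 2^l,2^{-2l}$ come into play). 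Your appeal to "Littlewood--Paley almost-orthogonality" does not substitute for this: the orthogonality relevant here is the interaction between the scale $2^j$ of the outer dilation and the frequency scale $2^l$ inside each block. Finally, the loss $\delta<\eps/2$ in the paper's argument comes from summing the $j$- and $l$-indices in the $\sK_{\delta,5}$ estimate (the decisive term is $R^{\delta-1}2^{(2-\eps)l}$ when $R\geq 2^{2l}$), not from a "Cauchy--Schwarz between Besov pieces" as you assert. As written, your argument for (iii) — and hence the "if" direction of (i) — would not close.
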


\subsection{Boundedness of multilinear forms}
For any  $\vsig\in \cB_\eps(\R^n\times\bbR^d)$
% (in particular for $\vsig\in \cB_\eps$) 
and for $b_i\in L^{p_i}(\R^d)$ with $\sum_{i=1}^{n+2}p_i^{-1}=1$ 
the multilinear form
$$
\La [\vsig](b_1,\dots, b_{n+2})=
\iiint {\vsig}\q(\alpha, x-y\w)b_{n+2}\q(x\w)b_{n+1}\q(y\w)  \prod_{i=1}^n b_i\q(x-\alpha_1\q(x-y\w))
\: dx\: dy\: d\alpha
$$
is well defined;  more precisely we have
% by an application of H\"older's inequality.
\begin{lemma}\label{LemmaBasicL2BasicLpEstimate}
Let $\vsig\in L^1\q(\R^n\times \R^d\w)$.  Suppose for $1\leq l\leq n+2$, $b_i\in L^{p_i}\q(\R^d\w)$ with $\sum_{i=1}^{n+2} p_i^{-1}=1$.
Then, for all $ j\in \Z$,
\begin{equation*}
\big|\La[\vsigj](b_1,\ldots, b_{n+2})\big|\leq \|\vsig\|_{L^1(\bbR^n\times\bbR^d)} \prod_{i=1}^{n+2} 
\|b_i\|_{p_i}\,.
\end{equation*}
\end{lemma}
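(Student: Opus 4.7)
The plan is to reduce the estimate to a straightforward application of the generalized H\"older inequality after a convenient change of variables. First I would change variables $v=x-y$ (with $y=x-v$) in the inner double integral, so that
\[
\La[\vsig](b_1,\ldots,b_{n+2})=\iint\vsig(\alpha,v)\Bigl(\int b_{n+2}(x)\,b_{n+1}(x-v)\prod_{i=1}^n b_i(x-\alpha_i v)\,dx\Bigr)d\alpha\,dv.
\]
For fixed $(\alpha,v)$, the inner $x$-integral is a product of $n+2$ translates of the functions $b_1,\dots,b_{n+2}$. Since $\sum_{i=1}^{n+2}p_i^{-1}=1$, the generalized H\"older inequality together with the translation invariance of the $L^{p_i}$-norms bounds this inner integral by $\prod_{i=1}^{n+2}\|b_i\|_{p_i}$, uniformly in $(\alpha,v)$.

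Next I would handle the dilation $\vsig_j=\vsig^{(2^j)}$, defined by $\vsig^{(2^j)}(\alpha,v)=2^{jd}\vsig(\alpha,2^j v)$. Either one observes that the argument above applied to the pointwise bound is unchanged under the substitution $w=2^jv$ (which only rescales the translates appearing in the H\"older step, preserving the $L^{p_i}$-norms), or, equivalently, one notes that $\|\vsig^{(2^j)}\|_{L^1(\R^n\times\R^d)}=\|\vsig\|_{L^1(\R^n\times\R^d)}$ and the previous pointwise estimate applies directly to $\vsig^{(2^j)}$ in place of $\vsig$. Either way one obtains
\[
\bigl|\La[\vsig^{(2^j)}](b_1,\ldots,b_{n+2})\bigr|\le\iint|\vsig^{(2^j)}(\alpha,v)|\,d\alpha\,dv\cdot\prod_{i=1}^{n+2}\|b_i\|_{p_i}=\|\vsig\|_{L^1}\prod_{i=1}^{n+2}\|b_i\|_{p_i}.
\]

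There is no real obstacle here; the only point to check carefully is that Fubini applies, i.e.\ that the triple integral of the absolute value is finite so that rearranging the order of integration and performing the change of variables $v=x-y$ is legitimate. This is justified \emph{a priori} by assuming $b_i\in L^{p_i}\cap L^\infty$ with compact support (dense in $L^{p_i}$ for $p_i<\infty$, with the $p_i=\infty$ entries handled trivially), running the argument above, and then extending to general $b_i\in L^{p_i}$ by multilinear density. Since $\vsig\in L^1(\R^n\times\R^d)$ is given, once H\"older's inequality is applied inside, Tonelli's theorem yields the required absolute integrability, closing the argument.
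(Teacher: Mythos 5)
Your proof is correct and is precisely the argument the paper intends: after the change of variables $v=x-y$, the inner $x$-integral is bounded uniformly in $(\alpha,v)$ by $\prod_i\|b_i\|_{p_i}$ via the generalized H\"older inequality and translation invariance, and integrating against $|\vsig|$ (together with $\|\vsig^{(2^j)}\|_{L^1}=\|\vsig\|_{L^1}$) finishes the proof. The paper's proof is simply the one-line remark ``This follows easily by H\"older's inequality,'' so you have supplied exactly the intended details; the Fubini concern can be handled even more directly by applying Tonelli to the nonnegative integrand (which your H\"older bound already shows has finite iterated integral), without passing through density.
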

\begin{proof}
This follows easily  by H\"older's inequality.
\end{proof}

Theorem  \ref{ThmResDecompKer} suggests to define 
the form  $\La[K]$, for $K\in \sK_\eps$, 
as the  limit of partial sums
\Be  \label{EqnOpResBasicOp}
\sum_{j=-N}^N \La[\vsig_j^{(2^j)}](b_1,\dots, b_{n+2}) \Ee as $N\to \infty$.

%Fix $\epsilon>0$.  For $\vsig\in \sBtp{\epsilon}{\R^n\times \R^d}$ and $j\in \Z$ and functions $a_1,\ldots, a_{n+2}$ on $\R^d$ define
%\begin{equation*}\begin{split}&\Wrp{j}{\vsig}{a_1,\ldots, a_{n+2}}:=\\&\iiint \dil{\vsig}{2^j}\q(\alpha, x-y\w) a_1\q(x-\alpha_1\q(x-y\w)\w)\cdots a_n\q(x-\alpha_n\q(x-y\w)\w) a_{n+1}\q(y\w) a_{n+2}\q(x\w)\: dx\: dy\: d\alpha. \end{split}\end{equation*}

%Let $\q\{\vsig_j : j\in \Z\w\}\subset \sBp{\epsilon}$ be a bounded set satisfying $\int\vsig_j \: dv=0$ $\forall j$.  In light of Theorem \ref{ThmResDecompKer} our main operator formally takes the form
%\begin{equation}\label{EqnOpResBasicOp}  T\q(a_1,\ldots, a_{n+2}\w) = \sum_{j\in \Z} \Wrp{j}{\vsig_j}{a_1,\ldots, a_{n+2}}. \end{equation}

%Our first proposition makes sense of the formula \eqref{EqnOpResBasicOp}.

%%\begin{prop}\label{PropOpResSumConvg}Fix $a_1,\ldots, a_n\in L^\infty\q(\R^d\w)$.  Let $p\in \q(1,\infty\w)$ and $p'$ be dual to $p$.  For each $j\in \Z$ define an operator
%$S_j:L^p\q(\R^d\w)\rightarrow L^p\q(\R^d\w)$ by for $f\in L^p\q(\R^d\w)$, $g\in L^{p'}\q(\R^d\w)$,
%\begin{equation*}\int g\q(x\w) \q(S_j f\w)\q(x\w)\: dx = \Wrp{j}{\vsig_j}{a_1,\ldots, a_n,f,g}.
%\end{equation*}Then the sum $\sum_{j\in \Z} S_j$ converges in the strong operator topology as operators $L^p\rightarrow L^p$.  (??SHOULD THIS JUST BE $p=2$??)
%\end{prop}
%\begin{rmk}
%We do not prove Proposition \ref{PropOpResSumConvg} separately--it follows from the proof of Theorem \ref{ThmOpResBound}, below.
%\end{rmk}

%============================

Our main boundedness result (a sharper version of 
 Theorem \ref{MainKepsthm}) is
%Our other main theorem involves the boundedness of $T$.  We have the following result.
\begin{thm}\label{ThmOpResBoundGen}
 Let $0<\delta<1$, let  $p_1,\ldots, p_{n+2}\in [1+\delta,\infty]$ with $\sum_{l=1}^{n+2} p_l^{-1}=1$.
% and let $\delta>0$$p_0=\min_{1\leq l\leq n+2} p_l$.  

(i) Let $\fI$ be a finite subset of $\bbZ$ and let $\{\vsig_j: j\in \fI \}$ be a subset of $\cB_{\eps}(\bbR^n\times\R^d)$ so that for every $j\in \fI$, 
$\int \vsig_j(\alpha,x) \,dx=0$ for almost all $\alpha\in \bbR^n$. 
Let $$K_\fI=\sum_{j\in \fI} \vsig_j^{(2^j)}\,.$$ Then
for  $b_l\in L^{p_l}\q(\R^{d}\w)$
we have
\begin{equation*}
\q|\La[K_\fI]\q(b_1,\ldots, b_{n+2}\w)\w|\leq C_{\epsilon,d,\delta} 
n^2 \q\big(\sup_{j\in \Z} \|\vsig_j\|_{L^1(\R^{n+d})} \w\big) \log^3\!\q\Big(2 + n\frac{\sup_{j\in \Z} \sBN{\epsilon}{\vsig_j}} {\sup_{j\in \Z} \sBzN{\vsig_j}}\w\Big) \prod_{l=1}^{n+2} \|b_l\|_{p_l}
\end{equation*}
where the constant $C_{\epsilon,d, \delta} $ 
is independent of $n$ and $\fI$.

(ii) Let $K\in \sK^\eps$ so that $K=\sum_{j\in \bbZ} \vsig_j^{(2^j)}$ in $L\cS'(\bbR^n\times\bbR^n)$ with $\int \vsig_j(\alpha,x) dx=0$ for almost all $\alpha\in \bbR^n$.
Let $\sup_j \|\vsig_j\|_{\cB_\eps}<\infty$,
$b_1\in L^{p_1}$, ..., 
$b_{n+2}\in L^{p_{n+2}}$. Then 
% the sequence
$\sum_{j=-\infty}^\infty  \La[\vsig_j^{(2^j)}] $ converges 
%(f_1,\dots, f_{n+2})$ 
%$\La[\sum_{j=-N}^N \vsigma_j^{(2^j)}] (f_1,\dots, f_{n+2})$ 
in the  operator topology of $(n+2)$-linear functionals to a limit $\La[K]$ 
satisfying
\begin{equation*}
\q|\La[K]\q(b_1,\ldots, b_{n+2}\w)\w|\leq C_{p_0,\epsilon,d} 
n^2 \q\|K\|_{\fK_0} \log^3\!\q\Big(2 + n\frac{\|K\|_{\fK_\eps}} {\|K\|_{\fK_0}} 
\w\Big) \prod_{l=1}^{n+2} \|b_l\|_{p_l}\,.
\end{equation*}
\end{thm}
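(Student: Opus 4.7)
The plan is to prove Theorem \ref{ThmOpResBoundGen} by first establishing part (i) for a single ``canonical'' exponent configuration and then propagating it to the general case by symmetry and interpolation; part (ii) then follows from part (i) by a density/uniform-bound argument.

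First, for part (i), I would argue that it suffices to prove the bound in the canonical configuration $p_1=\cdots=p_n=\infty$ and $p_{n+1}=p$, $p_{n+2}=p'$ with $p\in(1+\delta,2]$. To descend from arbitrary admissible $(p_1,\dots,p_{n+2})$ to this case, I proceed in two sub-steps. (a) The adjoint/permutation analysis of Section \ref{SectionAdjoints} shows that for any permutation $\vp$ of $\{1,\dots,n+2\}$ the kernel $K^\vp$ representing the permuted form $\La[K](b_{\vp(1)},\dots,b_{\vp(n+2)})$ lies in $\sK_{\eps'}$ for some $\eps'\le\eps$ with norm controlled polynomially in $n$ by $\|K\|_{\sK_\eps}$; applying the canonical case to $K^\vp$ therefore yields the bound in every ``adjoint configuration'' where at most two exponents differ from $\infty$. (b) Multilinear complex interpolation \cite[Theorem 4.4.1]{bergh-lofstrom} among the resulting finitely many extremal patterns then delivers the bound for all admissible $(p_1,\dots,p_{n+2})\in[1+\delta,\infty]^{n+2}$.

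For the canonical case itself, which is the heart of the argument, I would follow the Christ--Journ\'e blueprint as indicated in Section \ref{SectionOutline}. Decompose $K_\fI=\sum_{j\in\fI}\vsig_j^{(2^j)}$; each dyadic piece has the cancellation $\int\vsig_j(\alpha,x)\,dx=0$, which allows almost-orthogonal estimates. The individual pieces are controlled via the Schur, singular integral, annular and Carleson classes listed in Selected Notation; the polynomial bound $n^2$ (rather than exponential) is extracted by exploiting the tensor-like structure of the $n$ $L^\infty$-slots, along the lines of \cite{cj} but with the $\sK_\eps$ generality. This part of the argument a priori produces an estimate of the form
\begin{equation*}
|\La[K_\fI](b_1,\dots,b_{n+2})|\lesssim n^2\sup_{j\in\fI}\|\vsig_j\|_{\cB_\eps}\prod_i\|b_i\|_{p_i}.
\end{equation*}
To upgrade this to a bound in the weaker norm $\sup_j\|\vsig_j\|_{L^1}$ with only $\log^3$-dependence on the ratio $\sup_j\|\vsig_j\|_{\cB_\eps}/\sup_j\|\vsig_j\|_{L^1}$, I interpolate logarithmically between this estimate and the trivial estimate of Lemma \ref{LemmaBasicL2BasicLpEstimate} (which is bounded by $|\fI|\sup_j\|\vsig_j\|_{L^1}$). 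Splitting $\fI$ at an optimally chosen threshold scale, one uses the refined estimate on scales where $\vsig_j$ is $\cB_\eps$-controlled and the trivial $L^1$ bound on a logarithmically short range of scales, which accounts for the three $\log$-factors entering through the three components of the triple decomposition.

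For part (ii), the uniform-in-$\fI$ bound from (i) guarantees that the net of partial sums $\{\sum_{j\in\fI}\La[\vsig_j^{(2^j)}]:\fI\subset\Z\text{ finite}\}$ is equicontinuous on $L^{p_1}\times\cdots\times L^{p_{n+2}}$. To upgrade equicontinuity to convergence in the operator topology I would show convergence on a dense subclass (say, $\CziRd^{n+2}$): the high-frequency tail $j\to+\infty$ decays because the cancellation $\int\vsig_j(\alpha,x)\,dx=0$ combines with the smoothness of the test functions, while the low-frequency tail $j\to-\infty$ decays from the moment bound implicit in $\|\vsig_j\|_{\cB_{\eps,4}}$ together with the decay of the test functions. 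The limiting form $\La[K]$ inherits the bound by density. The main obstacle will be the sharp $n^2$-tracking in the canonical case: keeping polynomial rather than exponential dependence on $n$ throughout the Christ--Journ\'e machinery, now deployed in the considerably broader $\sK_\eps$ setting, is the principal technical difficulty of the paper.
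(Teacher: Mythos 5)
Your reduction strategy for part (i)---first the canonical case $p_1=\cdots=p_n=\infty$, $p_{n+1}\in(1,2]$, $p_{n+2}=p'$, then propagation by the adjoint/permutation theorem (\S\ref{SectionAdjoints}) to the finitely many two-finite-exponent configurations, then multilinear complex interpolation over the convex hull---is exactly the paper's route (\S\ref{interpolsect} even invokes Minkowski's extreme-point theorem to identify the permuted configurations as the vertices of the admissible polytope). The part (ii) argument via uniform bounds from (i) plus convergence tested against a dense class is also broadly the right shape.

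However, your explanation of where the $\log^3$ comes from is incorrect in a way that matters. You propose to first get a bound $\lesssim n^2 \sup_j\|\vsig_j\|_{\cB_\eps}\prod\|b_i\|_{p_i}$ (with no log) and then interpolate against the trivial bound of Lemma~\ref{LemmaBasicL2BasicLpEstimate} by splitting $\fI$ at an optimal threshold scale. This cannot work: the trivial bound for the piece $\vsig_j^{(2^j)}$ is $\sup_j\|\vsig_j\|_{L^1}$ \emph{per $j$}, with no decay in $j$, so summing the trivial bound over any subinterval of $\fI$ of length $L$ costs a factor $L$, not $\log L$. There is no natural ``$\cB_\eps$-controlled range'' of $j$ outside of which the pieces become small --- the family $\{\vsig_j\}$ is only assumed \emph{uniformly} bounded in $\cB_\eps$, and the full $j$-sum requires cancellation, not smallness. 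In the actual proof the logarithms arise entirely inside the canonical-case estimate of Theorem~\ref{ThmOpResBound}: the partial sum $\sum_j$ is reorganized using the auxiliary Littlewood--Paley parameters $k_1,k_2$ from the expansions $I-P_j=\sum_{k>0}Q_{j+k}$ and $I=\sum_m Q_{j+m}$ (\S\S\ref{SectionOutline},\ref{Sectionpartrough}--\ref{Sectionpartallpts}), where one genuinely has exponential decay $2^{-ck\eps}n\Gamma_\eps$ from the $L^2$ smoothing estimates (Theorem~\ref{ThmBasicL2Result}) competing against a trivial $O(1)$ bound in $k$; optimizing in $k$ over Lemma~\ref{LemmaBasicSum} and the Calder\'on--Zygmund machinery of Corollaries~\ref{PropWeakTypeWithQ}--\ref{CorWeakTypeWithQ} produces one logarithm per such parameter sum. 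The $\log^3$ is the worst power appearing among the five pieces $\La^1_{l_1,l_2},\La^2_l,\La^3$ of Theorem~\ref{main-parts} (parts \ref{ItemBoundTwoPts}--\ref{itemBoundAllPts} give $\log^3$; part \ref{ItemBoundRoughest} only $\log^2$), not three logs from three ``components of a triple decomposition.'' You would need to redo your canonical-case argument to introduce the auxiliary $k$-decomposition and carry both the $L^1$ and $\cB_\eps$ norms simultaneously through it, rather than attempting an \emph{a posteriori} logarithmic interpolation over $\fI$.
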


We now turn to the multilinear forms defined by adjoint operators.  More generally, given a permutation $\vp$ on $\{1,\dots,n+2\}$ we define the multilinear form $\La^\vp[\vsig]$ by
\Be\label{Lavpdef}
\La^\vp[\vsig](b_1,\dots, b_{n+2})= 
\La[\vsig] (b_{\vp\q(1\w)},\ldots, b_{\vp\q(n+2\w)}) \,.
\Ee

We have the following crucial result which will be proved in \S\ref{SectionAdjoints}. It shows that 
operators of the form \eqref{EqnOpResBasicOp}, and their limits as $N\to \infty$,
 are closed under adjoints.
%\begin{rmk}\label{RmkResultsTheNormsInTermsOfVsig}In light of Theorem \ref{ThmOpResAdjoints}, operators of the form \eqref{EqnOpResBasicOp} are closed under adjoints.
%\footnote{Needs to be modified}

\begin{thm}\label{ThmOpResAdjoints}
Let $\epsilon>0$.  There exists 
$\epsilon' \ge c(\epsilon) $ (independent of $n$) such that for 
any permutation $\vp$ of $\q\{1,\ldots, n+2\w\}$ there exists a bounded  linear transformation $\ell_\vp: \cB_{\epsilon}(\R^n\times \R^d)\to \cB_{\epsilon'}(\R^n\times \R^d)
$ with
$$(\ell_\vp \vsig)^{(t)}=\ell_\vp(\vsig^{(t)}), \quad t>0,$$ and 
\begin{equation*}
\La^\vp[\vsig]
% (f_{\vp\q(1\w)},\ldots, f_{\vp\q(n+2\w)}) 
= \La[\ell_\vp\vsig]\,,
%(f_1,\ldots, f_{n+2}).
\end{equation*}
 such that 
 $$\|\ell_\vp\vsig\|_{\cB_{\epsilon'}}\lesssim n^{2} \sBN{\epsilon}{\vsig}$$ 
 and
 $$\|\ell_\vp\vsig\|_{L^1} = \|\vsig\|_{L^1}.$$
%$\sBzN{\vsig_\vp}=\sBzN{\vsig}$.  Here, the implicit constant
%The implicit constants do not depend on $n$.   
Furthermore, if $\int \vsig\q(\alpha,v\w)\: dv=0$ a.e. 
%, for almost all $\alpha\in\R^n$  
then also
$\int \ell_\vp\vsig\q(\alpha,v\w)\: dv=0$ a.e.
%for almost all $\alpha\in\R^n$.
\end{thm}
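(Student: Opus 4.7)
The plan is to construct $\ell_\vp$ explicitly via an affine change of variables on the line through the anchor points $x$ and $y$, reducing the general case to a handful of canonical permutations.

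\emph{Reduction.} Every permutation $\vp$ of $\{1,\dots,n+2\}$ factors as $\vp=\pi\circ\tau$, where $\pi$ permutes $\{1,\dots,n\}$ (fixing $n+1$ and $n+2$) and $\tau$ is one of: the identity, the flip $n+1\leftrightarrow n+2$, a transposition $\vp^i$ (swapping some $i\le n$ with $n+1$ or $n+2$), or a double transposition $\vp^{ij}$ as in \S\ref{towards}. The map $\ell_\pi$ is simply the induced permutation of the $\alpha$-coordinates of $\vsig$, and is an isometry on each $\cB_\eps$ because every semi-norm in \eqref{Bepsnorms} is symmetric in the $\alpha_i$'s. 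The flip corresponds to $(\alpha,v)\mapsto(\bbone-\alpha,-v)$, a bounded map on $\cB_\eps$ with norm $\lesssim 1$. Hence it suffices to treat $\vp=\vp^i$ and $\vp=\vp^{ij}$.

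\emph{Change of variables.} For $\vp^i$ I would substitute $x'=x$ and $y'=(1-\alpha_i)x+\alpha_i y$ into the defining integral of $\La^{\vp^i}[\vsig]$. A direct computation gives $x-y=\alpha_i^{-1}(x'-y')$, $(1-\alpha_j)x+\alpha_j y=(1-\alpha_j/\alpha_i)x'+(\alpha_j/\alpha_i)y'$ for $j\ne i$, and $y=(1-\alpha_i^{-1})x'+\alpha_i^{-1}y'$. Setting $\alpha_i'=1/\alpha_i$ and $\alpha_j'=\alpha_j/\alpha_i$ for $j\ne i$, the $(x,y)$-Jacobian $|\alpha_i|^{-d}$ combines with the $\alpha$-Jacobian $|\alpha_i|^{n+1}$ to yield
\begin{equation*}
(\ell_{\vp^i}\vsig)(\alpha',v') \;=\; |\alpha_i'|^{d-n-1}\,\vsig\bigl(\wt\alpha(\alpha'),\,\alpha_i'v'\bigr),\qquad \wt\alpha_i=\tfrac{1}{\alpha_i'},\ \wt\alpha_j=\tfrac{\alpha_j'}{\alpha_i'}\ (j\ne i).
\end{equation*}
The case $\vp^{ij}$ is analogous, with pivot $\alpha_i-\alpha_j$ in place of $\alpha_i$. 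The preservation of $\|\cdot\|_{L^1}$, the dilation relation $(\ell_\vp\vsig)^{(t)}=\ell_\vp(\vsig^{(t)})$ (which holds because the transformation is linear in $(x,y)$ and the $\alpha$-part does not involve $(x,y)$), and the cancellation $\int(\ell_\vp\vsig)(\alpha',v')\,dv'=0$ all follow immediately from this formula.

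\emph{The main obstacle: semi-norm bounds.} Pulling each of the four semi-norms of $\cB_{\eps'}$ back through the change of variables produces integrals of $|\vsig(\alpha,v)|$ against weights of the form $(1+|\alpha_i|^{-1})^{\eps'}$ or $(1+|\alpha_j|/|\alpha_i|)^{\eps'}$ (from $\cB_{\eps',1}$), moduli of continuity in $v$ dilated by $|\alpha_i|^{\eps'}$ (from $\cB_{\eps',3}$), and analogous expressions for $\cB_{\eps',2}$ and $\cB_{\eps',4}$. On the region $|\alpha_i|\gtrsim 1$ (resp.\ $|\alpha_i-\alpha_j|\gtrsim 1$ for $\vp^{ij}$), these weights are controlled directly by the corresponding $\cB_\eps$ seminorms of $\vsig$ with loss $\eps'\le\eps/2$. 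The delicate region is $|\alpha_i|\ll 1$, where one must absorb the singular factor $|\alpha_i|^{-\eps'}$; this is done via a Besov-type embedding inequality $\int|\alpha_i|^{-\eps'}|\vsig|\,d\alpha\,dv\lesssim\|\vsig\|_{\cB_\eps}$ (valid for $\eps'<\eps/2$), which follows from a dyadic decomposition of $\vsig$ into strips $|\alpha_i|\sim 2^{-k}$ combined with the $\alpha_i$-regularity semi-norm $\|\vsig\|_{\cB_{\eps,2}}$ and the $L^1$ bound inherent in $\cB_\eps$. Since each defining semi-norm of $\cB_{\eps'}$ is a supremum over a single index $i$ (or pair $(i,j)$ for $\vp^{ij}$), summing the resulting bounds over the $n$, respectively $\binom{n}{2}$, choices produces the claimed polynomial factor $\lesssim n^2$.
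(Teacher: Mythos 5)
Your setup and change of variables are sound: the reduction to $\pi\circ\tau$, the formula
$(\ell_{\vp^i}\vsig)(\alpha',v')=|\alpha_i'|^{d-n-1}\vsig(\wt\alpha(\alpha'),\alpha_i'v')$ with $\wt\alpha_i=1/\alpha_i'$ and $\wt\alpha_j=\alpha_j'/\alpha_i'$, the dilation commutation, the $L^1$ isometry, and the preservation of cancellation are all correct and essentially what the paper computes in Case~(iii). (In fact the paper's displayed formula~\eqref{elltranspos} drops the Jacobian factor $|\alpha_1|^{d-n-1}$ that both the calculation just above it and your formula correctly contain; the subsequent factorization accounts for it.)

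However, the final paragraph has a genuine gap, and it is exactly at the hardest point. You assert that the $n^2$ factor arises by ``summing the resulting bounds over the $n$, respectively $\binom n2$, choices'' of the supremum index in the seminorms~\eqref{Bepsnorms}. But these seminorms are \emph{suprema}, not sums, over $i$; taking a sup over $n$ indices contributes no factor of $n$. The real source of the $n$-growth is the $\cB_{\eps',2}$ modulus of continuity of $\ell_{\vp^i}\vsig$ \emph{in the pivot variable $\alpha_i'$} — the one case you fold into ``analogous expressions.'' Shifting $\alpha_i'$ by $h$ simultaneously perturbs the inversion $\wt\alpha_i=1/\alpha_i'$, all $n-1$ ratios $\wt\alpha_j=\alpha_j'/\alpha_i'$, the scaling $\alpha_i'v'$ of the $v$-argument, and the power prefactor $|\alpha_i'|^{d-n-1}$. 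Estimating the resulting difference requires a telescoping (Leibniz-type) decomposition into about $n$ one-variable differences, each controlled by a single modulus-of-continuity seminorm of $\vsig$; the number of telescoping terms is precisely what produces the factor $n$. This is the content of the paper's Lemma~\ref{e1differencelemma} (and its auxiliary Lemma~\ref{auxiltwo}), which lies inside Proposition~\ref{PropAdjMultis1}, giving $\fBN{\delta}{M\gamma}\lesssim n\fBN{\eps}{\gamma}$ for the multiplicative shear. The paper then obtains the overall $n^2$ not from a count over index pairs, but because a general permutation requires at most \emph{two} applications of the $\vp^i$-type transposition in its factorization, each costing a factor $n$.

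Architecturally, the paper avoids analyzing the raw change of variables directly by factorizing $\ell_\vp=J\circ\wt M_d\circ J\circ M_{n-1}\circ J$ into a one-variable inversion $J$ (Proposition~\ref{PropAdjInvts1}, no $n$-loss) and multiplicative shears $M$ (Proposition~\ref{PropAdjMultis1}, loss $n$). This isolates the $n$-dependence cleanly in a single lemma and avoids having to treat $\vp^{ij}$ as a separate elementary case as you propose — it is simply a composition of two $\vp^i$'s. Your Besov-embedding remark (to absorb $|\alpha_i|^{-\eps'}$ on the region $|\alpha_i|\ll1$) is the right tool and matches the paper's Lemma~\ref{Sobemb}, but it only addresses the weight seminorms $\cB_{\eps',1},\cB_{\eps',4}$ and the $v$-modulus $\cB_{\eps',3}$; it does not touch the coupled multi-variable $\alpha$-modulus that dominates the estimate. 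To close the gap you need to carry out the telescoping estimate for the $\cB_{\eps',2}$ seminorm in the pivot direction and correct the accounting that yields $n^2$.
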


%\begin{rmk}\label{RmkResultsTheNormsInTermsOfVsig}
%In light of Theorem \ref{ThmOpResAdjoints}, operators of the form \eqref{EqnOpResBasicOp} are closed under adjoints.
%\footnote{Needs to be modified}
%This is the rigorous version of Theorem \ref{ThmInformalAdjoint}--here, $\sup_{j\in \Z} \sBN{\epsilon}{\vsig_j}$ plays the role of $\sKN{\epsilon}{K}$ and $\sup_{j} \sBzN{\vsig_j}$ plays the role of $\sKN{0}{K}$.
%\end{rmk}

%As in Remark \ref{RmkInformalAdjoints} and in
In  light of Theorem \ref{ThmOpResAdjoints},
 the result in Theorem \ref{ThmOpResBoundGen}
 is closed under taking adjoints, and therefore follows from
the following result and complex interpolation (see \S\ref{interpolsect}).
\begin{thm}\label{ThmOpResBound}
Let  $\delta>0$, $b_1,\ldots, b_n\in L^\infty\q(\R^d\w)$, $p\in \q[1+\delta,2\w]$, and let $p'=p/(p-1)$.
 For $b_{n+1}\in L^{p}\q(\R^d\w)$, $b_{n+2}\in L^{p'}\q(\R^d\w)$
we have  
\begin{equation*}
\q|\La[K]\q(b_1,\ldots, b_{n+2}\w)\w|\leq C_{\epsilon,d,\delta}n^2  \q \sup_{j\in \Z} \sBzN{\vsig_j}\w\, \log^3\!\q\big(2 + n\frac{\sup_{j\in \Z} \sBN{\epsilon}{\vsig_j}} {\sup_{j\in \Z} \sBzN{\vsig_j}}\w\big) \q\Big(\prod_{l=1}^{n} \|b_l\|_\infty\Big) \|b_{n+1}\|_p \|b_{n+2}\|_{p'}.
\end{equation*}
\end{thm}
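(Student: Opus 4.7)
The plan is to recast the multilinear form as a bilinear pairing: define
$$Tf(x) := \iint K(\alpha, x-y)\, f(y) \prod_{i=1}^n b_i(x-\alpha_i(x-y))\, d\alpha\, dy,$$
so that $\La[K](b_1,\dots,b_{n+2}) = \langle Tb_{n+1}, b_{n+2}\rangle$. The claim is then equivalent to the operator-norm bound $\|T\|_{L^p\to L^p}\lesssim n^2 A \log^3(2+nB/A)\prod_{i=1}^n\|b_i\|_\infty$ for $p\in[1+\delta,2]$, where $A=\sup_j\sBzN{\vsig_j}$ and $B=\sup_j\sBN{\epsilon}{\vsig_j}$. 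By Marcinkiewicz interpolation it suffices to treat the $L^2$ endpoint together with some estimate on a weak-type counterpart near $L^1$.

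For the $L^2$ endpoint I would decompose $T=\sum_{j\in\Z}T_j$ along the dyadic scales furnished by Theorem \ref{ThmResDecompKer}, with $T_j$ carrying the rescaled kernel $\vsig_j^{(2^j)}$. The hypothesis $\int\vsig_j(\alpha,v)\,dv=0$ supplies the cancellation needed for a Cotlar--Stein almost-orthogonality argument: the Besov-type semi-norms \eqref{Beps2} and \eqref{Beps3} translate into off-diagonal decay of the form
$$\|T_jT_k^*\|_{L^2\to L^2}+\|T_j^*T_k\|_{L^2\to L^2}\lesssim P(n)\cdot 2^{-\epsilon'|j-k|}\,\big(A\,\prod_i\|b_i\|_\infty\big)^2,$$
where $P(n)$ is polynomial in $n$ and absorbs the combinatorial cost of the $n$ auxiliary factors. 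The diagonal bound $\|T_j\|_{L^2\to L^2}\lesssim n\, A\prod_i\|b_i\|_\infty$ would follow from Schur's test using $\|\vsig_j\|_{L^1}\le A$ together with the weighted integrability $(1+|\alpha_i|)^\epsilon$ in \eqref{Beps1} that controls the tails of $b_i(x-\alpha_i(x-y))$.

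To extend to $L^p$ with $p\in[1+\delta,2)$, I would run a Calder\'on--Zygmund decomposition of $b_{n+1}$ at an appropriate level, writing $b_{n+1}=g+\sum_Q a_Q$ with mean-zero atoms $a_Q$ supported on disjoint cubes $Q$ of side $2^{-k}$. For each $Q$ I would split the sum over $j$ into a ``close'' range $|j-k|\le M$ and a ``far'' range $|j-k|>M$: the far range uses only $\|\vsig_j\|_{L^1}\le A$ together with the $v$-cancellation of $\vsig_j$ to produce a pointwise bound outside an expansion of $Q$, while the close range is estimated using the stronger $\|\vsig_j\|_{\cB_\epsilon}\le B$ at a cost of $\sim MB$. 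Optimising $M\sim\log(nB/A)$ absorbs $B$ into a single logarithmic factor, and the cube $\log^3$ will arise from iterating this trade-off across three independent parameters of the CZ/interpolation scheme (the threshold $\lambda$, the frequency band, and the weak-type endpoint extrapolation).

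The main obstacle will be obtaining \emph{polynomial} rather than exponential dependence on $n$. A na\"ive telescoping $b_i(x-\alpha_i(x-y)) = b_i(x) + [b_i(x-\alpha_i(x-y))-b_i(x)]$ applied to every factor would generate $2^n$ terms, and one must avoid this at every step. Following the Christ--Journ\'e strategy, I would instead exploit the cancellation of $\vsig_j$ against at most a bounded number --- essentially a \emph{pivot pair} $(i,i')$ of indices --- of the factors $b_i$, controlled via the weighted semi-norms \eqref{Beps1}--\eqref{Beps2} along those two coordinate axes in $\alpha$, while the remaining $n-2$ factors enter only through $\|b_i\|_\infty$. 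The combinatorial cost of choosing the pivot pair accounts for the $n^2$, and executing this reduction while simultaneously preserving the Besov-type off-diagonal regularity needed in the Cotlar--Stein step will be the heart of the analytic argument.
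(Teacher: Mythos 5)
Your high-level frame is right --- recast $\La[K]$ as $\langle Tb_{n+1}, b_{n+2}\rangle$, decompose $T=\sum_j T_j$ along the dyadic scales from Theorem \ref{ThmResDecompKer}, and interpolate from $L^2$ down to $L^p$. But the central claim that drives your $L^2$ estimate is not correct, and that is where the real work of the paper lives.

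You write that the cancellation $\int\vsig_j\,dv=0$ plus the Besov semi-norms \eqref{Beps2}--\eqref{Beps3} ``translate into off-diagonal decay'' $\|T_jT_k^*\|_{L^2\to L^2}\lesssim 2^{-\epsilon'|j-k|}$. This does not follow. The kernel of $T_j$ is
$$k_j(x,y)=2^{jd}\int\vsig_j\big(\alpha,2^j(x-y)\big)\prod_{i=1}^n b_i\big(x-\alpha_i(x-y)\big)\,d\alpha,$$
and the product $\prod b_i(x-\alpha_i(x-y))$ with $b_i\in L^\infty$ has no regularity whatsoever in $x$, $y$, or $x-y$; regularity of $\vsig_j$ in $v$ or in $\alpha$ does not propagate through the $\alpha$-integral to give the kernel a modulus of continuity. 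Consequently there is no Cotlar--Stein almost-orthogonality for $\sum T_j$ taken as-is, and the cancellation of $\vsig_j$ alone cannot generate the needed off-diagonal decay once the $L^\infty$ multipliers $b_i$ are present.

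The paper resolves this by a fundamentally different decomposition: one inserts smoothing operators $P_j$ and roughening operators $I-P_j$ on the $b_i$ \emph{entries} of the form, not on $b_{n+1}$ alone via a Calder\'on--Zygmund decomposition. This gives the identity \eqref{Lasums} splitting $\La$ into $\sum_{1\le l_1<l_2\le n+2}\La^1_{l_1,l_2}$ (two rough entries), $\sum_l\La^2_l$ (one rough entry, requiring $\int\vsig_j\,dv=0$), and $\La^3$ (all smoothed, also requiring the cancellation). The $n^2$ in the final bound then comes precisely from the $O(n^2)$ terms in $\sum_{l_1<l_2}$ --- close in spirit to your ``pivot pair'' heuristic, but implemented combinatorially rather than by selecting two indices inside a single integral. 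Each of the five parts of Theorem \ref{main-parts} then calls on a distinct mechanism: the crucial $L^2$ estimate (Theorem \ref{ThmBasicL2Result}), which gets decay in $k$ from the Littlewood--Paley cancellation of $Q_{j+k}$ and a mollification of $\vsig$ in the $\alpha_i$-direction (Lemmas \ref{vsigRdiffbd}--\ref{estregvsigR}) followed by Plancherel and a high/low frequency split; Carleson embedding for the $\La^1_{l,n+2}$ terms; a Journ\'e-type $T1$ theorem in the form of Theorem \ref{ThmJourne2}--\ref{ThmJourne3}; and an almost-orthogonality lemma applied with $\Qt$'s on both sides of rescaled pieces (Lemma \ref{ThmAlmostOrthogonal}). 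Your proposal sketches none of these and would therefore stall at the $L^2$ step before the $L^p$ extrapolation even begins. Your account of where the $\log^3$ comes from (``three independent parameters of the CZ/interpolation scheme'') is also not how the exponent arises in the paper: it accumulates log-factors one at a time through the Journ\'e comparison of $\|\cdot\|_{L^2}$, $\|\cdot\|_{H^1\to L^1}$, and $\|\cdot\|_{L^\infty\to BMO}$ norms and through the sums in $k_1,k_2,m_1,m_2$ controlled by Lemma \ref{LemmaBasicSum}.
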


The structure of the proof of Theorem \ref{ThmOpResBound} will be discussed in \S\ref{SectionOutline},  and  the details of the proof will be given in subsequent sections.

%\begin{rmk}\label{RmkResultsConstantsPolyInn}
\subsection{Remarks on Besov spaces}

\subsubsection{Equivalent norms}\label{RmkResultsConstantsPolyInn}
In Definition \ref{DefnKerBesovSpace} we chose a particular form of the norm $\sBN{\epsilon}{\cdot}$
which is well suited for our goal to prove estimates with polynomial growth in $n$.  There are other equivalent norms which could be used, for  instance, one might replace the expression
$$\sup_{\substack{0<h\leq 1 \\ 1\leq i \leq n }} h^{-\epsilon} \iint \q| \vsig\q(\alpha+he_i,v\w)-\vsig\q(\alpha,v\w) \w|\: d\alpha\: dv$$ with
$$\sup_{\substack{0<\q|h\w|\leq 1}} \q|h\w|^{-\epsilon} \iint \q| \vsig\q(\alpha+h,v\w)-\vsig\q(\alpha,v\w) \w|\: d\alpha\: dv$$
and one ends up with a comparable norm.  These two choices differ by a factor which is polynomial in $n$.
Fortunately, the result in Theorem \ref{ThmOpResBoundGen} only involves $\sBN{\epsilon}{\vsig_j}$ through the expression
$$ \log^3\q(2 + n\frac{\sup_{j\in \Z} \sBN{\epsilon}{\vsig_j}} {\sup_{j\in \Z} \sBzN{\vsig_j}}\w).$$
Thus, if one changes $\sup_{j\in \Z} \sBN{\epsilon}{\vsig_j}$ by a factor which is polynomial in $n$, this only changes the bound in
Theorem \ref{ThmOpResBoundGen} by a constant factor, and therefore does not change the result in Theorem \ref{ThmOpResBoundGen}.
In this way, one can use any one of a variety of equivalent norms when defining $\sBN{\epsilon}{\cdot}$ (as long as one only changes
the norm by a factor which is bounded by a polynomial in $n$) -- we picked out the choice which is
most natural for our purposes.
%\end{rmk}

\subsubsection{The role of projective space}\label{SectionRoleofRPnFirst}
%\subsubsection{The role of $\RPn$}
%$\mathbf{RP^n}$}
%\RPn$}
%	\input{roleofrpn}

Though it may not be apparent from the above definitions, the space $\RPn$ plays a key role in the intuition behind our main results.  In this section, we
exhibit a special case where the role of $\RPn$ is apparent, and we return to a more general version of these ideas in \S \ref{SectionRoleOfRPnSecond}.

Recall that $\RPn$ is defined as $\R^{n+1}\setminus \q\{0\w\}$ modulo the equivalence relation where we identify $\alpha,\beta\in  \R^{n+1}\setminus \q\{0\w\}$
if there exists $c\in \R\setminus \q\{0\w\}$ with $\alpha=c\beta$.  This sees $\RPn$ has an $n$-dimensional manifold.  Traditionally, there are $n+1$ standard coordinate charts on $\RPn$.
For these, we consider those points in $\alpha=\q(\alpha_1,\ldots, \alpha_{n+1}\w)\in \R^{n+1}\setminus \q\{0\w\}$ with $\alpha_j\ne 0$.  Under the equivalence relation,
$\alpha$ is equivalent to $\alpha_{j}^{-1} \alpha= \q(\alpha_j^{-1} \alpha_1,\ldots, \alpha_j^{-1} \alpha_{j-1}, 1, \alpha_j^{-1} \alpha_{j+1},\ldots, \alpha_{j}^{-1} \alpha_{n+1}\w)$.  This identifies such
points with a copy of $\R^n$ and yields a coordinate chart on $\RPn$--every point in $\RPn$ lies in the image of at least one of these charts.
This sees a copy of $\R^n$ inside of $\RPn$ given by $\q(\alpha_1,\ldots, \alpha_n\w)\mapsto \q(\alpha_1,\ldots, \alpha_{j-1},1,\alpha_{j+1},\ldots, \alpha_n\w)$.

Functions on $\RPn$ can be identified with functions $f:\R^{n+1}\setminus\q\{0\w\}\rightarrow \bbC$ such that $f\q(c\alpha\w)=f\q(\alpha\w)$--i.e., functions which are homogeneous of degree $0$ and are even.  Suppose we are given  $f:\RPn\rightarrow \bbC$.  We obtain a function $f_0:\R^n\rightarrow \bbC$ by viewing $\R^n\hookrightarrow \RPn$
via the map $\q(\alpha_1,\ldots, \alpha_n\w)\mapsto \q(\alpha_1,\ldots, \alpha_n,1\w)$.  Thus, given an even  function $f:\R^{n+1}\setminus\q\{0\w\}\rightarrow \bbC$
which is homogeneous of degree $0$, we obtain a function $f:\RPn\rightarrow \bbC$, and therefore a function $f_0:\R^n\rightarrow \bbC$ (and $f_0$ uniquely determines
$f$ off of a set of lower dimension in $\RPn$).

We consider here the special case when $$K\q(\alpha, v\w)= \gamma\q(\alpha\w) \ka\q(v\w)$$ and $\ka$ is a classical
 Calder\'on-Zygmund
kernel which is {\it homogeneous of degree $-d$} and smooth away from $v=0$.  For $\alpha\in \R^{n}$ and functions $b_1,\ldots, b_{n+2}$, consider
% the function 
\begin{equation}\label{EqnRoleRPNF0}
\begin{split}
F_0\q(\alpha\w) &=
\iint  \ka\q(x-y\w) 
b_{n+2}\q(x\w) b_{n+1}\q(y\w) \prod_{i=1}^n b_i\q(x-\alpha_i(x-y)\w)
%f_1\q(x-\alpha_1\q(x-y\w)\w)\cdots a_n\q(x-\alpha_n\q(x-y\w)\w)
 \:  dx\: dy
\\&= \iint \ka\q(v\w) 
b_{n+2}\q(x\w)
b_{n+1}\q(x-v\w) \prod_{i=1}^n b_i\q(x-\alpha_iv\w)\:  dx\: dv.
\end{split}
\end{equation}
The multilinear form  we wish to study (in this special case) is given by
\begin{equation*}
\int \gamma\q(\alpha\w) F_0\q(\alpha\w) \: d\alpha.
\end{equation*}
One main aspect of our assumptions is that this operator should be of the same form when we permute the roles of $b_{1},\ldots, b_{n+2}$.
Many of these permutations are easy to understand:  permuting the roles of $b_1,\ldots, b_n$ merely permutes the variables $\alpha_1,\ldots, \alpha_n$.
Switching the roles of $b_{n+1}$ and $b_{n+2}$ changes $\alpha$ to $\q(1-\alpha_1,\ldots, 1-\alpha_n\w)$.
Thus, the major difficulty in understanding adjoints can be reduced to understanding the question of switching the roles of $b_j$ ($1\leq j\leq n$) and $b_{n+1}$ (as every permutation of $\q\{1,\ldots, n+2\w\}$ can be generated by the these three types of permutations).

Define a new function $F:\R^{n+1}\setminus\q\{0\w\}\rightarrow \bbC$ by
\begin{equation*}
F\q(\alpha_1,\ldots, \alpha_{n+1}\w) = \iint \ka\q(v\w) b_1\q(x-\alpha_1v\w)\cdots b_n\q(x-\alpha_nv\w) b_{n+1}\q(x-\alpha_{n+1}v\w) b_{n+2}\q(x\w)\:  dx\: dv\,.
\end{equation*}
Because of the homogeneity of $\ka$, we see (for $c\in \R\setminus \q\{0\w\}$), $F\q(c\alpha\w)=F\q(\alpha\w)$.
By the above discussion, $F$ defines a function on $\RPn$, and therefore induces a function $F_0:\R^n\rightarrow \bbC$ as above.
This induced function $F_0$ is exactly the function of the same name from \eqref{EqnRoleRPNF0}.
Thus, we have defined $F_0$ in a way which is symmetric 
in $b_1,\ldots, b_{n+1}$.

$F\q(\alpha\w)$ defines  a function on $\RPn$, and therefore if $\gamma\q(\alpha\w)d\alpha$ were a measure on $\RPn$, it would make sense
to write
\begin{equation*}
\int \gamma\q(\alpha\w) F\q(\alpha\w) \: d\alpha.
\end{equation*}
Indeed, our main assumptions in this special case are equivalent to assuming that $\gamma\q(\alpha\w) d\alpha$ is a density
which lies in the space $\bigcup_{0<\eps<1}B^{\epsilon}_{1,\infty}\q(\RPn\w)$
(where $B^{\epsilon}_{1,\infty}\q(\RPn\w)$ denotes a {\it Besov space of densities} on $\RPn$, see \S \ref{SectionRoleOfRPnSecond} for a proof of this remark).
When we write the expression as
\begin{equation*}
\int \gamma\q(\alpha\w) F_0\q(\alpha\w)\: d\alpha,
\end{equation*}
we are merely choosing the coordinate chart $\R^n\hookrightarrow \RPn$ denoted above.
With this formulation, the operator
\begin{equation*}
\int \gamma\q(\alpha\w) F\q(\alpha\w) \: d\alpha
\end{equation*}
clearly remains of the same form when $b_1,\ldots, b_{n+1}$ are permuted, and from here it is easy to see that the class of operators is ``closed under adjoints.''

\begin{rem}
In our more general setting, $K\q(\alpha, v\w)$ is not homogeneous in the $v$ variable, and therefore we cannot define a function $F$
on $\RPn$ as was done above.  Nevertheless, these ideas play an important  role in our proofs,
see \S\ref{SectionRoleOfRPnSecond} below.
\end{rem}

\section{Kernels} \label{SectionKernels}
%\input{kernelspf}
	
	% !TEX root =  main.tex

In this section, we prove  various results announced in Section \ref{SectionResults}. We first show the independence of the space 
$\sK_\eps$ of the particular choice of $\eta$ satisfying \eqref{EqnResultsEta}
and then give the proof of Propositions \ref{PropKerDecompK} and 
\ref{PropKerSumVsig}.

\subsection{Independence of $\eta$} \label{SectionEtaindependence}

The following lemma shows that 
$\sK_\eps$ does not depend on the choice of $\eta\in \sS\q(\R^d\w)$ satisfying \eqref{EqnResultsEta}.
%\begin{lemma}\label{LemmaResultsEtaDoesntMatter}
%Let $\eta,\eta'\in \sS\q(\R^d\w)$, both satisfying \eqref{EqnResultsEta}.  For $0<\eps\leq 1$, let $\sKN{\eps}{\cdot}$ be the norm from \eqref{EqnResultsKepN}
%with this choice of $\eta$, and let $\sKN{\eps}{\cdot}'$ be the norm with $\eta$ replaced by $\eta'$.
%There is a constant $C=C\q(\eta,\eta'\w)$ such that \begin{equation*}
%C^{-1} \sKN{\eps}{K}'\leq \sKN{\eps}{K}\leq C\sKN{\eps}{K}'.\end{equation*} \end{lemma}

%Because of the symmetry between $\eta$ and $\eta'$ in the statement of Lemma \ref{LemmaResultsEtaDoesntMatter}, the result follows from the following lemma.
\begin{lemma}\label{LemmaResultsEtaDoesntMatter}
Let $\eta, \eta'\in \cS(\bbR^d)$ and $\eta$ be as in \eqref{EqnResultsEta}.
Let $0<\eps\le 1$.
There exists $C=C\q(\eta,\eta'\w)$ such that for all 
$K\in \sK_\eps$
%\footnote{ or for all $K\in L\sS'(\bbR^n\times\bbR^d)$ 
\begin{equation*}
\|K\|_{\sK_\eps^{ \eta'} } \leq C \|K\|_{\sK_\eps^{ \eta} }
%\sKN{\eps}{K}'\leq C \sKN{\eps}{K}.
\end{equation*}
The constant $C$ is independent of $n$.
\end{lemma}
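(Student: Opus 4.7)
The semi-norms $\|K\|_{\sK_{\eps,3}}$, $\|K\|_{\sK_{\eps,4}}$, and $\|K\|_{\sK_{\eps,5}}$ are intrinsic to $K$ and do not involve $\eta$. Hence it suffices to establish, for $i=1,2$,
\[
\|K\|_{\sK_{\eps,i}^{\eta'}} \le C(\eta,\eta',d)\,\|K\|_{\sK_{\eps,i}^{\eta}},
\]
with a constant independent of $n$ and of $K$. Throughout, set $\eta_s(x):=s^{-d}\eta(x/s)$ and $\tilde\eta_s(x):=s^{-d}\tilde\eta(x/s)$, and recall that $\Dil_t K(\alpha,x)=t^dK(\alpha,tx)$.

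The main tool is a Calder\'on reproducing formula adapted to $\eta$. Exploiting the non-degeneracy hypothesis \eqref{EqnResultsEta}, a compactness argument on $S^{d-1}$ combined with a smooth partition of unity produces $\tilde\eta\in\sS(\R^d)$ with $\widehat{\tilde\eta}$ supported in a compact annulus $\{a\le|\xi|\le b\}\subset\R^d\setminus\{0\}$, satisfying
\[
\int_0^\infty \widehat\eta(s\xi)\,\widehat{\tilde\eta}(s\xi)\,\frac{ds}{s}\,=\,1,\qquad \xi\neq 0.
\]
Since $\widehat{\tilde\eta}$ vanishes near the origin, $\tilde\eta$ has all moments equal to zero. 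The identity above reads distributionally as $\int_0^\infty\tilde\eta_s*\eta_s\,ds/s=\delta_0$, so that
\[
\eta'*\Dil_tK(\alpha,\cdot)\,=\,\int_0^\infty(\eta'*\tilde\eta_s)*\eta_s*\Dil_tK(\alpha,\cdot)\,\frac{ds}{s}.
\]

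A direct change-of-variables computation yields the scaling identity $\eta_s*\Dil_tK(\alpha,\cdot)=\Dil_t(\eta_{st}*K(\alpha,\cdot))$, and combined with $\|\Dil_tf\|_{L^2}=t^{d/2}\|f\|_{L^2}$ this gives $\|\eta_s*\Dil_tK(\alpha,\cdot)\|_{L^2}=s^{-d/2}\|\eta*\Dil_{st}K(\alpha,\cdot)\|_{L^2}$. Applying Minkowski's inequality in $L^2$ and Young's convolution inequality, integrating against $(1+|\alpha_i|)^\eps\,d\alpha$, and taking $\sup_t$ on the left then delivers
\[
\|K\|_{\sK_{\eps,1}^{\eta'}}\,\le\,\Big(\int_0^\infty\|\eta'*\tilde\eta_s\|_{L^1}\,s^{-d/2-1}\,ds\Big)\,\|K\|_{\sK_{\eps,1}^{\eta}}.
\]

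To close the argument one checks that the numerical integral is finite. For $s\le 1$, writing $\eta'*\tilde\eta_s(x)=\int \eta'(x-sz)\tilde\eta(z)\,dz$ and Taylor-expanding $\eta'$ while invoking the vanishing of all moments of $\tilde\eta$ yields $\|\eta'*\tilde\eta_s\|_{L^1}\lesssim s^N$ for arbitrary $N$, easily dominating the $s^{-d/2-1}$ singularity at $0$. For $s\ge 1$, the integrand is bounded by $\|\eta'\|_{L^1}\|\tilde\eta\|_{L^1}\,s^{-d/2-1}$, integrable at infinity for $d\ge 1$. The estimate for $\sK_{\eps,2}^{\eta'}$ is identical: convolution in $x$ commutes with the finite difference $\vsig(\alpha,\cdot)\mapsto \vsig(\alpha+he_i,\cdot)-\vsig(\alpha,\cdot)$, so the same reproducing formula applied to the differenced kernel carries through unchanged. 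All resulting constants depend solely on $\eta$, $\eta'$, and $d$, and are manifestly independent of $n$ because the construction of $\tilde\eta$ and the entire convolution argument unfold in the $x$-variable while the $\alpha$-integrations remain passive. The principal technical step is the construction of $\tilde\eta$, i.e., the conversion of the Tauberian-type non-degeneracy \eqref{EqnResultsEta} into a genuine reproducing identity.
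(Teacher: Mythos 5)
Your proof is correct. The route is close in spirit to the paper's but implemented differently: the paper uses a \emph{discrete} decomposition, writing $\widehat{\eta'}$ with a dyadic Littlewood--Paley partition $\{\chi(2^{-k}\cdot)^2\}$ and, on each annulus, expressing $\chi(2^{-k}\xi)$ through finitely many multipliers $m_\nu(2^{-k}\xi)\,\widehat\eta(2^{-k}\tau_\nu\xi)$ produced by the compactness/non-degeneracy argument (so the whole proof lives on the Fourier side, with $L^\infty\times L^2\to L^2$ bounds and the decay $\min\{2^{-kN},1\}$ for the Schwartz tails of $\widehat{\eta'}$). You instead build a \emph{continuous} Calder\'on reproducing formula by constructing a single $\tilde\eta$ with $\widehat{\tilde\eta}\in C_0^\infty$ supported in an annulus and $\int_0^\infty\widehat\eta(s\xi)\widehat{\tilde\eta}(s\xi)\,ds/s=1$, then working on the physical side via Minkowski and Young, with the Schwartz decay of $\eta'$ appearing through the moment cancellation of $\tilde\eta$. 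The two are essentially discrete vs.\ continuous versions of the same Calder\'on-type inversion of $\eta$. Your version is somewhat cleaner once the reproducing formula is in hand, but it quietly requires a small extra verification that the normalizing denominator $G(\theta)=\int_0^\infty|\widehat\eta(u\theta)|^2\Phi(u)^2\,du/u$ is smooth and uniformly bounded below on $S^{d-1}$ (this is exactly where \eqref{EqnResultsEta} and compactness enter, and it does hold, but you asserted the existence of $\tilde\eta$ rather than constructing it); the paper's finite-cover version with multipliers $m_\nu$ sidesteps this smoothness issue. Both routes make the dependence on $n$ manifestly absent, since the entire argument takes place in the $x$-variable.
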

\begin{proof}
Let $K\in \sK_\eps$.  Only two of the terms of the definition of $\sKN{\eps}{K}$ depend on the choice of $\eta$.
Thus, the result will follow once we prove the following two estimates.
\begin{equation}\label{EqnKerEtaDoesntMatter1}
\sup_{\substack{1\leq i \leq n \\ t>0 }} \int \q(1+\q|\alpha_i\w|\w)^\eps \LpRdN{2}{\eta'* \dil{K}{t}\q(\alpha,\cdot\w)} \: d\alpha
\leq C 
\sup_{\substack{1\leq i \leq n \\ t>0 }} \int \q(1+\q|\alpha_i\w|\w)^\eps \LpRdN{2}{\eta * \dil{K}{t}\q(\alpha,\cdot\w)} \: d\alpha,
\end{equation}
and
\begin{multline}\label{EqnKerEtaDoesntMatter2}
%\begin{split} 
\sup_{\substack{1\leq i\leq n \\ t>0 \\ 0<h\leq 1 }} h^{-\eps} \int \LpRdN{2}{   \eta'*\q[  \dil{K}{t}\q(\alpha+he_i , \cdot\w) - \dil{K}{t}\q(\alpha,\cdot\w) \w]  }\: d\alpha
\\
\leq C
\sup_{\substack{1\leq i\leq n \\ t>0 \\ 0<h\leq 1 }} h^{-\eps} \int \LpRdN{2}{   \eta*\q[  \dil{K}{t}\q(\alpha+he_i , \cdot\w) - \dil{K}{t}\q(\alpha,\cdot\w) \w]  }\: d\alpha.
%\end{split}
\end{multline}
The proofs of these two equations are nearly identical, so we prove only 
\eqref{EqnKerEtaDoesntMatter1}.

\smallskip

Let $\chi\in \CziRd$ be supported in $\q\{\xi:\frac{1}{2}< \q|\xi\w|<2\w\}$
with the property that $\sum_{k\in \Z} \q[\chi\q(2^{-k} \xi\w)\w]^2 =1$, for $\xi\in \bbR^d\setminus \{0\}$.  Since
$\eta'\in \sS\q(\R^d\w)$, we have $\LpN{\infty}{ \chi\q(2^{-k}\cdot\w)\etaph\q(\cdot\w)}\leq C_N 
\min\{2^{-kN}, 1\} $.
By \eqref{EqnResultsEta} and the compactness of $\q\{\xi : \frac{1}{2}\leq \q|\xi\w|\leq 2\w\}$ there is a finite index set
$\Theta$ and real numbers $\tau_\nu>0$ such that
\begin{equation*}
\sum_{\nu\in \Theta} \q|\etah\q(\tau_\nu \xi\w)\w|^2 \geq c>0 \:\text{   for   }\: \frac{1}{2}\leq\q|\xi\w|\leq 2.
\end{equation*}
Let
\begin{equation*}
m_\nu\q(\xi\w) = \frac{\overline{\etah\q(\tau_\nu\xi\w)} \chi\q(\xi\w) }{\sum_{\tilde{\nu}\in \Theta} \q|\etah\q(\tau_{\tilde{\nu}}\xi\w)\w|^2 };
\end{equation*}
then $\LpN{\infty}{m_\nu}\leq C_\nu$ and we have
\begin{equation*}
\etaph\q(\xi\w) = \sum_{k\in \Z} \chi\q(2^{-k}\xi\w) \etah\q(\xi\w) \sum_{\nu\in \Theta} m_\nu\q(2^{-k}\xi\w) \etah\q(2^{-k} \tau_\nu \xi\w).
\end{equation*}
Hence,
\begin{equation*}
\LpRdN{2}{\eta' * \dil{K}{t}\q(\alpha,\cdot\w)} \lesssim \sum_{k\in \Z} \min\{2^{-kN}, 1\} \sum_{\nu\in \Theta} \|m_\nu\|_\infty \LpRdN{2}{\etah\q(2^{-k}\tau_\nu \cdot\w) \widehat{\dil{K}{t}}\q(\alpha,\cdot\w)},
\end{equation*}
where the implicit constant depends on $N$.
Note 
$$\LpRdN{2}{\etah\q(2^{-k} \tau_\nu\cdot\w) \widehat{\dil{K}{t}}\q(\alpha,\cdot\w)} = \q(2^{k}/\tau_\nu\w)^{d/2} \LpRdN{2}{ \eta * \dil{K}{2^{-k}\tau_\nu t}\q(\alpha,\cdot\w)},$$
and so taking $N>d/2$ we obtain
\begin{equation*}
\begin{split}
&\int \q(1+\q|\alpha_i\w|\w)^{\eps} \LpRdN{2}{\eta' * \dil{K}{t}\q(\alpha,\cdot\w)}\: d\alpha
\\&\lesssim  \sum_{k\in \Z} \min\{2^{-k\q(N-d/2\w) },\,2^{kd/2}\w\}\sum_{\nu\in \Theta} C_\nu \int \q(1+\q|\alpha_i\w|\w)^{\eps} \LpRdN{2}{ \eta * \dil{K}{2^{-k} \tau_\nu t}\q(\alpha,\cdot\w)}\: d\alpha
\\&\lesssim \sup_{r>0} \int \q(1+\q|\alpha_i\w|\w)^{\eps} \LpRdN{2}{\eta*\dil{K}{r}\q(\alpha,\cdot\w)}\: d\alpha,
\end{split}
\end{equation*}
which completes the proof of \eqref{EqnKerEtaDoesntMatter1}.
%as desired.
\end{proof}

\subsection{Proof of Theorem \ref{ThmResDecompKer}}
The theorem follows from two propositions. In the first we prove an estimate for the $\vsig_j$ as in 
 \eqref{vsigK}, which arise in the decomposition of $K=\sum_j\vsigjj$.
\begin{prop}\label{PropKerDecompK}
Suppose $\eps\in \q(0,1\w]$, $0<\delta<\eps$.  
For every $K\in \sK_\eps$, let $$\vsig_j=
 (Q_j K)^{(2^{-j})}.$$  
Then 
$\q\{\vsig_j: j\in \Z\w\}$ is a bounded subset of   $\sBtp{\delta}{\R^n\times \R^d}$ satisfying $$\int \vsig_j\q(\alpha, v\w)\: dv=0,$$  for all $j$  and  almost every $\alpha\in \bbR^n$ and 
\begin{equation*}
\sup_{j\in \Z}\|\vsig_j\|_{\cB_\delta}\leq C_{\delta,\eps,d} \sKN{\eps}{K},
\end{equation*}
and such that 
\begin{equation*}
K=\sum_{j\in \Z} \dil{\vsig_j}{2^j},
\end{equation*}
with the sum converging in the sense of the topology on $L\sS'\q(\R^n\times \R^d\w)$.  
\end{prop}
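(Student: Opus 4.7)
Writing $\eta := \phi - \phi_{-1}$ with $\phi_{-1}(x) := 2^{-d}\phi(x/2)$, a direct change of variables gives the clean identification
\[
\vsig_j(\alpha,v) \;=\; (\eta *_v \dil{K}{2^{-j}})(\alpha,v),
\]
where $*_v$ denotes convolution in the $v$-variable alone. Since $\int\eta\,dv = 1-1 = 0$, the vanishing moment $\int\vsig_j(\alpha,v)\,dv=0$ for a.e.\ $\alpha$ follows at once. I choose $\phi$ so that $\widehat\eta$ does not vanish on any ray (which can be arranged by a suitable choice of $\phi$); Lemma~\ref{LemmaResultsEtaDoesntMatter} then lets me use $\sK_{\eps,1}^\eta$ and $\sK_{\eps,2}^\eta$ with this particular $\eta$ in estimating $\|K\|_{\sK_\eps}$.

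\textbf{The four semi-norm estimates.} For each of the four $\cB_\delta$-semi-norms I decompose $\R^d = \{|v|\le 1\}\cup\bigcup_{k\ge 1}\{|v|\sim 2^k\}$. On $\{|v|\le 1\}$, Cauchy--Schwarz combined with the $L^2$-bound from $\sK_{\eps,1}^\eta$ (or $\sK_{\eps,2}^\eta$ for the $\alpha$-increment) gives the required estimate. On a shell $\{|v|\sim 2^k\}$ with $k\ge 1$, the mean-zero property of $\eta$ allows one to write
\[
\vsig_j(\alpha,v) = \int\eta(w)\bigl[\dil{K}{2^{-j}}(\alpha,v-w)-\dil{K}{2^{-j}}(\alpha,v)\bigr]\,dw,
\]
and after the substitution $y = 2^{-j}v$, $u = 2^{-j}w$ one has $|y|\sim 2^{k-j}$ while $|u|\lesssim 2^{-j}$, so $\sK_{\eps,5}$ applies with $R\sim 2^k$ and delivers an unweighted $L^1(d\alpha\,dv)$ decay of order $2^{-k\eps}\|K\|_{\sK_{\eps,5}}$. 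For $\cB_{\delta,1}$, the weight $(1+|\alpha_i|)^\delta$ is recovered by interpolating this unweighted decay against the weighted $\sK_{\eps,3}$ bound, losing only a summable factor $2^{-k(\eps-\delta)}$. For $\cB_{\delta,2}$, the same scheme applies since $\alpha$-translation commutes with $*_v$; on each shell, the two competing bounds $h^\eps\|K\|_{\sK_{\eps,4}}$ (no $k$-decay) and $2^{-k\eps}\|K\|_{\sK_{\eps,5}}$ (no $h$-factor) are balanced at $k\sim\log_2(1/h)$, producing the bound $h^\eps\log(1/h)\|K\|_{\sK_\eps}$, and $h^{-\delta}h^\eps\log(1/h)$ is uniformly bounded on $(0,1]$ precisely when $\delta<\eps$. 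For $\cB_{\delta,3}$, $\nabla_v\vsig_j = (\nabla\eta)*_v\dil{K}{2^{-j}}$ is of the same form as $\vsig_j$, so the uniform bound $\|\nabla_v\vsig_j\|_{L^1}\lesssim\|K\|_{\sK_\eps}$ follows from the preceding analysis applied to $\nabla\eta$, and the elementary interpolation $\min(a,b)\le a^{1-\delta}b^\delta$ combined with H\"older's inequality in $\alpha$ then yields the required $|h|^\delta$-bound.

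\textbf{Convergence and the main obstacle.} The partial sums telescope:
\[
\sum_{j=-N}^{N}\dil{\vsig_j}{2^j} \;=\; \phi_{N+1}*_x K \,-\, \phi_{-N}*_x K.
\]
As $N\to\infty$, $\phi_{N+1}*_x K\to K$ by standard mollification. The other term $\phi_{-N}*_x K$ must be shown to vanish in $L\sS'$, which uses the $L^1$-in-$\alpha$ property defining $L\sS'$ together with the cancellation encoded in $\sK_{\eps,1}^\eta$: the uniform bound on $\int\|\eta*\dil{K}{t}(\alpha,\cdot)\|_{L^2}\,d\alpha$ (which persists as $t\to 0^+$) prevents $K$ from carrying a nontrivial large-scale average and forces the spread-out averages $\phi_{-N}*_x K$ to vanish when tested against any Schwartz function. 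I expect this last point to be the main technical obstacle, since one must verify that the partial sums all lie in a single $M\sS_{N_0}'$ with uniformly bounded norm and are Cauchy there; by contrast, the $\cB_\delta$-semi-norm bounds themselves are routine (if intricate) Littlewood--Paley bookkeeping once the identity $\vsig_j = \eta*_v\dil{K}{2^{-j}}$ is in hand.
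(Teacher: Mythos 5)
Your setup is exactly right: with $\eta=\psi=\phi-2^{-d}\phi(\cdot/2)$ one has $\vsig_j=\psi*_v\dil{K}{2^{-j}}$, and $\int\psi=0$ gives the vanishing $v$-moment. This is the paper's starting point as well (through Lemma~\ref{LemmaKerQjKIsRightForm}, reduced to $j=0$ by the dilation invariance of $\|\cdot\|_{\sK_\eps}$). Your strategy for the four $\cB_\delta$ semi-norms --- unit ball by Cauchy--Schwarz against $\sK^\eta_{\eps,1}$/$\sK^\eta_{\eps,2}$; dyadic shells by the mean-zero trick reducing to $\sK_{\eps,5}$, with the $(1+|\alpha_i|)^\delta$ weight recovered by H\"older's inequality between the $\sK_{\eps,3}$ and $\sK_{\eps,5}$ contributions, and the $h^{-\delta}$ factor in $\cB_{\delta,2}$ absorbed by trading $h^\eps\log(1/h)$ for $h^\delta$ when $\delta<\eps$ --- is a valid and somewhat more systematic variant of what the paper does. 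The paper instead splits directly by the competition between $|x|$ and $1+|\alpha_i|$ (regions $(I)$, $(II)$, $(III)$), using $\sK_{\eps,3}$ outright where $1<|x|\le 1+|\alpha_i|$ rather than interpolating; both yield the same $\delta<\eps$ constraint. Your $\cB_{\delta,3}$ argument (uniform $L^1$ bounds for $\vsig_j$ and $\nabla_v\vsig_j$, then $\min(a,b)\le a^{1-\delta}b^{\delta}$) is also legitimate, and corresponds to the paper's $(VII)$--$(IX)$ split where the gradient of $\psi$ produces the $\sK^{\nabla\psi}_{\eps,1}$ norm and the $|h|$, $|h|^\eps$ factors are balanced.

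The real gap is in the convergence statement, and you identify it yourself. Your telescoping identity $\sum_{j=-N}^{N}\dil{\vsig_j}{2^j}=P_N K-P_{-N-1}K$ forces you to show $P_{-N}K\to 0$ in $L\sS'$ from the $\sK_\eps$ hypotheses on $K$ directly, and that is a nontrivial claim about the large-scale behaviour of $K$ which you do not establish. The paper's route avoids it entirely. It first proves a standalone lemma (Lemma~\ref{LemmaKerSumsConverge}) stating that for \emph{any} bounded family $\{\vsig_j\}\subset\cB_\eps$ with $\int\vsig_j\,dv=0$, the series $\sum_j\dil{\vsig_j}{2^j}$ converges in $L\sS'$: one pairs $\dil{\vsig_j}{2^j}$ with a test function $f$ and shows, using the vanishing mean when $j\ge 0$ and a fractional Sobolev/$L^p$ bound (the paper's Lemma~\ref{LemmaKerIntAgainstdeltapower}) when $j<0$, that the pairing decays like $2^{-|j|\delta}$ in the relevant $M\sS'_{N_0}$-norm. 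This gives convergence of the series to some $L\in L\sS'$ with no appeal to the behaviour of the original $K$. The identification $L=K$ then follows from the elementary fact $\sum_j Q_j = I$ on $\sS'$ and the continuous embedding $L\sS'\hookrightarrow\sS'$. You should adopt this split: once you have the $\cB_\delta$ bounds on the $\vsig_j$, the convergence follows from the family, not from $K$ --- which is both cleaner and actually complete, whereas the $P_{-N}K\to 0$ claim as you pose it would need its own proof.
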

The second proposition provides   $\sK_\delta$-estimates for kernels that are given as sums
$\sum_j \vsigjj$, with uniform $\cB_\eps$-estimates for the $\vsig_j$.
\begin{prop}\label{PropKerSumVsig}
Let $\eps\in \q(0,1\w]$, and $0<\delta<\eps/2$.  Suppose $\q\{\vsig_j : j\in \Z\w\}\subset \sBtp{\eps}{\R^n\times \R^d}$ is a bounded set
satisfying $\int \vsig_j\q(\alpha, v\w)\: dv=0$, for all $j$.  Then the sum
\begin{equation*}
K\q(\alpha, v\w):= \sum_{j\in \Z} \dil{\vsig_j}{2^j}\q(\alpha,v\w)
\end{equation*}
converges in the sense of the topology on $L\sS'\q(\R^n\times \R^d\w)$, and $K\in \sK_\delta$.  Furthermore,
\begin{equation*}
\sKN{\delta}{K}\leq C_{\delta, \eps,d} \sup_{j\in \Z} \sBN{\eps}{\vsig_j}.
\end{equation*}
\end{prop}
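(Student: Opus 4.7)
The plan is to verify each of the five seminorms $\|\cdot\|_{\sK_{\delta,\ell}}$ ($\ell=1,\dots,5$) defining $\sK_\delta$ by substituting $K=\sum_j \vsig_j^{(2^j)}$ and performing a dyadic splitting on the frequency index $j$. Convergence of $\sum_j \vsig_j^{(2^j)}$ in $L\cS'(\R^n\times\R^d)$ is handled first: pairing with a test function $\phi_1\otimes\phi_2$ with $\phi_2\in\cS(\R^d)$, the mean-zero identity $\int \vsig_j^{(2^j)}(\alpha,x)\phi_2(x)\,dx=\int\vsig_j(\alpha,v)[\phi_2(2^{-j}v)-\phi_2(0)]\,dv$, combined with the Schwartz bound $|\phi_2(2^{-j}v)-\phi_2(0)|\le C\min\{(2^{-j}|v|)^\eps,1\}(1+2^{-j}|v|)^{-M}$, gives geometric convergence in $L^1(\R^n)$ by invoking $\cB_{\eps,4}$ for $j\to+\infty$ and the Schwartz decay of $\phi_2$ for $j\to-\infty$.

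For each seminorm, after rescaling $v=2^j x$ the seminorm's scale parameter $\rho$ (namely $\rho=R$ for $\sK_{\delta,3}, \sK_{\delta,4}$, $\rho=R|y|$ for $\sK_{\delta,5}$, and $\rho=1/t$ for $\sK_{\delta,1}^\eta, \sK_{\delta,2}^\eta$) combines with $2^j$ into the dimensionless quantity $2^j\rho$. I split into a high-frequency regime $2^j\rho\ge 1$, where $\cB_{\eps,4}$ yields decay $(2^j\rho)^{-\eps}$, and a low-frequency regime $2^j\rho\le 1$, where the mean-zero condition together with the $v$-Hölder estimate from $\cB_{\eps,3}$ yields decay $(2^j\rho)^\eps$. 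The geometric sums in both regimes produce $O(\sup_j\|\vsig_j\|_{\cB_\eps})$.

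For seminorms carrying the $(1+|\alpha_i|)^\delta$ weight, the high-frequency estimate requires decoupling the $v$- and $\alpha$-weights. I use Young's inequality
$$(1+|v|)^{\eps/2}(1+|\alpha_i|)^\delta \le \tfrac12(1+|v|)^\eps + \tfrac12(1+|\alpha_i|)^{2\delta},$$
valid precisely when $2\delta\le\eps$; this is the source of the hypothesis $\delta<\eps/2$. For $\sK_{\delta,5}$ I interpolate via $\min(a,b)\le a^{1/2}b^{1/2}$ between the $\cB_{\eps,3}$-based bound $\min\{(2^j|y|)^\eps,1\}\|\vsig_j\|_{\cB_\eps}$ and the $\cB_{\eps,4}$-based bound $(R\cdot 2^j|y|)^{-\eps}\|\vsig_j\|_{\cB_\eps}$; the resulting geometric sum gives $R^{-\eps/2}$ decay, compensating the seminorm's weight $R^\delta$ for $\delta\le\eps/2$. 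The $L^2$-type seminorms $\sK_{\delta,1}^\eta, \sK_{\delta,2}^\eta$ are handled by Plancherel applied to $\widehat{\eta*\vsig_j^{(2^jt)}}(\alpha,\xi)=\wh\eta(\xi)\wh{\vsig_j}(\alpha,\xi/(2^jt))$, using the vanishing $\wh{\vsig_j}(\alpha,0)=0$ (from mean-zero) for $2^jt\ll 1$ and the decay of $\wh\eta$ away from the origin for $2^jt\gg 1$.

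The main obstacle lies in the low-frequency treatment of the size-type seminorms $\sK_{\delta,3}, \sK_{\delta,4}$: the naive sum $\sum_j\iint_{R\le|x|\le 2R}|\vsig_j^{(2^j)}|$ can diverge since each low-frequency piece spreads at scale $2^{-j}\gg R$ and the triangle inequality discards cancellation. To overcome this I use an $L^1$-Poincar\'e-type estimate derived from $\cB_{\eps,3}$: if $\mathrm{avg}_\rho(\vsig_j)(\alpha):=|B_\rho|^{-1}\int_{|v|\le\rho}\vsig_j(\alpha,v)\,dv$, a double-integration trick via $v-w=h$ gives
$$\iint_{|v|\le\rho}\bigl|\vsig_j(\alpha,v)-\mathrm{avg}_\rho(\vsig_j)(\alpha)\bigr|\,dv\,d\alpha\lesssim \rho^\eps\|\vsig_j\|_{\cB_{\eps,3}},$$
while the mean-zero condition controls the contribution of $\mathrm{avg}_\rho(\vsig_j)$ itself through cancellation when the low-frequency sum is reorganized dyadically. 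Putting these ingredients together produces the bound $\|K\|_{\sK_\delta}\le C_{\delta,\eps,d}\sup_j\|\vsig_j\|_{\cB_\eps}$, completing the proof.
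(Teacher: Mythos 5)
Your high-level strategy — rescale to the dimensionless quantity $2^j\rho$, split at $2^j\rho=1$, use $\cB_{\eps,4}$ for the high-frequency tail and the $v$-regularity for the low-frequency tail, then sum geometrically — is sound and is also the essence of what the paper does; the paper just implements the low-frequency estimate by an explicit Littlewood--Paley decomposition $\vsig_j=\sum_l\chi_l*\vsig_j$ in the $v$-variable (so each piece has compact Fourier support and hence a good $L^\infty$ bound $\|\vsig_{j,l}(\alpha,\cdot)\|_\infty\lc 2^{ld}\|\vsig_j(\alpha,\cdot)\|_1$), while your plan substitutes an $L^1$-Poincar\'e/Sobolev-embedding argument. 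Both routes can work and avoiding the explicit LP layer is a genuine simplification in principle.

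However, you have the role of the mean-zero condition backwards, and this produces a real gap. The condition $\int\vsig_j(\alpha,v)\,dv=0$ is useful in the \emph{high}-frequency regime of the $L^2$-seminorms $\sK_{\delta,1}^\eta$, $\sK_{\delta,2}^\eta$: when $2^jt\ge1$, one writes $\eta*\vsig_j^{(2^jt)}(\alpha,x)=\int[\eta(x-y)-\eta(x)]\vsig_j^{(2^jt)}(\alpha,y)\,dy$ and exploits the smoothness of $\eta$. In the \emph{low}-frequency regime $2^jt\le1$ one is looking at $\widehat{\vsig_j}(\alpha,\xi/(2^jt))$ at \emph{large} arguments $\xi/(2^jt)$, so $\widehat{\vsig_j}(\alpha,0)=0$ is irrelevant there; what gives decay is Young's inequality $\|\eta*\vsig_j^{(s)}(\alpha,\cdot)\|_2\le\|\eta\|_q\,s^{d/p'}\|\vsig_j(\alpha,\cdot)\|_p$ (with $p>1$, $1/q=3/2-1/p$) combined with the Sobolev embedding $\int\|\vsig_j(\alpha,\cdot)\|_{L^p}\,d\alpha\lc\|\vsig_j\|_{\cB_\eps}$ implicit in Lemma \ref{LemmaKerIntAgainstdeltapower}. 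Your description invokes the cancellation in precisely the regime where it does nothing.

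The same confusion breaks your treatment of the low-frequency regime of $\sK_{\delta,3}$, $\sK_{\delta,4}$. Your Poincar\'e inequality correctly bounds $\iint_{|v|\le\rho}|\vsig_j-\mathrm{avg}_\rho(\vsig_j)|\lc\rho^\eps\|\vsig_j\|_{\cB_{\eps,3}}$, but the remaining piece is
\begin{equation*}
|B_\rho|\int\big|\mathrm{avg}_\rho(\vsig_j)(\alpha)\big|\,d\alpha=\int\Big|\int_{|v|\le\rho}\vsig_j(\alpha,v)\,dv\Big|\,d\alpha,
\end{equation*}
and the mean-zero identity rewrites this only as $\int|\int_{|v|>\rho}\vsig_j(\alpha,v)\,dv|\,d\alpha$, which is $O(\|\vsig_j\|_{L^1})$ — not small — for $\rho\le1$. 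Summing over the infinitely many $j$ with $2^jR\le1$ then diverges. To rescue this, you need an argument that produces $\rho^{\delta'}$ decay \emph{without} mean-zero, which is exactly the Sobolev embedding of Lemma \ref{LemmaKerIntAgainstdeltapower} (or the telescoping form of your Poincar\'e estimate: $\mathrm{avg}_{\rho}(\vsig_j)=\mathrm{avg}_1(\vsig_j)-\sum_{m}[\mathrm{avg}_{2^{-m-1}}(\vsig_j)-\mathrm{avg}_{2^{-m}}(\vsig_j)]$, with each increment controlled by the Poincar\'e bound and the sum converging since $\eps>0$; here $\cB_{\eps,4}$ controls $\mathrm{avg}_1$, and the cancellation hypothesis is never used). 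As written, your low-frequency treatment has a genuine hole, and the claim that cancellation "controls the contribution of $\mathrm{avg}_\rho(\vsig_j)$ ... when the low-frequency sum is reorganized dyadically" is too vague to verify and appears to be false as stated.
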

The proofs of  these propositions will be given in \S
\ref{pfPropKerDecompK} and \S\ref{pfPropKerSumVsig}

%The rest of this section is devoted to the proof of Theorem \ref{ThmResDecompKer}, which we separate into two propositions.
\subsubsection{ Proof of Proposition  \ref{PropKerDecompK}}
\label{pfPropKerDecompK}
We need several lemmata.
%\begin{prop}\label{PropKerDecompK}
%Suppose $\eps\in \q(0,1\w]$, $0<\delta<\eps$.  Then, $\forall K\in \sK_\eps$, there exists a bounded set $\q\{\vsig_j : j\in \Z\w\}\subset \sBtp{\delta}{\R^n\times \R^d}$ with $\int \vsig_j\q(\alpha, v\w)\: dv=0$, $\forall j$ and 
%\begin{equation*}K\q(\alpha,v\w)=\sum_{j\in \Z} \dil{\vsig_j}{2^j}\q(\alpha,v\w),\end{equation*}
%with the sum converging in the sense of the topology on $L\sS'\q(\R^n\times \R^d\w)$.  Furthermore, the $\vsig_j$ can be chosen so that
%\begin{equation*}\sup_{j\in \Z} \sBN{\delta}{\vsig_j} \leq C_{\delta,\eps,d} \sKN{\eps}{K}.\end{equation*}\end{prop}

%Clearly Propositions \ref{PropKerDecompK} and \ref{PropKerSumVsig} together imply Theorem \ref{ThmResDecompKer}.
%We now turn to the proofs of these two propositions.  We begin with the proof of Proposition \ref{PropKerDecompK}.

\begin{lemma}\label{LemmaKerIntAgainstdeltapower}
Let $\eps>0$.  Then, there exists $\delta=\delta\q(\eps,d\w)>0$ such that for $\vsig\in \sBtp{\eps}{\R^n\times \R^d}$, we have
\begin{equation*}
\iint \q|v\w|^{-\delta} \q|\vsig\q(\alpha,v\w)\w|\: d\alpha\: dv\leq C_{\eps, d} \sBN{\eps}{\vsig}.
\end{equation*}
\end{lemma}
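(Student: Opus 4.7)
The plan is to reduce the $(\alpha,v)$-integral to a one-variable $L^1$-Besov embedding on $\R^d$. Set
$$G(v):=\int_{\R^n}|\vsig(\alpha,v)|\,d\alpha,$$
so by Fubini the quantity to bound equals $\int_{\R^d}|v|^{-\delta}G(v)\,dv$. The two features of $G$ that I will exploit are: (i) $\|G\|_{L^1(\R^d)}=\|\vsig\|_{L^1(\R^n\times\R^d)}\le \sBN{\eps}{\vsig}$, since for instance $\sBN{\eps,4}{\cdot}$ already dominates the $L^1$ norm; and (ii) by the pointwise inequality $\big||\vsig(\alpha,v+h)|-|\vsig(\alpha,v)|\big|\le |\vsig(\alpha,v+h)-\vsig(\alpha,v)|$ and integration in $\alpha$,
$$\|G(\cdot+h)-G\|_{L^1(\R^d)}\le \iint |\vsig(\alpha,v+h)-\vsig(\alpha,v)|\,d\alpha\,dv\le |h|^\eps\,\sBN{\eps}{\vsig}$$
for $0<|h|\le 1$, via the semi-norm $\|\cdot\|_{\cB_{\eps,3}}$. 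Thus $G$ inherits first-order $L^1$-Besov regularity of order $\eps$ in $\R^d$, uniformly in $n$.

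The remainder is a standard $B^\eps_{1,\infty}\cap L^1\hookrightarrow L^1(|v|^{-\delta}\,dv)$ argument via Littlewood-Paley. Decompose $G=\sum_{k\in\Z}P_kG$ where $P_k$ is convolution with $\psi_k(v)=2^{kd}\psi(2^kv)$, for a Schwartz function $\psi$ of mean zero whose Fourier transform is supported in an annulus $\{|\xi|\sim 1\}$. Using $\int\psi_k=0$,
$$\|P_kG\|_{L^1}\le \int|\psi_k(u)|\,\|G(\cdot-u)-G\|_{L^1}\,du,$$
and splitting the $u$-integral at $|u|=1$ (where the region $|u|\le 1$ is controlled by the Besov bound (ii) and the region $|u|\ge 1$ by the rapid decay of $\psi_k$ together with $2\|G\|_{L^1}$) yields
$$\|P_kG\|_{L^1}\le C\,\sBN{\eps}{\vsig}\min\bigl(1,\,2^{-k\eps}\bigr)$$
for every $k\in\Z$.

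By Bernstein's inequality $\|P_kG\|_{L^\infty}\lesssim 2^{kd}\|P_kG\|_{L^1}$ (using the frequency support of $P_kG$), splitting $\int|v|^{-\delta}|P_kG(v)|\,dv$ at $|v|=2^{-k}$ produces, for any $0<\delta<d$, the uniform bound $C\,2^{k\delta}\|P_kG\|_{L^1}$. Summing $|G|\le\sum_k|P_kG|$ (by monotone convergence), the tail $k\le 0$ contributes $\lesssim \sum_{k\le 0}2^{k\delta}\|G\|_{L^1}$, a convergent geometric sum bounded by $\sBN{\eps}{\vsig}$, and the tail $k\ge 0$ contributes $\lesssim \sum_{k\ge 0}2^{-k(\eps-\delta)}\sBN{\eps}{\vsig}$, which converges provided $\delta<\eps$. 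Choosing any fixed $\delta\in(0,\min(\eps,d))$ yields the claimed estimate with constant $C_{\eps,d}$ depending only on $d$ and $\eps-\delta$, and crucially not on $n$.

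The only genuinely conceptual point is step (ii) above — that passing to $G(v)=\int|\vsig(\alpha,v)|\,d\alpha$ preserves the first-order $L^1$-Besov regularity despite the absolute value — and even that follows from a one-line triangle inequality. The rest is the standard Littlewood-Paley local-embedding argument, so I do not expect any substantial obstacle.
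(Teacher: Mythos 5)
Your proof is correct in spirit and essentially reproves the same fact as the paper's, but by a different route. The paper reduces $\iint_{|v|\le 1} |v|^{-\delta}|\vsig|\,d\alpha\,dv$ to a single Hölder inequality paired with a cited Besov--Sobolev embedding $B^\eps_{1,\infty}(\R^d)\hookrightarrow L^p$ (applied via Minkowski in the $\alpha$-slices). You instead pass immediately to the scalar function $G(v)=\int|\vsig(\alpha,v)|\,d\alpha$, note by the triangle inequality that it inherits the $L^1$-Besov regularity, and then reprove the embedding $B^\eps_{1,\infty}(\R^d)\cap L^1 \hookrightarrow L^1(|v|^{-\delta}\,dv)$ from scratch via Littlewood--Paley and Bernstein. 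The reduction to the scalar function $G$ is arguably cleaner than the paper's mixed-norm manipulation, and makes the uniformity in $n$ transparent.

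One technical slip to flag: the pointwise bound ``$|G|\le\sum_{k\in\Z}|P_kG|$ by monotone convergence'' is not actually justified for the \emph{homogeneous} Littlewood--Paley decomposition. Your estimate gives $\|P_kG\|_1\lesssim\min(1,2^{-k\eps})\,\sBN{\eps}{\vsig}$, which is merely bounded (not summable) for $k\le 0$, so $\sum_{k\in\Z}P_kG$ does not converge absolutely in $L^1$, and for an $L^1$ function the homogeneous partial sums only converge to $G$ distributionally, not obviously pointwise. The standard fix is to switch to an inhomogeneous decomposition: write $G=P_{\le 0}G + \sum_{k\ge 1}\Delta_k G$ where $P_{\le 0}$ is convolution with a fixed mean-one bump. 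The high-frequency tail now converges absolutely in $L^1$ (hence a.e.), so your estimate applies there; and the low-frequency term is handled directly by $\|P_{\le 0}G\|_1+\|P_{\le 0}G\|_\infty\lesssim\|G\|_1$ after splitting $\int|v|^{-\delta}|P_{\le 0}G|\,dv$ at $|v|=1$. With that modification the argument is complete.
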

\begin{proof}
Clearly $\iint_{\q|v\w|>1} \q|v\w|^{-\delta} \q|\vsig\q(\alpha, v\w)\w|\: d\alpha\: dv\lesssim \LpN{1}{\vsig}\leq \sBN{\eps}{\vsig}$,
so it suffices to prove \Be\label{vle1integral}
\iint_{\q|v\w|\leq 1} \q|v\w|^{-\delta} \q|\vsig\q(\alpha,v\w)\w|\: d\alpha\: dv\lesssim \sBN{\eps}{\vsig}.
\Ee
By a weak version of the Sobolev embedding theorem (see \cite{stein-si} or \cite{triebel}),  there exists $p=p\q(\eps,d\w)>1$ such that
\begin{equation*}
\int \q\Big( \int \q|\vsig\q(\alpha, v\w)\w|^p \: dv \w\Big)^{\frac{1}{p}} \: d\alpha \lesssim \sBN{\eps}{\vsig}.
\end{equation*}
Let $p'$ be dual to $p$ and let $\delta<1/p'$.  We have, by H\"older's inequality,
and then Minkowski's inequality, 
\begin{align*}
&\iint_{\q|v\w|\leq 1} \q|v\w|^{-\delta} \q|\vsig\q(\alpha,v\w)\w|\: d\alpha\: dv \leq \q\Big(\int_{|v|\le 1}\q|v\w|^{-\delta p'} \: dv\w\Big)^{\frac{1}{p'}}\q\Big( \int \q\Big(\int \q|\vsig\q(\alpha,v\w)\w|\: d\alpha\w\Big)^p \: dv \w\Big)^{\frac{1}{p}} \: d\alpha
\\
&\lc\,\q\Big( \int \q\Big(\int \q|\vsig\q(\alpha,v\w)\w|\: d\alpha\w\Big)^p \: dv \w\Big)^{\frac{1}{p}} \: d\alpha 
\lesssim \sBN{\eps}{\vsig}.
\end{align*}
This shows \eqref{vle1integral} and completes the proof of the lemma.
\end{proof}

\begin{lemma}\label{LemmaKerSumsConverge}
Let $\q\{\vsig_j : j\in \Z\w\}\subset \sBtp{\eps}{\R^n\times \R^d}$ be a bounded set with $\int \vsig_j\q(\alpha, v\w)\: dv=0$,  for all $j\in\bbZ$.  The sum
\begin{equation*}
\sum_{j\in \Z} \dil{\vsig_j}{2^j}\q(\alpha,v\w)
\end{equation*}
converges in the sense of the topology on $L\sS'\q(\R^n\times \R^d\w)$ (and {\it a fortiori} in the sense of tempered distributions).
\end{lemma}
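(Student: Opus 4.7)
\textbf{Proof proposal for Lemma \ref{LemmaKerSumsConverge}.}
The plan is to show that the partial sums $S_M := \sum_{|j|\le M} \dil{\vsig_j}{2^j}$ are absolutely Cauchy in a single normed space $M\sS_N'(\R^n\times\R^d)$ for some fixed $N$, and then upgrade convergence from $M\sS'$ to $L\sS'$ by showing that for every $\phi_2\in\sS(\R^d)$ the associated measures $\beta_{S_M}(\phi_2)$ converge absolutely in $L^1(\R^n)$. Concretely, for $f\in \sS(\R^n\times\R^d)$ write
\begin{equation*}
\big\langle \dil{\vsig_j}{2^j},f\big\rangle = \iint \vsig_j(\alpha,v)\, f(\alpha, 2^{-j}v)\, d\alpha\,dv,
\end{equation*}
and set $\|f\|_{*,N}:=\sup_{\alpha,x}\sum_{|\gamma|\le N}(1+|x|)^N|\partial_x^\gamma f(\alpha,x)|$. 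I claim there exist $N\in\N$, $\delta_0>0$, and $C=C(\eps,d)$ such that
\begin{equation*}
\big|\langle \dil{\vsig_j}{2^j},f\rangle\big| \le C \min\{2^{-j\eps},\,2^{j\delta_0}\}\,\|\vsig_j\|_{\cB_\eps}\,\|f\|_{*,N},\qquad j\in\Z,
\end{equation*}
which, being summable in $j$, gives the claimed $M\sS_N'$-convergence.

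For $j\ge 0$ I use the cancellation $\int \vsig_j(\alpha,v)\,dv=0$ to subtract $f(\alpha,0)$, combined with the elementary estimate
\begin{equation*}
|f(\alpha,2^{-j}v)-f(\alpha,0)|\le C\,\min\{2^{-j}|v|,1\}\|f\|_{*,1}\le C(2^{-j}|v|)^\eps\|f\|_{*,1},
\end{equation*}
and the bound $\iint |v|^\eps|\vsig_j|\,d\alpha\,dv\le \|\vsig_j\|_{\cB_{\eps,4}}\le\|\vsig_j\|_{\cB_\eps}$, yielding the factor $2^{-j\eps}$. For $j<0$ the cancellation is not what delivers smallness; instead I use the Schwartz decay of $f$ in the second variable, $|f(\alpha,2^{-j}v)|\le\|f\|_{*,N}(1+2^{-j}|v|)^{-N}$. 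Picking $N$ larger than the exponent $\delta_0$ supplied by Lemma \ref{LemmaKerIntAgainstdeltapower}, I estimate $(1+2^{-j}|v|)^{-N}\le (2^{-j}|v|)^{-\delta_0}=2^{j\delta_0}|v|^{-\delta_0}$, and then invoke Lemma \ref{LemmaKerIntAgainstdeltapower} to conclude $\iint |v|^{-\delta_0}|\vsig_j|\le C\|\vsig_j\|_{\cB_\eps}$, producing the desired $2^{j\delta_0}$.

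Absolute summability of $\min\{2^{-j\eps},2^{j\delta_0}\}$ together with the uniform bound $\sup_j\|\vsig_j\|_{\cB_\eps}<\infty$ shows $\{S_M\}$ is Cauchy in $M\sS_N'$, which is complete, so the sum converges to some $K\in M\sS_N'\subset M\sS'$. To finish, applying the same two estimates with $f=\phi_1\otimes\phi_2$ and $\|\phi_1\|_\infty\le 1$ gives
\begin{equation*}
\big\|\beta_{\dil{\vsig_j}{2^j}}(\phi_2)\big\|_{L^1(\R^n)} \le C\min\{2^{-j\eps},\,2^{j\delta_0}\}\|\vsig_j\|_{\cB_\eps}\,\big(\|\phi_2\|_{C^1}+\|\phi_2\|_{*,N}\big),
\end{equation*}
so $\sum_j \beta_{\dil{\vsig_j}{2^j}}(\phi_2)$ converges absolutely in $L^1(\R^n)$; by the uniqueness of distributional limits this $L^1$-limit equals $\beta_K(\phi_2)$, so $K\in L\sS'$ and the convergence takes place in the $L\sS'$-topology. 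The only non-routine step is the $j<0$ analysis, where the singular weight $|v|^{-\delta_0}$ against the non-smooth function $\vsig_j$ must be controlled; this is precisely the role of Lemma \ref{LemmaKerIntAgainstdeltapower}, and explains why a cancellation-free argument still yields geometric decay.
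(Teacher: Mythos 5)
Your proof is correct and takes essentially the same approach as the paper: for $j\ge 0$ use the cancellation of $\vsig_j$ to gain a factor $(2^{-j}|v|)^\eps$ controlled by $\|\vsig_j\|_{\cB_{\eps,4}}$, and for $j<0$ pay a singular weight $|v|^{-\delta_0}$ which is absorbed by Lemma \ref{LemmaKerIntAgainstdeltapower}, producing geometric decay $\min\{2^{-j\eps},2^{j\delta_0}\}$ and hence Cauchy convergence in an $M\sS_N'$-norm. The one stylistic difference is at the end: the paper simply invokes the completeness of $L\sS'$ (being a closed subspace of $M\sS'$) to conclude the limit lies in $L\sS'$ and the convergence is in that topology, whereas you re-verify this directly by showing absolute $L^1(\R^n)$-convergence of $\beta_{\dil{\vsig_j}{2^j}}(\phi_2)$ — a slightly more explicit route to the same conclusion.
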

\begin{proof}
Let $f\in \sS\q(\R^n\times \R^d\w)$.  We will show, for some $\delta>0$,
\begin{equation*}
\q\Big|\int \dil{\vsig_j}{2^j}\q(\alpha, v\w) f\q(\alpha, v\w)\: d\alpha\: dv\w\Big| \lesssim 2^{-\q|j\w|\delta} \sup_{\alpha\in \R^n, x\in \R^d} \sum_{\q|\gamma\w|\leq 1} \q(1+\q|x\w|\w) \q|\partial_x^\gamma f\q(\alpha, x\w)\w|,
\end{equation*}
and the result will follow by the completeness of $L\cS'$.

First we consider the case $j\geq 0$.  In this case, we have
\begin{equation*}
\begin{split}
&\Big|\iint \dil{\vsig_j}{2^j}\q(\alpha, v\w) f\q(\alpha, v\w)\: d\alpha\: dv\Big|
= \Big|\iint \dil{\vsig_j}{2^j}\q(\alpha, v\w) \q[ f\q(\alpha, v\w)- f\q(\alpha, 0\w)\w]\: d\alpha\: dv\w\Big|
\\&\lesssim \Big(\sup_{\alpha\in \R^n, x\in \R^d} \sum_{\q|\gamma\w|\leq 1}\q| \partial_x^{\gamma} f\q(\alpha, x\w) \w|\Big) \iint \q|\dil{\vsig_j}{2^j}\q(\alpha, v\w)\w| \q|v\w|^\eps \: dv\: d\alpha
\\&\lesssim 2^{-j\eps}  \Big(\sup_{\alpha\in \R^n, x\in \R^d} \sum_{\q|\gamma\w|\leq 1}\q| \partial_x^{\gamma} f\q(\alpha, x\w) \w|\Big) \sBN{\eps}{\vsig_j},
\end{split}
\end{equation*}
as desired.

We now turn to $j<0$.  Take $\delta>0$ as in Lemma \ref{LemmaKerIntAgainstdeltapower}.  We have
\begin{equation*}
\begin{split}
&\Big|\iint \dil{\vsig_j}{2^j} \q(\alpha, v\w) f\q(\alpha, v\w) \,d\alpha\, dv \Big| 
\leq \Big(\sup_{\alpha\in \R^n, x\in \R^d} \q|x\w|^{\delta} \q|f\q(\alpha, x\w)\w| \Big) \iint \q|\dil{\vsig_j}{2^j}\q(\alpha, v\w)\w|\,\q|v\w|^{-\delta}\: d\alpha\: dv\\
&= \big(\sup_{\alpha\in \R^n, x\in \R^d} \q|x\w|^{\delta} \q|f\q(\alpha, x\w)\w| \big)\,2^{j\delta} \iint \q|\vsig_j\q(\alpha, v\w)\w|\,\q|v\w|^{-\delta}\: d\alpha\: dv\\
&\lesssim \big(\sup_{\alpha\in \R^n, x\in \R^d} \q|x\w|^{\delta} \q|f\q(\alpha, x\w)\w| \big)\,2^{j\delta} \sBN{\eps}{\vsig_j},
\end{split}
\end{equation*}
where in the last line we have used our choice of $\delta$ and Lemma \ref{LemmaKerIntAgainstdeltapower}.  
%This completes the proof.
\end{proof}

%Fix $\psith\in \CziRd$ a radial and non-negative function with $\psith\equiv 1$ on a neighborhood of $0$.  Let $\phith\q(\xi\w) = \psith\q(\xi\w)-\psith\q(2\xi\w)$.
%Let $\psit$ and $\phit$ be the inverse Fourier transforms of $\psith$ and $\phith$, respectively.
%Not that $\psit\in \sS\q(\R^d\w)$ and $\phit\in \sS_0\q(\R^d\w)$ (the space of Schwartz functions, all of whose moments vanish).
%Let $Q_j f= f*\dil{\phit}{2^j}$.  Note that, in the sense of tempered distributions, $I=\sum_{j\in \Z} Q_j$.
Let $\phi\in \Czip{B^d\q(1/2\w)}$ be a radial, non-negative function with $\int \phi=1$.  For $j\in \Z$ let $\dil{\phi}{2^j}\q(v\w) = 2^{jd} \phi\q(2^j v\w)$.
Let $\psi\q(x\w) = \phi\q(x\w) - \frac{1}{2}\phi\q(x/2\w)\in \Czip{B^d\q(1\w)}$.  Let $Q_j f = f*\dil{\psi}{2^j}$.
Note that $f=\sum_{j\in \Z} Q_jf$ for $f\in \cS(\bbR^d)$ with convergence in the sense of tempered distributions.

The heart of the proof of Proposition \ref{PropKerDecompK} is the following lemma.
\begin{lemma}\label{LemmaKerQjKIsRightForm}
Suppose $0<\eps\le 1$, $0<\delta<\eps$ and let $K\in \sK_\eps$.  Let 
\[\vsig(\alpha,v)=Q_0K(\alpha,v).\]
Then, $\vsig\in \sBtp{\delta}{\R^n\times \R^d}$ and
$$\|\vsig\|\ci{\cB_\delta} \le C_{\delta,\eps,d}\|K\|_{\sK_\eps}\,.$$
%Define $\vsig_j$, for $j\in \Z$, by\begin{equation*}\dil{\vsig_j}{2^j}\q(\alpha, v\w) = Q_j K \q(\alpha, v\w).\end{equation*}
%Then, $\vsig_j\in \sBtp{\delta}{\R^n\times \R^d}$ and\begin{equation*}
%\sup_{j\in \Z} \sBN{\delta}{\vsig_j} \leq C_{\delta, \eps, d} \sKN{\eps}{K}.
%\end{equation*}
\end{lemma}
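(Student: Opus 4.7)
The plan is to verify each of the four defining seminorms of $\cB_\delta$ separately, exploiting that $\vsig = Q_0 K = \psi * K$ (convolution in the $v$-variable), where $\psi$ is smooth, compactly supported, and has mean zero.

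As a preliminary, an argument identical to the proof of Lemma~\ref{LemmaResultsEtaDoesntMatter}, but with an arbitrary Schwartz $\tau$ playing the role of $\eta'$ and the fixed $\eta$ satisfying \eqref{EqnResultsEta}, yields the bound
\[
\int (1+|\alpha_i|)^\eps \|\tau * K(\alpha,\cdot)\|_{L^2(\R^d)} \, d\alpha \lesssim_\tau \|K\|_{\sK_\eps}.
\]
Applied with $\tau = \psi$ this controls $\|\vsig(\alpha,\cdot)\|_{L^2}$; applied to $\tau = D_{h,i}\psi$ or $\tau = \partial_{v_j}\psi$ (with the $\alpha$-weight dropped as needed) it provides the analogous inputs for $\cB_{\delta,2}$ and $\cB_{\delta,3}$.

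For $\cB_{\delta,1}$ and $\cB_{\delta,4}$ I would split the $v$-integration into $|v|\le 4$ and $|v|>4$. On the compact part, Cauchy--Schwarz together with the preliminary $L^2$ bound handles both seminorms, since $(1+|\alpha_i|)^\delta \le (1+|\alpha_i|)^\eps$ and $(1+|v|)^\delta$ is bounded. On the tail, the vanishing mean of $\psi$ gives
\[
\vsig(\alpha,v) = \int [K(\alpha,v-w) - K(\alpha,v)]\psi(w)\, dw.
\]
Dyadically decomposing $|v|\sim 2^k$ with $k\ge 2$ and restricting $|w|\le 1$, two estimates are then available for each $k$: the H\"ormander-type bound from $\sK_{\eps,5}$ gives $\iint_{|v|\sim 2^k}|K(\alpha,v-w)-K(\alpha,v)|\, dv\, d\alpha \lesssim 2^{-k\eps}\|K\|_{\sK_{\eps,5}}$, while the triangle inequality combined with $\sK_{\eps,3}$ gives the $\alpha$-weighted companion $\iint_{|v|\sim 2^k}(1+|\alpha_i|)^\eps |K(\alpha,v-w)-K(\alpha,v)|\, dv\, d\alpha \lesssim \|K\|_{\sK_{\eps,3}}$. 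These are bridged by the elementary interpolation
\[
\int (1+|\alpha_i|)^\delta f \,d\alpha \le (1+M)^\delta \int_{|\alpha_i|\le M}\! f\,d\alpha + M^{\delta-\eps}\int (1+|\alpha_i|)^\eps f\,d\alpha,
\]
which, optimized at $M \sim 2^k$, produces a per-annulus bound $\lesssim 2^{k(\delta-\eps)}\|K\|_{\sK_\eps}$; this is summable in $k\ge 2$ precisely because $\delta<\eps$. For $\cB_{\delta,4}$ the factor $(1+|v|)^\delta \lesssim 2^{k\delta}$ paired with the unweighted $\sK_{\eps,5}$ bound yields the same geometric series.

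For $\cB_{\delta,2}$, set $D_{h,i}\vsig = \psi*(D_{h,i}K)$ and apply the same strategy to $D_{h,i}K$: on $|v|\le 4$ use $\sK_{\eps,2}^\eta$ to extract the $h^\eps$ factor in the $L^2$ bound; on $|v|>4$ exploit the mean-zero decomposition and interpolate, per dyadic scale, between the $\sK_{\eps,5}$ estimate applied to each of $K(\alpha+he_i,\cdot)$ and $K(\alpha,\cdot)$ (yielding $2^{-k\eps}$) and the $\sK_{\eps,4}$ annular estimate (yielding $h^\eps$); using $\min(X,Y)\le X^\theta Y^{1-\theta}$ with $\theta=(\eps-\delta)/\eps$ one obtains $\lesssim h^\delta\|K\|_{\sK_\eps}$. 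For $\cB_{\delta,3}$, observe $\partial_{v_j}\vsig = (\partial_{v_j}\psi)*K$ and that $\partial_{v_j}\psi$ is itself $C_0^\infty$ with mean zero, so the $\cB_{\delta,1}$ argument applied to this kernel gives $\iint |\nabla_v\vsig|\, d\alpha\, dv \lesssim \|K\|_{\sK_\eps}$; then the mean value theorem yields $\iint |\vsig(\alpha,v+h)-\vsig(\alpha,v)|\, d\alpha\, dv \le |h|\iint|\nabla_v\vsig| \le |h|^\delta\|K\|_{\sK_\eps}$ for $|h|\le 1$. The principal technical obstacle is the tension on the tail $|v|>4$: $\sK_{\eps,5}$ provides decay in $|v|$ but no $\alpha_i$-weight, while $\sK_{\eps,3}$ carries the weight but no $R$-decay. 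The interpolation in $|\alpha_i|$ described above, which balances a truncation parameter $M$ against the dyadic scale $2^k$, is the mechanism that reconciles these and produces the required weighted bound, but only under the strict inequality $\delta<\eps$.
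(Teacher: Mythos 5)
Your proof is correct, and it follows the same overall strategy as the paper's — control each of the four seminorms separately, use Cauchy--Schwarz and Lemma~\ref{LemmaResultsEtaDoesntMatter} on a bounded neighborhood of $v=0$, and exploit the zero mean of $\psi$ together with $\sK_{\eps,5}$, $\sK_{\eps,3}$, $\sK_{\eps,4}$ on the tail — but you organize the decomposition differently and in a way that is in places cleaner. For $\cB_{\delta,1}$ the paper splits the $x$-integration directly according to whether $|x|\le 1$, $1<|x|\le 1+|\alpha_i|$, or $|x|>1+|\alpha_i|$; your fixed compact/tail split at $|v|\sim 4$ combined with the truncation-in-$\alpha_i$ interpolation at $M=2^k$ reproduces the same underlying dichotomy (where $\sK_{\eps,5}$ controls the small-$\alpha_i$ region and $\sK_{\eps,3}$ the large-$\alpha_i$ region), so these two are essentially equivalent. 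The genuine difference is in $\cB_{\delta,3}$: the paper decomposes into three regions ($|x|\le 10$, an intermediate dyadic range up to $10|h|^{-1}$ which produces a factor $|h|\log(2+|h|^{-1})\|K\|_{\sK_{0,3}}$, and $|x|\ge 9|h|^{-1}$ giving $h^\eps\|K\|_{\sK_{\eps,5}}$), while you prove the single estimate $\iint|\nabla_v\vsig|\,d\alpha\,dv\lesssim\|K\|_{\sK_\eps}$ (compact part by Cauchy--Schwarz and $\sK^{\nabla\psi}_{\eps,1}$, tail by the mean zero of $\nabla\psi$ and $\sK_{\eps,5}$) and then apply the mean value theorem to get the bound $|h|\lesssim|h|^\delta$ directly, avoiding the logarithmic loss entirely; this is a slight simplification and is perfectly valid since $\delta<\eps\le1$. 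Similarly for $\cB_{\delta,2}$ your $\min(2^{-k\eps},h^\eps)$ interpolation in each annulus avoids the $h^\eps\log(2+h^{-1})$ term the paper accrues in $(V)$, though of course both suffice. Two small wording points: the expression ``$\tau=D_{h,i}\psi$'' in your preliminary paragraph is a slip, since $D_{h,i}$ differences the $\alpha_i$-variable while $\psi$ depends only on $v$; what you actually use, and correctly so later, is $\sK_{\eps,2}^\psi$ applied to $K(\alpha+he_i,\cdot)-K(\alpha,\cdot)$, i.e. the same $\tau=\psi$ applied to the $\alpha$-difference of $K$. Also your preliminary bound is not merely an argument ``identical to'' Lemma~\ref{LemmaResultsEtaDoesntMatter}: it \emph{is} that lemma, specialized to $t=1$.
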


\begin{proof}[Proof of Proposition \ref{PropKerDecompK} given Lemma \ref{LemmaKerQjKIsRightForm}]
Since  $\dil{K}{2^j}$ is of the same form as $K$, the lemma also yields,
with $\vsig_j:=(Q_j K)^{(2^{-j})}$,
%it suffices to prove the result for $j=0$.
$$\sup_{j\in \bbZ}\big \|\vsig_j
\big\|_{\sK_\eps}\,
\le C_{\delta,\eps,d}\|K\|_{\sK_\eps}\,.$$
%Let $\vsig_j=$ be as in Lemma \ref{LemmaKerQjKIsRightForm}.
As $\int \vsig_j(\alpha,x) dx=0$ for all $j$ it follows from standard estimates that $K=\sum_{j\in \Z} \dil{\vsig_j}{2^j}$, in the sense of tempered distributions.
Since we know $\sum_{j\in \Z} \dil{\vsig_j}{2^j}$ converges in the sense of the topology on
$L\sS'\q(\R^n\times \R^d\w)$ it follows that the sum can be taken in that sense as well.
The result now follows from Lemma \ref{LemmaKerQjKIsRightForm}.
\end{proof}

\begin{proof}[Proof of Lemma \ref{LemmaKerQjKIsRightForm}]
Note that, in light of Lemma \ref{LemmaResultsEtaDoesntMatter}, we may replace the test function $\eta$
 with $\psi$
in the definition of $\sKN{\eps}{K}$.
%I.e., set $\vsig:= Q_0 K$.  
%We will show\begin{equation*}\sBN{\delta}{\vsig}\leq C_{\delta, \eps, d} \sKN{\eps}{K},\end{equation*}and the proof will be complete.

We begin by bounding $\|\vsig\|_{\cB_{\delta,1}}$ as in \eqref{Beps1} and split,
for fixed  $1\leq i\leq n$,
\begin{equation*}
\iint \q(1+\q|\alpha_i\w|\w)^{\delta} \q|\vsig\q(\alpha, x\w)\w|\: dx\: d\alpha = \iint\limits_{\q|x\w|\leq 1} + \iint\limits_{1<\q|x\w|\leq 1+\q|\alpha_i\w|} + \iint\limits_{\q|x\w|>1+\q|\alpha_i\w|}=:(I)+(II)+(III).
\end{equation*}
For $(I)$, we apply the Cauchy-Schwarz inequality to see
\begin{equation*}
\begin{split}
&(I) = \iint_{\q|x\w|\leq 1} \q(1+\q|\alpha_i\w|\w)^{\delta} \q|\vsig\q(\alpha, x\w)\w|\: dx\:d\alpha
 \lesssim \int \q(1+\q|\alpha_i\w|\w)^{\delta} \Big(\int \q|\psi*K\q(\alpha,x\w)\w|^2\: dx \Big)^{\frac{1}{2}}\: d\alpha
\leq \|K\|_{\sK_{\eps,1}^\psi}.
\end{split}
\end{equation*}
%recall, we are  using the test function $\psi$ in the definition of $\sKN{\eps}{\cdot}$, instead of $\eta$.
For $(II)$, we have
\begin{equation*}
\begin{split}
(II)&=\iint\limits_{1<\q|x\w|\leq 1+\q|\alpha_i\w|} \q(1+\q|\alpha_i\w|\w)^{\delta} \q|\vsig\q(\alpha, x\w)\w|\: dx\: d\alpha
\lesssim \sum_{k\geq 0} \iint\limits_{\substack{1+\q|\alpha_i\w|>2^k \\ 2^k \leq \q|x\w|\leq 2^{k+1}}} \q(1+\q|\alpha_i\w|\w)^{\delta} \q|\psi*K\q(\alpha,x\w)\w|\: dx\: d\alpha
\\&\lesssim \sum_{k\geq 0} 2^{k\q(\delta-\eps\w)} \iint\limits_{2^{k-1}\leq \q|x\w|\leq 2^{k+3}} \q(1+ \q|\alpha_i\w|\w)^{\eps} \q|K\q(\alpha, x\w)\w|\: dx\: d\alpha
\lesssim \|K\|_{\sK_{\eps,3} }
%\|\sum_{k\geq 0} 2^{k\q(\delta-\eps\w)}\lesssim \sKN{\eps}{K}.
\end{split}
\end{equation*}
For $(III)$, we use that $\int \psi=0$ and $\supp{\psi}\subset B^d\q(0,1\w)$ to see
\begin{equation*}
\begin{split}
(III)&=\iint\limits_{\q|x\w|>1+\q|\alpha_i\w|} \q(1+\q|\alpha_i\w|\w)^{\delta} \q|\vsig\q(\alpha, x\w)\w|\: dx\: d\alpha
\\&
\lesssim \iint_{\q|x\w|>1+\q|\alpha_i\w|} \q(1+\q|\alpha_i\w|\w)^{\delta} \Big|\int \psi\q(y\w) \q[ K\q(\alpha, x-y\w)-K\q(\alpha,x\w) \w]\Big|\: dx\: d\alpha
\\&\lesssim \sum_{k\geq 0} 2^{k\delta} \int \q|\psi\q(y\w)\w|  \iint\limits_{\substack{2^k\leq \q|\alpha_i\w|\leq 2^{k+1} \\ \q|x\w|>2^k}} \q|K\q(\alpha, x-y\w)-K\q(\alpha, x\w)\w|\: dx\: d\alpha\: dy
\\&\lesssim \sum_{k\geq 0} 2^{k\delta} \int_{\q|y\w|\leq 1} \iint_{\q|x\w|>2^k} \q|K\q(\alpha, x-y\w)-K\q(\alpha, x\w)\w|\: dx\: d\alpha\: dy
\\&\lesssim \sum_{k\geq 0} 2^{k\q(\delta-\eps\w)} 
%\|K\|_{\sK_{\eps,5}}
\sKN{\eps,5}{K}
\lesssim \sKN{\eps,5}{K},
\end{split}
\end{equation*}
as desired. Combining the estimates for $(I)$, $(II)$, $(III)$ gives
$$\|\vsig\|_{\cB_{\delta,1}}\lc\|K\|_{\sK_{\eps,1}^\psi} +\|K\|_{\sK_{\eps,3} }+
\sKN{\eps,5}{K}\lc 
\|K\|_{\sK_\eps}\,.$$

We turn to bounding $\|\vsig\|_{\sB_{\delta,2}}$.
Let $1\leq i\leq n$ and $0<h\leq 1$  and split 
\begin{equation*}
\iint \q|\vsig\q(\alpha+he_i, x\w)-\vsig\q(\alpha,x\w)\w|\: dx\: d\alpha = \iint\limits_{\q|x\w|\leq 2} + \iint\limits_{2\leq \q|x\w|\leq 10 h^{-1}} + \iint\limits_{\q|x\w|<10h^{-1}}=:(IV)+(V)+(VI).
\end{equation*}
Our goal is to show $(IV), (V), (VI)\lesssim h^{\delta} \sKN{\eps}{K}$.
We have, by the Cauchy-Schwarz inequality,
\begin{equation*}
\begin{split}
(IV) &= \iint\limits_{\q|x\w|\leq 2} \q|\vsig\q(\alpha+he_i, x\w)-\vsig\q(\alpha,x\w)\w|\: dx\: d\alpha
\\
&\lesssim \int \Big( \int \q|  \psi*\q[ K\q(\alpha+he_i, \cdot\w) - K\q(\alpha,\cdot\w) \w]\q(x\w) \w|^2\: dx \Big)^{\frac{1}{2}}\: d\alpha
\leq h^{\eps} \|K\|_{\sK^\psi_{\eps,2}}\,.
\end{split}
\end{equation*}
For $(V)$, we have
\begin{equation*}
\begin{split}
(V) &= \iint\limits_{2<\q|x\w|\leq 10 h^{-1}} \q|\vsig\q(\alpha + he_i, x\w)-\vsig\q(\alpha, x\w)\w|\: dx\: d\alpha
\\&\lesssim \sum_{1\leq 2^k\leq 10h^{-1}} \iint\limits_{2^{k-1}\leq \q|x\w|\leq 2^{k+2}} |  K\q(\alpha+he_i, x\w)-K\q(\alpha, x\w) |\: dx\:d\alpha
\\&\lesssim \sum_{1\leq 2^k \leq 10 h^{-1}} h^\eps \|K\|_{\sK_{\eps,4}}
\lc h^\eps \log (2+h^{-1}) \|K\|_{\sK_{\eps,4}}\,.
%\lesssim h^\eps \log\q(2+h^{-1}\w) \sKN{\eps}{K}\lesssim h^\delta \sKN{\eps}{K}.
\end{split}
\end{equation*}
For $(VI)$, we use that $\int \psi=0$ and $\supp{\psi}\subset B^d\q(0,1\w)$ and obtain
\begin{equation*}
\begin{split}
(VI) &= \iint\limits_{\q|x\w|\geq 10 h^{-1}} \q|\vsig\q(\alpha+he_i,x\w)-\vsig\q(\alpha, x\w)\w|\: dx\: d\alpha
\leq 2 \iint\limits_{\q|x\w|>10h^{-1}} \q| \psi* K\q(\alpha,x\w)\w|\: dx\: d\alpha
\\&\lesssim \iint\limits_{\q|x\w|>10h^{-1}} \Big|\int \psi\q(y\w) \q[K\q(\alpha,x-y\w)-K\q(\alpha,x\w)\w]\: dy\Big|\: dx\: d\alpha
\\&\lesssim \int \q|\psi\q(y\w)\w| \iint_{\q|x\w|\geq 10 h^{-1}} \q|K\q(\alpha,x-y\w)-K\q(\alpha,x\w)\w|\: dx\: d\alpha\: dy
\\&\lesssim h^{\eps} \|K\|_{\sK_{\eps,5}}.
\end{split}
\end{equation*}
Combining the estimates for $(IV)$, $(V)$, $(VI)$ we get
\[\|\vsig\|_{\cB_{\delta,2}}\lc 
\|K\|_{\sK_{\eps,2}^\psi}+\|K\|_{\sK_{\eps,4}}
+ \|K\|_{\sK_{\eps,5}}
\lc \|K\|_{\sK_{\eps}}.
\]

We now turn to bounding $\|\vsig\|_{\cB_{\delta,3}}$.
%the last term in the definition on $\sBN{\delta}{\cdot}$.  
Fix $h\in \R^d$ with $0<\q|h\w|\leq 1$.  Using that $\int \psi=0$, we have
\begin{equation*}
\begin{split}
&\iint|\vsig\q(\alpha, x+h\w)- \vsig\q(\alpha, x\w)|\: dx\: d\alpha
\\&\leq \iint\limits_{\q|x\w|\leq 10} \Big| \int_0^1 \q\inn{h}{\grad_x \psi * K\q(\alpha, x+sh\w) \w} ds \Big|\: dx\: d\alpha
\\&\quad+ \sum_{8\leq 2^k \leq 10\q|h\w|^{-1}}\:\: \iint\limits_{2^k\leq \q|x\w|\leq 2^{k+1}} \Big| \int_0^1 \q\inn{h}{\grad_x \psi*K\q(\alpha, x+sh\w)\w}\: ds \Big|\: dx\: d\alpha
\\&\quad+ 2\iint_{\q|x\w|\geq 9\q|h\w|^{-1}}  \Big| \int \psi\q(y\w) \q[  K\q(\alpha, x-y\w)-K\q(\alpha, x\w)\w]\: dy \Big|\: dx\: d\alpha
\\&=: (VII) + (VIII) + 2 (IX).
\end{split}
\end{equation*}
We need to  show $(VII), (VIII), (IX)\lesssim \q|h\w|^\delta \sKN{\eps}{K}$.

We begin with $(VII)$ and use the Cauchy-Schwarz inequality to see
\begin{equation*}
\begin{split}
(VII)&= \iint\limits_{\q|x\w|\leq 10} \Big| \int_0^1 \inn{ h }{ \grad_x \psi *K\q(\alpha, x+sh\w)}\: ds \Big|\: dx\: d\alpha
\\&\leq \q|h\w|\iint_{\q|x\w|\leq 11} \q| \grad \psi * K\q(\alpha, x\w)\w| \:dx\:d\alpha
\\&\lesssim \q|h\w|\int\Big( \int \q| \grad \psi  *K\q(\alpha, x\w) \w|^2\: dx\Big)^{\frac{1}{2}}\: d\alpha
\\&\lesssim \q|h\w| \|K\|\ci{\sK^{\nabla\psi}_{\eps,1}}\, .
\end{split}
\end{equation*}
%where in the last line we have used that $\sKN{\eps}{\cdot}$ can be equivalently defined with $\grad \psi$ in place of $\eta$ (Lemma \ref{LemmaResultsEtaDoesntMatter}).
For $(VIII)$ we have
\begin{equation*}
\begin{split}
(VIII) &=  \sum_{8\leq 2^k \leq 10\q|h\w|^{-1}}\:\: \iint\limits_{2^k\leq \q|x\w|\leq 2^{k+1}} \Big| \int_0^1 \inn{h}{\grad_x \psi*K\q(\alpha, x+sh\w)}\: ds \Big|\: dx\: d\alpha
\\&\leq \q|h\w| \sum_{8\leq 2^k \leq 10\q|h\w|^{-1}}\:\: \iint\limits_{2^{k-1}\leq \q|x\w|\leq 2^{k+2}} \q| \grad \psi * K\q(\alpha, x\w)\w|\: dx\: d\alpha
\\&\lesssim \q|h\w| \sum_{8\leq 2^k \leq 10\q|h\w|^{-1}}\:\: \iint\limits_{2^{k-2}\leq \q|x\w|\leq 2^{k+3}} \q|  K\q(\alpha, x\w)\w|\: dx\: d\alpha
\\&\lesssim \q|h\w|\sum_{8\leq 2^k \leq 10\q|h\w|^{-1}}\|K\|_{\sK_{0,3}}
\lesssim \q|h\w|\log\q(2+\q|h\w|^{-1}\w)\|K\|_{\sK_{0,3}}\,.
%\sKN{\eps}{K}
%\lesssim \q|h\w|^\delta  \sKN{\eps}{K}.
\end{split}
\end{equation*}
For  $(IX)$ we use $\supp{\psi}\subset B^d\q(0,1\w)$ and estimate
\begin{equation*}
\begin{split}
(IX)&=\iint_{\q|x\w|\geq 9\q|h\w|^{-1}}  \Big| \int \psi\q(y\w) \q[  K\q(\alpha, x-y\w)-K\q(\alpha, x\w)\w]\: dy \Big|\: dx\: d\alpha
\\&\leq \int \q|\psi\q(y\w)\w| \iint_{\q|x\w|\geq 9 \q|h\w|^{-1}} \q|K\q(\alpha, x-y\w)-K\q(\alpha, x\w)\w|\: dx\: d\alpha\: dy
\\&\lesssim h^{\eps} \sKN{\eps,5}{K},
\end{split}
\end{equation*}
as desired.  Summarizing,
$$\|\vsig\|_{\cB_{\delta,3}}\lc
 \|K\|\ci{\sK^{\nabla\psi}_{\eps,1}}+
\|K\|_{\sK_{0,3}}+
\sKN{\eps,5}{K}\lc 
\sKN{\eps}{K}$$
where in the last inequality we have used Lemma \ref{LemmaResultsEtaDoesntMatter}.

Finally  we estimate $\|\vsig\|_{\cB_{\eps,4}}$ and split 
\begin{equation*}
\iint \q(1+\q|x\w|\w)^{\delta} \q|\vsig\q(\alpha,x\w)\w|\: d\alpha\: dx = \iint\limits_{\q|x\w|\leq 10} + \iint\limits_{\q|x\w|>10} =:(X)+(XI).
\end{equation*}
We have, by the Cauchy-Schwarz inequality,
\begin{equation*}
(X) = \iint_{\q|x\w|\leq 10} \q(1+\q|x\w|\w)^{\delta} \q|\vsig\q(\alpha, x\w)\w|\: dx\: d\alpha \lesssim \int\Big(\int \q| \psi*K\q(\alpha, x\w) \w|^2\: dx\Big)^{\frac{1}{2}}\: d\alpha\lesssim \|K\|_{\sK_{\eps,1}^\psi}.
\end{equation*}
Using that $\int \psi=0$ and $\supp{\psi}\subset B^d\q(0,1\w)$, we have
\begin{equation*}
\begin{split}
(XI) &= \iint_{\q|x\w|>10} \q(1+\q|x\w|\w)^{\delta} \q|\vsig\q(\alpha,x\w)\w|\: dx\: d\alpha
\\&\lesssim \sum_{k\geq 3} 2^{k\delta} \iint\limits_{2^k\leq \q|x\w|\leq 2^{k+1}}
\Big| \int \psi\q(y\w)\q[ K\q(\alpha, x-y\w)-K\q(\alpha,x\w) \w]\: dy \Big|\: dx\: d\alpha
\\&\lesssim \sum_{k\geq 3} 2^{k\delta} \int \q|\psi\q(y\w)\w| \iint_{\q|x\w|\geq 2^k} \q|K\q(\alpha, x-y\w)-K\q(\alpha,x\w)\w|\: dx\:d\alpha\: dy
\\&
\lesssim \sum_{k\geq 3} 2^{k\q(\delta-\eps\w)}\|K\|_{\sK_{\eps,5}}
\lc \|K\|_{\sK_{\eps,5}}.
\end{split}
\end{equation*}
Hence
$$\|\vsig\|_{\cB_{\eps,4}}\lc \|K\|_{\sK_{\eps,1}^\psi}+
 \|K\|_{\sK_{\eps,5}}\lc\|K\|_{\sK_{\eps}}.$$
This completes the proof.
\end{proof}

\subsubsection{Proof of Proposition \ref{PropKerSumVsig}}
\label{pfPropKerSumVsig}

%\begin{prop}\label{PropKerSumVsig}
%Let $\eps\in \q(0,1\w]$, and $0<\delta<\eps/2$.  Suppose $\q\{\vsig_j : j\in \Z\w\}\subset \sBtp{\eps}{\R^n\times \R^d}$ is a bounded set
%satisfying $\int \vsig_j\q(\alpha, v\w)\: dv=0$, $\forall j$.  Then the sum
%\begin{equation*}
%K\q(\alpha, v\w):= \sum_{j\in \Z} \dil{\vsig_j}{2^j}\q(\alpha,v\w)
%\end{equation*}
%converges in the sense of the topology on $L\sS'\q(\R^n\times \R^d\w)$, and $K\in \sK_\delta$.  Furthermore,
%\begin{equation*}
%\sKN{\delta}{K}\leq C_{\delta, \eps,d} \sup_{j\in \Z} \sBN{\eps}{\vsig_j}.
%\end{equation*}
%\end{prop}

We  begin with a preparatory lemma.
Let $\Phi\in \sS\q(\R^d\w)$ satisfy $\int \Phi(x)dx=1$,
and let $\Psi\q(x\w) = \Phi\q(x\w) - \tfrac{1}{2}\Phi\q(\tfrac x2\w)$.  Define $Q_j f = f* \dil{\Psi}{2^j}$.

%now turn to the proof of Proposition \ref{PropKerSumVsig}.  For this, we need the following lemma.
\begin{lemma}
Let $\eps>0$ and $\vsig\in \sBtp{\eps}{\R^n\times \R^d}$.  
Then, for $l>0$,
\begin{equation}\label{EqnKer7a}
\iint \q|Q_l \vsig\q(\alpha,x\w)\w|\: dx\: d\alpha + 2^{-l} \iint \q|\grad_x Q_l \vsig\q(\alpha,x\w)\w|\: dx\:d\alpha\lesssim 2^{-l\eps}\sBN{\eps}{\vsig},
\end{equation}
\begin{equation}\label{EqnKer7b}
\iint\limits_{\q|x\w|\geq R} \q|Q_l \vsig\q(\alpha, x\w)\w|\: dx\: d\alpha + 2^{-l} \iint\limits_{\q|x\w|\geq R} \q|\grad_x Q_l \vsig\q(\alpha, x\w)\w|\: dx\: d\alpha\lesssim R^{-\eps} \sBN{\eps}{\vsig},
\end{equation}
and for $\q|h\w|\leq 1$,
\begin{equation}\label{EqnKer7c}
\iint\limits_{\q|x\w|\geq R} \big|Q_l \vsig\q(\alpha, x+h\w)- Q_l \vsig\q(\alpha,x\w)\big|\: dx\: d\alpha \lesssim \min\{ 2^l |h|,1\} \min\{2^{-l\eps} ,R^{-\eps}\} \sBN{\eps}{\vsig}.
\end{equation}

Let $0<\delta<\eps$.  Then for $R\geq 0$, $i=1,\ldots n$,
\begin{equation}\label{EqnKer7d}
\iint_{\q|x\w|\geq R} \q(1+\q|\alpha_i\w|\w)^\delta \q|Q_l\vsig\q(\alpha,x\w)\w|\: dx\: d\alpha \lesssim \min\{ 2^{-l(\eps-\delta)},  R^{-(\eps-\delta)}\} 
\sBN{\eps}{\vsig},
\end{equation}
and for all $0<\q|\tau\w|\leq 1$, $j=1,\ldots, n$,
\begin{equation}\label{EqnKer7e}
\q|\tau\w|^{-\delta} \iint_{\q|x\w|\geq R} \big| Q_l\vsig\q(\alpha+\tau e_j, x\w) - Q_l\vsig\q(\alpha,x\w)\big|\:dx\:d\alpha
 \lesssim 
 %\q(2^{-l}\wedge R^{-1}\w)^{\eps-\delta} 
 \min\{ 2^{-l(\eps-\delta)},  R^{-(\eps-\delta)}\} 
\sBN{\eps}{\vsig}.
\end{equation}
\end{lemma}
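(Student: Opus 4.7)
The approach rests on two representations of $Q_l\vsig$: setting $z=2^ly$,
\[
Q_l\vsig(\alpha,x)=\int\Psi(z)\,\vsig(\alpha,x-2^{-l}z)\,dz
\]
and, using $\int\Psi=0$,
\[
Q_l\vsig(\alpha,x)=\int\Psi(z)\big[\vsig(\alpha,x-2^{-l}z)-\vsig(\alpha,x)\big]\,dz,
\]
together with the analogous pair of formulas for $\nabla_xQ_l\vsig$ in which $\Psi$ is replaced by $2^l\nabla\Psi$ (whose integral still vanishes). Inequality \eqref{EqnKer7a} is immediate from the second (cancellation) form: Fubini and the definition of the $\cB_{\eps,3}$ seminorm yield $\int|\vsig(\alpha,x-2^{-l}z)-\vsig(\alpha,x)|\,d\alpha\,dx\le (2^{-l}|z|)^\eps\|\vsig\|_{\cB_\eps}$, and Schwartz decay of $\Psi$ (respectively $\nabla\Psi$) renders $\int|z|^\eps|\Psi(z)|\,dz$ finite; for the gradient the factor $2^l$ in $\nabla_xQ_l\vsig$ is absorbed by the $2^{-l}$ on the left.

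For \eqref{EqnKer7b} I use the first representation and localize to $|x|\ge R$ in the inner integral. Changing variables to $y=x-2^{-l}z$, the constraint $|y+2^{-l}z|\ge R$ together with $|z|\le 2^{l-1}R$ forces $|y|\ge R/2$ by the reverse triangle inequality, so the $\cB_{\eps,4}$ bound gives $(R/2)^{-\eps}\|\vsig\|_{\cB_\eps}$; the tail $|z|>2^{l-1}R$ contributes only a negligible $(2^lR)^{-N}\|\vsig\|_{\cB_\eps}$ by Schwartz decay of $\Psi$. The gradient version is the same argument with $\nabla\Psi$ in place of $\Psi$, and the extra $2^l$ inside $\nabla_xQ_l\vsig$ precisely matches the $2^{-l}$ on the left.

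Inequality \eqref{EqnKer7c} is handled by splitting on whether $2^l|h|\le 1$. In the first case, $Q_l\vsig(\alpha,x+h)-Q_l\vsig(\alpha,x)=\int_0^1\langle h,\nabla_xQ_l\vsig(\alpha,x+sh)\rangle\,ds$, and the gradient parts of \eqref{EqnKer7a} (for small $R$) and \eqref{EqnKer7b} (for $R\ge 2$, where the shift by $sh$ moves $R$ by $O(|h|)\le 1$ and is absorbed into the constant) supply $|h|\cdot 2^l\min\{2^{-l\eps},R^{-\eps}\}\|\vsig\|_{\cB_\eps}$. In the second case, the triangle inequality reduces to two applications of \eqref{EqnKer7a}--\eqref{EqnKer7b} and the prefactor $\min\{2^l|h|,1\}=1$ is consistent.

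The main new content is in \eqref{EqnKer7d} and \eqref{EqnKer7e}, which I treat via H\"older interpolation between the unweighted estimates above and their $\cB_{\eps,1}$- (resp.\ $\cB_{\eps,2}$-) weighted counterparts. For \eqref{EqnKer7d}, view $d\mu_R:=\mathbbm{1}_{|x|\ge R}|Q_l\vsig|\,dx\,d\alpha$ as a positive measure: \eqref{EqnKer7a}--\eqref{EqnKer7b} give $\mu_R(\R^n\times\R^d)\lesssim\min\{2^{-l\eps},R^{-\eps}\}\|\vsig\|_{\cB_\eps}$, while the first representation together with the $\cB_{\eps,1}$ seminorm yields $\int(1+|\alpha_i|)^\eps\,d\mu_R\le\int(1+|\alpha_i|)^\eps|Q_l\vsig|\,dx\,d\alpha\lesssim\|\vsig\|_{\cB_\eps}$; H\"older with exponents $\eps/(\eps-\delta)$ and $\eps/\delta$ interpolates to $\int(1+|\alpha_i|)^\delta\,d\mu_R\lesssim\min\{2^{-l(\eps-\delta)},R^{-(\eps-\delta)}\}\|\vsig\|_{\cB_\eps}$. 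For \eqref{EqnKer7e}, set $g(\alpha,x):=\vsig(\alpha+\tau e_j,x)-\vsig(\alpha,x)$; since $Q_l$ is a convolution in $x$ it commutes with the $\alpha$-shift, so the task reduces to bounding $|\tau|^{-\delta}\int_{|x|\ge R}|Q_lg|$. Three simultaneous bounds on $\int_{|x|\ge R}|Q_lg|$ are available---the direct estimate $\|g\|_{L^1}\le|\tau|^\eps\|\vsig\|_{\cB_{\eps,2}}$, the cancellation estimate $\|Q_lg\|_{L^1}\lesssim 2^{-l\eps}\|\vsig\|_{\cB_\eps}$ (valid because $g$ inherits the $\cB_{\eps,3}$ modulus from $\vsig$ up to a factor $2$), and the $R$-local estimate $\lesssim R^{-\eps}\|\vsig\|_{\cB_\eps}$ obtained as in \eqref{EqnKer7b}. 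Taking the appropriate minimum and dividing by $|\tau|^\delta$ yields the desired bound after a brief case analysis on whether $|\tau|$ exceeds $2^{-l}$ or $R^{-1}$. The main thing to monitor throughout is that all implicit constants remain independent of $n$; this is transparent because $\vsig$ is always integrated fully in the $\alpha$-variable and each $\cB_{\eps,k}$ seminorm is defined as a supremum over the direction index $i$ rather than a sum.
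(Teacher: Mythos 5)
Your proof is correct, and for most of the statement it follows the paper's route: \eqref{EqnKer7a} by the cancellation form of $Q_l$ together with the $\cB_{\eps,3}$ modulus of continuity and Schwartz decay of $\Psi$ (and $\nabla\Psi$); \eqref{EqnKer7b} by splitting the convolution kernel at $|z|\approx 2^l R$; \eqref{EqnKer7c} by the fundamental theorem of calculus together with \eqref{EqnKer7a}--\eqref{EqnKer7b} for $2^l|h|\leq 1$, and the triangle inequality otherwise; and \eqref{EqnKer7e} via the same four cases ($R\lessgtr 2^l$, $|\tau|\lessgtr 2^{-l}$ or $R^{-1}$) that the paper uses. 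The one place where you diverge is \eqref{EqnKer7d}: the paper cuts the $\alpha_i$-integral at the threshold $2^l$ (resp.\ $R$) and handles the two regions separately, using $(1+|\alpha_i|)^{\delta-\eps}\lesssim 2^{-l(\eps-\delta)}$ on the far region, whereas you apply H\"older with exponents $\eps/\delta$ and $\eps/(\eps-\delta)$ against the positive measure $\mathbbm 1_{|x|\geq R}|Q_l\vsig|\,dx\,d\alpha$, interpolating between the mass bound from \eqref{EqnKer7a}--\eqref{EqnKer7b} and the uniform $\cB_{\eps,1}$-weighted bound. These two arguments are equivalent in strength (the case split is essentially a poor man's interpolation), but yours is cleaner to write down, buys the same $n$-independence for free, and makes transparent exactly which two endpoint estimates are being traded off. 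One small point worth making explicit in \eqref{EqnKer7a}: the $\cB_{\eps,3}$ modulus estimate $\iint|\vsig(\alpha,\cdot-2^{-l}z)-\vsig(\alpha,\cdot)|\lesssim(2^{-l}|z|)^\eps\|\vsig\|_{\cB_\eps}$ is only available for $|2^{-l}z|\leq 1$; for $|z|>2^l$ you should appeal instead to the trivial $2\|\vsig\|_{L^1}$ bound and then observe $\int_{|z|>2^l}|\Psi(z)|\,dz\lesssim 2^{-lN}$, which dominates $2^{-l\eps}$ for $l>0$. The same remark applies to \eqref{EqnKer7b}, where one should dispose of the range $R\lesssim 1$ first (where \eqref{EqnKer7b} is weaker than \eqref{EqnKer7a}).
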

\begin{proof}
First observe that \eqref{EqnKer7a} 
is an immediate consequence of the definitions. Next, for the proof of 
\eqref{EqnKer7b}  we may assume $R\ge 1$.
Also, observe, for every $N\in \N$,
\begin{equation*}
\begin{split}
&\iint\limits_{\q|x\w|\geq R} \q|Q_l \vsig\q(\alpha,x\w)\w|\: dx\: d\alpha +2^{-l} \iint\limits_{\q|x\w|\geq R} \q|\grad_x Q_l \vsig\q(\alpha,x\w)\w|\: dx\: d\alpha
\\&\leq C_N \iiint\limits_{\q|x\w|\geq R} \frac{2^{ld}}{\q(1+2^l\q|y\w|\w)^N} \q|\vsig\q(\alpha,x-y\w)\w|\: dx\: d\alpha \:dy
\\& = C_N \iiint\limits_{\substack{\q|x\w|\geq R \\ \q|y\w|\leq R/2}} +C_N \iiint\limits_{\substack{\q|x\w|\geq R \\ \q|y\w|> R/2}} =:C_N \big((I)+(II)\big).
\end{split}
\end{equation*}
For $(I)$ we have
\begin{equation*}
(I)\lesssim R^{-\eps}  \iiint\limits_{\substack{\q|x\w|\geq R \\ \q|y\w|\leq R/2}}\frac{2^{ld}}{\q(1+2^l\q|y\w|\w)^N} \q(1+\q|x-y\w|\w)^{\eps} \q|\vsig\q(\alpha,x-y\w)\w|\: dx\: d\alpha \:dy\lesssim R^{-\eps} \sBN{\eps,4}{\vsig}.
\end{equation*}
For $(II)$, taking $N\geq d+1$, we have
\begin{equation*}
(II)\lesssim \LpN{1}{\vsig} \int_{\q|y\w|> R/2} \frac{2^{ld}}{\q(1+2^l\q|y\w|\w)^N} \: dy\lesssim \q(2^l R\w)^{-1} \LpN{1}{\vsig}
\leq R^{-\eps} \sBN{0,4}{\vsig},
\end{equation*}
and \eqref{EqnKer7b} follows.  \eqref{EqnKer7c} follows by combining \eqref{EqnKer7a} and \eqref{EqnKer7b}.

We now turn to \eqref{EqnKer7d} and we separate the proof into two cases, $R\leq 2^l$ and $R\ge 2^l$. 
For  $R\leq 2^l$ we have, by \eqref{EqnKer7a},
\begin{equation*}
\iint\limits_{\q|x\w|\geq R} \q(1+\q|\alpha_i\w|\w)^\delta \q|Q_l\vsig\q(\alpha,x\w)\w|\: dx\: d\alpha\leq \iint\limits_{\q|\alpha_i\w|\leq 2^{l}} + \iint\limits_{\q|\alpha_i\w|>2^l} =:(III)+(IV).
\end{equation*}
For $(III)$, we apply \eqref{EqnKer7a} to see
\begin{equation*}
(III)\lesssim 2^{l\delta} \iint \q|Q_l \vsig\q(\alpha,x\w)\w|\: dx\:d\alpha\lesssim 2^{-l\q(\eps-\delta\w)} \sBN{\eps}{\vsig}.
\end{equation*}
Also, we have
\begin{align*}
(IV)&\lesssim 2^{-l\q(\eps-\delta\w)} \int \q(1+\q|\alpha_i\w|\w)^{\eps} \int \q|Q_l \vsig\q(\alpha,x\w)\w|\: dx\: d\alpha
\\&\lesssim 2^{-l\q(\eps-\delta\w)} \iint \q(1+\q|\alpha_i\w|\w)^{\eps}  \q| \vsig\q(\alpha,x\w)\w|\: dx\: d\alpha\lesssim 2^{-l\q(\eps-\delta\w)} \sBN{\eps}{\vsig}.
\end{align*}
In the second case,  $R\geq 2^l$, we have
\begin{equation*}
\iint\limits_{\q|x\w|\geq R} \q(1+\q|\alpha_i\w|\w)^\delta \q|Q_l\vsig\q(\alpha,x\w)\w|\: dx\: d\alpha \leq \iint\limits_{\substack{\q|\alpha_i\w|\leq R\\ \q|x\w|\geq R}} + \iint\limits_{\q|\alpha_i\w|>R} =:(V)+(VI).
\end{equation*}
Using \eqref{EqnKer7b},
\begin{equation*}
(V) \lesssim R^\delta \iint_{\q|x\w|\geq R} \q|Q_l\vsig\q(\alpha,x\w)\w|\: dx\:d\alpha \lesssim R^{\delta-\eps} \sBN{\eps}{\vsig}.
\end{equation*}
And,
\begin{align*}
(VI)& \lesssim R^{\delta-\eps} \int_{\q|\alpha_i\w|>R} \q(1+\q|\alpha_i\w|\w)^{\eps}\int \q|Q_l\vsig\q(\alpha,x\w)\w|\: dx\:d\alpha
\\&\lesssim R^{\delta-\eps} \iint \q(1+\q|\alpha_i\w|\w)^\eps \q|\vsig\q(\alpha,x\w)\w|\: dx\: d\alpha\lesssim R^{\delta-\eps} \sBN{\eps}{\vsig},
\end{align*}
which completes the proof of \eqref{EqnKer7d}.

Finally, we turn to \eqref{EqnKer7e}.  This we separate into four cases.  In the first case,  $R\le 2^l$, $\tau\geq 2^{-l}$, we have
\begin{equation*}
\q|\tau\w|^{-\delta} \iint_{\q|x\w|\geq R} \q|Q_l \vsig\q(\alpha+\tau e_j,x \w) -Q_l\vsig\q(\alpha,x\w)\w|\: dx\: d\alpha \lesssim 2^{l\delta} \iint \q|Q_l\vsig\q(\alpha,x\w)\w|\: dx\: d\alpha\lesssim 2^{-l\q(\eps-\delta\w)}\sBN{\eps}{\vsig}.
\end{equation*}
In the second case, $R\le 2^l$, $\q|\tau\w|\leq 2^{-l}$, we have
\begin{equation*}
\begin{split}
&\q|\tau\w|^{-\delta} \iint_{\q|x\w|\geq R} \q|Q_l \vsig\q(\alpha+\tau e_j,x \w) -Q_l\vsig\q(\alpha,x\w)\w|\: dx\: d\alpha 
\\&\lesssim 2^{-l\q(\eps-\delta\w)} \q|\tau\w|^{-\eps} \iint \q|\vsig\q(\alpha+\tau e_j, x\w)-\vsig\q(\alpha,x\w)\w|\: dx\:d\alpha \lesssim 2^{-l\q(\eps-\delta\w)} \sBN{\eps}{\vsig}.
\end{split}
\end{equation*}
In the third case, $R\geq 2^l$,  $\q|\tau\w|\geq R^{-1}$, we have
\begin{equation*}
\q|\tau\w|^{-\delta} \iint_{\q|x\w|\geq R} \q|Q_l \vsig\q(\alpha+\tau e_j,x \w) -Q_l\vsig\q(\alpha,x\w)\w|\: dx\: d\alpha  \lesssim R^{\delta} \iint_{\q|x\w|\geq R} \q|Q_l\vsig\q(\alpha,x\w)\w|\: dx\: d\alpha\lesssim R^{\delta-\eps} \sBN{\eps}{\vsig},
\end{equation*}
where in the last inequality we have used \eqref{EqnKer7b}.  In the last case,
 $R\geq 2^{l}$, $\q|\tau\w|\leq R^{-1}$,
\begin{equation*}
\begin{split}
&\q|\tau\w|^{-\delta} \iint_{\q|x\w|\geq R} \q|Q_l \vsig\q(\alpha+\tau e_j,x \w) -Q_l\vsig\q(\alpha,x\w)\w|\: dx\: d\alpha
\\& \lesssim R^{\delta-\eps} \q|\tau\w|^{-\eps} \iint \q|\vsig\q(\alpha+\tau e_j,x \w) -\vsig\q(\alpha,x\w)\w|\: dx\: d\alpha \lesssim R^{\delta-\eps} \sBN{\eps}{\vsig},
\end{split}
\end{equation*}
as desired.  This completes the proof.
\end{proof}

\begin{proof}[Proof of Proposition \ref{PropKerSumVsig}, conclusion.]
Let $\vsig_j$ be as in the statement of the proposition.  By Lemma \ref{LemmaKerSumsConverge} we already know the sum $\sum_{j\in \Z} \dil{\vsig_j}{2^j}$ converges in the topology
on $L\sS'\q(\R^n\times \R^d\w)$. 
Our goal is to show convergence of the sum
$\sKN{\delta}{\sum_{j\in \Z} \dil{\vsig_j}{2^j}}$ in $\sK_\delta$ for $0<\delta<\eps/2$.  Fix $j_1,j_2\in \Z$, $j_1<j_2$.  Define $K= \sum_{j_1\leq j\leq j_2} \dil{\vsig_j}{2^j}$.
We will show $\sKN{\delta}{K}\lesssim \sup_{j}\sBN{\eps}{\vsig_j}$, with the implicit constant independent of $j_1,j_2$.  The result then follows y a limiting argument. 
In what follows, summations  in $j$ are taken over the range  ${j_1\leq j\leq j_2}$.  We assume, without loss of generality, $$\sup_{j} \sBN{\eps}{\vsig_j}=1.$$

Let $\chiz\in \sS\q(\R^d\w)$ be so that $\chizh\q(\xi\w)=1$ for $\q|\xi\w|\leq 1$ and $\chizh$ is supported in $\q\{\xi : \q|\xi\w|\leq 2\w\}$.
For $l\geq 1$ let $\chil = \dil{\chiz}{2^l}-\dil{\chi_0}{2^{l-1}}$, so that $\sup_{l\in \bbZ} \chilh\q(\xi\w)=1$ for $\xi\ne 0$.
We write
\begin{equation*}
K=\sum_{j} \dil{\vsig_j}{2^j} = \sum_{l\geq 0} \sum_j \dil{\vsig_{j,l}}{2^j},
\end{equation*}
where  $$\vsig_{j,l}\q(\alpha,\cdot\w)= \chil* \vsig_j\q(\alpha, \cdot\w)$$ and  the 
 convolution is in $\R^d$.
Let $$K_l =\sum_j \dil{\vsig_{j,l}}{2^j}.$$ The proof will be complete once we have shown
$\sKN{\delta}{K_l}\lesssim 2^{-l\q(\eps-2\delta\w)}$.

Our  first goal is to 
%We first estimate $\|K^l\|_{\sK^\eta_{\eps,1}}$. We will  
show, for $1\leq i\leq n$, $t\in \R$,
\begin{equation}\label{EqnKerSumFirst}
\int \q(1+\q|\alpha_i\w|\w)^{\delta} \LpRdN{2}{\eta*\dil{K_l}{t}}\: d\alpha \lesssim \q(1+l\w) 2^{-l\q(\eps-\delta\w)}
\end{equation}
which gives
 $\|K_l\|_{\sK^\eta_{\delta,1}}\lc \q(1+l\w) 2^{-l\q(\eps-\delta\w)}.$  
To prove \eqref{EqnKerSumFirst}, we will show
\begin{equation}\label{EqnKerSum2}
\int \q(1+\q|\alpha_i\w|\w)^\delta \big\|\eta*\dil{\vsig_{j,l}}{2^j t}\q(\alpha,\cdot\w)\big\|_2 \: d\alpha
\lesssim
\begin{cases}
\q(2^{l\q(\eps-\delta\w)} 2^{j} t\w)^{-\frac{\eps-\delta}{1+\eps-\delta } } &\text{if }2^jt\geq 2^l,\\
2^{-l\q(\eps-\delta\w)} &\text{if } 2^{-2l}\leq 2^{j} t\leq 2^l,\\
\q(2^{l+j} t\w)^{d/2} &\text{if }2^j t\leq 2^{-2l}.
\end{cases}
\end{equation}
Summing \eqref{EqnKerSum2} in $j$ yields \eqref{EqnKerSumFirst}, so we focus on \eqref{EqnKerSum2}.

First we consider the case when $2^{j}t \geq 2^l$.
Letting $r\in \q[1,2^j t\w]$ be chosen later, we use that $\int \vsig_{j,l}\q(\alpha,x\w)\: dx=0$ to see
\begin{equation*}
\begin{split}
&\int \q(1+\q|\alpha_i\w|\w)^{\delta} \big\|\eta * \dil{\vsig_{j,l}\q(\alpha,\cdot\w)}{2^jt}\big\|_2\: d\alpha
\\&\lesssim \int \q(1+\q|\alpha_i\w|\w)^\delta \Big( \int\Big|\int \q[\eta\q(x-y\w)-\eta\q(x\w)\w] \q(2^j t\w)^d \vsig_{j,l}\q(\alpha,2^j t y\w)\: dy \Big|^2\: dx \Big)^{\frac{1}{2}}\: d\alpha
\\&\lesssim \int\q(1+\q|\alpha_i\w|\w)^{\delta} \int \q|\vsig_{j,l}\q(\alpha,v\w)\w| \LpRdN{2}{\eta\q(\cdot-\tfrac{v}{2^jt}\w) - \eta\q(\cdot\w) }\: dv\: d\alpha
\\&\lesssim \iint \q(1+\q|\alpha_i\w|\w)^{\delta}  \q|\vsig_{j,l}\q(\alpha,v\w)\w| \min\{ \tfrac{|v|}{2^jt},1\w\} \: dv\: d\alpha
\\&= \iint_{\q|v\w|\leq r} + \iint_{\q|v\w|>r} =:(I)+(II).
\end{split}
\end{equation*}
We have, using \eqref{EqnKer7d} with $R=0$,
\begin{equation*}
(I) \lesssim \frac{r}{2^jt}  \iint \q(1+\q|\alpha_i\w|\w)^{\delta}  \q|\vsig_{j,l}
\q(\alpha,v\w)\w|  \: dv\: d\alpha\lesssim \frac{r}{2^jt} 2^{-l\q(\eps-\delta\w)}.
\end{equation*}
Using \eqref{EqnKer7d} with $R=r$, 
\begin{equation*}
(II) \lesssim \iint_{\q|x\w|\geq r} \q(1+\q|\alpha_i\w|\w)^{\delta}  \q|\vsig_{j,l}\q(\alpha,v\w)\w|  \: dv\: d\alpha \lesssim r^{-\q(\eps-\delta\w)}.
\end{equation*}
We choose  $r$ so that $r^{1+\eps-\delta}=2^{l\q(\eps-\delta\w)} 2^j t$; this yields  \eqref{EqnKerSum2}
in the case $2^{j}t\geq 2^l$ under consideration.

For $2^{-2l}\leq 2^j t\leq 2^l$ we use the trivial $L^1\to L^2$ 
%$L^1\q(\R^d\w)\rightarrow L^2\q(\R^d\w)$ 
bound for convolution with $\eta$ and a change of variables,
combined with \eqref{EqnKer7d} (with $R=0$) to see
\begin{equation*}
\int \q(1+\q|\alpha_i\w|\w)^{\delta} \|\eta*\dil{\vsig_{j,l}}{2^jt}\q(\alpha,\cdot\w)\|_2\: d\alpha\lesssim \int \q(1+\q|\alpha_i\w|\w)^\delta \|\vsig_{j,l}\q(\alpha,\cdot\w)\|_1 \: d\alpha\lesssim 2^{-l\q(\eps-\delta\w)},
\end{equation*}
as desired.

Now assume $2^j t\leq 2^{-l}$. 
 Let $\ups\in \sS\q(\R^d\w)$ be such that $\upsh\q(\xi\w)=1$ for $\q|\xi\w|\leq 2$, so that $\upsh\q(2^{-l}\cdot\w)=1$ on the support
of $\vsigh_{j,l}$.  We then have, using $\|\upsh\q(2^{-j-l} t^{-1}\cdot\w) \etah\q(\cdot\w)\|_2\lesssim \q(2^{j+l} t\w)^{d/2}$,
\begin{equation*}
\begin{split}
&\int\q(1+\q|\alpha_i\w|\w)^{\delta} \| \eta*\dil{\vsig_{j,l}}{2^j t} \q(\alpha,\cdot\w)  \|_2\: d\alpha 
\lesssim \int \q(1+\q|\alpha_i\w|\w)^\delta \|\eta* \dil{\ups}{2^{j+l} t}\|_2 \|\vsig_{j,l}\q(\alpha,\cdot\w)\|_1\: d\alpha
\\&\lesssim \int\q(1+\q|\alpha_i\w|\w)^\delta \|\upsh\q(2^{-j-l} t^{-1}\cdot\w) \etah\q(\cdot\w)\|_2 \|\vsig_{j,l}\q(\alpha,\cdot\w)\|_1\: d\alpha
\lesssim \q(2^{j+l} t\w)^{d/2}.
\end{split}
\end{equation*}
This completes the proof of \eqref{EqnKerSum2} and therefore of \eqref{EqnKerSumFirst}.

A simple modification of the above proof, using \eqref{EqnKer7e} in place of \eqref{EqnKer7d}, gives for $\q|\tau\w|\leq 1$,
\begin{equation*}
\int \big\| \eta*\q[ \dil{\vsig_{j,l} }{2^{j}t}\q(\alpha+\tau e_j,\cdot \w) - \dil{\vsig_{j,l}}{2^j t}\q(\alpha, \cdot\w) \w] \big\|_2\: d\alpha
\lesssim \q|\tau\w|^\delta \cdot
\begin{cases}
\q(2^{j} t\w)^{-\q(\eps-\delta\w)} &\text{if } 2^j R \geq 2^l,
\\ 2^{-l\q(\eps-\delta\w)} &\text{if } 2^{-2l}\leq 2^j R\leq 2^l,
\\ \q(2^{l+j} R\w)^d &\text{if } 2^j R\leq 2^{-2l}.
\end{cases}
\end{equation*}
Summing in $j$ shows that for $0<h\leq 1$,
\begin{equation*}
h^{-\eps} \int \big\|\eta*\q[ \dil{K_l}{t}\q(\alpha+he_i,\cdot\w)- \dil{K_l}{t}\q(\alpha,\cdot\w) \w] \big\|_2\: d\alpha \lesssim \q(1+l\w)2^{-l\q(\eps-\delta\w)}
\end{equation*}
and hence $\|K_l\|_{\sK_{\delta,2}^\eta} \lesssim \q(1+l\w)2^{-l\q(\eps-\delta\w)}.$

Next we wish to show $\|K_l\|_{\sK_{\delta,3}} \lesssim \q(1+l\w)2^{-l\q(\eps-\delta\w)}$, that is, for $1\leq i\leq n$, $R>0$,
\begin{equation}\label{EqnKerSum3}
\iint\limits_{R\leq \q|x\w|\leq 2R} \q(1+\q|\alpha_i\w|\w)^{\delta} \q|K_l\q(\alpha,x\w)\w|\: dx\:d\alpha\lesssim \q(1+l\w) 2^{-\q(\eps-\delta\w)l}.
\end{equation}
To prove \eqref{EqnKerSum3} we will show
\begin{equation}\label{EqnKerSum4}
\iint\limits_{R\leq \q|x\w|\leq 2R} \q(1+\q|\alpha_i\w|\w)^\delta \q|\dil{\vsig_{j,l}}{2^j} \q(\alpha,x\w)\w|\: dx\:d\alpha
\lesssim
\begin{cases}
\q(2^j R\w)^{-\q(\eps-\delta\w)} &\text{if }2^j R\geq 2^l, \\
2^{-l\q(\eps-\delta\w)} &\text{if }2^{-2l}\leq 2^j R\leq 2^l,\\
\q(2^{l+j} R\w)^d &\text{if }2^j R \leq 2^{-2l}.
\end{cases}
\end{equation}
Summing \eqref{EqnKerSum4} in $j$ yields \eqref{EqnKerSum3}.
Now, applying \eqref{EqnKer7d},
\begin{equation*}
\begin{split}
&\iint\limits_{R\leq \q|x\w|\leq 2R} \q(1+\q|\alpha_i\w|\w)^\delta \q|\dil{\vsig_{j,l}}{2^j} \q(\alpha,x\w)\w|\: dx\:d\alpha
\leq \iint\limits_{2^j R\leq \q|x\w|} \q(1+\q|\alpha_i\w|\w)^\delta \q|\vsig_{j,l} \q(\alpha,x\w)\w|\: dx\:d\alpha
\\
&\lesssim
\begin{cases}
\q(2^{j}R\w)^{-\q(\eps-\delta\w)} &\text{if }2^j R\geq 2^l,\\
2^{-l\q(\eps-\delta\w)} &\text{if }2^j R\leq 2^l.
\end{cases}
\end{split}
\end{equation*}
Thus, to complete the proof of \eqref{EqnKerSum4} we need only consider the case when $2^j R\leq 2^{-2l}$.
We have
\begin{equation*}
\begin{split}
&\iint\limits_{R\leq \q|x\w|\leq 2R} \q(1+\q|\alpha_i\w|\w)^\delta \q|\dil{\vsig_{j,l}}{2^j} \q(\alpha,x\w)\w|\: dx\:d\alpha
= \iint\limits_{2^j R\leq \q|x\w|\leq 2^{j+1}R} \q(1+\q|\alpha_i\w|\w)^\delta \q|\vsig_{j,l} \q(\alpha,x\w)\w|\: dx\:d\alpha
\\&\lesssim \q(2^j R\w)^d \int \q(1+\q|\alpha_i\w|\w)^\delta \LpRdN{\infty}{\vsig_{j,l}\q(\alpha,\cdot\w)}\: d\alpha
\lesssim \q(2^j R\w)^d 2^{ld} \int \q(1+\q|\alpha_i\w|\w)^\delta \LpRdN{1}{\vsig_j \q(\alpha,\cdot\w)}\: d\alpha
\\&\lesssim \q(2^{j+l} R\w)^d,
\end{split}
\end{equation*}
competing the proof of \eqref{EqnKerSum4} and therefore of \eqref{EqnKerSum3}.

A simple modification of the above yields, for $0<\q|\tau\w|\leq 1$,
\begin{equation*}
\iint\limits_{R\leq \q|x\w|\leq 2R} \q|\dil{\vsig_{j,l}}{2^j} \q(\alpha+\tau e_i, x\w)- \dil{\vsig_{j,l}}{2^j}\q(\alpha,x\w)\w|\: dx\: d\alpha
\lesssim
\q|\tau\w|^\delta \cdot\begin{cases}
\q(2^j R\w)^{-\q(\eps-\delta\w)}&\text{if }2^j R\geq 2^l,
\\ 2^{-l\q(\eps-\delta\w)} &\text{if }2^{-2l}\leq 2^j R\leq 2^l,
\\ \q(2^{l+j} R\w)^d&\text{if }2^jR\leq 2^{-2l}.
\end{cases}
\end{equation*}
Summing in $j$ yields, for $0<h\leq 1$, $R>0$,
\begin{equation*}
h^{-\delta} \iint\limits_{R\leq \q|x\w|\leq 2R} \q|K_l\q(\alpha+he_i, x\w)- K_l\q(\alpha, x\w)\w|\: dx\: d\alpha \lesssim  \q(1+l\w) 2^{-\q(\eps-\delta\w)l}
\end{equation*}
and hence $\|K\|_{\sK_{\eps,4}}\lesssim  \q(1+l\w) 2^{-\q(\eps-\delta\w)l}$.

Finally, we wish to show, for $R\geq 2$, $y\in \R^d$,
\begin{equation}\label{EqnKerSum5}
R^{\delta} \iint\limits_{\q|x\w|\geq R\q|y\w|} \q|K\q(\alpha, x-y\w)- K\q(\alpha,x\w)\w|\: dx\: d\alpha\lesssim 2^{-l\q(\eps-2\delta\w)}.
\end{equation}
First, estimate 
\begin{equation*}
\begin{split}
&R^\delta \iint\limits_{\q|x\w|\geq R\q|y\w|} \q|\dil{\vsig_{j,l}}{2^j}\q(\alpha, x-y\w) - \dil{\vsig_{j,l}}{2^j}\q(\alpha, x\w)\w|\: dx\: d\alpha = R^\delta \iint\limits_{\q|x\w|>2^j \q|y\w| R} \q|\vsig_{j,l} \q(\alpha, x-2^jy\w) - \vsig_{j,l}\q(\alpha,x\w)\w|\: dx\:d\alpha
\\&\lesssim R^\delta \min\{1,  2^{j+l}\q|y\w|\} \min \{2^{-l\eps} ,
(2^{j} \q|y\w| R\w)^{-\eps}\}=:\sE\q(j,l,R\w).
\end{split}
\end{equation*}
Here we applied \eqref{EqnKer7c} with $2^{j}\q|y\w|$ in place of $\q|h\w|$ and $2^j\q|y\w| R$ in place of $R$.
Note the left hand side of \eqref{EqnKerSum5} is bounded by $\sum_j \sE\q(j,l,R\w)$.

In the case  $R\geq 2^{2l}$, we estimate
\begin{multline*}
\sum_{j} \sE\q(j,l,R\w)\lesssim
\\
\sum_{2^j|y|\ge 2^{-l}} R^{\delta-\eps} (2^j|y|)^{-\eps}
+
\sum_{2^l/R\le 2^j|y|\le 2^{-l} }
2^l\q(2^j\q|y\w|\w)^{1-\eps} R^{\delta-\eps} 
+\sum_{2^j\q|y\w|\leq 2^l/R}R^{\delta}\q(2^j\q|y\w|\w)2^{l\q(1-\eps\w)} .
\end{multline*}
The first two sums are $O(R^{\delta-\eps}2^{l\eps})$, and the third sum is
$O(R^{\delta-1} 2^{\q(2-\eps\w)l})$; here we used $R\ge 2^{2l}$.

%\begin{equation*}\sum_{j} \sE\q(j,l,R\w)\lesssim\sum_j\begin{cases}R^{\delta-\eps} \q(2^j\q|y\w|\w)^{-\eps}&\text{if }2^j\q|y\w|\geq 2^{-l}\\2^l\q(2^j\q|y\w|\w)^{1-\eps} R^{\delta-\eps} &\text{if }2^l/R\leq 2^j\q|y\w|\leq 2^{-l}\\R^{\delta}\q(2^j\q|y\w|\w)2^{l\q(1-\eps\w)} &\text{if }2^j\q|y\w|\leq 2^l/R
%\end{cases}\lesssim
%\begin{cases}R^{\delta-\eps} 2^{l\eps} \\R^{\delta-\eps}2^{l\eps}\\R^{\delta-1} 2^{\q(2-\eps\w)l}\end{cases}\lesssim 2^{-l\q(\eps-2\delta\w)},\end{equation*}where in the last inequality, we have used $R\geq 2^{2l}$.

In the case $R\leq 2^{2l}$ we have
\begin{equation*}
\sum_{j} \sE\q(j,l,R\w)
 \lesssim \sum_{2^j\q|y\w|\geq 2^l/R}R^{\delta-\eps}\q(2^j\q|y\w|\w)^{-\eps}
+ 
\sum_{2^{-l}\leq 2^j\q|y\w|\leq 2^l/R}R^\delta 2^{-l\eps}
+
\sum_{2^j\q|y\w|\leq 2^{-l}}R^\delta 2^j\q|y\w| 2^{l\q(1-\eps\w)}.
\end{equation*}
The first sum is $O(R^\delta 2^{-l\eps})$,
the second sum is $O(R^\delta 2^{-l\eps} \log(1+ 2^{2l}/R))$, and since 
$R\le 2^{2l}$ the third sum
is $O(R^\delta 2^{-l\eps})$.
%.  Then,\begin{equation*}\sum_{j} \sE\q(j,l,R\w) \lesssim \sum_{j}\begin{cases}R^{\delta-\eps}\q(2^j\q|y\w|\w)^{-\eps} &\text{if }2^j\q|y\w|\geq 2^l/R\\R^\delta 2^{-l\eps}&\text{if }2^{-l}\leq 2^j\q|y\w|\leq 2^l/R\\R^\delta 2^j\q|y\w| 2^{l\q(1-\eps\w)} &\text{if }2^j\q|y\w|\leq 2^{-l}\end{cases}\lesssim\begin{cases}R^\delta 2^{-l\eps}\\R^{\delta}2^{-l\eps}\log\q(1+2^{2l}/R\w)\\R^\delta 2^{-l\eps}\end{cases}\lesssim 2^{-l\q(\eps-2\delta\w)}.\end{equation*}
%In the last line, we have used $R\leq 2^{2l}$.
In both cases we obtain $\sum_{j} \sE\q(j,l,R\w)\lc 2^{-l(\eps-2\delta)}$.
This completes the proof of \eqref{EqnKerSum5}.
Combining all of the above inequalities completes the proof of the proposition. 
\end{proof}

\section{Adjoints}\label{SectionAdjoints}
%\input{adjoints}

	%\subsection{Proof of Theorem \ref{ThmOpResAdjoints}}
%	\input{proofofadj}
	
%	\subsection{Proof of Theorem \ref{ThmAdjTechResult}}
%	\input{proofofadjtech}
	
%	\subsection{A decomposition theorem}
%             \input{decomp}
	
%	\subsection{An interpolating space}
%	\input{interpolate}
	
%\section{The role of $\RPn$} \label{SectionRoleOfRPn}

%	\subsection{The role of $\RPn$}\label{SectionRoleofRPnFirst}
%	\input{roleofrpn}

%	\subsection{The role of $\RPn$, revisited}\label{SectionRoleOfRPnSecond}
%	\input{roleofrpnrev}

% !TEX root =  main.tex
This section is devoted to studying the space $\sBp{\eps}$; in particular will give the proof of Theorem \ref{ThmOpResAdjoints}. It will be advantageous to work with a variant of this class, for functions on $\bbR^N$, with $N=n+d$.
% As we will see, this is a result about Besov norms with weights.
%spaces.

\begin{defn}
Fix $\eps>0$ and $N\in \N$.  We define a Banach space $\fBtp{\eps}{\R^N}$ to be the space
of measurable functions $\gamma:\R^N\rightarrow \bbC$ such that the 
%following 
norm 
\begin{equation*}
\fBN{\eps}{\gamma}:= \max_{1\leq i\leq N} \int \q(1+\q|s_i\w|\w)^{\eps}\q|\gamma\q(s\w)\w|\: ds + \sup_{\substack{0<h\leq 1 \\ 1\leq i \leq N }} h^{-\eps} \int \q|\gamma\q(s+he_i\w)-\gamma\q(s\w)\w|\: ds,
\end{equation*}
is finite. Here $e_1,\ldots, e_N$ denotes the standard basis of $\R^N$.
\end{defn}

\begin{rmk}\label{RmkAdjfBandsBSame}
The spaces $\fBtp{\eps}{\R^{n+d}}$ and $\sBtp{\eps}{\R^n\times \R^d}$ coincide;
%and their norms are equivalent with implicit constants depending only on $d$.are equivalent.  
indeed, for $\vsig\in \sBtp{\eps}{\R^n\times \R^d}$,
we have the equivalence
\begin{equation*}
\fBN{\eps}{\vsig}\approx \sBN{\eps}{\vsig},
\end{equation*}
with implicit constants depending only on $d$.  In this section we find it more useful to use the space $\fBp{\eps}$ as it treats the $\alpha$ and $x$ variables
symmetrically.
\end{rmk}

The following two propositions involve operations on functions in $\fB_\eps$ 
involving inversions and multiplicative shears. They are the  main technical results needed for the proof of Theorem  \ref{ThmOpResAdjoints}.

\begin{prop}\label{PropAdjInvts1}
Let $\eps>0$ and $\delta<\eps/3$.  Let $\gamma\in \fBtp{\eps}{\R^N}$ and
\begin{equation*}
J_1\gamma\q(s_1,\ldots, s_N\w):=s_1^{-2} \gamma\q(s_1^{-1},s_2,\ldots, s_N\w),
\end{equation*} $\gamma\in \fBtp{\eps}{\R^N}$. Then
$J_1\ga\in \fBtp{\delta}{\R^N}$ 
and $$\fBN{\delta}{J_1\ga}\lesssim \fBN{\eps}{\gamma}.$$
\end{prop}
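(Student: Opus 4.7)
The plan is to verify the two defining conditions of $\fBp{\delta}$ for $J_1\gamma$: the weighted $L^1$ bound in each coordinate and the $L^1$-modulus of continuity in each direction. The basic device is the substitution $u=1/s_1$, which sends the density $J_1\gamma(s_1,s')\,ds_1\,ds'$ on $\{s_1\ne 0\}$ back to $\gamma(u,s')\,du\,ds'$. In the directions $i\ne 1$, both conditions reduce after this substitution directly to the corresponding conditions on $\gamma$: the weighted $L^1$ bound becomes $\int(1+|s_i|)^\delta|\gamma(u,s')|\,du\,ds'\le\|\gamma\|_{\fBp{\eps}}$, and the modulus of continuity gains a factor $h^{\eps-\delta}\le 1$. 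For the weighted $L^1$ bound in the direction $i=1$, the substitution produces $\int(1+|u|^{-1})^\delta|\gamma(u,s')|\,du\,ds'$; the part with $|u|\ge 1$ is harmless, and the part with $|u|<1$ reduces to the Sobolev-type estimate $\int_{|u|<1}|u|^{-\delta}|\gamma(u,s')|\,du\,ds'\lesssim\|\gamma\|_{\fBp{\eps}}$, which I would prove exactly as in Lemma \ref{LemmaKerIntAgainstdeltapower}: $\fBp{\eps}$-regularity in $u$ embeds $\gamma(\cdot,s')$ into some $L^p(du)$ with $p>1$, and Hölder's inequality then absorbs $|u|^{-\delta}$ provided $\delta<1/p'$.

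The serious case is the modulus of continuity in the direction $i=1$. I split $\R$ (in $s_1$) into two regions. In the region $\max(|s_1|,|s_1+h|)\le 4h$, the triangle inequality and the substitution $u=1/s_1$ reduce the estimate to $\int_{|u|\gtrsim 1/h}|\gamma(u,s')|\,du\,ds'\lesssim h^\eps\|\gamma\|_{\fBp{\eps}}$, using the $(1+|u|)^\eps$ weight. In the complementary region $|s_1|,|s_1+h|>4h$, I write
\[ J_1\gamma(s_1+h,s')-J_1\gamma(s_1,s')=A(s_1,s')+B(s_1,s'), \]
with $A=\bigl((s_1+h)^{-2}-s_1^{-2}\bigr)\gamma\bigl((s_1+h)^{-1},s'\bigr)$ and $B=s_1^{-2}\bigl(\gamma\bigl((s_1+h)^{-1},s'\bigr)-\gamma(s_1^{-1},s')\bigr)$. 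For Term $A$, $|(s_1+h)^{-2}-s_1^{-2}|\lesssim h|s_1|^{-3}$ and the change of variable $v=(s_1+h)^{-1}$ reduce the contribution to $h\int|v|\,|\gamma(v,s')|\,dv\,ds'$ over $|v|\le 1/(4h)$; splitting $|v|\le(1+|v|)^\eps\cdot|v|^{1-\eps}$ and using $|v|^{1-\eps}\lesssim h^{\eps-1}$ in this range delivers the bound $h^\eps\|\gamma\|_{\fBp{\eps}}$.

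Term $B$ is the heart of the proof and the main obstacle. After $u=1/s_1$ it becomes $\int_{|u|<1/(4h)}|\gamma(u+\tau(u),s')-\gamma(u,s')|\,du\,ds'$ with $\tau(u)=-hu^2/(1+hu)$, so that $u+\tau(u)=u/(1+hu)$ is a $u$-dependent shear rather than a constant translation, and one cannot directly invoke the $\fBp{\eps}$-modulus of $\gamma$. A useful identity is $1+\tau'(u)=(1+hu)^{-2}$, so the shear has a well-controlled Jacobian on $|hu|\le 1/2$. I decompose dyadically on $I_k:=\{|u|\in[2^k,2^{k+1})\}$ with $2^k\lesssim 1/h$, on which $|\tau(u)|\approx h\cdot 4^k$, and prove two competing bounds on each $I_k$: a \emph{regularity estimate} $\int_{I_k}|\gamma(u+\tau(u),s')-\gamma(u,s')|\,du\,ds'\lesssim(h\cdot 4^k)^{\eps}\|\gamma\|_{\fBp{\eps}}$ via a Littlewood--Paley decomposition $\gamma=\sum_j\gamma_j$ in $u$, the mean-value identity $|\gamma_j(u+\tau(u))-\gamma_j(u)|\le|\tau(u)|\int_0^1|\gamma_j'(u+t\tau(u))|\,dt$, the change of variable $v=u+t\tau(u)$ (Jacobian bounded below), and summation over $j$ at the critical threshold $2^j\sim(h\cdot 4^k)^{-1}$; and a \emph{decay estimate} $\int_{I_k}\lesssim 2^{-k\eps}\|\gamma\|_{\fBp{\eps}}$ coming from the $(1+|u|)^\eps$ weight together with the Jacobian control. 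Taking the minimum and summing over $k$, the two regimes balance at $2^k\sim h^{-1/3}$ and give a total bound $h^{\eps/3}\|\gamma\|_{\fBp{\eps}}$ for Term $B$. Multiplying by $h^{-\delta}$ and using the hypothesis $\delta<\eps/3$ closes the estimate; the balance between the regularity and decay bounds is precisely what forces the loss from $\eps$ to $\eps/3$.
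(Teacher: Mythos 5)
Your proof is correct and arrives at the same $\eps/3$ threshold, but your treatment of the modulus of continuity in the $s_1$-direction (your Term~$B$) is genuinely different from the paper's. Both approaches reduce to one dimension in the active variable (you do this implicitly by treating $s'$ as a parameter; the paper slices explicitly via Lemma~\ref{slicinglemma} and then proves the one-dimensional Lemma~\ref{inversionlemma}). The divergence is in how the $u$-dependent shear $u\mapsto u/(1+hu)$ is handled after the substitution $u=1/s_1$. You keep the $\sup_h$-Besov norm, decompose dyadically on $|u|\in[2^k,2^{k+1})$, and prove two competing shell-by-shell bounds --- a Littlewood--Paley/mean-value \emph{regularity estimate} $\lesssim(h\,4^k)^\eps$ and a weight-driven \emph{decay estimate} $\lesssim 2^{-k\eps}$ --- balancing them at $2^k\sim h^{-1/3}$. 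The paper avoids Littlewood--Paley entirely: it first passes (via Lemma~\ref{StandardBesov}) to the integrated form $\int_\rho^{2\rho}\cdot\,dh/h$, then, for fixed $s$, substitutes $u=\frac{s}{1+hs}-s$ in the $h$-integral and uses the measure identity $\frac{du}{|u|}=|1+hs|^{-1}\frac{dh}{|h|}$, so the shear becomes a \emph{constant} translation $s\mapsto s+u$ integrated against $du/|u|$; the one-dimensional modulus of $g$ then applies directly, with the choice $\beta<1/3$ playing the role of your balance point. Your method costs more machinery (dyadic LP, two scale-by-scale bounds) but works entirely with the $\sup_h$ characterization; the paper's method is leaner but hinges on the $dh/h\mapsto du/|u|$ trick. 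Both establish the Jacobian control via the same identity $1+\tau'(u)=(1+hu)^{-2}$ under a restriction keeping $|hu|$ bounded, and your handling of Term~$A$ and the weighted $L^1$ bound in direction~$1$ tracks the paper's use of Lemmas~\ref{Sobemb} and~\ref{largeslemma} essentially verbatim.
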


\begin{prop}\label{PropAdjMultis1}
Let $\eps>0$ and $\delta<\eps/3$. Let $\gamma\in \fBtp{\eps}{\R^N}$, $n\in \q\{1,\ldots, N\w\}$   and set 
\begin{equation*}
M\ga \q(s_1,\ldots, s_N\w):=s_1^{n-1} \gamma\q(s_1, s_1 s_2,\ldots,s_1 s_n, s_{n+1},s_{n+2},\ldots, s_N\w).
\end{equation*}
Then $M\ga\in \fBtp{\eps'}{\R^N}$ and 
$$\fBN{\delta}{M\ga}\lesssim n \fBN{\eps}{\gamma}.$$
\end{prop}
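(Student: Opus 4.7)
The plan is to exploit the change of variables $u = T(s) := (s_1, s_1 s_2, \ldots, s_1 s_n, s_{n+1}, \ldots, s_N)$, whose Jacobian determinant is $s_1^{n-1}$, so that $|M\gamma(s)|\,ds = |\gamma(u)|\,du$ whenever $s_1 \ne 0$. Under $T$ one has $s_i = u_i$ for $i = 1$ and $i > n$, while $s_i = u_i/u_1$ for $2 \le i \le n$. Consequently the easy cases of the $\fBp{\delta}$-norm reduce immediately: the weighted integral $\int (1+|s_i|)^\delta |M\gamma(s)|\,ds$ for $i = 1$ or $i > n$ transforms directly to $\int (1+|u_i|)^\delta |\gamma(u)|\,du \le \fBN{\eps}{\gamma}$, while the smoothness integral $\int |M\gamma(s+he_i) - M\gamma(s)|\,ds$ for $i > n$ (where the shift commutes with $T$) becomes $\int |\gamma(u+he_i) - \gamma(u)|\,du \le h^\eps \fBN{\eps}{\gamma}$.

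\textbf{Weighted integral for $2 \le i \le n$.} Here one must bound $\int (1+|u_i/u_1|)^\delta |\gamma(u)|\,du$. I would split the integration domain according to $|u_i| \le |u_1|$ (trivial, yielding the $L^1$ bound), $|u_i| > |u_1| \ge 1$ (absorbed since $(1+|u_i/u_1|)^\delta \lesssim (1+|u_i|)^\eps$), or $|u_1| < 1$ and $|u_i| > |u_1|$. In the last and hardest region one must control $\int_{|u_1|<1} |u_1|^{-\delta} h(u_1)\,du_1$, where $h(u_1) := \int (1+|u_i|)^\eps |\gamma(u_1, u'')|\,du''$. The function $h$ satisfies $\|h\|_{L^1(\R)} \lesssim \fBN{\eps}{\gamma}$ trivially, while by splitting the $u_i$-integral at $|u_i| = R$ and optimizing in $R$ (combining the Besov smoothness of $\gamma$ in $u_1$ with the $(1+|u_i|)^\eps$-integrability) one establishes the one-dimensional Besov bound $\int |h(u_1+\eta) - h(u_1)|\,du_1 \lesssim |\eta|^{\eps-\delta}\fBN{\eps}{\gamma}$. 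The standard embedding $B^{\eps-\delta}_{1,\infty}(\R) \hookrightarrow L^p(\R)$ for $p$ slightly below $1/(1-\eps+\delta)$, combined with H\"older's inequality, then yields the Hardy-type bound $\int_{|u_1|<1} |u_1|^{-\delta} h(u_1)\,du_1 \lesssim \fBN{\eps}{\gamma}$ provided $\delta < \eps - \delta$; i.e., $\delta < \eps/2$. The maximum over $i \in \{2,\ldots,n\}$ is absorbed uniformly.

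\textbf{Smoothness for $2 \le i \le n$ and for $i = 1$.} For $2 \le i \le n$ the shift $he_i$ in $s$ pulls back to a shift of size $h|u_1|$ in the $u_i$-direction, so the task is to bound $\int |\gamma(u+hu_1 e_i) - \gamma(u)|\,du$. I would decompose dyadically $|u_1| \in [2^k, 2^{k+1}]$ and use subadditivity of $\tau \mapsto G_\tau(u_1) := \int |\gamma(u_1, u''+\tau e_i) - \gamma(u_1, u'')|\,du''$ to replace the variable shift $\tau = hu_1$ by a fixed $\tau_k \asymp h 2^k$, up to a universal factor; the Besov condition gives contribution $(h 2^k)^\eps$ when $h 2^k \le 1$, while the $(1+|u_1|)^\eps$-weight handles the remaining $k$, and summing the geometric series yields $h^\delta \fBN{\eps}{\gamma}$ for any $\delta < \eps$. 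For the $i = 1$ case, which is the crux of the argument, I would split
\[
M\gamma(s+he_1) - M\gamma(s) = \bigl[(s_1+h)^{n-1} - s_1^{n-1}\bigr]\gamma(T(s+he_1)) + s_1^{n-1}\bigl[\gamma(T(s+he_1)) - \gamma(T(s))\bigr].
\]
The first term uses $|(s_1+h)^{n-1} - s_1^{n-1}| \le n|h|(|s_1|+|h|)^{n-2}$, so after changing variables it requires a Hardy-type estimate with weight $|u_1|^{-1}$; this is the origin of the factor $n$ in the conclusion. The second term pulls back to $\int |\gamma(u + h e_1 + h \sum_{j=2}^n (u_j/u_1) e_j) - \gamma(u)|\,du$, which I would telescope as a sum of $n$ one-directional shifts and treat each as in the previous step. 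The main obstacle is the handling of the varying-size shifts $h u_j/u_1$ when $|u_1|$ is small: there one cannot apply the Besov smoothness of $\gamma$ directly and must trade regularity for $|u_1|^{-\delta'}$-integrability, again controlled by the Hardy-type inequality. This additional loss is what forces the stated exponent range $\delta < \eps/3$ rather than $\delta < \eps/2$.
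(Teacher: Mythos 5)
Your overall strategy --- change of variables via $T$, immediate reduction of the easy cases, telescoping for the $i=1$ smoothness --- is the same as the paper's, and the weighted-integral cases and the $i\ne 1$ smoothness cases are handled in essentially the paper's way. The genuine gap is in the $i=1$ smoothness case.

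The two terms in your split
\[
M\gamma(s+he_1) - M\gamma(s) = \bigl[(s_1+h)^{n-1} - s_1^{n-1}\bigr]\gamma\bigl(T(s+he_1)\bigr) + s_1^{n-1}\bigl[\gamma(T(s+he_1)) - \gamma(T(s))\bigr]
\]
are individually \emph{not} integrable over $\R^N$ when $n\ge 2$, even though the sum is. For the first term, after the natural change of variable $v = T(s+he_1)$ the weight becomes $|v_1^{n-1}-(v_1-h)^{n-1}|/|v_1|^{n-1}$, and while this is $\lesssim n|h|/|v_1|$ when $|v_1|\gtrsim|h|$, near $v_1=0$ it grows like $(|h|/|v_1|)^{n-1}$, not $|h|/|v_1|$. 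The resulting piece $\int_{|v_1|<|h|}(|h|/|v_1|)^{n-1}|\gamma(v)|\,dv$ diverges in general, since the $\fBp{\eps}$ norm only controls $\int_{|v_1|<r}|\gamma|\,dv\lesssim r^{\eps'}$ with $\eps'<\eps<1<n-1$ (Lemma~\ref{Sobemb}). The paper sidesteps this by using Lemma~\ref{StandardBesov} to average over $h\in[\rho,2\rho]$ and then \emph{first excising the regions $|s_1|\le\rho^\beta$ and $|s_1|\ge\rho^{-\beta}$}, handling them directly via Lemmas~\ref{Sobemb} and~\ref{largeslemma}; in the remaining range $|h/s_1|\le 2\rho^{1-\beta}$ is small, so the factor $(1+h/s_1)^{n-1}$ is benign and the telescoping is safe. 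Without this truncation your decomposition creates spurious non-integrable singularities and the argument cannot close. Even with it, the telescoping of the second term into $n$ shifts of varying size $hu_j/u_1$ is the remaining hard step --- it is exactly what the paper's Lemma~\ref{auxiltwo} is engineered for --- and your remark that one should ``trade regularity for $|u_1|^{-\delta'}$-integrability'' does not show that the exponent accounting lands at $\delta<\eps/3$.
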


For later use in \S\ref{SectionRoleOfRPnSecond} we state these results in a different form:
\begin{cor}\label{ThmAdjTechResult}
Let $1\leq n\leq N$.  For $\gamma\in \fBtp{\eps}{\R^N}$ define two functions
\begin{equation*}\Gamma_1\q(s_1,\ldots, s_N\w):= s_1^{-n-1} \gamma\q(s_1^{-1},s_1^{-1}s_2,\ldots, s_1^{-1} s_n, s_{n+1},\ldots, s_N\w),\end{equation*}\begin{equation*}\Gamma_2\q(s_1,\ldots, s_N\w) := s_1^{-\q(n-1\w)} \gamma\q(s_1, s_1^{-1} s_2,\ldots, s_1^{-1} s_n, s_{n+1},\ldots, s_N\w).\end{equation*} There exists $\eps'=\epsilon'\q(\eps\w)>0$ (depending neither on 
 $N$ nor $n$) such that
\begin{equation*}
\fBN{\eps'}{\Gamma_1}+\fBN{\eps'}{\Gamma_2}\leq C_{\eps,\eps'} n \fBN{\eps}{\gamma}.
\end{equation*}
\end{cor}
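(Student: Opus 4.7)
The plan is to express $\Gamma_1$ and $\Gamma_2$ as short compositions of the operator $J_1$ of Proposition \ref{PropAdjInvts1} and the operator $M$ of Proposition \ref{PropAdjMultis1}, and then chain those two results. A direct substitution gives
\[
\Gamma_1 \;=\; J_1(M\gamma), \qquad \Gamma_2 \;=\; J_1\bigl(M(J_1\gamma)\bigr).
\]
For the first identity: $M\gamma(s)=s_1^{n-1}\gamma(s_1,s_1 s_2,\dots,s_1 s_n,s_{n+1},\dots,s_N)$, and then $J_1$ inverts $s_1$ and contributes $s_1^{-2}$; the combined prefactor is $s_1^{-2}\cdot s_1^{-(n-1)}=s_1^{-(n+1)}$ and the innermost argument becomes $(s_1^{-1},s_1^{-1}s_2,\dots,s_1^{-1}s_n,s_{n+1},\dots,s_N)$, matching $\Gamma_1$. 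For $\Gamma_2$ one applies $J_1$, then $M$, then $J_1$; tracking Jacobians, the three prefactors $s_1^{-2},\,s_1^{n-1},\,s_1^{-2}$ combine after the final inversion of $s_1$ to $s_1^{-(n-1)}$, and the innermost variables land on $(s_1,s_1^{-1}s_2,\dots,s_1^{-1}s_n,s_{n+1},\dots,s_N)$.

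Given these identities, the corollary follows by iterating the two propositions. Each application of $J_1$ or $M$ costs a factor of $3$ in the Besov exponent (we may only bound in $\fBp{\delta}$ with $\delta<\eps/3$), and each application of $M$ introduces an extra factor of $n$ in the norm. Thus I fix any $\eps'$ with $27\eps'<\eps$ and intermediate parameters $\delta_1,\delta_2$ with $\delta_1<\eps/3$, $\delta_2<\delta_1/3$, $\eps'<\delta_2/3$. By Proposition \ref{PropAdjInvts1}, then Proposition \ref{PropAdjMultis1}, then Proposition \ref{PropAdjInvts1} again,
\[
\fBN{\eps'}{\Gamma_2} \;\lesssim\; \fBN{\delta_2}{M(J_1\gamma)} \;\lesssim\; n\,\fBN{\delta_1}{J_1\gamma} \;\lesssim\; n\,\fBN{\eps}{\gamma}.
\]
The shorter chain for $\Gamma_1$ uses only one application each of $M$ and $J_1$ and yields the same linear bound in $n$. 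Summing the two estimates gives the claim, with $\eps'$ depending only on $\eps$, not on $N$ or $n$.

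There is essentially no serious obstacle: the entire argument is algebraic bookkeeping of the change-of-variables identities $\Gamma_1=J_1\circ M(\gamma)$ and $\Gamma_2=J_1\circ M\circ J_1(\gamma)$, together with the already-established Propositions \ref{PropAdjInvts1} and \ref{PropAdjMultis1}. The linear factor $n$ in the bound comes from exactly one invocation of Proposition \ref{PropAdjMultis1} in each decomposition, matching the statement; the losses in the Besov exponent are absorbed by the choice $\eps'<\eps/27$.
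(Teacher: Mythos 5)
Your proposal is correct and follows exactly the paper's proof: the paper likewise writes $\Gamma_1=J_1M\gamma$ and $\Gamma_2=J_1MJ_1\gamma$ and invokes Propositions \ref{PropAdjInvts1} and \ref{PropAdjMultis1} (the paper's proof is just the one-line observation of these factorizations, so your version merely spells out the exponent bookkeeping that the paper leaves implicit).
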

\begin{proof}
Notice that $\Gamma_1= J_1M\gamma$, $\Gamma_2= J_1MJ_1\gamma$ where $J_1$ and $M$ are as in the propositions above.
\end{proof} 
%Before we prove Theorem \ref{ThmAdjTechResult}, let us see how it implies Theorem \ref{ThmOpResAdjoints}.

\subsection{Proof of Theorem \ref{ThmOpResAdjoints}}
We assume Proposition \ref{PropAdjInvts1} and Proposition
\ref{PropAdjMultis1}
and deduce Theorem \ref{ThmOpResAdjoints}.
%, given the propositions}
%Theorem \ref{ThmAdjTechResult}}
If $\vsig\in L^1(\bbR^n\times \bbR^d)$ 
%\sBtp{\eps}{\R^n\times \R^d}$, $j\in \Z$, 
and $\vp$ is a permutation of $\q\{1,\ldots, n+2\w\}$, we shall show
\begin{equation*}
\La[\vsig]({b_{\vp\q(1\w)},\ldots, b_{\vp\q(n+2\w)}} )= \La[\ell_\vp \vsig]
({b_1,\ldots, b_{n+2}}),
\end{equation*}
such that
$\LpN{1}{\ell_\vp \vsig}=\LpN{1}{\vsig}$ and such that there exists $\eps' >c\eps$, with $c$ independent of $\vp$, and 
$$\sBN{\eps'}{\ell_\vp\vsig}\lesssim n^{2} \sBN{\eps}{\vsig}$$
for $\vsig\in \cB_\eps$.

%where $\vsig_\vp\in \sBp{\eps'}$ for some $\eps'\gtrsim 1$, with 
%and $\LpN{1}{\vsig_\vp}=\LpN{1}{\vsig}$.

Every permutation of $\q\{1,\ldots, n+2\w\}$ is a composition of at most four permutations of the following three forms, with the permutation in (iii) occuring at most twice.

\begin{enumerate}[(i)]
\item A permutation of $\q\{1,\ldots, n\w\}$, leaving $n+1$ and $n+2$ fixed.
\item The permutation which switches $n+1$ and $n+2$, leaving all other elements fixed.
\item The permutation which switches $n+1$ and $1$, leaving all other elements fixed.
\end{enumerate}

{\it Case (i) }If $\vp$ is a permutation of $\q\{1,\ldots, n\w\}$, leaving $n+1$ and $n+2$ fixed, then it is immediate to verify
\Be\label{firstnvar}\ell_\vp\vsig\q(\alpha, v\w)=\vsig\q(\alpha_{\vp^{-1}\q(1\w)},\ldots, \alpha_{\vp^{-1}\q(n\w)}, v \w),\Ee
and thus $\sBN{\eps}{\ell_\vp\vsig}=\sBN{\eps}{\vsig}$ and $\sBzN{\ell_\vp\vsig}=\sBzN{\vsig}$.

{\it Case (ii).}
If $\vp$ is the permutation which switches $n+1$ and $n+2$, leaving all other elements fixed, then it is immediate to verify that
\Be\label{transpostriv}\ell_\vp\vsig\q(\alpha, v\w)=\vsig\q(1-\alpha_1,\ldots, 1-\alpha_n,v\w).\Ee  We have
$\sBN{\eps}{\vsig_{\vp}}\approx \sBN{\eps}{\vsig}$ and $\sBzN{\vsig_{\vp}}=\sBzN{\vsig}$.

In both of the above cases, if $\int \vsig\q(\alpha, v\w)\: dv=0$ $\forall \alpha$ then $\int\vsig_\vp\q(\alpha, v\w)\: dv=0$ $\forall \alpha$.

{\it Case (iii).} 
%This is the interesting situation.
%Thus, to prove Theorem \ref{ThmOpResAdjoints}, it suffices to consider only the case when $\vp$ is the permutation which switches $n+1$ and $1$, leaving all other elements fixed. 
We compute
\begin{align*}
&\La[\vsig](b_{n+1}, b_2,\dots, b_n, b_1, b_{n+2})\\
&=
\iiint \vsig(\alpha,v) b_{n+1}(x-\alpha_1v) \big(\prod_{i=2}^n b_i(x-\alpha_i v)\big)
b_1(x-v) b_{n+2}(x) \,dv\,dx\, d\alpha
\\
&=\iiint|\alpha_1|^{-d}\vsig (\alpha, \alpha_1^{-1}w)
 b_{n+1}(x-w) \big(\prod_{i=2}^n b_i(x-\alpha_i \alpha^{-1}_1 w)\big)
b_1(x-\alpha_1^{-1}w) b_{n+2}(x) \,dx\,dw \,d\alpha
\\&=
\iiint
\beta^{d-n-1} \vsig(\beta_1^{-1}, \beta_1^{-1}\beta_2,\dots, \beta_1^{-1}\beta_n, \beta_1w)
\prod_{i=1}^n b_i(x-\beta_i v) b_{n+1}(x-w) b_{n+2}(x) dx\, dw\, d\beta
\end{align*}
where we have first changed variables $v=\alpha_1^{-1}u$, then interchanged the order of integration, and changed variables $\alpha_1=\beta_1^{-1}$, 
$\alpha_i= \beta_i\beta_1^{-1}$ for $i=2,\dots,n$. Hence if $\vp$ is the transposition interchanging $1$ and $n+1$ and leaving $2,\dots, n, n+2$ fixed then $\La^\vp[\vsig]=\La[\ell_\vp\vsig]$ with
\Be \label{elltranspos}
\ell_\vp\vsig (\alpha_1,\dots, \alpha_n,v)=
 \vsig(\alpha_1^{-1}, \alpha_1^{-1}\alpha_2,\dots, \alpha_1^{-1}\alpha_n, \alpha_1 v)\,.
 \Ee
Now if we define the inversion $J$, with respect  to the $\alpha_1$ variable, and multiplicative shears $M_{n-1}, \widetilde M_d$ by
\begin{align*}
Jg(\alpha_1,\dots,\alpha_n,v)&= \alpha_1^{-2} g(\alpha_1^{-1}, \alpha_2,\dots,\alpha_n,v)
\\
M_{n-1}g(\alpha_1,\dots,\alpha_n,v)
&= \alpha_1^{n-1}g(\alpha_1, \alpha_1\alpha_2,\dots,\alpha_1\alpha_n, v)
\\
\widetilde M_d g(\alpha_1,\dots,\alpha_n,v)&=
\alpha_1^d g(\alpha_1,\dots,\alpha_n, \alpha_1v)
\end{align*} 
then it is straightforward to check that the linear transformation $\ell_\vp$ in 
\eqref{elltranspos} can be factorized as 
\Be\label{factorization}\ell_\vp= J\circ \widetilde M_{d}\circ J\circ M_{n-1}\circ J
\,.\Ee
By Remark \ref{RmkAdjfBandsBSame} the $\cB_\eps(\bbR^n\times \bbR^d) $ and the $\fB_{\eps}(\bbR^{n+d})$ norms are equivalent with equivalence constants not depending on $n$.
By Proposition \ref{PropAdjInvts1}  we have 
$\|J g\|_{\cB_\eps'}\lc \|g\|_{\cB_\eps},$ and 
by Proposition \ref{PropAdjMultis1}  we have 
%\begin{align*} 
%\\
$\|M_{n-1} g\|_{\cB_\eps'} \lc n \|g\|_{\cB_\eps},$ and 
$\|M_{d} g\|_{\cB_\eps'} \lc  \|g\|_{\cB_\eps},$
%\end{align*}
for $\eps'<\eps/3$. Hence $\|\ell_\vp \vsig\|_{\cB_\delta}\lc n \|\vsig\|_{\cB_\eps}$,
at least when $\delta<3^{-5}\eps$.

Finally if $\vp$ is a general permutation than we can split 
$\vp=\vp_1\circ\vp_2\circ\vp_3\circ \vp_4$, each $\vp_i$ of the form in (i), (ii) or (iii), with at most two of the form in (iii). Hence we get 
$\La^\vp[\vsig]= \La[\ell_\vp\vsig]$ where $\|\ell_\vp\vsig\|_{\cB_{\delta}} \lc n^2
\|\vsig\|_{\cB_\eps}$, at least for $\delta<3^{-10} \eps$. 
We remark that if we avoid the factorization \eqref{factorization} and use 
the formula for $\ell_\vp$ directly we should get a better range for $\delta$ but this  will be irrelevant for our final boundedness results on  the forms $\La^\vp$. \qed

\subsection{Proof of  Propositions   \ref{PropAdjInvts1} and \ref{PropAdjMultis1}}
We first prove several preliminary lemmata, then give the proof of Proposition 
\ref{PropAdjInvts1} in \S\ref{Pf42} and 
the proof of Proposition  \ref{PropAdjMultis1} in \S\ref{Pf43}.
\subsubsection{ Preparatory Results}\label{prepresults}

We first  recall a standard fact about  Besov spaces
$B^\eps_{1,q}(\bbR)$; $1\le q\le \infty$. If  $0<\eps<1$ then  there  the characterizations
\begin{subequations}\label{besovchar}
\Be\label{besovcharq} \|f\|_{B^\eps_{1,q}}\approx \|f\|_1 +   
\Big (\int_0^1  \|f(\cdot+h)-f\|_1^q\frac{dh}{h^{1+\eps q}}\Big)^{1/q}, \quad 1\le q<\infty,
\Ee
and
\Be\label{besovcharinfty} \|f\|_{B^\eps_{1,\infty}}\approx \|f\|_1 +   
\sup_{0<h<1}  h^{-\eps} \|f(\cdot+h)-f\|_1\,.
\Ee
\end{subequations}
Moreover there are  the continous embeddings
\Be\label{besovemb} 
B^\eps_{1,q_1}\subset B^\eps_{1,q_2},
%\subset B^\delta_{1,1}, 
\quad q_1<q_2.
%\quad \delta<\eps.
\Ee
For \eqref{besovchar} and \eqref{besovemb} we refer to \cite[\S V.5]{stein-si} or \cite{triebel}. As a corollary we get
\begin{lemma}\label{StandardBesov}
Let $0<\delta< \eps<1$.  Then for functions in $L^1(\bbR)$ 
then there are constants $c,C>0$ depending only on $\eps, \delta$ such that 
$$\begin{aligned} 
&c \|f\|_1 + c\int_{0<h<1} h^{-\delta}\|f(\cdot+h)-f\|_{1} \frac{dh}{h}
\\ \le &
 \|f\|_1 + \sup_{0<h<1} h^{-\eps}\|f(\cdot+h)-f\|_1
\\\le& C
\|f\|_1 + C\int_{0<h<1} h^{-\eps}\|f(\cdot+h)-f\|_1\frac{dh}{h}\,.
\end{aligned}
$$
 \end{lemma}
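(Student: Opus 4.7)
The plan is to identify each of the three quantities in the chain with a standard Besov norm on $\bbR$, and then invoke either the characterizations in \eqref{besovchar} or the embedding \eqref{besovemb}. Writing $g(h):=\|f(\cdot+h)-f\|_1$, the characterization \eqref{besovcharq} with $q=1$ (applied separately with parameter $\epsilon$ and with parameter $\delta$) identifies the outermost terms as comparable to $\|f\|_{B^\epsilon_{1,1}(\bbR)}$ and $\|f\|_{B^\delta_{1,1}(\bbR)}$, while \eqref{besovcharinfty} identifies the middle term as comparable to $\|f\|_{B^\epsilon_{1,\infty}(\bbR)}$.

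Under this dictionary the right-hand inequality reads $\|f\|_{B^\epsilon_{1,\infty}(\bbR)} \lesssim \|f\|_{B^\epsilon_{1,1}(\bbR)}$, which is exactly the continuous embedding \eqref{besovemb} in the case $q_1=1$, $q_2=\infty$. No further work is needed for this direction.

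The left-hand inequality translates to $\|f\|_{B^\delta_{1,1}(\bbR)} \lesssim \|f\|_{B^\epsilon_{1,\infty}(\bbR)}$ for $\delta<\epsilon$. This I would prove by a direct one-line computation rather than by invoking a separate embedding: factor $h^{-\delta} = h^{\epsilon-\delta}\cdot h^{-\epsilon}$ and pull out the supremum,
\begin{equation*}
\int_0^1 h^{-\delta}\, g(h)\, \frac{dh}{h}
= \int_0^1 h^{\epsilon-\delta-1}\,\bigl(h^{-\epsilon} g(h)\bigr)\, dh
\le \Bigl(\sup_{0<h<1} h^{-\epsilon}g(h)\Bigr)\int_0^1 h^{\epsilon-\delta-1}\, dh
= \frac{1}{\epsilon-\delta}\sup_{0<h<1}h^{-\epsilon}g(h),
\end{equation*}
which produces both the claimed inequality and an explicit constant $c=\epsilon-\delta$ (up to the implicit absolute constants in the Besov equivalences).

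There is no genuine obstacle here: the result is essentially a repackaging of the standard facts \eqref{besovchar} and \eqref{besovemb} that have already been recalled. The only mild point worth spelling out in the writeup is that all implicit constants depend only on $\epsilon$ and on the gap $\epsilon-\delta$, which is why $c$ and $C$ in the statement depend on both parameters.
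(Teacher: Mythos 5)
Your proof is correct and follows the same route the paper intends: translate all three quantities into $B^\delta_{1,1}$, $B^\epsilon_{1,\infty}$, $B^\epsilon_{1,1}$ norms via \eqref{besovchar} and use embeddings. The paper simply asserts the lemma "as a corollary" of \eqref{besovchar} and \eqref{besovemb} without spelling out the left-hand inequality; since \eqref{besovemb} as stated only covers same-smoothness embeddings $B^\epsilon_{1,q_1}\subset B^\epsilon_{1,q_2}$, your direct one-line computation
\begin{equation*}
\int_0^1 h^{\epsilon-\delta-1}\bigl(h^{-\epsilon}g(h)\bigr)\,dh \le \tfrac{1}{\epsilon-\delta}\sup_{0<h<1}h^{-\epsilon}g(h)
\end{equation*}
is exactly the missing piece that makes the left-hand inequality (the cross-smoothness embedding $B^\epsilon_{1,\infty}\subset B^\delta_{1,1}$) explicit, and it also makes the dependence $c\sim\epsilon-\delta$ transparent.
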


We let $e_i$, $i=1,\dots,N$, denote the standard basis vectors in $\bbR^N$ and let $e_i^\perp $ to be the orthogonal complement. For $g\in L^1(\bbR^N)$  and $w\in e_i^\perp$ define
\Be \label{piwg}
\pi_i^w g(s) = g(se_i+w);
\Ee this is defined as an $L^1(\bbR) $ function for almost every $w\in e_i^\perp$, and by Fubini $w\mapsto \int_\bbR|\pi_i^w g(s) |ds$ belongs to $L^1(e_i^\perp)$. Moreover if $g\in \fB_\eps(\bbR^N)$ for some 
$\eps>0$ then for almost every $w\in e_i^\perp$ the function
$h\mapsto \int_\bbR|\pi_i^w g(s+h)-\pi_i^w g(s)| ds$ is continuous.

\begin{lemma} \label{slicinglemma}
Let $0\le \delta<1$.  Then the following statements hold.

(i) $$
\|g\|_{\fB_\delta(\bbR^N)} \le \max_{i=1,\dots,n} \int_{e_i^\perp} \big\|\pi_i^w g\big\|_{\fB_\delta(\bbR)} dw\,.
$$

(ii) If $0<\delta<\eps\le 1$ then there exists $C=C(\eps,\delta)>0$ (not depending on $N$) such that for all $f\in \fB_\eps(\bbR^N)$
$$\max_{i=1,\dots,N} \int_{e_i^\perp} \big\|\pi_i^w g\big\|_{\fB_\delta(\bbR)} dw
\le C \|g\|_{\fB_\eps(\bbR^N)}\,.$$
\end{lemma}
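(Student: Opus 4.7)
The plan is to reduce both statements to Fubini together with the sup-versus-integrated characterization in Lemma~\ref{StandardBesov} applied to the one-dimensional slices $\pi_i^w g$. The key observation is that both $\|g\|_{\fB_\delta(\bbR^N)}$ and $\int_{e_i^\perp}\|\pi_i^w g\|_{\fB_\delta(\bbR)}\,dw$ split into a polynomial-weight piece and a modulus-of-continuity piece, with only a single coordinate direction involved at a time.

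For part (i), for each fixed $j$ I would choose $i=j$. By Fubini,
\[\int(1+|s_j|)^\delta|g(s)|\,ds=\int_{e_j^\perp}\int(1+|s|)^\delta|\pi_j^w g(s)|\,ds\,dw,\]
and similarly $\|g(\cdot+he_j)-g\|_{L^1(\bbR^N)}=\int_{e_j^\perp}\|\pi_j^w g(\cdot+h)-\pi_j^w g\|_{L^1(\bbR)}\,dw$. Dividing the latter identity by $h^\delta$ and pulling the supremum over $h$ inside the $dw$-integral produces both halves of $\|\pi_j^w g\|_{\fB_\delta(\bbR)}$, and taking the maximum over $j$ yields the claim.

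Part (ii) is where the main obstacle lies: the reverse direction requires exchanging a supremum over $h$ with an integral over $w$, which is not generally permitted. To circumvent this I would apply Lemma~\ref{StandardBesov} to each slice, with exponent $\delta$ throughout, yielding
\[\sup_{0<h\le 1}h^{-\delta}\|\pi_i^w g(\cdot+h)-\pi_i^w g\|_{L^1(\bbR)}\le C\Big(\|\pi_i^w g\|_{L^1(\bbR)}+\int_0^1 h^{-\delta}\|\pi_i^w g(\cdot+h)-\pi_i^w g\|_{L^1(\bbR)}\tfrac{dh}{h}\Big).\]
Everything on the right is linear and $w$-integrable, so Fubini now applies. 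After integrating in $w$ the modulus-of-continuity integral becomes $\int_0^1 h^{-\delta}\|g(\cdot+he_i)-g\|_{L^1(\bbR^N)}\tfrac{dh}{h}$. I would then factor $h^{-\delta}=h^{\eps-\delta}\cdot h^{-\eps}$, estimate $h^{-\eps}\|g(\cdot+he_i)-g\|_1\le\|g\|_{\fB_\eps}$ directly from the definition of the $\fB_\eps$ norm, and observe that $\int_0^1 h^{\eps-\delta-1}\,dh=\tfrac{1}{\eps-\delta}<\infty$ precisely because $\delta<\eps$. The polynomial-weight piece is handled trivially by $(1+|s_i|)^\delta\le(1+|s_i|)^\eps$, and the residual $\|g\|_{L^1(\bbR^N)}$ is bounded by $\|g\|_{\fB_\eps}$. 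All constants depend only on $\eps$ and $\delta$, which gives the required $N$-independence.
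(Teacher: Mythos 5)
Your proof is correct and follows essentially the same route as the paper: part (i) by Fubini and the trivial direction of the sup/integral inequality, and part (ii) by applying Lemma~\ref{StandardBesov} to the one-dimensional slices to convert the sup over $h$ into an integral in $h$, applying Fubini, and then splitting $h^{-\delta}=h^{\eps-\delta}h^{-\eps}$ to extract the $\fB_\eps$ norm with an $N$-independent constant $C(\eps,\delta)$.
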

\begin{proof} (i) follows immediately from the definitions of $\fB_\delta(\bbR)$ and $\fB_\delta(\bbR^N)$. 
For (ii) fix $i\in \{1,\dots, N\}$ and split 
$ \int_{e_i^\perp} \big\|\pi_i^w g\big\|_{\fB_\delta(\bbR)} dw=I+II$
where
\begin{align*}
I&=
\int_{e_i^\perp} \int (1+|s|)^\delta |g(se_i+w)| ds \,dw
\\
II&=\int_{e_i^\perp}\sup_{0\le h\le 1} |h|^{-\delta} \int |g((s+h)e_i+w) -g(se_i+w)| ds \,dw\,.
\end{align*}
It is immediate that $I\le \|g\|_{\fB_{\delta}(\bbR^N)}\le \|g\|_{\fB_{\eps}(\bbR^N)}$.
For the second term we use  Lemma
\ref{StandardBesov} to estimate
\begin{align*}
II &\le C_\delta 
\int_{e_i^\perp}\int_{0\le h\le 1} |h|^{-\delta} \int |g((s+h)e_i+w) -g(se_i+w)| ds\,\frac{dh}{h} \,dw
\\
&= C_\delta \int_{0}^1 h^{\eps-\delta} h^{-\eps} \int_{\bbR^N} 
 |g(x+ he_i) -g(x)| dx\,\frac{dh}{h} 
 \\
 &\le  C_\delta (\eps-\delta)^{-1} \sup_{0<h<1} |h|^{-\eps} \|g(\cdot+he_i)-g\|_{L^1(\bbR^N)} 
 \end{align*} and hence $II\le C(\eps,\delta)  \|g\|_{\fB_\eps(\bbR^N)}$.
 \end{proof}
  
  \begin{lemma}\label{largeslemma}
  Let $R\ge 1$ and let $\Omega^i_R=\{x\in \bbR^N: |x_i|\ge R\}$. Then
$$\int_{\Omega_R^i}|g(x)| dx
 \le R^{-\eps} \|g\|_{\fB_\eps(\bbR^N)}.\,$$
% and, for $\delta<\eps$,
 %$$\int_{e_i^\perp} \int_{|x_i\Omega_R^i}|g(s)| ds
% \le R^{-\eps} \|g\|_{\fB_\eps(\bbR^N)}\,,$$
 
\end{lemma}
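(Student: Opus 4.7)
The plan is to observe that this lemma follows immediately from the first term in the definition of the $\fB_\eps$ norm, which contains a weighted $L^1$ quantity of the form $\int (1+|s_i|)^\eps |g(s)|\,ds$ for each coordinate $i$. The region $\Omega^i_R$ is precisely where the weight $(1+|s_i|)^\eps$ is at least $R^\eps$, so restricting the weighted integral to $\Omega^i_R$ and dropping the weight from the outside yields exactly the desired inequality.

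Concretely, I would argue as follows. Fix $i\in\{1,\dots,N\}$ and $R\ge 1$. For every $x\in\Omega^i_R$ we have $|x_i|\ge R$, hence $(1+|x_i|)^\eps \ge R^\eps$. Therefore
\begin{equation*}
R^\eps \int_{\Omega^i_R} |g(x)|\,dx \;\le\; \int_{\Omega^i_R} (1+|x_i|)^\eps |g(x)|\,dx \;\le\; \int_{\bbR^N} (1+|x_i|)^\eps |g(x)|\,dx \;\le\; \fBN{\eps}{g},
\end{equation*}
where the last inequality uses the definition of $\fBN{\eps}{\cdot}$ (the maximum over $i$ of the weighted $L^1$ term dominates each individual weighted $L^1$ integral). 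Dividing through by $R^\eps$ gives the claim.

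There is no real obstacle here: the statement is a direct Chebyshev-type consequence of the way the $\fB_\eps$ norm is defined. The only thing to be mindful of is that the norm is defined with the weight $(1+|s_i|)^\eps$ rather than $|s_i|^\eps$, which is why the assumption $R\ge 1$ is used (so that $(1+|x_i|)^\eps \ge R^\eps$ for $x\in\Omega^i_R$). No use of the Besov/regularity part of the $\fB_\eps$ norm is needed, nor any of the auxiliary slicing lemmas.
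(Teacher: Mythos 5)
Your proof is correct and is exactly the argument in the paper, which also observes that $|x_i|\ge R$ gives $(1+|x_i|)^\eps\ge R^\eps$ and hence $\int_{\Omega_R^i}|g|\,dx\le R^{-\eps}\int(1+|x_i|)^\eps|g|\,dx\le R^{-\eps}\|g\|_{\fB_\eps}$. Nothing to add.
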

\begin{proof} This is immediate from
\[\int_{\Omega_R^i}|g(x)|\, dx \le R^{-\eps}\int (1+|x_i|)^\eps |g(x) |\,dx\,.\qedhere\]
\end{proof}

  The following lemma is a counterpart to Lemma \ref{largeslemma} which is used when integrating over sets whose projection to a coordinate axis has small 
  measure.
  It can be seen as a standard  application of a Sobolev embedding theorem for functions on the real line. For measurable $J\subset \bbR$ we denote by $|J|$ the Lebesgue measure.
  
  \begin{lemma}\label{Sobemb}
   Let $0<\eps\le 1$ and  $f\in \fB_\eps(\bbR^N)$, and let $0<\eps'<\eps$. 
   Let $E\subset \bbR^N $ and let $$\pr_i(E)=\{s\in  \bbR: se_i+w\in E \text{ for some }w\in e_i^\perp\}.$$
   Then
$$   \int_E |f(x)| dx\le C_{\eps,\eps'} |\pr_i(E)|^{\eps'} \|f\|\ci{\fB_\eps(\bbR^N)}\,.$$
Moreover for $i=1,\dots, N$, 
$\delta<\eps$,
$$\int_{e_i^\perp}\int_{|x_i|\le 1} |x_i|^{-\delta} |f(x)| dx \le C(\eps,\delta) \|f\|_{\fB_\eps(\bbR^N)} .$$
\end{lemma}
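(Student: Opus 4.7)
\medskip

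The plan is to reduce both inequalities to one-dimensional Sobolev/Besov embedding on lines parallel to the $e_i$-axis, using the slicing estimate already provided in Lemma~\ref{slicinglemma}(ii) to integrate over the transverse directions.

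First, I would fix an index $i$ and use Fubini to write
$$\int_E |f(x)|\,dx = \int_{e_i^\perp} \int_{\pr_i(E)\cap\{s : se_i+w\in E\}} |\pi_i^w f(s)|\,ds\,dw,$$
so that the inner integral is over a subset of $\pr_i(E)\subset \bbR$ whose Lebesgue measure is at most $|\pr_i(E)|$. Choose an auxiliary exponent $\eps''$ with $\eps'<\eps''<\eps$, and pick $\eps'''\in(\eps',\eps'')$. By Lemma~\ref{StandardBesov}, the norm $\fBN{\eps''}{\cdot}$ on $\bbR$ controls the Besov-type quantity
$$\|g\|_1+\int_0^1 h^{-\eps'''}\|g(\cdot+h)-g\|_1\,\frac{dh}{h},$$
i.e.\ the $B^{\eps'''}_{1,1}(\bbR)$ norm. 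The standard one-dimensional Sobolev embedding then gives $B^{\eps'''}_{1,1}(\bbR)\hookrightarrow L^{p}(\bbR)$ with $p=1/(1-\eps''')$.

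Second, for each slice $g=\pi_i^w f$, Hölder's inequality with exponents $(p,p')$, $p=1/(1-\eps''')$, $p'=1/\eps'''$, yields
$$\int_{J} |g(s)|\,ds \le |J|^{\eps'''}\|g\|_{L^p(\bbR)} \le C_{\eps,\eps'}\,|J|^{\eps'''}\fBN{\eps''}{g}$$
for $J\subset\bbR$ with $|J|\le 1$; for $|J|>1$ the trivial estimate $\int_J|g|\le \|g\|_1\le |J|^{\eps'}\fBN{\eps''}{g}$ suffices. In either case one gets the bound with $|J|^{\eps'}$ since $\eps'''>\eps'$. Integrating over $w\in e_i^\perp$ and applying Lemma~\ref{slicinglemma}(ii) gives
$$\int_E |f(x)|\,dx \le C_{\eps,\eps'}|\pr_i(E)|^{\eps'}\int_{e_i^\perp}\fBN{\eps''}{\pi_i^w f}\,dw \le C_{\eps,\eps'}|\pr_i(E)|^{\eps'}\fBN{\eps}{f},$$
which is the first assertion.

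For the second assertion I would use the same slicing: pick $\eps''$ with $\delta<\eps''<\eps$ and choose $\eps'''\in(\delta,\eps'')$ so that $\delta/\eps'''<1$. Then for each fixed $w\in e_i^\perp$, Hölder's inequality with exponents $(p,p')=(1/(1-\eps'''),1/\eps''')$ gives
$$\int_{-1}^1 |s|^{-\delta}|\pi_i^w f(s)|\,ds \le \Bigl(\int_{-1}^1 |s|^{-\delta/\eps'''}\,ds\Bigr)^{\eps'''}\|\pi_i^w f\|_{L^p(\bbR)}\le C_{\eps,\delta}\,\fBN{\eps''}{\pi_i^w f},$$
using the same one-dimensional embedding as before, with the first factor finite precisely because $\delta/\eps'''<1$. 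Integrating in $w$ and invoking Lemma~\ref{slicinglemma}(ii) once more yields the claim.

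The argument is essentially a standard Sobolev-type embedding combined with slicing; the only delicacy is bookkeeping of the various exponents so that $\eps'<\eps'''<\eps''<\eps$ (respectively $\delta<\eps'''<\eps''<\eps$) and the Hölder computation remains valid. There is no serious obstacle; the main care is to handle separately the cases $|\pr_i(E)|\le 1$ and $|\pr_i(E)|>1$ in the first statement.
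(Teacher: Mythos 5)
Your proof is correct, and it takes a genuinely (if mildly) different route from the paper's. The paper also begins with H\"older, picking $p=(1-\eps')^{-1}$ directly so that the factor $|\pr_i(E)|^{\eps'}$ comes out in one stroke (with no case split on $|\pr_i(E)|\le 1$ versus $|\pr_i(E)|>1$); it then bounds $\int_{e_i^\perp}\|\pi_i^wf\|_{L^p(\bbR)}\,dw$ by $\fBN{\eps}{f}$ through an explicit one-dimensional Littlewood--Paley decomposition of $\pi_i^wf$ in the $s$-variable, using Young's inequality, the moment cancellation of the high-frequency pieces, and the gap $1-1/p=\eps'<\eps$. In effect that step re-derives, for $L^p$ slice norms, an estimate of the same flavor as Lemma~\ref{slicinglemma}(ii). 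You instead interpolate one further step: control each slice's $L^p$ norm by its $\fBp{\eps''}(\bbR)$ norm via the elementary embeddings $\fBp{\eps''}(\bbR)\hookrightarrow B^{\eps'''}_{1,1}(\bbR)\hookrightarrow L^p(\bbR)$ (the first by Lemma~\ref{StandardBesov}, the second the critical Sobolev--Besov embedding), and then invoke Lemma~\ref{slicinglemma}(ii) to integrate over $w$. That is more modular and avoids repeating the slicing computation; it costs one extra intermediate exponent and the case distinction on $|\pr_i(E)|$, which you could have avoided by taking $p=(1-\eps')^{-1}$ as the paper does and embedding $B^{\eps'}_{1,1}\hookrightarrow L^p$ directly. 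For the second inequality the paper simply applies the first to the dyadic annuli $E_k=\{2^{-k-1}\le|x_i|\le 2^{-k}\}$ and sums a geometric series, while you run the weighted H\"older estimate on each slice directly; both are correct, and the dyadic reduction is a hair shorter.
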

\begin{proof}  For $k\ge 0$ let  $E_k=\{x\in \bbR^N:2^{-k-1}\le |x_i|\le 2^{-k}\}$. The second inequality is a consequence of the first applied to the sets $E_k$.

%We first assume $N=1$ and that $E\subset \bbR$ with $|E|\le 1$.
To prove the first statement pick $p= (1-\eps')^{-1}>1$ so that $\eps'=1-p^{-1}$. By H\"older's inequality, 
$$\int_E |f(x)| dx\le  |\pr_i(E)|^{\eps'}  \int_{e_i^\perp} \Big(\int |f(se_i+w)|^p ds\Big)^{1/p}dw\,.$$
Let $\pi_i^wf(s)
=f(se_i+w)$.
%\|g\|\ci{L^p(\bbR)}
%the right hand side is $\le |E|^{1-1/p}\|f\|_p$.
Let $\phi\in \cS(\bbR)$, $\int\phi(s) ds=1$  such that the Fourier transform $\widehat\phi$ is supported in $\{|\xi|\le 1\}$. Let $\psi_k= 2^k\phi(2^k\cdot)-2^{k-1}\phi(2^{k-1}\cdot)$. Choose  $\phit\in \cS(\bbR)$ whose   Fourier transform is equal to $1$ on $\{|\xi|\le 2\}$ and let $\phit_k=2^k\phit(2^k\cdot)$.  %Let $*$ denote the
Then
$$\pi_i^wf
= \phit*\phi* \pi_i^wf+\sum_{k=1}^\infty \phit_k*\psi_k*\pi_i^wf$$ and thus, by Young's inequality, 
\begin{align*}
\|\pi_i^wf\|_{L^p(\bbR)} &\le \|\phit\|_{L^p(\bbR)} \|\phi*\pi_i^wf\|_{L^p(\bbR)}+ \sum_{k=1}^\infty \|\phit_k\|_{L^p(\bbR)} \|\psi_k*\pi_i^wf\|_{L^1(\bbR)}
\\&\lc \|\phi*\pi_i^wf\|_{L^p(\bbR)}  +\sum_{k=1}^\infty 2^{k(1-1/p)}  \|\psi_k*\pi_i^wf\|_{L^1(\bbR)}.
\end{align*}
Since $\int\psi_k(s) ds=0$ we have 
\begin{align*}
\big| \psi_k*\pi_i^wf(s)  \big|&= \Big|\int \psi_k(h) \big[
\pi_i^wf(s-h)-\pi_i^wf(s)
 \big] dh \Big|
\\ &\lc \int \frac{2^k}{(1+2^k|h|)^3}
\big|\pi_i^wf(s-h)-\pi_i^wf(s)\big| dh\,.
\end{align*}
Using this in the above expression we get  after integration in $w$
\begin{align*}
&\int_{e_i^\perp} \Big(\int |f(se_i+w)|^p ds\Big)^{1/p}dw \,\\&\lc 
\|f\|_1 + 
\sum_{k=1}^\infty 2^{k(1-\frac 1p)}
\int \frac{2^k}{(1+2^k|h|)^3}
\big|\pi_i^wf(s-h)-\pi_i^wf(s)\big| ds\, dw\, dh 
\\
&\lc 
\|f\|_1 + 
\sum_{k=1}^\infty 
\int_{|h|\le 1}  \frac{2^{k(2-\frac 1p)}|h|^{\eps}}{(1+2^k|h|)^3} dh\,
\sup_{|u|\le 1}
\frac{\|f(\cdot+u e_i)-f(\cdot)\|_1}{|u|^{\eps}}
+ \sum_{k=1}^\infty 
\int_{|h|\ge 1 } 
 \frac{2^{k(2-\frac 1p)}}{(1+2^k|h|)^3} dh\,
\|f\|_1.
\end{align*}
The last term is $\lc \sum_{k=1}^\infty2^{-k(1+1/p)} \|f\|_1\lc \|f\|_1$.
The middle term is $\lc \sum_{k=1}^\infty 2^{k(-\eps+1-1/p)} \|f\|_{\fB_\eps}$ and since $1-1/p=\eps'<\eps$ we  obtain the required bound.
\end{proof}

\subsubsection{Proof of Proposition \ref{PropAdjInvts1}}\label{Pf42}
%We separate the proof of Proposition \ref{PropAdjInvts1} into several lemmas.

 The main  lemma needed in the proof is an estimate for functions on the real line.
\begin{lemma}\label{inversionlemma}
For $g\in \fB_\eps(\bbR)$ let $Jg(s)=s^{-2} g(s^{-1})$. Then for $\delta<\eps/3$
$$\|Jg\|_{\fB_{\delta}(\bbR)} \le C(\eps, \delta) \|g\|_{\fB_\eps(\bbR)}.$$\end{lemma}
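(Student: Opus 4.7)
I will write $\|Jg\|_{\fB_\delta(\bbR)} = I + II$ where $I = \int(1+|s|)^{\delta}|Jg(s)|\,ds$ and $II = \sup_{0<h\le 1}h^{-\delta}\int|Jg(s+h)-Jg(s)|\,ds$, and control each separately.

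For $I$, the change of variable $u = s^{-1}$ (with $s^{-2}\,ds = -du$) gives
\begin{equation*}
I = \int \frac{(1+|u|)^{\delta}}{|u|^{\delta}}|g(u)|\,du \le 2^{\delta}\Big(\|g\|_{L^1} + \int_{|u|\le 1}|u|^{-\delta}|g(u)|\,du\Big),
\end{equation*}
and the last integral is $\lc \|g\|_{\fB_\eps(\bbR)}$ for $\delta<\eps$ by the second statement of Lemma \ref{Sobemb} (applied with $N=1$).

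For $II$, fix $0<h\le 1$ and split the $s$-integration at $|s| = h^{1/3}$. On $|s|\le h^{1/3}$, apply the triangle inequality $|Jg(s+h)-Jg(s)|\le|Jg(s+h)|+|Jg(s)|$ and change variables $u=s^{-1}$, $u=(s+h)^{-1}$; both reduce to $\int_{|u|\gtrsim h^{-1/3}}|g(u)|\,du$, which Lemma \ref{largeslemma} bounds by $h^{\eps/3}\|g\|_{\fB_\eps}$.

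On $|s|>h^{1/3}$ (so $|s+h|\sim|s|$), write
\begin{equation*}
Jg(s+h)-Jg(s) = \bigl[(s+h)^{-2}-s^{-2}\bigr]g\bigl((s+h)^{-1}\bigr) + s^{-2}\bigl[g\bigl((s+h)^{-1}\bigr)-g(s^{-1})\bigr] = A(s)+B(s).
\end{equation*}
For the $A$-term, $|(s+h)^{-2}-s^{-2}|\lc h/|s|^3$; changing variable $u=(s+h)^{-1}$ reduces $\int|A|\,ds$ to $h\int_{|u|\lc h^{-1/3}}|u||g(u)|\,du$. Since $|u|\le h^{-(1-\eps)/3}|u|^{\eps}$ on this range, this is $\lc h^{(2+\eps)/3}\|g\|_{\fB_\eps}$, which is amply smaller than $h^{\eps/3}\|g\|_{\fB_\eps}$.

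The hard part is the $B$-term. The change of variable $u=s^{-1}$ (using $s^{-2}\,ds=-du$) yields
\begin{equation*}
\int_{|s|\ge h^{1/3}}|B(s)|\,ds \;=\; \int_{|u|\le h^{-1/3}}\bigl|g\bigl(u-\tau(u)\bigr)-g(u)\bigr|\,du,\qquad \tau(u):=\frac{u^2 h}{1+uh},
\end{equation*}
a variable-shift problem where $|\tau(u)|\lc h|u|^2$ and $|\tau'(u)|\lc h|u|\lc h^{2/3}$. The plan is to apply a Littlewood--Paley decomposition $g=\sum_j\Delta_j g$ with $\|\Delta_j g\|_{L^1}\lc 2^{-j\eps}\|g\|_{\fB_\eps}$ and Bernstein's inequality $\|(\Delta_j g)'\|_{L^1}\lc 2^{j(1-\eps)}\|g\|_{\fB_\eps}$, and to bound each $j$-piece in two ways:
\begin{itemize}
\item Trivially, $\int|\Delta_j g(u-\tau(u))-\Delta_j g(u)|\,du\le 2\|\Delta_j g\|_{L^1}\lc 2^{-j\eps}\|g\|_{\fB_\eps}$.
\item Via the fundamental theorem of calculus $\Delta_j g(u-\tau(u))-\Delta_j g(u)=-\int_0^1\tau(u)(\Delta_j g)'(u-t\tau(u))\,dt$, followed by a dyadic decomposition $|u|\sim 2^k$ with $2^k\le h^{-1/3}$: on each annulus $|\tau(u)|\lc h\cdot 4^k$ and the map $u\mapsto u-t\tau(u)$ is bi-Lipschitz onto a set of comparable size in $y=u-t\tau(u)$, hence
\begin{equation*}
\int_{|u|\sim 2^k}|\Delta_j g(u-\tau(u))-\Delta_j g(u)|\,du \lc h\cdot 4^k\,\|(\Delta_j g)'\|_{L^1}.
\end{equation*}
Summing over $k\le \log_2(h^{-1/3})$ yields $\lc h^{1/3}\,2^{j(1-\eps)}\|g\|_{\fB_\eps}$.
\end{itemize}
Taking the minimum and summing over $j$ (the two bounds balance at $2^j\sim h^{-1/3}$) gives the desired $\lc h^{\eps/3}\|g\|_{\fB_\eps}$. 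Combined with the easier estimates, $II\lc h^{\eps/3-\delta}\|g\|_{\fB_\eps}$, which is finite for $\delta<\eps/3$.

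\textbf{Main obstacle.} The principal difficulty is exactly the variable-shift estimate on the $B$-term: one cannot directly invoke the Besov modulus of $g$ because $\tau(u)$ depends on $u$. The Littlewood--Paley/dyadic-in-$u$ argument resolves this by trading off translation-invariant $L^1$ control of $\Delta_j g$ against $L^1$ control of $(\Delta_j g)'$, using the crucial fact that on $|u|\le h^{-1/3}$ the perturbation $u\mapsto u-t\tau(u)$ remains bi-Lipschitz and the dyadic annuli $|u|\sim 2^k$ are preserved (up to constants) by this map. The constraint $|u|\le h^{-1/3}$, which emerges from the splitting of $s$, is precisely what makes this decomposition converge and produces the exponent $\eps/3$ of the lemma.
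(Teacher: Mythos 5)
Your proof is correct (modulo a small technical point below) but it takes a genuinely different route from the paper's. The paper does not fix $h$; instead it invokes Lemma~\ref{StandardBesov} to replace the $\sup_h$ by a dyadic $\int_\rho^{2\rho}\cdot\,\frac{dh}{h}$, changes $\sigma\to s=\sigma^{-1}$, and then — this is the key move — for fixed $s$ changes the integration variable $h\to u:=\frac{s}{1+hs}-s$, observing that $\frac{|du|}{|u|}\approx\frac{|dh|}{|h|}$ on the relevant range. After interchanging orders, the shift $u$ becomes a genuine free variable, so the inner $s$-integral is the translation-invariant modulus $\|g(\cdot+u)-g\|_1$, which the Besov condition controls directly; a dyadic sum in $u$ then finishes the estimate, with no Littlewood--Paley decomposition at all. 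You instead keep $h$ fixed, change $s\to u=s^{-1}$ so the shift $\tau(u)=\frac{u^2h}{1+uh}$ depends on $u$, and attack the variable shift with a Littlewood--Paley decomposition and Bernstein's inequality, trading $\|\Delta_jg\|_{L^1}\lc 2^{-j\eps}\|g\|_{\fB_\eps}$ against $\|(\Delta_jg)'\|_{L^1}\lc 2^{j(1-\eps)}\|g\|_{\fB_\eps}$ and balancing at $2^j\sim h^{-1/3}$. Both methods give the same exponent $\eps/3$; the paper's is shorter and avoids LP machinery once the change of variables in $h$ is spotted, while yours is perhaps more systematic and would adapt more readily to other variable-coefficient shifts. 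The one point to be careful about in your version: $\|\Delta_jg\|_{L^1}\lc 2^{-j\eps}\|g\|_{\fB_\eps}$ and the corresponding Bernstein bound are only meaningful for $j\ge 0$; you should either use a decomposition $g=\Phi_0*g+\sum_{j\ge 1}\Delta_jg$ (handling the low-pass piece with $\|\Phi_0*g\|_{L^1},\|(\Phi_0*g)'\|_{L^1}\lc\|g\|_{L^1}$, which still yields $h^{1/3}\|g\|_{L^1}$ after your FTC and dyadic-$k$ sum), or else replace $2^{-j\eps}\|g\|_{\fB_\eps}$ by the correct bound $\min\{1,2^{-j\eps}\}\|g\|_{\fB_\eps}$ on the trivial branch — either fix is routine but should be made explicit.
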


\begin{proof}
First observe that for $\eps'<\eps$
$$\int(1+|\sigma|)^{\eps'}|Jg(\sigma)|d\sigma=\int (1+|s|^{-1})^{\eps'} |g(s)| ds
\lc \|g\|_{\fB_\eps(\bbR)},$$
by Lemma \ref{Sobemb}. 
Thus, in light of Lemma \ref{StandardBesov}  it remains to prove that for $\rho\le 1/2$,
\Be\label{besovJg}
\int_\rho^{2\rho} \int |Jg(\sigma+h)-Jg(\sigma)| d\sigma \frac {dh}{h} \lc \rho^{\delta'} \|g\|_{\fB_\eps}, 
\Ee
for any $\eps'<\delta'<\eps/3$.
Choose any $\beta\in (\delta'/\eps, 1/3)$.
We have by changes of variables 
$$
\int_\rho^{2\rho} \int_{|\sigma|\le \rho^\beta} |Jg(\sigma+h)|+|Jg(\sigma)| d\sigma \frac {dh}{h} \lc \int_{|\sigma|\le 3\rho^\beta} |Jg(\sigma)| d\sigma
\le  \int_{|s|\ge \rho^{-\beta}/3 }|g(s)|ds\le \rho^{\beta\eps}\|g\|_{\fB_\eps}
$$
by Lemma \ref{largeslemma}. Also
$$
\int_\rho^{2\rho} \int_{|\sigma|\ge \rho^{-\beta}} |Jg(\sigma+h)|+|Jg(\sigma)| d\sigma \frac {dh}{h} \lc \int_{|\sigma|\le \rho^\beta /2} |Jg(\sigma)| d\sigma
\le  \int_{|s|\le 2\rho^{\beta} }|g(s)|ds\le \rho^{\beta\eps}\|g\|_{\fB_\eps},
$$
by Lemma \ref{Sobemb}. It remains to consider
\begin{align*}
\int_\rho^{2\rho} \int_{\rho^\beta}^{\rho^{-\beta}}|Jg(\sigma+h)-Jg(\sigma)| d\sigma \frac {dh}{h} &=
\int_\rho^{2\rho} \int_{\rho^\beta}^{\rho^{-\beta}}\big|\tfrac{s^{-2}}{(s^{-1}+h)^2} g\big(\tfrac{1}{s^{-1}+h}\big) -g(s) \big| ds\,\frac{dh}{h}
\\&=
\int_\rho^{2\rho} \int_{\rho^\beta}^{\rho^{-\beta}}
\big|\tfrac{1}{(1+hs)^2} g\big(\tfrac{s}{1+hs}\big) -g(s) \big| ds\,\frac{dh}{h};
\end{align*}
here we have performed the change of variable $s=\sigma^{-1}$.
We now interchange the order of integration and then change variables
$u= \frac{s}{1+hs}-s= -\frac{s^2h}{1+hs}$.
Observe that $du/dh= s^2(1+hs)^{-2}$ and thus
$\frac{|du|}{|u|} = |1+hs|^{-1} \frac{|dh|}{|h|}.$  Therefore for $|h|\approx\rho$ and $\rho^\beta<|s|\le \rho^{-\beta}$ we can replace $|dh|/|h| $ by $|du|/|u|$.
Also observe that $h=-u (su+s^2)^{-1}$ and $1+hs= s(u+s)^{-1}$.
Thus the last displayed expression can be written as
%is  equal to
%\begin{align*} &
\[ \int_{\rho^\beta\le |s|\le \rho^{-\beta}} 
\int_{-\frac{2\rho s^2}{1+2\rho s}}^{-\frac{\rho s^2}{1+\rho s}}\big| 
\big (\tfrac {u+s}{s}\big)^2 g(s+u) -g(s)\big| \frac{du}{|u|} ds \le (I)+(II)
\]
where
\begin{align*}
(I)&:=\iint\limits_{\substack{\rho^\beta\le |s|\le \rho^{-\beta}\\
|u|\approx \rho s^2}} 
\big|
\tfrac {(u+s)^2}{s^2}-1\big| | g(s+u)| \frac{du}{|u|}\, ds
\\ (II)&:=\iint\limits_{\substack{\rho^\beta\le |s|\le \rho^{-\beta}\\
|u|\approx \rho s^2}} 
%\int_{\rho^\beta\le |s|\le \rho^{-\beta}} \int_{|u|\approx \rho s^2}
 \big|g(s+u) -g(s)\big| \frac{du}{|u|} \,ds\,.
\end{align*}
First estimate  
\begin{align*}(I) &\lc 
\int_{\rho^\beta\le |s|\le \rho^{-\beta}}\int_{|u|\approx \rho s^2}
|g(u+s)|  \frac{u^2+2|us|}{s^2}  \frac{du}{|u|} \,ds
\\&\lc 
  \int_0^{C\rho^{1-2\beta} }\int_{c\rho^\beta}^{C\rho^{-\beta} }(\rho+|s|^{-1})|g(s)|ds\,
 du \\&\lc \rho^{1-2\beta}\|g\|_1 +\sum_{\substack{k\ge 0 \\2^{-k}\ge c\rho^\beta}}
 2^k \int_{2^{-k}\le |s|\le 2^{1-k}} |g(s)| ds
 \end{align*} 
 and, since by Lemma \ref{Sobemb}  $\int_{|s|\le 2^{-k}} |g(s)|ds \lc 2^{-k\eps''}\|g\|_{\fB_\eps}$ for $\eps''<\eps$, we get  
 \begin{align*} 
 (I)& \lc \rho^{1-2\beta}  \Big(\|g\|_1+ \sum_{\substack{k\ge 0 \\2^{-k}\ge c\rho^\beta}}2^{k(1-\eps'')} \|g\|_{\fB_\eps}\Big)
 \lc \rho^{1-3\beta+\beta\eps''}\|g\|_{\fB_\eps}.
  \end{align*}
  % where $\eps''<\eps$; here we have used Lemma \ref{Sobemb}.
Finally,
\begin{align*}  (II)&\le \sum_{k: 2^{-k}\le C\rho^{1-2\beta}}
\int\limits_{2^{-k}\le |u|\le 2^{1-k}}\|g(\cdot +u)-g\|_1\frac{du}{|u|}
\\&\le \sum_{k: 2^{-k}\le C\rho^{1-2\beta}}2^{-k\eps} \|g\|_{B^\eps_{1,\infty}} \lc
\rho^{(1-2\beta)\eps}  \|g\|_{\fB_\eps} .
\end{align*}
Now collect the estimates and keep in mind that $\beta<1/3$ is chosen close to $1/3$. We may choose $\eps''$ above so that $3\delta'<\eps''<\eps$.  Then the asserted estimate \eqref{besovJg} follows, and the lemma is proved.
% This completes the proof of the lemma.
\end{proof}

\begin{proof}[Proof of Proposition \ref{PropAdjInvts1}, concluded]
Let $\pi_i^w g(s)=g(se_i+w)$ be as in \eqref{piwg}. 
We have
$$\|J_1\gamma\|_{\fB_\delta} \le \max_{1\le i\le N} \int_{e_i^\perp}
 \|\pi_i^w(J_1 g)
 \|_{\fB_\delta(\bbR)} dw.$$
By Lemma \ref{slicinglemma}
and a change of variable $w_1\mapsto w_1^{-1}$ we obtain for $2\le i\le n$, $\delta_1>\delta$,
$$
\int_{e_i^\perp} \| \pi_i^w(J_1 g)\|_{\fB_\delta(\bbR)} dw
=\int_{e_i^\perp} \|\pi_i^w g\|_{\fB_\delta(\bbR)} dw\lc
\|g\|_{\fB_{\delta_1}(\bbR^N)}.
$$
Let $3\delta<\tilde \eps<\eps$. For the main term with $i=1$ we use
 Lemma \ref{inversionlemma} and then Lemma  \ref{slicinglemma} to get
$$
\int_{e_1^\perp} \|\pi_i^w (J_1 g)\|_{\fB_\delta(\bbR)} dw=
\int_{e_1^\perp} \|J_1(\pi_i^w g)\|_{\fB_\delta(\bbR)} dw
\lc \int_{e_1^\perp} \|\pi_i^w g\|_{\fB_{\tilde\eps}(\bbR)} dw\lc
\|g\|_{\fB_{\eps}(\bbR^N)}.
$$
This concludes the proof of the proposition.
\end{proof}

\subsubsection{Proof of Proposition \ref{PropAdjMultis1}}\label{Pf43}
We now turn to Proposition \ref{PropAdjMultis1}.  
Fix $\eps>0$, $n\in \q\{1,\ldots, N\w\}$, $\gamma\in \fBtp{\eps}{\R^N}$ and recall  the definition $$M\gamma(s)= s_1^{n-1} \gamma\q(s_1, s_1s_2,\ldots, s_1s_n, s_{n+1},\ldots, s_N\w).$$ 
% $\gamma\in \fBtp{\eps}{\R^N}$. % and let Let $M\gamma(s)= s_1^{n-1} \gamma\q(s_1, s_1s_2,\ldots, s_1s_n, s_{n+1},\ldots, s_N\w)$, for fixed 
We separate the proof into three lemmata. The most straightforward one is
\begin{lemma}
Let $0<\eps<1$. For $\delta<\eps/2$, $i=1,\dots, N$,
$$\int \q(1+\q|s_i\w|\w)^{\delta} \q|M\ga\q(s\w)\w|\: ds\lesssim \fBN{\eps}{\gamma}.$$
\end{lemma}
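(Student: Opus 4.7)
My plan is to reduce the estimate to a bound on $\gamma$ itself by undoing the multiplicative shear via a change of variables. The natural substitution is $u_1 = s_1$, $u_j = s_1 s_j$ for $2 \le j \le n$, and $u_j = s_j$ for $j \ge n+1$; its Jacobian is $|s_1|^{n-1}$, which exactly cancels the prefactor in the definition of $M\gamma$. After this change the integrand becomes $(1+|\tau_i(u)|)^\delta |\gamma(u)|$, where $\tau_i(u)$ equals $u_1$, $u_i/u_1$, or $u_i$ according as $i=1$, $2\le i\le n$, or $i \ge n+1$.

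The cases $i = 1$ and $i \ge n+1$ are then immediate from the first summand in the definition of $\fBN{\eps}{\cdot}$. The only genuine issue is the middle range $2 \le i \le n$, where the singularity of $u_i/u_1$ at $u_1 = 0$ must be controlled. I would estimate $(1+|u_i/u_1|)^\delta \le 2^\delta(1 + |u_i|^\delta |u_1|^{-\delta})$, isolate the region $|u_1|\ge 1$ (which is harmless since $|u_1|^{-\delta}\le 1$ there), and on $|u_1| < 1$ apply the Cauchy--Schwarz inequality with respect to the measure $|\gamma(u)|\,du$ to decouple the two singular weights:
\[ \int_{|u_1|<1} |u_1|^{-\delta}|u_i|^\delta |\gamma(u)|\,du \le \Big(\int_{|u_1|<1}|u_1|^{-2\delta}|\gamma(u)|\,du\Big)^{1/2}\Big(\int |u_i|^{2\delta}|\gamma(u)|\,du\Big)^{1/2}. \]
The first factor is controlled by the second assertion of Lemma \ref{Sobemb} applied with exponent $2\delta$, and the second by the obvious monotonicity in the exponent of the weighted-$L^1$ summand in $\fBN{\eps}{\cdot}$.

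The hypothesis $\delta < \eps/2$ enters precisely so that $2\delta < \eps$, which is exactly what both halves of the split weight need in order to land inside the weighted integrability built into $\fBN{\eps}{\cdot}$. I expect the main obstacle to be organizational rather than technical: namely, separating the product singularity $u_i/u_1$ into independent $u_1$ and $u_i$ singularities without losing any of the available integrability of $\gamma$. Note that the modulus-of-continuity part of the $\fBp{\eps}$ norm plays no role in this particular lemma; only the weighted-$L^1$ component is used.
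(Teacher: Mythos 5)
Your proof is correct, and the change of variables reducing the integral to $\int (1+|\tau_i(u)|)^\delta |\gamma(u)|\,du$ is exactly how the paper starts. The two arguments diverge in the only interesting case, $2\le i\le n$. The paper splits the domain into three geometric regions determined by $|s_1|\ge 3$, $\{|s_1|\le 3,\ |s_i|\ge |s_1|^{-1}\}$, and $\{|s_1|\le 3,\ |s_i|\le |s_1|^{-1}\}$, bounding $(1+|s_i/s_1|)^{\eps'}$ pointwise by $(1+|s_i|)^{\eps'}$, $(1+|s_i|)^{2\eps'}$, and $1+|s_1|^{-2\eps'}$ respectively, and only invoking Lemma \ref{Sobemb} in the last region. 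You instead split the weight algebraically, $(1+|u_i/u_1|)^\delta\lesssim 1+|u_i|^\delta|u_1|^{-\delta}$, and then decouple the two singularities by Cauchy--Schwarz in the measure $|\gamma|\,du$, landing on $|u_1|^{-2\delta}$ (Lemma \ref{Sobemb}) and $|u_i|^{2\delta}$ (the weighted-$L^1$ component). Both routes use Lemma \ref{Sobemb} once and need precisely $2\delta<\eps$. Your version is slightly slicker: the Cauchy--Schwarz step does automatically what the paper's middle region $\Omega_2$ is there to do by hand, namely trade one power of $|s_1|^{-1}$ for one power of $|s_i|$, at the cost of doubling the exponent; the paper's version avoids Cauchy--Schwarz and so is marginally more elementary, but requires one extra case. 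Both approaches keep the implicit constant independent of $N$, which is the one thing that really has to be watched in this paper.
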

\begin{proof}
Let $\eps'>0$ be a number, to be chosen later.
If $i=1$ or $n+1\leq i\leq N$, we have, by a change of variable,
\begin{equation*}
\int \q(1+\q|\sigma_i\w|\w)^{\eps'} \q|M\ga\q(\sigma\w)\w|\: d\sigma = \int \q(1+\q|s_i\w|\w)^{\eps'} \q|\gamma\q(s\w)\w|\: ds \lesssim \fBN{\eps}{\gamma},\quad \eps'\le \eps.
\end{equation*}
%provided $\eps'\leq \eps$.

Let $2\leq i\leq n$.  We have by a change of variable 
\begin{equation*}
%\label{EqnAdjBoundGammaSecond}
%\begin{split}&
\int \q(1+\q|\sigma_i\w|\w)^{\eps'}\q|M\ga\q(\sigma\w)\w|\: d\sigma= \int \q(1+\q|\frac{s_i}{s_1}\w|\w)^{\eps'} \q|\gamma\q(s\w)\w|\: ds.
%\\&=\int_{\q|s_1\w|\geq 3} \q(1+ \q|\frac{s_i}{s_1}\w|\w)^{\eps'} \q|\gamma\q(s\w)\w|\: ds + \int_{\q|s_1\w|< 3} \q(1+ \q|\frac{s_i}{s_1}\w|\w)^{\eps'} \q|\gamma\q(s\w)\w|\: ds.\end{split}
\end{equation*}
Let
$\Omega_1=\{s: |s_1|\ge 3\}$,
$\Omega_2=\{s: |s_1|\le 3, |s_i|\ge |s_1|^{-1} \}$,
$\Omega_3=\{s: |s_1|\le 3, |s_i|\le |s_1|^{-1} \}$, and bound the integrals over the three regions separately.
First, for $\eps'\leq \eps$,
\begin{equation*}
\int_{\Omega_1} \q(1+ \q|\frac{s_i}{s_1}\w|\w)^{\eps'} \q|\gamma\q(s\w)\w|\: ds \lesssim \int \q(1+ \q|s_i\w|\w)^{\eps'} \q|\gamma\q(s\w)\w|\: ds\leq \fBN{\eps}{\gamma}, 
\end{equation*}
Next, for $\eps'\le\eps/2$,
$$\int_{\Omega_2} \q(1+ \q|\frac{s_i}{s_1}\w|\w)^{\eps'} \q|\gamma\q(s\w)\w|\: ds \lesssim \int \q(1+ \q|s_i\w|\w)^{2\eps'} \q|\gamma\q(s\w)\w|\: ds\leq \fBN{\eps}{\gamma}.
%\quad \eps'\le \eps/2.
$$
Finally, for the third term we use Lemma \ref{Sobemb} to estimate,
for $\eps'< \eps/2$,
$$\int_{\Omega_3} \q(1+ \q|\frac{s_i}{s_1}\w|\w)^{\eps'} \q|\gamma\q(s\w)\w|\: ds \lesssim_{\eps'}\int_{|s_1|\le 3} \q(1+ \q|s_1\w|^{-2\eps'}) \q|\gamma\q(s\w)\w|\: ds\leq \fBN{\eps}{\gamma}.  
$$
%provided that $\eps'<\eps/2.$
The asserted estimate follows.
\end{proof}

\begin{lemma}\label{none1differenceslem}
(i) For $n+1\leq i\leq N$, $\eps>0$
\begin{equation*}
\sup_{0<h\leq 1} h^{-\eps} \|M\ga(\cdot+he_i)-M\ga\|_1  \le \fBN{\eps}{\gamma}.
\end{equation*}

(ii) For $2\le i\le n$, $\delta<\eps/2$ 
\begin{equation*}
\sup_{0<h\leq 1} h^{-\delta} \|M\ga(\cdot+he_i)-M\ga\|_1  \lc \fBN{\eps}{\gamma}.
\end{equation*}
\end{lemma}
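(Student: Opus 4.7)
My plan is to change variables in order to reduce both parts to bounding an integral of the form $\int|\gamma(t+vh\,e_i)-\gamma(t)|\,dt$, and then to treat the two cases separately. Setting $t_1=s_1$, $t_j=s_1 s_j$ for $2\le j\le n$, and $t_j=s_j$ for $j\ge n+1$, the Jacobian satisfies $ds=|t_1|^{-(n-1)}\,dt$, which cancels the leading factor $s_1^{n-1}$ in $M\ga$. Under this substitution a shift of $s_i$ by $h$ translates into a shift of the $i$-th coordinate of the argument of $\gamma$ by exactly $h$ when $n+1\le i\le N$, and by $s_1 h = t_1 h$ when $2\le i\le n$. Part (i) is thus immediate:
\begin{equation*}
\int |M\ga(s+he_i)-M\ga(s)|\,ds \;=\; \int|\gamma(t+he_i)-\gamma(t)|\,dt \;\le\; h^\eps\fBN{\eps}{\gamma}
\end{equation*}
directly from the definition of $\fBN{\eps}{\gamma}$.

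The substance is part (ii), where the shift is the \emph{variable} quantity $t_1 h$. I split the $t_1$-range into $\{|t_1|\le R\}$ and $\{|t_1|>R\}$ for a parameter $R\in (0, 1/h]$ to be optimized. On the tail the trivial bound $|\gamma(t+t_1 h e_i)-\gamma(t)|\le |\gamma(t+t_1 h e_i)|+|\gamma(t)|$, combined with translation invariance of the $t_i$-integral (permissible since $i\ne 1$) and Lemma~\ref{largeslemma}, gives a contribution $\lesssim R^{-\eps}\fBN{\eps}{\gamma}$. On the bulk I slice in the $e_i$-direction: using the notation $\pi_i^w\gamma(s)=\gamma(se_i+w)$ for $w\in e_i^\perp$ (so that $w_1=t_1$), Fubini gives
\begin{equation*}
\int_{|t_1|\le R}|\gamma(t+t_1 h e_i)-\gamma(t)|\,dt \;=\; \int_{\substack{w\in e_i^\perp \\ |w_1|\le R}}\q\|\pi_i^w\gamma(\cdot+w_1 h)-\pi_i^w\gamma\w\|_{L^1(\R)}\,dw.
\end{equation*}
Since $|w_1 h|\le 1$ on this range, the one-dimensional modulus of continuity of each slice is controlled by
\begin{equation*}
\q\|\pi_i^w\gamma(\cdot+w_1 h)-\pi_i^w\gamma\w\|_{L^1(\R)} \;\le\; (|w_1|h)^{\delta'}\fBN{\delta'}{\pi_i^w\gamma} \;\le\; (Rh)^{\delta'}\fBN{\delta'}{\pi_i^w\gamma},
\end{equation*}
and integrating in $w$ and invoking the slicing inequality Lemma~\ref{slicinglemma}(ii) (with $\delta' < \eps$) bounds this piece by $C(Rh)^{\delta'}\fBN{\eps}{\gamma}$.

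Combining the two estimates yields $[(Rh)^{\delta'}+R^{-\eps}]\fBN{\eps}{\gamma}$. Balancing the two terms by setting $(Rh)^{\delta'}=R^{-\eps}$ gives $R=h^{-\delta'/(\delta'+\eps)}$ and the bound $h^{\delta'\eps/(\delta'+\eps)}\fBN{\eps}{\gamma}$. As $\delta'\uparrow\eps$ the exponent $\delta'\eps/(\delta'+\eps)$ increases to $\eps/2$, so for any target $\delta<\eps/2$ the choice $\delta'\in(\delta\eps/(\eps-\delta),\,\eps)$ (noting $\delta\eps/(\eps-\delta)<\eps$ precisely under the hypothesis $\delta<\eps/2$) produces the required bound $\lesssim h^\delta\fBN{\eps}{\gamma}$. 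The main obstacle is exactly the variable shift $t_1 h$; the two-scale split together with Lemma~\ref{slicinglemma}(ii) is what makes it tractable, and the loss from $\eps$ to $\eps/2$ in the assertion is the precise toll paid for this variability.
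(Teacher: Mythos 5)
Your proof is correct, and it takes a modestly but genuinely different route from the paper's. The architecture is shared: change variables so the shift in the $e_i$-coordinate becomes $t_1 h$, split by the size of $|t_1|$, estimate the tail with Lemma~\ref{largeslemma}, and balance the two contributions, with the $\eps/2$ threshold arising from the balance in both arguments. The real divergence is in the bulk (small $|t_1|$) estimate, where the shift is variable. The paper first invokes Lemma~\ref{StandardBesov} to pass to the dyadic integral $\int_\rho^{2\rho}\|M\gamma(\cdot+he_i)-M\gamma\|_1\,\frac{dh}{h}$, then interchanges the order of integration and substitutes $\hh=s_1 h$, turning the variable shift into a fixed one so that the full $N$-dimensional $\cB_\eps$ modulus is applied directly at regularity $\eps$; the $dh/h$ Haar measure is precisely what makes that substitution painless, and this is why Lemma~\ref{StandardBesov} is needed up front. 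Your version dispenses with Lemma~\ref{StandardBesov} and works at a single $h$ throughout: you slice the bulk integral in the $e_i$-direction (which is legitimate since $i\ne 1$, so $t_1$ lives in $e_i^\perp$), control each one-dimensional slice by $(|w_1|h)^{\delta'}\fBN{\delta'}{\pi_i^w\gamma}$, and reassemble via Lemma~\ref{slicinglemma}(ii). This is more direct and avoids the sup-to-integral Besov characterization, at the cost of the $\eps\to\delta'<\eps$ loss built into the slicing lemma; that loss is immaterial here, since the optimization in $R$ yields the exponent $\delta'\eps/(\delta'+\eps)$, which increases to $\eps/2$ as $\delta'\uparrow\eps$, exactly matching the stated range $\delta<\eps/2$.
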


\begin{proof}  In the case  $n+1\leq i\leq N$ a change of variables shows, 
\begin{equation*}
 \int_{\bbR^N}\q|M\gamma\q(\sigma+he_i\w)-M\gamma\q(\sigma\w)\w|\: d\sigma =  \int_{\bbR^N} \q|\gamma\q(s+he_i\w)-\gamma\q(s\w)\w|\: ds, 
\end{equation*}
and the result follows.

Now consider the case $2\le i\le n$. By Lemma \ref{StandardBesov}
it suffices to show that for $\rho\le 1$
\begin{equation}\label{integralcond}
\int_{\rho}^{2\rho} \int_{\bbR^N}
\q|M\gamma\q(\sigma+he_i\w)-M\gamma\q(\sigma\w)\w|\: d\sigma\:\frac{dh}{h}\lesssim \rho^{\eps'}  \fBN{\eps}{\gamma}, \quad \eps'\le \eps/2.
\end{equation}
Our assumptions are symmetric in $s_2,\ldots, s_n$, and thus  it suffices to prove 
\eqref{integralcond}  for $i=2$.
The result is trivial for $10^{-2} \le \rho \le 1$, so we may assume  $\rho\le 10^{-2}$. 
In the inner integral we change variables, setting 
$(s_1,\dots, s_N)= (\si_1, \sigma_1\sigma_2,\dots, \sigma_1,\sigma_n,\si_{n+1},\dots, \sigma_N)$ and the left hand side of 
\eqref{integralcond} becomes
\begin{equation*}
\begin{split}
%&\int_{\rho}^{2\rho} \int_{\bbR^N}
%\q| \si_1^{n-1}\gamma\q(\si_1, \si_1\q(\si_2+h\w),\si_1 \si_3,\ldots, \si_1\si_n, \si_{n+1},\ldots, \si_N\w) - \si_1^{n-1}\gamma\q(\si_1, \si_1\si_2, \ldots, \si_1 \si_n, \si_{n+1},\ldots, \si_N\w) \w|\: d\si\: \frac{dh}{h}\\
&\int_{\rho}^{2\rho} \int_{\bbR^N} 
%\iint\limits_{\frac{1}{2}R^{-1}\leq h\leq R^{-1}}
  \q| \gamma\q(s_1, s_2+s_1h,s_1 s_3,\ldots,s_n, s_{n+1},\ldots, s_N\w) - \gamma\q(s\w) \w|\: ds\: \frac{dh}{h}\\
&=\iint\limits_{\substack{\rho\le h\le 2\rho  \\ \q|s_1\w|\geq \rho^{-\beta}}} 
+\iint\limits_{\substack{\rho\le h\le 2\rho  \\ \q|s_1\w|\leq \rho^{-\beta}}} 
=:(I)+(II)
% + \iint\limits_{\substack{\frac{1}{2}R^{-1}\leq h\leq R^{-1}  \\ \q|s_1\w|\leq R^{\frac{1}{4}}}}
\end{split}
\end{equation*}
where $\beta\in (0,1)$ is to be determined. We have the following estimate for the first term:
% e begin with $(I)$.  We have
\begin{equation*}
\begin{split}
&(I)\leq 2\iint\limits_{\substack{\rho\le h\le 2\rho  \\ \q|s_1\w|\geq \rho^{-\beta}}} 
\q|\gamma\q(s\w)\w|\: ds\:\frac{dh}{h}
\lesssim \int\limits_{\q|s_1\w|\geq \rho^{-\beta}} \q|\gamma\q(s\w)\w|\: ds
\lesssim \rho^{\beta\eps} \int\q(1+\q|s_1\w|\w)^{\eps} \q|\gamma\q(s\w)\w|\: ds
\lc \rho^{\beta\eps} \fBN{\eps}{\gamma}.
\end{split}
\end{equation*}
For the term  $(II)$ we interchange the order of integration and put for fixed
$s_1$,   $\hh=s_1 h$ so that $d\hh/\hh = dh/h$.  Also, on the domain of integration of $(II)$,
we have $\q|\hh\w|\leq 2\rho^{1-\beta}$.   Thus we may estimate
\begin{equation*}
\begin{split}
&(II)\leq \int_{\q|\hh\w|\leq 2\rho^{1-\beta} }\|\gamma\q(\cdot+\hh e_2\w)-\gamma\q(\cdot\w) \|_1 \q|\hh\w|^{-1} d\hh
\leq \fBN{\eps}{\gamma}\int_0^{2\rho^{1-\beta}} \hh^{\eps-1}\: d\hh
\lesssim \rho^{\eps(1-\beta)} \fBN{\eps}{\gamma}.
\end{split}
\end{equation*}
If we choose $\beta=1/2$ then \eqref{integralcond} follows from the estimates for $(I)$ and $(II)$.
\end{proof}

\begin{rem} One can replace the application of Lemma \ref{StandardBesov} by a more careful argument to show that \eqref{integralcond} implies that the statement (ii) in the lemma holds even for the endpoint  $\delta=\eps/2$. However this is not important for the purposes of this paper.
\end{rem}

The  main technical estimate in the proof of Proposition \ref{PropAdjMultis1}
is an  analogue of  
Lemma \ref{none1differenceslem} for regularity in the first variable, given 
as Lemma \ref{e1differencelemma} below. We first give an auxiliary estimate for functions of two variables.

\begin{lemma} \label{auxiltwo}
Let $\beta<1/2$, $\eps'<\eps$. For $g\in \fB_\eps(\bbR^2)$, 
 and $0<\rho\le 1$,
 %$$
 \begin{multline*}
 \iiint\limits_{\substack{ \rho^{\beta} \le |s_1|\le \rho^{-\beta} \\ \rho\le h\le 2\rho}}\Big|  \big(1+\tfrac{h}{s_1}\big) g(s_1+h, (1+\tfrac{h}{s_1})s_2)-
 g(s_1+h, s_2)\Big|\, ds_1 ds_2 \frac{dh}{h} 
 \\
 \le C(\beta, \eps') \big( \rho^{\eps'\beta}+\rho^{1-2\beta}\big)\|g\|_{\fB_\eps(\bbR^2)}\,.
 \end{multline*}
\end{lemma}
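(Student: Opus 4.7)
The plan is to write the integrand as a sum of two terms via the identity
\[
\bigl(1+\tfrac{h}{s_1}\bigr)\, g\bigl(s_1+h, (1+\tfrac{h}{s_1})s_2\bigr) - g(s_1+h, s_2) = \tfrac{h}{s_1}\, g\bigl(s_1+h, (1+\tfrac{h}{s_1})s_2\bigr) + \bigl[g\bigl(s_1+h, (1+\tfrac{h}{s_1})s_2\bigr) - g(s_1+h, s_2)\bigr].
\]
The first summand (a ``Jacobian error'') will produce the $\rho^{1-2\beta}$ contribution, while the second summand (a ``pure dilation difference'' in the $s_2$-variable) will produce the $\rho^{\eps'\beta}$ contribution.

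For the first summand, write $t = h/s_1$; then $|t| \le 2\rho^{1-\beta}$ on the integration region. Changing variable $\sigma = (1+t)s_2$ in the $s_2$-integral (with Jacobian $(1+t)^{-1} = O(1)$) reduces its modulus to $\tfrac{|t|}{1+t}\int |g(s_1+h,\sigma)|\,d\sigma$. Substituting $u = s_1+h$ in the $s_1$-integral and using $1/|s_1| \le \rho^{-\beta}$ together with the trivial bound $\|g\|_1 \le \|g\|_{\fB_\eps}$ (which follows from the definition of $\fB_\eps$) yields the contribution $C\rho\cdot \rho^{-\beta}\|g\|_{\fB_\eps} \le C \rho^{1-2\beta}\|g\|_{\fB_\eps}$ since $\rho \le 1$.

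For the second summand I change variables $(s_1,h)\mapsto(s_1,\tau)$ with $\tau = h/s_1$: then $dh/h = d\tau/\tau$, $s_1+h = s_1(1+\tau)$, and $\tau$ ranges in $[\rho^{1+\beta},\,2\rho^{1-\beta}]$. A further substitution $u_1 = s_1(1+\tau)$ (whose Jacobian $(1+\tau)^{-1}$ is bounded) combined with Fubini reduces the estimate to bounding, for each $\tau$ in the range,
\[
N(\tau) := \iint \bigl|g(u_1,(1+\tau)s_2) - g(u_1,s_2)\bigr|\,du_1\,ds_2,
\]
with the measure $d\tau/\tau$. A bound $N(\tau) \lesssim \tau^{\alpha}\|g\|_{\fB_\eps}$ for some $\alpha=\alpha(\eps)>0$ then integrates to $\rho^{(1-\beta)\alpha}\|g\|_{\fB_\eps}$, which dominates $\rho^{\eps'\beta}\|g\|_{\fB_\eps}$ whenever $\eps' \le \alpha(1-\beta)/\beta$; since $\beta<1/2$ this admits a positive $\eps'$ depending only on $\eps$.

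The main obstacle will be estimating $N(\tau)$: the $\fB_\eps$ norm controls \emph{translations} in $s_2$, but the shift $(1+\tau)s_2 - s_2 = \tau s_2$ depends on $s_2$, so the translation bound does not apply directly. My plan is to combine a mollification argument with a dyadic decomposition. Split $\{|s_2|>R\}$ versus $\{|s_2|\le R\}$: the tail contributes $\lesssim R^{-\eps}\|g\|_{\fB_\eps}$ via the $(1+|s_2|)^\eps$ weight built into $\fB_\eps$. On each dyadic shell $\{|s_2|\sim 2^k\}$ with $2^k\le R$, decompose $g = g_{\delta_k} + (g - g_{\delta_k})$ where $g_{\delta_k}$ is the convolution of $g$ in the $s_2$-variable with a bump at scale $\delta_k$. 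The rough part contributes $\delta_k^\eps\|g\|_{\fB_\eps}$ by the Besov translation bound applied to a fixed shift averaged against the mollifier, while the smooth part, estimated via the fundamental theorem of calculus $|g_{\delta_k}(u_1,(1+\tau)s_2) - g_{\delta_k}(u_1,s_2)| \le \int_0^{|\tau s_2|}|\partial_{s_2}g_{\delta_k}(u_1,s_2+r)|\,dr$ combined with $\|\partial_{s_2}g_{\delta_k}\|_1 \lesssim \delta_k^{-1}\|g\|_{\fB_\eps}$, contributes $\tau 2^k \delta_k^{-1}\|g\|_{\fB_\eps}$ on that shell. Optimizing $\delta_k = (\tau 2^k)^{1/(1+\eps)}$ gives a per-shell bound of $(\tau 2^k)^{\eps/(1+\eps)}\|g\|_{\fB_\eps}$; summing the resulting geometric series in $k$ up to $2^k\le R$ yields $(\tau R)^{\eps/(1+\eps)}\|g\|_{\fB_\eps}$; finally, balancing with the tail by $R \sim \tau^{-1/(2+\eps)}$ produces $N(\tau) \lesssim \tau^{\eps/(2+\eps)}\|g\|_{\fB_\eps}$, closing the argument with $\alpha = \eps/(2+\eps)$.
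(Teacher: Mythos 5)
Your decomposition into a ``Jacobian error'' term and a ``pure shear'' term matches the paper's split $(A)\le III+IV$ in structure, and your treatment of the Jacobian error is the same as theirs. But your estimate of the shear term follows a genuinely different route. The paper's argument never estimates the shear difference for a \emph{fixed} multiplicative shift. Instead, after first discarding the $|s_2|$-tails $|s_2|\le\rho^\beta$ and $|s_2|\ge\rho^{-\beta}$ (via Lemmas~\ref{Sobemb} and~\ref{largeslemma}), it freezes $\sigma_1=s_1+h$ and $s_2$, and changes variables from $h$ to $u=hs_2/(\sigma_1-h)=\tau s_2$. The crucial observation is $\tfrac{|du|}{|u|}\approx\tfrac{|dh|}{|h|}$ uniformly, so the averaged shear becomes $\iint\int_{|u|\lesssim\rho^{1-2\beta}}|g(\sigma_1,s_2+u)-g(\sigma_1,s_2)|\tfrac{du}{|u|}\,d\sigma_1\,ds_2$ — exactly the integral form of the $B^\eps_{1,\infty}$ seminorm, which the $\fB_\eps$ norm controls directly (Lemma~\ref{StandardBesov}). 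Note the tail truncation is not cosmetic: the range of $|u|=|\tau s_2|$ is bounded above by $\rho^{1-2\beta}$ precisely because $|s_2|\le\rho^{-\beta}$ has been imposed.

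You freeze $\tau=h/s_1$ instead and estimate $N(\tau)=\iint|g(u_1,(1+\tau)s_2)-g(u_1,s_2)|\,du_1\,ds_2$ pointwise in $\tau$. This forgoes the advantage of averaging in $\tau$ and requires you to prove, in effect, that an $\fB_\eps$ function has $L^1$ continuity under small dilations in one coordinate — a genuinely harder statement than translation continuity, which is why the mollification-plus-dyadic-shells machinery is needed. Your argument is plausible and I believe the exponent $\alpha=\eps/(2+\eps)$ is correct, but it is a longer detour than the paper's one-line change of variables, and it yields a somewhat weaker exponent ($\rho^{(1-\beta)\eps/(2+\eps)}$ versus the paper's $\rho^{(1-2\beta)\eps}$; for $\beta$ small the paper's bound is sharper, for $\beta$ close to $1/2$ yours is). Both require restricting $\eps'$ below a threshold that depends on $\beta$, which is harmless since the lemma is only invoked with $\beta<1/3$. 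One small point of hygiene: when you change $(s_1,h)\mapsto(s_1,\tau)$, the $s_1$-range is $\tau$-dependent, so the inner double integral is a priori only a piece of $N(\tau)$; you handle this correctly by simply dominating by the full $N(\tau)$, but it is worth stating.
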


\begin{proof} We may assume that $\rho\le 10^{-2/\beta}$, since otherwise the bound is trivial. We wish to discard the contributions of the integral where $|s_2|\le \rho^\beta$ or $|s_2|\ge \rho^{-\beta}$. We estimate the  left hand side by $A+ I_1+I_2 +II_1+ II_2$ 
where
\begin{align*}
A&\,=\,  \iiint\limits_{\substack{ \rho^{\beta} \le |s_1|, s_2\le \rho^{-\beta} \\ \rho\le h\le 2\rho}}\Big|  \big(1+\tfrac{h}{s_1}\big) g(s_1+h, (1+\tfrac{h}{s_1})s_2)-
 g(s_1+h, s_2)\Big| \,ds_1 ds_2 \frac{dh}{h} \,,
 \\
 I_1+II_1&\,=\,
 \iiint\limits_{\substack{ \rho^{\beta} \le |s_1|\le \rho^{-\beta} \\ 
 |s_2| \le \rho^{\beta}  \\ 
 \rho\le h\le 2\rho}}
 \,+\,
  \iiint\limits_{\substack{ \rho^{\beta} \le |s_1|\le \rho^{-\beta} \\ 
 |s_2| \ge \rho^{-\beta}  \\ 
 \rho\le h\le 2\rho}}
  \Big|  \big(1+\tfrac{h}{s_1}\big) g(s_1+h, (1+\tfrac{h}{s_1})s_2)\Big| \,ds_1 ds_2 \frac{dh}{h} \,,
\\
I_2+II_2&\,=\,
\iiint\limits_{\substack{ \rho^{\beta} \le |s_1|\le \rho^{-\beta} \\ 
 |s_2| \le \rho^{\beta}  \\ 
 \rho\le h\le 2\rho}} +
 \iiint\limits_{\substack{ \rho^{\beta} \le |s_1|\le \rho^{-\beta} \\ 
 |s_2| \ge \rho^{-\beta}  \\ 
 \rho\le h\le 2\rho}} 
 |  g(s_1+h, s_2)| \,ds_1 ds_2 \frac{dh}{h} \,.
  \end{align*}
%\begin{align*}
%(A)&=  \iiint\limits_{\substack{ \rho^{\beta} \le |s_1|, s_2\le \rho^{-\beta} \\ \rho\le h\le 2\rho}}\Big|  \big(1+\tfrac{h}{s_1}\big) g(s_1+h, (1+\tfrac{h}{s_1})s_2)- g(s_1+h, s_2)\Big| ds_1 ds_2 \frac{dh}{h} 
% \\
% I_1+I_2&= \iiint\limits_{\substack{ \rho^{\beta} \le |s_1|\le \rho^{-\beta} \\  |s_2| \le \rho^{\beta}  \\  \rho\le h\le 2\rho}} \Big|  \big(1+\tfrac{h}{s_1}\big) g(s_1+h, (1+\tfrac{h}{s_1})s_2)\Big| ds_1 ds_2 \frac{dh}{h} +\iiint\limits_{\substack{ \rho^{\beta} \le |s_1|\le \rho^{-\beta} \\  |s_2| \le \rho^{\beta}  \\  \rho\le h\le 2\rho}} |  g(s_1+h, s_2)| ds_1 ds_2 \frac{dh}{h} 
% \\
% II_1+II_2&= \iiint\limits_{\substack{ \rho^{\beta} \le |s_1|\le \rho^{-\beta} \\ 
% |s_2| \ge \rho^{-\beta}  \\  \rho\le h\le 2\rho}} \Big|  \big(1+\tfrac{h}{s_1}\big) g(s_1+h, (1+\tfrac{h}{s_1})s_2)\Big| ds_1 ds_2 \frac{dh}{h}  + \iiint\limits_{\substack{ \rho^{\beta} \le |s_1|\le \rho^{-\beta} \\  |s_2| \ge \rho^{-\beta}  \\  \rho\le h\le 2\rho}} |  g(s_1+h, s_2)| ds_1 ds_2 \frac{dh}{h}  \end{align*}
To bound $I_1$ we change (for fixed $h$, $s_1$) variables  as $\sigma_2=(1+h/s_1) s_2$ and observe that $(1+h/s_1)\approx 1$. Thus the $\sigma_2$ integration is extended over $\sigma_2\lc \rho^\beta$, and we may apply 
Lemma \ref{Sobemb}. A similar argument applies to $I_2$, and we get
$$I_1+I_2 \lc \rho^{\beta\eps'} \|g\|_{\fB_\eps(\bbR^2)}.$$

The same argument applies to the terms $II_1$, $II_2$, 
with the $\sigma_2$ integration now extended over $|\sigma_2|\ge \rho^{-\beta}-2\rho \ge c \rho^{-\beta}$ for $c>0$. Now we apply Lemma  
\ref{largeslemma} instead and the result is 
$$II_1+II_2 \lc \rho^{\beta\eps} \|g\|_{\fB_\eps(\bbR^2)}.$$

We now consider the term $A$ and  estimate 
$A\le III +IV$ where
\begin{align*}
III&\,=\,\iiint\limits_{\substack{ \rho^{\beta} \le |s_1|, |s_2|\le \rho^{-\beta} \\ \rho\le h\le 2\rho}}
 \big|1+\tfrac{h}{s_1}\big|
 \big |g(s_1+h, (1+\tfrac{h}{s_1})s_2)-g(s_1+h, s_2)\big|\,ds_1 ds_2 \frac{dh}{h} \,,
 \\
 IV&\,=\,
 \iiint\limits_{\substack{ \rho^{\beta} \le |s_1|, |s_2|\le \rho^{-\beta} \\ \rho\le h\le 2\rho}}\tfrac{|h|}{|s_1|}|g(s_1+h, s_2)| \,ds_1 ds_2 \frac{dh}{h} \,.
 \end{align*} 
  Since $h\approx \rho$ and $|s_1|\gc \rho^{\beta}$ in the domain of integration we immediately get
 $$IV \lc \rho^{1-\beta} \|g\|_{L^1(\bbR^2)}\,.$$
  In the estimation of $III$ we may ignore the factor $1+h/s_1$ which is $O(1)$.
 We make the change of variable $\sigma_1=s_1+h$ which does not substantially change the domain of integration since $\tfrac 12\rho^\beta\le |\sigma_1|\le 2\rho^{-\beta}$ for the ranges of $\rho$ we consider here.
 We see that
 $$III\lc
 \iiint\limits_{\substack{ \frac 12\rho^{\beta} \le |\si_1|, |s_2|\le 2\rho^{-\beta} \\ \rho\le h\le 2\rho}} \big|
 g(\si_1, (1+\tfrac{h}{\si_1-h})s_2) -g(\si_1, s_2)\big|\, d\si_1 ds_2 \frac{dh}{h} 
 $$
 We now interchange the order of integration, and then, for fixed $\si_1, s_2$ change variables  $u =u(h)= \tfrac{hs_2}{\si_1-h}$. Then observe that
 $$\frac{\partial u}{\partial h}= \frac{\sigma_1s_2}{(\sigma_1-h)^2}, \quad
 \frac{du}{u}=\frac{\sigma_1}{\sigma_1-h}\,\frac{dh}{h};$$ 
 moreover the range of $|u|$ is contained in  $[\frac 14 \rho^{1+2\beta}, 4\rho^{1-2\beta}]$. Since $|du|/|u|\approx |dh|/|h|$ we get 
 the estimate
 \begin{align*} III&\lc \sum_{2^{-k-1} \le 4\rho^{1-2\beta}}\int_{2^{-k-1}}^{2^{-k}}
 \iint |
 g(\si_1, s_2+u) -g(\si_1, s_2)\big|\,d\si_1 ds_2 \frac{du}{|u|} 
 \\&\lc \sum_{2^{-k-1} \le 4\rho^{1-2\beta}} 2^{-k\eps} \|g\|_{\fB_\eps(\bbR^2)} 
 \lc \rho^{1-2\beta}\|g\|_{\fB_\eps(\bbR^2)}\,. 
 %\qedhere
 \end{align*}
 We collect the estimates and obtain the desired bound.\end{proof}

\begin{lemma}\label{e1differencelemma}
%$\exists \eps'\gtrsim 1$, $\forall R\geq 1$,
For $0<\eps\le 1$, $\delta<\eps/3$,
\begin{equation*}
\sup_{0<h\le 1} h^{-\delta} \|M\ga(\cdot+he_1)-M\ga\|_1 \lc
n \fBN{\eps}{\gamma}.
\end{equation*}
%\iint\limits_{\frac{1}{2}R^{-1} \leq h \leq R^{-1}} \q|\Gamma\q(s+he_1\w)-%Gamma\q(s\w)\w|\: ds\: \frac{dh}{h}\lesssim R^{-\eps'} n\end{equation*}
\end{lemma}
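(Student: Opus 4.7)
The plan is to reduce to an integral estimate at dyadic scales: by Lemma~\ref{StandardBesov} applied slice-by-slice in the $s_1$-direction (combined with the elementary inequality $\sup_h\int f_w(h)\,dw\le\int\sup_h f_w(h)\,dw$), it suffices to show, for every dyadic $\rho\in(0,1]$ and some $\delta_0\in(\delta,\eps/3)$,
$$\int_\rho^{2\rho}\|M\gamma(\cdot+he_1)-M\gamma\|_1\,\frac{dh}{h}\lesssim n\rho^{\delta_0}\|\gamma\|_{\fB_\eps}.$$
Setting $a=s_1+h$ and $b=s_1$, I would telescope through
$$H_k(s)=b^{n-k}a^{k-1}\gamma(a,as_2,\ldots,as_k,bs_{k+1},\ldots,bs_n,s_{n+1},\ldots,s_N),\qquad 0\le k\le n,$$
so that $H_0=M\gamma(s)$, $H_n=M\gamma(s+he_1)$, and $M\gamma(s+he_1)-M\gamma(s)=(H_1-H_0)+\sum_{k=2}^n(H_k-H_{k-1})$. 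The first step $H_1-H_0$ only shifts the first argument of $\gamma$, so after the change $\tau_j=s_1 s_j$ ($j=2,\ldots,n$) it collapses to $\|\gamma(\cdot+he_1)-\gamma\|_1\le h^\eps\|\gamma\|_{\fB_\eps}$.

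For $2\le k\le n$, factor $H_k-H_{k-1}=b^{n-k}a^{k-2}[a\gamma(a,as_2,\ldots,as_k,bs_{k+1},\ldots)-b\gamma(a,as_2,\ldots,as_{k-1},bs_k,bs_{k+1},\ldots)]$ and apply the change of variables $\tau_1=s_1$, $\tau_j=as_j$ for $2\le j\le k$, $\tau_j=bs_j$ for $k<j\le n$, $\tau_j=s_j$ otherwise (Jacobian $|a|^{k-1}|b|^{n-k}$). Writing $\sigma_1=\tau_1+h$ and $\mu=b/a=1-h/\sigma_1$, this produces
$$\|H_k-H_{k-1}\|_1=\int\bigl|\gamma(\sigma_1,\tau_2,\ldots,\tau_N)-\mu\,\gamma(\sigma_1,\tau_2,\ldots,\tau_{k-1},\mu\tau_k,\tau_{k+1},\ldots,\tau_N)\bigr|\,d\sigma_1\,d\tau_2\cdots d\tau_N.$$
The algebraic identity $\gamma-\mu\gamma'=(1-\mu)\gamma+\mu(\gamma-\gamma')$ splits this as $T_1+T_2$ with $T_1=h\int|\gamma(\sigma)|/|\sigma_1|\,d\sigma$ and $T_2=\int|\mu|\,|\gamma-\gamma'|\,d\sigma$. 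The bound $T_1\lesssim h^\delta\|\gamma\|_{\fB_\eps}$ for any $\delta<\eps$ follows from $h/|\sigma_1|\le h^\delta|\sigma_1|^{-\delta}$ on $|\sigma_1|>h$ combined with the ``moreover'' statement of Lemma~\ref{Sobemb}, and on $|\sigma_1|\le h$ from the bound $h/|\sigma_1|\le h^\delta|\sigma_1|^{-\delta}h^{1-\delta}/h^{1-\delta}$ together with Lemma~\ref{Sobemb}.

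The main work is $T_2$. Slicing the $N-2$ coordinates distinct from $\sigma_1$ and $\tau_k$ into a parameter $w$ and setting $F_w(\sigma_1,\tau_k)=\gamma(\sigma_1,\tau_2,\ldots,\tau_{k-1},\tau_k,\tau_{k+1},\ldots,\tau_N)$, the $L^1$-isometric substitution $\tau_k\mapsto v=\mu\tau_k$ recasts the $T_2$-integrand as $|F_w(\sigma_1,\lambda v)-F_w(\sigma_1,v)|$ with $\lambda=1/\mu=1+h/(\sigma_1-h)$. The decomposition $F_w(\sigma_1,\lambda v)-F_w(\sigma_1,v)=[\lambda F_w(\sigma_1,\lambda v)-F_w(\sigma_1,v)]-(\lambda-1)F_w(\sigma_1,\lambda v)$ separates the exact integrand of Lemma~\ref{auxiltwo} from a residual of $T_1$-type. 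Applying Lemma~\ref{auxiltwo} on the strip $|\sigma_1|\in[\rho^\beta,\rho^{-\beta}]$, Lemma~\ref{Sobemb} on $|\sigma_1|\le\rho^\beta$ (after the substitution $v\mapsto v/\lambda$), and Lemma~\ref{largeslemma} on $|\sigma_1|\ge\rho^{-\beta}$ (where $|\lambda-1|\lesssim\rho^{1+\beta}$) gives
$$\int_\rho^{2\rho}\iint\bigl|F_w(\sigma_1,\lambda v)-F_w(\sigma_1,v)\bigr|\,d\sigma_1\,dv\,\frac{dh}{h}\lesssim(\rho^{\beta\eps'}+\rho^{1-2\beta})\|F_w\|_{\fB_{\eps'}(\bbR^2)}$$
for any $\eps'<\eps$ and $\beta<1/2$. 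Integrating in $w$ and using the integral form of the Besov norm (Lemma~\ref{StandardBesov}) to exchange norm and Fubini yields $\int_w\|F_w\|_{\fB_{\eps'}(\bbR^2)}\,dw\lesssim\|\gamma\|_{\fB_\eps(\bbR^N)}$. Choosing $\beta$ near $1/(\eps'+2)$ balances the two exponents at $\eps'/(\eps'+2)$, which is arbitrarily close to $\eps/3$; summing over $k=2,\ldots,n$ supplies the factor of $n$ and completes the dyadic integral bound.

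The main obstacle is the treatment of $T_2$ in the transition region where $|\sigma_1|$ is comparable to $h$ and $|\mu|$ is neither small nor bounded. The resolution is the $L^1$-isometric change $\tau_k\mapsto\mu\tau_k$, which trades the singular multiplicative factor for a residual term of the same form as $T_1$ that is dominated pointwise by $h/|\sigma_1|\cdot|\gamma|$ and therefore controlled by Lemma~\ref{Sobemb}. A second technical point is that the slice-integrated Besov estimate $\int_w\|F_w\|_{\fB_{\eps'}(\bbR^2)}\,dw\lesssim\|\gamma\|_{\fB_\eps(\bbR^N)}$ must hold with constants independent of $n$; this is arranged by using the integral (not supremum) characterization of the Besov norm before applying Fubini.
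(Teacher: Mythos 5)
Your telescope through $H_k$ and the ultimate reduction to Lemma~\ref{auxiltwo} follow the same plan as the paper's proof, but the treatment of the residual term $T_1 = h\int |\gamma(\sigma)|/|\sigma_1|\,d\sigma$ has a genuine gap. The claimed bound $T_1\lesssim h^\delta\fBN{\eps}{\gamma}$ cannot hold in general: the ``moreover'' part of Lemma~\ref{Sobemb} controls $\int_{|\sigma_1|<1}|\sigma_1|^{-\delta'}|\gamma|$ only for exponents $\delta'<\eps\le 1$, while $T_1$ carries the critical exponent $1$, and indeed $\int|\gamma|/|\sigma_1|\,d\sigma$ can be infinite for $\gamma\in\fBp{\eps}$ when $\eps<1$. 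The step offered for the region $|\sigma_1|\le h$ --- ``$h/|\sigma_1|\le h^\delta|\sigma_1|^{-\delta}h^{1-\delta}/h^{1-\delta}$'' --- is a tautology (both sides equal $h/|\sigma_1|$), and the inequality $h/|\sigma_1|\le h^\delta|\sigma_1|^{-\delta}$ that one would actually need is false when $|\sigma_1|<h$, since there $(h/|\sigma_1|)^{1-\delta}>1$.

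The paper avoids this by restricting to $|s_1|\in[\rho^\beta,\rho^{-\beta}]$ \emph{before} any telescoping or splitting: the two complementary regions are disposed of directly by the triangle inequality together with Lemma~\ref{Sobemb} and Lemma~\ref{largeslemma}, and only then does the telescope $(A_0),\dots,(A_{n-1})$ enter. On the remaining strip one has $|\sigma_1|\gtrsim\rho^\beta\gg h$, so the analogue of your $T_1$ is $\lesssim\rho^{1-\beta}\|\gamma\|_1$ trivially --- precisely the estimate for the term $(IV)$ inside the proof of Lemma~\ref{auxiltwo}. You do invoke this restriction in the $T_2$ analysis, but it must be applied to the full integrand $|\gamma-\mu\gamma'|$, not after the $T_1+T_2$ split: when $|\sigma_1|$ is small, $|\mu|=|1-h/\sigma_1|$ is large, and the split $|\gamma-\mu\gamma'|\le|1-\mu|\,|\gamma|+|\mu|\,|\gamma-\gamma'|$ is too lossy --- one has $\int|\gamma-\mu\gamma'|\le 2\|\gamma\|_1$ by a direct change of variables in the second term, whereas $T_1+T_2$ can be of size $|\mu|\,\|\gamma\|_1$. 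Discard the small- and large-$|\sigma_1|$ regions first (costing $n\rho^{\beta\eps'}\fBN{\eps}{\gamma}$ over the $n$ telescope terms), and the rest of your argument goes through.
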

\begin{proof} Let $\tilde \eps<\eps$, $\delta_1>\delta$ be such that
$\delta<\delta_1<\tilde \eps/3$. By Lemma \ref{StandardBesov} it suffices to show for $\rho\le 1$ the inequality
\begin{equation}\label{rhomodification}
\int_\rho^{2\rho}
\|M\ga(\cdot+he_1)-M\ga\|_1 \frac{dh}{h}\lc\rho^{\delta_1}
n \fBN{\eps}{\gamma}.
\end{equation}
We let $\beta<1/2$ to be chosen later; a suitable  choice will be $\beta \in (\delta_1/\tilde \eps, 1/3)$.
We may assume $\rho\le 10^{-2/\beta}$ since otherwise the result is obvious.
We first discard the contributions of the integral for $|s_1|\le \rho^\beta$ or $|s_1|\ge \rho^{-\beta}$.
We estimate
$$
\int_\rho^{2\rho}
\|M\ga(\cdot+he_1)-M\ga\|_1 \frac{dh}{h}
\lc\rho^{\delta_1}\le (A)+ (I_1)+(I_2)+ (II_1)+(II_2)
$$
where
\begin{align*}
(A)&= 
\int_\rho^{2\rho}\int_{s:\rho^{\beta}\le |s_1|\le \rho^{-\beta}}
|M\ga(s+he_1)-M\ga(s)| \,ds  \frac{dh}{h}\,,
\\
(I_1)+(I_2)&= \int_\rho^{2\rho}\int_{s:|s_1|\le \rho^{\beta}}
|M\ga(s+he_1)|\, ds  \frac{dh}{h}+
 \int_\rho^{2\rho}\int_{s:|s_1|\le \rho^{\beta}}
|M\ga(s)| \,ds  \frac{dh}{h}\,,
\\
(II_1)+(II_2)&= \int_\rho^{2\rho}\int_{s:|s_1|\ge \rho^{-\beta}}
|M\ga(s+he_1)| \,ds  \frac{dh}{h}
+ \int_\rho^{2\rho}\int_{s:|s_1|\ge \rho^{-\beta}}
|M\ga(s)| \,ds  \frac{dh}{h}\,.
\end{align*}
We make a change of variable 
$\sigma=(s_1+h, (s_1+h)s_2, \dots, (s_1+h)s_n, s_{n+1}, \dots, s_N)$ and estimate
$$(I_1)\le 
\int_\rho^{2\rho}\int_{\si:|\si_1|\le \rho^{\beta}+2\rho}
|\ga(\sigma)| \,d\si   \frac{dh}{h}\lc \rho^{\beta\eps}\|\ga\|_{\fB_\eps(\bbR^N)}.
$$
where we have used Lemma \ref{Sobemb}.
Similarly 
$$(II_1)\le  \int_\rho^{2\rho}
\int_{\si:|\si_1|\ge \rho^{-\beta}-2\rho}
|\ga(\sigma)|\, d\si   \frac{dh}{h}\lc \rho^{\beta\eps}\|\ga\|_{\fB_\eps(\bbR^N)}.
$$
by Lemma \ref{largeslemma} and the estimate $2\rho \le \frac 12\rho^{-\beta}$ which holds in the range of $\rho$ under consideration.
The bound $(I_2)+ (II_2)\lc \rho^{\beta\eps}\|\ga\|_{\fB_\eps(\bbR^N)}$ follows in the same way. 

It thus remains to estimate $(A)$.
We change variables 
%$\sigma=(s_1, s_1s_2, \dots, s_1s_n, s_{n+1},\dots, s_N)$
 and write 
\begin{align*}(A)&=
\int_\rho^{2\rho}\int_{s:\rho^{\beta}\le |s_1|\le \rho^{-\beta}}
\big|(s_1+h)^{n-1}\gamma(s_1+h, (s_1+h)s_2, \dots, (s_1+h)s_n, s_{n+1},\dots, s_N)
\\&\quad\quad\quad\quad\quad\quad\quad- s_1^{n-1} \gamma\q(s_1, s_1s_2,\ldots, s_1 s_n, s_{n+1},\ldots, s_N\w)\big|\: ds\: \frac{dh}{h}
\\
&=\int_\rho^{2\rho}\int_{s:\rho^{\beta}\le |s_1|\le \rho^{-\beta}}
\big|\q\big(1+\tfrac{h}{s_1}\big)^{n-1}
\gamma\q(s_1+h, (1+\tfrac{h}{s_1})s_2,\ldots, (1+\tfrac{h}{s_1})s_n, s_{n+1},\ldots, s_N\w) 
\\&\quad\quad\quad\quad\quad\quad\quad -  \gamma\q(s_1, s_2,\ldots, s_n, s_{n+1},\ldots, s_N\w)\big|\: ds\: \frac{dh}{h}.
\end{align*}
We split the integrand as a sum of $n$ differences $\Delta_k(s,h)$, $k=0,\dots, n-1$, 
where
$$ \Delta_0(s,h)= \ga(s+he_1)-\gamma(s)$$ and, for $k=1,\dots, n-1$,
\begin{align*}
\Delta_k(s,h)=&  
\big(1+\tfrac{h}{s_1}\big)^{k}
\gamma\q(s_1+h, (1+\tfrac{h}{s_1})s_2,\ldots, (1+\tfrac{h}{s_1})s_k, (1+\tfrac{h}{s_1})s_{k+1},s_{k+2}\ldots, s_N\w) \\& -
\big(1+\tfrac{h}{s_1}\big)^{k-1} 
\gamma\q(s_1+h, (1+\tfrac{h}{s_1})s_2,\ldots, (1+\tfrac{h}{s_1})s_k, s_{k+1}, \ldots, s_N\w) .
\end{align*}
Then $(A)\le \sum_{k=0}^{n-1} (A_k)$ where
$$(A_k)=\int_\rho^{2\rho}\int_{s:\rho^{\beta}\le |s_1|\le \rho^{-\beta}}|\Delta_k(s,h) |\, ds \frac{dh}{h}.
$$
It is immediate that
$$(A_0) \lc \rho^\eps \|\gamma\|_{\fB_\eps}\,.$$
For the estimation of $(A_k)$ we make  a change of variable 
in  the $s_i$ variables where $2\le i\le k$;  
this replaces $(1+h/s_1)s_i$ by $s_i$ 
(i.e. there is no change of variable if $k=1$).
This gives, for $1\le k\le n-1$,
\begin{multline*}
(A_k)
=\int_\rho^{2\rho}\int_{s:\rho^{\beta}\le |s_1|\le \rho^{-\beta}}\Big|
\big(1+\tfrac{h}{s_1}\big)
\gamma\q(s_1+h, s_2,\ldots, s_k, (1+\tfrac{h}{s_1})s_{k+1},s_{k+2}\ldots, s_N\w) \\
-\gamma\q(s_1+h, s_2,\ldots, s_k, s_{k+1}, \ldots, s_N\w) \Big |\,
ds \frac{dh}{h}\,.
\end{multline*}
By symmetry considerations we may assume $k=1$.
We may now freeze the $s_3,\dots, s_N$-variables,  apply the auxiliary Lemma 
\ref{auxiltwo} for functions of $(s_1,s_2)$ and obtain for $\eps'<\tilde\eps$
$$
(A_k) \lc 
\big(\rho^{\eps'\beta}+\rho^{1-2\beta}\big)
\int \cdots\int\big\| g(\cdot, \cdot, s_3,\dots, s_N)\big\|_{\fB_{\tilde\eps}(\bbR^2)} ds_3\cdots ds_N.
$$
Since $\tilde\eps<\eps$ this  also implies, by Lemma \ref{slicinglemma},
$$
(A_k) \lc 
\big(\rho^{\eps'\beta}+\rho^{1-2\beta}\big)
\|g\|_{\fB_{\eps}(\bbR^N)}.
$$
We collect estimates we see that the quantity on the left hand side 
of \eqref{rhomodification}  is estimated by
$$C(\beta,\eps',\eps) n\big ( \rho^{\beta\eps'}+\rho^{1-2\beta}\big)\|f\|_{\fB_\eps(\bbR^N)}\,$$
and with the correct choice of $\eps'\in (3\delta,\eps)$ and then $\beta\in
(\delta/\eps', 1/3)$ we see that \eqref{rhomodification} is established.
\end{proof}

\subsection{A decomposition lemma}

Later in the paper, we will need a decomposition result for $\sBtp{\eps}{\R^n\times \R^d}$, which we present here.
\begin{lemma}\label{ThmDecompVsig}
Fix $0<\eps<1$ and $0<\delta<\eps/2$. 
%Let  $\eps':=\eps-2\delta>0$. 
If  $\vsig\in \sBtp{\eps}{\R^n\times \R^d}$.  Then there are 
%, $\exists \eps'\gtrsim 1$ and
 %$$\q\{\vsig_m : m\in \N\w\}\subset \sBtp{\delta}{\R^n\times \R^d}$$
$\vsig_m \in \sBtp{\delta}{\R^n\times \R^d}$, $m\in \bbN$, 
with $\supp{\vsig_m}\subseteq \q\{\q(\alpha,v\w) : \q|v\w|\leq 1/4\w\}$ and
\begin{equation*}
\vsig=\sum_{m\ge 0}
% 2^{-\q|j\w|\eps'}
 \dil{\vsig_m}{2^{-m}},
\end{equation*}
such that $$
\|\vsig_m\|_{\cB_\delta}\lesssim 2^{-m(\eps-2\delta)} \sBN{\eps}{\vsig}.$$
\end{lemma}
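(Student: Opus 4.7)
The plan is a dyadic decomposition of $\vsig$ in the $v$-variable using a smooth partition of unity. Choose radial $\chi_0\in C_c^\infty(\R^d)$ with $\chi_0(v)=1$ on $\{|v|\le 1/8\}$ and supported in $\{|v|\le 1/4\}$, and for $m\ge 1$ set $\chi_m(v)=\chi_0(2^{-m}v)-\chi_0(2^{-(m-1)}v)$, so that $\chi_m$ is supported in the dyadic annulus $\{|v|\approx 2^m\}$ and $\sum_{m\ge 0}\chi_m\equiv 1$. Define
\[
\vsig_m(\alpha,v):=2^{md}\chi_m(2^mv)\,\vsig(\alpha,2^mv),
\]
so that $\dil{\vsig_m}{2^{-m}}(\alpha,v)=\chi_m(v)\vsig(\alpha,v)$, hence $\vsig=\sum_m\dil{\vsig_m}{2^{-m}}$ with convergence in $L^1$, and each $\vsig_m$ is supported in $\{|v|\le 1/4\}$ as required.

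After the change of variables $u=2^mv$ in all four semi-norms of $\vsig_m$, the engine of decay is the annular $L^1$ estimate
\[
\int|\chi_m(u)|\,|\vsig(\alpha,u)|\,d\alpha\,du\;\lesssim\;2^{-m\eps}\|\vsig\|_{\cB_{\eps,4}},
\]
which follows from $(1+|u|)^\eps\gtrsim 2^{m\eps}$ on $\supp\chi_m$ (for $m\ge 1$; the case $m=0$ is trivial). This immediately yields $\|\vsig_m\|_{\cB_{\delta,4}}\lesssim 2^{-m\eps}\|\vsig\|_{\cB_\eps}$, since $\vsig_m$ is compactly supported in $v$. It also yields $\|\vsig_m\|_{\cB_{\delta,1}}\lesssim 2^{-m(\eps-\delta)}\|\vsig\|_{\cB_\eps}$ by splitting the $\alpha_i$-integration at $|\alpha_i|=2^m$: one pays $2^{m\delta}$ against the annular $L^1$ decay on $\{|\alpha_i|\le 2^m\}$, and uses $(1+|\alpha_i|)^{\delta-\eps}\le 2^{-m(\eps-\delta)}$ against $\|\vsig\|_{\cB_{\eps,1}}$ on the complement.

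The regularity estimates come from interpolating the Besov bounds on $\vsig$ against this annular $L^1$ decay. For $\cB_{\delta,2}$ one has, using $\cB_{\eps,2}$ together with the trivial triangle inequality,
\[
\int|\chi_m(u)|\,|\vsig(\alpha+he_i,u)-\vsig(\alpha,u)|\,d\alpha\,du\;\le\;\min\bigl(h^\eps\|\vsig\|_{\cB_{\eps,2}},\,C\,2^{-m\eps}\|\vsig\|_{\cB_{\eps,4}}\bigr),
\]
and $h^{-\delta}$ times the minimum is $\lesssim 2^{-m(\eps-\delta)}$, with crossover at $h\approx 2^{-m}$. The $\cB_{\delta,3}$ bound is the main obstacle. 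I decompose
\[
\vsig_m(\alpha,v+h)-\vsig_m(\alpha,v)=A_1+A_2
\]
where $A_2$ carries the $\chi_m$-difference and is handled via $\|\nabla\chi_m\|_\infty\lesssim 2^{-m}$ combined with the annular $L^1$ decay. In $A_1$, the shift by $h$ in $v$ becomes a shift by $2^mh$ in $u$, so the Besov bound on $\vsig$ yields
\[
\int|A_1|\,d\alpha\,dv\;\lesssim\;\min\bigl((2^mh)^\eps\|\vsig\|_{\cB_{\eps,3}},\,C\,2^{-m\eps}\|\vsig\|_{\cB_{\eps,4}}\bigr),
\]
with crossover now at $h\approx 2^{-2m}$. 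A direct case check gives $h^{-\delta}\int|A_1|\lesssim 2^{-m(\eps-2\delta)}\|\vsig\|_{\cB_\eps}$. The factor $2^m$ inside the Hölder modulus is what degrades $\eps$ to $\eps-2\delta$ and forces the hypothesis $\delta<\eps/2$.

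A minor bookkeeping point in $A_1$: when $2^mh>1$ the Besov estimate is no longer used directly; one instead bounds both terms in $\vsig(\alpha,u+2^mh)-\vsig(\alpha,u)$ by their absolute values and exploits the annular support. Preservation of the annular support under the $u$-shift only requires $h$ below a fixed absolute constant, and the remaining (bounded) range of $h$ is absorbed into implicit constants. Combining the four semi-norm estimates delivers $\|\vsig_m\|_{\cB_\delta}\lesssim 2^{-m(\eps-2\delta)}\|\vsig\|_{\cB_\eps}$, as claimed.
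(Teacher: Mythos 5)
Your proposal is correct and is essentially the paper's own argument: the same rescaled dyadic partition of unity in $v$, the same $L^1$ annular-decay estimate used as the engine, the same split of the $\alpha_i$-integration at $|\alpha_i|\approx 2^m$ for $\cB_{\delta,1}$, the same crossover at $h\approx 2^{-m}$ for $\cB_{\delta,2}$ and at $h\approx 2^{-2m}$ for $\cB_{\delta,3}$, and the same source of the degradation from $\eps$ to $\eps-2\delta$. The one caveat you flag (the annular support can drift toward the origin when $h$ is of order one) is real but handled exactly as you say: for $|h|$ above a fixed small constant one simply uses $|h|^{-\delta}\cdot 2\|\vsig_m\|_{L^1}\lesssim 2^{-m\eps}\|\vsig\|_{\cB_\eps}\le 2^{-m(\eps-2\delta)}\|\vsig\|_{\cB_\eps}$, which is the triangle-inequality bound the paper also implicitly invokes.
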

\begin{proof}
Let $\eta_0\in C^\infty_0$ be supported in $\{|x|\le 1/4\}$ such that 
 with $0\leq \eta_0\leq 1$ and $\eta_0(x)=1$ for $|x|\le 1/8$.
 Set $\eta_1\q(v\w) = \eta_0\q(v\w)-\eta_0\q(2v\w)$, so that $0\leq |\eta_1|\leq 1$, $\supp{\eta_1}\subseteq \q\{\frac{1}{16}\leq \q|v\w|\leq \frac{1}{4}\w\}$
and $1=\eta_0\q(v\w) +\sum_{m\geq 1} \eta_1\q(2^{-m} v\w)$.
For $m\in \N$, define
\begin{equation*}
\vsig_m(v)=
\begin{cases}
\eta_0\q(v\w) \vsig\q(\alpha,v\w) &\text{if }m=0,
\\ \eta_1\q(v\w) 2^{md}\vsig\q(\alpha,2^{m}v\w) &\text{if }m\geq 1.
\end{cases}
\end{equation*}
Then
%Let $\eps'>0$ be a number to be chosen later, and set for $j\geq 0$, $\vsig_{-j}\q(\alpha,v\w):= 2^{j\eps'} 2^{jd} \xi_j\q(\alpha, 2^jv\w)$.
%It is clear from the definitions that 
${\vsig_m}(x)=0$ for $|x|\ge 1/4$ and
$\vsig=\sum_{m\ge 0}\dil{\vsig_m}{2^{-m}}$. Clearly 
 $\sBN{\eps}{\vsig_0}\lesssim \sBN{\eps}{\vsig}.$  
 It remains to bound $\|\vsig_m\|_{\cB_\delta}$ for $m\ge 1$.
 
 We  show
\begin{align} \label{weightestpieces}
 &\iint (1+|\alpha_i|)^\delta|\vsig_m(\alpha,v)|\,d\alpha\, dv
 + \iint (1+|v|)^\delta|\vsig_m(\alpha,v)|\,d\alpha\, dv
 \lc 2^{-m(\eps-\delta)}\|\vsig\|_{\cB_\eps},
 \\
 \label{alpharegdelta}
 &\sup_{|h|\le 1}|h|^{-\delta} \iint\q|\vsig_{m}\q(\alpha+he_i,v\w)-\vsig_{m}\q(\alpha,v\w)\w|\: d\alpha \: dv 
 \lc 2^{-m(\eps-\delta)}\|\vsig\|_{\cB_\eps}\,.
 \end{align}
 We change variables and see that the left hand side  of \eqref{weightestpieces}
 is bounded by
 \[
  \iint (1+|\alpha_i|)^\delta|\vsig(\alpha,v)| |\eta_1(2^{-m} v)| \,
  d\alpha\, dv
 + \iint (1+|2^{-m}v|)^\delta|\vsig(\alpha,v)| |\eta_1(2^{-m} v)| \,d\alpha\, dv\,.
 \]
  We estimate
 \begin{align*}
 &\iint\limits_{\substack{|\alpha_i|\le 2^m\\|v|\approx 2^m}}
 (1+|\alpha|)^\delta|\vsig(\alpha,v)|d\alpha dv\lc 
 2^{-m(\eps-\delta)}\iint (1+|v|)^\eps|\vsig(\alpha,v)| d\alpha dv
  \lc 2^{-m(\eps-\delta)}\|\vsig\|_{\cB_\eps},
 \\
 &\iint\limits_{\substack{|\alpha_i|\ge 2^m\\|v|\approx 2^m}}
 (1+|\alpha|)^\delta|\vsig(\alpha,v)|d\alpha dv\lc 
 2^{-m(\eps-\delta)}\iint (1+|\alpha_i|)^\eps|\vsig(\alpha,v)| d\alpha dv
  \lc 2^{-m(\eps-\delta)}\|\vsig\|_{\cB_\eps},
 \\
 &\iint\limits_{\substack{|v|\approx 2^m}}
 (1+2^{-m}|v|)^\delta|\vsig(\alpha,v)|d\alpha dv\lc 
 2^{-m(\eps-\delta)}
 \iint (1+|v|)^\eps|\vsig(\alpha,v)| d\alpha dv
 \lc 2^{-m(\eps-\delta)}\|\vsig\|_{\cB_\eps} ,
 \end{align*}
 and \eqref{weightestpieces} follows.
 
 Next, we consider, for $|h|\le 1$, the expression 
  \[
  \iint\q|\vsig_{m}\q(\alpha+he_i,v\w)-\vsig_{m}\q(\alpha,v\w)\w|\: d\alpha \: dv  
 \lesssim \iint |\eta_1(2^{-m}v)| \q|\vsig\q(\alpha+ he_i, v\w)- \vsig\q(\alpha,v\w)\w|\: d\alpha\: dv
 \] and distinguish the cases $2^m|h|\le1$ and $2^m|h|\ge1$.
 If $2^{m}\ge |h|^{-1}$ then we estimate
 $$
\iint_{|v|\approx 2^m} \q|\vsig\q(\alpha+ he_i, v\w)- \vsig\q(\alpha,v\w)\w|\: d\alpha\: dv\lc  2^{-m\eps} \int(1+|v|)^\eps |\vsig(\alpha,v)| \,d\alpha dv \lc |h|^\delta 2^{-m(\eps-\delta)} \|f\|_{\cB_\eps}
$$
and if 
 $2^{m}\le |h|^{-1}$,
 $$
\iint_{|v|\approx 2^m} \q|\vsig\q(\alpha+ he_i, v\w)- \vsig\q(\alpha,v\w)\w|\: d\alpha\: dv\lc  |h|^\eps\|\vsig\|_{\cB_\eps} 
\lc |h|^\delta 2^{-m(\eps-\delta)} \|f\|_{\cB_\eps}\,.
$$
Now \eqref{alpharegdelta} follows. Note that so far we have only used $\delta<\eps$.

For our last estimate we need $\delta<\eps/2$, and we need to  show
\Be \label{vregpieces}
\iint|\vsig_m(\alpha,v+h)-\vsig_m(\alpha, v)|d\alpha dv \lc |h|^\delta 2^{-m(\eps-2\delta)}\|\vsig\|_{\cB_\eps}.
\Ee
The left hand side is estimated by  $(I)+(II)$  where
\begin{align*}
(I)&=\iint \q|\eta\q(v+h\w)- \eta\q(v\w)\w|2^{md} 
\q|\vsig\q(\alpha, 2^m\q(v+h\w)\w)\w|\: d\alpha \: dv\,,
\\
(II)&= \iint  \q|\eta\q(v\w)\w| 2^{md}\q| \vsig\q(\alpha, 2^{m}\q(v+h\w)\w)-\vsig\q(\alpha, 2^m v\w) \w|\:d\alpha\: dv\,.
\end{align*}
Note that $\q|\eta\q(v+h\w)-\eta\q(v\w)\w|\lesssim \chi_{\q\{ \frac{1}{32}\leq \q|v\w|\leq \frac{1}{2} \w\}} \q|h\w|$ and so the first term is estimated as
\begin{equation*}
\begin{split}
(I)&\lesssim
% \q|h\w|\iint\limits_{\frac{1}{32}\leq \q|v\w|\leq \frac{1}{2}} 
 \q|h\w| \iint\limits_{\frac{1}{64}\leq \q|v\w|\leq 1}  2^{md} \q|\vsig\q(\alpha, 2^m v\w)\w|\: d\alpha\: dv=\q|h\w| \iint\limits_{2^{m-8}\leq \q|v\w|\leq 2^m} \q|\vsig\q(\alpha,v\w)\w|\: d\alpha\: dv
\\&
\lesssim 2^{-m\eps}\q|h\w| \iint \q(1+\q|v\w|\w)^{\eps} \q|\vsig\q(\alpha,v\w)\w|\: d\alpha\: dv \lesssim \q|h\w|2^{-m\eps} 
 \sBN{\eps}{\vsig}
\end{split}
\end{equation*}
which is a  better bound  than the one in \eqref{vregpieces}. More substantial is the estimate for $(II)$. Here we first consider the case 
$|h|\ge 2^{-2m}$
%$2^m \ge h^{-1/2}$ 
and bound
\begin{equation*}
\begin{split}
(II) &\lesssim \iint\limits_{2^{-4}\leq \q|v\w|\leq 2^{-2}}  2^{md} \q|\vsig\q(\alpha, 2^{m}\q(v+h\w)\w)- \vsig\q(\alpha,2^m v\w)\w|\: d\alpha \: dv
\leq 2\iint\limits_{2^{-8}\leq \q|v\w|\leq 2^{-1}}  2^{md} \q|\vsig\q(\alpha, 2^{m}v\w)\w|\: d\alpha\: dv
\\&\lesssim \iint\limits_{2^{m-8}\leq \q|v\w|\leq 2^{m-1}}  \q|\vsig\q(\alpha,v\w)\w|\: d\alpha\: dv
\lesssim 2^{-m\eps} \iint \q(1+\q|v\w|\w)^{\eps} \q|\vsig\q(\alpha,v\w)\w|\: d\alpha\: dv
\\&\lesssim 2^{-m\eps}  \sBN{\eps}{\vsig} \lc 
|h|^\delta  2^{-m(\eps-2\delta)} \| \vsig\|_{\cB_\eps}.
\end{split}
\end{equation*}
Finally for the case 
$|h|\le 2^{-2m}$ we get
%$2^m \le h^{-1/2}$ 
\begin{equation*}
\begin{split}
&(II)\lesssim  \iint \q|\vsig\q(\alpha, v+ 2^m h\w)- \vsig\q(\alpha,v\w)\w|\: d\alpha\: dv \lesssim  \q(2^{m} \q|h\w|\w)^{\eps} \sBN{\eps}{\vsig} \lesssim 
|h|^\delta  2^{-m(\eps-2\delta)}
\|\vsig\|_{\cB_\eps}.
\end{split}
\end{equation*}
This yields  \eqref{vregpieces} and the proof is complete.
\end{proof}

\subsection{Invariance properties}\label{invariance}
We state certain identities concerning  the behavior of our multilinear forms with respect to 
scalings and translations.  These will be used repeatedly. The straightforward proofs are omitted.

\begin{lemma}\label{scalinglemma}
Let $\vsig\in L^1(\bbR^n\times \bbR^d)$, and $\vsigj(\alpha,\cdot)=2^{jd} \vsig(\alpha, 2^j\cdot)$. Let $b_i\in L^{p_i}(\bbR^d)$, for $i=1,\dots, n+2$. Then
\begin{enumerate}[(i)]

\item Let $\tau_h f= f(\cdot -h)$. Then
\[
\La[\vsig](\tau_h b_1,\dots, \tau_h b_{n+2}) = 
\La[\vsig]( b_1, \dots, b_{n+2})\,.
\]

\item 
\[
\La[\vsigj](b_1,\dots, b_{n+2}) = 2^{-jd}
\La[\vsig]( b_1(2^{-j}\cdot), \dots, b_{n+2}(2^{-j}\cdot))\,.
\]

\item \label{kjdef}
\[
\La[\vsigj](b_1,\dots, b_{n+2}) = \int  b_{n+2}(x)  \int 2^{jd} k_j(2^j x,2^jy) b_{n+1}(y) \, dy\, dx\]
where
\[ k_j(x,y)= \int \vsig(\alpha, x-y) \prod_{i=1}^n b_i(2^{-j}(x-\alpha_i(x-y))) \,d\alpha.\]

\item
If $g_i= 2^{-jd/p_i}b_{i}(2^{-j}\cdot)$  then $\|g_i\|_{p_i}=\|b_i\|_{p_i}$, and 
\[
\La[\vsigj](b_1,\dots, b_{n+2}) =
\La[\vsig](g_1,\dots, g_{n+2}) \, \text{ if
 }\sum_{i=1}^{n+2} p_i^{-1}=1\,.
\]

\item Let $\kappa_1, \dots, \kappa_{n+2}$ be bounded Borel measures 
and $\kappa_i^{(t)}= t^d\kappa(t\cdot)$.
Set  $\tilde b_i (x)= b_i(2^{-j} x)$.
Then 
\[\La[\vsigj](\kappa_1*b_1,\dots,\kappa_{n+2}* b_{n+2}) =2^{-jd}
\La[\vsig](\kappa_1^{(2^{-j})}*\tilde b_1,\dots,\kappa_{n+2}^{(2^{-j})}*\tilde  b_{n+2}) .
\]

\item
\[
\La[\vsigj](\kappa_1*b_1,\dots, \kappa_{n+2}*b_{n+2}) = \int  2^{jd} \widetilde k_j(2^j x,2^jy) b_{n+1}(y) \, b_{n+2} (x)\,dx\]
where
\[ \widetilde k_j(x,y)= \iint \vsig(\alpha, w-z) \prod_{i=1}^n \kappa_i^{(2^{-j})}* [b_i(2^{-j}\cdot)]
(w-\alpha_i(w-z))
d\ka_{n+2}((x- w) d\ka_{n+1}(z-y) \,.\]

%\item
%\[\La[\vsigj](f_1,\dots,   \Qb{k}{\uzeta}f_{n+1}, f_{n+2})  = 2^{-jd}
%\La[\vsig]( f_1(2^{-j}\cdot), \dots,  \Qb{k-j}{\uzeta}f_{n+1}(2^{-j\cdot}), f_{n+2}(2^{-j}\cdot))\,.\]
\end{enumerate}
\end{lemma}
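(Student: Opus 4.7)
All six identities follow by direct calculation from the defining formula \eqref{EqnIntroMainOperator}, using only translation invariance of Lebesgue measure, Fubini's theorem, and elementary changes of variables in $\bbR^d$. I would dispatch them in the order stated.

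For (i), in the defining integral for $\La[\vsig](\tau_h b_1,\dots,\tau_h b_{n+2})$ I would change variables $x\mapsto x+h$, $y\mapsto y+h$. The Jacobian is $1$; the difference $x-y$ is preserved; and the arguments $x+h-\alpha_i(x-y)$ cancel the shift in $\tau_h b_i$. For (ii), using $\vsig_j(\alpha,x-y)=2^{jd}\vsig(\alpha,2^j(x-y))$, I would substitute $x=2^{-j}u$, $y=2^{-j}v$; the measure factor $2^{-2jd}$ combines with $2^{jd}$ from $\vsig_j$ to give the asserted $2^{-jd}$, and the arguments $2^{-j}u-\alpha_i\cdot 2^{-j}(u-v)=2^{-j}(u-\alpha_i(u-v))$ produce the rescaled $b_i(2^{-j}\cdot)$. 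For (iii), starting from \eqref{EqnIntroMainOperator} with $\vsig_j$ in place of $\vsig$, I would substitute $x=2^{-j}x'$ and $y=2^{-j}y'$ inside the $\alpha$-integral so that the product $\prod b_i(x-\alpha_i(x-y))$ rewrites as $\prod b_i(2^{-j}(x'-\alpha_i(x'-y')))$, then factor the resulting $y$-integral to identify the kernel $k_j$ as claimed; undoing the rescaling $x'=2^j x$, $y'=2^j y$ contributes the $2^{jd}$ in front of $k_j(2^jx,2^jy)$.

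For (iv), I would start from (ii) and rewrite $b_i(2^{-j}\cdot)=2^{jd/p_i}g_i$ for each $i$. Since $\sum_i p_i^{-1}=1$, the accumulated factor is $2^{jd}$, canceling the $2^{-jd}$ from (ii); the unchanged $L^{p_i}$-norm is the standard scaling identity $\|g(2^{-j}\cdot)\|_{p_i}=2^{jd/p_i}\|g\|_{p_i}$. For (v), I would first apply (ii) to move all convolutions to the rescaled variables, then verify the identity $(\kappa_i*b_i)(2^{-j}x)=(\kappa_i^{(2^{-j})}*\widetilde b_i)(x)$, which follows from the change of variable $z\mapsto 2^{-j}z$ inside $\int b_i(2^{-j}x-z)\,d\kappa_i(z)$ together with the definition of the dilate $\kappa_i^{(t)}$ (so that $\int f\,d\kappa_i^{(t)}=\int f(x/t)\,d\kappa_i(x)$). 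For (vi), I would combine (v) with (iii): apply (iii) to $\La[\vsig_j]$ acting on the convolved data, then write each $\kappa_{n+1}*b_{n+1}$ and $\kappa_{n+2}*b_{n+2}$ as integrals against the measures $\kappa_{n+1}$ and $\kappa_{n+2}$, use Fubini to absorb them into the kernel, and relabel variables to reach the expression for $\widetilde k_j$.

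There is no genuine obstacle here: every step is a change of variable or an application of Fubini, and the bookkeeping of the dilation factors $2^{jd}$ is dictated by the $L^1$-normalization of $\vsig\mapsto\vsig^{(t)}$. The only mild care needed is in (v)--(vi), where one must distinguish the dilation of $\vsig$ (in the $x$-variable only) from the dilation of the measures $\kappa_i$ acting on all of $\bbR^d$, and in (iv), where one uses the Hölder-scaling condition $\sum p_i^{-1}=1$ to get the exact cancellation.
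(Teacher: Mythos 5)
The paper explicitly omits the proof, declaring it ``straightforward,'' and your plan is exactly the direct computation intended: each part follows by a change of variables of Jacobian $1$ or $2^{\pm jd}$, together with Fubini, and you track the dilation bookkeeping correctly (in particular the $L^1$-normalization of $\vsig\mapsto\vsig^{(t)}$, the scaling of $L^{p_i}$-norms against $\sum p_i^{-1}=1$ in (iv), and the measure dilation convention $\int f\,d\kappa^{(t)}=\int f(\cdot/t)\,d\kappa$ needed for (v)--(vi)). The only caveat, which is not a gap in your argument, is that the displayed formula for $\widetilde k_j$ in the paper contains minor typographical slips (an unmatched parenthesis and a missing $d\alpha$), so one should read it as the kernel your Fubini computation actually produces rather than match symbols literally.
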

%\begin{proof} Immediate.\end{proof}

\subsection{The role of projective space, revisited}\label{SectionRoleOfRPnSecond}

A particular special case of Theorems 
\ref{ThmOpResAdjoints} and  \ref{ThmOpResBoundGen}
%\ref{ThmInformalAdjoint} and \ref{ThmInformalBound} 
involve the case
when $$K\q(\alpha,v\w)= \gamma_0\q(\alpha\w) K_0\q(v\w),$$ $K_0$ is a classical Calder\'on-Zygmund convolution kernel
which is {\it homogeneous} of degree $-d$, smooth away from $0$, and $\gamma_0\in \fBtp{\epsilon}{\R^n}$ for some $\epsilon>0$.
We saw in Section \ref{SectionRoleofRPnFirst} that such operators would be closed under adjoints
provided we could see the space of $\gamma_0$ as a space of densities on $\RPn$ in an appropriate way.
Indeed, this is the case, and this section is devoted to discussing that fact.  These results are not used
in the sequel, and are intended as motivation for our main results.

%\begin{defn}
For a measurable function $f:\R^n\rightarrow \bbC$, and $0<\epsilon<1$, we set
\begin{equation*}
\BesN{f}{\R^n}:=\LpN{1}{f} +\max_{i=1,\dots,n}  \sup_{0<h_i\leq 1} |h_i|^{-\epsilon}\int\q|f\q(s+h_ie_i\w)-f\q(s\w)\w|\: ds,
\end{equation*}
where $e_1,\ldots, e_n$ is the standard basis for $\R^n$.
%\end{defn}

Let $M$ be a compact manifold of dimension $n$, without boundary.  Let $\mu$ be a measure on $M$.
Take a finite open cover $V_1,\ldots, V_L$ of $M$ such that each $V_j$ is diffeomorphic to 
$B^n\q(1\w)$--the open ball of radius $1$ in $\R^n$.
Let $\Phi_j:B^n\q(1\w)\rightarrow V_j$ be this diffeomorphism
 and let $\phi_1,\ldots, \phi_L$
be a $C^\infty$ partition of unity subordinate to this cover. 
On each neighborhood $V_j$, let $\Phi_j^{\#} \mu$ denote the pull back
of $\mu$ via $\Phi_j$.  We suppose $\Phi_j^{\#}\mu$ is absolutely
continuous with respect to Lebesgue measure on $B^n\q(1\w)$ and
we write $d\Phi_j^{\#}\mu=:\gamma_j\q(x\w) \: dx$ where $dx$ denotes Lebesgue measure.

\begin{rmk}
$\gamma_j$ is called a density, because of the way it transforms under diffeomorphisms. 
\end{rmk}

\begin{defn}
For $0<\epsilon<1$ we define $B^{\epsilon}_{1,\infty}\q(M\w)$ to be the space of those
measures $\mu$ such that the following norm is finite:
\begin{equation*}
\BesN{\mu}{M}:=\sum_{j=1}^L \BesN{\phi_j\circ\Phi_j\q(\cdot\w) \gamma_j\q(\cdot\w) }{\R^n}.
\end{equation*}
\end{defn}

\begin{rmk}
The norm $\BesN{\cdot}{M}$ depends on various choices we made:  the finite open cover, the diffeomorphisms $\Phi_j$, and the partition
of unity $\phi_j$.  However, the {\it equivalence class} of the norm $\BesN{\cdot}{M}$ does not depend on any of these choices,
and therefore the Banach space $B^{\epsilon}_{1,\infty}\q(M\w)$ does not depend on any of these choices.
\end{rmk}

We now turn to the case $M=\RPn$.  Given a measure $\mu\in B^{\epsilon}_{1,\infty}\q(\RPn\w)$, we consider the map
taking $\R^n\hookrightarrow \RPn$ induced by the map $\R^n\hookrightarrow \R^{n+1}$ given
by $\q(x_1,\ldots, x_n\w)\mapsto\q(x_1,\ldots, x_n,1\w)$.  Pulling $\mu$ back via this map,
we obtain a measure on $\R^n$--since $\mu\in B^{\epsilon}_{1,\infty}\q(\RPn\w)$ this pulled back
measure is absolutely continuous with respect to Lebesgue measure and we write this pulled back measure
as $\gamma_0\q(x\w)\: dx$.  This induces a map taking measures in $B^{\epsilon}_{1,\infty}\q(\RPn\w)$
to functions  $\R^n$ given by $\mu\mapsto \gamma_0$.
%We have the following theorem.

\begin{thm}\label{ThmRPnBesovisBesov}
The map $\mu\mapsto \gamma_0$ is a bijection $\bigcup_{0<\epsilon<1} B^{\epsilon}_{1,\infty}\q(\RPn\w)\rightarrow \bigcup_{0<\epsilon<1} \fBtp{\epsilon}{\R^n}$
in the following sense:

%\begin{itemize}
(i) $\forall \epsilon\in (0,1)$, $\exists \epsilon'\in (0,\epsilon]$, and $C=C\q(\epsilon, n\w)<\infty$ such that $\forall \mu\in B^{\epsilon}_{1,\infty}\q(\RPn\w)$,
$\gamma_0\in \fBtp{\epsilon'}{\R^n}$ and $\fBN{\epsilon'}{\gamma_0}\leq C \BesN{\mu}{\RPn}$.

(ii) $\forall \epsilon\in (0,1)$, $\exists \epsilon'\in (0,\eps]$, $\forall \gamma_0\in \fBtp{\epsilon}{\R^n}$, there exists a unique $\mu\in B^{\epsilon'}_{1,\infty}\q(\RPn\w)$
with $\mu\mapsto \gamma_0$ under this map.  Furthermore, $\exists C=C\q(\epsilon,n\w)$ such that $\BesNp{\epsilon'}{\mu}{\RPn}\leq C \fBN{\epsilon}{\gamma_0}$.
%\end{itemize}
\end{thm}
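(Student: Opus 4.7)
The plan is to reduce the theorem to Corollary \ref{ThmAdjTechResult} by identifying the transition maps between the standard affine charts on $\RPn$ with the combinations of inversions and multiplicative shears treated there. Let $\Phi_i : \R^n \to U_i \subset \RPn$, $i=1,\ldots,n+1$, denote the standard chart sending $(s_1,\ldots,s_n)$ to the point with $i$-th homogeneous coordinate $1$ and whose other coordinates, in their natural order, are $s_1,\ldots,s_n$. The embedding $\R^n \hookrightarrow \RPn$ used to define $\gamma_0$ is precisely $\Phi_{n+1}$, and $\{U_1,\ldots,U_{n+1}\}$ is an open cover of $\RPn$. Fix a subordinate $C^\infty$ partition of unity $\phi_1,\ldots,\phi_{n+1}$ so that each $\psi_i := \phi_i\circ \Phi_i$ is compactly supported in $\R^n$; these are the data used in the definition of $\BesN{\mu}{\RPn}$.

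For direction (i), decompose $\mu = \sum_{j=1}^{n+1}\phi_j\mu$. By hypothesis, each $\psi_j\gamma_j \in B^{\eps}_{1,\infty}(\R^n)$ with compact support and norm controlled by $\BesN{\mu}{\RPn}$. The pullback $\Phi_{n+1}^\#(\phi_j\mu)$ is absolutely continuous on $\R^n$ with density
\begin{equation*}
\gamma_0^{(j)}(s) \;=\; (\psi_j\gamma_j)(T_j(s))\,|\det DT_j(s)|, \qquad T_j := \Phi_j^{-1}\circ \Phi_{n+1}.
\end{equation*}
A direct computation shows that for $j \ne n+1$, $T_j(s) = (s_1/s_j,\ldots,s_{j-1}/s_j,\, s_{j+1}/s_j,\ldots, s_n/s_j,\, 1/s_j)$ with $|\det DT_j(s)| = |s_j|^{-n-1}$. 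After permuting coordinates to bring $s_j$ to the first position, $\gamma_0^{(j)}$ is precisely the function $\Gamma_1$ of Corollary \ref{ThmAdjTechResult} applied to (a permutation of) $\psi_j\gamma_j$, with both $N$ and the corollary's "$n$" equal to $n$. Since any compactly supported $B^{\eps}_{1,\infty}(\R^n)$ function lies in $\fBtp{\eps}{\R^n}$ with comparable norm (the weights are bounded on the support), Corollary \ref{ThmAdjTechResult} yields $\gamma_0^{(j)} \in \fBtp{\eps'}{\R^n}$ with $\fBN{\eps'}{\gamma_0^{(j)}} \lesssim n\,\BesN{\mu}{\RPn}$. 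The case $j=n+1$ is trivial since $T_{n+1}$ is the identity. Summing over $j$ proves direction (i).

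For direction (ii), define $\mu$ on $\RPn$ by declaring $\Phi_{n+1}^\#\mu = \gamma_0(s)\,ds$ and extending by zero on the measure-zero complement $\RPn \setminus U_{n+1}$; uniqueness is then immediate. To check $\mu \in B^{\eps'}_{1,\infty}(\RPn)$ we must show $\psi_j\gamma_j \in B^{\eps'}_{1,\infty}(\R^n)$ for each $j$. The inverse transition $T_j^{-1} = \Phi_{n+1}^{-1}\circ \Phi_j$ has the same structural form as $T_j$ above, so $\gamma_j(t) = \gamma_0(T_j^{-1}(t))\,|\det DT_j^{-1}(t)|$ is identified, after a coordinate permutation, with the function $\Gamma_1$ (or $\Gamma_2$) of Corollary \ref{ThmAdjTechResult} applied to $\gamma_0$. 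The corollary gives $\gamma_j \in \fBtp{\eps'}{\R^n}$ with $\fBN{\eps'}{\gamma_j} \lesssim n\,\fBN{\eps}{\gamma_0}$; multiplying by the compactly supported $\psi_j$ replaces the weighted norm $\fBp{\eps'}$ by the ordinary $B^{\eps'}_{1,\infty}(\R^n)$ norm up to a constant depending on the support. Summing gives $\BesNp{\eps'}{\mu}{\RPn} \lesssim \fBN{\eps}{\gamma_0}$.

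The main technical step is recognizing that the standard projective transition maps are exactly of the form $\Gamma_1, \Gamma_2$ of Corollary \ref{ThmAdjTechResult}; once this identification is in hand the rest is bookkeeping. A minor subtlety to confirm is the passage between the compactly supported $B^{\eps}_{1,\infty}(\R^n)$ classes (which arise on the $\RPn$ side after cutting off by $\psi_j$) and the weighted class $\fBtp{\eps}{\R^n}$ required as input to the corollary: on compact sets the two norms are equivalent, so the transfer is harmless. The $n$-dependence inherited from the corollary is irrelevant for the qualitative bijection claimed here; what matters is the dimension-independent loss $\eps \mapsto \eps'$ provided by Corollary \ref{ThmAdjTechResult}.
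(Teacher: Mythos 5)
Your proposal is correct and follows essentially the same route as the paper: both decompose via the standard $n+1$ affine charts and a subordinate partition of unity, identify the transition maps (which have Jacobian $|s_j|^{-n-1}$) with the transformations $\Gamma_1,\Gamma_2$ of Corollary \ref{ThmAdjTechResult} after a coordinate permutation, and pass between compactly supported $B^{\eps}_{1,\infty}$ and $\fBp{\eps}$ classes. The only cosmetic difference is that you phrase the argument in terms of the transition maps $T_j$ and $T_j^{-1}$, while the paper writes out the resulting change-of-variables formulas for $\gamma_0$ and $\gamma_j$ explicitly; the content is the same.
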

\begin{proof}
Fix $\epsilon\in (0,1)$ and let $\mu\in B^{\epsilon}_{1,\infty}\q(\RPn\w)$.
We define an open cover of $\RPn$.  For $j=1,\ldots, n+1$, let $V_j$ denote those points
$\q\{\q(x_1,\ldots, x_{j-1}, 1,x_{j},\ldots, x_n\w) : x\in \R^n, \q|x\w|<2\w\}$, written in homogenous coordinates on
$\RPn$.  $V_j$ is an open subset of $\RPn$ which is diffeomorphic to $B^n\q(2\w)$, and $\cup_{j=1}^{n+1} V_j=\RPn$.

Let $\phi_j$, $1\leq j\leq n+1$ be a smooth partition of unity subordinate to the cover $V_1,\ldots, V_{n+1}$.
$\mu=\sum_{j=1}^{n} \phi_j \mu$.  By the assumption that $\mu\in B^{\epsilon}_{1,\infty}\q(\RPn\w)$,
it follows that $\phi_j\mu = \gamma_j\q(x\w)\: dx$, when written in the standard coordinates on $V_j$,
and $\BesN{\gamma_j}{\R^n}\lesssim \BesN{\mu}{\RPn}$.  Since $\gamma_j$ has compact support,
we have $\fBN{\epsilon}{\gamma_j}\lesssim \BesN{\gamma_j}{\R^n}\lesssim \BesN{\mu}{\RPn}$.  Finally,
\begin{equation*}
\gamma_0\q(x\w)\: dx = \gamma_{n+1}\q(x\w)\: dx + \sum_{j=1}^n 
 x_j^{-n-1} \gamma_j\q(x_j^{-1}x_1, x_{j}^{-1} x_2, \ldots, x_{j}^{-1} x_{j-1}, x_j^{-1} x_{j+1},\ldots, x_j^{-1} x_n, x_j^{-1}\w)
\:dx\,.
\end{equation*}
It follows from 
Corollary  \ref{ThmAdjTechResult}, applied to each term of the sum,
that $\fBN{\epsilon'}{\gamma_0}\leq C_n\BesN{\mu}{\RPn}$, and part (i) is proved. 
% from Theorem \ref{ThmAdjTechResult} applied to each term of the sum.

Because $\gamma_0$ uniquely determines $\mu$ except at those point which cannot be written in homogeneous coordinates
as $\q(x_1,\ldots, x_n,1\w)$, it follows that there is at most one $\mu\in \cup_{\epsilon>0} B^{\epsilon}_{1,\infty}\q(\RPn\w)$ which maps
to a given $\gamma_0$ (because such a $\mu$ is absolutely continuous with respect to Lebesgue measure in every coordinate chart,
and gives such points measure $0$).  Hence, given $\gamma_0\in \fBtp{\epsilon}{\R^n}$ there is at most one
$\mu$ such that $\mu\mapsto \gamma$.  We wish to construct such a $\mu$.

Let $\phi_j$ be the coordinate charts from above.  Given $\gamma_0\in \fBtp{\epsilon}{\R^n}$ define $\gamma_{n+1}\q(x\w)\: dx:= \phi_{n+1}\q(x\w)\gamma_0\q(x\w)\: dx$ and for $1\leq j\leq n$,
\begin{equation*}
\gamma_j\q(x\w)\: dx :=\phi_j\q(x\w)  x_n^{-n-1} \gamma_0\q(x_n^{-1}x_1,\ldots, x_n^{-1} x_{j-1}, x_n^{-1}, x_n^{-1}x_j,\ldots, x_n^{-1} x_{n-1}\w)\:dx.
\end{equation*}
Define $d\mu_j:= \gamma_j\q(x\w) \: dx$ on $V_j$.  By Corollary \ref{ThmAdjTechResult}, 
there exists $\epsilon'>0$
with $\fBN{\epsilon'}{\gamma_j}\leq C \fBN{\epsilon}{\gamma}$.
We set $\mu=\sum_{j=1}^{n+1} \mu_j$.  We have $\BesNp{\epsilon'}{\mu}{\RPn}\leq C' \fBN{\epsilon}{\gamma_0}$ and $\mu\mapsto \gamma_0$, as desired.
\end{proof}

\begin{rmk}
In this section we were not explicit about how each constant depends on $n$.  The above can be set up in such a way that
all constants are polynomial in $n$, which is natural for our purposes--see 
%Remark 
\S\ref{RmkResultsConstantsPolyInn}.
In fact, it would be hard to avoid this polynomial dependance on $n$, since there are naturally $n+1$ coordinate charts
in the definition of $\RPn$.
\end{rmk}

\begin{rmk}
Corollary  \ref{ThmAdjTechResult} 
implies that the space $\bigcup_{\epsilon>0} \fBtp{\epsilon}{\R^n}$ (when thought of as densities on $\RPn$)
is closed under the action of a particular diffeomorphism of $\RPn$.  Namely, if $\gamma\in \bigcup_{\epsilon>0} \fBtp{\epsilon}{\R^n}$, then
\begin{equation*}
s_1^{-n-1} \gamma\q(s_1^{-1}, s_1^{-1}s_2,\ldots, s_1^{-1} s_n\w)\in \bigcup_{\epsilon>0} \fBtp{\epsilon}{\R^n}.
\end{equation*}
Theorem \ref{ThmRPnBesovisBesov} tells us that more is true:  $\bigcup_{\epsilon>0} \fBtp{\epsilon}{\R^n}$
is closed under the action of {\it any} smooth diffeomorphism of $\RPn$ (as $\bigcup_{\epsilon>0} B^{\epsilon}_{1,\infty}\q(\RPn\w)$
clearly is).
It is not hard to see that, when taking adjoints of our multilinear operator in the special case when $K\q(\alpha,x\w) = \gamma_0\q(\alpha\w) K_0\q(x\w)$
where $K_0$ is a homogenous Calder\'on-Zygmund kernel, each permutation of $b_1,\ldots, b_{n+2}$ corresponds to the action
of a diffeomorphism of $\RPn$ on $\gamma_0$.  In fact, each permutation corresponds to an action of an element
of $\mathrm{GL}\q(n+1,\R\w)$ on $\RPn$ (where the action of $\mathrm{GL}\q(n+1,\R\w)$ on $\RPn$ is defined in the usual way).
\end{rmk}

\section{Outline of the proof of boundedness}\label{SectionOutline}
%\input{outline}

% !TEX root =  main.tex
In this section, we begin the proof of Theorem \ref{ThmOpResBound} on the
 boundedness of our multilinear forms.
Let $\phi$ be an even $C^\infty_0$ function supported in $\{|x|<1\}$
%$\phi\in C_0^\infty\q(B^d\q(1\w)\w)$ 
such that $\phi\geq 0$ and $\int \phi =1$.  For $j\in \Z$ define
$\dil{\phi}{2^j}\q(x\w):= 2^{jd} \phi\q(2^j x\w)$ and define the operator $P_j f = f*\dil{\phi}{2^j}$.  Furthermore, we choose $\phi$ to be even
so that $P_j^{*}=P_j={}^t\!P_j$ (here $P_j^{*}$ is the adjoint of $P_j$ and ${}^t\!P_j$ is the transpose).
There are two key facts to note about $P_j$. First, for all $f\in \cS(\bbR^d)$,   \Be\label{Pjapprox} 
\lim_{j\rightarrow +\infty} P_jf=f, \quad
\lim_{j\rightarrow -\infty} P_jf=0, 
\Ee
with convergence in $\cS'$.
Secondly, by the nonnegativity of $\phi$  the operator norm on $L^\infty$ is bounded by $1$:
\Be \label{contraction}\|P_j\|_{L^\infty\to L^\infty}=1\,.\Ee
% for $a\in L^\infty\q(\R^d\w)$, $$\LpN{\infty}{P_j a}\leq \LpN{\infty}{a}.$$

In Theorem \ref{ThmOpResBound} 
we are given a {\it bounded}  family in $\cB_\eps$,
\Be\label{fSdef} \vectsig=\{ \vsig_j: \,j\in \bbZ\}.\Ee
For (parts of the) proof of Theorem \ref{ThmOpResBound}  we shall also need to assume the cancellation condition
%consisting of functions satisfying $$
\Be\label{cancvsigj}
\int \vsig_j\q(\alpha,v\w)\: dv=0
\Ee for all $j\in \bbZ$.  Of particular interest are  the choices 
in Proposition \ref{PropKerDecompK}, namely
$\vsig_j= (Q_j K)^{(2^{-j})}$, given $K\in \sK_\a$ for some $\a>\eps$.
%  as in \eqref{vsigK}.
Theorem \ref{ThmOpResBound} 
concerns the  sum
\begin{equation}\label{EqnOutlineMainSum}
\La\q(b_1,\ldots, b_{n+2}\w) =\lim_{N\to\infty}\sum_{j=-N}^N\La[\vsigjj]
(b_1,\ldots, b_{n+2}),
\end{equation}
where $b_1,\ldots, b_{n}\in L^\infty\q(\R^d\w)$, $b_{n+1}\in L^{p}\q(\R^d\w)$, and $b_{n+2}\in L^{p'}\q(\R^d\w)$, with
$p\in\q(1,2\w]$ and $p'\in \q[2,\infty\w)$ is the dual exponent to $p$.  
%Here $\q\{\vsig_j : j\in \Z\w\}\subset \sBp{\epsilon}$ is a bounded set satisfying $\int \vsig_j\q(\alpha,v\w)\: dv=0$, $\forall j$.
We have not yet shown that this sum converges in any reasonable sense though
it is easy to see that it converges if all $b_j$ belong to $\CziRd$. 
One first establishes estimates for the  partial sums
$\sum_{j=-N}^N\La[\vsigjj]
(b_1,\ldots, b_{n+2})$ which are independent of $N$. Thus, in order to state a priori results 
one should  first assume that all but finitely many of the $\vsig_j$ are zero.
%, and then examine the convergence issues in the general case.
In the general case we shall establish 
convergence in the operator topology of multilinear functionals (or in slightly stronger convergence modes). 
Throughout we take $n\geq 1$, as the result for $n=0$ is classical. Our estimates will involve quantities depending on the family $\vectsig$. It will be convenient to use the following notation.
Let
\Be\label{Gammaeps}
\Gamma_\eps\equiv \Gamma_\eps[\vectsig]:= \frac{\sup_j \|\vsig_j\|\ci{\cB_\eps}}
{\sup_j \|\vsig_j\|\ci{L^1} },
\Ee
and for 
 $n\ge 1$, $\nu\ge 0$ set
%Let $\vectsig=\{\vsig_j:j\in \bbZ\}$ and let $n\ge 1$, $\nu\ge 0$. Let
\Be
\label{fM-quantity}
\fM^{n,\eps}_\nu\equiv \fM^{n,\eps}_\nu[\vectsig]
:= 
\sup_{j} \|\vsig_j\|\ci{L^1}
 \log^\nu( 1+n\, \Gamma_\eps(\vectsig))\,.
 %\frac{\sup_j \|\vsig_j\|\ci{\cB_\eps}}{\sup_j \|\vsig_j\|\ci{L^1} }\big)
\Ee

%One then examines the proofs to establish convergence in the operator topology of multilinear functionals.

We split the  sum  \eqref{EqnOutlineMainSum} into various terms which we study separately.  For $1\leq l_1<l_2\leq n+2$, we define
\begin{equation} \label{Lal1l2}
\begin{split}
&\La^{1}_{l_1,l_2}\q(b_1,\ldots, b_{n+2}\w) 
\\&:= \sum_{j\in \Z} \La[\vsigjj](b_1,\ldots, b_{l_1-1}, \q(I-P_j\w) b_{l_1}, P_j b_{l_1+1},\ldots, P_j b_{l_2-1}, \q(I-P_j\w) b_{l_2}, P_j b_{l_2+1},\ldots, P_j b_{n+2}).
\end{split}
\end{equation}
For $1\leq l\leq n+2$, we define
\begin{equation}\label{La2l}
\La^{2}_{l}\q(b_1,\ldots, b_{n+2}\w) :=\sum_{j\in \Z} \La[\vsigjj](P_j b_1,\ldots, P_j b_{l-1}, \q(I-P_j\w) b_l, P_j b_{l+1}, \ldots, P_j b_{n+2}).
\end{equation}
Finally, we define
\begin{equation} 
\La^{3}\q(b_1,\ldots, b_{n+2}\w) := \sum_{j\in \Z} \La[\vsigjj](P_j b_1,\ldots, P_{j} b_{n+2}).
\end{equation}
One verifies (by induction on $n$) that 
%\footnote{Does this need an argument?}
\begin{multline} \label{Lasums}
\La\q(b_1,\ldots, b_{n+2}\w) \\= \sum_{1\leq l_1<l_2\leq n+2} \La^{1}_{l_1,l_2} \q(b_1,\ldots, b_{n+2}\w)+ \sum_{1\leq l\leq n+2} \La^2_l\q(b_1,\ldots, b_{n+2}\w) + \La^3\q(b_1,\ldots, b_{n+2}\w).
\end{multline}
For $b_1,\ldots, b_{n}\in L^\infty\q(\R^d\w)$ fixed, we can identify 
the multilinear form $\La$  with an operator $T\equiv T\q[b_1,\ldots, b_n\w] $  
defined by
%, for instance in the last case,
\begin{equation} \label{operatorassoc}
\int g\q(x\w) \,T\q[b_1,\ldots, b_n\w] 
f(x) \: dx := \La\q(b_1,\ldots, b_n, f,g\w).
\end{equation}
In this way we associate operators 
$T^1_{l_1,l_2}$, $T^2_{l}$ and $T^3$ to the forms
$\La^{1}_{l_1,l_2}$, $\La^2_l$ and $\La^3$. 
We shall see that the sums defining these operators converge in the strong 
operator topology as operators  $L^p\rightarrow L^p$ (for fixed $b_1,\ldots, b_n\in L^\infty\q(\R^d\w))$, see \S\ref{notation} for the definitions.

%For the precise statement we need a definition.
 
\subsubsection*{The main estimates}
We separate the proof of Theorem \ref{ThmOpResBound} into the following five parts.

\begin{thm} \label{main-parts}
Let  $p\in \q(1,2\w]$ and $p'\in \q[2,\infty\w)$ with $\frac{1}{p}+\frac{1}{p'}=1$.

(a) Suppose that $\vsig_j=0$ for all but finitely many $j$. Then 
 
\begin{enumerate}[(I)]
\item\label{ItemBoundRoughest}
$$\big|\La^1_{n+1,n+2}\q(b_1,\ldots, b_{n+2})\big|\lesssim
\fM^{n,\eps}_2[\vectsig]
% \q(\sup_{j} \LpN{1}{\vsig_j} \w)\log^{2} \q( 1+n\frac{\sup_j \sBN{\epsilon}{\vsig_j} }{\sup_j \LpN{1}{\vsig_j} }\w)
\big(\prod_{i=1}^n \LpN{\infty} {b_i}\w\big)\LpN{p}{b_{n+1}} \LpN{p'}{b_{n+2}}.$$

\item\label{ItemBoundT1} 
For $ 1\leq l_1\leq n$, $l_2\in \q\{n+1,n+2\w\}$, 
$$\q|\La^1_{l_1,l_2}\q(b_1,\ldots, b_{n+2}\w)\w|\lesssim 
%\q(\sup_{j} \LpN{1}{\vsig_j} \w)\log^{2+\frac{1}{2}} \q( 1+n\frac{\sup_j \sBN{\epsilon}{\vsig_j} }{\sup_j \LpN{1}{\vsig_j} }\w)
\fM^{n,\eps}_{5/2}[\vectsig]\, \big(\prod_{i=1}^n \|b_i\|_\infty \big)\|b_{n+1}\|_p \|b_{n+2}\|_{p'}.$$

\item\label{ItemBoundTwoPts} For $ 1\leq l_1<l_2\leq n$,
$$\q|\La^1_{l_1,l_2}\q(b_1,\ldots, b_{n+2}\w)\w|\lesssim 
%\q(\sup_{j} \LpN{1}{\vsig_j} \w)\log^{3} \q( 1+n\frac{\sup_j \sBN{\epsilon}{\vsig_j} }{\sup_j \LpN{1}{\vsig_j} }\w)
\fM^{n,\eps}_{3}[\vectsig]\, \big(\prod_{i=1}^n \|b_i\|_\infty \big)\|b_{n+1}\|_p \|b_{n+2}\|_{p'}.$$

\item\label{ItemBound1IminusPt}  Under the additional cancellation condition 
\eqref{cancvsigj} we have, for $ 1\leq l\leq n+2$,
$$\q|\La^2_{l}\q(b_1,\ldots, b_{n+2}\w)\w|\lesssim n \,
%\q(\sup_{j} \LpN{1}{\vsig_j} \w)\log^{3} \q( 1+n\frac{\sup_j \sBN{\epsilon}{\vsig_j} }{\sup_j \LpN{1}{\vsig_j} }\w)
%\q[\prod_{l=1}^n \LpN{\infty} {f_l}\w]\LpN{p'}{f_{n+1}} \LpN{p}{f_{n+2}}.
\fM^{n,\eps}_{3}[\vectsig]\, \big(\prod_{i=1}^n \|b_i\|_\infty \big)\|b_{n+1}\|_p \|b_{n+2}\|_{p'}.
$$

\item\label{itemBoundAllPts}
Suppose that \eqref{cancvsigj} holds. Then
$$\q|\La^3\q(b_1,\ldots, b_{n+2}\w)\w|\lesssim n^2\,
%\q(\sup_{j} \LpN{1}{\vsig_j} \w)\log^{3} \q( 1+n\frac{\sup_j \sBN{\epsilon}{\vsig_j} }{\sup_j \LpN{1}{\vsig_j} }\w)
%\q[\prod_{l=1}^n \LpN{\infty} {f_l}\w]\LpN{p'}{f_{n+1}} \LpN{p}{f_{n+2}}.
\fM^{n,\eps}_{3}[\vectsig]\, \big(\prod_{i=1}^n \|b_i\|_\infty \big)\|b_{n+1}\|_p \|b_{n+2}\|_{p'}.$$
\end{enumerate}

In the above inequalities the implicit constants depend only on $p\in \q(1,2\w]$, $d\in \N$, and $\epsilon>0$.

(b) For general families 
$\vectsig=\{\vsig_j:j\in \bbZ\}$, bounded in $\cB_\eps$,  the sums defining the above five functionals converge in the  operator topology of multilinear functionals and the limits satisfy the above estimates.

(c) The sums defining the operators 
%In this way we associate operators 
$T^1_{l_1,l_2}[b_1,\dots,b_n]$, $T^2_{l}[b_1,\dots, b_n]$ and $T^3[b_1,\dots,b_n]$ associated to the forms
$\La^{1}_{l_1,l_2}$, $\La^2_l$ and $\La^3$ via  \eqref{operatorassoc}  converge in the strong 
operator topology as operators from $L^p\rightarrow L^p$.
%, with equiconvergence with respect to the index set 
%$\cV= \{(f_1,\dots, f_n): \|f_i\|_\infty\le 1\}$. 
% All sums are converge in the operator topology of multilinear functionals.
\end{thm}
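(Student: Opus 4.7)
The plan is to first establish part (a), the a priori estimate assuming $\vsig_j=0$ for all but finitely many $j$, and then obtain (b) and (c) by a density/uniform boundedness argument. The telescoping decomposition \eqref{Lasums} itself is a combinatorial identity proved by induction on $n$: expanding each factor as $P_jb_i+(I-P_j)b_i$ and using \eqref{Pjapprox} to control partial sums, only configurations with at most two rough factors $(I-P_j)b_i$ survive after summing in $j$, giving exactly the three families $\La^1_{l_1,l_2}$, $\La^2_l$, $\La^3$. The proofs of the five estimates will all rest on: (i) the scaling identities in Lemma \ref{scalinglemma}, which reduce each summand to the unit scale; (ii) uniform $L^1$ control of the $\vsig_j$ together with Besov regularity in $\alpha$; and (iii) the classes $\SI_\eps^{1,\infty}$, $\ann^{1,\infty}$, $\At$, $\Carl$ introduced in the notation list, which play the role of a Littlewood-Paley/singular integral apparatus adapted to our setting.

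For case (I), since the two rough factors fall on the dual pair $(b_{n+1},b_{n+2})\in L^p\times L^{p'}$, write
\[
\La^1_{n+1,n+2}(b_1,\dots,b_{n+2})=\sum_j\bigl\langle T_j[b_1,\dots,b_n](I-P_j)b_{n+1},\,(I-P_j)b_{n+2}\bigr\rangle,
\]
apply Cauchy-Schwarz in $j$ and the standard vector-valued $L^p$/$L^{p'}$ Littlewood-Paley estimates for $\{I-P_j\}$, reducing to a uniform operator bound for $T_j[b_1,\dots,b_n]$ on $L^2$. That bound follows from $\|\vsig_j\|_{L^1}\prod_i\|b_i\|_\infty$ via Schur's test, the $\alpha$-integration being absorbed by Minkowski's inequality; the $\log^2$ arises only because part of the Besov structure is needed to control the square function in $p\in(1,2]$ via interpolation with the trivial $L^2$ bound.

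The genuinely hard cases are (II) and (III), where at least one rough factor $(I-P_j)b_{l_1}$ sits on an $L^\infty$ function and no square function can be applied directly. The plan, following Christ-Journ\'e, is to write $(I-P_j)b_{l_1}$ as an oscillation at scale $2^{-j}$ and exploit the smoothness of the surrounding $P_jb_{i}$ factors together with the $\alpha$-Besov regularity of $\vsig_j$: an integration-by-parts style argument transfers the $2^{-j}$ oscillation onto differences in the $\alpha_{l_1}$ direction, producing a gain of the form $\|\vsig_j\|_{\cB_{\eps,2}}\,h^\eps$ at scale $h=2^{-j}\cdot 2^j=1$. After summing over $j$ using the Carleson-type boundedness (Definition \ref{Carlesoncond}) and the atomic framework $\At$, the result follows, and the logarithmic powers $\log^{5/2}$ and $\log^3$ arise as the price of interpolating between the sharp-$\eps$ Besov bound and the $L^1$ ceiling, i.e. of summing a geometric series truncated at $\log\Gamma_\eps$. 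For (IV) the cancellation condition \eqref{cancvsigj} is essential: it allows us to replace a missing second rough factor by a telescoped difference $P_jb_i-P_{j-1}b_i$ on one of the smooth entries, after which the same Carleson/$\At$ machinery gives the bound with a single extra factor of $n$. Case (V), with no rough factors, is a pure paraproduct; here \eqref{cancvsigj} is the only available source of decay, and expanding the product $\prod_{i=1}^n P_jb_i(x-\alpha_i(x-y))$ and integrating in $\alpha$ produces a quadratic $n^2$ that also appears in Theorem \ref{ThmOpResAdjoints}, and I expect this symmetry with the adjoint estimate to be the cleanest route.

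Finally, part (b) follows by extending the $(n+2)$-linear form from the dense class of Schwartz data to $L^{p_1}\times\cdots\times L^{p_{n+2}}$: once the partial sums $\sum_{|j|\le N}\La[\vsigjj]$ are known to be uniformly bounded in the multilinear operator norm by (a), a Banach-Steinhaus argument gives a bounded limiting form satisfying the same bound. Part (c) — convergence in the strong operator topology of the associated operators $T^1_{l_1,l_2}$, $T^2_l$, $T^3$ on $L^p$ for fixed $b_1,\dots,b_n\in L^\infty$ — follows the same pattern: the partial sum operators are uniformly bounded on $L^p$ by (a) applied with the test function $g$ ranging over a dense subset of $L^{p'}$, and strong convergence on Schwartz data combined with uniform boundedness yields strong convergence on all of $L^p$. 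The main obstacle throughout is case (III), since there the two rough factors both land on $L^\infty$ functions and all the frequency-localization of $I-P_j$ must be routed through the Besov regularity of $\vsig_j$ in $\alpha$; isolating the correct quantitative form of this gain, with the right logarithmic dependence on $\Gamma_\eps$, is the technical heart of the argument.
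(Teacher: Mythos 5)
Your overall outline captures the skeleton — telescoping decompositions, Carleson/atomic conditions, interpolation between an $\cB_\eps$-bound and an $L^1$ ceiling producing logarithmic losses — but the mechanisms you describe for several cases diverge from what actually works, and in case (I) the route as stated would fail.

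\textbf{Case (I).} You propose to apply Cauchy--Schwarz in $j$ together with ``standard vector-valued Littlewood--Paley estimates for $\{I-P_j\}$''. There is no such square function bound: $I-P_j$ is a high-pass cut-off, not a band-limited projection, and $\sum_j\|(I-P_j)f\|_2^2=\infty$ for generic $f\neq0$ since $P_jf\to 0$ as $j\to-\infty$. The paper instead expands $I-P_j=\sum_{k>0}Q_{j+k}$, so that the genuine Littlewood--Paley family $\{Q_{j+k}\}$ appears; the exponential gain in $k$ then comes from the key $L^2$ estimate (Theorem \ref{ThmBasicL2Result}), which controls $\La[\vsig^{(2^j)}]$ when a $Q_{j+k}$-type operator hits one of the two unbounded entries. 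That $L^2$ estimate is the engine of the entire Section 9--13 argument and it does not appear anywhere in your outline. Also, the $\log^2$ in (I) already arises at the $L^2$ level from summing $\min\{1, n2^{-(k_1+k_2)c\eps}\Gamma_\eps\}$ in $k_1,k_2$; it is not a $p\neq2$ interpolation artifact.

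\textbf{Cases (II)--(III).} You gesture at integration-by-parts transferring the $2^{-j}$ oscillation onto $\alpha$-differences controlled by $\|\vsig_j\|_{\cB_{\eps,2}}$. That is not the structure of the paper's proof. For (II), the paper decomposes the form into a \emph{paraproduct} piece
$\mathrm{Mult}\{\Qb{j+k}{\zeta}W_j[\vsig_j,(I-P_j)b_1]1\}P_j$ (estimated by showing the multiplier is a Carleson function, Proposition \ref{PropT1Carleson}, then invoking Carleson embedding) plus a \emph{cancellative} piece (Proposition \ref{PropT1Canc}, estimated via almost orthogonality). For (III), the key step is a spatial truncation of $b_1,b_2$ at a $(k_1,k_2)$-dependent radius, separating ``far'' support (handled by the $(2^jR)^{-\eps}$-decay lemmas) from ``near'' support (handled by the $L^2$ result via $L^\infty\hookrightarrow L^2$ on a ball). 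None of this localization appears in your sketch, and it is precisely what produces the correct $\Gamma_\eps$-dependence.

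\textbf{Case (V).} The $n^2$ does not come from ``expanding the product $\prod P_jb_i$ and integrating in $\alpha$'' nor from any parallel with Theorem \ref{ThmOpResAdjoints}; it comes from the fact that applying the collapsing-sum identity twice yields $(n+2)(n+1)=O(n^2)$ index pairs $(l_1,l_2)$, each contributing a term of size $\fM^{n,\eps}_3$. Your case (IV) description (introducing a single $Q_{j+m}$ difference on a smooth entry) is the closest of the five to the paper's approach.

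\textbf{Structure of (b), (c).} Your Banach--Steinhaus plan is reasonable in spirit, but the paper needs more: in (IV) and (V) the argument iterates Journé's theorem across dualizations, which requires the sums to converge in the strong operator topology \emph{with equiconvergence} in the $b_i$. The almost-orthogonality Lemma \ref{ThmAlmostOrthogonal} is designed to deliver exactly this equiconvergence. A one-shot density argument at the end would not support those iterations.

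In short, the broad architecture you describe is right, but the case-(I) approach is incorrect as stated, and the central quantitative tool (Theorem \ref{ThmBasicL2Result}), the Carleson-function decomposition in (II), the spatial truncation in (III), and the source of the $n^2$ in (V) are all missing or misattributed.
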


Summing up the estimates for the five parts yields Theorem \ref{ThmOpResBound}.

\section{Some auxiliary operators}\label{SectionAuxOps}
%\input{auxops}
% !TEX root =  main.tex
In this section, we introduce some auxiliary  operators which play a role
in the proof of Theorems \ref{ThmOpResBound}, \ref{main-parts}.
Recall that in \S\ref{SectionOutline} we introduced the operator $P_j$, which was defined as
$P_j f = f*\dil{\phi}{2^j}$, where $\phi\in \Czip{B^d\q(1\w)}$ was a fixed even function with $\int \phi=1$, $\phi\geq 0$, and
$\dil{\phi}{2^j}\q(x\w) = 2^{jd} \phi\q(2^j x\w)$.

Define $\psi\q(x\w):=\phi\q(x\w)-2^{-d}\phi\q(x/2\w)\in \Czip{B^d\q(0,2\w)}$, and let $Q_k f = f*\dil{\psi}{2^k}$ so that
\Be\label{Qkdefinition} Q_k=P_k-P_{k-1}\,.\Ee
Note that, in the sense of distributions, we have the following identities
\begin{equation}\label{EqnAuxOpId}
I=\sum_{j\in \Z} Q_j, \quad P_j = \sum_{k\leq j} Q_k, \quad I-P_j = \sum_{k>j} Q_k
\end{equation}
with convergence in the strong operator topology (as operators $L^p\to L^p$, $1<p<\infty$).

\begin{rmk}\label{RmkAuxOpSublety}
There is one subtlety that we must consider.  While $\lim_{j\rightarrow -\infty} P_j f=0$ for $f\in \CziRd$ (or even $f\in L^p$, $p\ne \infty$)
is it not the case that $\lim_{j\rightarrow -\infty} P_j f=0$ for $f\in L^\infty$.  Indeed, this is not true for a constant function.
Thus, the first two identities in \eqref{EqnAuxOpId} 
do not hold when thought of as operators on $L^\infty$.  However, the third identity
%\eqref{EqnAuxOpId2} 
does hold (with the limit taken almost everywhere), which we shall use.
\end{rmk}

Let $\chi_0\in \cS(\bbR^d)$ so that  $\chi_0(\xi)=1$ for $|\xi|<1/2$ and $\chi_0$ is supported in $\{|\xi|<1\}$. 
For $j\ge 1 $ let $\eta_j$ be defined via
\Be \label{etakdef}
\widehat{\eta_j} (\xi)=\chi_0(2^{-j}\xi)-\chi_0(2^{1-j}\xi)
\Ee
so that  $\widehat {\eta_j}$ is supported in the annulus
$\{ \xi: 2^{j-2}\le |\xi|\le 2^j\}$  and $\sum_{j\in \bbZ}\widehat \eta_j(\xi)=1$ for $\xi\neq 0$.
Let $\widetilde \eta_0$ be a Schwartz function so that its Fourier transform vanishes in a neighborhood of the origin and is compactly be supported, and equal to $1$ on the support of $\widehat \eta_0$. Let $\widetilde \eta_j= \widetilde \eta_0^{(2^{j})}$.
Note that $\eta_j$, $\widetilde \eta_j$ belong to $ \sS_0\q(\R^d\w)$ -- the space of Schwartz functions, all of whose moments vanish.
Define 
\Be \label{cQdef}
\cQ_j f= f*\eta_j, \quad
\widetilde \cQ_j f= f*\widetilde \eta_j.
\Ee
and note that \Be\label{reproducing}\cQ_j=\cQ_j\tcQ_j = \tcQ_j \cQ_j\Ee
 and $I=\sum_{j\in \Z} \cQ_j = \sum_{j\in \Z} \cQ_j \tcQ_j= \sum_{j\in \Z} \tcQ_j\cQ_j $, where this identity holds in the weak (distributional sense) and also 
 in the strong operator topology, as operators on $L^p$,  if $1<p<\infty$.
We also have the following well known estimates for the associated Littlewood-Paley square functions:
for $1<p<\infty$, $f\in L^p\q(\R^d\w)$,
\begin{equation}\label{EqnAuxLittlewoodPaley}\|f\|_p\approx \Big\|\Big( \sum_{j\in \Z} |\cQ_j f|^2 \Big)^{\frac{1}{2} }\Big\|_p 
\approx\Big\|\Big(\sum_{j\in \Z} |\tcQ_j f|^2 \Big)^{\frac{1}{2} }\Big\|_p
\end{equation}
with implicit constants depending only on $p$ and $d$.  The same estimates hold with $\cQ_k$ and $\tcQ_k$ replaced by their adjoints.

We introduce a class of operators generalizing $Q_j$, $\Qt_j$, and $\Qtt_j$.
\begin{defn}\label{Udefin}
$\sU$ is defined to be the space of those functions $\uzeta\in C^1\q(\R^d\w)$ such that the norm
\begin{equation*}
\|\uzeta\|_{\sU}:=\sup_{x\in \R^d} \q(1+\q|x\w|^{d+\frac 12}\w) \q(\q|\uzeta\q(x\w)\w|+\q|\nabla \uzeta\q(x\w)\w|\w)
\end{equation*}
 is finite
and such that $$\int \uzeta\q(x\w)\: dx=0.$$
\end{defn}

\begin{defn}
For $\uzeta\in \sU$ and $j\in \Z$, define $\Qb{j}{\uzeta}f:=f*\dil{\uzeta}{2^j}$.
\end{defn}

\begin{rmk}
Note that $\psi,\eta_0,\widetilde \eta_0\in \sU$ and $Q_j = \Qb{j}{\psi}$, $\cQ_j= \Qb{j}{\eta_0}$, and $\tcQ_j=\Qb{j}{\widetilde\eta_0}$.
\end{rmk}

The class $\sU$ comes up through the following proposition (which is very close to a similar one in \cite{cj}).
\begin{prop}\label{PropAuxQb}
If $\q\{f_j\w\}_{j\in \Z}\subset L^2\q(\R^d\w)$, then
\begin{equation*}
\Big\|\sum_{j\in \Z} Q_j f_j\Big\|_2 \lesssim \sup_{\substack{\uzeta\in \sU\\ \sUN{\uzeta}=1}} \Big( \sum_{j\in \Z} \|\Qb{j}{\uzeta} f_j \|_2^2\Big)^{\frac{1}{2}},
\end{equation*}
in the sense that  $\sum_j Q_jf_j$ converges unconditionally in the $L^2$ norm if the right hand side is finite.
\end{prop}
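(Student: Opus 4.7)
The plan is to construct one explicit universal $\uzeta_0\in\sU$ (depending only on $d$) for which
\[\Big\|\sum_{j\in\Z}Q_jf_j\Big\|_2\;\leq\;C\Big(\sum_{j\in\Z}\|\Qb{j}{\uzeta_0}f_j\|_2^2\Big)^{1/2};\]
after normalizing $\uzeta_0$ to have $\sU$-norm equal to $1$, this is trivially dominated by the supremum appearing on the right side of the proposition, so it suffices to find such a $\uzeta_0$. I will take
\[\widehat{\uzeta_0}(\eta)\;:=\;\frac{|\eta|^{1/2}}{(1+|\eta|^2)^N},\qquad N>d/2+2.\]
The factor $|\eta|^{1/2}$ forces $\widehat{\uzeta_0}(0)=0$, hence $\int\uzeta_0=0$; its H\"older-$\tfrac12$ singularity at $\eta=0$ produces precisely the borderline decay $(1+|x|)^{-d-1/2}$ for $\uzeta_0$ and $\nabla\uzeta_0$ at infinity, placing $\uzeta_0\in\sU$.

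The main computation is on the Fourier side. By Plancherel,
\[\Big\|\sum_jQ_jf_j\Big\|_2^2\;=\;\int_{\R^d}\Big|\sum_j\widehat\psi(2^{-j}\xi)\widehat{f_j}(\xi)\Big|^2 d\xi.\]
Writing $\widehat\psi(2^{-j}\xi)\widehat{f_j}(\xi)=\bigl[\widehat\psi(2^{-j}\xi)/\widehat{\uzeta_0}(2^{-j}\xi)\bigr]\cdot\bigl[\widehat{\uzeta_0}(2^{-j}\xi)\widehat{f_j}(\xi)\bigr]$ and applying Cauchy--Schwarz in $j$ pointwise in $\xi$ yields
\[\Big|\sum_j\widehat\psi(2^{-j}\xi)\widehat{f_j}(\xi)\Big|^2\;\leq\;A(\xi)\sum_j|\widehat{\uzeta_0}(2^{-j}\xi)\widehat{f_j}(\xi)|^2,\quad A(\xi):=\sum_{j\in\Z}\frac{|\widehat\psi(2^{-j}\xi)|^2}{|\widehat{\uzeta_0}(2^{-j}\xi)|^2}.\]
The crux is to verify $\sup_{\xi\neq 0}A(\xi)\leq C_0<\infty$. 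By the dyadic scale invariance $A(2\xi)=A(\xi)$ it suffices to consider $|\xi|\in[1,2]$; since $\phi$ is even one has $\widehat\psi(\eta)=\widehat\phi(\eta)-\widehat\phi(2\eta)=\tfrac32 Q(\eta)+O(|\eta|^4)$ for a positive definite quadratic form $Q$, so $|\widehat\psi(\eta)|\asymp|\eta|^2$ for small $|\eta|$, while $|\widehat{\uzeta_0}(\eta)|\gtrsim|\eta|^{1/2}$ there; each summand is then $O(|2^{-j}\xi|^3)$, giving a geometrically convergent tail as $j\to+\infty$. For large $|2^{-j}\xi|$ the Schwartz decay of $\widehat\psi$ easily beats the polynomial growth $(1+|\eta|^2)^{2N}/|\eta|$ of $1/|\widehat{\uzeta_0}|^2$, yielding geometric summability as $j\to-\infty$. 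Integrating and applying Plancherel once more delivers
\[\Big\|\sum_jQ_jf_j\Big\|_2^2\;\leq\;C_0\sum_j\|\Qb{j}{\uzeta_0}f_j\|_2^2,\]
which is the desired bound. For the unconditional convergence assertion, applying the identical estimate to finite tail sums $\sum_{j\in F\setminus F'}Q_jf_j$ with $F'\subset F\subset\Z$ finite produces a Cauchy estimate in $L^2$ whenever the right side of the proposition is finite, independent of the order of summation.

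The main technical obstacle I expect is verifying $\uzeta_0\in\sU$, i.e.\ the sharp decay $|\uzeta_0(x)|+|\nabla\uzeta_0(x)|\lesssim(1+|x|)^{-d-1/2}$. To handle it I would decompose $\widehat{\uzeta_0}=\chi_0(\eta)\widehat{\uzeta_0}(\eta)+(1-\chi_0(\eta))\widehat{\uzeta_0}(\eta)$ with $\chi_0\in C^\infty_0(\R^d)$ a smooth cutoff near the origin; the complementary piece is Schwartz so its inverse Fourier transform is Schwartz. For the near-origin piece the only obstruction to smoothness is the factor $|\eta|^{1/2}$, and a standard computation of $\mathcal{F}^{-1}[|\eta|^{1/2}\chi_0(\eta)](x)$ via Bessel potential/Riesz-kernel identities (or a stationary-phase analysis over spheres) produces decay of order exactly $|x|^{-d-1/2}$, matching the $\sU$ norm; multiplying $\widehat{\uzeta_0}$ by $\eta_k$ to obtain components of $\nabla\uzeta_0$ raises the order of vanishing of the Fourier symbol at the origin from $1/2$ to $3/2$, producing decay of order $|x|^{-d-3/2}$, which is more than enough.
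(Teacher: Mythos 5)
Your proof is correct, and it takes a genuinely different route from the paper's. The paper proves the claim by inserting a Littlewood--Paley resolution of the identity and factoring $Q_j = \sum_k \tcQ_{j+k}\,\Qtt_{j+k} Q_j$; the key work is Lemma~\ref{LemmaAuxBound3}, which identifies $\Qtt_{j+k}Q_j = 2^{-|k|/2}\Qb{j}{\uzeta_k}$ for a whole family $\{\uzeta_k\}_{k\in\Z}$ of functions with bounded $\sU$-norm, built via Lemmas~\ref{LemmaAuxBound1}--\ref{LemmaAuxDecompPsi}. The geometric weight $2^{-|k|/2}$ then allows a Cauchy--Schwarz in $k$ against the square function of $g$. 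You instead produce a single fixed $\uzeta_0$ and run Cauchy--Schwarz in $j$ directly on the Fourier side, reducing to the uniform boundedness of the multiplier $A(\xi)=\sum_j|\widehat\psi/\widehat{\uzeta_0}|^2(2^{-j}\xi)$. What your route buys is a shorter, self-contained Plancherel argument: the almost-orthogonality machinery is replaced by a pointwise Cauchy--Schwarz and a dyadic-scale-invariant multiplier bound, and only one test function $\uzeta_0$ is needed. What it costs is the verification that $\uzeta_0\in\sU$; the decay $(1+|x|)^{-d-1/2}$ of $\mathcal F^{-1}[|\eta|^{1/2}\chi_0(\eta)]$ sits exactly at the borderline of the $\sU$ norm, so that step genuinely requires the homogeneity/Riesz-kernel argument you sketch (a crude integration-by-parts count only yields $(1+|x|)^{-d}$). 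Two minor points worth noting: you only need the one-sided bound $|\widehat\psi(\eta)|\lesssim|\eta|^2$ near $\eta=0$, which follows immediately from $\widehat\psi(0)=0$ and $\nabla\widehat\psi(0)=0$ without invoking positive definiteness of the Hessian; and since the paper's $\phi$ is even but not assumed radial, $\widehat\psi$ need not be radial, though this is harmless because your estimate on $A(\xi)$ is pointwise and uses only the size of $\widehat\psi$, not its phase or symmetry.
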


\subsection{Proof of Proposition \ref{PropAuxQb}} 
We need several lemmata.

\begin{lemma}\label{LemmaAuxBound1}
For $\ell\leq 0$, $\phi\in \Czip{B^d\q(2\w)}$, $\uzeta\in \sS\q(\R^d\w)$ if we define
$\gamma_{-\ell}:=\phi*\dil{\uzeta}{2^{-\ell}}$, we have $\gamma_{-\ell}\in \sU$ and $\sUN{\gamma_{-\ell}}\lesssim 2^{\ell/2}.$
\end{lemma}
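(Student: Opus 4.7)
The plan is to exploit the (implicit) cancellation hypothesis $\int u\,dx=0$, which is natural in context because in the intended application (Proposition~\ref{PropAuxQb}) $u$ plays the role of a function in $\sU$, and $\sU$ requires mean zero — equivalently one could assume $\int\phi=0$, but the former is simpler and suffices for everything below. This cancellation immediately gives $\int\gamma_{-\ell} = (\int\phi)(\int u)=0$, so the mean-zero condition in the definition of $\sU$ is satisfied. What remains is the pointwise bound $(1+|x|^{d+1/2})(|\gamma_{-\ell}(x)|+|\nabla\gamma_{-\ell}(x)|)\lesssim 2^{\ell/2}$, which I would prove by separating the regimes $|x|\le 4$ and $|x|>4$.

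For $|x|\le 4$, I would use the cancellation of $\dil{u}{2^{-\ell}}$ (which has integral zero) to rewrite
$$\gamma_{-\ell}(x) = \int \bigl[\phi(x-y)-\phi(x)\bigr]\,\dil{u}{2^{-\ell}}(y)\,dy,$$
apply the mean value theorem to the Lipschitz function $\phi$, and combine with the first-moment identity $\int|y|\,|\dil{u}{2^{-\ell}}(y)|\,dy = 2^\ell\int|z|\,|u(z)|\,dz$ (a direct change of variable) to obtain $|\gamma_{-\ell}(x)| \lesssim \|\nabla\phi\|_\infty\cdot 2^\ell$. Since $(1+|x|^{d+1/2})$ is bounded by an absolute constant in this range, the resulting contribution is $\lesssim 2^\ell \le 2^{\ell/2}$, using $\ell\le 0$.

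For $|x|>4$, the support condition $\supp\phi\subseteq B^d(2)$ forces $|x-y|\ge|x|/2$ in the convolution defining $\gamma_{-\ell}(x)$; rescaling $z=2^{-\ell}(x-y)$ and invoking the Schwartz decay $\int_{|z|\ge R}|u(z)|\,dz\lesssim_N R^{-N}$ yields $|\gamma_{-\ell}(x)|\lesssim_N (2^{-\ell}|x|)^{-N}$ for any $N$. Choosing $N=d+1$ produces $(1+|x|^{d+1/2})|\gamma_{-\ell}(x)| \lesssim 2^{\ell(d+1)}|x|^{-1/2}\le 2^{\ell(d+1)}\le 2^{\ell/2}$, where the last step uses $\ell\le 0$ and $d+1\ge 1/2$. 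The corresponding bound on $\nabla\gamma_{-\ell}$ follows by applying the same two-regime argument to $\nabla\gamma_{-\ell}=(\nabla\phi)*\dil{u}{2^{-\ell}}$, replacing $\phi$ by $\nabla\phi\in C_0^\infty(B^d(2))$ throughout. The only conceptual point is recognising that cancellation of $u$ is indispensable in the small-$|x|$ regime: without it the trivial bound $\|\phi\|_\infty\|u\|_1=O(1)$ is all that is available near the origin, and one cannot beat this with the Schwartz decay of $u$ alone when $\phi\not\equiv 0$ nearby. Everything else is a standard small-scale/large-scale split.
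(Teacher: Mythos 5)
Your proof is correct, and in fact it repairs a genuine gap in the paper. The paper's own argument for the region $|x|\leq 4$ reads ``evident, since $|\gamma_{-\ell}|\leq\|\phi\|_\infty\|\uzeta\|_1\lesssim 1$'', but for $\ell<0$ the target bound $2^{\ell/2}(1+|x|^{d+1/2})^{-1}$ is strictly smaller than $1$ near the origin, so the trivial $L^\infty$ estimate does not suffice. You noticed precisely this and supplied the missing cancellation: subtracting $\phi(x)\int\dil{\uzeta}{2^{-\ell}}=0$ and using a first-moment bound, which yields $|\gamma_{-\ell}(x)|\lesssim 2^{\ell}\leq 2^{\ell/2}$ for $\ell\leq 0$. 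Your large-$|x|$ estimate is the same in spirit as the paper's, just with a marginally less sharp exponent $N=d+1$ (the paper uses $m=d+\tfrac12$, which gives the clean form $2^{\ell/2}(1+|x|^{d+1/2})^{-1}$ directly; yours gives an extra harmless factor $|x|^{-1/2}$). The gradient bound via $\nabla\gamma_{-\ell}=(\nabla\phi)*\dil{\uzeta}{2^{-\ell}}$ is handled the same way in both.

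Two remarks worth recording. First, the cancellation hypothesis you added is in fact indispensable for the stated conclusion $\gamma_{-\ell}\in\sU$ regardless of the sign of $\ell$, since $\sU$ requires $\int\gamma_{-\ell}=0$ and $\int\gamma_{-\ell}=(\int\phi)(\int\uzeta)$; the paper's proof never addresses this point. Your choice to impose $\int\uzeta=0$ rather than $\int\phi=0$ matches the sole application in Lemma~\ref{LemmaAuxBound3}, where $\uzeta=\partial_{x_\nu}\widetilde\eta_0$ is a derivative of a Schwartz function. Second, the condition $\ell\leq 0$ in the statement is very likely a sign slip for $\ell\geq 0$: in the application one bounds $\varphi_\nu*\dil{(\partial_\nu\widetilde\eta_0)}{2^k}$ with $k\leq 0$, i.e.\ $2^{-\ell}=2^k$ and hence $\ell=-k\geq 0$, and under $\ell\geq 0$ the paper's trivial small-$|x|$ bound $\lesssim 1\leq 2^{\ell/2}$ actually does suffice. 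Read with $\ell\leq 0$ as written, the paper's small-$|x|$ step is a non sequitur and your cancellation argument is the only way to make it go through.
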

\begin{proof}
It is clear that $\gamma_{-\ell}\in C^\infty\q(\R^d\w)$, so it suffices to prove the bound on $\sUN{\gamma_{-\ell}}$.
Because, for $\nu=1,\ldots,d$,  $\partial_{x_\nu} \gamma_{-\ell}$ is of the same form as $\gamma_{-\ell}$,
it suffices to show $\q|\gamma_{-\ell}\q(x\w)\w|\lesssim 2^{\ell/2} \q(1+\q|x\w|^{d+1/2}\w)^{-1}$.
This is evident for  $\q|x\w|\leq 4$, since 
$|\gamma_{-\ell}| \le \|\phi\|_\infty\|\uzeta\|_1
\lc 1$.
%\begin{equation*}
%\q|\gamma_{-\ell}\q(x\w)\w| = \int \phi\q(x-y\w) \dil{\uzeta}{2^{-\ell}}\q(y\w)\: dy\leq \LpN{\infty}{\phi} \LpN{1}{\uzeta}\lesssim 1\lesssim 2^{\ell/2} \q(1+\q|x\w|^{d+1/2}\w)^{-1}.
%\end{equation*}

Since $\phi\q(x-y\w)$ is supported on $\q|x-y\w|\leq 2$, we have for
$\q|x\w|\geq 4$ and any $m$,
\begin{equation*}
\q|\gamma_{-\ell}\q(x\w)\w|\lesssim \int_{\q|x-y\w|\leq 2} 2^{-\ell d} \q(1+2^{-\ell}\q|y\w|\w)^{-m}\: dy\approx \int_{\q|x-y\w|\leq 2} 2^{-\ell d} \q(1+2^{-\ell}\q|x\w|\w)^{-m}\: dy\lesssim 2^{-\ell d} \q(1+2^{-\ell} \q|x\w|\w)^{-m}.
\end{equation*}
Taking $m=d+1/2$, we have 
\begin{equation*}
\q|\gamma_{-\ell}\q(x\w)\w| \lesssim 2^{-\ell d} \q(1+2^{-\ell }\q|x\w|\w)^{-d-1/2}
%\lesssim
%\begin{cases}2^{jd} &\text{if }\q|x\w|<2^{-j}\\2^{-j/2} \q|x\w|^{-d-1/2}&\text{if }\q|x\w|\geq 2^{-j}\end{cases}
\lesssim 2^{\ell/2}\q(1+\q|x\w|^{d+1/2}\w)^{-1}, \quad\q|x\w|\geq 4,
\end{equation*}
as desired.
\end{proof}

\begin{lemma}\label{LemmaAuxBound2}
Suppose $\uzeta_1\in \sS\q(\R^d\w)$, $\uzeta_2\in \sS_0\q(\R^d\w)$.  For $j\geq 0$, let $\uzeta_j := \uzeta_1 *\dil{\uzeta_2}{2^j}$.  Then, for $m=0,1,2,\dots$,
%$\forall m$,
\begin{equation*}
\sum_{\q|\alpha\w|\leq m} \q|\partial_x^{\alpha} \uzeta_j\q(x\w)\w|\lesssim 2^{-jm} \q(1+\q|x\w|\w)^{-m}.
\end{equation*}
\end{lemma}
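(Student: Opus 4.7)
The plan is to exploit the vanishing moments of $\uzeta_2$ via a Taylor expansion, which is the natural mechanism producing both the $2^{-jm}$ gain and the spatial decay $(1+|x|)^{-m}$ simultaneously. First I would reduce the case of derivatives to the case $\alpha=0$: since $\partial_x^\alpha \uzeta_j = (\partial^\alpha \uzeta_1)\ast \dil{\uzeta_2}{2^j}$ and $\partial^\alpha \uzeta_1\in \sS(\R^d)$ whenever $\uzeta_1\in \sS(\R^d)$, the estimate for $\partial_x^\alpha \uzeta_j$ follows from the estimate for $\uzeta_j$ itself with $\uzeta_1$ replaced by $\partial^\alpha\uzeta_1$. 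So it suffices to prove $|\uzeta_j(x)|\lesssim 2^{-jm}(1+|x|)^{-m}$ for each fixed $m$.

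Next, after the substitution $z=2^j y$,
\begin{equation*}
\uzeta_j(x)=\int \uzeta_1(x-2^{-j}z)\,\uzeta_2(z)\,dz.
\end{equation*}
I would Taylor-expand $\uzeta_1(x-2^{-j}z)$ around $x$ to order $m-1$, writing the remainder in integral form as
\begin{equation*}
R_m(x,2^{-j}z)=2^{-jm}m\sum_{|\gamma|=m}\frac{(-z)^\gamma}{\gamma!}\int_0^1 (1-t)^{m-1}\partial^\gamma\uzeta_1(x-t2^{-j}z)\,dt.
\end{equation*}
Because $\uzeta_2\in\sS_0(\R^d)$ has $\int z^\beta\uzeta_2(z)\,dz=0$ for every multi-index $\beta$, the polynomial part of the Taylor expansion is annihilated when integrated against $\uzeta_2$, and one obtains the exact identity $\uzeta_j(x)=\int R_m(x,2^{-j}z)\uzeta_2(z)\,dz$. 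This already explains the $2^{-jm}$ factor.

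To extract the spatial decay $(1+|x|)^{-m}$, I would split the $z$-integral at $|z|\approx 2^{j-1}(1+|x|)$. On the inner region $|z|\le 2^{j-1}(1+|x|)$ one has $|x-t2^{-j}z|\ge \tfrac12(|x|-1)$, so Schwartz decay of $\partial^\gamma \uzeta_1$ (choose $|\partial^\gamma\uzeta_1(y)|\lesssim(1+|y|)^{-m}$) gives the pointwise bound $(1+|x|)^{-m}$, while $|z|^m|\uzeta_2(z)|$ is integrable by Schwartz decay of $\uzeta_2$. On the outer region $|z|>2^{j-1}(1+|x|)$ I would use only $\|\partial^\gamma\uzeta_1\|_\infty<\infty$ together with rapid decay of $\uzeta_2$: for any $N>m+d$,
\begin{equation*}
\int_{|z|>2^{j-1}(1+|x|)}|z|^m(1+|z|)^{-N}\,dz\lesssim\bigl(2^j(1+|x|)\bigr)^{-(N-m-d)},
\end{equation*}
which, for $j\ge 0$ and $N$ chosen large, is dominated by $(1+|x|)^{-m}$. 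Combining the two regions yields $|\uzeta_j(x)|\lesssim 2^{-jm}(1+|x|)^{-m}$. Handling the range $|x|\le 2$ is trivial since there $(1+|x|)^{-m}\asymp 1$ and one just bounds $\uzeta_j$ through the remainder directly using $\|z\mapsto|z|^m\uzeta_2\|_1<\infty$.

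There is no serious obstacle here: the argument is essentially bookkeeping around Taylor's theorem together with the two Schwartz estimates. The only point requiring care is the region split, which must be chosen so that the small-$z$ estimate feels the decay of $\uzeta_1$ at scale $|x|$ rather than at scale $|x-2^{-j}z|$; the choice $|z|\lesssim 2^{j}(1+|x|)$ is exactly what ensures $|x-t2^{-j}z|\gtrsim |x|$ on the inner region.
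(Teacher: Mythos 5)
Your argument is correct, but it takes a genuinely different route from the paper's. The paper works entirely on the Fourier side: since $\uzeta_2\in\sS_0$, the Fourier transform $\widehat{\uzeta_2}$ vanishes to infinite order at the origin, and the paper shows that for every $m$ the family $\{2^{jm}\widehat{\uzeta_j}:j\ge 0\}$ is a bounded subset of $\sS(\R^d)$ by estimating $\partial_\xi^\alpha\bigl(\widehat{\uzeta_1}(\xi)\widehat{\uzeta_2}(2^{-j}\xi)\bigr)$ via the Leibniz rule, extracting the factor $|2^{-j}\xi|^m$ from the flatness of $\widehat{\uzeta_2}$ at $0$. This yields the slightly stronger conclusion that $\{2^{jm}\uzeta_j\}$ itself is bounded in $\sS$, and it avoids any region split. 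You instead work on the physical side: Taylor-expand $\uzeta_1(x-2^{-j}z)$ to order $m$, kill the polynomial part with the vanishing moments of $\uzeta_2$, and then split the $z$-integral at $|z|\approx 2^{j}(1+|x|)$, using Schwartz decay of $\uzeta_1$ on the inner region and rapid decay of $\uzeta_2$ on the outer one. Your route is more elementary and explicit, at the cost of the region-split bookkeeping; the paper's is more compact and immediately gives uniform decay of all orders. Both are standard and sound, and your reduction to $\alpha=0$ (pushing the derivative onto $\uzeta_1$) is correct since $\partial^\alpha\uzeta_1\in\sS$.
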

\begin{proof}
The goal is to show, for every $m$, $\q\{2^{jm} \uzeta_j  : j\geq 0\w\}\subset \sS\q(\R^d\w)$ is a bounded set.  To do this, we show $\q\{2^{jm} \uzetah_j : j\geq 0\w\}\subset \sS\q(\R^d\w)$ is a bounded set.
We have, for every $\alpha$,
\begin{equation*}
\begin{split}
&\Big|\partial_\xi^{\alpha} \uzetah_j\q(\xi\w)\Big| =\q|\sum_{\beta+\gamma=\alpha} C_{\beta,\gamma} \partial_\xi^\beta \uzetah_1\q(\xi\w) \partial_\xi^{\beta} \uzetah_2\q(2^{-j} \xi\w)\w|
\lesssim  \sum_{\beta+\gamma=\alpha}2^{-j\q|\gamma\w|} \q|\partial_\xi^{\beta} \uzetah_1\q(\xi\w) \q(\partial_\xi^{\gamma}\uzetah_2\w)\q(2^{-j}\xi\w)\w|
\\
&\lesssim \sum_{\beta+\gamma=\alpha} 2^{-j\q|\gamma\w|} \q(1+\q|\xi\w|\w)^{-2m} \q|2^{-j}\xi\w|^{m} \q(1+\q|2^j\xi\w|\w)^{-2m}
\lesssim 2^{-mj} \q(1+\q|\xi\w|\w)^{-m}. 
\end{split}
\end{equation*}
The result follows.
\end{proof}

\begin{lemma}\label{LemmaAuxDecompPsi}
There exists functions $\vphi_1,\ldots, \vphi_d\in \Czip{B^d\q(2\w)}$ such that $\psi=\sum_{\nu=1}^d \partial_{x_\nu} \vphi_\nu$.
\end{lemma}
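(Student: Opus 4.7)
The key observation is that the difference $\psi(x) = \phi(x) - 2^{-d}\phi(x/2)$ can be written as the integral over a dilation parameter of a total derivative. Set $f_t(x) := t^{-d}\phi(x/t)$ for $t>0$, and note that $\int f_t(x)\,dx = 1$ for every $t$. The plan is to express $\psi$ as $-\int_1^2 \partial_t f_t(x)\,dt$, then to rewrite $\partial_t f_t$ as an $x$-divergence, and finally to check the support and regularity of the resulting vector field.

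For the first step, the fundamental theorem of calculus gives
\begin{equation*}
\psi(x) \;=\; f_1(x) - f_2(x) \;=\; -\int_1^2 \partial_t f_t(x)\,dt.
\end{equation*}
For the second step, I compute directly
\begin{equation*}
\partial_t f_t(x) \;=\; -d\, t^{-d-1}\phi(x/t) \;-\; t^{-d-2}\, x\cdot \nabla\phi(x/t),
\end{equation*}
while on the other hand
\begin{equation*}
\operatorname{div}_x\!\Bigl[\tfrac{x}{t^{d+1}}\phi(x/t)\Bigr] \;=\; d\,t^{-d-1}\phi(x/t) \;+\; t^{-d-2}\,x\cdot\nabla\phi(x/t),
\end{equation*}
so that $\partial_t f_t(x) = -\operatorname{div}_x\bigl[\tfrac{x}{t^{d+1}}\phi(x/t)\bigr]$. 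Combining these two identities and interchanging $\operatorname{div}_x$ with the $t$-integral yields $\psi = \sum_{\nu=1}^d \partial_{x_\nu}\vphi_\nu$ with
\begin{equation*}
\vphi_\nu(x) \;:=\; \int_1^2 \frac{x_\nu}{t^{d+1}}\,\phi(x/t)\,dt, \qquad \nu=1,\dots,d.
\end{equation*}

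The third step is to verify the support and regularity claims. Since $\phi\in \Czip{B^d(1)}$, the integrand $\phi(x/t)$ vanishes unless $|x|<t$, and thus $\vphi_\nu(x)=0$ whenever $|x|\geq 2$; this places $\supp \vphi_\nu$ inside $B^d(2)$. Smoothness is immediate because the integrand is jointly smooth in $(x,t)$ and $t$ ranges over the compact interval $[1,2]$, so differentiation under the integral sign is justified to all orders. I do not expect any real obstacle here: the only slightly delicate point is the support bound, which relies crucially on the fact that the dilation parameter is bounded above by $2$, matching the scale of $\phi(\cdot/2)$ appearing in the definition of $\psi$. No use of the evenness of $\phi$, the normalization $\int\phi=1$, or the sign of $\phi$ is needed for this lemma.
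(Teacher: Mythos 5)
Your proof is correct, and it takes a genuinely different route from the paper's. The paper telescopes discretely, coordinate by coordinate: it writes $\psi = \sum_{\nu=1}^d \psi_\nu$ where $\psi_\nu$ differs from $\psi_{\nu-1}$'s building block only by rescaling in the single variable $x_\nu$, observes that each $\psi_\nu$ has vanishing integral in $x_\nu$ (again by a one-dimensional Newton--Leibniz cancellation), and then defines $\vphi_\nu(x) = \int_{-\infty}^{x_\nu}\psi_\nu(x_1,\dots,y_\nu,\dots,x_d)\,dy_\nu$; compact support of $\vphi_\nu$ follows from that one-variable cancellation. You instead exploit the \emph{continuous} dilation family $f_t = t^{-d}\phi(\cdot/t)$, write $\psi = -\int_1^2 \partial_t f_t\,dt$, and use the Euler-type identity $\partial_t f_t = -\operatorname{div}_x[t^{-d-1}x\,\phi(x/t)]$ to produce $\vphi_\nu$ in closed form. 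Your identity computation checks out, and the support bound is correct: since $\operatorname{supp}\phi$ is a compact subset of $B^d(1)$ and $t\le 2$, the integrand vanishes outside a compact subset of $B^d(2)$. The two arguments are comparable in length; yours has the advantage of producing a symmetric, explicit formula for $\vphi_\nu$ without any case-by-case coordinate bookkeeping, while the paper's stays entirely elementary (no differentiation under an integral sign, no divergence identity) and shows more directly that the construction only needs $\phi$ to be a compactly supported test function. You are also right that evenness, positivity, and $\int\phi=1$ play no role for either argument.
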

\begin{proof}
Indeed, write
\begin{equation*}
\psi\q(x\w) = \phi\q(x\w) - 2^{-d} \phi(2^{-1} x) = \sum_{\nu=1}^d \psi_\nu\q(x\w),
\end{equation*}
where $\psi_\nu(x)$ is given by  $$ 2^{-\q(\nu-1\w)} \phi\q(x_1/2,x_2/2,\ldots, x_{\nu-1}/2, x_\nu,x_{\nu+1},\ldots, x_d\w) - 2^{-\nu} \phi\q(x_1/2,x_2/2,\ldots, x_{\nu}/2, x_{\nu+1},\ldots, x_d\w).$$
Letting $\vphi_\nu\q(x\w) = \int_{-\infty}^{x_\nu} \psi_\nu\q(x_1,\ldots, x_{\nu-1}, y_\nu, x_{\nu+1},\ldots, x_d\w)\: dy_\nu$, the result follows.
\end{proof}

\begin{lemma}\label{LemmaAuxBound3}
For $j,k\in \Z$, $\Qtt_{j+k} Q_j = 2^{-\q|k\w|/2} \Qb{j}{\uzeta_{k}}$, where $\uzeta_k\in \sU$ and $\sUN{\uzeta_k}\lesssim 1$.
\end{lemma}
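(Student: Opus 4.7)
\smallskip

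The plan is to reduce to a statement about a single function (independent of $j$) and then split into two cases according to the sign of $k$. First, unwinding the definitions, $\widetilde{\widetilde{Q}}_{j+k} Q_j f = f * \psi^{(2^j)} * \widetilde{\widetilde{\eta}}_{j+k}$. The identity $\widetilde{\widetilde{\eta}}_{j+k} = (\widetilde{\eta}_0^{(2^k)})^{(2^j)}$ together with the scaling rule $g^{(t)} * h^{(t)} = (g*h)^{(t)}$ gives
\[
\psi^{(2^j)} * \widetilde{\widetilde{\eta}}_{j+k} = \bigl(\psi * \widetilde{\eta}_0^{(2^k)}\bigr)^{(2^j)}.
\]
Thus it suffices to exhibit $\upsilon_k := 2^{|k|/2}\,\psi*\widetilde{\eta}_0^{(2^k)}$ as an element of $\mathscr{U}$ with $\|\upsilon_k\|_{\mathscr{U}}\lesssim 1$ uniformly in $k$. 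Since $\int\psi=0$, automatically $\int \upsilon_k = 2^{|k|/2}(\int\psi)(\int\widetilde{\eta}_0) = 0$.

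For $k\ge 0$, I would apply Lemma~\ref{LemmaAuxBound2} with $\upsilon_1=\psi\in\mathcal{S}(\mathbb{R}^d)$ and $\upsilon_2=\widetilde{\eta}_0\in\mathcal{S}_0(\mathbb{R}^d)$; this yields, for every $m\in\mathbb{N}$,
\[
|\psi*\widetilde{\eta}_0^{(2^k)}(x)| + |\nabla(\psi*\widetilde{\eta}_0^{(2^k)})(x)| \lesssim 2^{-km}(1+|x|)^{-m}.
\]
Choosing $m=d+1$ and multiplying by $2^{k/2}$ gives
\[
(1+|x|^{d+1/2})\bigl(|\upsilon_k(x)|+|\nabla\upsilon_k(x)|\bigr)\lesssim 2^{k/2 - k(d+1/2)} = 2^{-kd}\le 1,
\]
so $\upsilon_k\in\mathscr{U}$ with the required bound. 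In this case the favorable factor $2^{-km}$ comes entirely from the cancellation of $\widetilde{\eta}_0$.

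For $k\le 0$ the high-frequency bump $\widetilde{\eta}_0^{(2^k)}$ is spread out, and the cancellation must come from $\psi$ instead. Using Lemma~\ref{LemmaAuxDecompPsi}, I would write $\psi=\sum_{\nu=1}^d\partial_{x_\nu}\varphi_\nu$ with $\varphi_\nu\in C_0^\infty(B^d(2))$. Integration by parts and the chain-rule identity $\partial_{x_\nu}\widetilde{\eta}_0^{(2^k)} = 2^k(\partial_{x_\nu}\widetilde{\eta}_0)^{(2^k)}$ give
\[
\psi*\widetilde{\eta}_0^{(2^k)} = 2^k\sum_{\nu=1}^d \varphi_\nu *(\partial_{x_\nu}\widetilde{\eta}_0)^{(2^k)}.
\]
Each summand has the form covered by Lemma~\ref{LemmaAuxBound1} (with $\phi=\varphi_\nu$, $\upsilon=\partial_{x_\nu}\widetilde{\eta}_0\in\mathcal{S}$, at scale $2^k\le 1$), producing a $\mathscr{U}$-norm of order $2^{k/2}$. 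Summing, $\|\psi*\widetilde{\eta}_0^{(2^k)}\|_{\mathscr{U}}\lesssim 2^{3k/2}$, and hence $\|\upsilon_k\|_{\mathscr{U}}\lesssim 2^{-k/2}\cdot 2^{3k/2} = 2^k \le 1$.

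The only subtle step is the scaling manipulation in the initial reduction; the rest is a mechanical application of the two auxiliary lemmas. The main conceptual point—and the reason the bound is symmetric in the sign of $k$—is that the dichotomy on $k$ routes the required cancellation to the appropriate factor: the mean-zero property of $\widetilde{\eta}_0$ for $k\ge 0$, and that of $\psi$ (manifested via the decomposition $\psi=\sum_\nu\partial_{x_\nu}\varphi_\nu$) for $k\le 0$.
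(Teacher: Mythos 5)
Your argument follows the paper's approach: reduce to $j=0$ via the scaling identity $\psi^{(2^j)}*\widetilde\eta_{j+k}=(\psi*\widetilde\eta_0^{(2^k)})^{(2^j)}$, set $\upsilon_k=2^{|k|/2}\psi*\widetilde\eta_0^{(2^k)}$, split on the sign of $k$, and invoke Lemma~\ref{LemmaAuxBound2} for $k\ge 0$ and the decomposition $\psi=\sum_\nu\partial_{x_\nu}\varphi_\nu$ plus Lemma~\ref{LemmaAuxBound1} for $k\le 0$.

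There is, however, a sign error in your application of Lemma~\ref{LemmaAuxBound1}. Writing $\varphi_\nu*(\partial_{x_\nu}\widetilde\eta_0)^{(2^k)}$ in that lemma's notation means $2^{-\ell}=2^k$, i.e.\ $\ell=-k\ge 0$, and the conclusion then reads $\sUN{\varphi_\nu*(\partial_{x_\nu}\widetilde\eta_0)^{(2^k)}}\lesssim 2^{\ell/2}=2^{-k/2}$, not $2^{k/2}$ as you write. A direct check confirms the sign: for $k\ll 0$, $\varphi_\nu*(\partial_{x_\nu}\widetilde\eta_0)^{(2^k)}$ is a bump of height $\approx 2^{kd}$ spread to scale $2^{-k}$ (and $\int\varphi_\nu\neq 0$, so there is no cancellation), hence $\sup_x(1+|x|^{d+1/2})|\cdot|\approx 2^{-k(d+1/2)}\cdot 2^{kd}=2^{-k/2}\to\infty$, whereas $2^{k/2}\to 0$. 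Feeding the corrected exponent through your chain gives $\sUN{\psi*\widetilde\eta_0^{(2^k)}}\lesssim 2^k\cdot 2^{-k/2}=2^{k/2}$ and hence $\sUN{\upsilon_k}\lesssim 2^{-k/2}\cdot 2^{k/2}=1$, which is exactly what the lemma asserts. Your stated bound $\sUN{\upsilon_k}\lesssim 2^k$ is not justified by the argument (it is in fact sharper than what the decomposition yields), though the slip happens to be benign because $2^k\le 1$ still implies the desired $\sUN{\upsilon_k}\lesssim 1$. Part of the confusion likely stems from the hypothesis ``$\ell\le 0$'' as printed in Lemma~\ref{LemmaAuxBound1}, which appears to be a misprint for $\ell\ge 0$: the lemma's own proof needs $2^{\ell/2}\gtrsim 1$ to handle $|x|\le 4$, and both the paper's application and yours take $\ell=-k\ge 0$. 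Finally, in the $k\ge 0$ case the simplification $2^{k/2-k(d+1)}=2^{-kd}$ should read $2^{-k(d+1/2)}$; this is a harmless arithmetic slip.
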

\begin{proof}
By scale invariance, it suffices to consider the case  $j=0$; then  $\uzeta_k = \psi * \dil{\widetilde \eta_0}{2^k}$.
When $k\leq 0$, we use Lemma \ref{LemmaAuxDecompPsi} to see 
\begin{equation*}
\uzeta_k\q(x\w) = \sum_{\nu=1}^d \int \q(\partial_{x_\nu} \vphi_\nu\w) \q(y\w) \dil{\widetilde \eta_0}{2^k} \q(x-y\w)\: dy = -2^{k} \sum_{\nu=1}^d \int \varphi_\nu\q(y\w) \dil{\q(\partial_{x_\nu} \widetilde \eta_0\w)}{2^k}\q(x-y\w)\: dy.
\end{equation*}
From here, the desired estimate follows from Lemma \ref{LemmaAuxBound1}.
For $k\geq 0$, the result follows immediately from Lemma \ref{LemmaAuxBound2}.
\end{proof}

\begin{proof}[Proof of Proposition \ref{PropAuxQb}, conclusion]
Let $\q\{f_j : j\in \Z\w\}\subset L^2\q(\R^d\w)$ and let $g\in L^2\q(\R^d\w)$ with $\LpN{2}{g}=1$.  Let $\inn{\cdot}{\cdot}$ denote the inner product in $L^2$.  We have, letting $\uzeta_k$ be as in Lemma \ref{LemmaAuxBound3},
\begin{equation*}
\begin{split}
&\Big|\biginn{g}{\sum_{j=J_1}^{J_2}Q_j f_j }\Big| = \Big|\biginn{g}{\sum_{j=J_1}^{J_2}\sum_{k\in \Z} \Qt_{j+k} \Qtt_{j+k} Q_j f_j} \Big|
\leq \sum_{k\in \Z}\sum_{j=J_1}^{J_2} \Big|\biginn{\Qt_{j+k}^{*} g} {\Qtt_{j+k} Q_j f_j}\Big|
\\&\lesssim \sum_{k\in \Z} \Big(\sum_{j=J_1}^{J_2} \big\|\Qt_{j+k}^{*} g \big\|_2^2 \Big)^{\frac{1}{2}}
\Big( \sum_{j=J_1}^{J_2}\big\| \Qtt_{j+k} Q_j f_j \big\|_2^2 \Big)^{\frac{1}{2}}
\lesssim \sum_{k\in \Z}2^{-\q|k\w|/2} \Big( \sum_{j=J_1}^{J_2} \big\|\Qb{j}{\uzeta_k}f_j \big\|_2^2\Big)^{\frac 12}.
\end{split}
\end{equation*}
The result follows easily.
\end{proof}

\subsection{A decomposition result for functions in $\sU$}   The proof of the following result  follows closely a similar result in
\cite{NRS}.
%\cite{NagelRicciSteinSingularIntegralsWithFlagKernels}.

\begin{prop}\label{PropDecomZeta}
Let $\uzeta\in \sU$.  Then there exists $\uzeta_j\in C_0^1\q(B^d\q(\frac{1}{4}\w)\w)$ with $\CzN{\uzeta_j}\lesssim \sUN{\uzeta}$,
$\int \uzeta_j=0$, and 
\begin{equation*}
\uzeta=\sum_{j\leq 0} 2^{j/2} \dil{\uzeta_j}{2^j}.
\end{equation*}
\end{prop}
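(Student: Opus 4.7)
The plan is a dyadic atomic decomposition of $\uzeta$ on the shells $\{|x|\sim 2^{-j}\}$, $j\le 0$, followed by an Abel-type correction that restores mean zero to each piece and crucially exploits the cancellation $\int\uzeta=0$ built into the definition of $\sU$.

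First fix a radial $\rho\in C^\infty(\R^d)$ with $\rho\equiv 1$ on $B^d(1/8)$ and $\supp{\rho}\subset B^d(1/4)$, and put $\chi_0:=\rho$ and $\chi_j(x):=\rho(2^j x)-\rho(2^{j+1}x)$ for $j\le-1$, so that $\sum_{j\le 0}\chi_j\equiv 1$ by telescoping, $\chi_j$ is supported in $A_j:=\{2^{-j-4}\le|x|\le 2^{-j-2}\}$ for $j\le-1$, and $|\nabla\chi_j|\lesssim 2^j$. Writing $\tilde u_j:=\chi_j\uzeta$, the pointwise bound $|\uzeta(x)|+|\nabla\uzeta(x)|\lesssim\|\uzeta\|_\sU(1+|x|)^{-d-1/2}$ coming from the $\sU$-norm yields
\[
\|\tilde u_j\|_\infty\lesssim 2^{j(d+1/2)}\|\uzeta\|_\sU,\qquad |m_j|\le\|\tilde u_j\|_1\lesssim 2^{j/2}\|\uzeta\|_\sU,
\]
where $m_j:=\int\tilde u_j$.

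Next, pick $\omega\in C_0^\infty(\R^d)$ supported in $\{1/16\le|y|\le 1/8\}$ with $\int\omega=1$, let $\omega_j:=\omega^{(2^j)}$, and form the partial sums $M_j:=\sum_{k\le j}m_k$, which obey $|M_j|\lesssim 2^{j/2}\|\uzeta\|_\sU$ and, crucially, $M_0=\int\uzeta=0$. Define the corrected pieces
\[
v_j:=\tilde u_j+M_{j-1}\omega_j-M_j\omega_{j+1}.
\]
A direct computation gives $\int v_j=m_j+M_{j-1}-M_j=0$. Shifting the index $k=j+1$ in the third term,
\[
\sum_{j\le 0}v_j=\uzeta+\sum_{j\le 0}M_{j-1}\omega_j-\sum_{k\le 1}M_{k-1}\omega_k=\uzeta-M_0\omega_1=\uzeta,
\]
and the sum converges pointwise because every $x$ lies in the support of only finitely many of the three types of summands.

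Finally, set $\uzeta_j(y):=2^{-j(d+1/2)}v_j(2^{-j}y)$, equivalently $v_j=2^{j/2}\uzeta_j^{(2^j)}$, so that $\uzeta=\sum_{j\le 0}2^{j/2}\uzeta_j^{(2^j)}$ as required. Each of the three summands in $v_j$ lives in an annulus at scale $2^{-j}$ whose image under $y=2^j x$ sits inside $\{1/32\le|y|\le 1/4\}$, hence $\supp{\uzeta_j}\subset B^d(1/4)$. The bounds of the first paragraph, combined with $\|\omega_j\|_\infty\lesssim 2^{jd}$ and $\|\nabla\omega_j\|_\infty\lesssim 2^{j(d+1)}$, give $\|v_j\|_\infty\lesssim 2^{j(d+1/2)}\|\uzeta\|_\sU$ and $\|\nabla v_j\|_\infty\lesssim 2^{j(d+3/2)}\|\uzeta\|_\sU$; after the rescaling these become $\|\uzeta_j\|_{C^0}+\|\nabla\uzeta_j\|_\infty\lesssim\|\uzeta\|_\sU$, so $\uzeta_j\in C_0^1(B^d(1/4))$ with the required $C^0$ bound. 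The only delicate point is arranging $\int v_j=0$ while still having $\sum v_j=\uzeta$; this is exactly what the boundary condition $M_0=0$ (afforded by $\int\uzeta=0$) makes possible, and everything else is bookkeeping driven by the $|x|^{-d-1/2}$ decay encoded in $\|\cdot\|_\sU$.
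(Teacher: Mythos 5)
Your decomposition is essentially the same as the paper's: dyadic annular cutoffs, a telescoping (Abel-type) correction using the partial sums $M_j$, and the crucial use of $\int\uzeta=0$ to make the boundary term $M_0\omega_1$ vanish. The only cosmetic difference is that the paper renormalizes the cutoffs $\chit_j=\chi_j/\!\int\chi_j$ to play the role of your fresh mollifier $\omega$, and indexes with $j\ge 0$ before rescaling at the end; the argument and all estimates are the same.

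One small overclaim: your asserted bound $\|\nabla v_j\|_\infty\lesssim 2^{j(d+3/2)}\sUN\uzeta$ is not correct. Writing $\nabla\tilde u_j=(\nabla\chi_j)\uzeta+\chi_j\nabla\uzeta$, the first piece and the $M_{j-1}\nabla\omega_j$, $M_j\nabla\omega_{j+1}$ pieces are indeed $O(2^{j(d+3/2)})$, but the term $\chi_j\nabla\uzeta$ is only $O(2^{j(d+1/2)})$: the $\sU$-norm gives $|\nabla\uzeta|$ and $|\uzeta|$ the \emph{same} decay $(1+|x|^{d+1/2})^{-1}$, with no extra gain for the gradient. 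After rescaling this gives only $\|\nabla\uzeta_j\|_\infty\lesssim 2^{-j}\sUN\uzeta$, which is not uniformly bounded for $j\le 0$. However, the proposition asks only for $\uzeta_j\in C^1_0$ with a uniform $C^0$ bound, not a uniform $C^1$ bound, so this does not affect the validity of your proof of the stated result; you should simply drop the sentence claiming $\|\nabla\uzeta_j\|_\infty\lesssim\sUN\uzeta$.
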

\begin{proof}
Let $\chi_0\in C^\infty_0$, supported in $\{|x|\le 1/4\}$. with
%\in \Czip{B^d\q(\frac{1}{4}\w)}$ with 
$0\leq \chi_0\leq 1$ and $\chi_0(x)=1$ for $|x|\le 1/8$.
For $j\ge 1$ define $\chi_j(x)=\chi_0(2^{-j}x)-\chi_0(2^{1-j}x)$ so that
%Let $\eta\q(x\w) = \phi\q(x\w) - \phi\q(2x\w)$ 
 that for $j\ge 1$, 
 $\supp{\chi_j}\subseteq \q\{2^{j-4}\leq \q|x\w|\leq 2^{j-2}\w\}$,
and
\begin{equation*}
1=\sum_{j=0}^\infty \chi_j(x).
%\phi\q(x\w) + \sum_{j\geq 1} \eta\q(2^{-j} x\w).
\end{equation*}
Observe that
\begin{equation}\label{EqnAuxDecompZeta3}
\int \chi_j\q(x\w) \: dx = (2^{jd}-1)\int \chi_0(x) dx \gtrsim 2^{jd}.
\end{equation}

Also let 
%For $j\in \N$, define \begin{equation*}\chi_j\q(x\w) = \begin{cases}\phi\q(x\w) &\text{if } j=0,\\\eta\q(2^{-j}x\w)&\text{if }j\geq 1,\end{cases}\end{equation*}
$$\chit_j\q(x\w) = \frac{\chi_j\q(x\w)}{\int \chi_j\q(y\w)\: dy}.$$
Set $a_j =\int \uzeta\q(x\w) \chi_j\q(x\w)\: dx$ and $A_j = \sum_{k\geq j} a_k = -\sum_{0\leq k<j} a_k$ (where the second equality follows from the fact that $\sum a_j = \int \uzeta = 0$).

Note $\q|a_0\w|\lesssim 1$, and for $j\geq 1$,
\begin{equation}\label{EqnAuxDecompZeta1}
\q|a_j\w| \leq \int \q|\uzeta\q(x\w) \w| \q|\chi_j\q(x\w)\w|\: dx \leq \int_{2^{j-4}\leq \q|x\w|\leq 2^{j-2}} \q(1+\q|x\w|^{d+1/2}\w)^{-1}\: dx \sUN{\uzeta}\lesssim 2^{-j/2} \sUN{\uzeta}.
\end{equation}
Thus, 
\begin{equation}\label{EqnAuxDecompZeta2}
\q|A_j\w| \leq \sum_{k\geq j} \q|a_k\w|\lesssim 2^{-j/2} \sUN{\uzeta}.
\end{equation}

Notice, $A_0=0$.
We have,
\begin{equation*}
\begin{split}
&\uzeta\q(x\w) = \sum_{j\geq 0} \uzeta\q(x\w) \chi_j\q(x\w) = \sum_{j\geq 0} \q(\uzeta\q(x\w) \chi_j\q(x\w) - a_j \chit_j\q(x\w)\w) + \sum_{j\geq 0} \q(A_j -A_{j+1}\w) \chit_j\q(x\w)
\\&= \sum_{j\geq 0} \q(\uzeta\q(x\w) \chi_j\q(x\w) - a_j \chit_j\q(x\w) \w) + \sum_{j\geq 1} A_j \q(\chit_j\q(x\w) - \chit_{j-1}\q(x\w)\w) =:\sum_{j\geq 0} B_j\q(x\w),
\end{split}
\end{equation*}
where $B_j\q(x\w) = \uzeta\q(x\w) \chi_j\q(x\w) - a_j \chit_j\q(x\w) +\q(A_j \q(\chit_j\q(x\w) - \chit_{j-1}\q(x\w)\w) \w) \epsilon_j$ and
$\ep_j=1$ if $j\ge 1$, $\ep_0=0$.
%\begin{cases} 1 & j\geq 1 \\ 0& j=0\end{cases}$.  
Here we have used $A_0=0$ and $\lim_{j\rightarrow \infty} A_j=0$.
Clearly $\int B_j =0$, and $\supp{B_j}\subseteq \q\{ \q|x\w|\leq 2^{j-2} \w\}$.
We have
\begin{equation*}
\q|B_j\q(x\w)\w|\leq \q|\uzeta\q(x\w) \chi_j\q(x\w)\w| + \q|a_j\w| \q|\chit_j\q(x\w)\w| + \q|A_j\w| \q( \q|\chit_j\q(x\w)\w| +\q|\chit_{j-1}\q(x\w)\w| \w)\epsilon_j.
\end{equation*}
\eqref{EqnAuxDecompZeta3} shows $\q|\chit_j\q(x\w)\w|\lesssim 2^{-jd}$.  The support of $\chi_j$ shows
$\q|\uzeta\q(x\w) \chi_j\q(x\w)\w|\lesssim 2^{-j\q(d+\frac{1}{2}\w)}\sUN{\uzeta}$.  Combining this with \eqref{EqnAuxDecompZeta1}
and \eqref{EqnAuxDecompZeta2}
shows
$\q|B_j\q(x\w)\w|\lesssim 2^{-j\q(d+\frac{1}{2}\w)} \sUN{\vsig}$.
Setting, for $j\geq 0$, $\uzeta_{-j}\q(x\w) = 2^{jd} 2^{j/2} B_j\q(2^j x\w)$, the result follows easily. 
\end{proof}

\section{Basic $L^2$ estimates}\label{Sectionltwo}
%\input{ltwo}

% !TEX root =  main.tex

\subsection{An $L^2$ estimate for rough kernels}
An essential part to many of our estimates is the following $L^2$ estimate.

\begin{thm}\label{ThmBasicL2Resultfirst}
Let $u$ be a continuous function supported in $\{y\in \bbR^d: |y|\le 1/4\}$ such that 
$ \|u\|_\infty\le 1$ and 
\[\int u(y) dy=0
.\] Let $\fQ_k$ be the operator of convolution with $u^{(2^k)}$.
Let $0<\eps<1$, $\vsig\in \sBtp{\eps}{\R^n\times \R^d}$ and assume that
$\supp {\vsig}\subset\{(\alpha,v):|v|\le 1/4\}$. 
%Let  $\eps'\le \eps/(4d+1)$ \footnote{CHECK}. 
Then 
for all $k\in \bbN$, for $b_{n+1}, b_{n+2}\in L^2(\bbR^d)$, $b_i\in L^\infty(\bbR^d)$, 
$i=1,\dots, n,$  
\begin{equation*}
|\La[\vsig]( b_1,\ldots,  \fQ_k b_{n+1}, b_{n+2}  )\w|
\lesssim  
%2^{-k\frac{\eps}{3(d+1)} } 
2^{-k\eps/(3d+3)}
n\|\vsig\|_{B_\eps} \|b_{n+1}\|_2 \|b_{n+2}\|_2 \prod_{i=1}^n \|b_j\|_{\infty}.
\end{equation*}
\end{thm}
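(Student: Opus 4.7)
My approach is to exploit the cancellation $\int u\,dw=0$ together with the Besov regularity of $\vsig$ in the $v$-variable, and then interpolate against a trivial $L^2$-bound. Writing $(\fQ_k b_{n+1})(y)=\int u^{(2^k)}(w)b_{n+1}(y-w)\,dw$ and changing variables $y\mapsto y-w$, one recasts
\[
\La=\iiiint \vsig(\alpha,v-w)\,u^{(2^k)}(w)\,b_{n+1}(y)b_{n+2}(x)\prod_{i=1}^n b_i(x-\alpha_i(v-w))\,d\alpha\,dx\,dy\,dw,\quad v=x-y.
\]
Because $\int u^{(2^k)}(w)\,dw=0$, I may subtract the $w=0$ value of the factor $F(w):=\vsig(\alpha,v-w)\prod_i b_i(x-\alpha_i(v-w))$ without altering the integral. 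Expanding $F(w)-F(0)$ by a Leibniz-type telescoping produces $[\vsig(\alpha,v-w)-\vsig(\alpha,v)]\prod_i b_i(x-\alpha_i(v-w))$ plus $\vsig(\alpha,v)\sum_{j=1}^n\Delta_j$, with
\[
\Delta_j=\bigl[b_j(x-\alpha_j(v-w))-b_j(x-\alpha_j v)\bigr]\prod_{i<j}b_i(x-\alpha_i(v-w))\prod_{i>j}b_i(x-\alpha_i v),
\]
yielding the splitting $\La=\La_1+\sum_{j=1}^n\La_{2,j}$.

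The term $\La_1$ is straightforward: Schur's test applied to its $(b_{n+1},b_{n+2})$-bilinear kernel, combined with the Besov condition $\cB_{\eps,3}$ in the form $\iint|\vsig(\alpha,v-w)-\vsig(\alpha,v)|\,d\alpha\,dv\le |w|^\eps\|\vsig\|_{\cB_\eps}$ and the rescaling identity $\int|u^{(2^k)}(w)|\,|w|^\eps\,dw\lesssim 2^{-k\eps}$, gives
\[
|\La_1|\lesssim 2^{-k\eps}\,\|\vsig\|_{\cB_\eps}\prod_{i=1}^n\|b_i\|_\infty\,\|b_{n+1}\|_2\|b_{n+2}\|_2,
\]
which is in fact stronger than needed.

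The main obstacle will be the $n$ telescoping terms $\La_{2,j}$, since $b_j\in L^\infty$ enjoys no pointwise regularity and a direct Schur estimate produces no decay in $k$. My plan here is to decompose $\vsig$ dyadically in the $v$-variable, writing $\vsig=\sum_\ell \vsig^{(\ell)}$ with $\vsig^{(\ell)}$ a Littlewood--Paley piece localized at frequency $\sim 2^\ell$. High-frequency pieces $\ell\ge k$ contribute $\lesssim 2^{-\ell\eps}\|\vsig\|_{\cB_\eps}$ by $\cB_{\eps,3}$ applied as in $\La_1$, summing geometrically to $2^{-k\eps}$. For low-frequency pieces $\ell<k$, $\vsig^{(\ell)}$ is smooth at scale $2^{-\ell}$, so Bernstein-type inequalities let me replace $b_j(x-\alpha_j v+\alpha_j w)$ by a regularization and control the difference, which corresponds to a shift of size $|\alpha_j w|\lesssim 2^{-k}$, in an averaged $L^2$ sense, at the price of a dimensional factor of order $2^{kd}$. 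Interpolating this bound against the trivial estimate $|\La_{2,j}|\lesssim \|\vsig\|_{L^1}\prod\|b_i\|_\infty\|b_{n+1}\|_2\|b_{n+2}\|_2$ (which follows from Lemma~\ref{LemmaBasicL2BasicLpEstimate} with $(p_1,\dots,p_{n+2})=(\infty,\dots,\infty,2,2)$ and $\|\fQ_k\|_{L^2\to L^2}\le\|u\|_{L^1}\lesssim 1$) produces a decay of the form $2^{-k\eps/(3d+3)}$, where the denominator $3d+3$ reflects the combined dimensional cost of the Bernstein steps in the $v$-integration and in the two $L^2$-entries. Summing the $n$ telescoping pieces $\La_{2,j}$ finally introduces the linear factor $n$ appearing in the statement.
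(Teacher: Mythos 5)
Your opening moves match the paper exactly: you use the cancellation of $u$ to introduce a telescoping Leibniz difference, you isolate the "easy" piece $\La_1$ where the difference falls on $\vsig$, and you handle it by Schur's test with the $\cB_{\eps,3}$ seminorm, obtaining $2^{-k\eps}$. You also correctly identify that the $n$ terms $\La_{2,j}$ are the real obstacle (each $b_j$ is merely $L^\infty$, so the shift of $b_j$ gives no decay by itself) and that the final linear factor of $n$ comes from there. All of this is correct and agrees with the paper's treatment of $T_{k,0}$ in \eqref{Tk0est}.

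The gap is in your treatment of $\La_{2,j}$, and it is not a technicality but the central difficulty of the theorem. You assert that the shift of $b_j$ "corresponds to a shift of size $|\alpha_j w|\lesssim 2^{-k}$." This is false: you only have $|w|\lesssim 2^{-k}$ from the support of $u^{(2^k)}$, while $|\alpha_j|$ is unbounded and is controlled only in an averaged sense through the weight $(1+|\alpha_j|)^\eps$ in $\|\vsig\|_{\cB_\eps}$. A shift of size $|\alpha_j|\, 2^{-k}$ can be of order $1$ or larger, and your proposed Bernstein/Littlewood–Paley decomposition of $\vsig$ in the $v$-variable does nothing to fix this, because the problem lives in the $\alpha_j$-variable. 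Your high-frequency-in-$v$ reduction $\ell\ge k$ is itself fine (it is just an $L^1$ smallness argument), but for $\ell<k$ you have no mechanism that converts the $\alpha$-weighted integrability of $\vsig$ into decay in $k$; the "dimensional factor $2^{kd}$" you invoke cannot be interpolated away by any argument you have written down.

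The paper's proof addresses exactly this point. It first replaces $\vsig$ by a regularization $\vsig_R^i$ that is \emph{truncated and mollified in the $\alpha_i$-variable} at scale $R$; the replacement error is $\lesssim R^{-\eps}\|\vsig\|_{\cB_\eps}$ by Lemma~\ref{vsigRdiffbd}, which uses precisely the $\cB_{\eps,1}$ and $\cB_{\eps,2}$ seminorms you never invoke. On the regularized piece, after Plancherel in $x$, the critical estimate (Lemma~\ref{LemmaBasicL2TheMainTechLemma}) splits the frequency variable $\xi$ into $|\xi|\le U$ and $|\xi|>U$, and on the large-frequency range performs integration by parts in $\alpha_i$ against the oscillatory factor $e^{2\pi i\langle v,\xi\rangle \alpha_i}$; the cost of a derivative $\partial_{\alpha_i}\vsig_R^i$ is a factor of $R$ (Lemma~\ref{estregvsigR}(ii)), and the three free parameters $R,U,b$ are balanced to yield $2^{-k/3}R^{d+1}$ for the regularized operator, whence $R=2^{k/(3d+3)}$ gives the exponent in the theorem. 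Without the $\alpha_i$-regularization and the oscillatory integration by parts, there is no source of decay in $k$ for the $\La_{2,j}$ terms, so your plan as written cannot close.
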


In \S\ref{L2generalizations}  below 
 we shall prove a similar theorem  without the support assumptions on  $\vsig$ and $u$.
 In what follows we give the proof of Theorem \ref{ThmBasicL2Resultfirst}.

\subsubsection{Applying the Leibniz rule}
We have 
\begin{equation}\label{LaQcanc}
%\begin{split}&
\La[\vsig](b_1,\ldots, \fQ_k b_{n+1},b_{n+2}) 
=\iint F_k[\vsig](x,y) \:b_{n+1}(y) b_{n+2}(x)\: dx\: dy,
\end{equation}
where, using the cancellation of $u$ we have 
\begin{align*} 
&F_k[\vsig](x,y)=\iint 
\vsig(\alpha, x-z)
\prod_{i=1}^n b_i(x-\alpha_i(x-z))\,   u^{(2^k)}(z-y)\:dz 
\: d\alpha
\\ 
&=\iint 
\Big[\vsig(\alpha, x-z)
\prod_{i=1}^n b_i(x-\alpha_i(x-z))
-
\vsig(\alpha, x-y)
\prod_{i=1}^n b_i(x-\alpha_i(x-y)) \Big]
\,  u^{(2^k)}(z-y)\:dz 
\: d\alpha\,.
\end{align*}
We let $T_k[\vsig]$ denote the operator with Schwartz kernel 
$F_k[\vsig]$.

For further decomposition we use  a Leibniz  rule for differences 
% which expresses
\begin{multline*}\prod_{j=0}^n A_j-\prod_{j=0}^n B_j =
\\
(A_0-B_0) \big(\prod_{j=1}^n{A_j}\big)+ \sum_{i=1}^{n-1} \Big(\big(\prod_{j=0}^{i-1} B_j\big)(A_i-B_i)
\big(\prod_{j=i+1}^n A_j\big)\Big) + \big(\prod_{j=0}^{n-1}B_j\big)  (A_n-B_n).
\end{multline*}
%===
%$$\prod_{j=0}^n a_j-\prod_{j=0}^n b_j =
%(a_0-b_0) a_1\cdots a_n+b_0(a_1-b_1) a_2\cdots a_n +\ldots +
%b_0\cdots b_{i-1}(a_i-b_i) b_{i+1}\cdots b_n + \ldots +b_0\cdots b_{n-1}(a_n-b_n)$$===
Thus
$$F_k[\vsig]=\sum_{i=0}^n F_{k,i}[\vsig]$$ where 
\begin{align*}
F_{k,0}[\vsig](x,y)=&
\iint 
\big[\vsig(\alpha, x-z)- \vsig(\alpha, x-y)\big]
\prod_{j=1}^n b_j(x-\alpha_j(x-z))
\,   u^{(2^k)}(z-y)\:dz 
\: d\alpha\,,
\\
F_{k,i}[\vsig](x,y)=&
\iint 
\vsig(\alpha, x-y)\big]
\prod_{j=1}^{i-1} b_i(x-\alpha_i(x-y))\,\times \\&\quad \Big( b_i(x-\alpha_i(x-z)-b_i(x-\alpha_i(x-y))\Big) \prod_{j=i+1}^n b_j(x-\alpha_j(x-z))
\,   u^{(2^k)}(z-y)\:dz 
\: d\alpha\,,
\end{align*}
with the convention that the products $\prod_{j=1}^0$ and $\prod_{j=n+1}^n$ stand for the number $1$.
%\footnote{Should this be in the notation section?}
We thus have to estimate the $L^2\to L^2$ operator norms for the operators 
$T_{k,i}[\vsig]$
with Schwartz kernels $F_{k,i}[\vsig]$.
For $i=0$ we may use the standard Schur test and the condition $\vsig \in \cB_\eps$
\begin{subequations}\label{Fk0Schur}
\begin{align}
&\sup_x\int|F_{k,0}[\vsig](x,y)|\, dy\\ \notag &\le \sup_x
\prod_{j=1}^n\|b_j\|_\infty 
\int_{|h|\le 2^{-k}}|u^{(2^k)}(h)|\int
|\vsig(\alpha, x-y-h)- \vsig(\alpha, x-y)| \,dy \,d\alpha\, dh 
\\ \notag &\lc 
\prod_{j=1}^n\|b_j\|_\infty
 \sup_{|h|\le 2^{-k}}
\int \|\vsig(\alpha, \cdot-h)- \vsig(\alpha, \cdot)\| d\alpha  \lc
2^{-k\eps} \prod_{j=1}^n\|b_j\|_\infty \|\vsig\|_{\cB_\eps}
\end{align}
and similarly
\Be
\sup_y\int|F_{k,0}[\vsig](x,y)| dx\lc 2^{-k\eps} \prod_{j=1}^n\|b_j\|_\infty\|\vsig\|_{\cB_\eps}.
\Ee
\end{subequations}
Hence 
\Be\label{Tk0est}
\|T_{k,0}[\vsig]\|_{L^2\to L^2}\lc 2^{-k\eps} \prod_{j=1}^n\|b_j\|_\infty \|\vsig\|_{\cB_\eps}.
\Ee

%\subsubsection{Regularization  of ${\vsig}$}
%\subsubsection*{Regularization  of $\boldsymbol{\vsig}$}
We shall now turn to the operators $T_{k,i}[\vsig]$, $i=1,\dots, n$. We start with a trivial bound.
\begin{lemma}\label{trivialLpbound} 
For $1\le p\le \infty$ 
$$\|T_{k,i}[\vsig]\|_{L^p\to L^p} \lc \|\vsig\|_{L^1(\bbR^n\times\bbR^d)} 
\prod_{j=1}^n \|b_i\|_{\infty}.$$\end{lemma}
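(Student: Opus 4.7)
The plan is to prove the bound via the Schur test with very crude pointwise estimates, exploiting the $L^\infty$ bounds on $b_1,\dots,b_n$ and the uniform boundedness of $\|u^{(2^k)}\|_1$. There is no real obstacle here; the bound is essentially a consequence of the triangle inequality and Fubini.

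First I would derive a pointwise majorant for the Schwartz kernel. Using $|b_j(w)|\le\|b_j\|_\infty$ for all $j\ne i$ and the crude bound
\[
\big|b_i(x-\alpha_i(x-z))-b_i(x-\alpha_i(x-y))\big|\le 2\|b_i\|_\infty,
\]
the definition of $F_{k,i}[\vsig]$ yields
\[
|F_{k,i}[\vsig](x,y)|\;\le\;2\prod_{j=1}^{n}\|b_j\|_\infty\cdot\iint |\vsig(\alpha,x-y)|\,|u^{(2^k)}(z-y)|\,dz\,d\alpha.
\]
Since $u$ is continuous, bounded by $1$, and supported in $\{|y|\le 1/4\}$, we have $\|u^{(2^k)}\|_{L^1(\R^d)}=\|u\|_{L^1(\R^d)}\le |B^d(0,1/4)|=:C_d$, uniformly in $k$. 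Performing the $z$-integration first and setting $V(w):=\int |\vsig(\alpha,w)|\,d\alpha$, this simplifies to
\[
|F_{k,i}[\vsig](x,y)|\;\le\;2C_d\prod_{j=1}^{n}\|b_j\|_\infty\cdot V(x-y).
\]

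Next, I would verify the two Schur-test integrals. Since $V\in L^1(\R^d)$ with $\|V\|_{L^1(\R^d)}=\|\vsig\|_{L^1(\R^n\times\R^d)}$, the convolution-type majorant immediately gives
\[
\sup_{x\in\R^d}\int_{\R^d}|F_{k,i}[\vsig](x,y)|\,dy \;+\;\sup_{y\in\R^d}\int_{\R^d}|F_{k,i}[\vsig](x,y)|\,dx\;\le\;4C_d\prod_{j=1}^{n}\|b_j\|_\infty\cdot\|\vsig\|_{L^1(\R^n\times\R^d)}.
\]

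Finally, an application of Schur's test (which yields $L^p\to L^p$ boundedness for all $1\le p\le\infty$ from the simultaneous $L^1\to L^1$ and $L^\infty\to L^\infty$ bounds, by interpolation) completes the proof with the stated constant. Note that the estimate is uniform in $k\in\N$ and uses none of the cancellation or regularity of $\vsig$, as is appropriate for a ``trivial bound'' that will later be interpolated against the sharper $L^2$ estimate of Theorem \ref{ThmBasicL2Resultfirst}.
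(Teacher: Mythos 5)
Your proof is correct and takes the same approach as the paper: the paper simply invokes Schur's test by stating that $\sup_x\int|F_{k,i}[\vsig](x,y)|\,dy+\sup_y\int|F_{k,i}[\vsig](x,y)|\,dx\lesssim\|\vsig\|_{L^1}\prod_j\|b_j\|_\infty$, which is exactly what you verify via the pointwise majorant $|F_{k,i}[\vsig](x,y)|\lesssim\prod_j\|b_j\|_\infty\cdot V(x-y)$. You merely spell out the details (the crude $L^\infty$ bounds on the $b_j$, the $\|u^{(2^k)}\|_1$ estimate, and the interpolation step) that the paper leaves to the reader.
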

\begin{proof} This follows immediately from Schur's test since
\[\sup_x\int |F_{k,i}[\vsig](x,y) |\,dy+\sup_y\int| F_{k,i}[\vsig](x,y) |\,dx \lc
\|\vsig\|_{L^1(\bbR^n\times\bbR^d)} \prod_{j=1}^n \|b_i\|_{\infty}. \qedhere\]
\end{proof}

We begin  with a regularization of $\vsig$, in the $x$ and the $\alpha_i$ variables, depending on a parameter $R$
 to be chosen later. Here  $1\ll R \ll 2^k$
(we shall see that  $R= 2^{k/(3d+3)}$ will  be a good choice).

Let $\phi\in C^\infty(\bbR^d)$ supported in $\{x:|x|\le 1/2\}$ so that $\int \phi(x) dx=1$. 
Let $\varphi\in C^\infty(\bbR)$  be supported in $\{u:|u|\le 1/2\}$ so that $\int \varphi(u) du=1$.  
Define
$$
\vsig_R^i(\alpha,v) 
=\iint \chi_{[-R,R]}(\alpha-se_i) \vsig(\alpha-se_i, v-z) R\varphi(Rs) R^d\phi(Rz) \,dz \,ds.
$$
\begin{lemma}\label{vsigRdiffbd}
For $i=1,\dots, n$,
\begin{enumerate}[(i)] 
\item\label{vsigRdiffbd(i)}
$$\|\vsig-\vsig_R^i\|_{L^1(\bbR^n\times\bbR^d)}\lc R^{-\eps}$$
\item\label{vsigRdiffbd(ii)}
$$\|T_{k,i}[\vsig-\vsig_R^i]\|_{L^2\to L^2}
\lc R^{-\eps} \|\vsig\|_{\cB_\eps}\,.
$$
\end{enumerate}
\end{lemma}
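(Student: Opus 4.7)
The plan is to prove (i) by a direct decomposition of the approximation error into a truncation term and a smoothing term, each controlled by one of the defining seminorms of $\cB_\eps$. Then (ii) will follow immediately from (i) combined with the trivial Schur bound of Lemma \ref{trivialLpbound}, since $\vsig\mapsto T_{k,i}[\vsig]$ is linear.

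For (i), using $\int R\varphi(Rs)\,ds=1$ and $\int R^d\phi(Rz)\,dz=1$, I would write
$$\vsig(\alpha,v)-\vsig_R^i(\alpha,v)=\iint\bigl[\vsig(\alpha,v)-\chi_{[-R,R]}(\alpha_i-s)\vsig(\alpha-se_i,v-z)\bigr]R\varphi(Rs)R^d\phi(Rz)\,dz\,ds,$$
and split the bracket as
$$\bigl[1-\chi_{[-R,R]}(\alpha_i-s)\bigr]\vsig(\alpha-se_i,v-z)\;+\;\chi_{[-R,R]}(\alpha_i-s)\bigl[\vsig(\alpha,v)-\vsig(\alpha-se_i,v-z)\bigr]\,.$$
After integrating in $(\alpha,v)$ and changing variables $\alpha-se_i\mapsto\alpha$ in the first piece, its contribution is bounded by $\int_{|\alpha_i|>R}\|\vsig(\alpha,\cdot)\|_{L^1(\bbR^d)}\,d\alpha\lesssim R^{-\eps}\|\vsig\|_{\cB_{\eps,1}}$. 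The second piece is estimated via the trivial bound $\chi_{[-R,R]}\le 1$, the triangle inequality between an $\alpha_i$-shift and a $v$-shift,
$$\|\vsig(\cdot,\cdot)-\vsig(\cdot-se_i,\cdot-z)\|_{L^1}\le \|\vsig(\cdot+se_i,\cdot)-\vsig\|_{L^1}+\|\vsig(\cdot,\cdot+z)-\vsig\|_{L^1},$$
together with the fact that $R\varphi(Rs)$ and $R^d\phi(Rz)$ are supported in $|s|\le 1/(2R)\le 1$ and $|z|\le 1/(2R)\le 1$ respectively. The regularity seminorms \eqref{Beps2} and \eqref{Beps3} then give bounds by $|s|^\eps\|\vsig\|_{\cB_{\eps,2}}$ and $|z|^\eps\|\vsig\|_{\cB_{\eps,3}}$, whose integrals against the approximate identities contribute at most $R^{-\eps}(\|\vsig\|_{\cB_{\eps,2}}+\|\vsig\|_{\cB_{\eps,3}})$. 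Adding the three contributions yields (i).

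Part (ii) is essentially a corollary: since the map $\vsig\mapsto T_{k,i}[\vsig]$ is linear, I can apply Lemma \ref{trivialLpbound} with $p=2$ to the difference $\vsig-\vsig_R^i$, obtaining
$$\|T_{k,i}[\vsig-\vsig_R^i]\|_{L^2\to L^2}\lesssim \|\vsig-\vsig_R^i\|_{L^1}\prod_{j=1}^n\|b_j\|_\infty\lesssim R^{-\eps}\|\vsig\|_{\cB_\eps},$$
with the $\prod\|b_j\|_\infty$ factor absorbed into the implicit constant as in the rest of the section.

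I do not anticipate any real obstacle here: the proof is a bookkeeping exercise. The only point demanding mild care is to verify, before invoking \eqref{Beps2}--\eqref{Beps3}, that the shifts produced by the approximate identities are indeed of size $\le 1$, which holds as soon as $R\ge 1$ (an assumption already in force since we are interested in $1\ll R\ll 2^k$). No cancellation of $u$ or interaction with the operator $\fQ_k$ enters this lemma -- that will be needed only in the harder estimates for $T_{k,i}[\vsig_R^i]$ later on.
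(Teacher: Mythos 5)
Your proof is correct in substance and takes essentially the same route as the paper: isolate a truncation error (controlled by $\|\vsig\|_{\cB_{\eps,1}}$) and a smoothing error (split by the triangle inequality into an $\alpha_i$-shift and a $v$-shift, controlled by $\|\vsig\|_{\cB_{\eps,2}}$ and $\|\vsig\|_{\cB_{\eps,3}}$), then get (ii) for free from linearity and Lemma~\ref{trivialLpbound}. One small slip: the written decomposition of the bracket is not an identity, since
\[
\vsig(\alpha,v)-\chi\,\vsig(\alpha-se_i,v-z)\;=\;\bigl[1-\chi\bigr]\vsig(\alpha-se_i,v-z)\;+\;\bigl[\vsig(\alpha,v)-\vsig(\alpha-se_i,v-z)\bigr],
\]
\emph{without} the extraneous $\chi$ in front of the second bracket; with the $\chi$ inserted your two pieces undercount by $(1-\chi)\bigl[\vsig(\alpha,v)-\vsig(\alpha-se_i,v-z)\bigr]$. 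Since you immediately discard $\chi$ via $\chi\le 1$ anyway, your subsequent estimate applies verbatim to the correct second piece, so the conclusion is unaffected — but the displayed split should be repaired. The paper's own proof performs the same bookkeeping, just peeling off the $v$-smoothing first, then the $\alpha_i$-smoothing, then the truncation; the order and grouping are cosmetic.
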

\begin{proof}
We expand $\vsig-\vsig_R^i=  I+II+III$ where
\begin{align*}
I(\a,v)&=\int \big[
 \vsig(\alpha, v) \,-\, \vsig(\alpha, v-z)\big] R^d\phi(Rz) \,dz ,
\\
II(\a,v)&=
\iint \big[\vsig(\alpha, v-z)  \,-\,
 \vsig(\alpha-se_i, v-z)\big]  R\varphi(Rs) R^d\phi(Rz)\, dz \,ds,
\\
III(\a,v)&= 
\iint \chi_{[-R,R]^\complement}(\alpha-se_i) \vsig(\alpha-se_i, v-z) R\varphi(Rs) R^d\phi(Rz)\, dz \,ds\,.
\end{align*}
Then
\[
\|I\|_{L^1(\bbR^n\times\bbR^d)}\lc \int R^d|\phi(Rz)| \iint\big|\vsig(\alpha, v) \,-\, \vsig(\alpha, v-z)\big| \, d\alpha\, dv\,|R^d\phi(Rz)| \,dz  \lc R^{-\eps} \|\vsig\|_{\cB_{\eps,3}}.
\]
For the second term,
\[
\|II\|_{L^1(\bbR^n\times\bbR^d)}\lc \int R|\varphi(Rs) |\iint
\big|\vsig(\alpha, v) \,-\, \vsig(\alpha-se_i, v)\big|\, d\alpha\,  dv\,  ds 
 \lc R^{-\eps} \|\vsig\|_{\cB_{\eps,2}}.
\]
Finally
\[
\|III\|_{L^1(\bbR^n\times\bbR^d)}\lc\int \int_{[-R,R]^\complement}|\vsig(\alpha, v)| \,d\alpha \, dv \lc 
R^{-\eps} \|\vsig\|_{\cB_{\eps,1}}\] and part 
\eqref{vsigRdiffbd(i)} follows. The second part follows from Lemma \ref{trivialLpbound} applied to $\vsig-\vsig_R^i$,  and the first part.
\end{proof}

For the more regular term $\vsig_R^i$ we shall need the inequalities
%\footnote{Include Proof}
\begin{lemma} \label{estregvsigR} Let $0<\eps<1$, $d\ge 2$.
Then
\begin{enumerate}[(i)]
\item
%(i)
\[
\int \Big(\int\big| \vsig^i_R(\alpha,v)\big |^2dv\Big)^{\frac 12} d\a\lc R^{\frac d2 -\eps}
 \|\vsig\|_{\cB_\eps}.
\]
\item %(ii)
Let $\theta\in S^{d-1}$ and let $\theta^\perp$ the orthogonal complement of 
$\bbR\theta$. Then
\[
\int\sup_\theta \Big(\int_{\theta^\perp}  \sup_{s\in \bbR}\big|
\vsig^i_R(\alpha,v^\perp+s\theta) |^2 dv_{\theta^\perp}\Big)^{\frac 12} d\a\lc R^{\frac{d+1}{2}-\eps} \|\vsig\|_{\cB_\eps}
\]
and
\[
\int\sup_\theta \Big(\int_{\theta^\perp}  \sup_{s\in \bbR}\big|\partial_{\alpha_i}
\vsig^i_R(\alpha,v^\perp+s\theta) |^2 dv_{\theta^\perp}\Big)^{\frac 12} d\a\lc R^{\frac{d+3}{2}-\eps} \|\vsig\|_{\cB_\eps}\,.\]
\end{enumerate}
\end{lemma}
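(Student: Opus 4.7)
Both parts rest on the Littlewood--Paley decomposition in the $v$ variable combined with Bernstein's inequality and the averaged Besov regularity of $\vsig$ in $v$ encoded in $\|\cdot\|_{\cB_{\eps,3}}$.

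For part (i), I would first apply Minkowski's inequality to move the $L^2_v$ norm inside the $\alpha_i$-convolution and bound the cutoff by $\chi_{[-R,R]}\le 1$, reducing matters to the estimate
\[
\int \|\psi_R*_v\vsig(\beta,\cdot)\|_{L^2_v}\, d\beta \lesssim R^{d/2-\eps}\|\vsig\|_{\cB_\eps},
\]
where $\psi_R(v)=R^d\phi(Rv)$ and convolution is taken in the $v$ variable only. Decomposing $\vsig=\sum_{k\ge 0}P^v_k\vsig$ with $P^v_k\vsig$ having $v$-Fourier support in $|\xi|\approx 2^k$ (and $|\xi|\lesssim 1$ for $k=0$), the Schwartz decay of $\hat\phi(\xi/R)$ provides the damping factor $\min(1,(R/2^k)^N)$ on each annulus. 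Bernstein's inequality $\|P^v_k\vsig(\beta,\cdot)\|_{L^2_v}\lesssim 2^{kd/2}\|P^v_k\vsig(\beta,\cdot)\|_{L^1_v}$ applied pointwise in $\beta$, followed by the standard estimate $\int\|P^v_k\vsig(\beta,\cdot)\|_{L^1_v}\,d\beta\lesssim 2^{-k\eps}\|\vsig\|_{\cB_{\eps,3}}$ (which follows from the cancellation of the Littlewood--Paley kernel and the condition \eqref{Beps3}), yields a bound $\min(1,(R/2^k)^N)\,2^{k(d/2-\eps)}\|\vsig\|_{\cB_\eps}$ per dyadic piece. Summation over $k\ge 0$ gives the claimed $R^{d/2-\eps}$.

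For part (ii), the new ingredient is a ``line-maximal'' variant of Bernstein: if $h\in L^2(\R^d)$ has $v$-Fourier support in $|\xi|\le\rho$, then for every $\theta\in S^{d-1}$ and $v^\perp\in\theta^\perp$ the one-dimensional Fourier transform of $s\mapsto h(v^\perp+s\theta)$ is supported in $|\sigma|\le\rho$ (since $|\theta\cdot\xi|\le|\xi|\le\rho$); the one-dimensional Bernstein inequality combined with Fubini then yields
\[
\sup_\theta\Bigl(\int_{\theta^\perp}\sup_s|h(v^\perp+s\theta)|^2\,dv^\perp\Bigr)^{1/2} \lesssim \rho^{1/2}\|h\|_{L^2_v}.
\]
Applied to each Littlewood--Paley piece with $\rho=2^k$, this multiplies the pointwise-in-$\beta$ estimate from part (i) by an extra factor $2^{k/2}$; summing over $k$ produces $R^{(d+1)/2-\eps}$. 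The derivative estimate follows by the same scheme after observing that $\partial_{\alpha_i}\vsig^i_R$ is of the same form as $\vsig^i_R$ with the $\alpha_i$-smoothing kernel $R\varphi(Rs)$ replaced by $R^2\varphi'(Rs)$, whose $L^1_s$-norm is $R\|\varphi'\|_1$, contributing one extra factor $R$ and giving $R^{(d+3)/2-\eps}$; the boundary contributions from $\partial_{\alpha_i}\chi_{[-R,R]}$ reduce to the same estimates on the slices $|\alpha_i|=R$ and are handled identically. The main technical subtlety is that $\vsig$ possesses Besov regularity in $v$ only in the \emph{averaged} sense, not pointwise in $\beta$; this is circumvented by applying Bernstein first (which needs only the $v$-Fourier support of $P^v_k\vsig(\beta,\cdot)$, a pointwise-in-$\beta$ property) and carrying out the $\beta$-integration at the very end, where the averaged Besov estimate is then invoked.
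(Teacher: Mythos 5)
Your proposal is correct and follows essentially the same route as the paper: a Littlewood--Paley decomposition in the $v$-variable, the averaged Besov bound $\int\|P^v_k\vsig(\beta,\cdot)\|_{L^1_v}\,d\beta\lesssim 2^{-k\eps}\|\vsig\|_{\cB_\eps}$ for the dyadic pieces, Bernstein ($L^1\to L^2$) supplying the $2^{kd/2}$ factor, the rapid decay of $\widehat\phi(\xi/R)$ damping frequencies $\gg R$, and the one-dimensional Bernstein inequality in the $\theta$-direction for part~(ii). The paper phrases the same calculation by factoring through an auxiliary kernel $\widetilde\vsig^i_R$ (the $\alpha_i$-mollification without the $v$-mollification) and a reproducing pair $(\beta_l,\widetilde\beta_l)$, then invoking Young's inequality, whereas you strip off the $\alpha_i$-mollification first by Minkowski's inequality and apply the multiplier bound directly; these are cosmetically different but carry out the same estimate.

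One small inaccuracy worth pointing out: the ``boundary contributions from $\partial_{\alpha_i}\chi_{[-R,R]}$'' that you propose to handle do not actually arise. After the change of variables $t=\alpha_i-s$ in the definition of $\vsig^i_R$, both the cutoff $\chi_{[-R,R]}(t)$ and the argument of $\vsig$ become independent of $\alpha_i$, and the $\alpha_i$-dependence sits entirely in $\varphi(R(\alpha_i-t))$; hence $\partial_{\alpha_i}\vsig^i_R$ simply replaces $R\varphi(R\cdot)$ by $R^2\varphi'(R\cdot)$ and contributes exactly one extra factor of $R$, with no boundary terms. Attempting to differentiate the integrand directly in the original variables would in any case be illegitimate, since $\vsig$ has no pointwise $\alpha_i$-regularity; the change of variables is the correct way to see that the derivative lands on the mollifier alone.
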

\begin{proof}

Let $\beta_0\in \cS(\bbR^d)$ so that  $\widehat \beta_0(\xi)=1$ for $|\xi|<1/2$ and $\widehat \beta_0$ is supported in $\{|\xi|<1\}$. Let $\beta_1=\beta_0^{(2)}-\beta_0$ and $\beta_k=\beta_1^{(2^{k-1})}$ so that $\widehat \beta_k$ has support in an annulus 
$\{|\xi|\approx 2^k\}$, and $f=\sum_{k=0}^\infty \beta_k* f$ in the sense of distributions. Let $\widetilde \beta_0\in \cS(\bbR^d)$  be such that its Fourier transform equals $1$ on the support of $\widehat \beta_0$.
Let $\widetilde \beta_1$ be a Schwartz function so that its Fourier transform vanishes in a neighborhood of the origin and is compactly be supported, and equal to $1$ on the support of $\widehat \beta_1$. Let $\widetilde \beta_k= \widetilde \beta_1^{(2^{k-1})}$.

Let $$
\widetilde 
\vsig_R^i(\alpha,v) 
=\iint \chi_{[-R,R]}(\alpha-se_i) \vsig(\alpha-se_i, v) R\varphi(Rs)  \,ds
$$
so that $\widetilde 
\vsig_R^i(\alpha,\cdot)*\phi_R
=\vsig_R^i$ (the definition of $\varphi$ was given right before the statement of  Lemma \ref{vsigRdiffbd}). 
Then
$$ \vsig^i_R(\alpha,\cdot) =\sum_{l=0}^\infty \beta_l*\widetilde 
\vsig_R^i(\alpha,\cdot)*\phi_R*\widetilde\beta_l .$$
By Young's inequality 
$$\|\vsig^i_R(\alpha,v)*\widetilde \beta_l*\beta_l\|_2 
\le \|\widetilde \vsig^i_R(\alpha,\cdot)*\beta_l\|_1 \|\widetilde \beta_l*\Phi_R\|_2 
$$
and it is easy to see that 
\[\|\widetilde \beta_l*\Phi_R\|_2  \le C_M 2^{ld/2} \min \{1, (R2^{-l})^M\} \,.
\] 
Thus 
\begin{align*}
&\int \Big(\int\big| \vsig^i_R(\alpha,v)
\big |^2dv\Big)^{\frac 12} d\a
\\&\quad\lc \sum_{l=0}^\infty  2^{ld/2} \min \{1, (R2^{-l})^M\} 
\iint \Big|  \int \beta_l(v-w)  \vsig^i_R(\alpha,w) dw\Big | \,dv \,d\a
\\&\quad\lc \sum_{l=0}^\infty  2^{ld/2} \min \{1, (R2^{-l})^M\} 2^{-l\eps}
\|\widetilde \vsig^i_R\|_{\cB_\eps}  \lc R^{\frac d2-\eps} \|\vsig\|_{\cB_\eps}.
\end{align*}

The first  inequality in (ii) is proved similarly, except that we first use the  one-dimensional version of Young's inequality  in the $\theta$-direction. Since the  Fourier transform of $\beta_l$ is supported on a set of diameter $O(2^l)$ 
we have, for fixed  $\theta$ and almost every $\alpha$,
\begin{equation*}\Big(\int_{\theta^\perp}  \sup_{s\in \bbR}\big|
\beta_l*\vsig^i_R(\alpha,v^\perp+s\theta) |^2 dv^\perp\Big)^{\frac 12} 
\lc
2^{l/2}
\Big(\int_{\theta^\perp}\int_{-\infty}^\infty\big|
\beta_l*\vsig^i_R(\alpha,v^\perp+s\theta) |^2 ds\,dv^\perp\Big)^{\frac 12} .
\end{equation*}
%\lc 2^{l/2}\Big(\int\big|\beta_l*\vsig^i_R(\alpha,v)|^2 dv^\perp\Big)^{\frac 12} .
Notice that the double integral on the right hand side is just the $L^2(\bbR^d)$ norm of
$\vsig_R^i(\alpha,\cdot)$ and thus  does not depend on $\theta$. Take the sup over $\theta$,  then integrate in $\a$, and sum in $l$. Arguing as above we obtain: \begin{align*}&\int \sup_\theta\Big(\int_{\theta^\perp}  \sup_{s\in \bbR}\big|
\beta_l*\vsig^i_R(\alpha,v^\perp+s\theta) |^2 dv^\perp\Big)^{\frac 12} d\a
\\&\lc \sum_{l\ge 0} 
  2^{l/2} \int \Big(
|\beta_l* \vsig^i_R(\alpha,v) |^2dv \Big)^{1/2} d\a 
\\&\lc \sum_{l\ge 0} 
  2^{l(d+1)/2} \min \{1, (R2^{-l})^M\} \iint 
|\beta_l*\widetilde \vsig^i_R(\alpha,v) | \,dv \,d\a 
\\
&\lc \sum_{l\ge 0} 
  2^{l(d+1)/2} \min \{1, (R2^{-l})^M\} 2^{-l\eps} \|\widetilde \vsig_R^i\|_{\cB_\eps}\lc 
  R^{\frac{d+1}{2}-\eps}  \| \vsig\|_{\cB_\eps}.
  % \qedhere
\end{align*}

The second inequality in (ii) is proved in the same way. The differentiation in $\alpha_i$ hitting the mollifier $R\varphi(R\cdot)$ produces an additional factor of $R$.
\end{proof}

By the support assumptions on $\vsig$ and $u$, we have
$$\supp {F_{k,i}[\vsig_R^i]}\subseteq\q\{\q(x,y\w): \q|x-y\w|<1\w\}.$$ 
We shall use the following lemma to obtain the bound $C(R) 2^{-k\eps'}$  of the $L^2$ operator norms.
% in (here we are identifying $F_k$ with its Schwartz kernel),and we may use the next lemma to estimate $\LtOpN{F_k}$.
\begin{lemma}\label{LemmaBasicL2HowToEst}
Suppose $V\q(x,y\w)\in L^1_{\mathrm{loc}}\q(\R^d\times\R^d\w)$ is supported in the strip $\q\{\q(x,y\w) : \q|x-y\w|\leq 1\w\}$ and let $\cV$ be the operator with 
 Schwartz kernel $V$. Then,
\begin{equation*}
\LtOpN{\cV}^2\lesssim \sup_z \iint_{\substack{\q|x-z\w|<1 \\ \q|y-z\w|<1}} \q|V\q(x,y\w)\w|^2\: dx\: dy.
\end{equation*}
\end{lemma}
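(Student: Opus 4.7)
\medskip

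\noindent\textit{Proof plan.}
The plan is to combine a standard localization by a partition of unity with the simplest possible estimate — Cauchy--Schwarz in the defining integral for $\cV$ — exploiting the fact that the strip condition $\q|x-y\w|\leq 1$ forces only finitely many pieces of the decomposition to interact. The geometry has to be arranged so that each interacting pair of localized pieces fits into a single ball $B^d\q(z,1\w)\times B^d\q(z,1\w)$, so that the local $L^2$-mass of $V$ is controlled by the sup on the right-hand side.

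Concretely, let $M:=\sup_z \iint_{\q|x-z\w|<1, \q|y-z\w|<1} \q|V\q(x,y\w)\w|^2\: dx\: dy$. Fix a small radius, say $r=1/10$, and choose a smooth function $\phi\in C^\infty_0\q(B^d\q(0,r\w)\w)$ together with a lattice of centers $\q\{z_k\w\}\subset \R^d$ so that $\q\{\phi_k\w\}_{k}$, where $\phi_k\q(x\w):=\phi\q(x-z_k\w)$, satisfies $\sum_k \phi_k^2\q(x\w)\equiv 1$ on $\R^d$ and has uniformly bounded overlap of supports. Writing $f=\sum_k \phi_k^2 f$ gives $\cV f=\sum_k \cV\q(\phi_k^2 f\w)$, and using $\sum_j \phi_j^2 = 1$:
\begin{equation*}
\q\|\cV f\w\|_2^2 = \sum_j \int \phi_j\q(x\w)^2 \q\Big| \sum_k \cV\q(\phi_k^2 f\w)\q(x\w) \w\Big|^2\: dx.
\end{equation*}

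Next, observe that for $x\in \supp \phi_j$ and $y\in \supp \phi_k$, the strip condition $\q|x-y\w|\leq 1$ forces $\q|z_j-z_k\w|\leq 1+2r=6/5$. So for each $j$ only $O_d\q(1\w)$ indices $k$ contribute, and by Cauchy--Schwarz on that finite sum it suffices to estimate each diagonal term
\begin{equation*}
\int \phi_j\q(x\w)^2 \q|\cV\q(\phi_k^2 f\w)\q(x\w)\w|^2\: dx, \qquad \q|z_j-z_k\w|\leq 6/5.
\end{equation*}
Applying Cauchy--Schwarz inside the integral defining $\cV\q(\phi_k^2 f\w)\q(x\w)$, and using $\q|\phi_k\w|\leq 1$, bounds this by
\begin{equation*}
\q\|\phi_k f\w\|_2^2 \iint_{\q(x,y\w)\in \supp \phi_j\times \supp \phi_k} \q|V\q(x,y\w)\w|^2\: dx\: dy.
\end{equation*}

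Here is where the choice $r=1/10$ matters: since $\q|z_j-z_k\w|\leq 6/5$, we can pick $z=\tfrac{z_j+z_k}{2}$, for which $\q|z-z_j\w|,\q|z-z_k\w|\leq 3/5$, so both $\supp \phi_j\subset B^d\q(z_j,r\w)$ and $\supp \phi_k\subset B^d\q(z_k,r\w)$ lie in $B^d\q(z,1\w)$. Hence the double integral is bounded by $M$, and the diagonal term is bounded by $M\q\|\phi_k f\w\|_2^2$. Summing over all admissible pairs $\q(j,k\w)$ and using bounded overlap of $\q\{\supp \phi_k\w\}$ together with $\sum_k \phi_k^2=1$ yields
\begin{equation*}
\q\|\cV f\w\|_2^2 \lesssim M \sum_k \q\|\phi_k f\w\|_2^2 = M\q\|f\w\|_2^2,
\end{equation*}
which is the asserted bound.

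The argument is essentially routine; the only mild obstacle is the geometric bookkeeping — ensuring, via the specific choice of $r$, that the support of each interacting pair $\q(\supp\phi_j,\supp\phi_k\w)$ actually fits inside a common ball $B^d\q(z,1\w)$ as required by the definition of $M$. Everything else is Cauchy--Schwarz plus almost orthogonality from the bounded-overlap partition of unity.
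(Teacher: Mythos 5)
Your proof is correct and is essentially the same as the paper's: decompose $f$ into localized pieces, use the strip support of $V$ to reduce to $O_d(1)$ interacting pairs, apply Cauchy--Schwarz on each piece, and bound the local $L^2$-mass of $V$ by the right-hand side. The only cosmetic difference is that the paper uses a sharp decomposition of $\bbR^d$ into unit cubes $\fz+[0,1]^d$ whereas you use a smooth partition of unity on balls of radius $1/10$; your smaller radius makes the bookkeeping step (that the interacting pair of supports fits in a single ball $B^d(z,1)$) entirely transparent, while the paper's unit-cube version needs to cover the product of a cube and its neighbours by finitely many such product-balls — a minor extra observation either way.
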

\begin{proof} Let $A$ denote the quantity on the right hand side.
For $\fz\in \bbZ^d$ let $q_\fz$ be the cube $\fz+[0,1]^d$ and $f_\fz=\chi\ci{q_\fz}$.
Then $f= \sum_\fz f_\fz$ and for each $\fz$, $Vf_{\fz} $ is supported in the union 
$q^*_\fz$ of cubes which have a common side with $q_\fz$. By H\"older's inequality  it is immediate that $$\|\cV f_\fz\|_2\lc  \Big(\iint_{q^*_\fz\times q_\fz} |V(x,y)|^2 dx\,dy\Big)^{1/2} \|f_\fz\|_2 \l \le C(d) A \|f_\fz\|_2,$$  and then
\[\|\cV f\|_2=
\Big\|\sum_\fz \cV f_\fz\Big\|_2 \le 3^{d/2}\Big(\sum_\fz 
\big\|\cV f_\fz\big\|_2^2\Big)^{1/2} 
\le C'(d) A 
\Big(\sum_\fz \big\|f_\fz\big\|_2^2\Big)^{1/2}  \le C'(d) A \|f\|_2\,.\qedhere\]
\end{proof}
%For $j\in \Z^d$, define $z_j = \frac{j}{\sqrt{d}}$.  Notice, $\forall x\in \R^d$, $\exists j\in \Z^d$ with $\q|x-z_j\w|<1$,
%and $\forall j\in \Z^d$, $\# \q\{ l\in \Z^d : \q|z_j-z_l\w|\w\}\lesssim 1$.
%Let $\phi_j$ be a partition of unity subordinate to the cover $\q\{B\q(z_j, 1\w) : j\in \Z^d\w\}$ with $0\leq \phi_j\leq 1$.
%Define $V_{j,l} = \phi_j\q(x\w) V\q(x,y\w) \phi_l\q(y\w)$.  We have $\forall j\in \Z^d$, $\#\q\{l\in \Z^d : V_{j,l}\ne 0\w\}\lesssim 1$
%and $\forall\q(j_1,l_1\w)\in \Z^d\times\Z^d$, $\#\q\{ \q(j_2,l_2\w)\in \Z^d\times\Z^d : V_{j_1,l_1}^{*} V_{j_2,l_2}\ne 0\text{ or } V_{j_2,l_2} V_{j_1,l_2}^{*}\ne 0 \w\}\lesssim 1$.
%Finally, by H\"older's inequality,\begin{equation*}\LtOpN{V_{j_1,l_1}^{*} V_{j_2,l_2}} \LtOpN{V_{j_2,l_2}V_{j_1,l_1}^{*}} \leq \sup_z \iint_{\substack{\q|x-z\w|<1 \\ \q|y-z\w|<1 }}\q| V\q(x,y\w)\w|^2\: dx\: dy.
%\end{equation*}The result now follows from the Cotlar-Stein lemma.\end{proof}

In light of Lemma \ref{LemmaBasicL2HowToEst} the following proposition gives a basic $L^2$ bound for the operators $T_{k,i}[\vsig_R]$.
\begin{prop} \label{kernelFkiprop}
For $k\ge 0$ 
\begin{equation}\label{EqnBasicL2FinalToShow}
\Big(\sup_{y_0} \iint_{\substack{\q|x-y_0\w|<1 \\ \q|y-y_0\w|<1 }} \q|F_{k,i}[\vsig_R^i]\q(x,y\w)\w|^2\: dx\: dy\Big)^{\frac{1}{2}}\lesssim  2^{-k/3} R^{d+1} \prod_{i=1}^n\|b_i\|_\infty.
\end{equation}
\end{prop}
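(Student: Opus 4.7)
The plan is to establish the pointwise $L^2$ bound on the unit-scale strip by exploiting both the regularity of the mollified symbol $\vsig_R^i$ in the $\alpha_i$-variable (which supplies factors of $R$ via $\partial_{\alpha_i}\vsig_R^i$) and the small support radius $2^{-k}$ of $u^{(2^k)}$. First I would switch to coordinates $v=x-y$, $w=z-y$, which recasts the kernel as
\[ F_{k,i}[\vsig_R^i](x,y) = \iint\vsig_R^i(\alpha,v)\, B_{<i}(\tilde\alpha,x,v)\, \Delta_i(\alpha_i,x,v,w)\, B_{>i}(\alpha,x,v,w)\, u^{(2^k)}(-w)\,dw\,d\alpha, \]
with $B_{<i}=\prod_{j<i}b_j(x-\alpha_j v)$, $B_{>i}=\prod_{j>i}b_j(x-\alpha_j(v-w))$, and the difficult factor $\Delta_i=b_i(x-\alpha_i(v-w))-b_i(x-\alpha_iv)$. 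The effective ranges produced by the supports of $\vsig$, $u^{(2^k)}$, and the mollifier defining $\vsig_R^i$ are $|v|\le 1/4$, $|w|\lesssim 2^{-k}$, and $|\alpha_i|\lesssim R$, so the two arguments of $b_i$ inside $\Delta_i$ differ by the vector $\alpha_i w$ of length $\lesssim R\,2^{-k}$.

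The central idea is to introduce polar coordinates $v=r\theta$ and reparametrize via $s=\alpha_i r$, so that $x-\alpha_iv=x-s\theta$ slides along a line through $x$ in direction $-\theta$, while the companion point $x-\alpha_i(v-w)$ is shifted perpendicularly to $\theta$ by $\alpha_i\Pi_{\theta^\perp}w/r$, of magnitude $\lesssim R\,2^{-k}/r$. I would then split the $v$-integration according to whether $r\ge 2^{-k/3}$ or $r<2^{-k/3}$. On the small-$r$ piece, the set in $(\alpha,v,w)$-space is of small total measure; combining the trivial bound $|\Delta_i|\le 2\|b_i\|_\infty$ with the $L^1_\alpha L^2_v$ estimate of Lemma~\ref{estregvsigR}(i) and Lemma~\ref{LemmaBasicL2HowToEst} yields a contribution of order $2^{-k/3}R^{d/2-\eps}\prod\|b_j\|_\infty$, well absorbed by the claim.

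For the main regime $r\ge 2^{-k/3}$ the transverse shift is $\lesssim R\,2^{-2k/3}$, and the key move is to integrate by parts once in $\alpha_i$ (boundary terms vanish by the compact $\alpha_i$-support of $\vsig_R^i$), transferring $\partial_{\alpha_i}$ onto $\vsig_R^i$. The resulting symbol is controlled by the second inequality of Lemma~\ref{estregvsigR}(ii), producing the factor $R^{(d+3)/2-\eps}$. The primitive of $\Delta_i$ in $\alpha_i$ is a Calder\'on-commutator-style difference of line integrals of $b_i$ along two nearby lines through $x$, which is bounded pointwise by $\|b_i\|_\infty$ times the length of the antiderivative window (so no derivative of $b_i$ ever appears). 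Assembling via Cauchy-Schwarz in the transverse variable $v^\perp$ and using Lemma~\ref{LemmaBasicL2HowToEst} to localize in $x$ delivers the bound $2^{-k/3}R^{d+1}\prod\|b_j\|_\infty$ after optimizing the split threshold at $r\approx 2^{-k/3}$.

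The hard part will be the integration-by-parts step in the main regime, together with the geometric bookkeeping from the polar change of variables: one must carefully track the Jacobian $r^{d-1}$, the placement of the transverse $L^2_{v^\perp}$ norm, and the directional sup-in-$\theta$ supplied by Lemma~\ref{estregvsigR}(ii), so that the final power of $R$ is $R^{d+1}$ and not some larger number. The balance between the trivial small-$r$ contribution (losing a factor $r^{1/2}$) and the refined large-$r$ estimate (gaining a factor $r^{-1}$ from the transverse-shift scale) is precisely what fixes the threshold $r\approx 2^{-k/3}$ and thus the decay rate $2^{-k/3}$.
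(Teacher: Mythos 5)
Your proposal takes a genuinely different route from the paper, and it has a gap at the key step which I do not see how to close. The paper reduces (exactly as you do, via Cauchy--Schwarz in $z$ and Minkowski in $\alpha_i^\perp$) to Lemma~\ref{LemmaBasicL2TheMainTechLemma}, but the proof of that lemma is Fourier-side: it replaces $b_i$ by the truncation $g_R=b_i\bbone_{\{|x|\le 2R+2\}}$, applies Plancherel in the $x$-variable to produce the oscillatory factor $e^{2\pi\iunit\alpha_i\inn{v}{\xi}}$, and then splits \emph{both} the frequency $|\xi|$ (at $U=2^{2k/3}$) and the longitudinal variable $s=\theta\cdot v$ with $\theta=\xi/|\xi|$ (at $b=RU^{-1}$). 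On the piece with $|\xi|\ge U$ and $|s|\ge b$, the integration by parts in $\alpha_i$ acts on $e^{2\pi\iunit\alpha_i s|\xi|}$ and therefore produces the gain $(s|\xi|)^{-1}\le (Ub)^{-1}$. That gain is the engine of the whole estimate.

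Your plan replaces this by a physical-space split in $r=|v|$ and then integrates by parts in $\alpha_i$ directly against the difference $\Delta_i=b_i(x-\alpha_i(v-w))-b_i(x-\alpha_i v)$. But as you yourself note, the primitive of $\Delta_i$ in $\alpha_i$ --- a difference of two line integrals of $b_i$ --- admits no better bound than $\|b_i\|_\infty$ times the length of the $\alpha_i$-window, i.e.\ $\lesssim R\|b_i\|_\infty$; meanwhile $\partial_{\alpha_i}\vsig_R^i$ is larger than $\vsig_R^i$ by a factor of $R$ (this is exactly what the extra power of $R$ in the second inequality of Lemma~\ref{estregvsigR}(ii) records). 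So the physical-space integration by parts \emph{loses} a factor of order $R^2$ and gains nothing in $k$; the gain has to come from the primitive being small, and without an oscillatory factor (supplied by Plancherel) there is no source for such smallness. Relatedly, the transverse $L^2_{v_{\theta^\perp}}$ average that Lemma~\ref{estregvsigR}(ii) provides requires $\theta$ to be a direction \emph{independent of $v$} (in the paper it is $\xi/|\xi|$); once you set $\theta=v/|v|$ you have $\pi_{\theta^\perp}v=0$ and there is no transverse variable left to integrate over, so the lemma cannot be invoked in the way you describe. Your small-$r$ contribution is fine (for $d\ge 2$ it is in fact better than $2^{-k/3}$), but the main regime as planned does not close, and I believe the Plancherel step is unavoidable here.
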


\subsubsection{Proof of Proposition \ref{kernelFkiprop} }
Note that the class of operators is invariant under translations. 
That is,  if $\tau_a f:=f(x-a)$, then the kernel 
of $\tau_a T_{k,i}[\vsig^i_R]\tau_{-a}$, i.e. $F_{k,i}[\vsig_R^i](x-a,y-a)$, is 
 of the same form of $F_{k,i}$, with the functions $b_j$ replaced by $\tau_a b_j$.
Therefore we may take $y_0=0$ in  Proposition \ref{kernelFkiprop}. We may also 
assume 
\Be\label{normalizationfj}\|b_j\|_\infty \le 1, \quad 1\le j\le n\,.
\Ee
As in \S \ref{SectionAdjoints} we decompose  $\alpha$  as $\alpha=\alpha_i e_i+ \alpha_i^\perp$ where $\alpha_i^\perp=(..., \alpha_{i-1}, \alpha_{i+1},...)\in \R^{n-1}$.
We bound, using the Cauchy-Schwarz inequality in the $z$-variable, and then Minkowski's inequality in the $\alpha_i^\perp $ variables, as well as 
\eqref{normalizationfj} for $j\neq i$,
\begin{align*}
&\iint\limits_{\substack{\q|x\w|<1 \\ \q|y\w|<1 }} \q|F_{k,i}[\vsig_R^i]\q(x,y\w)\w|^2\: dx\: dy\Big)^{\frac{1}{2}}
\\
&\lc 
%\prod_{i\neq n} \|f_i\|_\infty 
\int\Big(\iiint\limits_{\substack{|x|,|y|\le 1\\ |y-z|\le 2^{-k}}}
2^{kd}\Big| \int\vsig_R^i(\alpha,x-y) \big[
b_i(x-\alpha_i(x-z))-b_i(x-\alpha_i(x-y))\big] d\alpha_i \Big]^2 dz\,dx\,dy\Big)^{1/2}
d\alpha_i^\perp
\\
&\lc 
%\prod_{i\neq n} \|f_i\|_\infty 
\int\Big(2^{kd}
\iiint\limits_{\substack{|x|,|v|,|w|\le 2\\|v-w|\le 2^{-k}}}
\Big| \int \vsig_R^i(\alpha,v) \big[
b_i(x-\alpha_i v)-b_i(x-\alpha_iw)\big]  d\alpha_i\Big|^2 dv\, dw\, dx\Big)^{1/2}d\a_i^\perp
\end{align*}
where for the last integral we have  changed variables to  $v=x-z$, $w=x-y$.
The  proof of Proposition \ref{kernelFkiprop}  will be complete after the following lemma is proved.

\begin{lemma}\label{LemmaBasicL2TheMainTechLemma}
Let $\vsig_R^i$ be as in Proposition \ref{kernelFkiprop}. Then for 
$g\in L^\infty\q(\R^d\w)$ 
and $k>0$,
\begin{equation*}
\Big(2^{kd} \iiint\limits_{\substack{\q|x\w|< 2 \\ \q|v\w|,\q|w|<2\\ \q|v-w|<2^{-k}  } } \Big| \int\vsig_R^i\q(\alpha,v\w) \big(g\q(x-\alpha_i v) - g(x-\a_i w) \big)d\alpha \Big|^2  \: dx\, dv\, dw\Big)^{\frac{1}{2}}   \lesssim 
 R^{d+1-\eps}2^{-k\eps/3}\|\vsig\|_{\cB_\eps}\|g\|_\infty.
\end{equation*}
\end{lemma}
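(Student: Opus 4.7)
The plan is to reduce to Plancherel in the $x$-variable after a localization of $g$. Since $|x|,|v|,|w|<2$ and $|\alpha_i|\le 2R$ force every argument of $g$ in the definition of $A$ to lie in $B(0,CR)$, one may replace $g$ by the truncation $\tilde g:=g\,\mathbf{1}_{B(0,CR)}$ without altering $A$ on $\{|x|<2\}$, and $\|\tilde g\|_{L^2}\lesssim R^{d/2}\|g\|_\infty$. Extending the $x$-integration to all of $\R^d$ and applying Plancherel yields
\[
\int_{|x|<2}|A(x,v,w)|^2\,dx \;\le\; \int|\hat{\tilde g}(\xi)|^2\,|M_{v,w}(\xi)|^2\,d\xi,\qquad M_{v,w}(\xi):=\int \vsig_R^i(\alpha,v)\bigl[e^{-2\pi i\alpha_i v\cdot\xi}-e^{-2\pi i\alpha_i w\cdot\xi}\bigr]d\alpha,
\]
so the goal becomes a uniform-in-$\xi$ bound for $\|M_{v,w}(\xi)\|_{L^2_{v,w}}$ on the constrained domain $|v-w|<2^{-k}$.

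Two complementary pointwise bounds on $M_{v,w}(\xi)$ are available. The trivial one is $|M_{v,w}(\xi)|\le 2\|\vsig_R^i(\cdot,v)\|_{L^1_\alpha}$. Writing $u=v-w$ with $|u|\le 2^{-k}$ and using $|e^{2\pi i\alpha_i u\cdot\xi}-1|\le 2\pi|\alpha_i||u\cdot\xi|\le 2\pi R|u\cdot\xi|$ on the support $|\alpha_i|\le 2R$ of $\vsig_R^i$ gives the Taylor bound $|M_{v,w}(\xi)|\lesssim R\cdot 2^{-k}|\xi|\,\|\vsig_R^i(\cdot,v)\|_{L^1_\alpha}$. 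Combining,
\[
|M_{v,w}(\xi)|^2\lesssim \min\!\bigl(R^2 2^{-2k}|\xi|^2,\;1\bigr)\,\|\vsig_R^i(\cdot,v)\|_{L^1_\alpha}^2.
\]
I would then change variables $w=v-u$ (so $u$ contributes a volume factor $2^{-kd}$) and control the $v$-integration by Minkowski combined with Lemma~\ref{estregvsigR}(i):
\[
\int\|\vsig_R^i(\cdot,v)\|_{L^1_\alpha}^2\,dv\;\le\;\Bigl(\int\|\vsig_R^i(\alpha,\cdot)\|_{L^2_v}\,d\alpha\Bigr)^2\;\lesssim\; R^{d-2\eps}\|\vsig\|_{\cB_\eps}^2.
\]

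To produce both the $R^{d+1-\eps}$ factor and the $2^{-k\eps/3}$ decay I would split the $\xi$-integral at a threshold $|\xi|\sim 2^{k\kappa}/R$ for a parameter $\kappa\in(0,1)$ to be optimized. In the low-frequency regime $|\xi|\le 2^{k\kappa}/R$ the Taylor bound controls the minimum by $R^2 2^{-2k}|\xi|^2$ and pairs with $\|\hat{\tilde g}\|_{L^2}^2\lesssim R^d\|g\|_\infty^2$ through Plancherel. In the high-frequency regime $|\xi|>2^{k\kappa}/R$ I would exploit that $\vsig_R^i$ is mollified in the $\alpha_i$-variable on scale $R^{-1}$, so the Fourier transform in $\alpha_i$ of the $\alpha^\perp$-marginal $\Psi_v$ decays as $(R/|\tau|)^N$ for $|\tau|\gg R$; reparameterizing $v=\tau\xi/|\xi|^2+v^\perp$ with $v^\perp\perp\xi$ recasts the $v^\perp$-integration as the directional sup-in-direction $L^2$-estimate supplied by Lemma~\ref{estregvsigR}(ii), whose $R^{(d+1)/2-\eps}$ bound upgrades the earlier $R^{d/2-\eps}$ factor to $R^{d+1-\eps}$ after squaring.

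The main obstacle is precisely the high-frequency regime: since $g$ is only in $L^\infty$, no a-priori decay of $\hat{\tilde g}$ is available and every gain in $|\xi|$ must be extracted from $M_{v,w}(\xi)$ itself. This is exactly where the directional estimate of Lemma~\ref{estregvsigR}(ii), strictly stronger than Lemma~\ref{estregvsigR}(i), is indispensable; without it only the weaker $R^{d-\eps}$ exponent is achievable. Balancing the low-frequency gain $R^2 2^{-2k(1-\kappa)}$ against the high-frequency contribution and choosing $\kappa$ accordingly should produce the exponent $\eps/3$ in $2^{-k\eps/3}$ and yield the claimed $R^{d+1-\eps}2^{-k\eps/3}\|\vsig\|_{\cB_\eps}\|g\|_\infty$ bound after combining with the outer $2^{kd/2}$ prefactor.
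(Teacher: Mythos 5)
Your overall strategy matches the paper's: truncate $g$ to a compactly supported $g_R$, pass to Plancherel in $x$, split the $\xi$-integral, pair a Taylor bound with Lemma~\ref{estregvsigR}(i) at low frequencies, and invoke the directional estimate of Lemma~\ref{estregvsigR}(ii) at high frequencies.

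The gap is in the high-frequency regime, which you correctly flag as the crux but leave as a heuristic. The paper does not exploit $N$-fold Fourier decay of $\vsig_R^i$ in $\alpha_i$. Instead, for $|\xi|\ge U$ and fixed direction $\theta=\xi/|\xi|$, it writes $v = s\theta + v_{\theta^\perp}$ and introduces a \emph{second} threshold $b$ for the $s$-variable. On $|s|\le b$, the small $s$-volume (of size $\sim b$) combines with the $\sup_s$--$L^2_{\theta^\perp}$ bound of Lemma~\ref{estregvsigR}(ii) (first estimate) to give $b^{1/2}R^{(d+1)/2-\eps}\|\vsig\|_{\cB_\eps}\|g_R\|_2$. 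On $|s|>b$, a \emph{single} integration by parts in $\alpha_i$ produces $\partial_{\alpha_i}\vsig_R^i$ together with a factor $(|\xi|\,|s|)^{-1}$, and the second estimate of Lemma~\ref{estregvsigR}(ii) (which costs one extra power of $R$) gives $U^{-1}b^{-1/2}R^{(d+3)/2-\eps}\|\vsig\|_{\cB_\eps}\|g_R\|_2$. Balancing these two terms forces $b=R/U$, and balancing the common value $R^{(d+2)/2-\eps}U^{-1/2}\|g_R\|_2$ against the low-frequency term $R^{(d+2)/2-\eps}U2^{-k}\|g_R\|_2$ forces $U = 2^{2k/3}$. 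Your sketch omits the $s$-split entirely. The small-$s$ region, i.e.\ $|v\cdot\xi|\lesssim R$, where the mollification produces no decay, is exactly the hard case, and your plan gives no mechanism to handle it; the phrase ``decays as $(R/|\tau|)^N$'' only addresses $|\tau|\gg R$.

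There is also a concrete defect in your threshold. You propose splitting at $|\xi|\sim 2^{k\kappa}/R$. Plugging $U = 2^{k\kappa}/R$ into the balanced high-frequency bound $R^{d+1-\eps}U^{-1/2}$ yields $R^{d+3/2-\eps}2^{-k\kappa/2}$, which overshoots the admissible $R^{d+1-\eps}$ by a factor of $R^{1/2}$. The $\xi$-threshold $U=2^{2k/3}$ has no $R$-dependence; the $R$-dependence lives entirely in the $s$-threshold $b=R/U$.
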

\begin{proof} We may and shall assume $\LpN{\infty}{g}=1$. Let $g_R(x)= g(x) $ if $|x|\le2 R+2$ and $g_R(x)=0$ if $|x|>2R+2$.
We first observe that since   $\vsig^i_R(\alpha,v)=0$ for $|\alpha_i|\ge R+1$
we may replace $g$ by $g_R$ in the above expression. Note that 
\Be\label{L2byLinfty}
\|g_R\|_2\lc R^{d/2}.
\Ee
 We interchange the $(v,w)$- and $x$-integrations, then  apply Plancherel's theorem, and interchange integrals again to get
\begin{align*}
&\int\Big(2^{kd} \iiint\limits_{\substack{\q|x\w|< 2 \\ \q|v\w|,\q|w|<2\\ \q|v-w|<2^{-k}  } } \Big| \int\vsig_R^i\q(\alpha,v\w) \big(g_R\q(x-\alpha_i v) - g_R(x-\a_i w)\big) d\alpha \w\Big|^2  \: dx\, dv\, dw\Big)^{\frac{1}{2}}   d\alpha_i^\perp
\\&= \int 
\Big( \int |\widehat g_R(\xi)|^2 2^{kd} 
\iint\limits_{\substack{ \q|v\w|,\q|w|<2\\ \q|v-w|<2^{-k}  } } \Big| \int\vsig_R^i\q(\alpha,v\w) \big( e^{2\pi \iunit  \alpha_i \inn{v}{\xi}} -e^{2\pi \iunit  \alpha_i \inn{w}{\xi}} \big)
d\a_i\Big|^2  \: dv\, dw\, d\xi\Big)^{\frac{1}{2}}   d\a_i^\perp.
\end{align*}
For a constant $U\ge 1$ (to be determined)  we split the $\xi$-integration into the parts for $|\xi|\le U$ and $|\xi|\ge 1$.

For $|\xi|\le U $ we bound $|e^{2\pi \iunit  \alpha_i \inn{v}{\xi}} -e^{2\pi \iunit  \alpha_i \inn{w}{\xi}} |\lc RU 2^{-k}$ since $|\alpha_i|\le (R+1)$ and $|v-w|\le 2^{-k}$.
Hence we obtain
\begin{align}
&\int 
\Big( \int_{|\xi|\le U} |\widehat g_R(\xi)|^2 2^{kd} 
\iint\limits_{\substack{ \q|v\w|,\q|w|<2\\ \q|v-w|<2^{-k}  } } \Big| \int\vsig_R^i\q(\alpha,v\w) \big( e^{2\pi \iunit  \alpha_i \inn{v}{\xi}} -e^{2\pi \iunit  \alpha_i \inn{w}{\xi}} \big)
d\a_i\Big|^2  \: dv\, dw\, d\xi\Big)^{\frac{1}{2}}   d\a_i^\perp
\label{smallxiest}
\\
\notag
&\lc RU 2^{-k}  \|\widehat g_R\|_2 \int \Big(\int |\vsig_R^i(\a,v)|^2 dv\Big)^{1/2}d\a
\\
%&\lc RU 2^{-k}  \|g_R\|_2 \int \Big(\int |\vsig_R^i(\a,v)|^2 dv\Big)^{1/2}d\a\\
&\lc  R^{\frac {d+2}2-\eps} U 2^{-k} 
 \|g_R\|_2 \|\vsig\|_{\cB_\eps}
 \notag
\end{align}
where in the last inequality we have used part (i) of Lemma \ref{estregvsigR}.

Next we consider the part when $\q|\xi\w|>U$.  Using the symmetry in $v,w$ we may estimate

\begin{equation*}
%\label{EqnBasicL2ThexiBound}
\begin{split}
&\int 
\Big( \int_{|\xi|\ge U} |\widehat g_R(\xi)|^2 2^{kd} 
\iint\limits_{\substack{ \q|v\w|,\q|w|<2\\ \q|v-w|<2^{-k}  } } \Big| \int\vsig_R^i\q(\alpha,v\w) \big( e^{2\pi \iunit  
\inn{v}{\xi}\alpha_i } -e^{2\pi \iunit  \inn{w}{\xi}\alpha_i } \big)
d\a_i\Big|^2  \: dv\, dw\, d\xi\Big)^{\frac{1}{2}}   d\a_i^\perp
\\
&\le 2 
\int 
\Big( \int_{|\xi|\ge U} |\widehat g_R(\xi)|^2 2^{kd} 
\iint\limits_{\substack{ \q|v\w|,\q|w|<2\\ \q|v-w|<2^{-k}  } } \Big| \int\vsig_R^i\q(\alpha,v\w) e^{2\pi \iunit   \inn{v}{\xi}\alpha_i } 
d\a_i\Big|^2  \: dv\, dw\, d\xi\Big)^{\frac{1}{2}}   d\a_i^\perp
\\
&\lc \int 
\Big( \int_{|\xi|\ge U} |\widehat g_R(\xi)|^2 
\int \Big| \int\vsig_R^i\q(\alpha,v\w) e^{2\pi \iunit   \inn{v}{\xi}\alpha_i } 
d\a_i\Big|^2  \: dv\,
 d\xi\Big)^{\frac{1}{2}}   d\a_i^\perp\,.
\end{split}
\end{equation*}

For fixed $\xi=|\xi|\theta$ ($\theta\in S^{d-1}$) we  separate the $v$-integral into two parts. Let $0<b<1$ (which will be optimally chosen later).
%Let
%\begin{equation*}
%\Omega_\theta\q(b\w):=\q\{v\in \R^d : \q|v\w|< 1, 
%|\inn{v}{\theta} \q|\q\leq b\w\}, \quad 
%\end{equation*} 
%and $\Omega_\theta^\complement(b)=\bbR^d\setminus \Omega_theta(b)$.
For fixed $\theta=\xi/|\xi|$, $\alpha_i^\perp$ we have
$v=\pi_{\theta^\perp}v+s \theta$ where $\pi_{\theta^\perp}v$ is the projection of $v$ to the orthogonal complement of $\bbR\theta$ and $s= \inn{\theta}{v}$.
We split
\begin{align*}
\int \Big| \int\vsig_R^i\q(\alpha,v\w) e^{2\pi \iunit  \inn{v}{\xi}\alpha_i } 
d\a_i\Big|^2  \: dv\,
&=
\int \int \Big| \int\vsig_R^i\q(\alpha,\pi_{\theta^\perp}v+s\theta)\w) e^{2\pi \iunit   s|\xi|\alpha_i }d\alpha_i
\Big|^2 ds dv_{\theta^\perp} 
\\
&=: I_b(\a_i^\perp, |\xi|\theta) + II_b(\a_i^\perp, |\xi|\theta)
\end{align*}
where
\begin{align*}
I_b(\a_i^\perp, |\xi|\theta)&:=
\int \int_{[-b,b]} \Big| \int\vsig_R^i\q(\alpha,\pi_{\theta^\perp}v+s\theta)\w) e^{2\pi \iunit   s|\xi|\alpha_i }d\alpha_i
\Big|^2 ds dv_{\theta^\perp} 
\\
II_b(\a_i^\perp, |\xi|\theta)&:=
\int \int_{[-b,b]^\complement} \Big| \int\vsig_R^i\q(\alpha,\pi_{\theta^\perp}v+s\theta)\w) e^{2\pi \iunit   s|\xi|\alpha_i }d\alpha_i
\Big|^2 ds dv_{\theta^\perp} 
\end{align*}
so that
\begin{align*}
&\int 
\Big( \int_{|\xi|\ge U} |\widehat g_R(\xi)|^2 
\int \Big| \int\vsig_R^i\q(\alpha,v\w) e^{2\pi \iunit   \inn{v}{\xi}\alpha_i } 
d\a_i\Big|^2  \: dv\,
 d\xi\Big)^{\frac{1}{2}}   d\a_i^\perp
\\& \lc 
 \int 
\Big( \int_{|\xi|\ge U} |\widehat g_R(\xi))|^2 
[I_b (\a_i^\perp, \xi)+ 
II_b(\a_i^\perp,\xi)] d\xi\Big)^{\frac{1}{2}}   d\a_i^\perp\,.
 \end{align*}
 The expression $I_b$ is estimated
as
$$
I_b(\a_i^\perp, |\xi|\theta)| \le 2b \int \sup_{|s|\le b} \Big[\int|\vsig_R^i\q(\alpha,\pi_{\theta^\perp}v+s\theta)\w) | d\alpha_i
\Big]^2  dv_{\theta^\perp} 
$$
and we get  using part (ii) of Lemma \ref{estregvsigR}
\begin{align}
&\int 
\Big( \int_{|\xi|\ge U} |\widehat g_R(\xi))|^2 
 I_b (\a_i^\perp, \xi)  d\xi\Big)^{\frac{1}{2}}   d\a_i^\perp
 \notag
\\
&\lc  b^{1/2} \|g_R\|_2 
\int \Big(\sup_\theta \int \sup_s \Big[\int|\vsig_R^i\q(\alpha,\pi_{\theta^\perp}v+s\theta)\w) | d\alpha_i
\Big]^2  dv_{\theta^\perp} \Big)^{1/2} d\alpha_i^\perp
\notag
\\&\lc 
b^{1/2} R^{\frac{d+1}{2}-\eps} \|\vsig\|_{\cB_\eps}
\|g_R\|_2\,.
\label{small-s-est}
\end{align}

To estimate $II_b(\a_\perp,\xi)$   we observe that
the function $\alpha_i\mapsto \vsig_R^i(\alpha, v)$ is smooth and compactly supported.
We use integration by parts
to write
$$
\int\vsig_R^i\q(\alpha,\pi_{\theta^\perp}v+s\theta)\w)
 e^{2\pi \iunit   s|\xi|\alpha_i }d\alpha_i = -\int \partial_{\alpha_i}
 \vsig_R^i\q(\alpha,\pi_{\theta^\perp}v+s\theta)\w)
(2\pi \iunit  |\xi|)^{-1} s^{-1} e^{2\pi \iunit s\alpha_i} d\alpha_i
 $$
 and thus for $|\xi|\ge U$
\begin{align*} II_b(\a_i^\perp, |\xi|\theta)|& \le 
\int_{b}^\infty |\xi|^{-2}|s|^{-2} ds  
\int \sup_t \Big[\int|\partial_{\alpha_i}\vsig_R^i\q(\alpha,\pi_{\theta^\perp}v+t\theta)\w) | d\alpha_i
\Big]^2  dv_{\theta^\perp} 
\\
&\lc U^{-2} b^{-1} 
\int \sup_t \Big[\int|\partial_{\alpha_i}\vsig_R^i\q(\alpha,\pi_{\theta^\perp}v+t\theta)\w) | d\alpha_i
\Big]^2  dv_{\theta^\perp} \,.
\end{align*}
Hence, by the second inequality in part (ii) of Lemma 
\ref{estregvsigR},
\begin{align}
&\int 
\Big( \int_{|\xi|\ge U} |\widehat g_R(\xi))|^2 
II_b (\a_i^\perp, \xi)  d\xi\Big)^{\frac{1}{2}}   d\a_i^\perp
 \notag
\\
&\lc U^{-1}b^{-1/2}  \|g_R\|_2 
\int \Big(\sup_\theta \int \sup_t \Big[\int|\partial_{\alpha_i }\vsig_R^i\q(\alpha,\pi_{\theta^\perp}v+t\theta) | d\alpha_i
\Big]^2  dv_{\theta^\perp} \Big)^{1/2} d\alpha_i^\perp
\notag
\\&\lc 
U^{-1} b^{-1/2} R^{\frac{d+3}{2}-\eps} \|\vsig\|_{\cB_\eps}
\|g_R\|_2.
\label{large-s-est}
\end{align}

We combine \eqref{smallxiest},
\eqref{small-s-est}, \eqref{large-s-est}
to deduce
\Be\label{combiningthree}
\begin{aligned}
&\int\Big(2^{kd} \iiint\limits_{\substack{\q|x\w|< 2 \\ \q|v\w|,\q|w|<2\\ \q|v-w|<2^{-k}  } } \Big| \int\vsig_R^i\q(\alpha,v\w) \big(g_R\q(x-\alpha_i v) - g_R(x-\a_i w) \big)d\alpha \Big|^2  \: dx\, dv\, dw\Big)^{\frac{1}{2}}   d\alpha_i^\perp
\\&\lc \big (    R^{\frac{d+2}{2}-\eps} U 2^{-k}\,+  \, R^{\frac{d+1}{2}-\eps}  b^{1/2} 
\,+\,
 R^{\frac{d+3}{2}-\eps}U^{-1} b^{-1/2}
\big)\|\vsig\|_{\cB_\eps} \|g_R\|_2\,.
\end{aligned}
\Ee
We choose $b,U$ so that the three terms are comparable,
i.e. $b=RU^{-1}$, $U= 2^{2k/3}$. The result is that  the left hand side of 
\eqref{combiningthree} is bounded by a constant times
$$ 
R^{\frac{d+2}{2}-\eps} 2^{-k/3} \|\vsig\|_{\cB_\eps} \|g_R\|_2
\lc
R^{d+1-\eps}
2^{-k/3} \|\vsig\|_{\cB_\eps},
$$ by \eqref{L2byLinfty}, and the proof is complete.
\end{proof}

\subsubsection{Proof of Theorem \ref{ThmBasicL2Resultfirst}}
%, given Proposition \ref{kernelFkiprop}} 
By \eqref{Tk0est}, $$\|T_{k,0}[\vsig]\|_{L^2\to L^2} \lc 2^{-k\eps} \|\vsig\|_{\cB_\eps} \prod_{l=1}^n\|b_l\|_\infty.$$
By Lemma \ref{vsigRdiffbd} and Proposition \ref{kernelFkiprop} we have
for $i=1,\dots, n$,
\begin{align*}
\|T_{k,i}[\vsig]\|_{L^2\to L^2}  &\le
\|T_{k,i}[\vsig-\vsig^i_R]\|_{L^2\to L^2} +
\|T_{k,i}[\vsig^i_R]\|_{L^2\to L^2}
\\&\lc R^{-\eps}( 1+ 2^{-k/3} R^{d+1}) 
\|\vsig\|_{\cB_\eps} \prod_{l=1}^n \|b_l\|_\infty\,.
\end{align*} 
Choosing  $R=2^{k/(3d+3)}$ yields the bound
$$
\sum_{i=0}^n\|T_{k,i}[\vsig]\|_{L^2\to L^2}  \lc (n+1) 2^{-k\eps /(3d+3)} 
\|\vsig\|_{\cB_\eps} \prod_{l=1}^n \|b_l\|_\infty
$$
and thus the  estimates for the multilinear forms claimed in Theorem \ref{ThmBasicL2Resultfirst}.\qed

\subsection{Generalizations of Theorem
\ref{ThmBasicL2Resultfirst} }
 \label{L2generalizations}

We shall now drop the support assumptions on $x\mapsto \vsig
(\alpha,x)$ and on $u$ in Theorem \ref{ThmBasicL2Resultfirst}.
Moreover, we extend to $L^p$ estimates and replace $\vsig$ by  the scaled versions $\vsig^{(2^j)}$ (with the scaling  in the $x$ variables).

\begin{thm}\label{ThmBasicL2Result}
There exists $c>0$, independent of $n$ and $\eps$, so that the following statement holds for all $1\le p\le\infty$. 
%Let \ep_p= c\eps \min\{1-1/p, 1/p\}$.
For all 
$\vsig\in \cB_\eps(\R^n\times \R^d)$, for all  $j,k\in \Z$, $1\leq l_1\ne l_2\leq n+2$, $b_{l_1}\in L^2(\bbR^d)$, $ b_{l_2}\in L^{2}\q(\R^d\w)$, $b_l\in L^\infty\q(\R^d\w)$ for $l\ne l_1,l_2$,
and $\uzeta\in \sU$,
\begin{multline*}
\q|\La[\vsigj] (b_1,\dots,b_{l_2-1}, \Qb{k}{\uzeta} b_{l_2}, b_{l_2+1},\ldots, b_{n+2}  )\w|\\
\lesssim \min\{
n 2^{c\eps(j-k)} \sBN{\eps}{\vsig} ,\, \LpN{1}{\vsig} \} \|u\|\ci\sU\Big(\prod_{l\ne l_1,l_2} \LpN{\infty}{b_{l}}\w\Big) \|b_{l_1}\|_2\|b_{l_2}\|_{2}\,.
\end{multline*}
\end{thm}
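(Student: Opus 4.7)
My plan is to handle the two bounds in the min separately. First, for the $\|\vsig\|_{L^1}$ side: the change of variable $v=x-y$ rewrites
\[
\La[\vsig](b_1,\dots,b_{n+2})=\iiint \vsig(\alpha,v)\prod_{i=1}^n b_i(x-\alpha_i v)\,b_{n+1}(x-v)\,b_{n+2}(x)\,dx\,dv\,d\alpha,
\]
so for fixed $(\alpha,v)$ the $x$-integral is a pairing of translates of the $b_l$'s. H\"older in $x$ with $p_{l_1}=p_{l_2}=2$ and all other $p_l=\infty$ gives $|\La|\lesssim \|\vsig\|_{L^1}\prod_l\|b_l\|_{p_l}$, and the uniform-in-$k$ estimate $\|\Qb{k}{u}\|_{L^2\to L^2}\le\|\widehat u\|_\infty\lesssim\|u\|_\sU$ (since $\sU\hookrightarrow L^1$) furnishes the trivial piece of the min with the correct $\|u\|_\sU$ factor.

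For the Besov-type bound I would first reduce to $j=0$ via Lemma \ref{scalinglemma} and the scaling identity $(\Qb{k}{u}f)(2^{-j}\,\cdot)=\Qb{k-j}{u}[f(2^{-j}\,\cdot)]$, so that the exponent $c\eps(j-k)$ becomes $-c\eps k$. I then reduce to compactly supported kernels on both sides: Lemma \ref{ThmDecompVsig} yields $\vsig=\sum_{m\ge 0}\vsig_m^{(2^{-m})}$ with $\vsig_m$ supported in $\{|v|\le 1/4\}$ and $\|\vsig_m\|_{\cB_\delta}\lesssim 2^{-m(\eps-2\delta)}\|\vsig\|_{\cB_\eps}$ for any $\delta<\eps/2$, and Proposition \ref{PropDecomZeta} gives $u=\sum_{l'\le 0}2^{l'/2}u_{l'}^{(2^{l'})}$ with $u_{l'}\in C^1_0(B^d(1/4))$, $\int u_{l'}=0$ and $\|u_{l'}\|_{C^0}\lesssim\|u\|_\sU$. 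Multilinearity plus one more application of the scaling lemma converts each summand into $\La[\vsig_m](\dots,\Qb{K}{u_{l'}}\widetilde b_{l_2},\dots)$ with $K:=k+l'+m$ and both $\vsig_m$, $u_{l'}$ compactly supported, as required to invoke Theorem \ref{ThmBasicL2Resultfirst}.

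In Case A, $\{l_1,l_2\}=\{n+1,n+2\}$, I apply Theorem \ref{ThmBasicL2Resultfirst} to each piece with $K>0$ (giving factor $n\,2^{-K\delta/(3d+3)}\|\vsig_m\|_{\cB_\delta}$) and the trivial $\|\vsig_m\|_{L^1}$ bound for $K\le 0$. With $\delta=\eps/4$, summing against the decay factors $2^{-m\eps/2}$ and $2^{l'/2}$ (the latter convergent since $1/2-\delta/(3d+3)>0$) yields the claimed $n\,2^{-c\eps k}\|\vsig\|_{\cB_\eps}\|u\|_\sU$ bound with a universal constant $c$ depending only on $d$. For Cases B and C (where at least one of $l_1,l_2$ lies in $\{1,\dots,n\}$), I reduce to Case A via Theorem \ref{ThmOpResAdjoints}, choosing a permutation $\vp$ which moves both $L^2$-valued entries to positions $n+1,n+2$; the identity $\|\ell_\vp\vsig\|_{L^1}=\|\vsig\|_{L^1}$ preserves the trivial side of the min.

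The main obstacle is keeping a single factor of $n$ in the final estimate. A straightforward composition of Theorem \ref{ThmOpResAdjoints} (which costs a factor of $n$ per type-(iii) transposition in the $\cB_{\eps'}$-norm) and the Case~A bound (which itself contributes a factor of $n$ from the Leibniz telescoping in Theorem \ref{ThmBasicL2Resultfirst}) gives $n^2$ in Case~B and $n^3$ in Case~C, not the claimed $n$. Achieving the single $n$ will likely require running the Leibniz decomposition of Section~6 directly on the interior variable acted on by $\Qb{k}{u}$---effectively fusing the adjoint step with the Leibniz step so that the $n$-fold telescoping absorbs the shear cost of $\ell_\vp$ rather than multiplying it.
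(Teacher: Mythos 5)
Your argument follows the same route as the paper: reduce to $j=0$ by scaling, decompose $\vsig$ and $u$ into compactly supported dyadic pieces via Lemma \ref{ThmDecompVsig} and Proposition \ref{PropDecomZeta}, apply Theorem \ref{ThmBasicL2Resultfirst} for the position pair $\{n+1,n+2\}$, derive the $\|\vsig\|_{L^1}$ side of the min from Lemma \ref{LemmaBasicL2BasicLpEstimate}, and reduce all other position pairs to this one through Theorem \ref{ThmOpResAdjoints}.

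The ``obstacle'' you flag at the end is real but need not be fixed. The paper proves only the $\{n+1,n+2\}$ case (its estimate \eqref{BasicL2for:n+1}, which carries a single factor of $n$) and then asserts that the general statement ``follows immediately'' from Theorem \ref{ThmOpResAdjoints}. Since that theorem costs a factor of up to $n^2$ on the $\cB_{\eps'}$-norm, the composition yields $n^3$ in the worst case, exactly as you compute: the claimed single power of $n$ is not literally attained by the paper's proof either, and should be read as a fixed polynomial $n^M$ with $M$ independent of $n$. This discrepancy is harmless, as the authors stress in \S\ref{RmkResultsConstantsPolyInn}: the $\cB_\eps$-norm only enters the downstream estimates inside the expression $\log^3\big(2 + n\,\sup_j\sBN{\eps}{\vsig_j}/\sup_j\sBzN{\vsig_j}\big)$, so replacing $n$ by $n^3$ there only changes absolute constants in Theorem \ref{ThmOpResBoundGen}. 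Trying to ``fuse the adjoint step with the Leibniz telescoping'' to recover the sharper $n$ would be wasted effort; the cruder polynomial bound is exactly what the rest of the paper uses.
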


%For the proof we first state an elementary lemma.
% The first one deals with the behavior of our multilinear forms under scaling.

%This section is devoted to the proof of Theorem \ref{ThmBasicL2Result}.
%In light of Theorem \ref{ThmOpResAdjoints}, Theorem \ref{ThmBasicL2Result} follows immediately from the next lemma and proposition.

%\subsubsection*{Proof of  Theorem \ref{ThmBasicL2Result}}
\begin{proof}
 In light of Theorem \ref{ThmOpResAdjoints}, Theorem \ref{ThmBasicL2Result} follows immediately from Lemma \ref{LemmaBasicL2BasicLpEstimate} and the estimate
(for some $c'>0$, independent of $n$) 
\Be\label{BasicL2for:n+1}
\La[\vsigj](b_1,\ldots,b_n,  \Qb{k}{\uzeta}b_{n+1}, b_{n+2}) 
\lesssim 
\sBN{\eps}{\vsig} n 2^{-c'\eps(k-j)} \sUN{\uzeta}\Big(\prod_{l=1}^n \|b_l\|_\infty\Big) \|b_{n+1}\|_2 
\|b_{n+2}\|_2\,.
\Ee

By  scaling  (Lemma \ref{scalinglemma}) it suffices to prove \eqref{BasicL2for:n+1} for $j=0$.
Theorem \ref{ThmBasicL2Resultfirst}
 covers the case of $\vsig$ supported in 
$\bbR^n \times\{|x|\le 1/4\}$. To cover the general case
we apply Proposition \ref{PropDecomZeta} to write
$u=\sum_{l\ge 0}2^{-l/2} u_l^{(2^{-l})}$
where  $u_l$ is continuous and supported in $\{|x|\le 1/4\}$,  $\int u_l=0$, and $\CzN{\uzeta_l}\lesssim \sUN{\uzeta}$.
We apply Theorem \ref{ThmDecompVsig} to write $\vsig= \sum_{m\geq 0} 
2^{-mc_1\eps} \dil{\vsig_m}{2^m}$
 for some $c_1>0$,
where $\vsig_m\in \cB_{c_1\eps}$, $\|\vsig_m\|_{\cB_{c_1\eps}}\lesssim \sBN{\eps}{\vsig}$, and $\supp{\vsig_m}\subset \q\{\q(\alpha,v\w): \q|v\w|\leq \frac{1}{4}\w\}$.
We then have 
\begin{align*}
&\big|\La[\vsig]\big(b_1,\dots, b_n, \Qb{k}{u}b_{n+1}, b_{n+2}\big)\big|
\\&\le 
\sum_{l\ge 0} \sum_{m\ge 0} 2^{-l/2}2^{-m c_1\eps}
\big|\La[\vsig_m^{(2^{-m})}]\big(b_1,\dots, b_n, \Qb{k}{u_l^{(2^{-l})}}b_{n+1}, b_{n+2}\big)\big|
%\\&\le \sum_{l\ge 0} \sum_{m\ge 0} 2^{-l/2}2^{-m c_1\eps}
%\big|\La[\vsig_m]\big(f_1(2^m\cdot),\dots, f_n(2^m\cdot), 
%\Qb{k-l+m}{u_l}(2^{md/2}f_{n+1}(2^m\cdot)), 2^{md/2}f_{n+2}(2^m\cdot)\big)\big|
\\&= 
\sum_{l\ge 0} \sum_{m\ge 0} 2^{-l/2}2^{-m c_1\eps}
\big|\La[\vsig_m]\big(g_1,\dots, g_n, 
\Qb{k-l+m}{u_l}g_{n+1}, g_{n+2}\big)\big|
\end{align*}
where $g_l= b_l(2^m \cdot)$, $l=1,\dots,n$, 
$g_{n+1}= 2^{md/2}b_{n+1}(2^m\cdot),$
$g_{n+2}= 2^{md/2}b_{n+2}(2^m\cdot)$ (see Lemma \ref{scalinglemma}).
By Theorem 
 \ref{ThmBasicL2Resultfirst} we have, for some $c_2>0$
\begin{multline*}
 \big|\La[\vsig_m]\big(g_1,\dots, g_n, 
\Qb{k-l+m}{u_l}(g_{n+1}), g_{n+2}\big)\big|
\\
 \lc \min\{1, n 2^{-(k-l+m)c_2\eps} \} 
 \|u\|\ci{\sU}\Big(\prod_{i=1}^n\|g_i\|_\infty\Big)
 \|g_{n+1}\|_2\|g_{n+2}\|_2\,.
 \end{multline*}
  Now  $\sum_{l\ge 0} \sum_{m\ge 0} 2^{-l/2}2^{-m c_1\eps}
 \min\{1, n 2^{-(k-l+m)c_2\eps} \}  \lc n 2^{-k c_3\eps}$
 for some $c_3$ with $0<c_3<\min \{1/2, c_2\}$ and 
 \eqref{BasicL2for:n+1} for $j=0$  follows easily.
 \end{proof}

\section{Some results from Calder\'on-Zygmund theory}
%\input{cz}

% !TEX root =  main.tex
In this section, we present  some essentially well known results from the Calder\'on-Zygmund theory
which do not seem to be stated in the literature in the precise form we need them.
%, so we present them here.
%form thewhich are needed for the sequel.% for the proof of Theorem \ref{ThmOpResBound}.
%The first is the standard weak type $(1,1)$ estimate for Calder\'on-Zygmund operators, and the second is a result of Journ\'e %cite{JourneCalderonZygmundOperatorsPseudodifferentialOperators} 
%\cite{journe}
%concerning some characterizations when a singular integral operator
%is bounded on $L^2$.
%, and the third is a maximal result.
We begin by recalling some  classical results (see \cite{stein-ha}).

Consider kernels $K\in \cD'(\bbR^d\times\bbR^d)$ such that $K$ is locally integrable on $(\bbR^d\times \bbR^d)\setminus \Delta$; here $\Delta=\diag(\bbR^d\times \bbR^d)=\{(x,x):x\in \bbR^d\}$. Let $T_K: C^\infty_0(\bbR^d) \to \cD'(\bbR^d)$ be the operator with Schwartz kernel $K$. Then  the expression
$$\inn{T_K f}{g} =\iint K(x,y) f(y) g(x) \,dy\, dx$$
makes sense for bounded functions $f$, $g$ with compact and disjoint supports.
For such kernels $K$ we define the singular integral semi-norms
\begin{align}
\label{Hoer1}
\SI^1[K]&:= \sup_{y,y'}\int_{|x-y|\ge 2|y-y'|}|K(x,y)-K(x,y')|\, dx, \\
\label{Hoerinfty}
\SI^\infty[K]&:=\sup_{x,x'}\int_{|y-x|\ge 2|x-x'|}|K(x,y)-K(x',y)| \, dy .
\end{align}

Let $1<q<\infty$. It is a standard and classical theorem (see \cite{stein-ha}) that if $T_K$ extends as a bounded operator on $L^q(\bbR^d)$ and $\SI^1[K]<\infty$ then $T_K$ extends as an operator of weak type $(1,1)$, as an operator mapping the Hardy space $H^1(\bbR^d)$ to $L^1(\bbR^d)$  and as a bounded operator on $L^p$, $1\le p<2$, and one has the following estimates for the operator norms (or quasi-norms).
%weak type $(1,1) $ quasi-norm is bounded by $C(\|T\|_{L^2\to L^2}+\SI^1[K])$, moreover $T$ maps the Hardy space $H^1(\bbR^d)$  to $L^1(\bbR^d)$ with
\Be\label{hardy} \|T_K\|_{H^1\to L^1}+\|T_K\|_{L^1\to L^{1,\infty}} \lc \|T_K\|_{L^q\to L^q}+\SI^1[K].\Ee
We note that in order to prove the $H^1\to L^1$ result, it suffices to check 
$\|T_Ka\|_1\le \|T_K\|_{L^q\to L^q}+\SI^1[K]$ for $q$-atoms, see \cite{MSV}.
Let $L^\infty_0$ be the subspace of $L^\infty$ consisting of functions with compact support (in the sense of distributions). 
Then we also have for $q\ge1$
\Be\label{bmothm} \|T_K\|_{L^\infty_0\to BMO} \lc \|T_K\|_{L^q\to L^q}+\SI^\infty[K].\Ee
Furthermore (taking $q=2$), by  interpolation 
\Be\label{marc}
\|T_K\|_{L^p\to L^p} \le C_{p,d} (\|T_K\|_{L^2\to L^2} + \|T_K\|_{L^2\to L^2}^{2-\frac 2p }
(\SI^1[K])^{\frac 2p-1}), \quad 1<p<2,
\Ee
and
\Be\label{bmointerpol}
\|T_K\|_{L^p\to L^p} \le C_{p,d} (\|T_K\|_{L^2\to L^2} + \|T_K\|_{L^2\to L^2}^{\frac 2p }(\SI^\infty[K])^{1-\frac 2p}), \quad 2<p<\infty.
\Ee

We will apply these results to  singular integral kernels  given by
\Be\label {Ksum}
K= \sum_j \Dilj \tau_j \equiv \sum_j 2^{jd} \tau_j(2^j\cdot, 2^j\cdot)
\Ee
where $\tau_j $ satisfy suitable uniform Schur and regularity conditions.

\subsection{Classes of kernels}

\subsubsection{Schur Norms and Regularity Conditions}\label{schurregularity}

In what follows we consider complex-valued locally integrable functions $(x,y)\mapsto k(x,y)$ on $\bbR^d\times \bbR^d$.

We formulate  conditions related to the usual Schur test, involving integrability conditions in the $x$ and $y$ variables.
We let  $\Sha^1$ 
be the class of kernels $k\in L^1_\loc(\bbR^d\times\bbR^d)$  for which 
\Be\label{schur1}
%\|k\|_{\Sha^1}
\Sha^1[k]=\sup_{y\in \bbR^d}\int |k(x,y)| \,dx\Ee
is finite. Here and in what follows  $\sup_y$ is used synonymously with essential supremum (or $L^\infty$-norm).
We let  $\Sha^\infty$ 
be the class of kernels $k\in L^1_\loc(\bbR^d\times\bbR^d)$  for which 
\Be\label{schurinfty}
%\|k\|_{\Sha^\infty}
\Sha^\infty[k]
=\sup_{x\in \bbR^d}\int |k(x,y)| \, dy
\Ee
is finite. Here the supremum is interpreted as essential supremum (i.e. the $L^\infty$ norm with respect to $y$). The notation is motivated by the fact that for 
$k\in \Sha^1$ the integral operator with kernel $k$ is bounded on 
$L^\infty(\bbR^d)$, with operator norm $\Sha^1[k]$,
and
for 
$k\in \Sha^\infty $ this operator is bounded on $L^\infty(\bbR^d)$, with operator norm $\Sha^\infty[k]$.

Next we need stronger conditions, which add some weights in terms of the  distance of $(x,y)$ to the diagonal $\Delta$.
Define 
\begin{align}
\label{schureps1}
\Sha_{\eps}^1[k]&:=
\sup_{y\in \bbR^d}\int (1+|x-y|)^\eps |k(x,y)| \, dx,
\\
\label{schurepsinfty}
\Sha_{\eps}^\infty[k]&:=
\sup_{x\in \bbR^d}\int (1+|x-y|)^\eps |k(x,y)| \, dy.
\end{align} 
 Let 
$$k^\dual(x,y)=k(y,x)$$
and note that
%$\|k\|_{\Sha_\eps^\infty}=\| k^\dual\|_{\Sha_\eps^1}.$
$\Sha_\eps^\infty[k]=\Sha_\eps^1[k^\dual].$
%so that $\|k\|_{\Sha_\eps^\infty} =\| k^\dual\|_{\Sha_\eps^1}\,.$

\medskip

%\subsubsection{Regularity Conditions} 
In Calder\'on-Zygmund theory we also need some variants involving regularity, in either the left ($x$-) or right ($y$-)variable.
We define
 \begin{align}
\Zhe_{\eps,\rml}^1[k]
\label{reg1epsleft}
&:=  \sup_{0<|h|\leq 1}\sup_y |h|^{-\epsilon}\int|k(x+h,y)-k(x,y)|\: dx,
\\
\label{reg1epsright}
\Zhe_{\eps,\rmr}^1[k]
&:=  \sup_{0<|h|\leq 1}\sup_y |h|^{-\epsilon}\int|k(x,y+h)-k(x,y)|\: dx,
\end{align}
and
\begin{align}
\label{reginftyepsleft}
\Zhe_{\eps,\rml}^\infty[k]
&:=  \sup_{0<|h|\leq 1}\sup_x |h|^{-\epsilon}\int|k(x+h,y)-k(x,y)|\: dy,
\\
\label{reginftyepsright}
\Zhe_{\eps,\rmr}^\infty[k]
&:=  \sup_{0<|h|\leq 1}\sup_x |h|^{-\epsilon}\int|k(x,y+h)-k(x,y)|\: dy,
\end{align}
so that
$\Zhe_{\eps,\rml}^\infty[k]=
\Zhe_{\eps,\rmr}^1[k^\dual]$ and 
$\Zhe_{\eps, \rml}^\infty[k]=
\Zhe_{\eps,\rmr}^1[k^\dual]$.

\subsubsection{Singular Integral Kernels}\label{SIclasses}
%\subsubsection{Annular Integrability}
We now consider distributions $K\in \cD'((\bbR^d\times \bbR^d)\setminus \Delta)$ which are locally integrable in $(\bbR^d\times\bbR^d)\setminus \Delta$.
We define variants of \eqref{Hoer1}, \eqref{Hoerinfty} 
 with more decay away from the diagonal (here $\eps\ge 0$)
\begin{align}
\label{Hoer1eps}
\SI_\eps^1[K]&:=\sup_{y,y'}
\sup_{R\ge 2} R^\eps
\int_{|x-y|\ge R|y-y'|}|K(x,y)-K(x,y')| \,dx \,,
\\ \label{Hoerinftyeps}
\SI_\eps^\infty[K]&:=\sup_{x,x'}\sup_{R\ge 2} R^\eps
\int_{|y-x|\ge R|x-x'|}|K(x,y)-K(x',y)|\, dy \,.
\end{align}
Note that for $\eps=0$ we recover the norms defined in \eqref{Hoer1}, \eqref{Hoerinfty}.
\begin{rem}
We shall also use the   alternative notation
$\|K\|_{\SI^1_\eps}=\SI^1_\eps[K]$ etc.
We will say 
$K\in \SI^1_\eps$ if $\SI^1_\eps[K]<\infty$ etc.
\end{rem}

We say that 
$K\in L^1_\loc((\bbR^d\times\bbR^d)\setminus\Delta)$ 
satisfies one of the {\it uniform annular integrability conditions} $\ann^1$, $\ann^\infty$
if 
the respective expressions
\begin{align}\label{annular1}
\ann^1[K]&:=  \sup_{R>0}\sup_y
 \int_{x: R\le |x-y|\le 2R}|K(x,y)| \, dx,
 \\
 \label{annularinfty}
 \ann^\infty[K] &:=
 \sup_{R>0} \sup_x \int_{y:R\le |x-y|\le 2R}|K(x,y)| \, dy
\end{align}
are finite.

%(essentially symmetric version). 
We say that $K$ satisfies the 
{\it averaged  annular integrability condition} $\ann_{\av}$ if
\Be\label{aveannular}
\ann_{\av}[K]=
\sup_{a\in \bbR^d}\sup_{R>0} R^{-d}\iint\limits_{\substack {|x-a|\le R\\ R\le|x-y|\le 2R}}
|K(x,y)| \,dy \,dx 
\Ee
is finite.

The last notion will be used in \S\ref{Journesect} below.
%We first note the  averaged annular integrability condition is essentially symmetric and is implied by both $\ann^1$, $\ann^\infty$.

\begin{lemma} Let $K\in L^1_\loc((\bbR^d\times\bbR^d)\setminus\Delta\})$. Then
$$\ann_\av[K] \approx \ann_\av[K^\dual].$$
Moreover, $$\ann_\av[K]\lc \min \{\ann^1[K], \ann^\infty[K]\}\,.$$
\end{lemma}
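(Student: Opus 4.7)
The plan is to treat both statements via elementary covering and Fubini arguments; no deep ideas are needed.

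First I would dispatch the easier inequality $\ann_\av[K]\lesssim \min\{\ann^1[K],\ann^\infty[K]\}$. Fix $a\in\bbR^d$ and $R>0$. For the bound by $\ann^\infty[K]$, perform the $y$-integration first: for fixed $x$, the definition of $\ann^\infty$ gives
\[
\int_{R\le|x-y|\le 2R}|K(x,y)|\,dy\le \ann^\infty[K],
\]
and then $\int_{|x-a|\le R}dx = c_d R^d$, so dividing by $R^d$ yields the bound $c_d\,\ann^\infty[K]$. For the bound by $\ann^1[K]$, swap the order of integration: the constraints $|x-a|\le R$ and $|x-y|\le 2R$ force $y\in B(a,3R)$ by the triangle inequality; for fixed $y$ the $x$-integral over the annulus $R\le|x-y|\le 2R$ is bounded by $\ann^1[K]$; and the $y$-integration ranges over a set of measure at most $c_d(3R)^d$. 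After dividing by $R^d$ this produces the constant $3^d c_d\,\ann^1[K]$.

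Next I would prove the symmetry $\ann_\av[K]\approx\ann_\av[K^\dual]$. Renaming variables in the definition,
\[
\ann_\av[K^\dual]=\sup_{a'}\sup_{R'>0}(R')^{-d}\iint_{\substack{|y-a'|\le R'\\ R'\le|x-y|\le 2R'}}|K(x,y)|\,dx\,dy,
\]
so the only asymmetry with $\ann_\av[K]$ is that the ball constraint is placed on $y$ rather than $x$. Fix $a,R$ and set
\[
I(a,R):=\iint_{\substack{|x-a|\le R\\ R\le|x-y|\le 2R}}|K(x,y)|\,dx\,dy.
\]
Since the integrand is supported in $\{y\in B(a,3R)\}$, I cover $B(a,3R)$ by a fixed number $N=N(d)$ of balls $B(a_j,R)$ and decompose the $y$-integration accordingly. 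Dropping the (now superfluous) $x$-constraint $|x-a|\le R$ in each piece gives
\[
I(a,R)\le\sum_{j=1}^{N}\iint_{\substack{|y-a_j|\le R\\ R\le|x-y|\le 2R}}|K(x,y)|\,dx\,dy\le N\cdot R^d\,\ann_\av[K^\dual],
\]
so $R^{-d}I(a,R)\le N(d)\,\ann_\av[K^\dual]$; taking the supremum over $a,R$ yields $\ann_\av[K]\lesssim \ann_\av[K^\dual]$. The reverse inequality follows by applying the same argument to $K^\dual$ (noting $(K^\dual)^\dual = K$), which completes the proof.

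There is no serious obstacle — the only mild subtlety is the need to expand the support region from $B(a,R)$ to $B(a,3R)$ when swapping the roles of $x$ and $y$, which costs only a dimensional constant through the covering argument.
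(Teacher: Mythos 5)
Your proof is correct. The paper dismisses this lemma as ``immediate from the definitions'' and gives no argument; your Fubini bounds for the second inequality and the finite covering of $B(a,3R)$ by $R$-balls for the $\ann_\av[K]\approx\ann_\av[K^\dual]$ equivalence are exactly the natural way to fill in that gap.
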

\begin{proof} Immediate from the definitions.\end{proof}

\begin{lemma} \label{SIepsvs0}
Let $K\in L^1_\loc ((\bbR^d\times \bbR^d )\setminus \Delta)$.
Suppose that for some $\eps>0$,
$$\SI^1_\eps[K] \le B, \quad \ann[K]\le A.$$ Then
$$
\SI^1_0[K] \lc A \log (2+\eps^{-1} B/A).
$$ 
\end{lemma}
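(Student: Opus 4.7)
The plan is to prove the estimate by a dyadic decomposition of the domain in the definition of $\SI^1_0[K]$, exploiting the two bounds $A$ and $B\cdot R^{-\eps}$ available on each shell, and then optimizing where to switch between them. This is a standard Calderón--Zygmund optimization.

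Fix $y,y'\in\bbR^d$, write $\rho=|y-y'|$, and consider
\[
I(y,y') \;:=\; \int_{|x-y|\ge 2\rho} |K(x,y)-K(x,y')|\,dx.
\]
The first step is to split $\{|x-y|\ge 2\rho\}$ into dyadic annuli $A_k=\{2^k\rho\le |x-y|< 2^{k+1}\rho\}$ for $k\ge 1$, and to record two bounds on each $A_k$. Bound (a): by the triangle inequality and $\ann^1[K]\le A$, one has $\int_{A_k}|K(x,y)|\,dx\le A$ directly, and for $\int_{A_k}|K(x,y')|\,dx$ one observes that, since $k\ge 1$ forces $|x-y'|\in [2^{k-1}\rho,2^{k+2}\rho]$, the region $A_k$ is covered by a bounded (absolute) number of dyadic shells centred at $y'$, each of annular mass $\le A$; hence $\int_{A_k}|K(x,y)-K(x,y')|\,dx\lesssim A$. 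Bound (b): applying $\SI^1_\eps[K]\le B$ with the admissible choice $R=2^k\ge 2$ gives
\[
\int_{A_k}|K(x,y)-K(x,y')|\,dx \;\le\; \int_{|x-y|\ge 2^k\rho}|K(x,y)-K(x,y')|\,dx \;\le\; B\, 2^{-k\eps}.
\]

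With these two pointwise-in-$k$ bounds, the estimate reduces to summing $\min(A,B\cdot 2^{-k\eps})$ over $k\ge 1$. The natural threshold is $k_0=\lceil \eps^{-1}\log_2(2+B/A)\rceil$: for $k\le k_0$ use bound (a), contributing a total of $O(k_0 A)$; for the tail $k> k_0$ apply the $\SI^1_\eps$-bound once (rather than summing a geometric series), at $R=2^{k_0+1}$, to get $\int_{|x-y|\ge 2^{k_0+1}\rho}|K(x,y)-K(x,y')|\,dx\le B\,2^{-(k_0+1)\eps}\lesssim A$ by the choice of $k_0$. Adding the two contributions yields a bound of the form $A$ times a logarithmic factor, which gives the claimed inequality. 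Taking the supremum over $y,y'$ then bounds $\SI^1_0[K]$.

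There is no real obstacle --- the argument is purely geometric optimization. The one mild subtlety is bookkeeping in step (a): the annular integrability $\ann^1[K]$ is stated with a centre point, so the second term $\int_{A_k}|K(x,y')|\,dx$ must be re-expressed as an integral over a few dyadic shells around the shifted centre $y'$, which is legitimate because the hypothesis $k\ge 1$ guarantees $\rho\le |x-y|/2$ and hence comparability of $|x-y|$ and $|x-y'|$. Everything else is routine summation and the choice of $k_0$.
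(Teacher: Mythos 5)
Your proposal is correct and takes essentially the same approach as the paper: split into a near region $2\rho\le|x-y|\le R\rho$ (covered by $O(\log R)$ dyadic shells, each contributing $\lesssim A$ by annular integrability, with the shift from $y$ to $y'$ handled exactly as you note) and a far region $|x-y|\ge R\rho$ (bounded directly by $BR^{-\eps}$ from the $\SI^1_\eps$ hypothesis), then choose the threshold $R\approx(2+B/A)^{1/\eps}$ so the tail contributes $\lesssim A$. Your choice $k_0=\lceil\eps^{-1}\log_2(2+B/A)\rceil$ is the same threshold in disguise, and your care to apply the tail bound once rather than summing a geometric series (which would cost an extra $\eps^{-1}$) matches the paper's argument.
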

\begin{proof} 
Fix $y\neq y'$ and split
$$ \int_{|x-y|\ge 2|y-y'|}|K(x,y)-K(x,y')| \,dx  =I+II$$
where
\begin{align*}
I&= \int_{2|y-y'| \le |x-y|\le R|y-y'|}|K(x,y)-K(x,y')|\, dx \,,
\\
II&= \int_{ |x-y|\ge R|y-y'|}|K(x,y)-K(x,y')| \, dx \,.
\end{align*}
Then if we apply condition $\ann_1$ with $O(\log R)$ annuli  to estimate
$$I\lc A \log R;$$ 
moreover we have
$$II\lc B R^{-\eps} .$$
If we choose $R=2+(B/A)^{1/\eps}$ the assertion follows.
\end{proof}

\subsubsection{Integral conditions for singular integrals}
%$\mathbf{SI}$ Conditions}
We formulate a  proposition which is used to verify
the condition $\SI^1_\eps$ , $\SI^\infty_\eps$  
for kernels of the form \eqref{Ksum}.

\begin{prop} \label{ShaZheimplSI}
Suppose that $\tau_j \in \Sha^1_\eps\cap \Zhe^1_{\eps, R}$ and 
\begin{gather*}
 \sup_j \Sha_0^1[\tau_j]\le A,
 \\
  \sup_j \Sha^1_\eps[\tau_j] +
   \sup_j \Zhe^1_{\eps,\rmr}[\tau_j]\le B.
\end{gather*}
Then the sum \eqref{Ksum} converges in the sense of $L^1_\loc ((\Bbb R^d\times \Bbb R^d)\setminus\Delta) $ and the limit $K$ satisfies
\Be \label{SIepshalf} \SI^1_{\eps/2}[K] \lc  B.\Ee
 Moreover,
\Be \label{SI-log}
\SI^1_0[K] \lc A \log (2+B/A)\,.
\Ee
\end{prop}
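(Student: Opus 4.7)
The overall strategy is to reduce each bound to dyadic sums over $j$ of integrals of the pieces $K_j := \Dilj\tau_j$, using the change of variables $u=2^jx$, $v=2^jy$ to translate the hypotheses on $\tau_j$ into scale-dependent bounds on $K_j$. Specifically, the three controlling inequalities I would derive first are
(a) $\sup_y \int |K_j(x,y)|\,dx \le A$ (from $\Sha_0^1[\tau_j]\le A$),
(b) $\sup_y \int_{|x-y|\ge T} |K_j(x,y)|\,dx \le B(1+2^jT)^{-\eps}$ (from $\Sha_\eps^1[\tau_j]\le B$), and
(c) $\sup_y \int |K_j(x,y+h)-K_j(x,y)|\,dx \le \min\{B(2^j|h|)^\eps,\ 2A\}$ (from $\Zhe^1_{\eps,\rmr}[\tau_j]\le B$ and triangle inequality when $2^j|h|\ge 1$). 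The interplay of these three bounds drives everything that follows.

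For \eqref{SIepshalf}, fix $y,y'$, write $\delta=|y-y'|$, $R\ge 2$, and set $M_j=2^j\delta$. The integrand over $|x-y|\ge R\delta$ of $|K_j(x,y)-K_j(x,y')|$ admits two estimates: a regularity bound $\lc \min(A, BM_j^\eps)$ from (c), and a size bound $\lc B(1+M_jR)^{-\eps}$ from (b) (using that $|x-y'|\ge R\delta/2$ on the truncation since $R\ge 2$). The plan is to split $j$ into three ranges. For $M_jR\le 1$ the regularity bound $BM_j^\eps$ is geometric in $j$ with top term at $M_j=1/R$, summing to $\lc BR^{-\eps}$. For $M_j\ge 1$ the size bound $B(M_jR)^{-\eps}$ is geometric with top term at $M_j=1$, summing to $\lc BR^{-\eps}$. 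The critical middle range $1/R\le M_j\le 1$ is handled by the crossover at $M_j = R^{-1/2}$: below the crossover use regularity, above use size; each subrange is geometric with top term $\lc BR^{-\eps/2}$. Summing the three contributions gives the desired $\lc BR^{-\eps/2}$, i.e., $\|K\|_{\SI^1_{\eps/2}}\lc B$. The loss from $R^{-\eps}$ to $R^{-\eps/2}$ is unavoidable in this step because in the middle range neither individual bound decays in $R$.

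For \eqref{SI-log}, I would run the same dyadic bookkeeping but with $R=2$ (no weight) and use instead the competing bounds $\min(2A, BM_j^\eps)$ (regularity) and $\min(2A, CB(1+M_j)^{-\eps})$ (size) on $I_j$. Three ranges again: for $M_j\le (A/B)^{1/\eps}$ the regularity bound $BM_j^\eps\le A$ sums geometrically to $\lc A$; for $M_j\ge (B/A)^{1/\eps}$ the size bound $B M_j^{-\eps}\le A$ sums geometrically to $\lc A$; in the middle range $(A/B)^{1/\eps}\le M_j\le (B/A)^{1/\eps}$ neither beats $A$, and one simply uses the trivial bound $2A$ on each of the $(2/\eps)\log_2(B/A)$ scales, contributing $\lc A\log(2+B/A)$. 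Total: $\SI_0^1[K]\lc A\log(2+B/A)$. Alternatively, one could derive \eqref{SI-log} by combining \eqref{SIepshalf} with Lemma~\ref{SIepsvs0}, provided $\ann^1[K]\lc A$; but $\ann^1[K]$ is not controlled by the hypotheses alone (the small-scale pieces contribute $A$ per scale on a fixed annulus), so the direct dyadic optimization above is preferable.

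The main obstacle is convergence of $\sum_j\Dilj\tau_j$ in $L^1_\loc((\bbR^d\times\bbR^d)\setminus\Delta)$. On a compact set $E$ away from $\Delta$, the $j\to+\infty$ tail is summable by (b) (geometric decay $B(2^j c)^{-\eps}$ where $c=\dist(E,\Delta)$), but the $j\to-\infty$ tail contributes a uniform $\lc A$ per scale and only converges when additional structure on $\tau_j$ is present. In the applications of this proposition in the paper, the $\tau_j$ will carry cancellation (e.g., $\tau_j$ will be of the form $\vsig_j^{(2^j)}$ with $\int\vsig_j(\alpha,v)\,dv=0$), which is what renders the large-scale tail summable and hence justifies the convergence; I would either state this cancellation as the standing hypothesis for the convergence claim or establish it as a consequence of the specific form $\tau_j$ arises in through the kernel decomposition machinery of Sections~\ref{SectionKernels}. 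Once convergence is in hand, the two norm bounds pass to the limit by Fatou's lemma applied inside the $L^1(dx)$ integrals defining $\SI^1_{\eps/2}$ and $\SI^1_0$.
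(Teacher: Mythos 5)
Your derivation of \eqref{SIepshalf} and \eqref{SI-log} is essentially the paper's argument: fix $y,y'$, rescale to $I_j^R(y,y')=\int_{|x-2^jy|\ge R\,2^j|y-y'|}|\tau_j(x,2^jy)-\tau_j(x,2^jy')|\,dx$, record the size bound $\lc B(2^j|y-y'|R)^{-\eps}$ (valid when $2^j|y-y'|\ge 1/R$), the regularity bound $\lc B(2^j|y-y'|)^\eps$ (valid when $2^j|y-y'|\le 1$), and the trivial bound $\le 2A$; then sum, splitting at $2^j|y-y'|=R^{-1/2}$ for \eqref{SIepshalf} and using the three-way minimum to get $A\log(2+B/A)$ for \eqref{SI-log}. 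Your extra subdivision into more explicit ranges is cosmetic; the optimization and the outcome are identical. Your side remark that one cannot derive \eqref{SI-log} by feeding \eqref{SIepshalf} into Lemma~\ref{SIepsvs0}, because $\ann^1[K]$ is not controlled by the present hypotheses, is also correct: $\ann^1$ is the subject of the separate Proposition~\ref{ShaZheimplAnn}, whose hypothesis uses left-regularity $\Zhe^1_{\eps,\rml}$ rather than the right-regularity $\Zhe^1_{\eps,\rmr}$ used here.

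Your concern about the $L^1_\loc$-convergence clause is legitimate and flags an imprecision that the paper's own proof does not resolve. The proof estimates only $\sum_j I_j^R$, i.e.\ $L^1$-norms of the differences $K_j(\cdot,y)-K_j(\cdot,y')$ over the exterior region; it says nothing about absolute convergence of $\sum_j K_j$ on compact sets away from $\Delta$. As you observe, the $j\to-\infty$ tail of $\sum_j\iint_E|K_j|$ contributes a uniform $\lc A$ per scale under the stated hypotheses, and right-regularity $\Zhe^1_{\eps,\rmr}$ does not supply the small-scale decay that left-regularity does in Lemma~\ref{lemmaann} (there, $\Zhe^1_{\eps,\rml}$ gives a Besov-type bound in the integration variable which Sobolev embedding upgrades to an $L^p$ bound; $\Zhe^1_{\eps,\rmr}$, being a $\sup_y\int_x$-type quantity, does no such thing). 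So convergence of $\sum_j K_j$ genuinely requires extra input beyond the stated hypotheses. Your instinct — either to assume the cancellation $\int\vsig_j(\alpha,v)\,dv=0$ carried by the $\tau_j$ in every actual application, or to treat convergence as a separate hypothesis established downstream — is consistent with how the paper in fact uses this proposition (e.g.\ in Proposition~\ref{CorWeakTypeInterp} and Theorems~\ref{ThmJourne1}--\ref{ThmJourne3}, where convergence is supplied as an explicit assumption or proved from $L^2$ considerations).
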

\begin{proof}
We fix $y,y'$ and $R\ge 0$ and consider
\begin{align*}I_j^R(y,y')&=\int_{x: |x-y|\ge R|y-y'|}|\Dilj \tau_j(x,y)- \Dilj \tau_j(x,y')|\,dx
\\
&=\int_{x: |x-2^jy|\ge R|2^jy-2^jy'|}|\tau_j(x,2^j y)-  \tau_j(x,2^jy')|\,dx.
\end{align*}
Clearly $I_j^R(y,y')\le 2A$. 
We now  give two estimates, the first valid when
$2^j|y-y'|\ge 1/R$, the second valid when  $2^j|y-y'|\le 1$;
thus both estimates will be valid when $1/R\le 2^j|y-y'|\le 1$.

For $2^j|y-y'|\ge 1/R$ we have 
\begin{align*}
&\int_{x: |x-2^jy|\ge R|2^jy-2^jy'|}
|\tau_j(x,2^j y)|dx
\le 
\int
%_{x: |x-2^jy|\ge R|2^jy-2^jy'|}
|\tau_j(x,2^j y)| 
\frac{(1+|x-2^jy|)^\eps}{(R2^j|y-y'|)^\eps}
dx
\\
&\le (2^j|y-y'|R)^{-\eps} 
 \Sha^1_\eps[\tau_j] \le B (2^j|y-y'|R)^{-\eps} .
\end{align*}
Also note that if $|x-2^jy|\ge R|2^jy-2^jy'|$ then also 
$|x-2^jy'|\ge (R-1)|2^jy-2^jy'|$.  Thus the last  argument also gives (for $R\ge 2$)
\[\int_{x: |x-2^jy|\ge R|2^jy-2^jy'|}|\tau_j(x,2^j y')|dx\le 
B (2^j|y-y'|(R-1))^{-\eps} \]
and hence 
$$ I_j^R(y,y')\lc B (2^j|y-y'|)^{-\eps} R^{-\eps} \text { if } 2^j|y-y'|\ge 1/R\,.$$ 

For $2^j|y-y'|\le 1$  we  obtain 
$$
I_j^R(y,y')
\le \int |\tau_j(x,2^j y)-  \tau_j(x,2^jy')|\,dx \le \Zhe_\eps^1[\tau_j]
(2^j|y-y'|)^\eps 
\le B (2^j|y-y'|)^\eps .
$$
Hence 
$$\sum_{j\in \bbZ} I_j^R(y,y') \lc  
\sum_{j: 2^j|y-y'|\le R^{-1/2}}B (2^j|y-y'|)^\eps 
+
\sum_{j: 2^j|y-y'|> R^{-1/2}}B (R2^j|y-y'|)^{-\eps} \lc BR^{-\eps/2}
$$
and \eqref{SIepshalf} follows.
The same argument gives
$$\sum_{j\in \bbZ} I_j^R(y,y') \lc  
\sum_{j\in \bbZ}\min \{A, B (2^j|y-y'|)^{\eps}, B (2^j|y-y'|)^{\eps} \lc
A(\log(2+B/A))
$$
which yields \eqref{SI-log}.

\end{proof}

%\subsubsection{Annular integrability conditions}

The following proposition is useful for verifying membership in the classes $\ann^1$, $\ann^\infty$ for kernels 
of the form \eqref{Ksum}.

\begin{prop}\label{ShaZheimplAnn}
 Suppose that $\tau_j \in \Sha^1_\eps\cap \Zhe^1_{\eps, \rml}$ such that 
\begin{gather*}
 \sup_j \Sha^1_0[\tau_j]\le A\, ,
 \\
  \sup_j \Sha^1_\eps[\tau_j] + \sup_j \Zhe^1_{\eps,\rml}[\tau_j]\le B\,.
\end{gather*}
Then the sum $K=\sum_j\Dilj \tau_j$ converges in the sense of $L^1_\loc ((\bbR^d\times \bbR^d)\setminus\Delta) $ and 
$$\ann^1[K] \lc A  \log(2+B/A)\,.$$
\end{prop}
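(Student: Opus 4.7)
The plan is to reduce the estimate to bounding the contribution of each dyadic scale and then to sum these contributions carefully. Fix $R>0$ and $y$; after the change of variable $u=2^j x$, the contribution of $\Dilj \tau_j$ to $\int_{R \le |x-y| \le 2R} |\Dilj\tau_j(x,y)|\,dx$ equals
\[
I_j(y,R) \,:=\, \int_{r_j \le |u - 2^j y| \le 2 r_j} |\tau_j(u, 2^j y)|\,du, \qquad r_j := 2^j R,
\]
so the proposition reduces to showing $\sum_{j \in \bbZ} I_j(y,R) \lc A \log(2 + B/A)$ uniformly in $y$ and $R$.

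I would then establish three bounds for $I_j$. The trivial Schur estimate gives $I_j \le \Sha^1_0[\tau_j] \le A$, and for $r_j \ge 1$ the weighted Schur condition yields $I_j \le (1+r_j)^{-\eps}\Sha_\eps^1[\tau_j] \lc r_j^{-\eps} B$. The crucial bound for small $r_j$ comes from a mollification argument using the left regularity. For $\rho \in (0,1]$, setting $\tau_j^\rho(\cdot,c) := \tau_j(\cdot,c) * \phi_\rho$ for a standard $L^1$-normalized smooth bump $\phi_\rho$ at scale $\rho$, Young's inequality gives the pointwise estimate $\|\tau_j^\rho(\cdot,c)\|_\infty \lc \rho^{-d} A$, while the hypothesis $\Zhe_{\eps,\rml}^1[\tau_j]\le B$ combined with Minkowski's inequality gives $\|\tau_j^\rho(\cdot,c) - \tau_j(\cdot,c)\|_1 \lc \rho^\eps B$. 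Writing $\tau_j = \tau_j^\rho + (\tau_j - \tau_j^\rho)$, bounding the first term on the annulus of volume $\sim r_j^d$ by its $L^\infty$ norm and controlling the second by its $L^1$ norm yields
\[
I_j \,\lc\, r_j^d \rho^{-d} A + \rho^\eps B,
\]
and choosing $\rho = (r_j^d A/B)^{1/(d+\eps)}$ (which satisfies $\rho \le 1$ precisely when $r_j \le (B/A)^{1/d}$) produces
\[
I_j \,\lc\, r_j^{\gamma} A^{\eps/(d+\eps)} B^{d/(d+\eps)}, \qquad \gamma := \tfrac{d\eps}{d+\eps} > 0.
\]

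To conclude I would take the minimum of these three bounds at each scale and sum. The range of $j$ splits into three regimes according to $r_j$: for $r_j \le (A/B)^{1/\eps}$ the mollification bound dominates, and using $\gamma/\eps = d/(d+\eps)$ the resulting geometric sum collapses to $\lc A$; for $(A/B)^{1/\eps} \le r_j \le (B/A)^{1/\eps}$ the best bound is the trivial $A$, across a range of $O(\eps^{-1}\log(2+B/A))$ scales; and for $r_j \ge (B/A)^{1/\eps}$ the weighted Schur bound yields a geometric series of total size $\lc A$. Adding the three contributions gives $\sum_j I_j(y,R) \lc A \log(2+B/A)$, and convergence of $K=\sum_j \Dilj\tau_j$ in $L^1_\loc((\bbR^d\times\bbR^d)\setminus\Delta)$ then follows by partitioning any compact set away from $\Delta$ into finitely many dyadic shells in $|x-y|$ and applying the annular bound on each shell. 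The principal technical point is the mollification estimate; the subtlety in the summation is that the trivial bound $A$ must be used in the intermediate regime in order to produce the logarithmic factor $\log(2+B/A)$ rather than a polynomial dependence on $B/A$.
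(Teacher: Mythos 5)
Your proof is correct. The estimates $I_j\le A$ (trivial Schur), $I_j\lc r_j^{-\eps}B$ for $r_j\ge 1$, and the mollification bound $I_j\lc r_j^{d}\rho^{-d}A+\rho^{\eps}B$ with the optimal choice $\rho=(r_j^dA/B)^{1/(d+\eps)}$ all check out, the identity $\gamma/\eps=d/(d+\eps)$ correctly collapses the small-$r_j$ geometric sum to $\lc A$, and the three-regime summation produces the logarithmic factor as claimed.

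The route is, however, different in detail from the paper's. The paper first abstracts the statement into a lemma about sequences $\{g_j\}\subset L^1(\bbR^d)$ satisfying an $L^1$ bound $A$, a weighted bound $\int(1+|x|)^\eps|g_j|\le B_1$, and a Besov-type modulus bound $\sup_{|h|<1}|h|^{-\eps}\|g_j(\cdot+h)-g_j\|_1\le B_2$; the key step for small annuli is a Sobolev embedding of $B^\eps_{1,\infty}$ into $L^p$ for some $p>1$ (so $\|g_j\|_p\lc B_2$), followed by H\"older on the annulus of measure $\sim r_j^d$, yielding $\|g_j\|_{L^1(K_{r_j})}\lc r_j^{d/p'}B_2$ with $d/p'=\eps'<\eps$. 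You instead obtain the small-annulus gain directly by mollification, splitting $\tau_j=\tau_j^\rho+(\tau_j-\tau_j^\rho)$, bounding the smooth part in $L^\infty$ by Young's inequality and the remainder in $L^1$ via the regularity hypothesis, then optimizing in $\rho$. The two devices are essentially interchangeable here — both convert the $L^1$-modulus-of-continuity hypothesis into an $r_j^{\text{positive power}}$ gain on small annuli, and both produce the sharp $\log(2+B/A)$ factor after taking the minimum of the three bounds and summing. Your version avoids invoking the one-dimensional Besov--Sobolev embedding and carries an explicit interpolated prefactor $A^{\eps/(d+\eps)}B^{d/(d+\eps)}$, which happens to be sharper than the paper's $B$ prefactor at the smallest scales, though this does not change the final result. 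The paper's abstraction into Lemma 7.11 has the virtue of being reusable verbatim for $v\mapsto K(y+v,y)$ and (by symmetry in the assumptions) for its transpose, while your formulation works directly with $\tau_j$ and is slightly more self-contained.
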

This follows from the following lemma regarding functions in $L^1(\bbR^d)$.

\begin{lemma}
\label{lemmaann}
Let $0<\eps<1$,  $g_j\in L^1(\bbR^d)$ such that
\[
%\label{gjcond0}
\int|g_j(x)| \,dx \le A,
\]
%\Ee
%\Be\label{gjcond1}
\[ \int|g_j(x)|(1+|x|)^\eps \,dx \le B_1,
\]
%\Ee
and 
%\Be\label{gjcond2}
\[\sup_{|h|<1}|h|^{-\eps} \int|g_j(x+h)-g_j(x)|\, dx \le B_2\,.\]
%\Ee
Then for every compact set $K\subset \bbR^d\setminus \{0\}$, the series 
$G(x)=\sum_{j\in \bbZ} 2^{jd} g_j(2^j x)$ converges in $L^1(K)$, so that $G\in L^1_{\loc}(\bbR^d\setminus\{0\})$. Moreover, if 
$K_R= \{x:R\le |x|\le 2R\}$,
%\Be \label{KR}
\[ \sup_{R>0}\int_{K_R} |G(x)| dx \lc A \log(1+ \frac{B_1+B_2}{A})\,.
\]
\end{lemma}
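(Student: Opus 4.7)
The plan is to bound, for each $j \in \bbZ$, the quantity $\int_{K_{2^j R}}|g_j(y)|\,dy$ in three different ways, and then to sum these bounds over $j$ carefully. Writing $r = 2^j R$ and changing variables, $\int_{K_R} 2^{jd}|g_j(2^j x)|\,dx = \int_{K_r}|g_j(y)|\,dy$, so all estimates reduce to bounding this quantity with a constant independent of $j$ but depending on the scale $r$.

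Per-$j$ estimates. First, there is the trivial bound $\int_{K_r}|g_j| \le A$. Second, for $r \ge 1$, the fact that $(1+|y|)^\eps \ge r^\eps$ on $K_r$ and the moment hypothesis give $\int_{K_r}|g_j| \le r^{-\eps}B_1$. The main technical point is the third bound, for $r \le 1$: I claim
\[
\int_{K_r}|g_j| \lesssim r^\eps B_2 + r^d A .
\]
This I would prove by a Littlewood-Paley decomposition $g_j = \sum_{k\in \bbZ}\psi_k * g_j$ with $\psi_k$ a standard frequency-$2^k$ bandpass. For $k \ge 0$, using $\int \psi_k = 0$ and the $L^1$-H\"older hypothesis, writing $\psi_k * g_j(y) = \int \psi_k(z)[g_j(y-z) - g_j(y)]\,dz$ yields $\|\psi_k * g_j\|_1 \lesssim 2^{-k\eps}B_2$; combined with Bernstein's inequality $\|\psi_k * g_j\|_\infty \lesssim 2^{kd}\|\psi_k * g_j\|_1$ and the interpolation $\int_{K_r}|\psi_k * g_j| \le \min(\|\psi_k * g_j\|_1, |K_r|\|\psi_k * g_j\|_\infty)$, a geometric sum gives $\lesssim r^\eps B_2$. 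For $k < 0$, Young's inequality gives $\|\psi_k * g_j\|_\infty \lesssim 2^{kd}A$, hence $\int_{K_r}|\psi_k * g_j| \lesssim r^d 2^{kd}A$, summing to $\lesssim r^d A$.

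Summation in $j$. Setting $r_j := 2^j R$, I combine the three bounds: $\int_{K_{r_j}}|g_j| \lesssim \min(A, r_j^{-\eps}B_1)$ when $r_j \ge 1$, and $\int_{K_{r_j}}|g_j| \lesssim \min(A, r_j^\eps B_2) + r_j^d A$ when $r_j \le 1$. In the first regime the standard balancing trick is to split at $r_j = (B_1/A)^{1/\eps}$: the terms on the small side contribute $\lesssim A\cdot \eps^{-1}\log(2 + B_1/A)$ (a logarithmic count of terms each bounded by $A$), while on the large side the geometric series sums to $\lesssim A$. The second regime is symmetric: $\sum_{r_j \le 1}\min(A, r_j^\eps B_2) \lesssim A\log(2 + B_2/A)$ by the same balancing, and $\sum_{r_j \le 1} r_j^d A \lesssim A$ is a convergent geometric series. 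Adding the two regimes gives $\sum_j \int_{K_{r_j}}|g_j| \lesssim A\log(2 + (B_1+B_2)/A)$, which is the stated bound.

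Convergence. The argument above actually controls $\sum_j \int_{K_R} 2^{jd}|g_j(2^j x)|\,dx$ by this same finite quantity, so by monotone convergence the series $\sum_j 2^{jd}g_j(2^j x)$ converges absolutely for almost every $x \in K_R$ and defines a function $G \in L^1(K_R)$, with partial sums converging in $L^1(K_R)$ by dominated convergence. Any compact $K \subset \bbR^d \setminus \{0\}$ is contained in a finite union of such dyadic annuli, so $G \in L^1_{\loc}(\bbR^d \setminus \{0\})$ and the advertised $\sup_R$ bound holds. The hard part will be the $r \le 1$ estimate: extracting the factor $r^\eps$ from the mere $L^1$-H\"older condition requires Littlewood-Paley analysis (or, equivalently, a careful mollification/Bernstein argument), with separate treatment of low and high frequencies; the subsequent balancing in $j$ is essentially combinatorial once the three per-$j$ estimates are in hand.
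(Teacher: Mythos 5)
Your proof is correct. The overall strategy is the same as the paper's: obtain three per-scale bounds for $\int_{K_{2^jR}}|g_j|$ (the trivial bound $\le A$, the moment bound $\lesssim (2^jR)^{-\eps}B_1$ for $2^jR\ge 1$, and a positive-power bound in $2^jR$ for $2^jR\le 1$ extracted from the $L^1$-H\"older hypothesis), then balance them over $j$ to produce the logarithmic factor. Where you differ is in the small-scale estimate: the paper applies H\"older's inequality on $K_r$ followed by the Sobolev-type embedding $B^{\eps}_{1,\infty}(\bbR^d)\hookrightarrow L^p$ for $d/p'<\eps$, yielding $\int_{K_r}|g_j|\lesssim r^{d/p'}\|g_j\|_p\lesssim r^{\eps'}(A+B_2)$ with $\eps'<\eps$ (the paper writes ``$\|g_j\|_p\lc B_1$'', which appears to be a typo for the Besov norm $\approx A+B_2$; the subsequent display correctly uses $B_2$). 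You instead run a Littlewood--Paley decomposition directly, using cancellation plus the H\"older condition for high frequencies and Young's inequality for low frequencies, together with Bernstein and $L^1$--$L^\infty$ interpolation on $K_r$, arriving at the marginally sharper bound $r^{\eps}B_2+r^dA$. This buys you a self-contained argument (you effectively re-derive the embedding you need rather than citing it), at the cost of a slightly longer technical step; both routes close the summation in $j$ in the same way, and both give the stated $A\log(1+(B_1+B_2)/A)$ after using $\log(2+a)+\log(2+b)\lesssim\log(2+a+b)$ and $B_1\ge A$ to absorb constants. Your convergence paragraph at the end (monotone convergence on a dyadic annulus, then covering a general compact $K$ by finitely many annuli) supplies a detail the paper leaves implicit and is fine.
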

\begin{proof}
It suffices to consider the case $K=K_R$. Let $G_j=2^{jd}g_j(2^j\cdot)$
then $$\|G_j\|_{L^1(K_R)}=\|g_j\|_{L_1(K_{2^jR})}\le A.$$
First assume that $2^j R\ge 1$.  In this case 
$$\|g_j\|_{L_1(K_{2^jR})}\lc  (2^j R)^{-\eps}B_1.$$
For $2^j R\le 1$ we have by H\"older's inequality
$$\|g_j\|_{L_1(K_{2^jR})}\lc (2^jR)^{d/p'} \|g_j\|_p,$$
and by Sobolev imbedding it follows 
$ \|g_j\|_p\lc B_1$ provided that $d/p' <\eps$. 
%Since we always have  $$\|g_j\|_{L_1(K_{2^jR})}\lc A$$
Hence we obtain for  $0<\eps'<\eps$ we get 
\[\|G\|_{L^1(K_R)}\lc \sum_{j\in \bbZ} \min\{ A, B_1 (2^jR)^{-\eps}, B_2 (2^j R)^{\eps'}\} \lc A \log \big(1+ \frac{B_1+B_2}{A}\big)\,.\qedhere \]
\end{proof}

\begin{proof}[Proof of Proposition \ref{ShaZheimplAnn}]
Apply Lemma \ref{lemmaann} to the  functions
$v\mapsto K(y+v,y)$.
\end{proof}

\subsubsection{Kernels with cancellation}
We state a standard estimates involving the Schur test for compositions with operators exhibiting some cancellation; this will be  used when proving $L^2$ estimates in \S\ref{Sectionpartt1}.

\begin{lemma}\label{LemmaBoundT1RonS}
Fix $0<\eps\le1$.  Let $\ell\in \Z$ with $\ell\leq 0$.  Suppose $\rho,\, \sigma_\ell:\R^d\times \R^d\rightarrow \bbC$ are measurable functions satisfying
\begin{subequations}
\begin{align}
\label{EqnBoundR}
&\Sha^1[\rho]\le A_1, \quad \,\Sha_\eps^\infty[\rho] \le A_{\eps,\infty},
\\ \label{EqnBoundSell}
&\Sha^1[\sigma_\ell] \le B_1,\, \quad\Sha^\infty[\sigma_\ell] \le B_\infty,
\end{align}
and 
\Be\label{EqnBoundnablaxSell}
\Sha^\infty [\nabla_x \sigma_\ell ] \le 2^{-\ell}\widetilde B_\infty.
\Ee
%|s_l(x,y)|&\leq  B_1 2^{ld} (1+2^l |x-y|)^{-d-\eps},
%\\ |\nabla_x s_l(x,y)|&\leq  B_2 2^{l(d+1)} (1+2^l |x-y|)^{-d-\eps},
\end{subequations}
Assume 
\Be \label{rhocancel}\int \rho(x,y)\: dy =0 \, \text{ for almost every $x\in\bbR^d$}.\Ee
Let $R$, $S_\ell$ be the integral operators with Schwartz kernels $\rho(x,y)$, $\sigma_\ell(x,y)$.
Then \begin{equation*}
\LpOpN{2}{R S_\ell}\lesssim 2^{-\ell\eps/2} \sqrt{A_1 A_{\eps,\infty} B_1(B_\infty+\widetilde B_\infty)}.
\end{equation*}
\end{lemma}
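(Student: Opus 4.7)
The strategy is to verify Schur-type bounds on the Schwartz kernel
\[
K(x,y) = \int \rho(x,z)\,\sigma_\ell(z,y)\,dz
\]
of the composition $R S_\ell$, and then to conclude by the elementary Schur test
\[
\LpOpN{2}{R S_\ell} \le \sqrt{\Sha^1[K]\cdot \Sha^\infty[K]}.
\]
The geometric mean on the right is the source of both the square root and the halving of the exponent (giving $2^{-\ell\eps/2}$) in the target estimate.

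The bound on $\Sha^1[K]$ is obtained without invoking either the cancellation of $\rho$ or the smoothness of $\sigma_\ell$. By Fubini and the hypotheses \eqref{EqnBoundR}, \eqref{EqnBoundSell},
\[
\sup_y \int |K(x,y)|\,dx \le \sup_y \int |\sigma_\ell(z,y)|\Bigl(\int |\rho(x,z)|\,dx\Bigr)\,dz \le A_1 B_1.
\]
The substance of the proof lies in bounding $\Sha^\infty[K]$, where both the cancellation \eqref{rhocancel} and the smoothness \eqref{EqnBoundnablaxSell} enter. Condition \eqref{rhocancel} lets me write
\[
K(x,y) = \int \rho(x,z)\bigl[\sigma_\ell(z,y) - \sigma_\ell(x,y)\bigr]\,dz,
\]
and the fundamental theorem of calculus gives
\[
\sigma_\ell(z,y) - \sigma_\ell(x,y) = \int_0^1 (z-x)\cdot (\nabla_x \sigma_\ell)\bigl(x + t(z-x),y\bigr)\,dt.
\]
Integrating in $y$ and using \eqref{EqnBoundnablaxSell} yields the bound $|z-x|\,2^{-\ell}\wt B_\infty$, while the triangle inequality and \eqref{EqnBoundSell} yield the bound $2 B_\infty$. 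Since $\eps\in(0,1]$, the elementary inequality $\min(a,b)\le a^\eps b^{1-\eps}$ gives
\[
\int |\sigma_\ell(z,y)-\sigma_\ell(x,y)|\,dy \lesssim \bigl(2^{-\ell}|z-x|\bigr)^\eps (B_\infty + \wt B_\infty).
\]
Inserting this estimate and using $\int|\rho(x,z)|\,|z-x|^\eps\,dz\le \int|\rho(x,z)|(1+|z-x|)^\eps\,dz\le A_{\eps,\infty}$ produces
\[
\sup_x \int |K(x,y)|\,dy \lesssim 2^{-\ell\eps}(B_\infty + \wt B_\infty)\,A_{\eps,\infty}.
\]

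Combining these two Schur bounds via the test recalled in the first paragraph yields the claimed inequality. There is no real obstacle: the argument is a careful bookkeeping showing that the cancellation of $\rho$, manifested as the factor $|z-x|^\eps$ integrated against the weighted norm $A_{\eps,\infty}$, combines with the interpolation between the $\nabla_x$-bound for $\sigma_\ell$ and its trivial $L^\infty(L^1)$-bound to produce the gain $2^{-\ell\eps}$ on the $\Sha^\infty$ side, which is converted into the $2^{-\ell\eps/2}$ gain by the Schur test.
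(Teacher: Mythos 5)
Your proof is correct and follows essentially the same route as the paper: both verify Schur bounds on the kernel of $RS_\ell$, exploit the cancellation of $\rho$ to insert $\sigma_\ell(z,y)-\sigma_\ell(x,y)$, and obtain the gain $2^{-\ell\eps}$ on the $\Sha^\infty$ side from the gradient bound combined with the weighted Schur norm $A_{\eps,\infty}$. The only cosmetic difference is that the paper splits the $z$-integral into $|x-z|\le 2^\ell$ and $|x-z|>2^\ell$ (applying the gradient bound on the first region and the trivial $B_\infty$-bound on the second), whereas you combine the two pointwise bounds everywhere via $\min(a,b)\le a^\eps b^{1-\eps}$; both yield the identical estimate.
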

\begin{proof}%\footnote{Is there a need to state the dual statement?}
 Let $k_{\ell} $ be the Schwartz kernel of $RS_\ell$. Then, by the cancellation assumption,
 %\begin{subequations}
 %\begin{align*}
 \[k_{\ell}(x,y)
 %&=\int r_j(x,z) s_l(z,y) \,dz
 =\int \rho(x,z) \big(\sigma_\ell(z,y) -\sigma_\ell(x,y)\big) \,dz\]
% \\&= \int_0^1 \int r_j(x,z) \inn{z-x}{\nabla_x s_l((1-\tau)x+\tau z),y} dz\, dtau

 Clearly for a.e. $y\in\bbR^d$
\Be\notag
%\label{Sha1test}
\int |k_{\ell}(x,y)| dx \le \int |\sigma_\ell(z,y)|\int|\rho(x,z)| dx\,dz \lc B_1A_1.
\Ee
Moreover,
%for fixed $x$
\[
\int |k_{\ell}(x,y)| dy \le (I_x)+(II_x)
\]
where
\begin{align*}
(I_x)&:= \int_{|x-z|\le 2^{\ell}}| \rho(x,z)|\int\big|\sigma_\ell(z,y) -\sigma_\ell(x,y)\big| \,dy\, dz,
\\
(II_x)&:= \int_{|x-z|\ge 2^{\ell}}| \rho(x,z)|\int \big(|\sigma_\ell(z,y)|+|\sigma_\ell(x,y)| \big) \,dy\, dz.
\end{align*}
Now by assumption, for fixed $x,z$
\[
\int |\sigma_\ell(z,y)| \:dy\,+\, \int | \sigma_\ell(x,y)|  dy \lc B_\infty\]
and
\begin{align*}\int\big|\sigma_\ell(z,y) -\sigma_\ell(x,y)\big| \,dy&=\int\Big|
\int_0^1 \inn{z-x}{\nabla_x \sigma_\ell((1-s)x+s z),y} \, ds\Big|\: dy
\\
&\le |x-z| \int_0^1 \int 
|\nabla_x \sigma_\ell((1-s)x+s z),y)| \:dy\, d\tau \lc \widetilde B_\infty 2^{-\ell}|x-z|.
\end{align*}
For  $(I_x)$ we then get
%use$s_l(z,y) -s_l(x,y)=\int_0^1 \inn{z-x}{\nabla_x s_l((1-\tau)x+\tau z),y} \, d\tau$ and thus obtain
\[
(I_x) \le \widetilde B_\infty \int_{|z-x|\le 2^{\ell}} |\rho(x,z)|2^{-\ell}|x-z| \,dz
\]
and  estimate (using  $\eps\le 1$)
\begin{align*} 
&\int_{|z-x|\le 2^{\ell}} |\rho(x,z)|2^{-\ell}|x-z| \,dz
\le \int_{|z-x|\le 2^{\ell}} |\rho(x,z)|[2^{-\ell}|x-z|]^\eps \,dz
\\ &\lc 2^{-\ell\eps }\int |\rho(x,z) (1+|x-z|)^\eps dz \lc 2^{-\ell\eps} A_{\eps,\infty}.
\end{align*}
Hence  $(I_x)\lc 2^{-\ell\eps} \widetilde B_\infty A_{\eps,\infty}$.
 For $(II_x)$ we have 
$$(II_x) \le  B_\infty  \int_{|z-x|\ge 2^{\ell}} |\rho(x,z)| \,dz 
\lc B_\infty 2^{-\ell\eps} \int_{|z-x|\ge 2^{\ell}} |\rho(x,z)| (1+|x-z|)^\eps \,dz $$
and thus $(II_x)\lc  2^{-\ell\eps}  B_\infty A_{\eps,\infty}$.
Finally,
%from  \eqref{Sha1test} and the inequalities for $(I_x)$, $(II_x)$ 
we obtain by  Schur's test
$$\|RS_\ell\|_{L^2\to L^2} \le \sqrt{\Sha_1[k_\ell]} \sqrt{\Sha_\infty[k_\ell]}\lc
 \sqrt{A_1B_1} \sqrt{(B_\infty+\widetilde B_\infty)A_{\eps,\infty} 2^{-\ell\eps}}.$$ The assertion is proved.
  \end{proof}

%\subsection{Singular integral theorems} \label{SItheorems}

\subsubsection{On operator topologies}
%\footnote{Should we have this  section?}
We finish this section by stating  a version of the uniform boundedness principle which is used for the partial sums of operators defined by kernels of the form \eqref{Ksum}.

\begin{lemma} \label{UBPlemma}
%\footnote{This should appear elsewhere in the paper}
Let $X$, $Y$ be Banach spaces and let $\Sigma_N:X\to Y$ be bounded operators. Assume that $\Sigma_N$ converges in the weak operator topology, i.e. there is a linear operator $\Sigma:X\to Y$ so that for every $f\in X$ and every linear functional  $g\in Y'$, 
$$\lim_{N\to\infty} \inn {\Sigma_Nf}{g}=\inn{\Sigma f}{g}.$$
Then $\Sigma:X\to Y$ is  bounded, and there exists  $B<\infty$ so that
$$\|\Sigma\|_{X\to Y}\le \sup_N \|\Sigma_N\|_{X\to Y}\le B.$$
\end{lemma}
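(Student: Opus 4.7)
The plan is to deduce the statement from two applications of the Banach--Steinhaus (uniform boundedness) theorem, plus the Hahn--Banach characterization of the norm in $Y$. I shall first establish that $\sup_N\|\Sigma_Nf\|_Y<\infty$ for every fixed $f\in X$, then upgrade this to uniform operator boundedness, and finally pass the bound to the limit operator~$\Sigma$.

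First, I would fix $f\in X$ and consider the sequence $\{\Sigma_Nf\}_{N\in\N}$ in $Y$. By hypothesis, for every $g\in Y'$ the numerical sequence $\langle \Sigma_Nf,g\rangle$ converges, hence is bounded. Viewing each $\Sigma_Nf$ as an element of the bidual $Y''$ via the canonical embedding, the family $\{\Sigma_Nf\}\subset Y''$ is pointwise bounded on the Banach space $Y'$. The Banach--Steinhaus theorem applied in $\mathcal{L}(Y',\mathbb{C})$ then gives $\sup_N\|\Sigma_Nf\|_Y=\sup_N\|\Sigma_Nf\|_{Y''}<\infty$.

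Second, the family $\{\Sigma_N\}_{N\in\N}\subset\mathcal{L}(X,Y)$ is thus pointwise bounded on all of $X$. A second application of Banach--Steinhaus, this time to operators from $X$ into $Y$, yields
\[
B:=\sup_N\|\Sigma_N\|_{X\to Y}<\infty.
\]
This is the uniform bound asserted in the lemma.

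Finally, I would use this uniform bound to prove boundedness of the limit operator. For any $f\in X$ and any $g\in Y'$ with $\|g\|_{Y'}\le 1$, the hypothesis gives
\[
|\langle \Sigma f,g\rangle|=\lim_{N\to\infty}|\langle \Sigma_Nf,g\rangle|\le \sup_N \|\Sigma_N\|_{X\to Y}\,\|g\|_{Y'}\,\|f\|_X\le B\|f\|_X.
\]
Taking the supremum over $g$ in the unit ball of $Y'$ and invoking the Hahn--Banach corollary that $\|y\|_Y=\sup_{\|g\|_{Y'}\le 1}|\langle y,g\rangle|$ for every $y\in Y$, we conclude $\|\Sigma f\|_Y\le B\|f\|_X$, so $\Sigma\in\mathcal{L}(X,Y)$ with $\|\Sigma\|_{X\to Y}\le B$. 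There is no real obstacle here; the only subtlety to be careful about is the first step, where one must recognize that weak convergence (rather than norm convergence) of $\Sigma_Nf$ in $Y$ still forces boundedness, via the embedding $Y\hookrightarrow Y''$.
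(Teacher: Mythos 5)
Your proof is correct and follows essentially the same route as the paper: two applications of the uniform boundedness principle (first to the sequence $\{\Sigma_N f\}$ viewed in $Y''$ acting on $Y'$, then to the family $\{\Sigma_N\}\subset\mathcal{L}(X,Y)$), followed by passing the bound to the limit via Hahn--Banach. You are somewhat more explicit than the paper about the canonical embedding $Y\hookrightarrow Y''$ in the first step, but this is the same mechanism the paper is implicitly invoking.
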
 
\begin{proof}
We have $\sup_N\|\inn{\Sigma_N f}{g}|\le C_{f,g}<\infty $ for every $f,\in X$, $g\in Y'$.
By the uniform boundedness principle this implies
$\sup_N \|\Sigma_N f\|_Y \le C_f<\infty$ for all $f\in X$.
By the uniform boundedness principle again there is $A<\infty$ so that
$A:=\sup_N \|\Sigma_N \|_{X\to Y}  <\infty$.
Thus $C_{f,g} \le A \|f\|_X\|g\|_{Y'}$.  Passing to the limit we see
$|\inn{\Sigma f}{g}|\le A\|f\|_X \|g\|_{Y'}$ which implies $\|\Sigma \|_{X\to Y}\le A$.
\end{proof}

Given a formal series $\sum_{j \in \bbZ} T_j$ of bounded operators we say that
$\sum_{j\in \bbZ}  T_j$ converges in the weak operator topology as operators $X\to Y$
if the partial sums $\Sigma_N=\sum_{j=-N}^N T_j$ satisfy the assumptions in Lemma
\ref{UBPlemma}.

\begin{lemma} \label{densenesslemma}
%\footnote{This should appear elsewhere in the paper}
Let $X$, $Y$ be Banach   spaces, let $W$ be a linear subspace of $X$ which is dense in $X$.
 Let  $\Sigma_N:X\to Y$ be bounded operators. Assume that 
 $$\sup_N\|\Sigma_N\|_{X\to Y} \le A$$ and that
 for every $f\in W$, and every $g\in Y'$ 
$$\lim_{N\to\infty} \inn {\Sigma_Nf}{g}=\inn{\Sigma f}{g}$$
where $\Sigma:W\to Y$ is a linear operator. Then $\Sigma_N$ converges to $\Sigma$ in the weak 
 operator topology (as operators $X\to Y$) and we have $\|\Sigma\|_{X\to Y} \le A$.
\end{lemma}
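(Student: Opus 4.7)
The plan is to first extend $\Sigma$ from $W$ to all of $X$ by density, and then to verify weak convergence on $X$ via a standard $3\epsilon$-argument that exploits the uniform bound on the $\Sigma_N$.

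First I would show that the restriction $\Sigma:W\to Y$ is bounded with $\|\Sigma f\|_Y\le A\|f\|_X$ for $f\in W$. Indeed, for any $f\in W$ and any $g\in Y'$ with $\|g\|_{Y'}\le 1$, the hypothesis gives $|\inn{\Sigma f}{g}|=\lim_{N\to\infty}|\inn{\Sigma_N f}{g}|\le A\|f\|_X$, so taking supremum over such $g$ (using the standard duality characterization of the norm in $Y$) yields the bound. Since $W$ is dense in $X$, $\Sigma$ extends uniquely by continuity to a bounded linear operator $\widetilde\Sigma:X\to Y$ with $\|\widetilde\Sigma\|_{X\to Y}\le A$.

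Next I would show that $\Sigma_N f\to \widetilde\Sigma f$ weakly in $Y$ for every $f\in X$. Fix $f\in X$, $g\in Y'$, and $\epsilon>0$. Choose $f_0\in W$ with $\|f-f_0\|_X<\epsilon/(3A\|g\|_{Y'}+1)$. Then decompose
\begin{equation*}
\inn{\Sigma_Nf-\widetilde\Sigma f}{g}=\inn{\Sigma_N(f-f_0)}{g}+\inn{\Sigma_N f_0-\Sigma f_0}{g}+\inn{\widetilde\Sigma (f_0-f)}{g}.
\end{equation*}
The first and third terms are each bounded by $A\|f-f_0\|_X\|g\|_{Y'}<\epsilon/3$ (using $\sup_N\|\Sigma_N\|\le A$ and $\|\widetilde\Sigma\|\le A$). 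The middle term tends to $0$ as $N\to\infty$ by the hypothesis, since $f_0\in W$. Hence $\limsup_{N\to\infty}|\inn{\Sigma_Nf-\widetilde\Sigma f}{g}|\le 2\epsilon/3<\epsilon$, and since $\epsilon$ was arbitrary, the weak limit equals $\widetilde\Sigma f$.

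There is no real obstacle here; the argument is entirely routine once density and the uniform bound are in hand. The only mild subtlety is ensuring that the extension $\widetilde\Sigma$ agrees with $\Sigma$ on $W$ (which is immediate from the construction) and that the pointwise weak limit operator obtained from the $3\epsilon$-argument coincides with $\widetilde\Sigma$ on all of $X$ (which is forced by the uniqueness of the continuous extension). The final bound $\|\widetilde\Sigma\|_{X\to Y}\le A$ is already recorded in the extension step, and it also follows a posteriori from Lemma \ref{UBPlemma} applied to the just-established weak convergence.
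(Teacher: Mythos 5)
Your proof is correct and follows the same approach as the paper's (very terse) proof: establish the bound on $W$ via the duality characterization of the norm, extend $\Sigma$ by density, and then use the uniform bound on $\|\Sigma_N\|$ together with a $3\epsilon$-argument to upgrade convergence from $W$ to all of $X$. The paper compresses the last step into "it follows easily," which is exactly the routine approximation argument you wrote out.
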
 
\begin{proof} The assumptions imply that $\|\Sigma f\|_Y \le \|f\|_X$ for all $f\in W$, and $\Sigma$ 
extends uniquely to a bounded operator $X\to Y$ with operator norm at most $A$.
Moreover, using $\|\Sigma_N\|_{X\to Y} \le A$ it follows easily that $\Sigma_N\to\Sigma$ in the weak operator topology.
\end{proof}

\subsubsection{Consequences for sums of dilated  kernels}
%on the boundedness of singular integral operators}
We now formulate some consequences of the propositions above and the boundedness result \eqref{marc}.

%\begin{thm}\label{ThmWeakType}
%Assume that $\Sha_0[\tau]<\infty$ and define for $f\in L^2$
%\Be\label{Dilj} T_jf(x)=\int 2^{jd}\tau_j(2^j x, 2^j y) f(y) dy \,.\Ee Suppose that
%
%$T=\sum_jT_j$ converges in the weak operator topology as operators $L^2\rightarrow L^2$. 
%Then, $T$ extends to an operator of  weak type $(1,1)$, and if the weak type $(1,1)$ quasi-norm of $T$ satisfies $\|T\|_{(1,1)}\leq C_{d,\epsilon} A$ where
%$\| T\|_{(1,1)}$ denotes the \begin{equation*}\end{equation*}
%$$A=\Big( \|T\|_{L^2\rightarrow L^2} + \Sha^1[\tau] \log\Big( 1+ \frac{\Sha^1_\eps[\tau]+\Zhe^1_\eps[tau]}{\Sha_0^1[\tau]}\Big)\Big). $$\end{thm}

\begin{prop}\label{CorWeakTypeInterp} 
Let $\tau_j\in \Sha^1_\eps\cap \Zhe^1_{\eps, \rmr}$, so that
$$\Sha^1_0[\tau_j]\lc A , \quad \Sha^1_\eps[\tau_j] +\Zhe^1_{\eps, \rmr}[\tau_j]\le 
B.$$
Let  $T_j$ denote the integral operator with kernel $\Dilj \tau_j$.
 
 (i) Suppose that 
$T=\sum_{j\in \Z} T_j$ 
converges in the weak operator topology as operators $L^2\rightarrow L^2$.
Then, for $1<p\le 2$, $T$ extends to an operator bounded on $L^p$
such that
%$\frac{1}{p}=\frac{a}{2}+(1-a)$, ($0<a\leq 1$), we have
\begin{equation*}
\|T\|_{L^p\rightarrow L^p}\leq C_{d,p,\epsilon} 
\big(  \|T\|_{L^2\rightarrow L^2} 
+ \|T\|_{L^2\rightarrow L^2} ^{2-\frac 2p}\big(A  \log (2+B/A)
\big)^{\frac 2p-1}\big).
\end{equation*}

Moreover $T$ extends to an operator bounded from $H^1$ to $L^1$ and 
\begin{equation*}
\|T\|_{H^1\rightarrow L^1}\leq C_{d,\epsilon} 
\big(  \|T\|_{L^2\rightarrow L^2} 
+ A  \log (2+B/A)\big).
\end{equation*}

(ii) Suppose that 
$T=\sum_{j\in \Z} T_j$ 
converges in the strong operator topology, as operators $L^2\rightarrow L^2$.
Then  the sum also converges 
in the strong operator topology as operators $L^p\rightarrow L^p$, $1<p<2$
and
in the strong operator topology as operators $H^1\rightarrow L^1$.
\end{prop}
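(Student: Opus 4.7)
The overall strategy is to reduce to classical Calder\'on--Zygmund theory by means of Proposition~\ref{ShaZheimplSI}, which supplies the key singular integral estimate $\SI^1_0[K]\lesssim A\log(2+B/A)$ for the accumulated kernel $K=\sum_{j\in\Z}\Dilj\tau_j$, and then to invoke inequalities \eqref{hardy} and \eqref{marc}. For part (ii) the main additional work will be to upgrade strong $L^2$ convergence of $\Sigma_N=\sum_{|j|\le N}T_j$ to strong convergence in $L^p$ and in $H^1\to L^1$ via real interpolation and a density argument.

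For part (i), weak $L^2$ convergence together with Lemma~\ref{UBPlemma} gives that $T\in\cL(L^2,L^2)$. By Proposition~\ref{ShaZheimplSI} the series $\sum_j\Dilj\tau_j$ converges in $L^1_{\loc}((\R^d\times\R^d)\setminus\Delta)$ to a kernel $K$ with $\SI^1_0[K]\lesssim A\log(2+B/A)$. Pairing $\langle Tf,g\rangle$ with $f,g\in C^\infty_c$ having disjoint supports and passing to the limit $N\to\infty$ identifies $K$ as the Schwartz kernel of $T$ off the diagonal. Substituting the estimate on $\SI^1_0[K]$ into \eqref{marc} and \eqref{hardy} yields the stated $L^p$ and $H^1\to L^1$ bounds.

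For part (ii), applying part (i) to each truncation $\Sigma_{N_1}-\Sigma_{N_2}$ (whose kernel $\sum_{N_2<|j|\le N_1}\Dilj\tau_j$ inherits the same uniform bounds $A,B$) yields uniform boundedness of $\{\Sigma_N\}$ in $L^p\to L^p$, in $H^1\to L^1$, and in $L^1\to L^{1,\infty}$. To upgrade to strong $L^p$-convergence, fix $f\in L^1\cap L^2$, a dense subspace of $L^p$. The strong $L^2$-hypothesis gives $(\Sigma_N-T)f\to 0$ in $L^2$, while the uniform weak-type $(1,1)$ bound gives $\|(\Sigma_N-T)f\|_{L^{1,\infty}}\lesssim\|f\|_1$ uniformly. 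The Lorentz-space interpolation
\[
\|g\|_p \lesssim \|g\|_{L^{1,\infty}}^{\theta}\,\|g\|_2^{1-\theta}, \qquad \theta=\tfrac{2}{p}-1\in(0,1),
\]
(which follows from the distribution-function estimate $m(t;g)\le\min(t^{-1}\|g\|_{L^{1,\infty}},t^{-2}\|g\|_2^2)$ and optimizing the split) then gives $\|(\Sigma_N-T)f\|_p\to 0$. A standard density argument using the uniform $L^p$-boundedness of $\{\Sigma_N\}$ upgrades this to strong convergence on all of $L^p$.

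For the $H^1\to L^1$ convergence, finite linear combinations of $L^2$-atoms are dense in $H^1$, so it suffices to show $\|(\Sigma_N-T)a\|_1\to 0$ for each such atom $a$, supported in a ball $B=B(x_B,r)$ with $\int a=0$ and $\|a\|_2\le|B|^{-1/2}$. On $2B$, Cauchy--Schwarz combined with the strong $L^2$-convergence gives $\|\chi_{2B}(\Sigma_N-T)a\|_1\le|2B|^{1/2}\|(\Sigma_N-T)a\|_2\to 0$. On $(2B)^c$, the cancellation $\int a=0$ permits writing
\[
\chi_{(2B)^c}(\Sigma_N-T)a(x) = -\int\sum_{|j|>N}\bigl[\Dilj\tau_j(x,y)-\Dilj\tau_j(x,x_B)\bigr]\,a(y)\,dy,
\]
and Fubini bounds its $L^1$-norm by $\int|a(y)|\sum_{|j|>N}J_j(y,x_B)\,dy$, where $J_j(y,x_B):=\int_{(2B)^c}|\Dilj\tau_j(x,y)-\Dilj\tau_j(x,x_B)|\,dx$. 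Rescaling by $2^j$ and splitting each integral at the midpoint between $2^jy$ and $2^jx_B$, the arguments used in the proof of Proposition~\ref{ShaZheimplSI} yield the single-scale bound
\[
J_j(y,x_B) \lesssim \min\bigl\{A,\,B(2^j|y-x_B|)^\eps,\,B(2^jr)^{-\eps}\bigr\}
\]
which is summable in $j\in\Z$, so $\sum_{|j|>N}J_j(y,x_B)\to 0$ for each fixed $y\in B$; dominated convergence then finishes the estimate on $(2B)^c$. The density argument combined with the uniform $H^1\to L^1$ bound upgrades this to strong convergence on all of $H^1$. The main technical obstacle is this tail control on $(2B)^c$, which requires exploiting $\int a=0$ together with careful separation of the two scales $2^j|y-x_B|$ and $2^jr$; all other ingredients reduce to Proposition~\ref{ShaZheimplSI} and the classical theory.
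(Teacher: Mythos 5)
Your proof is correct, and part (i) is identical in spirit to the paper's: reduce to the classical estimates \eqref{hardy}, \eqref{marc} by verifying $\SI^1_0[K]\lesssim A\log(2+B/A)$ via Proposition~\ref{ShaZheimplSI}.

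For part (ii) you take a genuinely different route to the strong $L^p$-convergence. The paper first establishes strong convergence $H^1\to L^1$ (by the atom argument you also give), and then applies the elementary interpolation $\|h\|_p\le\|h\|_1^{2/p-1}\|h\|_2^{2-2/p}$ to tail sums $h=\sum_{j\in\cJ}T_j g$ with $g\in H^1\cap L^2$, thereby leaning on the $H^1\to L^1$ convergence already obtained. You instead invoke the uniform weak-type $(1,1)$ bound for the partial sums together with the strong $L^2$-convergence, and interpolate $\|g\|_p\lesssim\|g\|_{L^{1,\infty}}^{2/p-1}\|g\|_2^{2-2/p}$ directly on $L^1\cap L^2$; this decouples the $L^p$-convergence from the $H^1\to L^1$-convergence and is slightly more modular, at the cost of requiring the Lorentz-space interpolation rather than plain H\"older. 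Both work. One small point to make explicit: the statement of part (i) only records the $L^p$ and $H^1\to L^1$ operator norms, so you should note that the same input---Proposition~\ref{ShaZheimplSI} and \eqref{hardy}---also controls $\|\cdot\|_{L^1\to L^{1,\infty}}$ uniformly for the truncations, which is what your weak-type bound uses. Your direct atom argument for $H^1\to L^1$ convergence, with the single-scale bound $J_j(y,x_B)\lesssim\min\{A, B(2^j|y-x_B|)^\eps, B(2^jr)^{-\eps}\}$, is a legitimate (and slightly sharper) variant of the paper's estimate $\|T_j a\|_{L^1(\bbR^d\setminus Q^*)}\lesssim B\min\{(2^j\diam Q)^\eps,(2^j\diam Q)^{-\eps}\}$, and the dominated convergence step goes through as you describe.
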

\begin{proof}
By Proposition 
\ref{ShaZheimplSI}
 we have 
for $K$ as in \eqref{Ksum}
$\SI_{0}^1[K] \le \log (2+B/A)$ and the assertion (i)  follows from \eqref{marc} and \eqref{hardy}.

For (ii) we examine the proof of $H^1\to L^1$ boundedness.
Let $a$ be a $2$-atom supported in a cube $Q$ with center $y_Q$, i.e. we have $\|a\|_2\le |Q|^{-1/2}$, $\int a(x) dx=0$.
Let $Q^*$ be the double cube with the same center. 
By assumption 
$\sum_{j=-N}^N T_j a$ converges in $L^2(Q^*)$  and by H\"older's inequality in $L^1(Q^*)$.
Also, by the argument 
 in the proof of 
Proposition \ref{ShaZheimplSI},
\begin{align*}\|T_j a\|_{L^1(\bbR^d\setminus Q^*)}&\lc \int|a(y)| \int_{\bbR^d\setminus Q^*}
|\Dilj \tau_j(x,y)-\Dilj \tau_j(x,y_Q)| \,dx\, dy
\\ &\lc B \min \{ (2^j\diam (Q))^{\eps},  (2^j\diam (Q))^{-\eps}  \}
\end{align*} 
and clearly $\sum_{j=-N}^N T_j a$ converges in $L^1(\bbR^d\setminus Q^*)$ as well.

Let $f\in H^1$;  we need to establish convergence of $\sum_j T_jf$ in $L^1$.
By the atomic decomposition $f=\sum_{\nu=1}^\infty c_\nu a_\nu$ where $a_\nu$ are $2$-atoms  and $\sum_\nu |c_\nu|\lc \|f\|_{H^1}$. Given $\eps>0$ take $M$ so that
$\sum_{\nu=M}^\infty |c_\nu|\le \eps$.
Then there is $C$ independent of $M$, $\eps$ so that for all $N$ we have
$$\Big\|\sum_{j=-N}^N T\big( \sum_{\nu=M}^\infty c_\nu a_\nu\big)\Big\|_1<C\eps.$$ 
It is now straightforward to combine the arguments and deduce the convergence of
$\sum_j T_j f$ in $L^1$. 

In order to prove convergence in the strong operator topology as operators $L^p\to L^p$, $1<p<2$,
we apply the interpolation inequality $\|h\|_p\le 
\|h\|_1^{\frac 2p-1}\|h\|_2^{2- \frac 2p}$ to 
$h=\sum_{j\in \cJ} T_j g$ where $g\in H^1\cap L^2$. This yields that 
%$$\Big\|\Big\|\sum_{j\in \cJ} T_j g\Big\|_1^{\frac 2p-1}\Big\|\sum_{j\in \cJ} T_j g\Big\|_2^{2- \frac 2p}$$
 $\sum_j T_j g$ converges in $L^p$. Since $H^1\cap L^2$ is dense 
and since the operator norms $\sum_{j\in \cJ} T_j$ are bounded uniformly in $\cJ$, it is now straightforward to show convergence of $\sum_j T_jf$ for every $f\in L^p$.
\end{proof}

In our applications we work with the following
setting.
Let $\phi\in C_0^\infty(B^d(1))$ have $\int \phi=1$ and define $P_j f = f*\dil{\phi}{2^j}$.
Set $\psi(x)= \phi(x)-2^{-d}\phi(2^{-1}x)$, and set $Q_j f = f*\dil{\psi}{2^j}$.
We have $I=\sum_{j\in \Z} Q_j$, $P_j=\sum_{k\leq j} Q_k$ and $I-P_j= \sum_{k>j} Q_k$ in the sense of distributions.

%Suppose $S_j:\R^d\times \R^d\rightarrow \bbC$ ($j\in \Z$) is a sequence of measurable functions satisfying, for some $\epsilon>0$,
%\begin{equation*}C_\epsilon:= \sup_{j,x} \int (1+|x-y|)^{\epsilon}|S_j(x,y)|\: dy + \sup_{j,y} \int (1+|x-y|)^{\epsilon} |S_j(x,y)|\: dx<\infty.\end{equation*}
%Also set\begin{equation*}C_0:= \sup_{j,x} \int |S_j(x,y)|\: dy + \sup_{j,y}  \int |S_j(x,y)|\: dx.\end{equation*}

\begin{cor} 
\label{ThmWeakTypeWithP}
%\begin{thm}\label{ThmWeakTypeWithP}
Let $s_j:\bbR^d\times\bbR^d\to \bbC$ be a sequence of locally integrable kernels  and assume that
$$\sup_j \Sha^1_0[s_j] \le A, \quad
\sup_j \Sha^1_\eps[s_j] \le B\,.
$$
Let $S_j$ be the integral operator with integral kernel $\Dilj s_j$.
Suppose the sum $S=\sum_{j\in \Z} S_jP_j$ converges in the weak operator topology as operators $L^2\rightarrow L^2$.  Then, for $1<p\le 2$, 
$S:L^p\rightarrow L^p$ is bounded  and
\begin{equation*}
\| S\|_{L^p\rightarrow L^p} \leq C_{d,p,\epsilon} \big( \|S\|_{L^2\rightarrow L^2} + 
\|S\|_{L^2\to L^2}^{2-\frac 2p} (A\log( 2+B/A))^{\frac 2p-1}\big).
\end{equation*}
%The same result holds with $P_j$.
\end{cor}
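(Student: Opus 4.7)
The plan is to recognize $S=\sum_j S_jP_j$ as a sum $\sum_j T_j$ of operators whose kernels are dilates of a family $\{\tau_j\}$ satisfying the hypotheses of Proposition \ref{CorWeakTypeInterp}, and then apply that proposition directly.

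First, I would compute the Schwartz kernel of $T_j:=S_jP_j$. Since $S_j$ has kernel $\Dilj s_j(x,y) = 2^{jd} s_j(2^jx,2^jy)$ and $P_j$ is convolution with $\dil{\phi}{2^j}$, a change of variable $u = 2^j z$ in
\[
T_j f(x) = \int\!\!\int 2^{2jd} s_j(2^jx, 2^jz)\phi(2^j(z-y))f(y)\,dz\,dy
\]
shows that the kernel of $T_j$ is $\Dilj\tau_j$, where
\[
\tau_j(a,b) := \int s_j(a,u)\,\phi(u-b)\,du,
\]
i.e.\ $\tau_j(a,\cdot)$ is the convolution of $s_j(a,\cdot)$ with $\widetilde\phi(\cdot)=\phi(-\cdot)$ in the second variable.

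The second step is to verify, with constants independent of $j$, that $\tau_j$ belongs to $\Sha^1_\eps \cap \Zhe^1_{\eps,\rmr}$ with bounds governed by $A$ and $B$. By Fubini, $\Sha^1_0[\tau_j] \le \|\phi\|_1\,\Sha^1_0[s_j] \le A$. For the $\eps$-weighted Schur norm one uses the subadditivity inequality $(1+|x-y|)^\eps \le (1+|x-u|)^\eps + |u-y|^\eps$ (valid since $0<\eps\le1$) to split
\[
\int (1+|x-y|)^\eps |\tau_j(x,y)|\,dx \le \|\phi\|_1\,\Sha^1_\eps[s_j] + \Big(\int |z|^\eps|\phi(z)|\,dz\Big)\,\Sha^1_0[s_j] \lesssim B,
\]
where the finiteness of $\int|z|^\eps|\phi(z)|\,dz$ is guaranteed by $\phi\in C^\infty_0$. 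For the right-regularity, I would write
\[
\tau_j(x,y+h)-\tau_j(x,y)=\int s_j(x,u)\,[\phi(u-y-h)-\phi(u-y)]\,du,
\]
and then Fubini gives
\[
\int|\tau_j(x,y+h)-\tau_j(x,y)|\,dx \le \Sha^1_0[s_j]\,\|\phi(\cdot-h)-\phi\|_1 \lesssim A\,|h| \le A\,|h|^\eps
\]
for $|h|\le 1$, so $\Zhe^1_{\eps,\rmr}[\tau_j]\lesssim A \le B$.

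Finally, since $S=\sum_j T_j$ is assumed to converge in the weak operator topology on $L^2$, Proposition \ref{CorWeakTypeInterp}(i) applied to the family $\{\tau_j\}$ with the constants $A$ and $CB$ just obtained yields
\[
\|S\|_{L^p\to L^p} \le C_{d,p,\eps}\big(\|S\|_{L^2\to L^2} + \|S\|_{L^2\to L^2}^{2-2/p}\,(A\log(2+B/A))^{2/p-1}\big),
\]
which is the claim. There is no real obstacle here; the proof is essentially a bookkeeping exercise, and the only small point requiring care is ensuring that the regularity/Schur estimates on $\tau_j$ depend only on $A$, $B$ (and on $\phi$), not on any further parameters, so that the quantitative form of Proposition \ref{CorWeakTypeInterp} transfers cleanly.
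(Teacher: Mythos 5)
Your proof is correct and follows essentially the same route as the paper: compute the Schwartz kernel of $S_jP_j$ as $\Dilj\tau_j$ with $\tau_j(x,y)=\int s_j(x,z)\phi(z-y)\,dz$, check that $\Sha^1_0[\tau_j]\lesssim A$, $\Sha^1_\eps[\tau_j]\lesssim B$, and $\Zhe^1_{\eps,\rmr}[\tau_j]\lesssim A\le B$ (the last using the $C^1$ regularity and compact support of $\phi$), and then invoke Proposition~\ref{CorWeakTypeInterp}(i). The only cosmetic difference is the particular subadditivity estimate used to verify $\Sha^1_\eps[\tau_j]\lesssim B$; the paper simply notes $(1+|x-y|)^\eps\lesssim(1+|x-z|)^\eps$ on $\supp\phi(\cdot-y)$.
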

\begin{proof}
The kernel of $S_jP_j$ is equal to $\Dilj \tau_j$ where
$$\tau_j(x,y)= \int s_j(x,z) \phi(z-y) \,dz\,.$$
Clearly $\Sha_\eps^1[\tau_j]\lc \Sha_\eps^1[s_j]$ for $\eps\ge 0$ and in view of the regularity and support of $\phi$ we also have
$$\Zhe^1_{\delta,\rmr}[\tau_j]\lc \Sha^1_0[s_j]$$ for $\delta\le 1$. The assertion now follows from Corollary \ref{CorWeakTypeInterp}.
\end{proof}

\begin{cor}
\label{PropWeakTypeWithQ}
Let $s_j$, $S_j$ be  as in Corollary \ref{ThmWeakTypeWithP}
For $k\in \N$ define $S^k := \sum_{j\in \bbZ} S_j Q_{j+k}$.  Suppose that
this sum converges in the weak operator topology as
operators $L^2\rightarrow L^2$, and suppose that
for some $\eps'>0$
\begin{equation*}
D_{\eps'}:=\sup_{k>0} 2^{k\epsilon'} \| S^k\|_{L^2\rightarrow L^2}<\infty.
\end{equation*}
Also define
%\begin{equation*}
$D_0:=\sup_{k>0} \| S^k\|_{L^2\rightarrow L^2}.$
%\end{equation*}
Then, for $1<p\le 2$, 
%\begin{prop}\label{PropWeakTypeWithQ}
%For $1<p\leq 2$ ($\frac{1}{p}=\frac{a}{2}+(1-a)$, $0<a\leq 1$),
%\begin{equation*}\| S^k\|_{L^p\rightarrow L^p} \leq C_{p,d,\epsilon} \left(  2^{-k\epsilon'} D_{\epsilon'} \wedge D_0  + \left(  2^{-k\epsilon' a} D_{\epsilon'}^a \wedge D_0^a  \right)\left( C_0 \log\left( 1+2^k \frac{C_\epsilon}{C_0}  \right)  \right)^{1-a}  \right).\end{equation*}
\[\| S^k\|_{L^p\rightarrow L^p} \leq C_{p,d,\epsilon} \Big(
\min\{  2^{-k\epsilon'} D_{\epsilon'} , D_0  \}
+ 
\big(\min\{  2^{-k\epsilon'} D_{\epsilon'} , D_0  \}\big)^{2-\frac 2p}
\big(A 
\log (2^k +B/A) \big)^{\frac 2p-1}\Big).
\]
\end{cor}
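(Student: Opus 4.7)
\medskip

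\textbf{Proof proposal.} The plan is to identify $S^k$ as a sum $\sum_j T_j^k$ where $T_j^k := S_j Q_{j+k}$ has a kernel of the form $\mathrm{Dil}_{2^j} \tau_j^k$, verify uniform Schur and regularity bounds for the family $\{\tau_j^k\}$ (with constants that depend on $k$ in a controlled way), and then apply Proposition \ref{CorWeakTypeInterp}(i). First I would compute the kernel of $S_jQ_{j+k}$: by direct composition and the substitution $z' = 2^j z$,
\[
\mathrm{kernel}(S_j Q_{j+k})(x,y) = 2^{jd}\tau_j^k(2^j x, 2^j y), \qquad
\tau_j^k(x,y) := \int s_j(x,z)\,\dil{\psi}{2^k}(z-y)\,dz,
\]
so $\tau_j^k(x,\cdot)$ is $s_j(x,\cdot)$ convolved with $\dil{\psi}{2^k}$ in the right variable.

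Next I would verify the two Schur-type bounds. The unweighted bound is immediate from $\int|\dil{\psi}{2^k}|=\|\psi\|_1$: $\Sha^1_0[\tau_j^k] \lesssim A$. For the $\eps$-weighted bound, using $(1+|x-y|)^\eps \le (1+|x-z|)^\eps(1+|z-y|)^\eps$ (for $\eps \in (0,1]$) and the Schwartz decay of $\psi$,
\[
\Sha^1_\eps[\tau_j^k] \le \Sha^1_\eps[s_j]\int (1+|w|)^\eps|\dil{\psi}{2^k}(w)|\,dw \lesssim B.
\]
For regularity in the right variable, the difference $\dil{\psi}{2^k}(\cdot-h)-\dil{\psi}{2^k}(\cdot)$ is controlled in $L^1$ by $\min\{2\|\psi\|_1, 2^k|h|\|\nabla\psi\|_1\} \lesssim (2^k|h|)^\eps$, so together with the $L^1$-in-$z$ bound on $s_j(x,\cdot)$ we get
\[
\Zhe^1_{\eps,\rmr}[\tau_j^k] \lesssim A\,2^{k\eps}.
\]

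With these bounds in hand, and using that by hypothesis $S^k = \sum_j S_j Q_{j+k}$ converges in the weak operator topology on $L^2$, Proposition \ref{CorWeakTypeInterp}(i) applied to the family $\{\tau_j^k\}_j$ (with $B$ replaced by $B' := C(B + A\,2^{k\eps})$) yields, for $1<p\le 2$,
\[
\|S^k\|_{L^p\to L^p} \le C_{d,p,\eps}\!\left(\|S^k\|_{L^2\to L^2} + \|S^k\|_{L^2\to L^2}^{2-\frac 2p}\bigl(A\log(2+B'/A)\bigr)^{\frac 2p-1}\right).
\]
Since $\log(2+B'/A) \lesssim k\eps + \log(2+B/A) \lesssim \log(2^k + B/A)$, and since $\|S^k\|_{L^2\to L^2}\le \min\{2^{-k\eps'}D_{\eps'},\,D_0\}$ by definition, substituting these in produces exactly the claimed inequality.

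The only subtlety is the $2^{k\eps}$ growth of the regularity constant $\Zhe^1_{\eps,\rmr}[\tau_j^k]$; I do not expect a genuine obstacle, since this growth is precisely what feeds the $\log(2^k+B/A)$ factor (rather than $\log(2+B/A)$) on the right-hand side via the $\SI^1_0$ estimate in Proposition \ref{ShaZheimplSI} already built into Proposition \ref{CorWeakTypeInterp}.
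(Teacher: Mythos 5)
Your proposal is correct and follows essentially the same route as the paper: compute the kernel of $S_jQ_{j+k}$ as $\Dilj\tau_{j,k}$ with $\tau_{j,k}(x,y)=\int s_j(x,z)\,\psi^{(2^k)}(z-y)\,dz$, verify the uniform Schur bounds and the $k$-dependent right-regularity bound, and feed everything into Proposition~\ref{CorWeakTypeInterp}(i) with the $L^2$ estimate $\|S^k\|_{L^2\to L^2}\le\min\{2^{-k\eps'}D_{\eps'},D_0\}$. The paper simply records $\Zhe^1_{\delta,\rmr}[\tau_{j,k}]\lesssim 2^k A$ (for $\delta\le1$) via the straightforward gradient bound, whereas you interpolate $\min\{1,2^k|h|\}\lesssim(2^k|h|)^\eps$ to get the marginally sharper $\Zhe^1_{\eps,\rmr}[\tau_{j,k}]\lesssim 2^{k\eps}A$; both yield $\log(2^k+B/A)$ after absorption, so the distinction is immaterial.
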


\begin{proof} By definition $\|S^k\|_{L^2\to L^2}\le
\min\{  2^{-k\epsilon'} D_{\epsilon'} , D_0  \}$.
The integral kernel of $S_jQ_{j+k} $ is given by $\Dilj \tau_{j,k}$ where
$$\tau_{j,k}(x,y)= \int s_j(x,z) 2^{kd}\psi(2^k(z-y)) \, dz\,.$$
We have  $\Sha_\eps^1[\tau_{j,k}]\lc \Sha_\eps^1[s_j]$ for $\eps\ge 0$ and 
now $$\Zhe^1_{\delta,\rmr}[\tau_{j,k}]\lc 2^k \Sha^1_0[s_j] \lc 2^kA$$ for $\delta\le 1$. The assertion follows from Corollary \ref{CorWeakTypeInterp}.
\end{proof}

\begin{cor}\label{CorWeakTypeWithQ}
Let
$s_j$, $S_j$, $S^k$ be as in
Corollary \ref{PropWeakTypeWithQ}.
Define $\tS:=\sum_{j\in \Z} S_j(I-P_j) = \sum_{k>0} S^k$.  For $1<p\leq 2$ ,
%($\frac{1}{p}=\frac{a}{2}+(1-a)$, $0<a\leq 1$),
\[\|\tS\|_{L^p\rightarrow L^p} \le C_{p,d,\epsilon,\epsilon'} 
\Big( D_0 \log \big(2+ \frac{D_{\epsilon'}}{D_0}\big)  +D_0^{2-\frac 2p} A^{\frac 2p-1} 
\log \big(2+\frac{D_{\epsilon'}}{D_0}\big) \log^{\frac 2p-1} \!\big( 2+ \frac{D_{\epsilon'} }{D_0}+\frac BA\big)  \Big).
\end{equation*}
%\|\tS\|_{L^p\rightarrow L^p} \leq C_{p,d,\epsilon,\epsilon'} 
%\left( D_0 \log\left(1+\frac{D_{\epsilon'}}{D_0}\right)  +D_0^a C_0^{1-a} \log \left(1+\frac{D_{\epsilon'}}{D_0}\right) \log^{1-a} \left( 1+ \frac{D_{\epsilon'} C_{\epsilon} }{ D_0 C_0 } \right)  \right).
\end{cor}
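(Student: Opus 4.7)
The plan is to decompose $\tS = \sum_{k>0} S^k$ and apply Corollary \ref{PropWeakTypeWithQ} to each piece, then sum in $k$. First, from the definitions we have the basic $L^2$ bound
\[
\|S^k\|_{L^2\to L^2} \le m_k := \min\bigl\{D_0,\, 2^{-k\eps'} D_{\eps'}\bigr\}.
\]
The threshold $k_0 := \lceil (\eps')^{-1}\log_2(1+D_{\eps'}/D_0)\rceil$ separates the two regimes, and the telescoping geometric estimate
\[
\|\tS\|_{L^2\to L^2}\le \sum_{k>0} m_k \;\lesssim\; k_0 D_0 + D_0 \;\lesssim\; D_0\log\bigl(2+D_{\eps'}/D_0\bigr)
\]
already yields the $L^2$ piece of the claim (and in particular secures the convergence of $\sum_k S^k$ in the strong/weak $L^2$ operator topology from which Corollary \ref{PropWeakTypeWithQ} is applicable).

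Next I would invoke Corollary \ref{PropWeakTypeWithQ} term by term: for $1<p\le 2$,
\[
\|S^k\|_{L^p\to L^p} \le C\Bigl( m_k + m_k^{2-\frac2p}\bigl(A\log(2^k+B/A)\bigr)^{\frac2p-1}\Bigr).
\]
Summing the first term gives exactly $D_0\log(2+D_{\eps'}/D_0)$, as above. For the second term I would use the subadditivity $(\alpha+\beta)^{\frac2p-1}\le \alpha^{\frac2p-1}+\beta^{\frac2p-1}$ (valid since $\frac2p-1\in[0,1]$) to write
\[
\bigl(A\log(2^k+B/A)\bigr)^{\frac2p-1}\;\lesssim\; A^{\frac2p-1}\bigl(k^{\frac2p-1} + \log^{\frac2p-1}(2+B/A)\bigr),
\]
and split the sum $\sum_{k>0} m_k^{2-\frac2p}(\,\cdot\,)$ at $k_0$. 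On $k\le k_0$ one has $m_k=D_0$, and $k\le k_0\lesssim \log(2+D_{\eps'}/D_0)$, so both contributions are bounded by
\[
D_0^{2-\frac2p} A^{\frac2p-1}\, k_0\,\log^{\frac2p-1}\!\bigl(2+D_{\eps'}/D_0+B/A\bigr).
\]
On $k>k_0$ one has $m_k=2^{-k\eps'}D_{\eps'}$, and since $2^{-k_0\eps'}D_{\eps'}\asymp D_0$, the geometric sum in $k$ converges and produces the same bound (with the $k^{\frac2p-1}$ factor absorbed into $k_0^{\frac2p-1}\le k_0$).

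Collecting everything yields
\[
\|\tS\|_{L^p\to L^p}\lesssim D_0\log\!\Bigl(2+\tfrac{D_{\eps'}}{D_0}\Bigr) + D_0^{2-\frac2p} A^{\frac2p-1}\log\!\Bigl(2+\tfrac{D_{\eps'}}{D_0}\Bigr)\log^{\frac2p-1}\!\Bigl(2+\tfrac{D_{\eps'}}{D_0}+\tfrac{B}{A}\Bigr),
\]
which is the asserted inequality. The main obstacle is purely bookkeeping: keeping the two logarithmic factors straight while carrying through the subadditivity estimate, and showing that the $k^{\frac2p-1}$ tail in the range $k>k_0$ is dominated by the $k\le k_0$ contribution so no extra log power sneaks in. The argument about operator-topology convergence of $\sum_k S^k$ at the $L^p$ level follows from the uniform $L^p$ bounds on the partial sums combined with $L^2$ convergence on a dense subspace, in the spirit of Lemma \ref{densenesslemma}.
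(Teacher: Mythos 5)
Your proof is correct and follows the same route as the paper: decompose $\tS=\sum_{k>0}S^k$, apply Corollary \ref{PropWeakTypeWithQ} termwise, and sum the two resulting series. The only difference is in how the second sum $\sum_{k>0} m_k^{2-2/p}\big(A\log(2^k+B/A)\big)^{2/p-1}$ is handled: after factoring out $D_0^{2-2/p}A^{2/p-1}$, the paper invokes the prepackaged Lemma \ref{LemmaBasicSum} with $U=D_{\eps'}/D_0$, $V=B/A$, $\alpha=2-2/p$, $\beta=2/p-1$, whereas you re-derive that lemma's conclusion inline via the subadditivity of $t\mapsto t^{\gamma}$ for $\gamma\in[0,1]$ and a split at $k_0\asymp(\eps')^{-1}\log(1+D_{\eps'}/D_0)$. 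Your bookkeeping checks out (in particular the tail $k>k_0$ contributes a bounded multiple of $D_0^{2-2/p}A^{2/p-1}k_0^{\gamma}$, which is absorbed as you claim because $\sum_{j\ge1}2^{-j\eps'(2-2/p)}j^{\gamma}<\infty$ once $p>1$), so the two arguments are equivalent in content; citing Lemma \ref{LemmaBasicSum} would just shorten the exposition.
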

\begin{proof}  
By Corollary  \ref{PropWeakTypeWithQ}, we have
\begin{equation*}
 \| \tS\|_{L^p\rightarrow L^p} \lesssim \sum_{k>0}   \min\{2^{-k\epsilon'} D_{\epsilon'} , D_0\}  + \sum_{k>0}
 \big( \min\{2^{-k\epsilon'} D_{\epsilon'} , D_0\}\big)^{2-\frac 2p}   \big( A \log
 ( 2^k +B/A) )^{\frac 2p-1} .
\end{equation*}
Clearly, $\sum_{k>0} \min\{2^{-k\epsilon'} D_{\epsilon'} ,D_0 \}\lesssim D_0 \log(2+D_{\eps'}/D_0)$. 
Also, the second sum equals
\begin{equation*}
%\begin{split}
%&\sum_{k>0}  \left(  2^{-k\epsilon' a} D_{\epsilon'}^a \wedge D_0^a  \right)\left( C_0 \log\left( 1+2^k \frac{C_\epsilon}{C_0}  \right)  \right)^{1-a} 
%\\&= 
D_0^{2-\frac 2p} A^{\frac 2p-1} \sum_{k>0} \min\big 
\{2^{-k\epsilon'} \frac{D_{\epsilon'}}{D_0}, 1\big\}^{2-\frac 2p} \big(\log(2^k +\frac BA)\big)^{\frac 2p-1}.
%\end{split}
\end{equation*}
To conclude apply the following  Lemma
\ref{LemmaBasicSum} with $\beta= -1+2/p$.
%, $1-a=2/p-1$.
\end{proof}

\begin{lemma}\label{LemmaBasicSum}
Fix $\epsilon>0$,  $\alpha>0$, $\beta\ge 0$. Let $U,V\geq 1$, then
\begin{equation*}
\sum_{k\geq 0}(\min\{2^{-k\epsilon} U, 1\})^\alpha \log^{\beta}\!(2^k+ V) 
\leq C_{\epsilon, \alpha,\beta} \log(1+U) \log^{\beta} (1+U+V).
\end{equation*}
\end{lemma}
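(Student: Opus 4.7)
The plan is to split the sum at the threshold $k_0 := \lfloor \epsilon^{-1}\log_2 U\rfloor$, which is the (essentially) unique integer for which the minimum $\min\{2^{-k\epsilon}U,1\}$ transitions from the value $1$ to the value $2^{-k\epsilon}U$. Thus for $k\le k_0$ the factor $(\min\{2^{-k\epsilon}U,1\})^\alpha$ equals $1$, and for $k>k_0$ we have $2^{-k\epsilon}U\lesssim 1$ with geometric decay in $k-k_0$. Write $S=S_{\mathrm{lo}}+S_{\mathrm{hi}}$ accordingly.

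For the low range I would estimate
\[
S_{\mathrm{lo}}=\sum_{k=0}^{k_0}\log^{\beta}(2^k+V)\le (k_0+1)\log^{\beta}(2^{k_0}+V).
\]
Since $U\ge 1$ gives $k_0+1\lesssim_\epsilon 1+\log U\lesssim \log(1+U)$ and $2^{k_0}\le U^{1/\epsilon}$, we get
\[
\log(2^{k_0}+V)\le \log\bigl(U^{1/\epsilon}+V\bigr)\lesssim_\epsilon \log(1+U+V),
\]
so $S_{\mathrm{lo}}\lesssim_{\epsilon,\beta}\log(1+U)\log^{\beta}(1+U+V)$, which is the target bound.

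For the high range, substituting $j=k-k_0\ge 1$ and using $2^{-k_0\epsilon}U\lesssim 1$,
\[
S_{\mathrm{hi}}=\sum_{k>k_0}(2^{-k\epsilon}U)^{\alpha}\log^{\beta}(2^k+V)\lesssim_{\alpha}\sum_{j\ge 1}2^{-j\epsilon\alpha}\log^{\beta}(2^{k_0+j}+V).
\]
Bounding $\log(2^{k_0+j}+V)\lesssim j+k_0+\log(1+V)\lesssim j+\log(1+U+V)$ and using the trivial inequality $(a+b)^{\beta}\lesssim_\beta a^{\beta}+b^{\beta}$ for $a,b\ge 0$ reduces matters to the convergent geometric sum $\sum_{j\ge 1}2^{-j\epsilon\alpha}(j^{\beta}+\log^{\beta}(1+U+V))$, which is $\lesssim_{\epsilon,\alpha,\beta}\log^{\beta}(1+U+V)$. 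Since $U\ge 1$ implies $\log(1+U)\ge\log 2$, the factor $1$ in front is $\lesssim\log(1+U)$, and we absorb it into the desired bound. There is no real obstacle here — the only thing to be slightly careful about is the case $U$ close to $1$ (where $k_0=0$ and the low sum may be empty or just one term) and the case $\beta=0$ (where the logarithmic factor simply disappears); both are handled by the estimates above since $\log(1+U)\ge\log 2>0$.
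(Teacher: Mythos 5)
Your proof is correct and follows essentially the same strategy as the paper: split the sum at the transition point $k_0\approx\epsilon^{-1}\log_2 U$ of the $\min\{2^{-k\epsilon}U,1\}$ factor, count terms in the low range, and exploit geometric decay in the high range. The paper's proof further subdivides each range according to whether $2^k\lessgtr V$, whereas you handle the $V$-dependence in one stroke via $\log^\beta(2^k+V)\lesssim_\beta k^\beta+\log^\beta(1+V)$; this is a cosmetic difference, not a different route.
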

\begin{proof}
Let $J_k(U,V)=  (\min\{2^{-k\epsilon} U, 1\})^\alpha 
\log^{\beta}\!(2^k+ V) $.

We first consider the terms with 
$2^{-k\eps/2}U\le 1$.
Observe 
$$\sum_{\substack {2^{-k\eps/2}U
 \le 1\\ 2^k\le V}}
J_k(U,V) 
\lc \log^\beta\!(1+V) \sum_{2^{k\eps/2} \le U} (U 2^{-k\eps})^\alpha \lc
 \log^\beta\!(1+V)
$$
and 
$$
\sum_{\substack {2^{-k\eps/2}U \le 1\\ 2^k> V}}
J_k(U,V) \lc \sum_{2^{-k\eps/2} U\ge 1} (U 2^{-k\eps})^\alpha k^\beta
\lc \sum_{k: 2^{-k\eps/2}U \le 1} ( U 2^{-k\eps/2})^{\alpha} \,\lc 1\,.
$$
The main contribution comes from the terms with $2^{-k\eps/2}U\ge 1$; here we use
$$\sum_{\substack {2^{-k\eps/2}U
 \ge 1\\ 2^k\le V}}
J_k(U,V) \lc \log^\beta\!(1+V) \sum_{2^{k\eps/2} \le U} 1 \lc \log(1+U)
\log^\beta\!(1+V)
$$
and
$$\sum_{\substack {k: 2^{-k\eps/2} U\ge 1
 \\ 2^k\ge  V}}
J_k(U,V) \lc \sum_{k:2^{k\eps/2}\le U} k^{\beta} \lc \log^{\beta+1}(1+U)\,.
$$
Clearly, all four terms are $\lc  \log(1+U)\log^\beta(1+U+V)$ and the asserted bound follows.
\end{proof}

\subsection{On a result of Journ\'e}\label{Journesect}
%Let $T$ be the operator with kernel $K$.
For a cube $Q$ let $Q^*$ be the double cube with same center.

\begin{defn}\label{Carlesoncond}
Let $T: C^\infty_0(\bbR^d)\to \cD'(\bbR^d)$ be an operator with Schwartz kernel $K$. We say that $T$ satisfies a Carleson condition if there is a constant $C$ so that  for all cubes $Q$ and for all
bounded functions $f$ supported in $Q$, $Tf\in L^1(Q^*)$ and the inequality
$$\int_{Q^*} |Tf(x)|dx \le C|Q| \|f\|_\infty$$ is satisfied.
We denote by $\|T\|_{\Carl} $ the best constant in the displayed inequality.
\end{defn}
\medskip

Journ\'e \cite{journe}
considered a class of operators associated with  regular singular integral  kernels
satisfying, say, $|K(x,y)|\lc |x-y|^{-d}$, $|\nabla_x K(x,y)|+|\nabla_y K(x,y)|
\lc |x-y|^{-d-1}$ and showed
that the following conditions are equivalent.

\begin{itemize}
%$\bullet$
 \item $T$ satisfies a Carleson condition.

%$\bullet$ 
\item $T$ maps $H^1$ to $L^1$.

%$\bullet$
\item  $T$ maps $L^\infty_0$ to $BMO$.
\end{itemize}
He then used  an interpolation theorem to show that each condition is equivalent with
\begin{itemize}
\item   $T$ maps $L^2$ to $L^2$.
\end{itemize}

We now give versions of   Journ\'e's  theorem  for larger classes of  kernels which arise  in our main result.
%that covers the generality necessary for our applications.

%In our application to $n$-linear operators we often  have the situation that$A= O(1)$ and $B_l+B_r= O(n^M)$ for some $M>0$ in the theorem below. This and some interpolation  leads then to an $L^2$ bound which is $O(\log n)$ as $n\to \infty$.

\begin{defn}\label{atomicboundedness}
(i) A integrable function is called an $\infty$-atom associated to a cube $Q$  if $a$  is supported on $Q$, and satisfies 
$\|a\|_\infty\le |Q|^{-1}$ and $\int a(x) dx=0$.

(ii) A linear operator defined on compactly supported functions with integral zero satisfies 
 the {\it atomic boundedness condition} if 
 $$\|T\|_{\At} :=\sup \|Ta\|_1 <\infty$$ where the sup is taken over all $\infty$-atoms.
  \end{defn}

\begin{rmk}One can also make a definition of a class $\At(q)$ where one works 
 with $q$-atoms satisfying $\supp a\subset Q$, $\|a\|_q\le |Q|^{-1+1/q}$
and $\int a(x) dx=0$. Define $\|T\|_{\At(q)}=\sup \|Ta\|_1$ where the supremum is taken over all $q$-atoms. 
For  the case $1<q<\infty$ one has $T\in \At(q)$ if and only if $T$ extends to a bounded operator $H^1\to L^1$, and 
$\|T\|_{\At(q)}\approx \|T\|_{H^1\to L^1}$. This is a special case of a result by Meda, Sj\"ogren and Vallarino \cite{MSV}. 
The  equivalence may fail for the case $q=\infty$, as was shown by  Bownik \cite{bownik}.
We remark that for special classes of Calder\'on-Zygmund operators the equivalence holds true even for $q=\infty$ (see \cite[\S7.2]{MC}, and the proof of Theorem 
\ref{Journe-equiv} below). For most situations in harmonic analysis 
the use of $\infty$-atoms (instead of  $q$-atoms) does not yield a significant advantage, but in our application it will be crucial  to work with $\infty$-atoms.
\end{rmk}

In the  following three propositions  
$T: C^\infty_0(\bbR^d)\to \cD'(\bbR^d)$  will denote a linear operator with Schwartz kernel $K\in \cD'(\bbR^d\times \bbR^d)\cap L^1_{\loc}((\bbR^d\times \bbR^d)setminus\Delta)$. The proofs 
use the arguments of  Journ\'e \cite[\S 4.2]{journe}.
\begin{prop}\label{Jpropi}
Suppose that $T$ satisfies the atomic boundedness condition and the averaged annular integrability condition. 
Then 
$$\|T\|_{\Carl} \lc  \|T\|_{\At}+ \ann_\av[K]
\,.$$
\end{prop}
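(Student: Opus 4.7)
Given bounded $f$ with $\supp{f}\subset Q$ and $\|f\|_\infty\le 1$, I would first write $f=\tilde f+f_Q\chi_Q$, where $f_Q:=|Q|^{-1}\int_Q f$ and $\tilde f:=f-f_Q\chi_Q$. Then $\tilde f$ is supported in $Q$, has mean zero, and satisfies $\|\tilde f\|_\infty\le 2$, so $\tilde f/(2|Q|)$ is an $\infty$-atom associated to $Q$; the atomic boundedness hypothesis yields
\[ \int_{Q^*}|T\tilde f(x)|\,dx \;\le\; \|T\tilde f\|_1 \;\le\; 2|Q|\,\|T\|_{\At}. \]
Since $|f_Q|\le 1$, it therefore suffices to establish the key bound
\[ \int_{Q^*}|T\chi_Q(x)|\,dx \;\le\; C|Q|\bigl(\|T\|_{\At}+\ann_\av[K]\bigr). \]

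To prove this, I would split $Q^*=Q\sqcup(Q^*\setminus Q)$ and treat the two parts separately. The integral over $Q^*\setminus Q$ can be handled using only the annular hypothesis: dominating it by $\iint_{x\in Q^*\setminus Q,\,y\in Q}|K(x,y)|\,dy\,dx$ and decomposing dyadically according to $R_m:=2^{-m}\ell(Q)$, the region where $R_m\le|x-y|\le 2R_m$ lies in the $R_m$-neighborhood of $\partial Q$ inside $Q^*$, a tubular set of measure $\sim R_m\ell(Q)^{d-1}$ coverable by $\sim (\ell(Q)/R_m)^{d-1}$ balls of radius $R_m$. Applying $\ann_\av[K]$ ball by ball gives a contribution $\sim R_m\ell(Q)^{d-1}\ann_\av[K]=2^{-m}|Q|\ann_\av[K]$ per scale, which is summable in $m$, yielding $\int_{Q^*\setminus Q}|T\chi_Q|\lc|Q|\ann_\av[K]$.

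The essential difficulty lies in bounding $\int_Q|T\chi_Q|$: a naive annular decomposition fails, since for $x,y\in Q$ each dyadic scale $R_m\le\ell(Q)$ contributes $\sim|Q|\ann_\av[K]$ and there are infinitely many such scales. My plan here is to use a telescoping $\infty$-atomic decomposition along a chain of concentric cubes $Q=Q^{(0)}\supset Q^{(1)}\supset\cdots$ at some point $x_*\in Q$ with $\ell(Q^{(j)})=2^{-j}\ell(Q)$, relying on the identity
\[ \chi_Q \;=\; 2^d|Q|\sum_{j=0}^{N-1}b_j \;+\; 2^{Nd}\chi_{Q^{(N)}}, \]
where $b_j:=2^{jd}(2^d|Q|)^{-1}\bigl(\chi_{Q^{(j)}}-2^d\chi_{Q^{(j+1)}}\bigr)$ is an $\infty$-atom on $Q^{(j)}$. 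For each $j$, the piece of $Tb_j$ inside $(Q^{(j)})^*$ is controlled by $\|Tb_j\|_1\le\|T\|_{\At}$, while the piece outside $(Q^{(j)})^*$ but still inside $Q$ is handled by the dyadic-annulus cover of the previous paragraph, applied now at scales $\ge\ell(Q^{(j)})$.

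The hard part will be closing the $N\to\infty$ limit without accumulating factors of $N$. The key is to regroup the non-local contributions by annular distance scale rather than by $j$, so that each distance scale is charged $O(|Q|\ann_\av[K])$ only once, absorbing the telescoping. The remainder $2^{Nd}\chi_{Q^{(N)}}$ converges distributionally to $|Q|\delta_{x_*}$; its contribution to $\int_Q|T\chi_Q|$ is absorbed via a limiting argument using that $\ann_\av[K]$ controls the \emph{averaged}, and not the pointwise, behaviour of $K(\cdot,y)$ near the diagonal. Combining the three estimates produces the required bound.
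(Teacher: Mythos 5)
Your opening reduction (split $f=\tilde f+f_Q\chi_Q$ with $\tilde f$ mean zero, handle $\tilde f$ atomically) is fine, and your tubular decomposition of $\int_{Q^*\setminus Q}|T\chi_Q|$ via a dyadic cover of a neighborhood of $\partial Q$ is a legitimate use of $\ann_\av[K]$. But the third and central piece — bounding $\int_Q|T\chi_Q|$ via the telescoping $\infty$-atomic decomposition on concentric cubes $Q^{(j)}$ — has a genuine gap. The atomic hypothesis gives $\|Tb_j\|_1\le\|T\|_{\At}$, and this is a statement about the \emph{total} $L^1$ norm of $Tb_j$, not a quantity that can be localized or ``regrouped by annular distance scale.'' Summing these over $j=0,\dots,N-1$ inevitably costs a factor of $N$, and there is no cancellation available because $Tb_j$ are added with absolute values. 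Moreover, the remainder $2^{Nd}\chi_{Q^{(N)}}$ does converge distributionally to $|Q|\delta_{x_*}$, but $T(\delta_{x_*})$ is essentially the kernel slice $K(\cdot,x_*)$, and neither $\ann_\av[K]$ nor $\|T\|_{\At}$ controls $\int_Q|K(x,x_*)|\,dx$ — this is generally infinite near the diagonal even for classical Calder\'on--Zygmund kernels. So the concentric-cube approach cannot close.

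The paper's argument sidesteps all of this by putting the cancelling mass \emph{far away} instead of inside $Q$. One picks a cube $Q_1$ of the same sidelength as $Q^*$, at distance $\approx\diam(Q^*)$ from $Q^*$, and extends $f$ to a mean-zero $f_1$ supported in $Q\cup Q_1$ with $\|f_1\|_\infty\le\|f\|_\infty$. Then (a rescaled multiple of) $f_1$ is an $\infty$-atom for a cube of sidelength $\approx\ell(Q)$ containing $Q\cup Q_1$, so $\int_{Q^*}|Tf_1|\lesssim |Q|\|f\|_\infty\|T\|_{\At}$; and the correction $f_2=f-f_1$ is supported in $Q_1$, where $|x-y|\approx\diam(Q)$ for all $x\in Q^*$, so a \emph{single} application of $\ann_\av[K]$ at scale $\approx\diam(Q)$ gives $\int_{Q^*}|Tf_2|\lesssim|Q|\|f\|_\infty\ann_\av[K]$. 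No infinite dyadic sum ever appears, because the cancellation lives at one fixed scale away from $Q$. You could salvage your plan by applying precisely this far-field trick directly to $\chi_Q$ in place of your concentric-cube telescoping, but that would just recover the paper's argument.
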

\begin{prop}\label{Jpropii}
%Let $K\in \cD'(\bbR^d\times \bbR^d)\cap L^1_{\loc}(\bbR^d\times \bbR^d\setminus\Delta)$ and let  $T$ be the linear operator with Schwartz kernel $K$.
Suppose that $\SI^\infty[K]<\infty$ , $\ann_\av[K]<\infty$
 and
that $T$ satisfies a Carleson condition.  Then $T$ extends to a bounded operator from 
$L^\infty_0$ to $BMO$ satisfying 
$$\|T\|_{L^\infty_0\to BMO} \lc \|T\|_{\Carl} + \SI^\infty[K].
%\ann_\av[K]+ 
$$
%Suppose that in addition, 
%for some $\eps>0$,
%$$\ann^\infty[K]\le A_\infty, \quad \SI_\eps^\infty[K]\le B_\infty\,,$$
%Then 
%$$\|T\|_{L^\infty_0\to BMO} \lc \|T\|_{\Carl} + A_\infty \log (2+\eps^{-1} B_\infty/A_\infty)
% A \log(2+ B_r/A)
%$$
\end{prop}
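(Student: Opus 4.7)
The plan is to adapt Journé's classical argument \cite{journe}. Fix a cube $Q\subset\bbR^d$ with center $x_Q$, and take a dimensional enlargement $Q^\#=\lambda Q$ (concentric with $Q$) with $\lambda=\lambda(d)$ chosen large enough so that
\[
x\in Q,\ y\notin Q^\#\ \Longrightarrow\ |y-x|\ge 2|x-x_Q|;
\]
any $\lambda\ge 2\sqrt{d}+1$ suffices. Split $f=f_1+f_2$ with $f_1=f\chi_{Q^\#}$ and $f_2=f\chi_{(Q^\#)^c}$, and estimate the oscillation of $Tf$ on $Q$ by treating the two pieces separately.

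For the local part, the Carleson hypothesis applied to the cube $Q^\#$ and the function $f_1$ (supported in $Q^\#$) gives
\[
\int_{(Q^\#)^*}|Tf_1(x)|\,dx\;\le\;\|T\|_{\Carl}\,|Q^\#|\,\|f\|_\infty.
\]
Since $Q\subset Q^\#\subset(Q^\#)^*$, dividing by $|Q|$ yields
\[
\tfrac{1}{|Q|}\!\int_Q |Tf_1(x)|\,dx\;\lesssim\;\lambda^d\,\|T\|_{\Carl}\,\|f\|_\infty.
\]
For the far part, I would define, for $x\in Q$,
\[
g(x)\;:=\;\int_{(Q^\#)^c}\bigl[K(x,y)-K(x_Q,y)\bigr]f(y)\,dy,
\]
which represents $Tf_2$ modulo the (formal) constant $Tf_2(x_Q)$. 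By our choice of $\lambda$, the region of integration is contained in $\{|y-x|\ge 2|x-x_Q|\}$, so the $\SI^\infty$ hypothesis gives pointwise in $x\in Q$
\[
|g(x)|\;\le\;\|f\|_\infty\!\int_{|y-x|\ge 2|x-x_Q|}\!|K(x,y)-K(x_Q,y)|\,dy\;\le\;\SI^\infty[K]\,\|f\|_\infty.
\]
Choosing $c_Q$ to be the constant so that $Tf(x)-c_Q=Tf_1(x)+g(x)$ on $Q$, adding the two bounds, and taking the infimum over $c$ in the BMO definition completes the estimate uniformly in $Q$.

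The main obstacle is not any single estimate but rather the set-up: making sense of $Tf$ as a well-defined $BMO$ equivalence class for $f\in L^\infty_0$. The splitting $f=f_1+f_2$ depends on the cube $Q$, and one must check that the resulting local definitions agree modulo additive constants on overlaps $Q\cap Q'$. This is where the remaining hypothesis $\ann_\av[K]<\infty$ enters: combined with $\SI^\infty[K]<\infty$ it yields the $L^1_{\mathrm{loc}}$ representation of $Tf_1$ outside $(Q^\#)^*$ and ensures the consistency of the formal cancellations that occur when one passes from one cube to another. This bookkeeping is routine but must be carried out before the two displayed estimates above can be assembled into a genuine $BMO$ bound.
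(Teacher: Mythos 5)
Your decomposition and estimates are the same as the paper's: split $f$ into a near part $f_1$ (supported in a dilate of $Q$) handled by the Carleson hypothesis, and a far part $f_2$ handled by the $\SI^\infty$ condition after subtracting the value of the kernel at a reference point. The one place where the paper is more careful than you is exactly the step you label ``bookkeeping.'' You take the reference point to be the fixed center $x_Q$, so your constant is $c_Q = Tf_2(x_Q)$. But $\ann_\av[K]<\infty$ only guarantees $Tf_2(w)$ is finite for \emph{almost every} $w$, not at the specific point $x_Q$ (a null set). The paper resolves this by averaging the reference point over a small ball $B_Q=\{w:|w-x_Q|\le (2d)^{-1}\diam Q\}$: since $\inf_C\intslash_Q|Tg-C|$ is a fixed number, it is bounded by its average over $w$ of $\intslash_Q|Tg_1|+\intslash_Q|Tg_2(x)-Tg_2(w)|\,dx$, and the almost-everywhere finiteness of $Tg_2$ is all that is needed. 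This is precisely the role of $\ann_\av$ in the argument. Your concern about ``consistency of the local definitions on overlaps $Q\cap Q'$'' is a red herring: $\mathrm{BMO}$ only requires $\sup_Q\inf_c\intslash_Q|Tf-c|<\infty$ and that $Tf$ (already a distribution, here with compactly supported $f$) be represented by an $L^1_{\loc}$ function; no cube-to-cube matching of constants is needed. Finally, a small numerical slip: $\lambda\ge 2\sqrt d+1$ does not guarantee $|y-x|\ge 2|x-x_Q|$ for all $x\in Q$, $y\notin\lambda Q$; one needs roughly $\lambda\ge 3\sqrt d$.
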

\begin{prop}\label{Jpropiii}
%Let $K\in \cD'(\bbR^d\times \bbR^d)\cap L^1_{\loc}(\bbR^d\times \bbR^d\setminus\Delta)$ and let  $T$ be the linear operator with Schwartz kernel $K$.
Suppose that  
$\SI^1[K]<\infty$ 
and that 
%for some $\eps>0$,
%$$\ann^1[K]\le A_1,\quad \SI_\eps^1[K]\le B_1\,$$ and
$T$ extends to  a bounded operator $T: L_0^\infty\to BMO$. Then
$T$ satisfies the atomic boundedness condition and 
% $T:H^1\to L^1$ is bounded so that
$$\|T\|_{\At} \lc \|T\|_{L_0^\infty\to BMO} + \SI^1[K].
%+\ann_\av[K] 
%A_1 \log (2+\eps^{-1} B_1/A_1)
$$
\end{prop}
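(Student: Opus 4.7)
The plan is to estimate $\|Ta\|_1$ for an arbitrary $\infty$-atom $a$, where $\mathrm{supp}(a) \subseteq Q$, $\|a\|_\infty \le |Q|^{-1}$, and $\int a = 0$. Let $y_Q$ be the center of $Q$ and fix an enlarged cube $\tilde Q \supseteq Q$ whose side length is a fixed (dimensional) multiple of that of $Q$, large enough that $|x-y| \ge 2|y-y_Q|$ whenever $y \in Q$ and $x \notin \tilde Q$. I would split $\|Ta\|_1 = \int_{\tilde Q} |Ta|\,dx + \int_{\tilde Q^c} |Ta|\,dx$ and treat the exterior and interior pieces separately.

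The exterior piece is the standard Calder\'on--Zygmund estimate: the cancellation $\int a = 0$ lets one write $Ta(x) = \int_Q \big(K(x,y) - K(x,y_Q)\big)\, a(y)\,dy$ for $x \in \tilde Q^c$, so Fubini together with the definition of $\SI^1[K]$ gives
\[
\int_{\tilde Q^c} |Ta(x)|\,dx \le \|a\|_1\cdot \SI^1[K] \le \SI^1[K],
\]
since $\|a\|_1 \le 1$.

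For the interior piece, I would use the BMO hypothesis. Take a representative $u$ of the BMO class $[Ta]$ which agrees on $\tilde Q^c$ with the absolutely convergent kernel formula above (such a representative exists because $Ta$ is a distribution on $\mathbb{R}^d$ whose restriction to $\tilde Q^c$ is that formula, and this distribution represents the BMO class). Then $\|u\|_{BMO} \le \|T\|_{L^\infty_0 \to BMO}\|a\|_\infty \le \|T\|_{L^\infty_0 \to BMO}/|Q|$. Writing
\[
\int_{\tilde Q} |u|\,dx \le \int_{\tilde Q} |u - u_{\tilde Q}|\,dx + |\tilde Q|\cdot |u_{\tilde Q}| \le |\tilde Q|\cdot \|u\|_{BMO} + |\tilde Q|\cdot |u_{\tilde Q}|,
\]
the oscillation term is already $\lesssim \|T\|_{L^\infty_0 \to BMO}$, so the proof reduces to controlling the mean.

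The main obstacle is the mean $|\tilde Q|\cdot |u_{\tilde Q}|$, since averages of BMO functions are not a priori controlled. To handle this I would compare $u_{\tilde Q}$ with $u_{\tilde Q^*}$ for $\tilde Q^* := 2\tilde Q$ via two different expressions. On one hand, the standard BMO telescoping inequality yields $|u_{\tilde Q} - u_{\tilde Q^*}| \lesssim \|u\|_{BMO}$. On the other, decomposing $\tilde Q^* = \tilde Q \sqcup (\tilde Q^* \setminus \tilde Q)$ and observing that $u$ coincides with the kernel formula on the annulus,
\[
u_{\tilde Q^*} = \frac{|\tilde Q|}{|\tilde Q^*|}\, u_{\tilde Q} + \frac{1}{|\tilde Q^*|}\int_{\tilde Q^*\setminus\tilde Q} u\,dx,\qquad \Big|\int_{\tilde Q^*\setminus\tilde Q} u\,dx\Big| \le \SI^1[K],
\]
where the annulus integral is controlled by the exterior estimate applied to $\tilde Q^*\setminus\tilde Q \subseteq \tilde Q^c$. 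Combining these two identities and using that $1 - |\tilde Q|/|\tilde Q^*| = 1 - 2^{-d}$ is a positive dimensional constant, one can solve for $u_{\tilde Q}$ to obtain $|u_{\tilde Q}| \lesssim \|u\|_{BMO} + \SI^1[K]/|\tilde Q|$, and hence $|\tilde Q|\cdot |u_{\tilde Q}| \lesssim \|T\|_{L^\infty_0 \to BMO} + \SI^1[K]$. Summing the exterior bound, the oscillation bound, and the mean bound yields the claim.
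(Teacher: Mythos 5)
Your proof is correct, and it is a mild but genuine variant of the paper's argument. Both proofs reduce to the same essential difficulty — averages of BMO functions are not controlled, so one must pin down $\intslash_{\widetilde Q} Ta$ using the kernel hypothesis. The paper does this by introducing a \emph{disjoint} auxiliary cube $\widetilde Q$ adjacent to (but not overlapping) $Q^*$, lying entirely in the exterior region where the cancellation estimate yields $\int_{\widetilde Q}|Ta|\lesssim \SI^1[K]$; it then compares the average over $Q^*$ to the average over $\widetilde Q$ via a two-step oscillation bound over a common enclosing cube $Q^{**}$. You instead use the nested pair $\widetilde Q \subset \widetilde Q^* = 2\widetilde Q$, decompose $\widetilde Q^*$ into $\widetilde Q$ and the annulus, control the annulus integral by the exterior estimate, and solve a linear equation for $u_{\widetilde Q}$ using that $1 - |\widetilde Q|/|\widetilde Q^*|$ is a positive dimensional constant. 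Your version avoids the intermediate cube $Q^{**}$ and the extra telescoping term (the paper's $II_Q$) by exploiting the nesting algebraically; the paper's version avoids the linear solve. Neither is a fundamentally different mechanism — both buy control of the mean from the exterior kernel bound — and the required inputs ($\SI^1[K]<\infty$, the BMO hypothesis, $\int a = 0$) are identical, so the two arguments are of comparable length and generality.

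One small comment on rigor: your choice of representative $u$ is the right move, and you correctly justify it by noting that $Ta$ is an actual distribution whose restriction to $\widetilde Q^c$ is given by the absolutely convergent formula $\int_Q [K(x,y)-K(x,y_Q)]a(y)\,dy$. Since the BMO hypothesis says $Ta$ agrees, modulo a constant, with a BMO function, adding that constant back in produces a locally integrable representative that matches the kernel formula a.e. on $\widetilde Q^c$ without affecting the BMO seminorm. This is exactly what is needed for the annulus integral bound $\big|\int_{\widetilde Q^*\setminus\widetilde Q} u\big|\le \SI^1[K]$ to apply.
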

For the convenience of the reader we give the proof of the three propositions.
In what follows $Q$ will  denote a cube, $x_Q$ its center, and as above
$Q^*$ will be the double cube with same center.

\begin{proof} [Proof of Proposition \ref{Jpropi}]
 %, and, for $R>2$,   $Q_R^*$ be the cube with $R$-fold sidelength and same center
%(so that $Q_2^*=Q^*$). We will choose $R$ so that $2^R\lc (2+ B_\infty/A)^{1/\eps}$ in the proof of (ii) and $2^R\lc (2+ B_1/A)^{1/\eps}$ in the proof of (iii).
Let $f$ be a bounded function supported in a cube $Q$.  We need to establish the estimate
\Be\label{J(i)} \int_{Q^*}|Tf|dx\lc C|Q|\|f\|_\infty \big(\|T\|_{\At}+\ann_\av[K]\big).\Ee

Let $Q_1$ be a  cube with the same sidelength of $Q^*$ and of distance $\diam ( Q^*)$ to $Q^*$.
Let $f_1$ be a function supported in $Q\cup Q_1$ so that
$f_1(y)=f(y)$ for $y\in Q$, $\|f_1\|_\infty \le \|f\|_\infty$  and $\int f_1(y) dy=0$.
Then, if
$$a(x)={|Q|^{-1}\|f\|_\infty^{-1}}f_1(x)$$
then there is $C_d>0$ so that $C_d^{-1} a$ is an $\infty$-atom.
Set $f_2=f-f_1$ so that $f_2$ is supported in  $Q_1$ and split
$$\int_{Q^*}|Tf|dx \lc \int_{Q^*}|Tf_1|dx
+\int_{Q^*}|Tf_2|dx. $$
We estimate  
%$\|f_1\|_{H^1}\lc |Q|\|f\|_\infty\|a\|_{H^1} \lc |Q|\|f\|_\infty$ and thus
\Be\label{Tf1onQ*}
\int_{Q^*}|Tf_1|dx
\lc |Q|\,\|T\|_{\At}\|f\|_\infty.
\Ee
Since $\dist (Q^*,Q_1) \approx \diam(Q_1)\approx \diam(Q^*)\approx \diam(Q)$ we may use the averaged annular integrability condition and estimate
\[
\frac{1}{|Q|}\int_{Q_1} \int_{Q^*} 
|K(x,y)| dy\, dx  \lc \ann_\av[K].
\]
This yields
\Be\label{Tf2onQ*}
\int_{Q^*}|Tf_2|dx \lc \int_{Q^*} \int_{Q_1} |K(x,y)||f_2(y)| dx\, dy 
\lc \|f_2\|_\infty |Q| \ann_\av[K].
\Ee
Since $\|f_2\|_\infty \le2\|f\|_\infty$, 
\eqref{J(i)} 
 follows from
\eqref{Tf1onQ*} and \eqref{Tf2onQ*}.
\end{proof}

\begin{proof} [Proof of Proposition \ref{Jpropii}]
%\medskip\noi{\it Proof of (ii)}.
Let $g\in L^\infty_0$ and let $Q$ be any  cube with center $x_Q$.
We have to verify
\Be \label{J(ii)}\inf_C \intslash_Q |Tg(x)-C|dx \le  \|T\|_{\Carl} +\SI^\infty[K]\Ee
where the slashed integral denotes the average over $Q$.

Let
$g_1= g\bbone_{Q^*}$,
%$g_2= g(\chi_{Q_R^*}-\chi_{Q_*})$,
%$g_3= g(1-\chi_{Q_R^*})$, so that $g=g_1+g_2+g_3$.
$g_2= g\bbone_{\bbR^d\setminus Q^*}$, so that $g=g_1+g_2.$
Since $g$ has compact support it is immediate by the assumed finiteness of $\ann_\av[K]$ 
that $Tg_2(w)$ is finite for almost every $w$ in $$ B_Q:=\{w: |w-x_Q|\le (2d)^{-1}\diam(Q)\}.$$ 
%We pick such a $w=w_Q$.
Now 
\[
%\begin{multline*}
\inf_C \intslash_Q |Tg(x)-C|dx \lc
\intslash_{B_Q}\Big[
\intslash_Q |Tg_1(x)| dx
%+\intslash_Q |Tg_2(x)| dx
+
\intslash_Q |Tg_2(x)-Tg_2(w)| dx\, \Big] dw.
\]
%\end{multline*} 
From the Carleson condition we get
$$
\intslash_Q |Tg_1(x)| dx \le 4^d \|T\|_{\Carl} \|g_1\|_\infty
\lc \|T\|_{\Carl} \|g\|_\infty\,.
$$
%From the $\ann_1[K]$ assumption used for $O(\log R) $ annuli we get  
%$$\intslash_Q |Tg_2(x)| dx\lc \|g_2\|_\infty \intslash_Q
%\intslash_{Q_R^*\setminus Q^*} |K(x,y)| dy\, dx
%\lc A_\infty(\log R) \|g\|_\infty.$$
Moreover, 
\begin{align*}\intslash_{B_Q}
\intslash_Q |Tg_2(x)-Tg_2(w)| dx\,dw&\le
\|g_2\|_\infty
\sup_{w\in B_Q} \intslash_Q
\int_{\bbR^d\setminus Q^*} |K(x,y)-K(w,y)| dy \, dx
\\&\lc \SI^\infty[K] \|g\|_\infty\,.
\end{align*}
and 
\eqref{J(ii)} follows.
%We pick $R$ so that $B_rR^{-\eps} \le A$, i.e. 
\end{proof}

\begin{proof} [Proof of Proposition \ref{Jpropiii}]
%{\it Proof of (iii)}.
Let $a$ be an $\infty$-atom, associated with the cube $Q$.
% i.e. a  function supported on a cube $Q$, satisfying $\|a\|_\infty \le |Q|^{-1}$ and $\int a(y) dy=0$.
We need to verify
\Be\label{atomicestimate} 
\|Ta\|_1 \lc  \|T\|_{L_0^\infty\to BMO} +
\SI^\infty[K]\,.
%A_1 \log (2+\eps^{-1} B_1/A_1)
\Ee
%\footnote{Argue that there is  there is a problem re Bownik's \cite{bownik} example}
First estimate $Ta$ in the complement of  $Q^*$, using the cancellation of $a$:
\begin{align*}
%\label{awayfromatom}
&\int_{\bbR^d\setminus Q^*} |Ta(x)|\, dx\lc
\int_{\bbR^d\setminus Q^*} 
\Big|\int_Q [K(x,y)-K(x,x_Q)] a(y) dy\Big|\, dx
\\
&\le \int_Q|a(y)|\int\limits _{ |x-x_Q|\ge 2|y-x_Q|}
|K(x,y)-K(x,x_Q)| \, dx\, dy
\\
&\le \SI^1[K] \|a\|_1 \lc  \SI^1[K]\,.
\end{align*}
Let $\widetilde Q$ be a cube which is contained in $CQ^*\setminus Q^*$ and has distance $O(\diam (Q)) $ to $Q^*$, say,  a cube adjacent to $Q^*$ and of same sidelength. 
The above  calculation also yields
\Be\label{ontildeQ}
\int_{\widetilde  Q} |Ta(x)|dx \lc \SI^1[K]\,.\Ee 
We choose such a cube $\widetilde Q$ and estimate
$$\int_{Q^*} |Ta(x)|dx\lc I_Q+II_Q+III_Q$$ where
\begin{align*}
I_Q&=\int_{Q^*} \Big|Ta(x)-\intslash_{Q^*} Ta(y) dy
\Big| \, dx\,,\\
II_Q&=
|Q^*| \Big|\intslash_{Q^*} Ta(y) dy- \intslash_{\widetilde Q} Ta(y)
dy\Big|\,,
\\
III_Q&=|Q^*|\Big|\intslash_{\widetilde Q} Ta(y) dy\Big|\,.
\end{align*}
Clearly
$$|I_Q|
%\int_{Q^*} |Ta(x)-\intslash_{Q^*} Ta(y) dy| dx
\le |Q^*|\|Ta\|_{BMO}\le
\|T\|_{L^\infty\to BMO} |Q^*|\|a\|_{L^\infty}
\lc\|T\|_{L^\infty\to BMO} .
$$
To estimate $II_Q$ we let
$Q^{**}$ be  a cube containing both $Q^*$ and $\widetilde Q$, and of comparable sidelength. Then
\begin{align*}
&\Big|\intslash_{Q^*} Ta(y) dy- \intslash_{\widetilde Q} Ta(y)
dy\Big|
\\&\le
\intslash_{Q^*} \Big|Ta(y) - \intslash_{Q^{**}} Ta(z) dz\Big|\,dy
+\intslash_{\widetilde Q} \Big|Ta(y) - \intslash_{Q^{**}} Ta(z) dz\Big|\,dy
\\&\lc
\intslash_{Q^{**}} \Big|Ta(y) - \intslash_{Q^{**}} Ta(z) dz\Big|\,dy \lc \|Ta\|_{BMO}
\end{align*}
and thus
$$|II_Q| \lc \|T\|_{L^\infty_0\to BMO} |Q|\|a\|_\infty \lc \|T\|_{L^\infty_0\to BMO} |Q|\|a\|_\infty \lc
\|T\|_{L^\infty_0\to BMO}\,.
$$
Finally,
$$| III_Q|\le |Q^*|\Big|\intslash_{\widetilde Q} Ta(y) dy\Big|\lc \|Ta\|_{L^1(\widetilde Q)} \lc A\,,$$
by \eqref{ontildeQ}, and the proof of \eqref{atomicestimate}  is finished.
\end{proof}

\begin{thm} \label{Journe-equiv}
Let $T: C^\infty_0(\bbR^d)\to \cD'(\bbR^d)$ and assume that 
the Schwartz kernel $K$ is locally integrable 
in $(\bbR^d\times \bbR^d)\setminus\Delta$.
Assume that
$$\SI[K]:= \ann_\av[K]+\SI^1[K]+\SI^\infty[K] <\infty.$$ 
%\footnote{Change the notation -- call this SI?}

(i) Let $1<q<\infty$. The following statements are equivalent.

\begin{itemize}
\item    $T$ satisfies a Carleson condition.

\item  $T$ maps $L^\infty_0\to BMO$.

\item $T$ satisfies the atomic boundedness condition.

\item $T$ extends to a bounded  operator  $H^1\to L^1$.

\item  $T$ extends to an operator  bounded on $L^q$. 
\end{itemize}

(ii) We have the following equivalences of norms.
\Be\label{j-equivalences}
\begin{aligned}
\|T\|_{\Carl} + \SI[K] & \approx
\|T\|_{L^\infty_0\to BMO} + \SI[K] \approx
\|T\|_\At + \SI[K] 
 \approx\ci{q} \|T\|_{L^q\to L^q}+ \SI[K].
\end{aligned}
\Ee
Moreover,
\Be \label{Hardyequiv}\|T\|_\At \approx\|T\|_{H^1\to L^1} \,.
\Ee
\end{thm}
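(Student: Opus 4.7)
The proof assembles the three cyclic implications from Propositions \ref{Jpropi}--\ref{Jpropiii} with standard Calder\'on--Zygmund interpolation. First I would establish the three-way equivalence among the Carleson, atomic, and $L^\infty_0\to BMO$ conditions. Specifically, atomic plus $\ann_\av[K]<\infty$ gives Carleson by Proposition \ref{Jpropi}; Carleson plus $\SI^\infty[K]<\infty$ and $\ann_\av[K]<\infty$ gives $L^\infty_0\to BMO$ by Proposition \ref{Jpropii}; and $L^\infty_0\to BMO$ plus $\SI^1[K]<\infty$ returns atomic by Proposition \ref{Jpropiii}. Since the hypothesis $\SI[K]<\infty$ supplies all three pieces simultaneously, composing the three propositions yields the first three norm equivalences in \eqref{j-equivalences} modulo an additive $\SI[K]$.

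Next I would establish the Hardy--$L^1$ equivalence \eqref{Hardyequiv}. The direction $\|T\|_{\At}\le\|T\|_{H^1\to L^1}$ is immediate since every $\infty$-atom has $H^1$-norm bounded by an absolute constant. For the reverse, I would invoke the Meda--Sj\"ogren--Vallarino theorem: it suffices to show $\|T\|_{\At(q)}\lesssim \|T\|_{\At}+\SI^1[K]$ for some $q\in(1,\infty)$ (say $q=2$). For a $q$-atom $a$ supported on a cube $Q$, split $\|Ta\|_1$ into the contributions from $Q^*$ and from $\bbR^d\setminus Q^*$. The tail is controlled by $\SI^1[K]\|a\|_1\lesssim \SI^1[K]$ via the cancellation $\int a=0$, exactly as in the proof of Proposition \ref{Jpropiii}. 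For the local part $\|Ta\|_{L^1(Q^*)}$, I would perform a Calder\'on--Zygmund decomposition of $a$ at height $|Q|^{-1}$: this realizes $a$ as an $\ell^1$-convergent sum of (constant multiples of) $\infty$-atoms supported in cubes inside $Q^*$, after correcting the mean-zero defects of the good part by a scalar multiple of $\chi_Q$ controlled by $|Q|^{-1}$. Applying the atomic hypothesis to each summand and summing yields $\|Ta\|_{L^1(Q^*)}\lesssim \|T\|_{\At}+\SI^1[K]$, which completes \eqref{Hardyequiv}.

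Finally, to close the loop with $L^q\to L^q$ boundedness for $1<q<\infty$: from the established $H^1\to L^1$ boundedness (via the previous paragraph) together with $L^\infty_0\to BMO$ boundedness, real interpolation (between $H^1$ and $BMO$) yields $L^2\to L^2$ boundedness, and then \eqref{marc} and \eqref{bmointerpol} extend this to every $q\in(1,\infty)$, with constants controlled by the existing norms plus $\SI[K]$. Conversely, $L^q\to L^q$ boundedness combined with $\SI^1[K]<\infty$ yields $H^1\to L^1$ boundedness by \eqref{hardy} (hence the atomic condition), and combined with $\SI^\infty[K]<\infty$ yields $L^\infty_0\to BMO$ by \eqref{bmothm}. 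This gives the last equivalence in \eqref{j-equivalences}.

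The principal obstacle is the Meda--Sj\"ogren--Vallarino-type step in \eqref{Hardyequiv}. Bownik's counterexample shows that atomic boundedness on $\infty$-atoms alone cannot in general be upgraded to $H^1\to L^1$ boundedness, so the kernel regularity $\SI^1[K]<\infty$ must be used essentially: it is through the localization of each $q$-atom by a Calder\'on--Zygmund stopping-time argument---turning the $q$-atom into a controlled sum of $\infty$-atoms at smaller scales---that the atomic hypothesis on $\infty$-atoms is transferred to the full Hardy space.
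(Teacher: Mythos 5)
Your handling of the three-way equivalence among Carleson, atomic, and $L^\infty_0\to BMO$ conditions (via Propositions \ref{Jpropi}--\ref{Jpropiii}), and the closing interpolation argument linking to $L^q$ via \eqref{hardy}, \eqref{bmothm}, \eqref{marc}, \eqref{bmointerpol}, matches the paper and is fine. The gap is in the key direction $\|T\|_{H^1\to L^1}\lesssim\|T\|_{\At}$.

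Your plan is to bound $\|T\|_{\At(q)}$ by decomposing a $q$-atom $a=\sum_\nu c_\nu a_\nu$ into $\infty$-atoms with $\sum|c_\nu|\lesssim 1$, and to apply the hypothesis termwise. But this decomposition converges only in $L^q$ (or $L^1$), not in $L^\infty$, and $T$ is a priori defined only on bounded compactly supported functions with vanishing integral. To conclude $Ta=\sum_\nu c_\nu Ta_\nu$ you need to know that $T$ commutes with this limit, i.e.\ that $T(\sum_{\nu\le N}c_\nu a_\nu)\to Ta$ in $L^1$; the tails $a-\sum_{\nu\le N}c_\nu a_\nu$ have small $L^1$ norm but uncontrolled $L^\infty$ norm, so atomic boundedness alone gives no control on $T$ applied to them. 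This is \emph{precisely} the consistency obstruction that Bownik's example exploits, and nothing in your outline supplies the missing argument. You identify $\SI^1[K]<\infty$ as essential, which is correct, but you attribute it to the Calder\'on--Zygmund localization step --- which is a purely real-variable fact about $H^1$ requiring no kernel regularity at all --- rather than to the actual place it is used.

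The paper's proof works directly with $\infty$-atoms via Coifman--Latter, and addresses the consistency condition \eqref{cons} by the Meyer--Coifman approximation argument: smoothing $T$ to $P_mTP_m$, observing that the kernel $K_m$ of $P_mTP_m$ is uniformly bounded (this is where $\ann_{\av}[K]$ and the Carleson condition --- which is derived from $\At$ via Proposition \ref{Jpropi} --- enter), concluding that $P_mTP_m$ maps $L^1\to L^\infty$ so that $\sum_\nu c_\nu P_mTP_m a_\nu=0$ whenever $\sum c_\nu a_\nu=0$, and then letting $m\to\infty$. Your route via MSV and $q$-atoms does not bypass this: to even speak of $\|T\|_{\At(q)}$ you must define $Ta$ for unbounded $q$-atoms, which is the same extension problem. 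If you were to repair the argument by inserting the approximation step, you would end up doing at least as much work as the paper. A secondary point: your version of \eqref{Hardyequiv} would produce the weaker bound $\|T\|_{H^1\to L^1}\lesssim\|T\|_{\At}+\SI^1[K]$, whereas the paper's argument yields $\|T\|_{H^1\to L^1}\lesssim\|T\|_{\At}$ with $\SI[K]<\infty$ used only qualitatively.
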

\begin{proof} The first three equivalences are immediate from a combination of 
Propositions \ref{Jpropi}, \ref{Jpropii} and \ref{Jpropiii}.
Since $\infty$-atoms satisfy $\|a\|_{H^1}\le C$ it is clear that
$$\|T\|_\At\lc \|T\|_{H^1\mapsto L^1}\,.
$$

The converse 
\Be \label{H1atomic} \|T\|_{H^1\to L^1}\, \lc \|T\|_\At 
%+\vectsig[K]
\Ee
is not obvious (and the inequality without the term $\SI[K]$ might  not hold if we drop our assumption $\SI[K]<\infty$, see \cite{bownik}).
By the Coifman-Latter theorem about the atomic decomposition (see 
\cite[\S III.2]{stein-ha}) we may write
$f=\sum_Q \la_Q a_Q$,
with $\sum_Q|\la_Q| \lc \|f\|_{H^1}$ and $a_Q$ being $\infty$-atoms; here  the convergence of the series is understood in the $L^1$ sense.
We immediately  get
$$\Big\| \sum_Q \la_Q Ta_Q 
\Big\|_1 \le \sum_Q|\la_Q| \|T\|_\At \|a_Q\|_1 \lc
\|T\|_{\At}\|f\|_{H_1}.
$$
However the decomposition $f=\sum_Q \la_Q a_Q$ is not unique and  in order 
to prove that the expression $\sum_Q \la_Q Ta_Q $ can be used as a definition for $Tf$ we need to show the following consistency condition for a sequence of atoms $\{a_\nu\}_{\nu=1}^\infty$,
\Be \label{cons}\sum_Q|c_\nu| <\infty \,,\,   \sum_\nu c_\nu a_\nu=0 \,\implies \, 
\sum_\nu c_\nu Ta_\nu=0 .
\Ee
Fortunately, a version of an approximation (or weak compactness)  argument in \cite[\S7.2]{MC} applies to our situation.
As stated above  the atomic boundedness condition implies the Carleson condition.
Let $\phi\in C^\infty_0$ be supported in a ball of radius $1/2$ such that $\int \phi(x) dx=1$. Set $P_m f= \phi^{(2^m)}*f$.
Let $K_m$ be the distribution kernel for $P_mTP_m$.
Note that we have 
$$|K_m(x,y)| \lc 2^{md} \ann_\av[K] \text{ if } |x-y| \ge  2^{2-m}$$
and
$$|K_m(x,y)| \lc 2^{md}  \|T\|_\Carl \text{ if } |x-y| \le  2^{2-m}.$$
% Since $K$ is a distribution it is clear that $K_m$ is a locally bounded function.
Hence   $K_m\in L^\infty(\bbR^d\times \bbR^d)$ 
%with $|K_m(x,y)|\lc 2^{md}$  a.e.
and thus $P_mTP_m$ maps $L^1$ to $L^\infty$.
This implies $\sum_\nu c_\nu P_m T P_m a_\nu=P_mTP_m (\sum c_\nu a_\nu)=0.$ Now, since the $P_m$ form an approximation of the identity, it is clear that, for each atom $a_\nu$, we have  $\|P_mTP_m a_\nu- Ta_\nu\|_1\to 0$ as $\nu\to\infty$.
Taking in account that $\sum_\nu| a_\nu|<\infty$, a straightforward limiting argument yields $\sum_\nu c_\nu Ta_\nu=0 .$
Note that the condition  $\SI[K]<\infty$ is used to establish  \eqref{Hardyequiv} only in order to verify the implication \eqref{cons} (via the boundedness of $K_m$); it does not enter in \eqref{Hardyequiv} itself.

We still have to show the equivalence of the first three  conditions in 
\eqref{j-equivalences} 
with the fourth condition.
Assume first that $T$ is $L^q$-bounded. 
%Then $T$ is also bounded from $H^1\to L^1$ and from $L^\infty \to BMO$ and one has the estimates
Then we have the standard estimates \eqref{hardy}, \eqref{bmothm} and thus the $H^1\to L^1$ operator norms 
and $L^\infty_0\to L^\infty$ operator norms of $T$ are bounded by
$\|T\|_{L^q\to L^q}+\SI[K]$. 
The other direction uses the interpolation result (cf. the remarks below)
$$
\|T\|_{L^q\to L^q} 
\le C_q
\|T\|_{H^1\to L^1}^{1/q}
\|T\|_{L^\infty_0\to BMO}^{1-1/q}
$$
together with the equivalence of the first three conditions in 
\eqref{j-equivalences} and the equivalence \eqref{Hardyequiv}.
\end{proof}

\noi {\it Remarks on interpolation of $H^1$ and $BMO$.}
%{\it References for interpolation.}
In the above interpolation one uses the interpolation formulas
$[H^1, BMO]_{\theta,q} =L^{p,q}$,
$[H^1, BMO]_{\theta}
=L^{p}$ for $1-\theta=1/p$, $1<p<\infty$,  or a direct interpolation result for operators in \S3.III of Journ\'e's monograph
\cite{journe}.
One also has $[L^1, BMO]_{\theta,q} =L^{p,q}$,
$[L^1, BMO]_{\theta}
=L^{p}$ for $1-\theta=1/p$, $1<p<\infty$.

The result for complex interpolation can be obtained
from the results
$[H^1, L^{p_1}]_{\vartheta} =L^p$, 
$1/p=1-\vartheta+\vartheta/p_1$, $1<p_1<\infty$,
(or its respective standard counterpart $[L^1, L^{p_1}]_{\vartheta} =L^p$),  together 
with $[L^{p_0}, BMO]_\vartheta= L^p$, $1/p=(1-\theta)/p_0$, $1<p_0<\infty$ which can be found in Fefferman and Stein \cite{fs}, see also the discussion in  Janson and Jones \cite{jajo}.
The stated interpolation formula for $H^1$ and $BMO$ follows then from Wolff's four space reiteration theorem for the complex method \cite{wolff}.
One can also use the results by Fefferman, Rivi\`ere, Sagher \cite{frs} for the real method, and then combine it with Wolff's result \cite{wolff}  for the real method.
From the above remarks we also get
an interpolation inequality for functions
 $g\in L^1\cap BMO$, 
\Be\label{L1BMOconvex} \|g\|_p \le C_p \|g\|_{L^1}^{1/p} \|f\|_{BMO}^{1-1/p}, \quad 1<p<\infty\Ee
which will be useful in the proof of Theorem \ref{ThmJourne2} below.

%Finally we mention that an explicit formula for the $K$-functional for the spaces $H_1$ and $BMO$ has been given by Jawerth \cite{jaw}.
%\end{rmk}

\subsection{Sums of dilated kernels}\label{sumsofdilated}
%Corollaries}
We shall now formulate some corollaries for operators of the form \eqref{Ksum} or its relatives.
%\subsection{Old Journe section}
%In this section we present a result due to Journ\'e \cite{JourneCalderonZygmundOperatorsPseudodifferentialOperators} in the form which we need it.
We use norms combining the various Schur and regularity norms.

For each $j\in \Z$, let $\tau_j:\R^d\times \R^d\rightarrow \bbC$ be a measurable function. Let $0<\eps\le 1$.
Set, for $0<\eps\le 1$, 
%\ satisfying, for each $j\in \Z$,
\begin{equation*}
%\sha_\eps:=
%\fW_\eps[\vec\tau]:=
\|\tau\|_{\Op_\eps}=
 %\left(
\Sha_\eps^1[\tau]
+\Sha_\eps^\infty[\tau]
+\Zhe_{\eps, \rml}^1[\tau]
+\Zhe_{\eps, \rml}^\infty[\tau]
+\Zhe_{\eps, \rmr}^1[\tau]
+\Zhe_{\eps, \rmr}^\infty[\tau] ,
%\right)
%\le \cC_\eps<\infty
\end{equation*}
and set
\[\|\tau\|_{\Op_0}:=
%\sha_0:=
%\cC_0=
\Sha_0^1[\tau]
+\Sha_0^\infty[\tau].
\]
This means  for $\eps>0$ 
\begin{equation}\label{EqnJourneBoundCep}
\begin{split}
\|\tau\|_{\Op_\eps}=
&\sup_{x} \int(1+|x-y|)^{\epsilon} |\tau(x,y)|\: dy + \sup_{y} \int(1+|x-y|)^{\epsilon} |\tau(x,y)|\: dx
\\+&\sup_{\substack{y\\ 0<|h|\leq 1}} |h|^{-\epsilon} \int |\tau(x+h,y)-\tau(x,y)|\: dx 
+\sup_{\substack{x\\ 0<|h|\leq 1}} |h|^{-\epsilon} \int |\tau(x+h,y)-\tau(x,y)|\: dy 
\\+&\sup_{\substack{y\\ 0<|h|\leq 1}} |h|^{-\epsilon} \int |\tau(x,y+h)-\tau(x,y)|\: dx 
+\sup_{\substack{x\\ 0<|h|\leq 1}} |h|^{-\epsilon} \int |\tau(x,y+h)-\tau(x,y)|\: dy 
\end{split}
\end{equation}
and, for $\eps=0$,
\begin{equation}\label{schurassu}
\|\tau\|_{\Op_0}=
\sup_{x} \int |\tau(x,y)|\: dy + \sup_{y} |\tau(x,y)|\: dx\,.
\end{equation}

%+\sup_{\substack{j,x\\ 0<|h|\leq 1}} |h|^{-\epsilon} \int |T_j(x+h,y)-T_j(x,y)|\: dy<\infty.
%\end{split}\end{equation}
%Also set
%\begin{equation}\label{EqnJourneBoundC0}C_0:=\sup_{j,x}\int |T_j(x,y)|\: dy  + \sup_{j,y} \int |T_j(x,y)|\: dx.\end{equation}

%\begin{equation}\begin{split}C_\epsilon:= &\sup_{j,x} \int(1+|x-y|)^{\epsilon} |T_j(x,y)|\: dy + \sup_{j,y} \int(1+|x-y|)^{\epsilon} |T_j(x,y)|\: dx
%\\&+\sup_{\substack{j,y\\ 0<|h|\leq 1}} |h|^{-\epsilon} \int |T_j(x,y+h)-T_j(x,y)|\: dx 
%+\sup_{\substack{j,x\\ 0<|h|\leq 1}} |h|^{-\epsilon} \int |T_j(x+h,y)-T_j(x,y)|\: dy<\infty.
%\end{split}\end{equation}
%Also set
%\begin{equation}\label{EqnJourneBoundC0}C_0:=\sup_{j,x}\int |T_j(x,y)|\: dy  + \sup_{j,y} \int |T_j(x,y)|\: dx.\end{equation}
We shall consider families $\{\tau_j\}$ for which the $\Op_\eps$ norm is uniformly bounded in $j$. 
We let $T_j$ be the operator with kernel $\Dilj\tau_j$, i.e.
\begin{equation}\label{Tjdef}
T_j f(x) = \int 2^{jd} \tau_j(2^j x, 2^j y) f(y) dy\,.
\end{equation}

\begin{thm}\label{ThmJourne1} Suppose that 
$\sup_j\|\tau_j\|_{\Op_\eps}\le \cC_\eps$ for some $\eps\in (0,1)$ and that 
$\sup_j\|\tau_j\|_{\Op_0}\le \cC_0$. Let 
 $T_j$ be the operator with kernel $\Dilj\tau_j$
and suppose that 
$\sum_j T_j$ converges to 
an operator $T:L^\infty_\comp\to L^1_\loc$ in the sense that
for compactly supported $L^\infty$ functions $f$ and $g$
$$\biginn{\sum_{j=-N}^N T_j f}{g} 
\to \inn {Tf}{g}$$
as $N\to \infty$ and assume that there exists $A>0$ such that
for all $x\in \bbR^d$, $t>0$, $N\in \bbN$,
%$f,g\in L^\infty(B^d(x,t))$ we have
\begin{equation}\label{carlesonforsums}
\Big |\biginn{\sum_{j=-N}^N T_j f}{g}\Big|\le
A t^d \|f\|_{L^\infty} \|g\|_{L^\infty}\quad  \text{ if } \mathrm{supp}(f)\cup\mathrm{supp} (g)\subset 
B^d(x,t).
\end{equation}
Then $T$ extends to an operator bounded on $L^2(\bbR^d)$ 
%the sum $T=\sum_{j\in \Z}\dil{T_j}{2^j}$ converges in the strong operator topology as operators $L^2\rightarrow L^2$ and
and \begin{equation*}
\| T\|_{L^2\rightarrow L^2} \leq C_{d,\epsilon} \Big(A+ \cC_0\log\big(1+\frac{\cC_\epsilon}{\cC_0}\big)\Big).
\end{equation*}
\end{thm}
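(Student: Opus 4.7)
The plan is to reduce the result to the equivalence of norms \eqref{j-equivalences} in Theorem \ref{Journe-equiv} (with $q=2$), so it suffices to estimate $\|T\|_{\mathrm{Carl}}$ and the singular integral quantity
\[
\mathrm{SI}[K] \,=\, \SI^1[K]+\SI^\infty[K]+\ann_{\av}[K]
\]
in terms of $A$ and $\cC_0\log(1+\cC_\eps/\cC_0)$, where $K$ is the Schwartz kernel of $T$.

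First I will deduce from the hypothesis \eqref{carlesonforsums} that $\|T\|_{\mathrm{Carl}}\lesssim A$. Given a cube $Q$ and a bounded function $f$ supported in $Q$, the double cube $Q^*$ is contained in a ball $B^d(x,t)$ with $t\approx\mathrm{diam}(Q)$, and taking $g\in L^\infty$ supported in $Q^*$ with $\|g\|_\infty\le 1$ we may pass to the limit in \eqref{carlesonforsums} using the convergence assumption to get $\bigl|\langle Tf,g\rangle\bigr|\lesssim A|Q|\,\|f\|_\infty$; taking the supremum over such $g$ yields $\int_{Q^*}|Tf|\,dx\lesssim A|Q|\,\|f\|_\infty$.

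Second I will bound $\mathrm{SI}[K]$ using the uniform bounds on $\tau_j$. The hypothesis $\sup_j\|\tau_j\|_{\Op_\eps}\le \cC_\eps$ and $\sup_j\|\tau_j\|_{\Op_0}\le \cC_0$ gives, for both $\tau_j$ and the transposed kernels $\tau_j^\dual$, uniform control of $\Sha^1_\eps$, $\Sha^1_0$ and $\Zhe^1_{\eps,\rmr}$, $\Zhe^1_{\eps,\rml}$. Then Proposition \ref{ShaZheimplSI} applied to the family $\{\tau_j\}$ and to $\{\tau_j^\dual\}$ yields
\[
\SI^1_0[K]+\SI^\infty_0[K]\,\lesssim\, \cC_0\log\!\bigl(2+\cC_\eps/\cC_0\bigr),
\]
while Proposition \ref{ShaZheimplAnn} (applied in both orders of the variables) gives
\[
\ann^1[K]+\ann^\infty[K]\,\lesssim\, \cC_0\log\!\bigl(2+\cC_\eps/\cC_0\bigr),
\]
which controls $\ann_{\av}[K]$ by the lemma immediately following \eqref{aveannular}. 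Altogether $\mathrm{SI}[K]\lesssim \cC_0\log(2+\cC_\eps/\cC_0)$.

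Finally, I will invoke Theorem \ref{Journe-equiv} with $q=2$: the Carleson and singular integral bounds just established imply
\[
\|T\|_{L^2\to L^2}\,\lesssim\, \|T\|_{\mathrm{Carl}}+\mathrm{SI}[K]\,\lesssim\, A+\cC_0\log\!\bigl(1+\cC_\eps/\cC_0\bigr),
\]
which is the desired estimate. The main obstacle is the book-keeping in the second step: one must check that the assumed uniform $\Op_\eps$ and $\Op_0$ control of the individual $\tau_j$ is exactly what Propositions \ref{ShaZheimplSI} and \ref{ShaZheimplAnn} require (and that the sum $K=\sum_j\Dilj\tau_j$ converges in $L^1_{\mathrm{loc}}((\bbR^d\times\bbR^d)\setminus\Delta)$, which is part of the conclusion of those propositions and is compatible with the weak convergence of $\sum T_j$ to $T$ assumed here, so the resulting distribution kernel is indeed $K$).
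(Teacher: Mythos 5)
Your proposal matches the paper's own proof: both deduce $\|T\|_{\Carl}\lesssim A$ directly from \eqref{carlesonforsums}, bound $\SI^1[K]$, $\SI^\infty[K]$, and the annular quantities via Propositions \ref{ShaZheimplSI} and \ref{ShaZheimplAnn} applied to $\tau_j$ and $\tau_j^\dual$ (controlled uniformly by $\Op_0$ and $\Op_\eps$), and then invoke the equivalence of norms in Theorem \ref{Journe-equiv} with $q=2$. The argument is correct and is essentially the same as the paper's.
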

\begin{proof} 
The inequality \eqref{carlesonforsums}  implies $\|\sum_{j=-N}^NT_j\|_{\Carl} \lc A$. This inequality extends to the limit $T$.
 Let 
$K_N$, $K$ be the Schwartz kernels of the operators $\sum_{j=-N}^NT_j$ and $T$ respectively. Then
by Propositions  \ref{ShaZheimplSI} and  \ref{ShaZheimplAnn}, applied to both $\tau_j$ and its adjoint version we have
$\SI[K_N], \, \SI[K]
 \lc \cC_0  \log (2+\cC_\eps/\cC_0)\,.$
The assertion follows now from Theorem \ref{Journe-equiv}.
\end{proof}

\begin{thm}\label{ThmJourne2}
Suppose that 
$\sup_j\|\tau_j\|_{\Op_\eps}\le \cC_\eps$ for some $\eps\in (0,1)$ and that 
$\sup_j\|\tau_j\|_{\Op_0}\le \cC_0$.
 Let 
 $T_j$ be the operator with kernel $\Dilj\tau_j$ and suppose that the sum $T=\sum T_j$ converges in the sense of distributions on $C^\infty_{0,0}$ (test functions with vanishing integrals),
i.e. for every $f\in C^\infty_{0,0}$ and every $g\in C^\infty_0$ we have
\Be\label{convindistr0}
\lim_{N\to\infty} \sum_{j=-N}^N \inn{T_j f}{g} = \inn {Tf}{g}.\Ee
Then the following statements hold.

(i) If 
%for some $A<\infty$, 
$\sup_N \|\sum_{j=-N}^N T_j \|_{H^1\to L^1} \le A,$ for some $A<\infty$, 
then we also have 
$$\sup_N \Big\|\sum_{j=-N}^N T_j \Big\|_{L^2\to L^2} \lesssim  A +\cC_0
\log \big(1+\frac {\cC_\eps}{\cC_0}\big).$$ Moreover,  $T$ extends to a bounded operator 
on $L^2$,  $\sum_{j=-N}^N T_j$ converges to $T$  in the weak  operator topology
and
%$T$ satisfies 
%  (as operators $L^2\to L^2$) to an operator $T$ which satisfies 
 $\|T\|_{L^2\to L^2} \lesssim  A +\cC_0
\log \big(1+ {\cC_\eps}/{\cC_0}\big).$

(ii) If 
$\sup_N \|\sum_{j=-N}^N T_j \|_{L^2\to L^2} \le B,$ for some $B<\infty$, 
then we also have 
$$\sup_N \Big\|\sum_{j=-N}^N T_j \Big\|_{H^1\to L^1} \lesssim  B +\cC_0
\log \big(1+\frac {\cC_\eps}{\cC_0}\big).$$ Moreover $T$ extends to an operator bounded from $H^1$ to $L^1$, 
 $\sum_{j=-N}^N T_j\to T$ converges in the weak  operator topology (as operators $H^1\to L^1$)  and 
 $\|T\|_{H^1\to L^1} \lesssim  B +
\cC_0\log \big(1+{\cC_\eps}/{\cC_0}\big).$

(iii) The sum $T=\sum_{j\in \Z} T_j$ converges in the strong operator topology as operators $H^1\rightarrow L^1$ if and only if it converges in the strong operator topology as operators $L^2\rightarrow L^2$.  

%(i) The sum $T=\sum_{j\in \Z} \dil{T_j}{2^j}$ converges in the strong operator topology as operators $H^1\rightarrow L^1$ if and only if it converges in the strong operator topology as operators $L^2\rightarrow L^2$.  In this case, 
%\begin{equation*}\|T\|_{H^1\rightarrow L^1} \leq C_{d,\epsilon}\left( \|T\|_{L^2\rightarrow L^2} + C_0\log\left(1+\frac{C_\epsilon}{C_0}\right) \right), \end{equation*}
%and
%\begin{equation*}\|T\|_{L^2\rightarrow L^2} \leq C_{d,\epsilon}\left( \|T\|_{H^1\rightarrow L^1} + C_0\log\left(1+\frac{C_\epsilon}{C_0}\right) \right).\end{equation*}
\end{thm}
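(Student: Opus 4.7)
Set $S_N=\sum_{j=-N}^N T_j$, with Schwartz kernel $K_N$. The unifying observation is that the Calder\'on--Zygmund regularity of $K_N$ is controlled uniformly in $N$: applying Propositions \ref{ShaZheimplSI} and \ref{ShaZheimplAnn} to $\tau_j$ and, since the $\Op_\eps$ norm is symmetric in the two variables, to its dual kernel, I obtain
\[
\SI[K_N]=\ann_\av[K_N]+\SI^1[K_N]+\SI^\infty[K_N]\lesssim \cC_0\log\bigl(2+\cC_\eps/\cC_0\bigr),
\]
uniformly in $N$. This lets me cycle freely through the equivalences in Theorem \ref{Journe-equiv}.

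\emph{Parts (i) and (ii).} The chain \eqref{j-equivalences}--\eqref{Hardyequiv} applied to each $S_N$ yields
\[
\|S_N\|_{L^2\to L^2}+\SI[K_N]\ \approx\ \|S_N\|_{H^1\to L^1}+\SI[K_N],
\]
which, combined with the $\SI$-bound above, instantly gives the claimed transfer between the $L^2$- and $H^1$-bounds on partial sums. To upgrade uniform boundedness to weak-operator convergence I would invoke Lemma \ref{densenesslemma} with the dense subspace $W=C^\infty_{0,0}$. Density of $C^\infty_{0,0}$ in $L^2$ follows by subtracting from $f\in C^\infty_0$ a mean-correcting bump $R^{-d}\phi(\cdot/R)\int f$, whose $L^2$-norm tends to $0$; density of $C^\infty_{0,0}$ in $H^1$ follows from the Coifman--Latter atomic decomposition after mollifying atoms. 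On $C^\infty_{0,0}$ the hypothesis \eqref{convindistr0} already gives convergence of $\inn{S_N f}{g}$ for $g\in C^\infty_0$, and a standard density-plus-uniform-bound argument extends this to $g\in L^2$ in case (i) and to $g\in L^\infty$ in case (ii).

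\emph{Part (iii), first direction.} Given strong $L^2$ convergence, part (ii) furnishes a uniform $H^1\to L^1$ bound, so it suffices to check $L^1$-convergence on $\infty$-atoms. For an atom $a$ with $\supp a\subset Q$ and $\int a=0$, split the integration into $Q^*$ and its complement. On $Q^*$ Cauchy--Schwarz reduces $L^1$-convergence to $L^2$-convergence, which is given. On $\bbR^d\setminus Q^*$ I use the cancellation of $a$ together with the uniform regularity $\Zhe^1_{\eps,\rmr}[\tau_j]\lesssim\cC_\eps$ to derive, exactly as in the proof of Proposition \ref{CorWeakTypeInterp}(ii), the summable bound
\[
\|T_j a\|_{L^1(\bbR^d\setminus Q^*)}\lesssim \cC_\eps\min\bigl\{(2^j\diam Q)^\eps,\,(2^j\diam Q)^{-\eps}\bigr\}\,\|a\|_1,
\]
which gives absolute and $N$-uniform convergence of $S_N a$ in $L^1(\bbR^d\setminus Q^*)$. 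Combining the two pieces yields $\|S_Na-Ta\|_1\to 0$, and the uniform $H^1\to L^1$ bound together with the atomic decomposition extends this to every $f\in H^1$.

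\emph{Part (iii), second direction — the main obstacle.} Given strong $H^1\to L^1$ convergence, part (i) gives $\sup_N\|S_N\|_{L^2\to L^2}\le B$. For $f\in C^\infty_{0,0}\subset H^1\cap L^\infty_0$, I would apply the interpolation inequality \eqref{L1BMOconvex} to $g=S_Nf-S_Mf$:
\[
\|S_Nf-S_Mf\|_2\lesssim\|S_Nf-S_Mf\|_1^{1/2}\,\|S_Nf-S_Mf\|_{BMO}^{1/2}.
\]
The first factor tends to zero by the Cauchy criterion in $L^1$ (strong $H^1\to L^1$ convergence applied to $f\in H^1$). For the second, applying Theorem \ref{Journe-equiv} to the difference $S_N-S_M$, whose kernel still inherits the uniform $\SI$-bound, yields
\[
\|S_N-S_M\|_{L^\infty_0\to BMO}\lesssim \|S_N-S_M\|_{L^2\to L^2}+\SI[K_N-K_M]\lesssim B+\cC_0\log(1+\cC_\eps/\cC_0),
\]
hence $\|S_Nf-S_Mf\|_{BMO}\lesssim \|f\|_\infty$ uniformly. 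Thus $\{S_Nf\}$ is Cauchy in $L^2$; its limit coincides with $Tf$ by \eqref{convindistr0}, and density of $C^\infty_{0,0}$ in $L^2$ together with the uniform $L^2$-operator bound finishes the argument. The delicate point throughout is that the limit $T$ is supplied only in the weak sense of \eqref{convindistr0}, so the interpolation and density arguments must be arranged so that all quantitative estimates live at the level of partial sums $S_N$ (and their differences), where the Journ\'e equivalences apply unconditionally.
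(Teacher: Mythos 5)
Your proposal is correct and takes essentially the same route as the paper: parts (i) and (ii) via Theorem \ref{Journe-equiv} combined with the uniform $\SI$ bounds furnished by Propositions \ref{ShaZheimplSI}--\ref{ShaZheimplAnn} and the density lemma, the forward direction of (iii) by checking $L^1$-convergence on $\infty$-atoms exactly as in Proposition \ref{CorWeakTypeInterp}(ii), and the reverse direction of (iii) via the interpolation inequality \eqref{L1BMOconvex} applied to finite partial-sum differences. The paper cites Proposition \ref{CorWeakTypeInterp} directly where you unfold the atom argument, but the content is identical.
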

\begin{proof}
The assertions on the operators $\sum_{j=-N}^N T_j$ follow immediately from Theorem 
\ref{Journe-equiv}. Note that $C^\infty_{0,0}$ is dense in both $H^1$ and $L^p$, $1<p<\infty$. The uniform bounds for the operator norms of $\sum_{j=-N}^N T_j$ 
and the convergence hypothesis \eqref{convindistr0} imply convergence in the respective 
weak operator topologies.

Now we prove (iii). If 
%For the proof of (iii)let us first assume that
$T=\sum_{j\in \Z} T_j$ converges 
in the strong operator topology as operators $L^2\rightarrow L^2$ then it is immediate from Proposition \ref{CorWeakTypeInterp}  that 
$T=\sum_{j\in \Z} T_j$ converges 
in the strong operator topology as operators $H^1\rightarrow L^1.$ 

Vice versa assume that 
$T=\sum_{j\in \Z} T_j$ converges 
in the strong operator topology as operators $H^1\rightarrow L^1.$ 
By the interpolation inequality \eqref{L1BMOconvex} we have for any finite set 
$\cJ\in \bbZ$ and any $f\in C^\infty_{0,0}$.
%\begin{align*}
$$
\Big \|\sum_{j\in \cJ} T_j f\Big\|_{2}\le C 
\Big \|\sum_{j\in \cJ} T_j f\Big\|_{1}^{1/2} 
\Big \|\sum_{j\in \cJ} T_j f\Big\|_{BMO}^{1/2} 
%\\&
\le C 
\Big \|\sum_{j\in \cJ} T_j f\Big\|_{1}^{1/2} 
\Big \|\sum_{j\in \cJ} T_j \Big\|_{L^\infty\to BMO}^{1/2}  \|f\|_\infty
$$
%\end{align*}
and since 
$\|\sum_{j\in \cJ} T_j \|_{L^\infty\to BMO}$ is bounded independently of $\cJ$ we 
see that $\sum_{j}T_j f$ converges in $L^2$  for any $f\in C^\infty_{0,0}$. Since 
$C^\infty_{0,0}$ is dense in $L^2$ we conclude that $\sum_j T_j$ converges in the strong operator topology as operators $L^2\to L^2$.
\end{proof}

We now formulate a version of Theorem \ref{ThmJourne1} which has a  convergence statement with respect to the strong operator topology.

\begin{thm}\label{ThmJourne3} Suppose that 
$\sup_j\|\tau_j\|_{\Op_\eps}\le \cC_\eps$ for some $\eps\in (0,1)$ and that 
$\sup_j\|\tau_j\|_{\Op_0}\le \cC_0$. Let 
 $T_j$ be the operator with kernel $\Dilj\tau_j$.
 Suppose that 
$\sum_j T_j$ converges to 
an operator $T:L^\infty_\comp\to L^1_\loc$ in the strong sense that
for any  compactly supported $L^\infty$ function $f$  and for any compact set $K$ 
$$\lim_{N\to\infty} \int_{K}  \Big|\sum_{j=-N}^N T_jf(x) -Tf(x) \Big| dx =0.$$ 
Suppose that  there exists $A>0$ such that
for all $x\in \bbR^d$, $t>0$, $N\in \bbN$, 
%$f,g\in L^\infty(B^d(x,t))$ we have
\begin{equation}\label{carlesonforsumsmod}
\int_{B_d(x,t)} \Big |\sum_{j=-N}^N T_j f(w)\Big|\, dw \le
A t^d \|f\|_{\infty} \quad  \text{ if } \mathrm{supp}(f)\subset 
B^d(x,t).
\end{equation}
Then 
%$T$ extends to an operator bounded on $L^2(\bbR^d)$ 
the sum $T=\sum_{j\in \Z}T_j$ converges in the strong operator topology as operators $L^2\rightarrow L^2$ and
and \begin{equation*}
\| T\|_{L^2\rightarrow L^2} \leq C_{d,\epsilon} \Big(A+ \cC_0\log\big(1+\frac{\cC_\epsilon}{\cC_0}\big)\Big).
\end{equation*}
\end{thm}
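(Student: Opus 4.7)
The plan is to promote the $L^1_{\mathrm{loc}}$ convergence hypothesis to strong $L^2$ convergence in three stages: derive uniform operator-norm bounds for the partial sums $S_N=\sum_{|j|\le N}T_j$, verify $L^2$-convergence on the dense subspace $C^\infty_{0,0}(\bbR^d)$, and then invoke Lemma \ref{densenesslemma} to extend to all of $L^2$.

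First I would place the problem in the setting of Theorem \ref{Journe-equiv}. Hypothesis \eqref{carlesonforsumsmod} gives $\|S_N\|_{\Carl}\le A$ uniformly in $N$, and Propositions \ref{ShaZheimplSI} and \ref{ShaZheimplAnn} applied to both $\tau_j$ and $\tau_j^\dual$ yield $\SI[K_N]\lesssim \cC_0\log(1+\cC_\eps/\cC_0)$ uniformly, where $K_N$ is the Schwartz kernel of $S_N$. Theorem \ref{Journe-equiv} then produces
\begin{equation*}
\sup_N\|S_N\|_{L^2\to L^2}+\sup_N\|S_N\|_{L^\infty_0\to BMO}\lesssim A+\cC_0\log\bigl(1+\cC_\eps/\cC_0\bigr).
\end{equation*}
Fatou's lemma combined with the $L^1_{\mathrm{loc}}$ convergence passes the Carleson bound (and hence, via Theorem \ref{Journe-equiv}, the $L^\infty_0\to BMO$ and $L^2\to L^2$ bounds) to the limit $T$.

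Next I would verify strong $L^2$ convergence on $C^\infty_{0,0}(\bbR^d)$. For $f\in C^\infty_{0,0}$ with $\supp f\subset B^d(0,r_0)$, the cancellation $\int f=0$ allows
\begin{equation*}
S_Nf(x)=\int[K_N(x,y)-K_N(x,0)]f(y)\,dy,\qquad |x|>2r_0,
\end{equation*}
and the bound $\SI^1_{\eps/2}[K_N]\lesssim \cC_\eps$ from Proposition \ref{ShaZheimplSI} yields
\begin{equation*}
\int_{|x|>R}|S_Nf(x)|\,dx\lesssim \cC_\eps\,(R/r_0)^{-\eps/2}\|f\|_1
\end{equation*}
uniformly in $N$, with the same estimate valid for $Tf$. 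Combined with the hypothesized $L^1(B(0,R))$ convergence $S_Nf\to Tf$, this shows $S_Nf\to Tf$ in $L^1(\bbR^d)$ for every $f\in C^\infty_{0,0}$. To upgrade to $L^2$ I apply the interpolation inequality \eqref{L1BMOconvex} to $S_Nf-Tf$:
\begin{equation*}
\|S_Nf-Tf\|_2\lesssim \|S_Nf-Tf\|_1^{1/2}\|S_Nf-Tf\|_{BMO}^{1/2}.
\end{equation*}
The $BMO$ factor is uniformly bounded by a constant multiple of $\bigl(A+\cC_0\log(1+\cC_\eps/\cC_0)\bigr)\|f\|_\infty$ by the first step, while the $L^1$ factor goes to zero by the previous paragraph.

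Finally, since $C^\infty_{0,0}$ is dense in $L^2(\bbR^d)$ and $\sup_N\|S_N\|_{L^2\to L^2}$ is finite, a routine three-$\epsilon$ argument promotes the convergence to every $f\in L^2$, giving strong operator convergence $S_N\to T$ with the claimed norm bound. The main obstacle is the tail estimate in the second step: it relies essentially on the improved regularity $\SI^1_{\eps/2}$ (rather than just $\SI^1_0$) furnished by Proposition \ref{ShaZheimplSI}, together with the cancellation $\int f=0$ available on the dense class $C^\infty_{0,0}$. A minor technical point is the verification that $T$ itself satisfies the required $L^\infty_0\to BMO$ estimate used in the interpolation, which follows from Fatou's lemma applied to the equivalent Carleson characterization.
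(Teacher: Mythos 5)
Your proof is correct and rests on the same pillars as the paper's: uniform Carleson, $\SI$, and $\ann$ estimates for the partial sums via Propositions \ref{ShaZheimplSI}, \ref{ShaZheimplAnn} and Theorem \ref{Journe-equiv}; tail decay from cancellation on a dense subspace; and the $L^1$--$BMO$ interpolation inequality \eqref{L1BMOconvex} to promote $L^1$-convergence to $L^2$-convergence. The organizational difference is that the paper proves $L^1$-convergence on $\infty$-atoms, upgrades this (via the atomic decomposition and the consistency argument packaged in Theorem \ref{Journe-equiv}) to strong $H^1\to L^1$ convergence, and then invokes Theorem \ref{ThmJourne2}(iii), whereas you work directly with $C^\infty_{0,0}(\bbR^d)$ and apply the interpolation by hand, at the cost of the extra step — which you correctly flag and which does go through by an $L^1_{\loc}$-convergence/Fatou argument on the Carleson (equivalently $BMO$) side — of verifying that the limit $T$ itself maps $L^\infty_0\to BMO$. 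Note that this last step can be avoided, as the proof of Theorem \ref{ThmJourne2}(iii) does, by applying \eqref{L1BMOconvex} only to finite tails $\sum_{N_1<|j|\le N_2}T_jf$ (so all $BMO$ bounds involve only partial sums and the uniform bound suffices), showing $\{S_Nf\}$ is Cauchy in $L^2$ and then identifying the limit with $Tf$ via the given $L^1_{\loc}$ convergence.
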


\begin{proof} If $a$ is an $L^\infty$ atom supported on a cube $Q$ and $Q^*$ is the double cube,
we see that $\sum_{j=-N}^N Ta \to Ta$ in $L^1(Q^*)$.  
Standard arguments using the cancellation of $a$  yield
$$\int_{\bbR^d \setminus Q^*} |T_j a(x)| \lc \begin{cases} 
\Sha^1_\eps[\tau_j]  \,(2^j\diam(Q))^{-\eps} \quad
 \quad &\text{ if } 2^j\diam(Q)\ge 1,
\\
 \Zhe^1_{\eps,\rml}[\tau_j] \,(2^j\diam(Q))^{\eps}\quad
 &\text{ if } 2^j\diam(Q)\le 1.
\end{cases}
$$
Altogether we see that $\sum_{j=-N}^N T_ja \to Ta$ in $L^1$.
By Theorem \ref{ThmJourne1} we also have the uniform bounds 
$\| Ta\|_1 \leq C_{d,\epsilon} \Big(A+ \cC_0\log\big(1+\frac{\cC_\epsilon}{\cC_0}\big)\Big)$ for $L^\infty$ atoms. Now, writing $f\in H^1$  as $f=\sum_\nu c_\nu a_{\nu}$ where the $a_\nu$ are $\infty$-atoms and $\sum_\nu|c_\nu|<\infty$, we easily derive that 
$\sum_{j=-N}^N T_jf \to Tf$ in $L^1$. Thus we see that $\sum_j T_j$ converges in the strong operator topology as operators $H^1\to L^1$  and we have the uniform bound 
$$\Big\|\sum_{j=-N}^N T_j\Big\|_{H^1\to H^1} \lc 
\Big(A+ \cC_0\log\big(1+\frac{\cC_\epsilon}{\cC_0}\big)\Big).$$
We apply parts (i) and (iii) of Theorem \ref{ThmJourne2} to see that 
that $\sum_j T_j$ converges in the strong operator topology as operators $L^2\to L^2$, 
and obtain the asserted bounds on the $L^2\to L^2$ operator norms.
\end{proof}

%\begin{lemma}
%Let $(x,y)\mapsto s(x,y)$  be measurable on $\bbR^d\times \bbR^d$ satisfying
%\begin{equation}\label{EqnJourneCorCep}\sup_{x} \int (1+|x-y|)^{\epsilon} |s(x,y)|\: dy + \sup_{y} \int (1+|x-y|)^{\epsilon} |s(x,y)|\: dx<C_\epsilon.\end{equation}
%Let \begin{equation}\label{EqnJourneCorC0} C_0 :=\sup_{j,x}\int |s(x,y)|\: dy  + \sup_{y}  |s(x,y)|\: dx\le C_0. \end{equation} 
%Let  $\phi\in C_0^\infty(\R^d)$ and let $$\tau(x,y)=\iint \phi(x-w)s(w,z)\phi(z-y) \,dw\, dz\,.
%$$Then $$\|\tau\|_{\Op_\eps} \le C_\eps. $$\end{lemma}

The following lemma  allows us to apply 
Theorems \ref{ThmJourne1}, \ref{ThmJourne2} 
and \ref{ThmJourne3} 
to sums of the form
$\sum_j P_j S_j P_j$ 
%We will apply the previous two theorems to the special case of sums
 where $P_j f = f*\dil{\phi}{2^j}$, and $S_j$ is an integral operator with kernel 
 $\Dilj s_j$, with $\sup_j (\Sha_\eps^1[s_j]+\Sha_\eps^\infty[s_j])<
 \infty$.
 %\le C_\eps$  and  $\sup_j (\Sha^1[s_j]+\Sha^\infty[s_j])\le C_0$.

\begin{lemma}\label{PSP-lemma} Suppose that  $ \Sha_\eps^1[s]+\Sha_\eps^\infty[s]\le C_\eps$ 
 and
 $\Sha^1[s]+\Sha^\infty[s]\le C_0$. Let $\phi\in C^\infty_0$ supported in $\{v:|v|\le 10\}$.
Let $$\widetilde s(x,y)= \iint \phi(x-w)s(w,z)\phi(z-y) \,dw\,dz.$$
Then
$\|\widetilde s\|_{\Op_\eps} \lc C_\eps$ and 
$\|\widetilde s\|_{\Op_0} \lc C_0$.
\end{lemma}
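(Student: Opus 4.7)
The plan is to treat each of the six semi-norms entering $\|\widetilde s\|_{\Op_\eps}$ (and the two entering $\|\widetilde s\|_{\Op_0}$) separately, using in each case only the support and smoothness of $\phi$ together with Fubini and the corresponding assumption on $s$. The key geometric observation is that when $\phi(x-w)\phi(z-y)\ne 0$ we have $|x-w|\le 10$ and $|z-y|\le 10$, hence
\[
(1+|x-y|)^{\eps} \le (1 + |x-w| + |w-z| + |z-y|)^{\eps} \lesssim (1+|w-z|)^{\eps}.
\]

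First I would bound the Schur-type norms. Fix $y$ and use Fubini to write
\[
\int (1+|x-y|)^{\eps}|\widetilde s(x,y)|\,dx \lesssim \iint |\phi(z-y)|\,|s(w,z)| \Big(\int |\phi(x-w)|(1+|w-z|)^{\eps}\,dx\Big)\,dw\,dz,
\]
which (integrating $x$ first and then $w$) is bounded by $\|\phi\|_1^2\,\Sha_\eps^1[s]$; this gives $\Sha_\eps^1[\widetilde s]\lesssim C_\eps$, and the case $\eps=0$ gives the claimed bound on $\Sha^1[\widetilde s]$. The symmetric argument (swap the roles of $x$ and $y$, of the two copies of $\phi$, and of $\Sha_\eps^1[s]$ and $\Sha_\eps^\infty[s]$) yields $\Sha_\eps^\infty[\widetilde s]\lesssim C_\eps$ and $\Sha^\infty[\widetilde s]\lesssim C_0$.

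Next I would handle the four regularity norms. For the left-variable differences, write
\[
\widetilde s(x+h,y)-\widetilde s(x,y) = \iint [\phi(x+h-w)-\phi(x-w)]\,s(w,z)\,\phi(z-y)\,dw\,dz.
\]
Since $\phi\in C^\infty_0$ is supported in $\{|v|\le 10\}$, for $|h|\le 1$ one has $|\phi(x+h-w)-\phi(x-w)|\le |h|\,\|\nabla\phi\|_\infty\,\bbone_{\{|x-w|\le 11\}}$. Integrating in $x$ with the other variables fixed contributes a constant (the measure of $\{|x-w|\le 11\}$), and then the remaining integral is bounded by $\|\phi\|_1\,\Sha^1[s]\le \|\phi\|_1\,C_0$. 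This gives
\[
\int |\widetilde s(x+h,y)-\widetilde s(x,y)|\,dx \lesssim |h|\,C_0,
\]
uniformly in $y$, and since $|h|\le 1$ and $\eps\le 1$ we have $|h|\cdot|h|^{-\eps}=|h|^{1-\eps}\le 1$, so $\Zhe_{\eps,\rml}^1[\widetilde s]\lesssim C_0\le C_\eps$. Integrating in $y$ instead of $x$ (and using $\Sha^\infty[s]$ in place of $\Sha^1[s]$) gives $\Zhe_{\eps,\rml}^\infty[\widetilde s]\lesssim C_0\le C_\eps$. The two right-variable regularity norms are handled identically, with the $h$-difference now landing on $\phi(z-y)$ rather than on $\phi(x-w)$.

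The main (mild) subtlety is simply to keep track of which variable one integrates first so that the supports of the two copies of $\phi$ absorb the weights $(1+|x-y|)^{\eps}$ into $(1+|w-z|)^{\eps}$, and so that the right one of $\Sha_\eps^1[s]$, $\Sha_\eps^\infty[s]$ (or of $\Sha^1[s]$, $\Sha^\infty[s]$) is used. Since the regularity gain of $|h|^{1-\eps}$ comes for free from the smoothness of $\phi$, there is no real obstacle; everything is a bookkeeping exercise in Fubini and the support condition on $\phi$. Summing the six (resp.\ two) contributions gives $\|\widetilde s\|_{\Op_\eps}\lesssim C_\eps$ and $\|\widetilde s\|_{\Op_0}\lesssim C_0$, completing the proof.
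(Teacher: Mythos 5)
Your proof is correct and is exactly the elementary Fubini-plus-support argument the authors had in mind when writing ``Left to the reader.'' The geometric observation $(1+|x-y|)^\eps\lesssim(1+|w-z|)^\eps$ on the support of $\phi(x-w)\phi(z-y)$, the order-of-integration bookkeeping for each of the six semi-norms, and the use of $C^1$-smoothness of $\phi$ (giving $|h|^{1-\eps}\le 1$ for $|h|\le 1$, $\eps\le 1$) to get the four regularity bounds are all correct. One minor wording point: you write ``$\lesssim C_0\le C_\eps$,'' but the hypotheses do not literally assert $C_0\le C_\eps$; however, since $\Sha^1[s]\le\Sha^1_\eps[s]$ and $\Sha^\infty[s]\le\Sha^\infty_\eps[s]$ trivially, you can bound the regularity terms directly by $\Sha^1_\eps[s]+\Sha^\infty_\eps[s]\le C_\eps$ without invoking $C_0$ at all, so the argument goes through in any case.
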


\begin{proof} Left to the reader. \end{proof}

We also have
\begin{lemma}\label{PS-SP-lemma} Let $s\in \Op_\eps$, $0\le \eps\le 1$.
Let $\phi\in C^1$ supported in $\{v:|v|\le 10\}$
and let 
\begin{align*} 
s_1(x,y)&=
  \int \phi(x-w)s(w,y) \,dw,
  \\
  s_2(x,y) &=
  \int s(x,z)\phi(z-y) \,dz.
  \end{align*}
Then
$\| s_1\|_{\Op_\eps} \lc \|s\|_{\Op_\eps}\|\phi\|_{C^1}
$,
$\| s_2\|_{\Op_\eps} \lc \|s\|_{\Op_\eps}\|\phi\|_{C^1}$.
\end{lemma}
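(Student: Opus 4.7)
The strategy is direct verification of each of the six seminorms in the definition of $\|\cdot\|_{\Op_\eps}$ (only two when $\eps=0$), applied to $s_1$; the bound for $s_2$ then follows from the one for $s_1$ via duality. Specifically, $s_2^\dual(x,y)=s_2(y,x)=\int \tilde\phi(x-w)\, s^\dual(w,y)\,dw$ with $\tilde\phi(v):=\phi(-v)$, which is exactly the $s_1$-construction applied to $s^\dual$ using the kernel $\tilde\phi$; since the six component seminorms of $\Op_\eps$ are invariant (up to permutation among themselves) under $\tau\mapsto\tau^\dual$, the $s_2$-estimate is immediate. So I focus on $s_1(x,y)=\int \phi(x-w)s(w,y)\,dw$.

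The two weighted Schur norms $\Sha_\eps^1[s_1]$ and $\Sha_\eps^\infty[s_1]$ are handled identically. Since $\phi$ is supported in $\{|v|\le 10\}$, in the defining integral we have $|x-w|\le 10$, so $(1+|x-y|)^\eps \le C(1+|w-y|)^\eps$ for $\eps\le 1$. This yields, after Fubini,
\begin{align*}
\int (1+|x-y|)^\eps |s_1(x,y)|\,dy
&\le C \int |\phi(x-w)| \int (1+|w-y|)^\eps |s(w,y)|\,dy\,dw \\
&\le C\|\phi\|_{C^1}\, \Sha_\eps^\infty[s],
\end{align*}
where I used that $\|\phi\|_1\lesssim \|\phi\|_\infty\le \|\phi\|_{C^1}$ thanks to the compact support. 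The estimate for $\Sha_\eps^1[s_1]$ is symmetric.

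For the regularity in the left variable ($\Zhe^1_{\eps,\rml}[s_1]$ and $\Zhe^\infty_{\eps,\rml}[s_1]$) is where the $C^1$ regularity of $\phi$ is used. For $|h|\le 1$,
\begin{equation*}
s_1(x+h,y)-s_1(x,y) = \int \bigl(\phi(x+h-w)-\phi(x-w)\bigr)\, s(w,y)\,dw,
\end{equation*}
and the mean value theorem gives $\|\phi(\cdot+h)-\phi\|_1 \lesssim |h|\,\|\nabla\phi\|_\infty$, again using compact support. Applying Fubini and the bounds $\Sha_0^1[s],\Sha_0^\infty[s]\le \|s\|_{\Op_\eps}$ produces factors of $|h|\,\|\nabla\phi\|_\infty$, which we convert to $|h|^\eps\,\|\phi\|_{C^1}$ by $|h|\le |h|^\eps$ for $|h|\le 1$, $\eps\in[0,1]$. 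For the regularity in the right variable, the $y$-increment passes directly to $s$:
\begin{equation*}
s_1(x,y+h)-s_1(x,y) = \int \phi(x-w)\bigl(s(w,y+h)-s(w,y)\bigr)\,dw,
\end{equation*}
and Fubini together with the definitions of $\Zhe_{\eps,\rmr}^\infty[s]$ and $\Zhe_{\eps,\rmr}^1[s]$ gives $\int |s_1(x,y+h)-s_1(x,y)|\,dy \le \|\phi\|_1\,\Zhe_{\eps,\rmr}^\infty[s]\,|h|^\eps$ and the analogous $L^1$ bound, both absorbed into $\|\phi\|_{C^1}\|s\|_{\Op_\eps}$. Summing the six estimates yields $\|s_1\|_{\Op_\eps}\lesssim \|\phi\|_{C^1}\|s\|_{\Op_\eps}$; the $\eps=0$ case involves only the two Schur norms and is an immediate consequence of the argument above. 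There is no genuine obstacle: the lemma is pure bookkeeping, and the only point worth flagging is the replacement $|h|\leadsto|h|^\eps$ from the mean value theorem, which is free for $|h|\le 1$.
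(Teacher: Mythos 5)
Your proof is correct and, since the paper's own "proof" of this lemma is simply "Immediate from the definition," your direct verification of each component seminorm is exactly the argument being left to the reader. The duality reduction for $s_2$ and the two mechanisms you isolate — transferring a factor of $(1+|x-y|)^\eps$ onto $(1+|w-y|)^\eps$ using the compact support of $\phi$, and trading $|h|$ from the mean value theorem for $|h|^\eps$ when $|h|\le 1$ and $\eps\le 1$ — are precisely what make the claim routine.
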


\begin{proof} Immediate from the definition. \end{proof}

\section{Almost orthogonality} \label{almost-orth}
We shall repeatedly use a rather standard  almost orthogonality lemma 
which involves the Littlewood-Paley operators $\cQ_k$ introduced in \eqref{cQdef}. 

\begin{lemma}\label{ThmAlmostOrthogonal}
Let  $\cI$ be an index set. 
 Suppose that for each $j\in \Z$, $\nu\in \cI$, $V_j^\nu:L^2\rightarrow L^2$ is a bounded  operator such that for $k_1,k_2\in \Z$,
\begin{equation}\label{Ajk2assu}
\big\|\Qt_{k_1} V_{j+k_1}^\nu \Qt_{j+k_1+k_2}\big\|_{L^2\to L^2}\lesssim A_{j,k_2},
\end{equation}
where $$\sum_{j,k_2} A_{j,k_2}<\infty.$$  Then the sum 
$V^\nu:=\sum_{j\in \Z} V_j^\nu$, converges in the strong operator topology (as operators on $L^2$),
with equiconvergence with respect to $\cI$, and  we have
% $V:L^2\rightarrow L^2$ and
\begin{equation} \label{Ajk2concl}
\sup_{\nu\in\cI} \LpOpN{2}{V^\nu}\lesssim \sum_{j,k_2} A_{j,k_2}.
\end{equation}
%Here $\Qt_k$ is the operator from Section \ref{SectionAuxOps}.

%(ii) Given an index set $\cI$, and for each $i\in \cI$ a family of operators $\{V_j^i\}$  of $L^2$ bounded operators. Assume that for $k_1,k_2\in \Z$,
%$\LpOpN{2}{\Qt_{k_1} V_{j+k_1}^i \Qt_{j+k_1+k_2}}\lesssim A_{j,k_2},$
%where $\sum_{j,k_2} A_{j,k_2}<\infty$.  Then the convergence of $V^i:=\sum_{j\in \Z} V_j^i$ in the strong operator topology (as operators on $L^2$) is uniform in $i$, that is, for each $f\in L^2$ and $\epsilon>0$ there  an $N(\eps,f)$ so that   $\|\sum_{j=-N}^N V_j^i f- V^if \|_2<\eps$ for all $N\ge N(\eps, f)$, $i\in \cI$.
\end{lemma}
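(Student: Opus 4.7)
The plan is a Cotlar--Stein style argument, using the Littlewood--Paley operators $\cQ_k$, $\tcQ_k$ introduced in \S\ref{SectionAuxOps}, the reproducing identity $\cQ_l = \cQ_l\tcQ_l$ from \eqref{reproducing}, and the square-function equivalence \eqref{EqnAuxLittlewoodPaley}. First I would establish the operator norm bound \eqref{Ajk2concl} for the finite partial sums $\Sigma_\cJ^\nu := \sum_{j\in\cJ} V_j^\nu$, $\cJ\subset\Z$ finite, with a constant independent of $\cJ$ and $\nu$; equiconvergence then drops out of the same estimate applied to tail sums.

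For the uniform bound, the square function estimate yields
\[
\|\Sigma_\cJ^\nu f\|_2^2 \;\lesssim\; \sum_{k_1\in\Z}\|\cQ_{k_1}\Sigma_\cJ^\nu f\|_2^2.
\]
Inside, I insert $f=\sum_l\cQ_l\tcQ_l f$ and reindex $j=j'+k_1$, $l=j'+k_1+k_2$, so that exactly the product $\cQ_{k_1}V_{j'+k_1}^\nu\cQ_{j'+k_1+k_2}$ from the hypothesis appears:
\[
\cQ_{k_1}\Sigma_\cJ^\nu f \;=\; \sum_{\substack{j',k_2\in\Z\\ j'+k_1\in\cJ}} \bigl(\cQ_{k_1}V_{j'+k_1}^\nu\cQ_{j'+k_1+k_2}\bigr)\tcQ_{j'+k_1+k_2}f.
\]
Minkowski and \eqref{Ajk2assu} bound $\|\cQ_{k_1}\Sigma_\cJ^\nu f\|_2$ by $\sum_{j',k_2:\,j'+k_1\in\cJ} A_{j',k_2}\|\tcQ_{j'+k_1+k_2}f\|_2$. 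Splitting $A_{j',k_2}=A_{j',k_2}^{1/2}\cdot A_{j',k_2}^{1/2}$ and applying Cauchy--Schwarz in $(j',k_2)$, then squaring and summing over $k_1$, the interchange of sums together with the Plancherel-type bound $\sum_m\|\tcQ_m f\|_2^2\lesssim\|f\|_2^2$ yields $\|\Sigma_\cJ^\nu f\|_2^2 \lesssim (\sum_{j,k_2} A_{j,k_2})^2\|f\|_2^2$, uniformly in $\cJ$ and $\nu$.

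For the equiconvergence claim, I would rerun exactly the same estimate with $\cJ=\{j:N_2<|j|\le N_1\}$. The bookkeeping gives
\[
\|\Sigma^\nu_{N_1}f-\Sigma^\nu_{N_2}f\|_2^2 \;\lesssim\; \Bigl(\sum_{j,k_2}A_{j,k_2}\Bigr)\sum_{j',k_2}A_{j',k_2}\!\!\!\sum_{\substack{k_1\in\Z\\ N_2<|j'+k_1|\le N_1}}\!\!\!\|\tcQ_{j'+k_1+k_2}f\|_2^2.
\]
For each fixed $(j',k_2)$ the innermost $k_1$-sum is dominated by $\sum_m\|\tcQ_m f\|_2^2<\infty$ and tends to $0$ as $N_2\to\infty$; by dominated convergence in $\ell^1((j',k_2)\in\Z^2)$, justified by $\sum A_{j,k_2}<\infty$, the right-hand side vanishes in the limit, with a bound depending only on $f$ and $\sum A_{j,k_2}$ and hence independent of $\nu\in\cI$. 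This gives the uniform Cauchy property and, together with the previous step, the full conclusion.

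The argument is essentially bookkeeping; the only step requiring a little care is the reindexing that aligns the reproducing formula with the indices appearing in \eqref{Ajk2assu}, so that Cauchy--Schwarz in $(j',k_2)$ produces $(\sum A_{j,k_2})^2$ with no spurious factor from the $k_1$-summation.
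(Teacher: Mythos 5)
Your argument is correct and is essentially the paper's almost-orthogonality argument, with the only difference being cosmetic: you bound $\|\Sigma_\cJ^\nu f\|_2^2$ directly through the square-function equivalence $\|\cdot\|_2^2\approx\sum_{k_1}\|\cQ_{k_1}\cdot\|_2^2$ and split $A_{j,k_2}=A_{j,k_2}^{1/2}\cdot A_{j,k_2}^{1/2}$ before Cauchy--Schwarz in $(j',k_2)$, whereas the paper dualizes against $g$, applies Cauchy--Schwarz in $k_1$ against $\sum_{k_1}\|\tcQ^*_{k_1}g\|_2^2$, and then uses Minkowski in $(j,k_2)$. The reproducing identity, the reindexing $j\mapsto j+k_1$, $l\mapsto j+k_1+k_2$ to align with \eqref{Ajk2assu}, and the equiconvergence argument via tail sums are all the same.
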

\begin{proof}
Recall, from \S\ref{SectionAuxOps}, $\sum_{k} \Qtt_k \Qt_k=\sum_{k} \Qt_k \Qtt_k =I$.  Let $f,g\in L^2(\R^d)$ with
$\|{f}\|_2=\|{g}\|_2=1$.  By \eqref{EqnAuxLittlewoodPaley}, we have
\begin{equation*}
\Big(\sum_k \|\Qtt_k f\|_2^2\Big)^{\frac{1}{2}} = \Big\|\Big( \sum_{k} \q|\Qtt_k f\w|^2 \Big)^{\frac{1}{2} }\Big\|_2\approx 1,\quad \Big(\sum_k \|\Qtt_k^{*} g\|_2^2\Big)^{\frac{1}{2}} = \Big\|\Big( \sum_{k} \q|\Qtt_k^{*} g\w|^2 \Big)^{\frac{1}{2} }\Big\|_2\approx 1.
\end{equation*}
First observe, for integers $J_1\le J_2$,

\begin{equation*}
\begin{split}
&\Big|\Ltip{g}{\sum_{j=J_1}^{J_2}
 V_j^\nu f}\Big| =  \Big|\Ltip{g}{\sum_{k_1,k_2\in \Z} 
\sum_{j=J_1}^{J_2} \Qtt_{k_1} \Qt_{k_1} V_j^\nu \Qt_{k_2} \Qtt_{k_2} f }\Big| 
\\&= \Big|\Ltip{g}{\sum_{j=J_1}^{J_2} \sum_{k_1,k_2\in \Z} \Qtt_{k_1} \Qt_{k_1} V_{j}^\nu \Qt_{k_2} \Qtt_{k_2 } f}\Big|
\\&\leq \sum_{k_1\in \Z} \Big|\Ltip{\Qtt_{k_1}^{*} g}{  \sum_{j=J_1-k_1}^{J_2-k_1} \sum_{k_2\in \Z} \Qt_{k_1} V_{j+k_1}^\nu \Qt_{j+k_1+k_2} \Qtt_{j+k_1+k_2} f }\Big|
\\&\leq \Big( \sum_{k_1\in \Z}\|\Qtt_{k_1}^{*} g\|_2^2   \Big)^{\frac{1}{2}} \Big( \sum_{k_1\in \Z}  \Big\|  \sum_{j=J_1-k_1}^{J_2-k_1}\sum_{k_2\in \Z} \Qt_{k_1} V_{j+k_1}^\nu \Qt_{j+k_1+k_2} \Qtt_{j+k_1+k_2} f \Big\|_2^2  \Big)^{\frac{1}{2}}.
\end{split}
\end{equation*}
Now
\begin{align*}
&\Big( \sum_{k_1\in \Z}  \Big\| \sum_{j=J_1-k_1}^{J_2-k_1}
\sum_{k_2\in \Z} \Qt_{k_1} V_{j+k_1}^\nu\Qt_{j+k_1+k_2} \Qtt_{j+k_1+k_2} f \Big\|_2^2  \Big)^{\frac{1}{2}}
\\&\lesssim \sum_{j\in \bbZ}\sum_{k_2\in \Z} \Big( \sum_{k_1=J_1-j}^{J_2-j} 
 \big\|\Qt_{k_1} V_{j+k_1}^\nu\Qt_{j+k_1+k_2} \Qtt_{j+k_1+k_2} f \big\|_2^2  \Big)^{\frac{1}{2}}
\\&\leq \sum_{j\in \bbZ}\sum_{k_2\in \Z} A_{j,k_2}\Big( \sum_{k_1=J_1-j}^{J_2-j} \| \Qtt_{j+k_1+k_2} f \|_2^2 \Big)^{\frac{1}{2}}.
\end{align*}

We take the sup over $g$ with $\|g\|_2=1$ and obtain from the two previous displays
\Be \label{checkequi}
\Big\|\sum_{j=J_1}^{J_2} V_j^\nu f\Big\|_2 \lc 
\sum_{j\in \bbZ}\sum_{k_2\in \Z} A_{j,k_2}\Big( \sum_{k_1=J_1-j}^{J_2-j} \| \Qtt_{j+k_1+k_2} f \|_2^2 \Big)^{\frac{1}{2}}
\lc 
\sum_{j\in \bbZ}\sum_{k_2\in \Z} A_{j,k_2}\|f\|_2 .
 \Ee
 The first inequality in \eqref{checkequi}  implies  that for fixed $f\in L^2$ 
 the partial sums 
 %we get convergence 
 of $\varSigma_N^\nu  f=\sum_{j=-N}^N V_j^\nu f$ 
 form a Cauchy sequence, more precisely,  for each $\eps>0$ there is $N(\eps,f)\in \bbN$ (independent of $\cI$) such that $\|\varSigma_{N_1} f-\varSigma_{N_2} f\|_2<\eps$ for $N_1, N_2>N(\eps,f)$.
 By  completeness of $L^2$, $\varSigma_N^\nu f$ converge to a limit $\varSigma^\nu\! f$ and 
 $\varSigma^\nu$  defines a linear bounded operator on $L^2$. Thus $\varSigma_N^\nu \to \varSigma^\nu$
  in the strong operator topology, and, by the above, we get equiconvergence with respect to $\cI$.
  \end{proof}

\section{Proof of Theorem \ref{main-parts}: 
Part \ref{ItemBoundRoughest}}
\label{Sectionpartrough}
%\input{partrough}

% !TEX root =  main.tex

We are given a family $\vectsig=\{\vsig_j\}$ with $\sup_j \|\vsig_j\|_{\cB_\eps}<\infty$. 
In this and the following sections  we  use the notation $$\Gamma_\eps=
\frac{\sup_j \|\vsig_j\|\ci{\cB_\eps}}
{\sup_j \|\vsig_j\|\ci{L^1} }
$$
 introduced in \eqref{Gammaeps}. Notice that always $\Ga_\eps\ge 1$.

Recall, $$\La^1_{n+1,n+2}(b_1,\ldots, b_{n+2}) = \sum_{j\in \Z} \La[\vsigjj](b_1,\ldots, b_n, (I-P_j)b_{n+1}, (I-P_j) b_{n+2}).$$  
%It is our goal 
Given $\eps>0$ and  
$\vectsig$,  
%a bounded family $\vectsig=\q\{\vsig_j: j\in\Z\w\} \subset \sBtp{\eps}{\R^n\times \R^d}$ 
it is our goal 
to prove
Part \ref{ItemBoundRoughest} of Theorem \ref{main-parts}, i.e. for $1<p
\le 2$,
the estimate
%\begin{multline} 
\Be\label{ThmBoundRoughest}
\q|\La^1_{n+1,n+2}\q(b_1,\ldots, b_{n+2}\w)\w|
\leq C_{d,p,\eps} \q(\sup_{j} \LpN{1}{\vsig_j} \w)
%\log^{2} \q( 1+n\frac{\sup_j \sBN{\eps}{\vsig_j} }{\sup_j \LpN{1}{\vsig_j} }\w)
\log^{2} ( 1+n\Ga_\eps)
\Big(\prod_{l=1}^n \| {b_l}\|_\infty\Big)\|b_{n+1}\|_p \|b_{n+2}\|_{p'}.
\Ee
%\end{multline}

We formulate a stronger result which will also 
be useful in other parts of the paper.
For this, we need some
new notation.  Let $1\leq l_1\ne l_2\leq n+2$ and let 
 $\q\{b_l^j :j\in \Z, \,l\neq l_1,l_2 \w\}\subset L^\infty(\R^d)$ be a bounded subset of $L^\infty(\bbR^d)$.
 Let  $k_1,k_2\in \N$,  
 % For $l\ne l_1,l_2$ let $\q\{b_l^j :j\in \Z\w\}\subset L^\infty(\R^d)$ be a bounded set
and fix $\zeta_1,\zeta_2\in \sU$.

Define an operator $S_{k_1,k_2,j}^{l_1,l_2}$ (which implicitly depends on $\q\{b_l^j:j\in \Z, l\ne l_1,l_2\w\}$, 
$\zeta_1$, and $\zeta_2$) by the formula
\begin{multline*}
\int g(x) \q(S_{k_1,k_2,j}^{l_1,l_2} f\w)(x)\: dx \\:= 
%\sum_{j\in \Z} 
\La[\vsigjj](b_1^j, \ldots, b_{l_1-1}^j, \Qb{j+k_1}{\zeta_1} f, b_{l_1+1}^j, \ldots, b_{l_2-1}^j, \Qb{j+k_2}{\zeta_2} g, b_{l_2+1}^j,\ldots, b_{n_2}^j).
\end{multline*}

\begin{thm}\label{PropL2BoundFor2Qs} Let $0<\eps<1$ and suppose that $\sup_j \|\vsig_j\|_{\cB_\eps}<\infty$. Then 
$$S^{l_1,l_2}_{k_1,k_2}=\sum_{j\in \bbZ} S^{l_1,l_2}_{k_1,k_2,j}$$  converges in the strong operator topology, as bounded operators on $L^2$.
%, with equiconvergence with respect to the
%choice of $b^l_j$ in a bounded set of $L^\infty$ and the choice of $\zeta_1, \zeta_2$ in a bounded set of $\sU$. 
Moreover there is $c>0$ such that
%$\eps'\gtrsim 1$, $M\lesssim 1$ with
\begin{equation*}
\LpOpN{2}{S_{k_1,k_2}^{l_1,l_2}}\lesssim \sUN{\zeta_1}\sUN{\zeta_2} 
\sup_j \|\vsig_j\|_{L^1} \min\{ 1, \,n 2^{-(k_1+k_2)\eps'} \Ga_\eps\}
\big(\prod_{l\ne l_1,l_2} \sup_j \|b_j^l\|_\infty\w\big).
%\Big(\q(\sup_j \sBN{\eps}{\vsig_j} n 2^{-(k_1+k_2)c\eps}\Big) \wedge \sup_j \LpN{1}{\vsig_j}\w] 
%\big(\prod_{l\ne l_1,l_2} \sup_j \LpN{\infty}{b_j^l}\w\big).
\end{equation*}
\end{thm}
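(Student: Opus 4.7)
I would apply the almost orthogonality Lemma~\ref{ThmAlmostOrthogonal} with $V_j^\nu:=S^{l_1,l_2}_{k_1,k_2,j}$, the index $\nu\in\cI$ encoding the auxiliary data $(\{b_l^j\},\zeta_1,\zeta_2,k_1,k_2)$. This reduces the task to an estimate of the form
\begin{equation*}
\big\|\Qt_{m_1}\,S^{l_1,l_2}_{k_1,k_2,j+m_1}\,\Qt_{j+m_1+m_2}\big\|_{L^2\to L^2}\;\leq\; A_{j,m_2},
\end{equation*}
with $\sum_{j,m_2\in\Z}A_{j,m_2}$ bounded by the right-hand side of the theorem; the equiconvergence afforded by Lemma~\ref{ThmAlmostOrthogonal} then delivers the claimed strong-operator-topology convergence of $\sum_j S^{l_1,l_2}_{k_1,k_2,j}$.

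Dualizing with $f,g\in L^2$, the operator norm above is a supremum of
\begin{equation*}
\La\bigl[\vsig_{j+m_1}^{(2^{j+m_1})}\bigr]\bigl(\ldots,\,\Qb{j+m_1+k_1}{\zeta_1}\Qt_{j+m_1+m_2}f,\,\ldots,\,\Qb{j+m_1+k_2}{\zeta_2}\Qt_{m_1}^{*}g,\,\ldots\bigr).
\end{equation*}
Generalizing Lemma~\ref{LemmaAuxBound3} from $\psi,\widetilde\eta_0$ to arbitrary elements of $\sU$ by combining Proposition~\ref{PropDecomZeta} with the pointwise estimates of Lemmas~\ref{LemmaAuxBound1}--\ref{LemmaAuxBound2}, each composed pair factors as a scalar times a single $\sU$-cancellation operator at the coarser of the two scales: $\Qb{j+m_1+k_1}{\zeta_1}\Qt_{j+m_1+m_2}=2^{-|k_1-m_2|/2}\Qb{\sigma_1}{\tilde\zeta_1}$ and $\Qb{j+m_1+k_2}{\zeta_2}\Qt_{m_1}^{*}=2^{-|j+k_2|/2}\Qb{\sigma_2}{\tilde\zeta_2}$, with $\|\tilde\zeta_i\|_{\sU}\lesssim\|\zeta_i\|_{\sU}$.

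Theorem~\ref{ThmBasicL2Result} applied to either one of the two remaining cancellation insertions then yields a bound of the form $\min\{1,\,n\,2^{c\eps(j+m_1-\sigma_i)}\,\Ga_\eps\}$, while Lemma~\ref{LemmaBasicL2BasicLpEstimate} provides the unconditional $1$. Combined,
\begin{equation*}
A_{j,m_2}\;\lesssim\;C_0\cdot 2^{-|k_1-m_2|/2}\cdot 2^{-|j+k_2|/2}\cdot\min\{1,\,n\,2^{-c\eps\Delta(j,m_2)}\,\Ga_\eps\},
\end{equation*}
where $C_0:=\|\zeta_1\|_{\sU}\|\zeta_2\|_{\sU}\sup_j\|\vsig_j\|_1\prod_{l\ne l_1,l_2}\sup_j\|b_l^j\|_\infty$ and $\Delta(j,m_2)$ records the scale-excess in the chosen application.

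The hard part will be organizing the summation $\sum_{j,m_2}$ so as to produce the \emph{symmetric} combined decay $n\,2^{-(k_1+k_2)\eps'}\,\Ga_\eps$: the geometric prefactors $2^{-|k_1-m_2|/2}\cdot 2^{-|j+k_2|/2}$ by themselves only sum to $O(1)$, and applying Theorem~\ref{ThmBasicL2Result} separately from the $l_1$- or $l_2$-side yields, at the peak summand $(m_2,j)\approx(k_1,-k_2)$ where both prefactors are $O(1)$, only $n\,2^{-c\eps k_1}\Ga_\eps$ or $n\,2^{-c\eps k_2}\Ga_\eps$, not their product. My plan to resolve this is to promote Theorem~\ref{ThmBasicL2Result} to a two-cancellation variant — proved by iterating the Leibniz-rule decomposition that underlies Theorem~\ref{ThmBasicL2Resultfirst} once for each of the two inserted $\Qb$'s and applying a bilinear analog of the key $L^2$ bound of Lemma~\ref{LemmaBasicL2TheMainTechLemma} — and to substitute this refinement for the single application of Theorem~\ref{ThmBasicL2Result} used above; the resulting pointwise bound on $A_{j,m_2}$ then sums to the desired $\min\{1,\,n\,2^{-(k_1+k_2)\eps'}\Ga_\eps\}C_0$.
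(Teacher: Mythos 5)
Your framework is the right one — Lemma~\ref{ThmAlmostOrthogonal}, a pairing identity for $\Qt_{m_1}S^{l_1,l_2}_{k_1,k_2,j+m_1}\Qt_{j+m_1+m_2}$, an $L^1$-type bound with geometric decay in $(j,m_2)$ for summability, and a $\cB_\eps$-type bound carrying decay in $(k_1,k_2)$ — and the $L^1$ bound via $\sU$-factorization of the composed pairs, combined with Lemma~\ref{LemmaBasicL2BasicLpEstimate}, is essentially what the paper does. (The paper also first reduces to $l_1=n+1$, $l_2=n+2$ via Theorem~\ref{ThmOpResAdjoints}, which you should make explicit.)

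The gap is in your final paragraph. You claim that a single application of Theorem~\ref{ThmBasicL2Result} gives only $n\,2^{-c\eps k_1}\Ga_\eps$ or $n\,2^{-c\eps k_2}\Ga_\eps$ "but not their product," and that this obstructs the $2^{-(k_1+k_2)\eps'}$ decay, so you propose proving a new two-cancellation analogue of Theorem~\ref{ThmBasicL2Resultfirst}. This is not needed. Both one-sided bounds are \emph{simultaneously valid} upper bounds on $\|S^{l_1,l_2}_{k_1,k_2,j+m_1}\|_{L^2\to L^2}$ (note $\|\Qt\|_{L^2\to L^2}\lesssim 1$, so you don't even need the factorization at this point — the paper simply applies Theorem~\ref{ThmBasicL2Result} to $S^{l_1,l_2}_{k_1,k_2,j+m_1}$ directly, once viewing $\Qb{\cdot}{\zeta_2}$ as the cancellation insertion and $\Qb{\cdot}{\zeta_1}f$ as an $L^2$ input, and once the other way around). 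Since $\min\{a,b\}\le\sqrt{ab}$, the two bounds combine to
\begin{equation*}
\min\bigl\{n\,2^{-c\eps k_1}\Ga_\eps,\ n\,2^{-c\eps k_2}\Ga_\eps\bigr\}\ \le\ n\,2^{-c\eps(k_1+k_2)/2}\Ga_\eps,
\end{equation*}
which is exactly the claimed decay with $\eps'=c\eps/2$. The theorem permits this loss in the exponent ("there is $c>0$ such that..."), so no genuinely bilinear refinement of the Leibniz decomposition behind Theorem~\ref{ThmBasicL2Resultfirst} is required. Once this observation replaces your proposed detour, the remaining step — taking the $\min$ of the two bounds for $A_{j,m_2}$ and summing over $j,m_2$ — proceeds exactly as you sketch, and matches the paper's computation (which produces a bound of the form $\min\{\sup_j\|\vsig_j\|_{\cB_\eps}\,n^{1/2}2^{-(k_1+k_2)c\eps/2},\ \sup_j\|\vsig_j\|_{L^1}\}$, again absorbable into the statement after relabeling $c$).
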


\begin{proof} [Proof that Theorem \ref{PropL2BoundFor2Qs} implies inequality 
\eqref{ThmBoundRoughest}]
For this, fix $b_1,\ldots, b_n\in L^\infty\q(\R^d\w)$ with 
\Be\label{bnormalization} \|b_j\|_\infty=1, \quad j=1,\dots,n.
\Ee 
  For $k_1,k_2\in \N$, define operators
$\cV$, $\cV_{k_1}$, and $\cV_{k_1,k_2}$ by the following formulas.
\begin{equation*}
\int g(x) \q( \cV f\w)(x) \: dx := \sum_{j} \La[\vsigjj](b_1,\ldots, b_n, (I-P_j) f, (I-P_j)g),
\end{equation*}
\begin{equation*}
\int g(x) \q( \cV_{k_1} f\w)(x)\: dx:= \sum_{j} \La[\vsigjj](b_1, \ldots, b_n, Q_{j+k_1}f, (I-P_j) g),
\end{equation*}
\begin{equation*}
\int g(x) \q( \cV_{k_1,k_2} f\w)(x)\: dx:= \sum_{j} \La[\vsigjj](b_1, \ldots, b_n, Q_{j+k_1} f, Q_{j+k_2} g).
\end{equation*}
The estimate \eqref{ThmBoundRoughest} is equivalent to
\begin{equation}\label{EqnBoundRoughestToShow}
\LpOpN{p}{\cV} \lesssim  \sup_{j} \LpN{1}{\vsig_j} \log^{2} \q( 1+n\Ga_\eps).
%\frac{\sup_j \sBN{\eps}{\vsig_j} }{\sup_j \LpN{1}{\vsig_j} }\w).
\end{equation}
In light of \eqref{EqnAuxOpId}, we have the following identities,
\begin{equation*}
\cV= \sum_{k_1>0} \cV_{k_1}, \quad \cV_{k_1}=\sum_{k_2>0} \cV_{k_1,k_2}.
\end{equation*}

To see \eqref{EqnBoundRoughestToShow} we first use 
Theorem  \ref{PropL2BoundFor2Qs} to deduce
\begin{equation*}
\LpOpN{2}{\cV_{k_1,k_2}}\lesssim \min\{ \sup_j \sBN{\eps}{\vsig_j} n 2^{-(k_1+k_2)c\eps},\, \sup_j \LpN{1}{\vsig_j} \}.
\end{equation*}
Thus, by Lemma \ref{LemmaBasicSum},
\begin{equation*}
\begin{split}
\LpOpN{2}{\cV_{k_1}}&\lesssim \sum_{k_1>0}  \min\big\{\sup_j \sBN{\eps}{\vsig_j} n 2^{-(k_1+k_2)c \eps}\w, \,\, \sup_j \LpN{1}{\vsig_j}\big\}
%\\&\lesssim \q(\sup_{j} \sBN{\eps}{\vsig_j} n 2^{-k_2c\eps} \w) \wedge \Big(\q(\sup_j \LpN{1}{\vsig_j}\w) \log\q(1+n \frac{\sup_j \sBN{\eps}{\vsig_j} }{\sup_j \LpN{1}{\vsig_j} }\w)\Big)
\end{split}
\end{equation*}
which implies
\Be\label{LemmaBoundRoughest1}
\LpOpN{2}{\cV_{k_1}}\lesssim 
\sup_j\|\vsig_j\|_{L^1} \min\{ 
n\Ga_\eps 2^{-k_2c\eps},\, \log\q(1+n \Ga_\eps)\}\,.
\Ee

We turn to the proof of   \eqref{EqnBoundRoughestToShow}. 
Define an operator $W_j$ by
 $$\La[\vsigjj](b_1,\ldots, b_n,  b_{n+1}, b_{n+2})=\int W_jb_{n+1}(x) b_{n+2} (x) \,dx.$$
The Schwartz kernel of $W_j$ is $\Dil_{2^j} w_j(x,y)$ where
\Be\label{schwartzkernelofwj}
w_j(x,y)=  \int\vsig_j(\alpha, x-y)
\prod_{i=1}^n b_i(2^{-j}(x-\alpha_i(x-y))\, d\alpha\,.
\Ee
We observe that $\cV_{k_1}=\sum_j (I-P_j)W_jQ_{j+k_1}$.
If we set $S_j=(I-P_j) W_j$ then the Schwartz kernel of $S_j$ is $\Dil_{2^j} s_j$
where $s_j(x,y) = w_j(x,y)-\int \phi(x-x') w_l(x',y)$.
It is easy to see that
$\Sha^1(s_j) \lc \|\vsig\|_{L^1}=:A$ and 
$\Sha_\eps^1(s_j) \lc \|\vsig\|_{\cB_\eps}=:B$. 

We wish to apply  Corollary \ref{CorWeakTypeWithQ}, with $S^{k_1}\equiv \sum S_j Q_{j+k_1}=\cV_{k_1}$.
By Lemma \ref{LemmaBoundRoughest1}, we have
$D_{\eps'}\lesssim
 \sup_{j} \sBN{\eps}{\vsig_j} $ and $D_0\lesssim  \q(\sup_j \LpN{1}{\vsig_j}\w) \log\q(1+n \frac{\sup_j \sBN{\eps}{\vsig_j} }{\sup_j \LpN{1}{\vsig_j} }\w)$.
%Here $C_\eps\lesssim \sup_{j} \sBN{\eps}{\vsig_j}$ and $C_0\lesssim \sup_{j} \LpN{1}{\vsig_j}$.
Plugging this into the conclusion of Corollary \ref{CorWeakTypeWithQ}, \eqref{EqnBoundRoughestToShow} follows, and the proof is complete.
\end{proof}

\subsubsection*{Proof of Theorem \ref{PropL2BoundFor2Qs}}  In light of Theorem \ref{ThmOpResAdjoints}, it suffices to prove Theorem \ref{PropL2BoundFor2Qs}
in the case $l_1=n+1$, $l_2=n+2$.  We may also assume
the normalizations
\Be \label{bjl-and-u-normalization}
\begin{aligned}
&\sup_{j} \|{b_j^l}\|_\infty =1, \quad 1 \leq l\leq n, 
\\
&\sUN{\zeta_1}=1=\sUN{\zeta_2}.
\end{aligned}
\Ee
  With these reductions, our goal is to show
\begin{equation}\label{EqnBoundRoughestToShowProp}
\LpOpN{2}{S_{k_1,k_2}^{n+1,n+2}} \lesssim\max  \big\{\sup_j \sBN{\eps}{\vsig_j} n 2^{-(k_1+k_2)c\eps}, \, 
% \wedge 
\sup\|\vsig_j\|_{L^1}\big\}\,.
\end{equation}
To finish the proof we define, for $j\in \Z$, $k_1,k_2\in \N$,  an operator $S_{j,k_1,k_2}\equiv S_{j,k_1,k_2}^{n+1,n+2}$ by
\begin{equation*}
\int g(x)\, S_{j,k_1,k_2} f \q(x\w)\: dx = \La[\vsigjj] (b_1^j, \ldots, b_n^j, \Qb{j+k_1}{\zeta_1} f, \Qb{j+k_2}{\zeta_2} g),
\end{equation*}
so that $S_{k_1,k_2}^{n+1,n+2} = \sum_{j\in \Z} S_{j,k_1,k_2}$.

We claim that there is $c>0$ 
such that for $j,k_1',k_2'\in \Z$, $k_1,k_2\in \N$,
\begin{multline}
\label{LemmaBoundRoughestPieces}
\big\|\Qt_{k_1'} S_{j+k_1', k_1,k_2} \Qt_{j+k_1'+k_2' }\big\|_{L^2\to L^2}
\\ \lesssim \min\big\{\sup_j \sBN{\eps}{\vsig_j} n 2^{-(k_1+k_2)c\eps},\, 2^{-|k_2-k_2'|-|k_1+j|}\sup_j \LpN{1}{\vsig_j}\big\}.
\end{multline}
To see this observe first that
%This follows from results in \S \ref{Sectionltwo}. Indeed 
using 
\[\big\|\Qt_{k_1'} {}^t\Qb{j+k_1'+k_1}{\zeta_1}\big\|_{L^2\to L^2}\lesssim 2^{-|k_1+j|}, \qquad
\LpOpN{2}{\Qb{j+k_1'+k_2}{\zeta_2} \Qt_{j+k_1'+k_2'}}\lesssim 2^{-|k_2-k_2'|},\]
it follows from the simple  Lemma \ref{LemmaBasicL2BasicLpEstimate} that
\begin{equation*}
\big\|\Qt_{k_1'} S_{j+k_1', k_1,k_2} \Qt_{j+k_1'+k_2' }\big\|_{L^2\to L^2} \lesssim2^{-|k_2-k_2'|-|k_1+j|} \LpN{1}{\vsig_{j+k_1'}}.
\end{equation*}
Using $\LpOpN{2}{\Qt_{k_1'}}, \LpOpN{2}{\Qt_{j+k_1'+k_2'}}\lesssim 1$, it follows from the main $L^2$-estimate, Theorem \ref{ThmBasicL2Result}, that
\begin{equation*}
\big\|\Qt_{k_1'} S_{j+k_1', k_1,k_2} \Qt_{j+k_1'+k_2' }\big\|_{L^2\to L^2} \lesssim \sBN{\eps}{\vsig_{j+k_1'}} n 2^{-(k_1+k_2)c\eps}
\end{equation*}
for some $c>0$ (independent of $n$). Inequality \eqref{LemmaBoundRoughestPieces}  follows from a combination of the two bounds.

%\begin{proof}[Proof of Theorem \ref{PropL2BoundFor2Qs}, conclusion]
To prove \eqref{EqnBoundRoughestToShowProp} we use  Lemma \ref{ThmAlmostOrthogonal} and inequality \eqref{LemmaBoundRoughestPieces} to conclude 
\begin{equation*}
\begin{split}
\LpOpN{2}{S_{k_1,k_2}^{n+1,n+2}}&\lesssim \sum_{j,k_2'\in \Z} \min\big\{\sup_{j'} \sBN{\eps}{\vsig_{j'}} n 2^{-(k_1+k_2)c \eps} ,\, 2^{-|k_2-k_2'|-|k_1+j|}\sup_{j '}\LpN{1}{\vsig_{j'}}\big\}
%\\&\lesssim \sum_{j,k_2'\in \Z} \q(\sup_{j'} \sBN{\eps}{\vsig_{j'}} n 2^{-(k_1+k_2)c \eps} \w)^{\frac{1}{2}} (2^{-|k_2-k_2'|-|k_1+j|}\sup_{j '}\LpN{1}{\vsig_{j'}}\w)^{\frac{1}{2}} \wedge  \q(2^{-|k_2-k_2'|-|k_1+j|}\sup_{j '}\LpN{1}{\vsig_{j'}}\w)
\\&\lesssim \min\big\{\sup_j \sBN{\eps}{\vsig_j} n^{1/2} 2^{-(k_1+k_2) c\eps/2},\, \sup_j \LpN{1}{\vsig_j}\big\},
\end{split}
\end{equation*}
where we have used $\LpN{1}{\vsig_j}\leq \sBN{\eps}{\vsig_j}$.  This completes the proof (with $c$ replaced by $c/2$). \qed
%\end{proof}

\section{Proof of Theorem \ref{main-parts}: Part \ref{ItemBoundT1}}
\label{Sectionpartt1}
%\input{partt1}

% !TEX root =  main.tex
This section is devoted to the boundedness of the multilinear forms 
 $\La_{l,n+2}^1$ and 
 $\La_{l,n+1}^1$.
In \S\ref{mainLp10}  we shall formulate and prove a crucial  $L^2$ bound for a useful generalization
 of the form of $\La_{l,n+2}^1$ and then deduce the asserted  estimates for  $\La_{l,n+2}^1$, and  $\La_{l,n+2}^1$. The proof of the main $L^2$ bound will be given in \S\ref{pfofmainL2est10}.

 \subsection{The main $L^2$ estimate}\label{mainLp10}
 %The rest of this section is devoted to the proof of Theorem \ref{ThmBoundT1} for the operator
%In fact we prove a more general result.  

For $2\leq l\leq n$, fix bounded sets
$\q\{b^j_l: j\in \Z\w\}\subset L^\infty(\R^d)$ with $$\sup_{j} \|b^j_l\|_\infty\le 1, \quad l=2,\dots, n.$$
For $b_1\in L^\infty(\R^d)$, $j\in \Z$ define an operator 
\[W_j[\vsig_j, b_1]\equiv W_j[\vsig_j, b_1, b_2^j,\dots, b_n^j]\] by 
\begin{equation*}
\int g(x) \,W_j[\vsig_j, b_1] f(x) \: dx= \La[\vsigjj](b_1,b_2^j,\ldots, b_n^j,  f, g),
\end{equation*}
and we denotes its transpose by ${}^tW_j[b_1]$:
\begin{equation*}
\int f(x) \,{}^tW_j[\vsig_j, b_1] g(x) \: dx= \La[\vsigjj](b_1,b_2^j,\ldots, b_n^j,  f, g),
\end{equation*}
Define an operator $\cT_N=\cT_N[\vectsig,b_1]$ by
\begin{equation*}
%\cT[\vsig,b_1]=\sum_{j\in \Z} P_j W_j^t[\vsig, (I-P_j)b_1] (I-P_j).
\cT_N=\sum_{j=-N}^N (I-P_j) W_j[\vsig_j, (I-P_j)b_1] P_j.
\end{equation*}
Using $I-P_j=\sum_{k>0} Q_{j+k}$  for the operator on the left 
we  decompose $\cT_N=\sum_{k>0} \cT^k_N$
where 
\[\cT^k_N=\sum_{j=-N_1} ^{N_2}Q_{j+k} W_j[\vsig_j, (I-P_j)b_1] P_j.\]
We now  state our main estimate and give the proof that it implies 
Part \ref{ItemBoundTwoPts} of Theorem \ref{main-parts} in \S\ref{implyBoundTwoPts}
below.

\begin{thm}\label {ThmBoundT1MainRes}
Let $0<\eps\le 1$, and $\sup_j\|\vsig_j\|_{\cB_\eps}<\infty$. Let $\Gamma_\eps $ be as in 
\eqref{Gammaeps}. Then $\cT_N^k$ converges to an operator $\cT^k$, 
and $\cT_N$ converges to an operator $\cT$, in the strong operator topology as operators 
$L^2\to L^2$.
%, with equiconvergence with respect to the sets
%$\{ b^j_l: j\in \Z\}$, $\{b_1\}$ as $\sup_{j} \LpN{\infty}{b^j_l}\le 1$,
%$\|b_1\|_\infty\le C$. 
Moreover,
\begin{equation*}
\|\cT^k\|_{L^2\to L^2}\leq C_{d,\eps} 
\|b_1\|_\infty \sup_j\|\vsig_j\|_{L^1} \min \{2^{-\eps_1 k} n \Ga_\eps^2 , 
\log^{3/2}(1+n\Ga_\eps)\}.
\end{equation*}
and
%For $1<p\leq 2$, $\cT[\vsig, b_1]:L^p(\R^d)\rightarrow L^p(\R^d)$ and
\begin{equation*}
\|\cT\|_{L^2\to L^2}\leq C_{d,\eps} \|b_1\|_\infty \sup_{j} \LpN{1}{\vsig_j} 
\log^{5/2} \!( 1+n\Ga_\eps).
\end{equation*}

\end{thm}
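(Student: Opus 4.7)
\medskip

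\noindent\textbf{Proof plan for Theorem \ref{ThmBoundT1MainRes}.}

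The strategy is to combine the almost orthogonality Lemma \ref{ThmAlmostOrthogonal}, the basic $L^2$ bound of Theorem \ref{ThmBasicL2Result}, and the two--$Q$ bound of Theorem \ref{PropL2BoundFor2Qs}. Fix $k\ge 1$ and write $\cT_N^k=\sum_{j} V_j$ with
$$V_j:=Q_{j+k}\,W_j[\vsig_j,(I-P_j)b_1]\,P_j,$$
and plan to estimate $\|\cT^k\|_{L^2\to L^2}$ in two complementary ways to obtain the polynomial bound $n\Ga_\eps^2 2^{-\eps_1 k}$ and the logarithmic bound $\log^{3/2}(1+n\Ga_\eps)$ stated in the theorem. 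To begin with, I would decompose $(I-P_j)b_1=\sum_{m>0}Q_{j+m}b_1$, write $\cT^k_N=\sum_{m>0}\cT^{k,m}_N$ with $\cT^{k,m}_N:=\sum_j Q_{j+k}W_j[\vsig_j,Q_{j+m}b_1]P_j$, and study each piece separately.

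For fixed $m$, I would estimate $\cT^{k,m}_N$ by the Cotlar--Stein scheme of Lemma \ref{ThmAlmostOrthogonal}, using the pieces $\tcQ_{k_1'}V^{k,m}_{j+k_1'}\tcQ_{j+k_1'+k_2'}$. The outer compositions give free decay:  the second--order vanishing of $\hat\psi$ near the origin (since $\phi$ is even with integral one) yields $\|\tcQ_{k_1'}Q_{j+k_1'+k}\|_{L^2\to L^2}\lesssim 2^{-2|j+k|}$, while the Schwartz decay of $\hat\phi$ gives $\|P_{j+k_1'}\tcQ_{j+k_1'+k_2'}\|_{L^2\to L^2}\lesssim 2^{-Nk_2'}$ when $k_2'\ge 0$. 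These two bounds contribute summable factors in $j$ and in $k_2'\ge 0$.

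The genuinely delicate regime is $k_2'<0$, where $P_{j+k_1'}\tcQ_{j+k_1'+k_2'}\approx \tcQ_{j+k_1'+k_2'}$ with only negligible Schwartz--tail error. Here my plan is to drop the $P$ factor (absorbing the error into the summable part), and to bound the resulting operator by writing the corresponding bilinear form and exploiting the three Littlewood--Paley projections already present (on $b_1$ via $Q_{j+m}$, on $f$ via $\tcQ_{j+k_2'}$, and on $g$ via $Q_{j+k}$). Two different bounds for this form can be obtained: the basic $L^2$ estimate Theorem \ref{ThmBasicL2Result} applied to the pair of $Q$'s on $b_1$ and $g$ (or on $f$ and $g$, taking adjoints via Theorem \ref{ThmOpResAdjoints}) yields a factor $\min\{1,n 2^{-(m+k)c\eps}\Ga_\eps\}$; the two--$Q$ estimate Theorem \ref{PropL2BoundFor2Qs}, applied after grouping the $j$--sum, yields a second factor of the same shape, whose product accounts for the $\Ga_\eps^2$ in the polynomial bound. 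Summing the per--piece estimates in $(j,k_2')$ (summable against $2^{-2|j+k|}$ and the Schwartz/approximation terms) gives a bound on $\cT^{k,m}_N$ of the form $\|\vsig\|_{L^1}\|b_1\|_\infty\cdot\min\{1,\,n\Ga_\eps 2^{-(k+m)c\eps}\}$. Summing this in $m\ge 1$ by Lemma \ref{LemmaBasicSum} produces the two advertised bounds on $\cT^k$: the polynomial decay $n\Ga_\eps^2\cdot 2^{-\eps_1 k}$ (using both Besov factors) and the logarithmic bound $\log^{3/2}(1+n\Ga_\eps)$ (balancing the two $\min\{\cdot\}$ expressions).

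Once the per--$k$ bounds are in hand, the strong operator topology convergence of $\cT^k_N$ follows from the Cauchy criterion built into Lemma \ref{ThmAlmostOrthogonal} (the $A_{j,k_2}$ estimates ensure equiconvergence in $N$), and the bound on $\cT=\sum_{k>0}\cT^k$ is obtained by summing the two estimates optimally: for $k$ below the crossover $2^{\eps_1 k}\approx n\Ga_\eps^2/\log^{3/2}(1+n\Ga_\eps)$ use the logarithmic bound and for $k$ above use the polynomial decay; this balance contributes an additional factor $\log(1+n\Ga_\eps)$, yielding the $\log^{5/2}(1+n\Ga_\eps)$ bound for $\cT$. The main obstacle I anticipate will be the $k_2'<0$ regime of the almost--orthogonality, which requires exploiting the Besov regularity of $\vsig_j$ in two independent ways (once through $Q_{j+m}b_1$ and once through $Q_{j+k}g$) in order to produce $\Ga_\eps^2$ rather than the cruder $n^2\Ga_\eps$; careful bookkeeping between Theorem \ref{ThmBasicL2Result}, Theorem \ref{PropL2BoundFor2Qs}, and the adjoint rewriting given by Theorem \ref{ThmOpResAdjoints} should accomplish this.
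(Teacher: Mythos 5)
Your decomposition $(I-P_j)b_1=\sum_{m>0}Q_{j+m}b_1$ and the use of Lemma~\ref{ThmAlmostOrthogonal} for the off-diagonal pieces are fine, but your treatment of the regime $k_2'<0$ has a genuine gap, and it is precisely this regime where the paper needs an idea that almost orthogonality alone cannot supply. In your scheme the per-piece bounds for $k_2'<0$ — whether obtained from Theorem~\ref{ThmBasicL2Result} or from Theorem~\ref{PropL2BoundFor2Qs} — depend on $(m,k)$ but are \emph{independent of $k_2'$}: Theorem~\ref{ThmBasicL2Result} gives a factor $2^{c\eps(j-\ell)}$ for a Littlewood--Paley projection at scale $\ell$, and with $\ell=j+k_2'$ and $k_2'<0$ this factor is $2^{-c\eps k_2'}$, which \emph{grows} as $k_2'\to-\infty$; Theorem~\ref{PropL2BoundFor2Qs} requires both mollifier indices to be nonnegative and so simply does not apply. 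The quantities $2^{-2|j+k|}$ and the ``Schwartz/approximation terms'' only give summability in $j$ and in $k_2'\ge 0$, not in $k_2'<0$. Thus your claimed summation in $(j,k_2')$ fails, and the almost-orthogonality argument cannot close.

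The culprit is the paraproduct. If one subtracts the diagonal term
$\mathrm{Mult}\{\Qb{j+k}{\zeta}W_j[\vsig_j,(I-P_j)b_1]1\}P_j$
from $\Qb{j+k}{\zeta}W_j[\vsig_j,(I-P_j)b_1]P_j$, the resulting operator $R_{k,j}$ satisfies $R_{k,j}1=0$, and Lemma~\ref{LemmaBoundT1RonS} then delivers the needed $2^{l\eps/4}$ decay for $l<0$ (this is the content of Lemma~\ref{LemmaBoundT1BoundRkj}), so that part is indeed amenable to the orthogonality scheme. But the paraproduct itself, $\sum_j w_k(\cdot,j)P_jf$ with $w_k(x,j)=\Qb{j+k}{\zeta}W_j[\vsig_j,(I-P_j)b_1]1(x)$, has \emph{no} off-diagonal decay in $j$: each term has $L^2$-to-$L^2$ norm $O(1)$. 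The paper handles this by showing that $w_k(\cdot,\cdot)$ is a Carleson function (Proposition~\ref{PropT1Carleson}) and invoking the Carleson embedding theorem; after using Proposition~\ref{PropAuxQb} to reduce to the square-function estimate~\eqref{square-fctest}, the estimate splits into the cancellation part~\eqref{square-fctest-canc} (Proposition~\ref{PropT1Canc}) and the Carleson part~\eqref{square-fctest-carl}. To salvage your approach you would need to introduce this ``T1-type'' subtraction and prove a Carleson-measure bound on $w_k$ — that is the missing idea.
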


\subsection{Proof of Theorem  \ref{ThmBoundT1MainRes}}\label{pfofmainL2est10}

%\subsubsection*{\bf Outline}
% use the expansion $I-P_j=\sum_{k>0} Q_{j+k}$ and 
%seek to prove, 
For fixed $k>0$, in order to bound $\cT^k$ we need to prove that for $f\in L^2$ the limit
$$\sum_{j=-N}^N Q_{j+k} W_j[\vsig_j, (I-P_j)b_1] P_jf$$
exists in $L^2$ as $N\to \infty$ and that the estimate
\Be \label{Qj+kdec}
\Big\| \sum_{j=-N}^N Q_{j+k} W_j[\vsig_j, (I-P_j)b_1] P_j\Big\|_{L^2\to L^2} 
\lc\|b_1\|_\infty  \sup_j\|\vsig_j\|_{L^1} \min \{2^{-\eps_1 k} n 
\Ga_\eps^2 , \log^{3/2}(1+n\Ga_\eps)\}
\Ee holds uniformly in $N$. 
%Once \eqref{Qj+kdec} is proved we sum in $k>0$ (using Lemma \ref{LemmaBasicSum}) to get the corresponding estimate for $\cT[b_1]$.
By Proposition  \ref{PropAuxQb},  both statements 
%\eqref{Qj+kdec} 
are a consequence of a  square-function estimate, namely, for $k>0$
\begin{multline} 
\label{square-fctest}
\Big(\sum_{j\in \Z} \big\| 
\Qb{j+k}{\zeta}
W_j[\vsig_j, (I-P_j)b_1] P_jf\big\|_{2} ^2\Big)^{1/2}
\\
\lc\|b_1\|_\infty  \|f\|_2 \|u\|_{\sU} \sup_j\|\vsig_j\|_{L^1} \min \{2^{-\eps_1 k} n \Ga_\eps^2 , \log^{3/2}(1+n\Ga_\eps)\}.
\end{multline}
To show \eqref{square-fctest} one establishes the following two inequalities:
\begin{multline}  \label{square-fctest-canc}
\Big(\sum_{j\in \Z} \int\big|
\Qb{j+k}{\zeta}
W_j[\vsig_j, (I-P_j)b_1] P_jf(x) - 
\Qb{j+k}{\zeta}
W_j[\vsig_j, (I-P_j)b_1] 1(x)\cdot P_jf(x) 
\big|^2\Big)^{1/2}\\ \lc 
 \|f\|_2\|b_1\|_\infty  \|u\|_{\sU} \sup_j\|\vsig_j\|_{L^1} \min \{2^{-\eps_1 k} n \Ga_\eps^2 , \log(1+n\Ga_\eps)\}.
\end{multline} 
and 
\begin{multline}  \label{square-fctest-carl}
\Big(\sum_{j\in \Z} \int\big|
\Qb{j+k}{\zeta}
W_j[\vsig_j, (I-P_j)b_1] 1(x)
\,\cdot\, P_jf(x) 
\big|^2\Big)^{1/2} \\\lc 
 \|f\|_2 \|b_1\|_\infty \|u\|_{\sU} \sup_j\|\vsig_j\|_{L^1} \min \{2^{-\eps_1 k} n \Ga_\eps^2 , \log^{3/2}(1+n\Ga_\eps)\}.
\end{multline} 
For 
the proof of \eqref{square-fctest-carl} we need the notion of a {\it Carleson function}.
\begin{defn}\label{carlesonfunction}
We say a measurable function $w:\bbR^d\times \bbZ\rightarrow \bbC$ is a Carleson function
if there is a constant $c$ such that for all $k\in \bbZ$ and balls $B$ of radius $2^{-k}$ ($k\in \Z$),
\begin{equation*}
\Big(\frac{1}{|B|} \int_B \sum_{j=k}^\infty |w(x,j)|^2\: dx\Big)^{\frac{1}{2}}\leq c<\infty.
\end{equation*}
The smallest such $c$ is denoted by $\|w\|_\carl$.
\end{defn}

\begin{rem} $w$ is a Carleson function if the measure 
$d\mu(x,t)=\sum_{j\in \bbZ} |w(x,j)|^2 dx \:d \delta_{2^{-j}}(t)$ is a Carleson measure on   the upper half plane (in the usual sense)  and the norm  $\|w\|_{carl} $ is equivalent with  the square root of the Carleson norm  of $\mu$.
\end{rem}

Carleson measures or Carleson functions can be used to prove
$L^2$-boundedness of nonconvolution operators. 
This  idea  goes back  to Coifman and Meyer \cite[ch. VI] {coi-me-audela} and was crucial in the proof of the David-Journ\'e theorem \cite{david-journe}.
One  uses Carleson functions via the following special case of 
the {\it Carleson embedding theorem}.
A proof can be found e.g. in \cite[\S 6.III]{journe} or
\cite[\S II.2]{stein-ha}.

\begin{named}{Theorem}
%\label{ThmBoundT1Carleson}
Let $w$ be a Carleson function.  Then,
\begin{equation*}
\Big(\sum_{j\in \Z} \int |P_j f (x)|^2 |w(x,j)|^2\: dx\Big)^{\frac{1}{2}} \leq C_d \| w\|_{carl}\|f\|_2.
\end{equation*}
\end{named}
 
Note that
\eqref{square-fctest-carl} is an immediate consequence of this theorem and the following proposition.
\begin{prop} \label{PropT1Carleson}
 The function 
 $$w_k(x,j)=\Qb{j+k}{\zeta}
W_j[\vsig_j, (I-P_j)b_1] 1(x)$$  
defines a Carleson function and there is $C\lc 1$ so that for $0<\eps'\le C^{-1}\eps^2$
we have the estimate
\Be
\|w_k\|_{\carl}
%\Qb{j+k}{\zeta}W_j[\vsig_j, (I-P_j)b_1] 1(x)\,\cdot\, P_jf(x) \big|^2\Big)^{1/2} \\
\lc 
 \Qb{j+k}{\zeta}\|b_1\|_\infty \|u\|_{\sU} \sup_j\|\vsig_j\|_{L^1} \min \{2^{-k\eps' } n \Ga_\eps^2 , \log^{3/2}(1+n\Ga_\eps)\}.
 \Ee
 \end{prop}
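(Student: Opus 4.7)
The plan is to prove the Carleson bound via a duality argument, converting it to a multilinear estimate of the form already handled in this and previous sections, with the crucial use of Theorem \ref{ThmOpResAdjoints} to move the ``rough'' function $(I-P_j)b_1$ into a position where the available $L^2$ bounds apply.

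First I would use the dual characterization of the Carleson norm: fix a ball $B = B(x_0, r)$ with $r = 2^{-k_0}$; the desired estimate reduces to
\[ \Big| \sum_{j \ge k_0} \int w_k(x,j) \, \overline{h_j(x)} \chi_B(x) \, dx \Big| \lesssim M \, |B|^{1/2} \Big(\sum_j \|h_j\|_{L^2(B)}^2\Big)^{1/2}, \]
where $M$ is the claimed Carleson bound. Since $\bar Q_{j+k}[\zeta]$ is convolution with $\zeta^{(2^{j+k})}$, I would transfer it to the test function, setting $g_j = \bar Q_{j+k}[\zeta]^* (h_j \chi_B)$. Then, by the defining relation for $W_j$,
\[ \sum_j \langle w_k(\cdot,j), h_j \chi_B\rangle = \sum_j \Lambda[\vsig_j^{(2^j)}]\bigl((I-P_j)b_1,\, b_2^j,\ldots, b_n^j,\, \mathbf 1,\, \overline{g_j}\bigr). \]

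Next I would localize. Since $P_j \mathbf 1 = \mathbf 1$, we may subtract any constant from $b_1$ without altering $(I-P_j) b_1$. Choose an enlargement $B^{**}$ of $B$ adapted to the essential support of $\vsig_j(\alpha,\cdot)$ (on scale $2^{-j}\leq r$) and set $\beta = (b_1 - (b_1)_{B^{**}})\chi_{B^{**}}$, so that $\|\beta\|_2 \lesssim r^{d/2}\|b_1\|_\infty$; the non-local contribution of $b_1$ is controlled by the Schur/regularity norms of $\vsig_j$ and contributes negligibly. Now apply the adjoint identity of Theorem \ref{ThmOpResAdjoints} with the transposition $\varpi$ that swaps positions $1$ and $n+1$, producing a kernel $\ell_\varpi \vsig_j$ with $\|\ell_\varpi \vsig_j\|_{L^1} = \|\vsig_j\|_{L^1}$ and $\|\ell_\varpi \vsig_j\|_{\cB_{\eps'}} \lesssim n \|\vsig_j\|_{\cB_\eps}$. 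This yields
\[ \Lambda[\vsig_j^{(2^j)}]\bigl((I-P_j)\beta,\, b_2^j,\ldots, b_n^j,\, \mathbf 1,\, \overline{g_j}\bigr) = \Lambda[\ell_\varpi\vsig_j^{(2^j)}]\bigl(\mathbf 1,\, b_2^j,\ldots, b_n^j,\, (I-P_j)\beta,\, \overline{g_j}\bigr), \]
which is now of the type controlled by Part \eqref{ItemBoundRoughest} of Theorem \ref{main-parts} — both ``large'' slots are paired with cancellation ($(I-P_j)$ on one, and $\bar Q_{j+k}[\zeta]$-structure on the other).

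After this reduction I would apply the key $L^2$ bound of Theorem \ref{PropL2BoundFor2Qs} (with the kernel $\ell_\varpi \vsig_j$) to the summed form. Writing $(I-P_j)\beta = \sum_{l>0} Q_{j+l}\beta$ and using that $g_j$ inherits a Littlewood--Paley structure at frequency scale $2^{j+k}$, an almost-orthogonality argument in the spirit of Lemma \ref{ThmAlmostOrthogonal} gives
\[ \Big|\sum_j \Lambda[\ell_\varpi\vsig_j^{(2^j)}](\mathbf 1,\ldots, Q_{j+l}\beta,\, g_j)\Big| \lesssim 2^{-\eps'(k+l)} n \Gamma_\eps \|\zeta\|_\sU \sup_j \|\vsig_j\|_{L^1} \|\beta\|_2 \Big(\sum_j \|h_j\|_{L^2(B)}^2\Big)^{1/2}, \]
together with the trivial uniform bound (no cancellation exploited). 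Interpolating/summing these two bounds as in Lemma \ref{LemmaBasicSum} and plugging $\|\beta\|_2 \lesssim |B|^{1/2}\|b_1\|_\infty$ yields the $\min\{2^{-\eps' k} n \Gamma_\eps^2,\, \log^{3/2}(1+n\Gamma_\eps)\}$ factor; the logarithmic alternative arises from summing $\ell^2$-style the $\log$ losses from Part \eqref{ItemBoundRoughest} (the extra half power reflects the Cauchy--Schwarz over the Littlewood--Paley index $l$).

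The main obstacle will be Step 5/6: correctly balancing the two cancellation mechanisms — the mean-zero $\zeta$ at scale $2^{-j-k}$ and the high-frequency cancellation of $(I-P_j)\beta$ — so as to extract the full $2^{-\eps' k}$ decay without losing more than $n \Gamma_\eps^2$ from the Adjoint Theorem and the bilinear $L^2$ estimate. In particular, the almost-orthogonality must handle the interaction of the two frequency scales $2^{j+k}$ and $2^{j+l}$ uniformly in $j$, and the passage from the local $L^2$ bound to the normalized Carleson constant requires keeping careful track of the $|B|^{1/2}$ factor through the Adjoint identity and the Littlewood--Paley square function for the $g_j$'s.
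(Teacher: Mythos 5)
Your plan has a genuine gap in the handling of the ``far'' part of $b_1$, and this gap cannot be repaired with the fixed enlargement $B^{**}$ you propose.

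The central issue: $W_j$ is not a local operator. In the form $\La[\vsig_j^{(2^j)}]$, the function $b_1$ is evaluated at $x-\alpha_1 v$, and since $\vsig_j(\alpha,v)$ is only integrable in $\alpha$ with a weight $(1+|\alpha_1|)^\eps$, the operator ``sees'' values of $b_1$ at arbitrarily large distances from $x$ (scaled by $\alpha_1 2^{-j}$). If you cut $b_1$ off at a fixed enlargement $B^{**}$ of $B$ (radius $\approx r$), the error term involves $b_1$ restricted to $|y|\gtrsim r$, and the corresponding contribution to $\Qb{j+k}{\zeta}W_j[\vsig_j,\cdot]1$ on $B$ is controlled by $\int_{|\alpha_1|\gtrsim 2^j r}|\vsig_j(\alpha,v)|\,d\alpha\,dv\lesssim (2^{j-k_0})^{-\eps}$. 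Summing over $j\ge k_0$ gives a contribution that is $O(1)$ \emph{uniformly in $k$} — it is not small in $k$, so it cannot match the required $2^{-k\eps'}$ decay in the first alternative of the $\min$. ``Negligible'' is exactly what it is not. The paper's proof avoids this by cutting off $b_1$ at a radius $R_{k_1,k_2}=\max\{5,\beta 2^{(k_1+k_2)\delta}\}$ that \emph{grows with the Littlewood--Paley indices}; the far piece is then handled by a dedicated support-separation estimate (the analogue of Lemma~\ref{LemmaBoundT13On22}) giving gain $(2^jR_{k_1,k_2})^{-\eps/4}$, while the near piece pays a price of $R_{k_1,k_2}^{d/2}$ in its $L^2$ norm before the $L^2$ bound is applied. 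The final exponent $\Gamma_\eps^2$ and the power $\log^{3/2}$ arise from the optimal choice $\beta=(n\Gamma_\eps)^{-1/d}$, $\delta\approx\eps/d$ that balances these two regimes — not from compounding an adjoint-swap cost $n$ with an $L^2$-estimate cost $n\Gamma_\eps$ as your sketch suggests.

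Two further points. First, the adjoint swap (Theorem~\ref{ThmOpResAdjoints}) is not used in the paper's proof of this proposition, and it is not obviously helpful: after swapping $1\leftrightarrow n+1$, the constant function $\mathbf 1$ still sits in one slot, and it is not in $L^2$, so Part~\eqref{ItemBoundRoughest} or Theorem~\ref{PropL2BoundFor2Qs} does not directly apply; you would end up needing another Carleson argument for the swapped form. Second, the paper bounds the Carleson constant directly (by scaling to $B=B^d(0,1)$, $k_0=0$ via the invariance of Lemma~\ref{scalinglemma}) rather than through the dual pairing, but that particular difference is cosmetic — it is the fixed-scale cutoff, and the missing balance between $(2^jR)^{-\eps/4}$ gain and $R^{d/2}$ loss, that is the fatal flaw.
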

 
 Our next proposition is a restatement of the other square-function estimate \eqref{square-fctest-canc}, in a slightly more general form.
 %\footnote{verify epsilons}
\begin{prop} \label{PropT1Canc}
Let $0<\eps\le 1$. There exists $C\lc 1$ so that for $0<\eps'\le C^{-1}\eps$
\begin{multline*}  %\label{square-fctest-canc}
\Big(\sum_{j\in \Z} \int\big|
\Qb{j+k}{\zeta}
W_j[\vsig_j, b_1^j] P_jf(x) - 
\Qb{j+k}{\zeta}
W_j[\vsig_j, b_1^j] 1(x)\cdot P_jf(x) 
\big|^2\Big)^{1/2}\\ \lc 
 \|f\|_2\sup_j\|b_1^j\|_\infty  \|u\|_{\sU} \sup_j\|\vsig_j\|_{L^1} 
 \min \{2^{-k\eps' } n \Ga_\eps^2 , \log(1+n\Ga_\eps)\}.
\end{multline*} 
 \end{prop}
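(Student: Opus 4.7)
The starting point is an algebraic identity. Writing $T_j := W_j[\vsig_j, b_1^j]$ for brevity and denoting by $K_{T_j}(w,y)$ its Schwartz kernel, one has, since $P_jf(x)$ is just a scalar,
\begin{multline*}
E_{j,k}(x) := \Qb{j+k}{\zeta} T_j P_jf(x) - \Qb{j+k}{\zeta} T_j 1(x) \cdot P_jf(x) \\
= \iint \zeta^{(2^{j+k})}(x-w)\, K_{T_j}(w,y)\, [P_jf(y)-P_jf(x)]\, dy\, dw.
\end{multline*}
This form exhibits the two sources of cancellation that the proof will exploit: the mean-zero kernel $\zeta^{(2^{j+k})}$ localized at scale $2^{-j-k}$, and the difference $P_jf(y)-P_jf(x)$, which vanishes at $y=x$ and reflects the smoothness of $P_jf$ at the coarser scale $2^{-j}$. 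The plan is to split $P_jf(y)-P_jf(x) = [P_jf(y)-P_jf(w)] + [P_jf(w)-P_jf(x)]$; the first term is localized at the scale $2^{-j}$ of $K_{T_j}$, while the second term, restricted to $|w-x|\lesssim 2^{-j-k}$ by the support of $\zeta^{(2^{j+k})}$ together with the bound $|\nabla P_jf|\lesssim 2^j\widetilde P_jf$ for an auxiliary low-frequency projection, produces a gain of order $2^{-k}$.

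To derive the square-function bound I would pass to duality: pairing with a test sequence $(g_j)\in \ell^2(L^2)$ reduces the claim to an estimate for $\sum_j \int E_{j,k}(x)g_j(x)\,dx$. Inserting Littlewood-Paley decompositions $f=\sum_{k_2'} \cQ_{j+k_2'}f$ and $g_j=\sum_{k_1'}\cQ_{j+k_1'}g_j$, as in the proof of Theorem \ref{PropL2BoundFor2Qs}, and invoking the almost-orthogonality Lemma \ref{ThmAlmostOrthogonal}, reduces the problem to bounding pieces of the form $\Qt_{k_1'}\mathcal{E}_{j,k}\Qt_{j+k_2'}$, where $\mathcal{E}_{j,k}$ is the linear operator $f\mapsto E_{j,k}$. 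For the first bound $2^{-k\eps'}n\Ga_\eps^2$, the diagonal contributions ($k_1', k_2'$ bounded) are estimated by applying Theorem \ref{ThmBasicL2Result} to the underlying multilinear form: the cancellation of $\Qb{j+k}{\zeta}$ supplies the factor $n2^{-c\eps k}\sBN{\eps}{\vsig_j}$, while the extra $2^{-k}$ from the $\zeta$-cancellation combined with the smoothness of $P_jf$ contributes a second factor of $\Ga_\eps$; off-diagonal pieces decay geometrically in $|k_1'|+|k_2'|$.

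For the second bound $\log(1+n\Ga_\eps)$ I would use the same scheme but replace the $\cB_\eps$-estimate by the simple $L^1$-based bound of Lemma \ref{LemmaBasicL2BasicLpEstimate}, which loses the decay in $k$ but produces a constant depending only on $\sup_j\|\vsig_j\|_{L^1}$. Interpolating this with the first bound via Lemma \ref{LemmaBasicSum} converts what would otherwise be a polynomial factor in $n\Ga_\eps$ into the logarithmic factor $\log(1+n\Ga_\eps)$; taking the minimum of the two estimates completes the proof.

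The main obstacle is to make the pointwise cancellation argument compatible with the $L^2$ square-function structure: the crude inequality $|P_jf(y)-P_jf(x)|\lesssim 2^j|y-x|Mf$ is not summable in $j$ in the required vector-valued sense. The remedy is to replace $P_j$ by its Littlewood-Paley expansion $\sum_{l\le j} Q_l$, so that the mean-zero property of each $Q_l$ furnishes an extra smoothing factor $2^{l-j}$ that can be absorbed by the almost-orthogonality argument. It is precisely this double use of Besov-type regularity---once for the kernel $K_{T_j}$ and once to upgrade the pointwise smoothness bound on $P_jf(y)-P_jf(x)$ to an $L^2$-summable estimate---that produces the quadratic factor $\Ga_\eps^2$ rather than a linear one in the first bound.
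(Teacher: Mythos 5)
Your starting identity is correct and is indeed the right source of cancellation, and the plan to refine $P_j f$ into a Littlewood--Paley expansion is also what the paper does. But there are several slips that, taken together, mean the argument as sketched does not close. First, $\zeta\in\sU$ is not compactly supported (members of $\sU$ only decay like $|x|^{-d-1/2}$), so ``restricted to $|w-x|\lesssim 2^{-j-k}$ by the support of $\zeta^{(2^{j+k})}$'' is not available; one must work with the decay and run a Schur test. Second, the claim that each low-frequency $Q_l$ piece ``furnishes an extra smoothing factor $2^{l-j}$'' is too strong: the kernel $K_{T_j}$ only has H\"older regularity of order $\eps$ inherited from $\vsig_j\in\cB_\eps$, and in fact the gain the paper extracts for $l<0$ is only $2^{(l)\eps/4}$ (see \eqref{maincasel<0j=0} in Lemma~\ref{LemmaBoundT1BoundRkj}), not a full power $2^{l}$. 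Third, Lemma~\ref{ThmAlmostOrthogonal} is not the right tool here: it converts bounds on $\Qt_{k_1}V_{j+k_1}\Qt_{j+k_1+k_2}$ into a bound on the \emph{sum} $\|\sum_j V_j\|_{L^2\to L^2}$, whereas what you need is the \emph{square function} bound $(\sum_j\|R_{k,j}f\|_2^2)^{1/2}\lesssim\|f\|_2$. The paper's proof instead expands $f=\sum_l\Qt_{j+l}\Qtt_{j+l}f$ inside $R_{k,j}f$, applies Minkowski in $l$, pulls out $\sup_{j}\|R_{k,j}\Qt_{j+l}\|_{L^2\to L^2}$, and uses the Littlewood--Paley inequality $\sum_j\|\Qtt_{j+l}f\|_2^2\lesssim\|f\|_2^2$; this is a cleaner and different reduction.

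The piece of the argument you are really missing is the quantitative engine behind the $l<0$ gain: the paper packages the cancellation at the operator level as ``$R_{k,j}$ annihilates constants'' and then invokes the Schur-test-with-cancellation Lemma~\ref{LemmaBoundT1RonS} (applied to $\rho = $ kernel of $R_1-R_2$ and $\sigma_\ell = $ kernel of $\Qt_l$). Your pointwise splitting of $P_jf(y)-P_jf(x)$ is morally in the same direction but is a different and more delicate packaging; without a clean version of that lemma the kernel estimates do not organize. Finally, the attribution of the factor $\Ga_\eps^2$ to a ``double use of Besov regularity'' is not what happens: the paper's proof actually produces a \emph{linear} $n\Ga_\eps$ in the $k$-decaying term, and $\Ga_\eps^2$ in the statement is simply a permissive overestimate (harmless since $\Ga_\eps\ge 1$). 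Trying to design the proof so that the two sources of cancellation compound multiplicatively, as you suggest, is neither needed nor what the paper does.
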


 We emphasize that the  implicit constants in the above propositions are independent of $n$ and independent of the choices of $b_i^j$ 
 with $\|b_i^j\|_\infty=1$.

\subsubsection{Proof of Proposition \ref{PropT1Carleson}}
%We now turn to the proof of Theorem \ref{ThmBoundT1MainRes}.  
%For this, we need several preliminary results.
%We need  \subsection{Proof of Theorem  \ref {PropBoundT1MainRes} }
 We need to prove for $x_0\in \bbR^d$, $\ell\in \bbZ$,
 \begin{multline}\label{CarlesonQW}
 \Big(\sum_{j\ge -\ell}
 \frac{1}{|B^d(x_0, 2^\ell)|}\int_{B^d(x_0, 2^\ell)} 
 \big|\Qb{j+k}{\zeta} W_j[\vsig_j, (I-P_j) b_1, b_2^j, \dots, b_n^j] 1 (x)
 \big| \,dx\Big)^{1/2} \\ \lc
 \|b_1\|_\infty \|u\|_{\sU} \sup_j\|\vsig_j\|_{L^1}
  \min \{2^{-k\eps' } n \Ga_\eps^2 , \log^{3/2}(1+n\Ga_\eps)\}.
 \end{multline}
 Now 
\begin{multline*} \frac{1}{|B^d(x_0, 2^\ell)|}\int_{B^d(x_0, 2^\ell)} 
 \big|\Qb{j+k}{\zeta} W_j[\vsig_j, (I-P_j) b_1, b_2^j, \dots, b_n^j] f (x)
 \big| \,dx\\= 
 \frac{1}{|B^d(0, 1)|}\int_{B^d(0, 1)} 
 \big|\Qb{j+k}{\zeta} W_j[\vsig_j, (I-P_j) b_1, b_2^j, \dots, b_n^j] f (x_0+2^{\ell }x)
\big | \,dx
 \end{multline*}
 and we have by changes of variables 
\begin{multline} \label{changeofvarQW}
\Qb{j+k}{\zeta} W_j[\vsig_j, (I-P_j) b_1, b_2^j, \dots, b_n^j] f (x_0+2^{\ell }x)\\=
\Qb{j+\ell+k}{\zeta} W_{j+\ell}[\vsig_j, (I-P_{j+\ell}) \tilde b_1, \tilde b_2^j, \dots, \tilde b_n^j]
\tilde  f (x)
\end{multline}
where $\tilde b_1(x)= b_1(x_0+2^\ell x)$, $\tilde b_i^j(x)= b_i^j(x_0+2^\ell x)$,  $\tilde f(x)= f(x_0+2^\ell x)\,.$
 Applying this with $f=1$ we see that it suffices to prove
 \eqref{CarlesonQW} with $x_0=0$, $\ell=0$.

 The somewhat lengthy proof will be given in a series of lemmata, partially relying on the $L^2$ boundedness results in \S \ref{Sectionltwo}. Our first lemma is a restatement of such a result.

\begin{lemma}\label{LemmaBoundT1QbUjL2Bound}
Let $0<\eps<1$. There is  $C\lc 1$ so that for all 
$\eps' \le C^{-1}\eps$ we have for all $k\geq 0$,  and for all $\zeta\in \sU$,
\begin{equation*}
\LpOpN{2}{\Qb{j+k}{\zeta}
 W_j[\vsig_j, b_1]} \lesssim \min\big\{n 2^{-k\eps'} \sBN{\eps}{\vsig_j},\, \LpN{1}{\vsig_j}\big\}\,  
 \|{b_1}\|_\infty \sUN{\zeta}.
\end{equation*}
\end{lemma}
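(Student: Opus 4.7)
The plan is to reduce this estimate directly to the basic $L^2$ bound provided by Theorem \ref{ThmBasicL2Result} via duality. First I would interpret the operator norm as a bilinear form: by definition of $W_j[\vsig_j,b_1]$, one has
\begin{equation*}
\bigl\langle g, \Qb{j+k}{\zeta} W_j[\vsig_j,b_1] f\bigr\rangle
= \La[\vsigjj]\bigl(b_1,b_2^j,\ldots,b_n^j,\, f,\, {}^t\!\Qb{j+k}{\zeta} g\bigr),
\end{equation*}
where ${}^t\!\Qb{j+k}{\zeta}$ is convolution with $\zetat^{(2^{j+k})}$ for $\zetat(x):=\zeta(-x)$. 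Since $\zetat\in \sU$ with $\|\zetat\|_{\sU}=\|\zeta\|_{\sU}$, we can write this as
\begin{equation*}
\bigl\langle g, \Qb{j+k}{\zeta} W_j[\vsig_j,b_1] f\bigr\rangle
= \La[\vsigjj]\bigl(b_1,b_2^j,\ldots,b_n^j,\, f,\, \Qb{j+k}{\zetat} g\bigr).
\end{equation*}

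Next I would apply Theorem \ref{ThmBasicL2Result} with the choice $l_1=n+1$, $l_2=n+2$, with the $\Qb{k'}{u}$-factor being $\Qb{j+k}{\zetat}$ (so $k'=j+k$, $u=\zetat$). Theorem \ref{ThmBasicL2Result} then yields
\begin{equation*}
\bigl|\La[\vsigjj](b_1,b_2^j,\ldots,b_n^j, f, \Qb{j+k}{\zetat}g)\bigr|
\lesssim \min\bigl\{n\, 2^{c\eps(j-(j+k))} \|\vsig_j\|_{\cB_\eps},\ \|\vsig_j\|_{L^1}\bigr\}\, \|\zetat\|_\sU \prod_{i\ne n+1,n+2}\!\|b_i^j\|_\infty\, \|f\|_2\|g\|_2,
\end{equation*}
where $c>0$ is the absolute constant from that theorem. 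The exponential factor simplifies to $2^{-c\eps k}$, so taking supremum over $g$ with $\|g\|_2=1$ gives the desired operator-norm bound with $\eps':=c\eps$, after using the normalization $\sup_j\|b_i^j\|_\infty\le 1$ for $2\le i\le n$ that was fixed at the start of \S\ref{mainLp10}, which absorbs the product $\prod_{i=2}^n\|b_i^j\|_\infty$.

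In short, the whole argument is a one-line duality followed by a direct citation of Theorem \ref{ThmBasicL2Result}: no new estimate is needed. The one point to notice is that the minimum on the right-hand side is genuine: the $\|\vsig_j\|_{L^1}$ term comes from the trivial H\"older bound (Lemma \ref{LemmaBasicL2BasicLpEstimate}), which is also part of Theorem \ref{ThmBasicL2Result}, while the $n\,2^{-k\eps'}\|\vsig_j\|_{\cB_\eps}$ term encodes the gain from the Littlewood--Paley piece acting on the $(n{+}2)$-th slot. There is no real obstacle here; this lemma is essentially a convenient repackaging of the main $L^2$ theorem.
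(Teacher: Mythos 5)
Your proposal is correct and takes essentially the same route as the paper: rewrite the pairing as $\La[\vsigjj](b_1,b_2^j,\dots,b_n^j,f,{}^t\Qb{j+k}{\zeta}g)$ and cite Theorem \ref{ThmBasicL2Result} (with $l_1=n+1$, $l_2=n+2$, the $\Qb{}{}$ factor on the last slot). The paper's actual proof is exactly this one-line duality plus a direct invocation of Theorem \ref{ThmBasicL2Result}; your version merely spells out the bookkeeping (that $\zetat(x)=\zeta(-x)$ lies in $\sU$ with the same norm, and that the theorem's $2^{c\eps(j-k')}$ specializes to $2^{-c\eps k}$ when $k'=j+k$) that the paper leaves implicit.
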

\begin{proof}
For $f,g\in L^2$, we have
\begin{equation*}
\int g(x) \q( \Qb{j+k}{\zeta} W_j[\vsig_j, b_1] f(x)\w)\: dx = \La[\vsigjj]( b_1,b_2^j,\ldots, b_n^j, f, {}^t\Qb{j+k}{\zeta} g).
\end{equation*}
From here, the result follows immediately from Theorem \ref{ThmBasicL2Result}.
\end{proof}

We  now 
%prepare for a Carleson measure estimate (Corollary \ref{CorBoundT1BoundBeforeSub} below). Our first ingredient is 
give an estimate on 
$\La[\vsigj](b_1,\dots, b_{n+2})$ under the assumptions that 
the supports of $b_1$ and $b_{n+2}$ are separated.

\begin{lemma} Let $0<\eps\le 1$. 
%For $\eps>0$ fixed, $\exists \eps'\gtrsim 1$ such that 
For all $j,k\geq 0$, $\vsig\in \sBtp{\eps}{\R^n\times \R^d}$, $\zeta\in \sU$, $R\geq 5$, $b_1,\ldots, b_{n+1}\in L^\infty(\R^d)$, $b_{n+2}\in L^1(\R^d)$,
with $$\supp{b_1}\subseteq \q\{|v|\geq R\w\}, \quad \supp{b_{n+2}}\subseteq \q\{|v|\leq 1\w\},$$ we have
\begin{equation*}
\big|\La[\vsigjj](b_1,\ldots,b_{n+1}, \Qb{j+k}{\zeta}b_{n+2})\big|\lesssim \sUN{\zeta}\big(\prod_{l=1}^{n+1} \|{b_l}\|_\infty\big) \LpN{1}{b_{n+2}} \min\big\{
 (2^{j}R\w)^{-\eps/4}\sBN{\eps}{\vsig},
\, \LpN{1}{\vsig}\big\}.
\end{equation*}
\end{lemma}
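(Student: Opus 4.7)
The trivial bound $|\La[\vsigjj](b_1,\dots, b_{n+1}, h)| \leq \|\vsig\|_{L^1}\prod_{l=1}^{n+1}\|b_l\|_\infty \|h\|_1$, with $h := \Qb{j+k}{\zeta}b_{n+2}$, follows directly from estimating the integrand of $\La[\vsigjj]$ pointwise; Young's inequality combined with $\|\zeta\|_1\lesssim \sUN{\zeta}$ yields $\|h\|_1\lesssim \sUN{\zeta}\|b_{n+2}\|_1$, giving the $\|\vsig\|_{L^1}$-branch of the $\min$. It then suffices to prove the other branch in the regime $2^jR\geq 1$ (when $2^jR<1$, the trivial bound already dominates $(2^jR)^{-\eps/4}\|\vsig\|_{\cB_\eps}$, since $\|\vsig\|_{L^1}\leq \|\vsig\|_{\cB_\eps}$), and in fact I will prove the slightly stronger bound $(2^jR)^{-\eps/2}\|\vsig\|_{\cB_\eps}$.

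After the change of variables $u = 2^j(x-y)$, the form rewrites as
\[
\iiint \vsig(\alpha, u)\, h(x)\, b_{n+1}(x-2^{-j}u)\prod_{i=1}^n b_i(x-2^{-j}\alpha_i u)\,d\alpha\,du\,dx,
\]
and the factor $b_1(x-2^{-j}\alpha_1 u)$ restricts the effective region to $|x-2^{-j}\alpha_1 u|\geq R$. I would split the $x$-integration into the main region $\{|x|\leq R/2\}$ and the tail $\{|x|>R/2\}$, to be handled by different mechanisms.

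In the main region, the constraint $|x-2^{-j}\alpha_1 u|\geq R$ together with $|x|\leq R/2$ forces $|\alpha_1||u|\geq 2^{j-1}R$, so that $\max(|\alpha_1|,|u|)\geq (2^{j-1}R)^{1/2}$ and hence $(1+|\alpha_1|)^\eps + (1+|u|)^\eps \gtrsim (2^jR)^{\eps/2}$. Dominating the indicator of the constraint by $(2^jR)^{-\eps/2}\bigl[(1+|\alpha_1|)^\eps + (1+|u|)^\eps\bigr]$ and using $\int |h|\,dx\lesssim \sUN{\zeta}\|b_{n+2}\|_1$ bounds this piece by a constant times $(2^jR)^{-\eps/2}\bigl(\|\vsig\|_{\cB_{\eps,1}}+\|\vsig\|_{\cB_{\eps,4}}\bigr)\sUN{\zeta}\|b_{n+2}\|_1\prod_{l=1}^{n+1}\|b_l\|_\infty$, which is $\lesssim (2^jR)^{-\eps/2}\|\vsig\|_{\cB_\eps}$ times the required factors.

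The main difficulty is the tail $|x|>R/2$, where I must exploit only the modest decay $|\zeta(x)|\lesssim \sUN{\zeta}(1+|x|)^{-d-1/2}$ available from the class $\sU$. Since $\supp b_{n+2}\subseteq\{|y|\leq 1\}$ and $R\geq 5$, the condition $|x|>R/2$ combined with $|y|\leq 1$ gives $|x-y|\geq |x|/2$; plugging this into the convolution defining $h$ and integrating yields $\int_{|x|>R/2}|h(x)|\,dx\lesssim \sUN{\zeta}\|b_{n+2}\|_1 (2^{j+k}R)^{-1/2}\leq \sUN{\zeta}\|b_{n+2}\|_1(2^jR)^{-1/2}$. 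Combining this with the trivial $L^1$-norm bound on the $(\alpha,u)$-integral, and invoking $\|\vsig\|_{L^1}\leq \|\vsig\|_{\cB_\eps}$ together with $(2^jR)^{-1/2}\leq (2^jR)^{-\eps/2}$ in the regime $2^jR\geq 1$, yields the same $(2^jR)^{-\eps/2}\|\vsig\|_{\cB_\eps}$ bound on the tail. It is precisely the borderline $(d+\tfrac12)$-decay of $\sU$ that makes this power saving tight---the margin between $\eps/2$ proved above and the $\eps/4$ stated in the lemma is exactly what affords the necessary slack.
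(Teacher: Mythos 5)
Your proof is correct, and the bound you obtain, $(2^jR)^{-\eps/2}\|\vsig\|_{\cB_\eps}$, is in fact slightly stronger than the $(2^jR)^{-\eps/4}$ asserted. The route differs from the paper's in its basic decomposition. The paper estimates the form by
\[
\sup_{|x'|\le1}\iiint|\dil{\vsig}{2^j}(\alpha,v)|\,|b_1(x-\alpha_1v)|\,|\dil{\zeta}{2^{j+k}}(x-x')|\,dx\,d\alpha\,dv,
\]
then decomposes into dyadic annuli simultaneously in $1+|v|$ and in $1+2^{j+k}|x-x'|$, splitting according to whether $2^{l_2}\gtrsim R2^{j+k}$ or not: in the first case it uses the polynomial tail of $\zeta$ together with the $v$-weight, and in the second it derives $|\alpha_1|\gtrsim2^{j-l_1}R$ and uses the $\alpha_1$-weight, splitting the exponent as $\eps'=\eps/2$ because the Besov norm controls the $v$-weight and $\alpha_1$-weight separately rather than as a product. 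You instead split $x$-space into $\{|x|\le R/2\}$ and $\{|x|>R/2\}$: in the first region the support constraint from $b_1$ directly forces $|\alpha_1||u|\gtrsim2^jR$, which you convert via $\max(|\alpha_1|,|u|)\ge\sqrt{|\alpha_1||u|}$ into a bound in terms of the \emph{sum} $(1+|\alpha_1|)^\eps+(1+|u|)^\eps$, using the full $\eps$-weight on whichever coordinate happens to be large and thereby losing only the factor $1/2$ from the geometric mean; in the second region you use only the $\zeta$-decay and the trivial $L^1$-bound on $\vsig$, with no annular sum. Each method exploits the same two mechanisms (decay of the convolutor $\zeta$ and the support constraint from $b_1$), but your decomposition decouples them cleanly while the paper interleaves them in the $(l_1,l_2)$-sum; your AM--GM step is what recovers the extra factor of two in the exponent. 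For the purposes of the paper the improvement is immaterial, since the exponent is absorbed into a power summation later, but the argument is tighter.

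One stylistic remark: the regime split on whether $2^jR\ge1$ is moot here, since the hypotheses $j\ge0$, $R\ge5$ already force $2^jR\ge5$; you could simply note this and drop that clause.
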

\begin{proof}
Without loss of generality, we take $\LpN{\infty}{b_l}=1$, $1\leq l\leq n+1$, $\LpN{1}{b_{n+2}}=1$, and $\sUN{\zeta}=1$.
The bound
\begin{equation*}
\q|\La[\vsigjj](b_1,\ldots,b_{n+1}, \Qb{j+k}{\zeta}b_{n+2})\w|\lesssim \LpN{1}{\vsig}
\end{equation*}
follows immediately from Lemma \ref{LemmaBasicL2BasicLpEstimate}, so we prove only the estimate
\begin{equation}\label{2^jga-after-normal}
\q|\La[\vsigjj](b_1,\ldots,b_{n+1}, \Qb{j+k}{\zeta}b_{n+2})\w|\lesssim\sBN{\eps}{\vsig} \q(2^jR)^{-\eps/4}.
\end{equation}
We estimate
\begin{equation*}
\begin{split}
&\q| \La[\vsigjj](
b_1,\ldots, b_{n+1}, \Qb{j+k}{\zeta} b_{n+2})\w| 
\\&= \Big|\iiiint \dil{\vsig}{2^j}(\alpha,v) 
%b_1(x-\alpha_1v)\cdots b_n(x-\alpha_nv)
\big(\prod_{i=1}^n b_i(x-\alpha_i v)\big)
b_{n+1}(x-v) \dil{\zeta}{2^{j+k}}(x-x') b_{n+2}(x')\: dx\: dx'\: d\alpha\: dv\Big|
\\&\leq \sup_{|x'|\leq 1} \iiint \q| \dil{\vsig}{2^j}(\alpha,v)\w| |b_1(x-\alpha_1 v)| \q|\dil{\zeta}{2^{j+k}}(x-x')\w|\: dx\: d\alpha\: dv.
\end{split}
\end{equation*}
Fix $x'\in \R^d$ with $|x'|\leq 1$.  Then 
%We have, for $\eps'>0$ to be chosen later,
\begin{equation*}
\begin{split}
& \iiint \q| \dil{\vsig}{2^j}(\alpha,v)\w| |b_1(x-\alpha_1 v)| \q|\dil{\zeta}{2^{j+k}}(x-x')\w|\: dx\: d\alpha\: dv
\\&\leq \iiint |\vsig(\alpha,v)| |b_1(x-\alpha_1 2^{-j} v)| 2^{d(j+k)} (1+2^{j+k} |x-x'|)^{-d-\frac{1}{2}}\: dx\: d\alpha\: dv
\\& =\sum_{l_1=0}^\infty \sum_{l_2=0}^\infty \: \iiint\limits_{\substack{2^{l_1}\leq 1+|v|\leq 2^{l_1+1} \\ 2^{l_2}\leq 1+2^{j+k} |x-x'|\leq 2^{l_2+1}}}  |\vsig(\alpha,v)| |b_1(x-\alpha_1 2^{-j}v)| 2^{d(j+k)} (1+2^{j+k}|x-x'|)^{-d-\frac{1}{2}}\: dx\: d\alpha\: dv
%\\&\leq \sum_{l_1=0}^\infty \sum_{l_2=0}^\infty 2^{-l_1 \eps'-l_2/4} \iiint\limits_{\substack{2^{l_1}\leq 1+|v|\leq 2^{l_1+1} \\ 2^{l_2}\leq 1+2^{j+k} |x-x'|\leq 2^{l_2+1}}} (1+|v|)^{\eps'} |\vsig(\alpha,v)| |b_1(x-\alpha_1 2^{-j}v)| 2^{d(j+k)} (1+2^{j+k}|x-x'|)^{-d-\frac{1}{4}}\: dx\: d\alpha\: dv
\\&=\sum_{l_1=0}^\infty\sum_{2^{l_2}\geq R 2^{j+k-2}} + \sum_{l_1=0}^\infty \sum_{2^{l_2}<R 2^{j+k-2}} =:(I)+(II).
\end{split}
\end{equation*}
We begin with $(I)$.  We have, provided $\eps'\leq \eps$,
\begin{equation*}
\begin{split}
(I)&\le \sum_{l_1=0}^\infty \sum_{2^{l_2}\geq R 2^{j+k-2}} 2^{-l_1\eps' - l_2/4}\times
\\&\quad\quad\quad  \iiint\limits_{\substack{2^{l_1}\leq 1+|v|\leq 2^{l_1+1} \\ 2^{l_2}\leq 1+2^{j+k} |x-x'|\leq 2^{l_2+1}}} (1+|v|)^{\eps'} |\vsig(\alpha,v)| |b_1(x-\alpha_1 2^{-j}v)| 
\frac{2^{d(j+k)}}{ (1+2^{j+k}|x-x'|)^{d+\frac{1}{4}}}
%2^{d(j+k)} (1+2^{j+k}|x-x'|)^{-d-\frac{1}{4}}
\: dx\: d\alpha\: dv
\\&\lesssim \sum_{l_1=0}^\infty  \sum_{2^{l_2}\geq R 2^{j+k-2}} 2^{-l_1\eps' - l_2/4} \sBN{\eps}{\vsig} 
\lesssim (2^{j+k}R)^{-1/4}
%\q(\frac{2^{-j-k}}{R}\w)^{\frac{1}{4}} 
\sBN{\eps}{\vsig} 
\lesssim (2^{j}R)^{-1/4}\sBN{\eps}{\vsig}.
\end{split}
\end{equation*}
%which is the desired bound.

We now turn to $(II)$.  We have 
\begin{multline*}
%\begin{split}
(II)=\sum_{l_1=0}^\infty \sum_{2^{l_2}<R 2^{j+k-2}} 2^{-l_1\eps' - l_2/4}\,\,\times
\\ \iiint\limits_{\substack{2^{l_1}\leq 1+|v|\leq 2^{l_1+1} \\ 2^{l_2}\leq 1+2^{j+k} |x-x'|\leq 2^{l_2+1}}} (1+|v|)^{\eps'} |\vsig(\alpha,v)| |b_1(x-\alpha_1 2^{-j}v)|
\frac{2^{d(j+k)}}{ (1+2^{j+k}|x-x'|)^{d+\frac{1}{4}}}
 %2^{d(j+k)} (1+2^{j+k}|x-x'|)^{-d-\frac{1}{4}}
 \: dx\: d\alpha\: dv\,.
%\end{split}
\end{multline*}
On the support of the integral, $|x-\alpha_1 2^{-j} v|\geq R$ (by the support of $b_1$).  Since $1+2^{j+k}|x-x'|\leq 2^{l_2+1}$, we have 
$|x-x'|\leq 2^{l_2+1-j-k}$.  Thus, $|x|\leq 2^{l_2+1-j-k}+1\leq \frac{R}{2} +1\leq \frac{R}{2}+\frac{R}{5}\leq \frac{3}{4} R$.
Thus, $|\alpha_1 2^{-j} v|\gtrsim R$ and therefore $|\alpha_1|\gtrsim 2^{j} \frac{R}{|v|}\gtrsim 2^{j-l_1}R$.  Plugging this in, we have for $\eps'= \eps/2$,
\begin{equation*}
\begin{split}
(II)&\lesssim \sum_{l_1=0}^\infty \sum_{2^{l_2}< R 2^{j+k-2}} 2^{-l_1\eps' - l_2/4} (1+2^{j-l_1}R)^{-\frac{\eps'}{2}}
%\\&
  \iiint\limits_{\substack{2^{l_1}\leq 1+|v|\leq 2^{l_1+1} \\ 2^{l_2}\leq 1+2^{j+k} |x-x'|\leq 2^{l_2+1}}} (1+|v|)^{\eps'} 
\times
\\&\qquad\qquad
(1+|\alpha_1|)^{\frac{\eps'}{2}}|\vsig(\alpha,v)| |b_1(x-\alpha_1 2^{-j}v)| 
\frac{2^{d(j+k)}}{ (1+2^{j+k}|x-x'|)^{d+\frac{1}{4}}}
\: dx\: d\alpha\: dv
\\&\lesssim \sum_{l_1=0}^\infty \sum_{2^{l_2}< R 2^{j+k-2}} 2^{-l_1\eps' - l_2/4} (1+2^{j-l_1}R)^{-\frac{\eps'}{2}}\sBN{\eps}{\vsig}
\lesssim (2^jR)^{-\eps'/2}\,.
%\q(\frac{2^{-j}}{R}\w)^{\frac{\eps'}{2}}\sBN{\eps}{\vsig}.
\end{split}
\end{equation*}
Combine the estimates for $(I)$ and $(II)$ to obtain  
\eqref{2^jga-after-normal}
and the proof of the lemma is complete.
%as desired, completing the proof.
\end{proof}

\begin{lemma}\label{LemmaBoundT13On22}
Let $0<\eps\le 1$. Then for all 
%\exists\eps'\gtrsim 1$ such that
$j,k\geq 0$, $\zeta\in \sU$,  $R\geq 5$, and $b_1\in L^\infty(\R^d)$ with $\supp{b_1}\subseteq \{|v|\geq R\}$
%if we set $B=B^d(1)$ 
we have
\begin{equation*}
\Big( \int_{|x|\le 1} \q| \q(\Qb{j+k}{\zeta} W_j[\vsig_j,b_1] 1\w)(x) \w|^2\: dx\Big)^{1/2}\lesssim \sUN{\zeta} \|{b_1} \|_\infty
\min\big\{(2^jR)^{-\eps/ 4}\sBN{\eps}{\vsig_j},\,  \LpN{1}{\vsig_j}\big\}.
\end{equation*}
\end{lemma}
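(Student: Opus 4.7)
The plan is to deduce this from the previous (unlabelled) lemma by a simple duality argument. First, by the duality between $L^2$ and $L^2$,
\[
\Big(\int_{|x|\le 1}\big|\Qb{j+k}{\zeta}W_j[\vsig_j,b_1]1(x)\big|^2\,dx\Big)^{1/2}
=\sup_{g}\Big|\int g(x)\,\Qb{j+k}{\zeta}W_j[\vsig_j,b_1]1(x)\,dx\Big|,
\]
the supremum taken over $g\in L^2(\R^d)$ with $\|g\|_2\le 1$ and $\supp g\subseteq\{|x|\le 1\}$. Moving $\Qb{j+k}{\zeta}$ to the other factor, its transpose is convolution with the reflected kernel $\tilde\zeta(x):=\zeta(-x)$, which again lies in $\sU$ with $\|\tilde\zeta\|_{\sU}=\|\zeta\|_{\sU}$. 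Hence
\[
\int g\cdot\Qb{j+k}{\zeta}W_j[\vsig_j,b_1]1\,dx
=\int(\Qb{j+k}{\tilde\zeta}g)\cdot W_j[\vsig_j,b_1]1\,dx
=\La[\vsig_j^{(2^j)}]\big(b_1,b_2^j,\dots,b_n^j,1,\Qb{j+k}{\tilde\zeta}g\big),
\]
by the very definition of $W_j[\vsig_j,b_1]$.

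Next, I would apply the preceding lemma to the right-hand side with the choice $b_{n+1}:=1$, $b_{n+2}:=g$, and the function $\tilde\zeta\in\sU$ in place of $\zeta$. The required support hypotheses are satisfied: $\supp b_1\subseteq\{|v|\ge R\}$ by assumption, and $\supp g\subseteq\{|x|\le 1\}$ by our choice of test function. That lemma yields
\[
\big|\La[\vsig_j^{(2^j)}]\big(b_1,b_2^j,\dots,b_n^j,1,\Qb{j+k}{\tilde\zeta}g\big)\big|
\lesssim \|\tilde\zeta\|_{\sU}\|b_1\|_\infty\Big(\prod_{l=2}^{n}\|b_l^j\|_\infty\Big)\|g\|_1\,
\min\{(2^jR)^{-\eps/4}\sBN{\eps}{\vsig_j},\,\|\vsig_j\|_1\}.
\]

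Finally, since the running normalization $\sup_j\|b_l^j\|_\infty\le 1$ for $2\le l\le n$ is in force, since $\|\tilde\zeta\|_{\sU}=\|\zeta\|_{\sU}$, and since Cauchy--Schwarz gives $\|g\|_1\le|B^d(0,1)|^{1/2}\|g\|_2\lesssim 1$, taking the supremum over admissible $g$ delivers exactly the claimed bound. There is no real obstacle here; the lemma is essentially a packaging of the preceding one via duality, with the only mildly subtle point being the transposition of $\Qb{j+k}{\zeta}$ and the observation that the class $\sU$ is invariant under reflection with the same seminorm.
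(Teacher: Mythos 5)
Your proof is correct and is essentially the same argument as the paper's: bound the local $L^2$ norm by a dual pairing with a test function of $L^1$ norm $\lesssim 1$ supported in the unit ball, move $\Qb{j+k}{\zeta}$ onto that test function via transposition (using that $\sU$ is reflection-invariant), and invoke the preceding lemma. The only cosmetic difference is that the paper passes through the $L^\infty(B)$ bound and $L^\infty$--$L^1$ duality, whereas you use $L^2$--$L^2$ duality followed by Cauchy--Schwarz; both routes are equivalent here.
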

\begin{proof}
%In what follows, we apply the previous lemma with the roles of $b_{n+1}$ and $b_{n+2}$ switched.  Since this amounts to changing $\vsig(\alpha, v)$
%to $\vsig(1-\alpha_1,\ldots, 1-\alpha_n, v)$, the same result holds.

Let $B=\{x:|x|\le 1\}$. 
We have, by the previous lemma,
\begin{equation*}
\begin{split}
&\Big(\int_B \q|\q(\Qb{j+k}{\zeta} W_j[\vsig_j,b_1] 1\w)(x)\w|^2\: dx\Big)^{1/2} \leq 
\sup_{\substack{\|b_{n+2}\|_1=1 \\ \supp{b_{n+2}}\subseteq B } } \big|
\La[\vsigjj](b_1, b_2^j,\ldots, b_n^j,1, {}^t\Qb{j+k}{\zeta}b_{n+2} )\big|
\\&\lesssim \sup_{\substack{\|b_{n+2}\|_1=1 \\ \supp{b_{n+2}}\subseteq B } } \sUN{\zeta} 
\|{b_1}\|_\infty\|b_{n+2}\|_1
\min\big\{(2^jR)^{-\eps /4}\sBN{\eps}{\vsig_j},\,  \LpN{1}{\vsig_j}\big\}
%\q( \sBN{\eps}{\vsig_j} \q(\frac{2^{-j}}{R}\w)^{\eps'}\wedge \LpN{1}{\vsig_j} \w)
%\\&\lesssim \sUN{\zeta} \LpN{\infty}{b_1} \q(\sBN{\eps}{\vsig_j}\q(\frac{2^{-j}}{R}\w)^{\eps'} \wedge \LpN{1}{\vsig_j}\w),
\end{split}
\end{equation*}
and the assertion follows.
%as desired.
\end{proof}

%\begin{lemma}$\exists \eps\gtrsim 1$ such that $\forall R\geq 5$, $\forall \zeta\in \sU$, and for all bounded sets $\{b_1^j : j\in \N\}\subset L^\infty(\R^d)$ with $\supp{b_1^j}\subseteq \{|v|\geq R\}$, if we set $B=B^d(1)$, we have
%\begin{equation*}\q(\sum_{j\geq 0 }  \int_B \q| \q(\Qb{j+k}{\zeta} W_j[b_1^j] 1\w)(x) \w|^2\: dx\w)^{\frac{1}{2}} \lesssim \sUN{\zeta} \sup_{j} \LpN{\infty}{b_1^j} \q( \sum_{j\geq 0} \q[\q(\q(\frac{2^{-j}}{R}\w)^{2\eps'} \sup_{j'} \sBN{\eps}{\vsig_{j'}}^2 \w)\wedge \sup_{j'} \LpN{1}{\vsig_{j'}}^2\w] \w)^{\frac{1}{2}}.\end{equation*}\end{lemma}\begin{proof}
%This follows immediately from the previous lemma.\end{proof}

For $j,k_1,k_2\geq 0$ and $\zeta\in \sU$, define an operator 
%$V_{j,k_1,k_2}^\zeta$
$V_{j,k_1,k_2}\equiv V_{j,k_1,k_2}^{\vsig_j,\zeta}$ by
\begin{align*}
\int f(x) \,V_{j,k_1,k_2}g(x)\: dx &= \int g(x) \,\q(\Qb{j+k_1}{\zeta} W_j [ \vsig_j, Q_{j+k_2} f] 1\w)(x)\: dx \\
&= \La[\vsigjj](Q_{j+k_2} f, b_2^j,\ldots, b_n^j,1, {}^t\Qb{j+k_1}{\zeta}g ).
\end{align*}
%
%\begin{lemma}
%$\exists \eps'\gtrsim 1$, $M\lesssim 1$ such that
%for $k_1', k_2'\in \Z$,
%\begin{equation*}
%\LpOpN{2}{\Qt_{k_1'} V_{j+k_1', k_1, k_2}^{\zeta} \Qt_{k_1'+k_2'+j}}\lesssim \sUN{\zeta}\q( 2^{-k_1\eps' - k_2\eps' } n^M \sup_{j'}\sBN{\eps}{\vsig_{j'}} \wedge 2^{-|j+k_1| -|k_2'-k_2|} \sup_{j'} \LpN{1}{\vsig_{j'}} \w).
%\end{equation*}
%\end{lemma}
%\begin{proof}
%This follows from Theorem \ref{ThmBasicL2Result} just as in the proof of
%Lemma \ref{LemmaBoundRoughestPieces}.
%\end{proof}
%

%For $k_1,k_2\geq 0$ and $\zeta\in \sU$, define an operator $V_{k_1,k_2}$
%\begin{equation*}
%V_{k_1,k_2}^\zeta:=\sum_{j\geq 0} V_{j,k_1,k_2}\,.
%\end{equation*}

%\begin{prop}$\exists \eps'\gtrsim 1$, $M\lesssim 1$ such that\end{prop}\begin{proof}
%This follows from the previous lemma by using Theorem \ref{ThmAlmostOrthogonal} just as in the proof
%of Proposition \ref{PropL2BoundFor2Qs}.
%This follows from Proposition \ref{PropL2BoundFor2Qs}.
%\end{proof}

\begin{lemma}\label{CorBoundT1V}
Let $0<\eps\le 1$. There exists $c>0$  
(independent of $n$ and $\eps$) 
such that for $\eps'\le c\eps$, $k_1, k_2\ge 0$,
%$\exists \eps'\gtrsim 1$, $M\lesssim 1$ such that 
and for all $f\in L^2(\R^d)$,
\begin{equation*}
\Big(\int\sum_{j\geq 0} \q| {}^tV_{j,k_1,k_2} f (x)\w|^2\: dx\Big)^{1/2} \lesssim \LpN{2}{f} \sUN{\zeta}  \sup_j \|\vsig_j\|\ci{L^1} 
 \min 
\big\{ 1, n 2^{-\eps' (k_1+ k_2)} \Ga_\eps\big\}.
\end{equation*}
\end{lemma}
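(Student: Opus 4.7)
The plan is to identify ${}^tV_{j,k_1,k_2}$ explicitly as a composition $\Qb{j+k_1}{\zeta}\mathcal{U}_j Q_{j+k_2}$ (where $\mathcal{U}_j h := W_j[\vsig_j, h]\,1$), establish the trivial ``$1$'' bound via standard composition plus Littlewood--Paley, and extract the sharper $n\,2^{-\eps'(k_1+k_2)}\Gamma_\eps$ bound by recognizing $\sum_{j\ge 0}{}^tV_{j,k_1,k_2}$ as an instance of the operator $S^{1,n+2}_{k_2,k_1}$ of Theorem~\ref{PropL2BoundFor2Qs}, then promoting the operator bound to a square function bound via almost orthogonality. Unwinding the definitions of $V_{j,k_1,k_2}$ and $W_j$ immediately gives the identification
\[\int g(x)\,{}^tV_{j,k_1,k_2}f(x)\,dx = \La[\vsig_j^{(2^j)}]\bigl(Q_{j+k_2}f,\,b_2^j,\ldots,b_n^j,\,1,\,{}^t\Qb{j+k_1}{\zeta}g\bigr).\]

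For the ``$1$'' part I would apply the composition estimate
\[\|{}^tV_{j,k_1,k_2}f\|_2 \le \|\Qb{j+k_1}{\zeta}\|_{L^2\to L^2}\,\|\mathcal{U}_j\|_{L^2\to L^2}\,\|Q_{j+k_2}f\|_2.\]
The first factor is $\lesssim \sUN{\zeta}$ from Definition~\ref{Udefin} and Young's inequality. For $\|\mathcal{U}_j\|_{L^2\to L^2}$ I would apply Theorem~\ref{ThmOpResAdjoints} once, swapping positions $1$ and $n+1$ so that the constant $1$ lands in the first slot, after which Lemma~\ref{LemmaBasicL2BasicLpEstimate} (with exponents $(\infty,\dots,\infty,2,2)$) yields $\|\mathcal{U}_j\|_{L^2\to L^2}\lesssim \|\vsig_j\|_{L^1}$ (using $\|\ell_\vp\vsig\|_{L^1}=\|\vsig\|_{L^1}$). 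Squaring, summing in $j\ge 0$, and applying Littlewood--Paley $\sum_{j\ge 0}\|Q_{j+k_2}f\|_2^2\le \|f\|_2^2$ produces the ``$1$'' bound.

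The identification also shows that $\sum_{j\ge 0}{}^tV_{j,k_1,k_2}$ coincides with the operator $S^{1,n+2}_{k_2,k_1}$ of Theorem~\ref{PropL2BoundFor2Qs} with $\zeta_1=\psi$ and $\zeta_2=\tilde\zeta$ (the reflection of $\zeta$); crucially this requires no further use of Theorem~\ref{ThmOpResAdjoints}, which keeps the eventual exponent of $n$ equal to $1$. Theorem~\ref{PropL2BoundFor2Qs} hence yields the operator bound
\[\Bigl\|\sum_{j\ge 0}{}^tV_{j,k_1,k_2}\Bigr\|_{L^2\to L^2}\lesssim \sUN{\zeta}\sup_j\|\vsig_j\|_{L^1}\min\{1,\,n\,2^{-(k_1+k_2)\eps'}\Gamma_\eps\}.\]
To upgrade this to the square function bound I would revisit the individual-scale almost orthogonality estimate underlying that proof, i.e.\ the analogue of \eqref{LemmaBoundRoughestPieces} with $T_j={}^tV_{j,k_1,k_2}$, and insert it into a vector-valued version of Lemma~\ref{ThmAlmostOrthogonal}. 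Concretely, using the reproducing formulas $I=\sum_{k'}\tcQ_{k'}\cQ_{k'}$ on both sides of each $T_j$, expanding in Littlewood--Paley pieces, and applying Cauchy--Schwarz in the scales $k_1',k_2'$ together with the Littlewood--Paley inequality in $j$ yields the desired square function estimate with the same factor $n\,2^{-(k_1+k_2)\eps'}\Gamma_\eps$.

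The main obstacle is the passage from the operator bound on $\sum_j T_j$ to the square function bound $(\sum_j\|T_jf\|_2^2)^{1/2}$; this conversion depends decisively on the mean-zero condition $\int\zeta=0$ built into the class $\sU$, which makes $\Qb{j+k_1}{\zeta}$ behave as a Littlewood--Paley piece at scale $2^{j+k_1}$ and thereby furnishes almost orthogonality of the $T_jf$ across $j$. A subsidiary but essential point is to apply Theorem~\ref{ThmOpResAdjoints} exactly once, and only at the $L^1$-level (Step~2); iterating it to move the position of both $Q$-factors would cost a factor $n^2$ or $n^3$ and destroy the claimed linear dependence on $n$.
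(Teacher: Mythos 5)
Your proposal correctly identifies that the operator bound on $\sum_{j\ge 0}{}^tV_{j,k_1,k_2}$ comes straight from Theorem~\ref{PropL2BoundFor2Qs} (with $l_1=1$, $l_2=n+2$, and $(k_1,k_2)$ swapped), and that the real content is the upgrade from this operator bound to the square-function bound $\bigl(\sum_{j}\|{}^tV_{j,k_1,k_2}f\|_2^2\bigr)^{1/2}$. That is the same starting point the paper uses. Where you diverge is in how you make that upgrade. You propose to reopen the almost-orthogonality machinery (the analogue of \eqref{LemmaBoundRoughestPieces} together with a vector-valued version of Lemma~\ref{ThmAlmostOrthogonal}, Littlewood--Paley expansions on both sides, Cauchy--Schwarz in the auxiliary scales). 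This is plausible in principle but left at the level of a plan, and it requires redoing the internal single-scale estimates rather than using Theorem~\ref{PropL2BoundFor2Qs} as a black box; it also risks getting mired in bookkeeping because the decay in $k_1$ and the decay in $k_2$ enter through different factors in the factorization ${}^tV_{j,k_1,k_2}=\Qb{j+k_1}{\zeta}\circ M_j\circ Q_{j+k_2}$.

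The paper's route is shorter and genuinely different. One observes that $\delta_j V_{j,k_1,k_2}$ for any choice of signs $\delta_j\in\{\pm1\}$ is exactly $V_{j,k_1,k_2}$ with $\vsig_j$ replaced by $\delta_j\vsig_j$, a replacement that leaves $\|\vsig_j\|_{L^1}$ and $\|\vsig_j\|_{\cB_\eps}$, and hence $\Gamma_\eps$, unchanged. Theorem~\ref{PropL2BoundFor2Qs} therefore gives $\bigl\|\sum_j\delta_j\,{}^tV_{j,k_1,k_2}\bigr\|_{L^2\to L^2}\lesssim \sUN{\zeta}\sup_j\|\vsig_j\|_{L^1}\min\{1,n\,2^{-\eps'(k_1+k_2)}\Gamma_\eps\}$ uniformly in the sign sequence, and Khinchine's inequality converts this uniform randomized bound into the square-function estimate with no additional analysis of the $T_j$'s. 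This randomization observation is the one genuinely non-obvious idea in the proof, and it is the piece missing from your proposal; notice in particular that without it you would be forced to re-derive almost-orthogonality for the $T_j$'s by hand, which is what you sketch but do not execute. Your remarks about avoiding iterated applications of Theorem~\ref{ThmOpResAdjoints} are correct but ultimately moot once one adopts the Khinchine route, since that theorem is invoked only internally within the proof of Theorem~\ref{PropL2BoundFor2Qs}.
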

\begin{proof}
From Theorem \ref{PropL2BoundFor2Qs}
we get the bound
\begin{equation} \label{sumVjk's}
\Big\|\sum_{j\geq 0} V_{j,k_1,k_2}\Big\|_{L^2\to L^2}
%\LpOpN{2}{V_{k_1,k_2}}
\lesssim \sUN{\zeta}
\min\big\{ 1, n 2^{-\eps' (k_1+ k_2)} \Ga_\eps\big\}.
\end{equation}
Let $\delta_j$ be any sequence of $\pm 1$.  Note that $\delta_j V_{j,k_1,k_2}$ is of the same form as $V_{j,k_1,k_2}$ with $\vsig_j$
replaced by $\delta_j \vsig_j$.  Thus, by \eqref{sumVjk's},
\begin{equation*}
\Big\|\sum_{j\geq 0} \delta_j {}^tV_{j,k_1,k_2}f\Big\|_2\lesssim \LpN{2}{f} \sUN{\zeta} \min\big\{ 1, n 2^{-\eps' (k_1+ k_2)} \Ga_\eps\big\},
\end{equation*}
where the implicit constant does not depend on the particular sequence $\delta_j$.
By  Khinchine's inequality 
\begin{equation*}
\Big(\int\sum_{j\geq 0} \q| {}^tV_{j,k_1,k_2}f (x)\w|^2\: dx\Big)^{1/2} \lesssim \sup
\Big\|\sum_{j\geq 0} \delta_j \,{}^tV_{j,k_1,k_2}f\Big\|_2,
\end{equation*}
where the sup is taken over all $\pm1$-sequences $\{\delta_j\}$. The result follows.
\end{proof}

\begin{lemma} Let $0<\eps<1$. There exists $c>0$ (independent of $n$ and $\eps$) 
so that for $\eps'\le c\eps^2$, for all  $ b_1\in L^\infty(\R^d)$, for all $ \zeta\in \sU$, 
\begin{multline*}
%\begin{split}
\Big(\sum_{j\geq 0} \int_{|x|\le 1} \big| \q( \Qb{j+k_1}{\zeta} W_j[\vsig_j,(I-P_j) b_1] 1 \w)(x) \big|^2\: dx\Big)^{\frac{1}{2}}
\\ \le C(\eps,d)\sUN{\zeta} \|{b_1} \|_\infty
\sup_j\|\vsig_j\|_{L^1} 
\min\{ 2^{-k_1\eps'} n\Gamma_\eps^2, \log^{3/2}(1+n\Gamma_\eps)\}.
%\q[ \q(  2^{-k_1\eps'} \q(\frac{\sup_j \sBN{\eps}{\vsig_j} }{\sup_j \LpN{1}{\vsig_j} }\w)^M n^M \sup_{j} \sBN{\eps}{\vsig_j}\w)\wedge\q(\q(\sup_{j} \LpN{1}{\vsig_j}\w) \log^{1+\frac{1}{2}} \q( 1+n \frac{\sup_j \sBN{\eps}{\vsig_j} }{\sup_j \LpN{1}{\vsig_j} } \w)\w)\w].
%\end{split}
\end{multline*}
\end{lemma}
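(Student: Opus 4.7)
The plan is to decompose $(I-P_j)b_1 = \sum_{k_2>0}Q_{j+k_2}b_1$ and apply Minkowski's inequality in $k_2$, reducing to bounding $\bigl(\sum_{j\ge 0}\int_B|{}^tV_{j,k_1,k_2}b_1|^2\bigr)^{1/2}$ for each $k_2>0$ and then summing over $k_2$. Since $b_1$ is only in $L^\infty$, we further decompose $b_1 = b_1^{\mathrm{near}}+b_1^{\mathrm{far}}$, where $b_1^{\mathrm{near}} = b_1\chi_{|z|\le R}$ and $b_1^{\mathrm{far}} = b_1\chi_{|z|>R}$, with a truncation radius $R\ge 10$ to be chosen at the end. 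The two pieces will be handled by different arguments.

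For the near part, $b_1^{\mathrm{near}}\in L^2$ with $\|b_1^{\mathrm{near}}\|_2\lesssim R^{d/2}\|b_1\|_\infty$, so Corollary \ref{CorBoundT1V} applied with $f = b_1^{\mathrm{near}}$ yields
\[
\Bigl(\sum_{j\ge 0}\int_B|{}^tV_{j,k_1,k_2}b_1^{\mathrm{near}}|^2\Bigr)^{1/2}\lesssim R^{d/2}\|b_1\|_\infty\|\zeta\|_{\sU}\sup_j\|\vsig_j\|_{L^1}\min\{1,n2^{-\eps'(k_1+k_2)}\Gamma_\eps\},
\]
and summing in $k_2>0$ produces at most a factor $\eps'^{-1}\bigl(\min(A,1)+\log_+A\bigr)$ with $A := n 2^{-\eps'k_1}\Gamma_\eps$.

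For the far part, rather than proceeding $k_2$-by-$k_2$, the efficient route is to first use the identity $\sum_{k_2>0}Q_{j+k_2}(b_1^{\mathrm{far}}) = (I-P_j)(b_1^{\mathrm{far}})$. The key geometric observation is that $P_j$ is convolution with a kernel supported in $\{|u|\le 2^{-j}\}$, hence $(I-P_j)(b_1^{\mathrm{far}})$ vanishes on $\{|z|\le R/2\}$ whenever $R2^j\ge 2$; in particular it is an $L^\infty$ function of norm $\le 2\|b_1\|_\infty$ supported in $\{|z|\ge R/2\}$. Lemma \ref{LemmaBoundT13On22} (with $R/2$ in place of $R$) then applies and gives
\[
\Bigl(\int_B\bigl|\Qb{j+k_1}{\zeta}W_j[\vsig_j,(I-P_j)(b_1^{\mathrm{far}})]1\bigr|^2dx\Bigr)^{1/2}\lesssim \|b_1\|_\infty\|\zeta\|_{\sU}\min\bigl\{(R2^j)^{-\eps/4}\|\vsig_j\|_{\cB_\eps},\|\vsig_j\|_{L^1}\bigr\}.
\]
A direct computation of the $\ell^2_j$-sum using this min-structure and the definition of $\Gamma_\eps$ shows that the crossover between the two branches occurs at $j_\ast\approx \log_2(\Gamma_\eps^{4/\eps}/R)$, producing a bound $\lesssim \sqrt{(1+\log_+\Gamma_\eps)/\eps}\cdot\sup_j\|\vsig_j\|_{L^1}\cdot\|b_1\|_\infty\|\zeta\|_{\sU}$, independent of $R$ for bounded $R$. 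The finitely many $j$ with $R2^j<2$ contribute at most a constant via the trivial $L^1$-bound.

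Combining, I take $R=10$. For the logarithmic bound, Near contributes $\lesssim \log(1+n\Gamma_\eps)/\eps'$ and Far contributes $\lesssim \sqrt{\log(1+\Gamma_\eps)/\eps}$, giving the claimed $\log^{3/2}$-bound (in fact the stronger $\log$). For the $2^{-k_1\eps'}n\Gamma_\eps^2$-bound, one exploits the Corollary's factor $A = n2^{-\eps'k_1}\Gamma_\eps$ in the Near sum (which in the regime $A\le 1$ yields $n2^{-\eps'k_1}\Gamma_\eps$) and absorbs Far using the crude estimate $\sqrt{\log(1+\Gamma_\eps)}\le \Gamma_\eps\le n 2^{-\eps'k_1}\Gamma_\eps^2\cdot 2^{\eps'k_1}/n$ in the complementary regime, which is controlled via the $\min$. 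The main obstacle is the far-part estimate: a $k_2$-by-$k_2$ bound would not be summable in $k_2$, because $Q_{j+k_2}b_1^{\mathrm{far}}$ has no $k_2$-decay at points in the support of $b_1^{\mathrm{far}}$; the device of first collapsing $\sum_{k_2>0}Q_{j+k_2}=I-P_j$ to exploit the support-localization of $(I-P_j)b_1^{\mathrm{far}}$ (and thereby access Lemma \ref{LemmaBoundT13On22}) is essential. The remainder is careful bookkeeping of the constants depending on $\eps$ and $d$, together with a small amount of interpolation between the two bounds to obtain the claimed $\min$ form.
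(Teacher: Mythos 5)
Your decomposition is correct in outline and closely parallels the paper's: split $(I-P_j)b_1 = \sum_{k_2>0}Q_{j+k_2}b_1$, split $b_1$ into a near part (apply Corollary~\ref{CorBoundT1V}) and a far part (apply Lemma~\ref{LemmaBoundT13On22}). The observation that $(I-P_j)b_1^{\mathrm{far}}$ remains supported away from the origin for $j\ge 0$ is also fine. But there is a genuine gap in the last step, and it is not a bookkeeping issue.

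The far-part estimate you obtain from Lemma~\ref{LemmaBoundT13On22} with a fixed $R=10$ is, after summing the $\ell^2_j$-sum, of order $\sqrt{\log(1+\Gamma_\eps)/\eps}\cdot\sup_j\|\vsig_j\|_{L^1}$; this quantity has \emph{no decay in $k_1$} at all, since Lemma~\ref{LemmaBoundT13On22} produces $\min\{(2^j R)^{-\eps/4}\sBN{\eps}{\vsig_j},\LpN{1}{\vsig_j}\}$ with no dependence on the scale $j+k_1$ of the outer Littlewood--Paley piece. Consequently your total bound is at least $c\sqrt{\log(1+\Gamma_\eps)}$ for every $k_1$, which cannot be dominated by the first branch $2^{-k_1\eps'}n\Gamma_\eps^2$ once $k_1$ is large with $n,\Gamma_\eps$ fixed. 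The claimed absorption in the final paragraph is a non-sequitur: the chain $\sqrt{\log(1+\Gamma_\eps)}\le\Gamma_\eps\le n2^{-\eps' k_1}\Gamma_\eps^2\cdot 2^{\eps' k_1}/n$ just says $\sqrt{\log(1+\Gamma_\eps)}\le\Gamma_\eps^2$, which does not give the needed $\sqrt{\log(1+\Gamma_\eps)}\le n2^{-\eps' k_1}\Gamma_\eps^2$ (which is false for $k_1$ large). No interpolation can manufacture $k_1$-decay that the far part simply does not possess.

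The paper circumvents this exactly by making the truncation radius $(k_1,k_2)$-dependent: it sets $R_{k_1,k_2}=\max\{10,\beta 2^{(k_1+k_2)\delta}\}$, so the far-part bound $(2^jR_{k_1,k_2})^{-\eps/4}$ carries a factor $2^{-k_1\eps\delta/4}$, giving the required $k_1$-decay; the parameters $\beta=(n\Gamma_\eps)^{-1/d}$ and $\delta=c_1\eps/(2d)$ are chosen at the end to balance the competing penalties $R^{d/2}$ (near, via Corollary~\ref{CorBoundT1V}) and $R^{-\eps/4}$ (far). This growing radius is the essential device; without it, the first branch of the claimed $\min$ is unreachable.
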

\begin{proof}
Fix $b_1\in L^\infty(\R^d)$ and $\zeta\in \sU$.  We may assume $\LpN{\infty}{b_1}=1$ and $\sUN{\zeta}=1$.
Fix $0<\beta\leq 1$ and $\delta>0$ to be chosen later, see \eqref{betadelta} below.  Given  $k_1,k_2\geq 0$ we decompose $b_1= 
%b_1^{k_1,k_2,\infty}+b_1^{k_1,k_2,0}$ where 
b_{1,\infty}^{k_1,k_2}+b_{1,0}^{k_1,k_2}$ where 
\begin{align*}
b_{1,\infty}^{k_1,k_2}(y) &:= 
\begin{cases} b_1(y) &\text{ if } |y|\ge \max \{ 10, \beta\, 2^{1+\delta(k_1+k_2)}\}
\\
0 &\text{ if } |y|< \max \{ 10, \beta \,2^{1+\delta(k_1+k_2)}\}
\end{cases},
\\
 b_{1,0}^{k_1,k_2}(y) &:= b_1(y)-b_{1,\infty}^{k_1,k_2}(y).
 \end{align*}
%\chi_{\q\{|v|>10 \vee c 2^{\delta k_1+\delta k_2+1}\w\} }b_1(v), \quad
We expand $I-P_j=\sum_{k_2}Q_{j+k_2}$ and then have
\begin{equation*}
%\begin{split}&
\Big(\sum_{j\geq 0} \int_B \q| \q(\Qb{j+k_1}{\zeta}W_j[\vsig_j, (I-P_j)b_1] 1  \w)(x)  \w|^2\: dx\Big)^{1/2} \le (I)+(II)
\end{equation*} where
%\leq \sum_{k_2>0 } \q(\sum_{j\geq 0} \int_B \q| \q( \Qb{j+k_1}{\zeta} U_j[Q_{j+k_2} b_1] 1 \w)(x) \w|^2\:dx\w)^{\frac{1}{2}}
\begin{align*}
(I)&:=\sum_{k_2>0 } \Big(\sum_{j\geq 0} \int_B \q| \q( \Qb{j+k_1}{\zeta} W_j[\vsig_j,Q_{j+k_2} b_{1,\infty}^{k_1,k_2}] 1 \w)(x) \w|^2\:dx\Big)^{1/2}\,,
\\
(II)&:=\sum_{k_2>0 } \Big(\sum_{j\geq 0} \int_B \q| \q( \Qb{j+k_1}{\zeta} W_j[\vsig_j, Q_{j+k_2} b_{1,0}^{k_1,k_2}] 1 \w)(x) \w|^2\:dx\Big)^{1/2}\,.
%\\&=:(I)+(II).
\end{align*}
We begin by estimating $(I)$.  Because $j,k_2\geq 0$, $$\supp{Q_{j+k_2} b_{1,\infty}^{k_1,k_2}}\subseteq\{y: |y|\ge R_{k_1,k_2}\} \text{ where $R_{k_1,k_2}:=\max \{5,\, \beta 2^{(k_1+k_2)\delta}\},$}
$$
%\subseteq\q\{ |v|>5\vee c2^{k_1\delta+k_2\delta} \w\}$,
we may apply Lemma \ref{LemmaBoundT13On22} to see
\begin{equation*}
\begin{split}
(I)&=\sum_{k_2>0 } \Big(\sum_{j\geq 0} \int_B \q| \q( \Qb{j+k_1}{\zeta} W_j[\vsig_j, Q_{j+k_2} b_{1,\infty}^{k_1,k_2}] 1 \w)(x) \w|^2\:dx\Big)^{1/2}
\\&\lesssim \sum_{k_2>0}\Big(  \sum_{j\geq 0} \min\big\{(2^j R_{k_1,k_2})^{-2\eps/4} \sBN{\eps}{\vsig_{j}}^2, \sup_{j'} \LpN{1}{\vsig_{j'}}^2\big\}  \Big)^{1/2}
\\&\leq 
\sup_{j'} \LpN{1}{\vsig_{j'}}
\sum_{k_2>0} \Big( \sum_{j\geq 0}\min\{ 1, 
2^{-j\eps/2-(k_1+k_2)\eps\delta/2}  \beta^{-\eps/2} \Gamma_\eps^2
\Big)^{1/2}
\\&\lc \sup_{j} \LpN{1}{\vsig_{j}}
\sum_{k_2>0} \min\{ 1,\, 2^{-(k_1+k_2)\eps\delta/4}\beta^{-\eps/4} \Gamma_\eps\}
\log^{1/2}(1+\beta^{-2\eps/4}\Gamma_\eps^2)
\\&\lc \sup_{j} \LpN{1}{\vsig_{j}}
\min\{ 1,\, 2^{-k_1\eps\delta/4}\beta^{-\eps/4} \Gamma_\eps\}
\log^{3/2}(1+\beta^{-\eps/4}\Gamma_\eps).
\end{split}
\end{equation*}
 %The last two inequalities use Lemma \ref{LemmaBasicSum}. \footnote{Is this necessary to write?}

We now turn to $(II)$.  We have
$\|b_{1,0}^{k_1,k_2}\|_2
\lc R_{k_1,k_2}^{d/2} 
\|b_{1,0}^{k_1,k_2}\|_\infty
\lc R_{k_1,k_2}^{d/2} 
$ and use  Lemma  \ref{CorBoundT1V} to estimate, for some $c_1\in (0,1)$,
\begin{equation}\label{EqnBoundT1PropII}
\begin{split}
(II) &= \sum_{k_2>0 } \Big(\sum_{j\geq 0} \int_B \q| \q( \Qb{j+k_1}{\zeta} W_j[\vsig_j,Q_{j+k_2} b_{1,0}^{k_1,k_2}] 1 \w)(x) \w|^2\:dx\Big)^{\frac{1}{2}}
\\&=\sum_{k_2>0} \Big(\sum_{j\geq 0} \int_B \q|{}^tV_{j,k_1,k_2}
 b_{1,0}^{k_1,k_2} (x)\w|^2 \: dx\Big)^{\frac{1}{2}}
\\&\lc 
 \sup_{j} \LpN{1}{\vsig_j} 
  \sum_{k_2>0}\LpN{2}{b_{1,0}^{k_1,k_2}}
\min\{1,  \,n 2^{-c_1\eps(k_1+ k_2)}  \Gamma_\eps\}
\\&\lesssim  \sup_{j} \LpN{1}{\vsig_j} 
\sum_{k_2>0} \big(1+\beta 2^{k_1\delta+k_2\delta}\big)^{d/2} 
\min\{1, \, n 2^{-c_1\eps( k_1+ k_2)}  \Gamma_\eps\}
\\&=
\sum_{\substack{k_2>0 \\ 1< \beta\,2^{k_1\delta+k_2\delta}}}
+\sum_{\substack{k_2>0 \\ 1\geq \beta \,2^{k_1\delta+k_2\delta}}} 
=:(II_1)+(II_2).
\end{split}
\end{equation}
We take
\begin{equation}\label{betadelta}
\beta= (n\Ga_\eps)^{-1/d},\quad
  \quad \delta=\frac{c_1\eps}{2d}.
\end{equation}
Notice that since 
$ \beta\,2^{k_1\delta+k_2\delta}\ge 1$  in the sum $(II_1)$  we may replace the power $d/2$ by $d$ and get, 
 with the choice \eqref{betadelta},
%$(\beta 2^{k_1\delta+k_2\delta} )^{d/2}\le (\beta 2^{k_1\delta+k_2\delta} )^{d}$ and thus
\begin{align*}
%\label{EqnBountT1WhatbetaGetsyou}
\q(\beta 2^{k_1\delta+k_2\delta}\w)^{d/2} (n 2^{-\eps' k_1-\eps'k_2} \Gamma_\eps)
&\le \beta^d n\Gamma_\eps 2^{(k_1+k_2) (\delta d -c_1\eps)}
\\
&\le 2^{-(k_1+k_2) c_1\eps/2}\,
\end{align*}
and thus
%For $(IV)$ we have, using \eqref{EqnBountT1WhatbetaGetsyou},
\begin{equation*}
\begin{split}
&(II_1) 
%\lesssim \sum_{k_2>0} \q(c2^{k_1\delta+k_2\delta}\w)^d \q(n^M 2^{-\eps' k_1-\eps' k_2} \sup_j \sBN{\eps}{\vsig_j}\w)
%\\&
\lesssim \sum_{k_2>0} 2^{-(k_1+k_2)c_1\eps/2}
\sup_j \LpN{1}{\vsig_j}\lesssim 2^{-k_1 c_1\eps /2} \sup_j \LpN{1}{\vsig_j}.
\end{split}
\end{equation*}
Next,
\begin{equation*}
\begin{split}
(II_2)&\lesssim \sup_j \LpN{1}{\vsig_j} \sum_{k_2>0}
\min\{1,\,  n 2^{-(k_1+ k_2)c_1\eps}  \Gamma_\eps\}
\\
&\lesssim \sup_j \LpN{1}{\vsig_j}\times 
\begin{cases} \log (2+2^{-c_1\eps k_1} \Ga_\eps n) &\text{ if } 2^{-c_1 \eps k_1}\Gamma_\eps n\ge 1
\\
2^{-c_1\eps k_1} \Ga_\eps n  &\text{ if } 2^{-c_1\eps k_1}\Gamma_\eps n\le 1
\end{cases}
\\&\lesssim \sup_j \LpN{1}{\vsig_j}\min \{2^{-c_1\eps k_1} n\Ga_\eps,\,
 \log(1+n \Ga_\eps)\}.
\end{split}
\end{equation*}
%where the last inequality uses  Lemma \ref{LemmaBasicSum}.

Finally we use the choice \eqref{betadelta} in
the above estimate for $(I)$ and get
\begin{align*}
(I)&\lc  \sup_{j} \LpN{1}{\vsig_{j}}
\min\{ 1,\, 2^{-k_1\frac{c_1\eps^2}{8d}} n^{\frac{\eps}{4d}} \Gamma_\eps^{1+\frac{\eps}{4d}}
\}
\log^{3/2}(1+ n^{\frac{\eps}{4d}} \Gamma_\eps^{1+\frac{\eps}{4d}})
\\
&\lc \sup_{j} \LpN{1}{\vsig_{j}}\min\{1,\, 2^{-k_1 c\eps^2} n\Gamma_\eps^2\}
\log^{3/2}(1+ n\Gamma_\eps)
\end{align*}
with $c=c_1/8d$.
Combining  this estimate with the  above estimates for $(II_1)$ and $(II_2)$  yields the assertion.
% with $c=c_1/8d$.
\end{proof}

\begin{proof}[Proof of Proposition \ref{PropT1Carleson}, conclusion.]
The lemma is just a restatement of \eqref{CarlesonQW} 
for $x_0=0$ and $\ell=0$ and by 
\eqref{changeofvarQW} we reduced the proof of  \eqref{CarlesonQW}  to this special case. 
\end{proof}
%\footnote{Perhaps add a scaling lemma again?}

%\begin{lemma}\label{LemmaBoundT1L2BountQbUjP}
%$\exists \eps'\gtrsim 1$, $M\lesssim 1$ such that for $k\geq 0$, $j\in \Z$, $\zeta\in \sU$, and $b_1\in L^\infty(\R^d)$,
%\begin{equation*}\LpOpN{2}{\Qb{j+k}{\zeta} W_j[b_1] P_j} \lesssim \sUN{\zeta}\LpN{\infty}{b_1} \q(n^M 2^{-\eps' k} \sBN{\eps}{\vsig_j}\wedge \LpN{1}{\vsig_j} \w).\end{equation*}\end{lemma}
%\begin{proof}Take $f,g\in L^2(\R^d)$ with $\LpN{2}{f}=1=\LpN{2}{g}$.  We have, by Theorem \ref{ThmBasicL2Result},\begin{equation*}\begin{split}&\q|\int f(x) \q( \Qb{j+k}{\zeta} W_j[b_1] P_j g\w)(x)\: dx\w| =|\La[\vsigjj]((b_1,b_2^j,\ldots, b_n^j, P_j g, \Qb{j+k}{\zeta}^t f)\w|\\&\lesssim \sUN{\zeta}\LpN{\infty}{b_1} \q(n^M 2^{-\eps' k} \sBN{\eps}{\vsig_j}\wedge \LpN{1}{\vsig_j} \w),\end{split}\end{equation*}as desired.\end{proof}

\subsubsection{Proof of Proposition \ref{PropT1Canc}}
We start with an elementary observation for $f\in L^\infty$.

\begin{lemma}\label{LemmaBoundT1LinfNOfOp}
For all $k\geq 0$, $j\in \Z$, $b_1\in L^\infty(\R^d)$, and $\zeta\in \sU$,
\begin{equation*}
\LpN{\infty}{ \Qb{j+k}{\zeta} W_j[\vsig_j,b_1] f} \lesssim 
\sUN{\zeta} \LpN{1}{\vsig_j} \|b_1\|_\infty \|f\|_\infty.
\end{equation*}
\end{lemma}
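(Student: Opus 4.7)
The plan is that this lemma is essentially a soft $L^\infty \to L^\infty$ estimate that does not use any cancellation of $\zeta$ or of $\vsig_j$. I would handle the two factors $\Qb{j+k}{\zeta}$ and $W_j[\vsig_j,b_1]$ independently and compose the resulting bounds.

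First I would verify that $W_j[\vsig_j,b_1]$ is $L^\infty$-bounded with norm $\lesssim \|\vsig_j\|_{L^1}\|b_1\|_\infty$. Unraveling the definition,
\[
W_j[\vsig_j,b_1]f(x) = \iint \dil{\vsig_j}{2^j}(\alpha,x-y)\, b_1(x-\alpha_1(x-y))\,\Big(\prod_{i=2}^n b_i^j(x-\alpha_i(x-y))\Big)\,f(y)\,dy\,d\alpha,
\]
so by pulling absolute values inside and using the standing normalization $\sup_j\|b_i^j\|_\infty \le 1$ for $2\le i\le n$, together with the change of variables $y\mapsto x-2^{-j}u$ (which has Jacobian $2^{-jd}$ and absorbs the scaling defining $\dil{\vsig_j}{2^j}$), one sees
\[
|W_j[\vsig_j,b_1]f(x)| \le \|f\|_\infty\|b_1\|_\infty \iint |\vsig_j(\alpha,u)|\,du\,d\alpha = \|f\|_\infty\|b_1\|_\infty\|\vsig_j\|_{L^1}.
\]

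Next I would bound $\Qb{j+k}{\zeta}$ on $L^\infty$. Since $\Qb{j+k}{\zeta}g = g * \dil{\zeta}{2^{j+k}}$ and $\|\dil{\zeta}{2^{j+k}}\|_1 = \|\zeta\|_1$ (dilations preserve $L^1$ norm), Young's inequality gives $\|\Qb{j+k}{\zeta}g\|_\infty \le \|\zeta\|_1\|g\|_\infty$. The remaining point is that $\|\zeta\|_1\lesssim \sUN{\zeta}$, which follows directly from the pointwise bound $|\zeta(x)|\le \sUN{\zeta}(1+|x|^{d+1/2})^{-1}$ built into the definition of $\sU$, since $(1+|x|^{d+1/2})^{-1}$ is integrable on $\bbR^d$.

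Composing these two $L^\infty\to L^\infty$ bounds then yields
\[
\LpN{\infty}{\Qb{j+k}{\zeta}W_j[\vsig_j,b_1]f} \lesssim \sUN{\zeta}\|\vsig_j\|_{L^1}\|b_1\|_\infty\|f\|_\infty,
\]
as claimed. There is no real obstacle here: the cancellation and regularity of $\zeta\in\sU$ (which are crucial in earlier almost-orthogonality arguments) are not needed, only the decay encoded in $\sUN{\zeta}$, and similarly only the $L^1$ size of $\vsig_j$ enters.
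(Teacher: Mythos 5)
Your proof is correct and rests on the same ingredients as the paper's: Hölder's inequality on the multilinear form (which gives the $\|\vsig_j\|_{L^1}$ factor) and the $L^1$ bound $\|\zeta\|_1\lesssim\sUN{\zeta}$ coming from the pointwise decay in the definition of $\sU$. The only difference is presentational — the paper pairs against an arbitrary $g\in L^1$ with $\|g\|_1=1$ and cites Lemma~\ref{LemmaBasicL2BasicLpEstimate}, whereas you bound the two operator factors directly in $L^\infty$ and compose; the substance is identical.
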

\begin{proof}
For $g\in L^1$ with $\|g\|_1=1$  we have, using Lemma \ref{LemmaBasicL2BasicLpEstimate},
\begin{equation*}
\begin{split}
&\Big|\int g(x) \q(\Qb{j+k}{\zeta} W_j[b_1] 1  \w)(x)\: dx\Big| =\big| \La[\vsigjj](b_1, b_2^j,\ldots, b_n^j, 
f, {}^t\Qb{j+k}{\zeta} g)\big|
\\&\lesssim \|b_1\|_\infty \|f\|_\infty \|{}^t\Qb{j+k}{\zeta}g\|_1 \LpN{1}{\vsig_j}\lesssim \|{b_1}\|_\infty \sUN{\zeta} \LpN{1}{\vsig_j},
\end{split}
\end{equation*}
completing the proof.
\end{proof}

\begin{lemma}\label{LemmaBoundT1QbUj1Pj}
There is $c\in (0,1)$ (independent of $n$ and $\eps$)  so that for $\eps'\le c\eps^2$, 
and all $k\geq 0$, $j\in \Z$, $\zeta\in \sU$, $b_1\in L^\infty(\R^d)$, $f\in L^2(\bbR^d)$ we have
\begin{equation*}
\Big(\int\big | \Qb{j+k}{\zeta} W_j[\vsig_j, b_1]1(x) P_jf(x)\big|^2dx\Big)^{1/2}
\lc  \|f\|_2  \sUN{\zeta} \LpN{\infty}{b_1}
%\sup_j \LpN{1}{\vsig_j}  \min\{ 1, n2^{-k\eps'} \Ga_\eps\}
\min\{\LpN{1}{\vsig_j},\,  n2^{-k\eps'} \|\vsig_j\|_{\cB_\eps}\}.
\end{equation*}
%Here we are treating $\q\{ \Qb{j+k}{\zeta} W_j[b_1]1\w\} $ as the operator which is given by multiplication by the function $\Qb{j+k}{\zeta} W_j[b_1]1$.
\end{lemma}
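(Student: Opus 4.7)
The plan is to reduce the bound to an $L^{\infty}$ or local-$L^{2}$ estimate on $h_{j} := \Qb{j+k}{\zeta} W_j[\vsig_j, b_1]1$, and derive the two bounds from the earlier lemmata. The first bound $\|\vsig_j\|_{L^1}$ is immediate: Lemma~\ref{LemmaBoundT1LinfNOfOp} gives $\|h_j\|_\infty \lesssim \|\vsig_j\|_{L^1}\|b_1\|_\infty \|\zeta\|_{\sU}$, so $\|h_j \cdot P_j f\|_2 \le \|h_j\|_\infty \|P_j f\|_2 \le \|h_j\|_\infty \|f\|_2$.

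For the second, more delicate bound $n\, 2^{-k\eps'}\|\vsig_j\|_{\cB_\eps}$, I would first reduce to a localized $L^{2}$ estimate of $h_{j}$ at scale $2^{-j}$. Tile $\bbR^{d}$ by dyadic cubes $\{Q\}$ of sidelength $2^{-j}$; since $\phi_{2^{j}}$ is supported in a ball of radius $\lesssim 2^{-j}$, Cauchy--Schwarz gives the pointwise bound $|P_{j}f(x)|^{2} \lesssim |Q|^{-1}\|f\chi_{\widetilde Q}\|_{2}^{2}$ for $x \in Q$, where $\widetilde Q$ is a slight enlargement. Summing over $Q$ yields
\begin{equation*}
\|h_j P_j f\|_2^2 \;\lesssim\; \Big(\sup_{Q}\, \tfrac{1}{|Q|}\int_{Q}|h_j|^2\Big)\,\|f\|_2^2\,,
\end{equation*}
so it suffices to estimate the local $L^{2}$ average of $h_{j}$ on cubes of sidelength $2^{-j}$. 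By the translation invariance of the Christ--Journ\'e form (Lemma~\ref{scalinglemma}) together with the scaling that exchanges $\vsigjj$ with $\vsig_j$, matters reduce to estimating $\|h_{0}\|_{L^{2}(B(0,1))}$ in the case $j = 0$.

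Next, I would split $b_{1} = b_{1,\mathrm{far}} + b_{1,\mathrm{near}}$, where $b_{1,\mathrm{far}}$ is the restriction of $b_{1}$ to $\{|v|\ge R\}$ and $b_{1,\mathrm{near}}$ to $\{|v|\le R\}$, with threshold $R := 2^{k\delta}$ for a small $\delta$ to be chosen. Lemma~\ref{LemmaBoundT13On22} applies to the far piece and gives
\begin{equation*}
\|\Qb{k}{\zeta} W_{0}[\vsig, b_{1,\mathrm{far}}, b_{2}^{0},\dots]1\|_{L^{2}(B(0,1))} \;\lesssim\; R^{-\eps/4}\|\vsig\|_{\cB_\eps}\|b_1\|_\infty\|\zeta\|_{\sU}.
\end{equation*}
For the near piece I would approximate the constant $1$ by the cutoff $\chi_{R} := \chi_{B(0,CR)}$; the operator bound from Lemma~\ref{LemmaBoundT1QbUjL2Bound} applied to $\chi_{R}$ yields a global $L^{2}$ estimate, and using the kernel decay of $\Qb{k}{\zeta} W_{0}[\vsig, b_{1,\mathrm{near}}]$ away from the support restriction $|v|\le R$ of $b_{1,\mathrm{near}}$, this transfers to a local $L^{2}$ bound on $B(0,1)$. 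The error from replacing $1$ by $\chi_{R}$ is controlled by the tail $\|\vsig\|_{\cB_{\eps,4}}\, R^{-\eps}$. Optimizing $\delta$ balances the two error terms and produces the required $n\, 2^{-k\eps'}$ decay with an exponent $\eps' \gtrsim \eps^{2}$.

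The main obstacle is the near-field estimate. A crude application of Lemma~\ref{LemmaBoundT1QbUjL2Bound} to $\chi_{R}$ introduces an unfavorable factor $\|\chi_{R}\|_{2}\sim R^{d/2}$; the technical heart of the argument is to exploit the joint locality of $\Qb{k}{\zeta}$ at scale $2^{-k}$ and of $b_{1,\mathrm{near}}$ at scale $R$ to show that, when only the output on $B(0,1)$ is measured, this $R^{d/2}$ blow-up disappears and one recovers the clean $n\, 2^{-k\eps'}\|\vsig\|_{\cB_\eps}$ decay.
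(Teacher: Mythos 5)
Your overall plan follows the shape of the paper's argument: reduce by tiling to a local $L^{2}$ estimate $\|h_{0}\|_{L^{2}(B(0,1))}$ with $j=0$, then decompose at a $k$-dependent scale $R=2^{k\delta}$, invoke Lemma~\ref{LemmaBoundT1QbUjL2Bound} for the core, and balance exponents. But there is a genuine conceptual error in what you call the technical heart, and a redundant step.

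The stated obstacle -- that the factor $\|\chi_{R}\|_{2}\sim R^{d/2}$ must ``disappear'' via some unexploited ``joint locality'' -- is a false difficulty, and you leave it unresolved. Nothing needs to disappear: with $R=2^{k\delta}$, the crude bound from Lemma~\ref{LemmaBoundT1QbUjL2Bound} yields $R^{d/2}\cdot n\,2^{-kc\eps}\|\vsig\|_{\cB_\eps}=n\,2^{-k(c\eps-\delta d/2)}\|\vsig\|_{\cB_\eps}$, and choosing $\delta\leq c\eps/(2d)$, say, keeps the exponent positive. The paper does exactly this: it uses the global $L^{2}$ bound on the truncated piece and absorbs $2^{k\delta d/2}$ into $2^{-kc\eps}$, with $\delta=c\eps/(4d)$ giving $\eps'\sim\eps^{2}$. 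Your sentence about ``optimizing $\delta$'' is where this should have happened; instead you postulate an unspecified sharper local estimate that you never supply.

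The far/near split of $b_{1}$ is also unnecessary. The paper does not decompose $b_{1}$ at all; it splits only the constant in the $(n{+}1)$-slot, writing $1=\bbone_{\Omega_{k\delta}}+\bbone_{\Omega_{k\delta}^\complement}$ with $\Omega_{k\delta}=\{|x|\leq 5\cdot 2^{k\delta}\}$. For $\bbone_{\Omega_{k\delta}}$ one applies the $L^{2}$ bound and absorbs $R^{d/2}$ as above. For $\bbone_{\Omega_{k\delta}^\complement}$ one passes to $\tilde\vsig(\alpha,v)=\vsig(1-\alpha_1,\dots,1-\alpha_n,v)$ to swap the roles of the last two entries, placing the cutoff in the $x$-slot and the compactly supported test function in the $y$-slot; then a dyadic decomposition in $|x|$ and $1+|v|$ gives decay $\sim 2^{-k\delta\eps}$ from $\|\vsig\|_{\cB_{\eps,1}}$, $\|\vsig\|_{\cB_{\eps,4}}$ and the $\sU$ decay of $\zeta$, plus a $2^{-k/4}$ contribution. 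Your heuristic attributes the tail decay to the support of $b_{1,\mathrm{near}}$, but the mechanism is really the weight decay of $\vsig$ together with the support of $g$ after the $\tilde\vsig$ permutation; the estimate goes through for arbitrary $b_{1}\in L^{\infty}$, which is why Lemma~\ref{LemmaBoundT13On22} need not be invoked here.
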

\begin{proof}
We may normalize and assume $\|b_1\|_\infty=1$. We may assume, by  scale invariance of the result, that $j=0$ (see \eqref{changeofvarQW}).  The assertion follows then from  the inequality
\Be\label{normalizedcase}
\Big(\int\big | \Qb{k}{\zeta} W_0[\vsig, b_1]1(x) P_0f(x)\big|^2dx\Big)^{1/2}
\lc \|u\|_{\sU} \|b_1\|_\infty \|f\|_2  \min\{ \|\vsig\|_{L^1}, n2^{-k\eps'} \|\vsig\|_{\cB_\eps}
\}.
\Ee
Because the convolution kernel of $P_0$ is supported in $B^d(0,1)$, it suffices to show \eqref{normalizedcase} for functions 
 supported in a ball $B$ of radius $1$. We may assume 
 (by translating the functions $b_i$) that $B$ is centered at the origin.
 Let $B^*$ be the ball of double radius.

  Now $\|P_0 f\|_\infty \lc\|f\|_2$  for $f$ supported in $B$,
  and  therefore it suffices to show
 \begin{equation}\label{L2Bnorm}
\|\Qb{k}{\zeta} W_0[\vsig, b_1]1\|_{L^2(B^*)} \lesssim \sUN{\zeta}
% \LpN{\infty}{b_1} 
\min\{ n 2^{-k\eps'} \sBN{\eps}{\vsig}, \, \LpN{1}{\vsig_j} \}.
\end{equation}
To show \eqref{L2Bnorm} we split 
$1= \bbone_{\Omega_{k\delta}}+\bbone_{\Omega_{k\delta}^\complement}$ 
where  $\Omega_{k\delta}=\{x: |x|\le 5\cdot 2^{k\delta}\}$, with a choice of  $\delta\ll \eps$ to be determined.

It follows from Lemma \ref{LemmaBoundT1QbUjL2Bound}
(or directly from  Theorem \ref{ThmBasicL2Result}) that for some $c>0$ (independent of $n$)
\begin{align}\notag
\|\Qb{k}{\zeta} W_0[\vsig, b_1]\bbone_{\Omega_{k\delta}}\|_{L^2(B^*)}& \lc
\|\bbone_{\Omega_{k\delta}}\|_2
 \|u\|_{\sU} 
 %\|b_1\|_\infty
  \min\{ n2^{-kc\eps} \|\vsig\|_{\cB_\eps}, \|\vsig\|_{L^1}\}
\\
\label{Omkdeltaest} 
& \lc
 \|u\|_{\sU} 
 %\|b_1\|_\infty 
  \min\{ n2^{-k(c\eps-d\delta)} \|\vsig\|_{\cB_\eps}, \|\vsig\|_{L^1}\}
\end{align} and thus we want to choose $\delta\le c\eps(2d)^{-1}$.

Next we estimate the $L^2(B^*)$ norm of $\Qb{k}{\zeta} W_0[\vsig, b_1]\bbone_{\Omega_{k\delta}^\complement}$.
  Let $\tilde \vsig(\alpha,v)=\vsig(1-\alpha_1,\cdots, 1-\alpha_n, v)$ so that
$\sBN{\eps}{\tilde \vsig}\lesssim \sBN{\eps}{\vsig_j}$ and $\LpN{1}{\tilde \vsig}=\LpN{1}{\vsig_j}$.  We have, for $\|g\|_{L^2(B^*)}=1$, 
\begin{equation*}
\begin{split}
&\Big|\int g(x)\q(\Qb{k}{\zeta} W_0[\vsig, b_1] \bbone_{\Omega_{\delta k}^\complement}
\w)(x)\: dx\Big|
\\&= \big|\La[\vsig](b_1, b_2^0,\ldots, b_n^0, 
\bbone_{\Omega_{k\delta }^\complement},{}^t \Qb{k}{\zeta}g)\big|
=
\big|\La[\tilde \vsig](b_1, b_2^0,\ldots, b_n^0,{}^t \Qb{k}{\zeta}g,
\bbone_{\Omega_{k\delta }^\complement} )\big|
\\&=\q\Big|\iiiint \tilde \vsig(\alpha,v) b_1(x-\alpha_1 v) 
\big(\prod_{i=2}^n b_i^0(x-\alpha_i v)\big) 
%b_2^0(x-\alpha_2 v)\cdots b_n^0(x-\alpha_n v) 
\bbone_{\Omega_{\delta k}^\complement} (x)\dil{\zeta}{2^k}(y-x+v) g(y)\: dx\: dy\: dv\: d\alpha\Big|
\end{split}
\end{equation*} and this is estimated by
\begin{equation*}\begin{split}
& \iiiint\limits_{\substack{|x|>5\cdot 2^{\delta k}}} |\tilde\vsig(\alpha,v)| \q|\dil{\zeta}{2^k}(y-x+v) g(y)\w|\: dx\: dv\:d\alpha\: dy
\\&\leq \sum_{2^{l_1}\geq 2\cdot 2^{k\delta}} \sum_{l_2=0}^\infty \iiiint\limits_{\substack{2^{l_1}\leq |x|\leq 2^{l_1+1} \\ 2^{l_2}\leq 1+|v|\leq 2^{l_2+1}}} |\tilde\vsig(\alpha,v)| \q| \dil{\zeta}{2^k}(y-x+v) g(y)\w|\: dx\: dv\: d\alpha\: dy
\\&=  \sum_{2^{l_1}\geq 2\cdot 2^{k\delta}} \sum_{l_2=(l_1-3)\vee 0}^{\infty} + \sum_{2^{l_1}\geq 2\cdot 2^{k\delta}} \sum_{l_2= 0}^{l_1-3}=:(I)+(II).
\end{split}
\end{equation*}
%For $(I)$, we have
We estimate $(I)\lc$
\begin{equation*}
\begin{split}
%(I)&\leq 
&\sum_{2^{l_1}\geq 2\cdot 2^{k\delta}} \sum_{l_2=(l_1-3)\vee 0}^{\infty} 2^{-\eps l_2} \iiiint\limits_{\substack{2^{l_1}\leq |x|\leq 2^{l_1+1} \\ 2^{l_2}\leq 1+|v|\leq 2^{l_2+1}}}(1+|v|)^\eps |\tilde \vsig(\alpha,v)| \q| \dil{\zeta}{2^k}(y-x+v) g(y)\w|\: dx\: dv\: d\alpha\: dy
\\&\lesssim  \sum_{2^{l_1}\geq 2\cdot 2^{k\delta}} \sum_{l_2=(l_1-3)\vee 0}^{\infty} 2^{-\eps l_2} \sBN{\eps}{\tilde \vsig} \sUN{\zeta} \|g\|_1
\lesssim  \sBN{\eps}{\vsig} \sUN{\zeta} \|{g}\|_1 2^{-k\delta \eps} \lesssim  \sBN{\eps}{\vsig} \sUN{\zeta} 2^{-k\delta \eps},
\end{split}
\end{equation*}
where the last inequality uses the support of $g$ to see 
$\|g\|_1\lc \|g\|_2=1$.
%$\LpN{1}{g}\lesssim \LpN{2}{g}=1$.

For $(II)$, we use the fact that $l_2\leq l_1-3$ to see that on the support of the integral, since $|y|\leq 1$ (due to the support of $g$), we have $|y-x+v|\approx 2^{l_1}$.
Thus, we have
\begin{equation*}
\begin{split}
&(II)\lesssim \sum_{2^{l_1}\geq 2\cdot 2^{k\delta}} \sum_{l_2= 0}^{l_1-3} \sUN{\zeta} \iiiint\limits_{\substack{2^{l_1}\leq |x|\leq 2^{l_1+1} \\ 2^{l_2}\leq 1+|v|\leq 2^{l_2+1}}}
|\tilde\vsig(\alpha,v)| \frac{2^{kd}}{(1+2^k |x-v-y|\w)^{d+\frac{1}{2}}} |g(y)|\: dx\: dv\: dy\: d\alpha 
\\&\lesssim \sum_{2^{l_1}\geq 2\cdot 2^{k\delta}} \sum_{l_2= 0}^{l_1-3}2^{(-k-l_1)/4} \sUN{\zeta} \iiiint\limits_{\substack{2^{l_1}\leq |x|\leq 2^{l_1+1} \\ 2^{l_2}\leq 1+|v|\leq 2^{l_2+1}}}
|\tilde\vsig(\alpha,v)| \frac{2^{kd}}{(1+2^k |x-v-y|\w)^{d+\frac{1}{4}}} |g(y)|\: dx\: dv\: dy\: d\alpha
\\&\lesssim \sum_{2^{l_1}\geq 2\cdot 2^{k\delta}} \sum_{l_2= 0}^{l_1-3}2^{(-k-l_1)/4} \sUN{\zeta} \LpN{1}{\tilde \vsig} \|g\|_1
\lesssim \sUN{\zeta} \LpN{1}{\vsig} \|g\|_1 2^{-k/4} \lesssim 2^{-k/4} \sUN{\zeta} \LpN{1}{\vsig}.
\end{split}
\end{equation*}
%where again we equality uses the support of $f$ to see $\LpN{1}{f}\lesssim\LpN{2}{f}=1$.

Finally, we have, 
by Lemma \ref{LemmaBoundT1LinfNOfOp} applied to $f= \bbone_{\Omega_{k\delta}^\complement}$, 
%by Lemma \ref{LemmaBasicL2BasicLpEstimate},
\begin{equation*}
%\begin{split}&
\Big|\int g(x)\q(\Qb{k}{\zeta} W_0[b_1] \bbone_{\Omega_{k\delta}^\complement}
(x)\: dx\Big| 
%= \q|\La[\vsig](b_1, b_2^0,\ldots, b_n^0, \Qb{k}{\zeta}^t g, \bbone_{\Omega_{k\delta}^\complement})\w|
%\\&\lesssim \LpN{1}{\vsig} \|\bbone_{\Omega_{k\delta}^\complement} \|_\infty
%\|\Qb{k}{\zeta}^t g\|_1
%\lesssim \LpN{1}{\vsig} \sUN{\zeta} \|g\|_1
\lesssim \LpN{1}{\vsig}\sUN{\zeta},
%\end{split}
\end{equation*}
where the last inequality uses the support of $g$ again to see 
$\|g\|_1\lc\|g\|_2=1$.
If we take $\delta= c\eps/(4d)$ then a combination of the estimates  for (I) and (II), and \eqref{Omkdeltaest} , yields 
\eqref{L2Bnorm}
for $\eps'\le c\eps^2/(4d)$. This completes the proof.
\end{proof}

In what follows we find it convenient to occasionally use the notation
\Be \label{multiplicate} \mathrm{Mult}\{g\}f = fg\Ee for the operator of pointwise multiplication with $g$.

\begin{lemma}\label{LemmaBoundT1BoundRkj}
Let $0<\eps\le 1/2$. Then there is $c>0$ (independent of $n,\eps$) such that 
for $\eps'\le c\eps^2$, 
for all $k\geq 0$, $j,l\in \Z$, $\vsig_j\in \cB_\eps$, $\zeta\in \sU$, $b_1\in L^\infty(\R^d)$,
\begin{multline*}
%\begin{split}&
\big\|\Qb{j+k}{\zeta} W_j[\vsig_j, b_1] P_j\Qt_{j+l} - \mathrm{Mult}\{ \Qb{j+k}{\zeta} W_j[\vsig_j,b_1] 1 \} P_j\w\Qt_{j+l} \big\|_{L^2\to L^2}
\\\lesssim \begin{cases}
\sUN{\zeta} \|{b_1}\|_\infty
 \min\{ n\|\vsig_j\|_{\cB_\eps} 2^{-k\eps'},\,\, 2^{-l}\|\vsig\|_{L^1} \} &\text{ if } l\ge 0,
\\
\sUN{\zeta}  \|{b_1}\|_\infty \min\{ n\|\vsig_j\|_{\cB_\eps} 2^{l\eps/4} 2^{-k\eps'},\,\, 
 \|\vsig\|_{L^1} \}  &\text{ if } l\le 0.
 \end{cases}
%\sUN{\zeta} \LpN{\infty}{b_1} \max\{ q[n^M \sBN{\eps}{\vsig_j} 2^{-k\eps' - ( (j-l)\vee 0)\eps'} \wedge  \LpN{1}{\vsig_j} 2^{-((l-j)\vee 0)} \w]
%\end{split}
\end{multline*}
\end{lemma}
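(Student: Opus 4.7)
\medskip

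\noindent\textbf{Proof plan.} Write $A := \Qb{j+k}{\zeta} W_j[\vsig_j, b_1] P_j \Qt_{j+l}$ and $B := \mathrm{Mult}\{\Qb{j+k}{\zeta} W_j[\vsig_j, b_1] 1\} P_j \Qt_{j+l}$. The assertion is a ``min'' of two terms in each case, so I would prove the two bounds separately and apply the triangle inequality to $A - B$. I would first dispose of the ``easy'' entries by bounding $A$ and $B$ individually. The uniform bound $n\|\vsig_j\|_{\cB_\eps}2^{-k\eps'}$ (valid for all $l$) follows from Lemma \ref{LemmaBoundT1QbUjL2Bound} applied to $\Qb{j+k}{\zeta} W_j[\vsig_j,b_1]$, combined with $\|P_j \Qt_{j+l}\|_{L^2\to L^2}\lesssim 1$, for the $A$-term; while for the $B$-term I would apply Lemma \ref{LemmaBoundT1QbUj1Pj} with $f$ replaced by $\Qt_{j+l} f$. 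The bound $\|\vsig\|_{L^1}$ for $l\le 0$ follows similarly from the crude $L^2$-bound in Lemma \ref{LemmaBasicL2BasicLpEstimate} applied to $W_j[\vsig_j,b_1]$ and to $\mathrm{Mult}\{\Qb{j+k}{\zeta}W_j[\vsig_j,b_1]1\}$ (the latter has $L^\infty$-norm $\lesssim \|\zeta\|_\sU\|\vsig_j\|_{L^1}\|b_1\|_\infty$).

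For the bound $2^{-l}\|\vsig\|_{L^1}$ with $l\ge 0$, I would use an almost-orthogonality estimate: for $l\ge 1$, $P_j\Qt_{j+l}$ has convolution kernel $\psi_l^{(2^j)}$ with $\psi_l = \widetilde\eta^{(2^l)}*\phi$, and by the Fourier support considerations ($\hat{\widetilde\eta}$ vanishes near zero, $\hat\phi$ is localized there), one gets $\|P_j\Qt_{j+l}\|_{L^2\to L^2}\lesssim 2^{-lN}$ for any $N$. Composing with the trivial $L^2$-bounds on $W_j[\vsig_j,b_1]$ and on $\Qb{j+k}{\zeta}$ or $\mathrm{Mult}\{\Qb{j+k}{\zeta}W_j1\}$ yields the decay.

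The substantive case is the bound $n\|\vsig_j\|_{\cB_\eps}2^{l\eps/4}2^{-k\eps'}$ for $l\le 0$, where cancellation between $A$ and $B$ must be exploited. Writing $K(x,y)$ for the kernel of $\Qb{j+k}{\zeta}W_j[\vsig_j,b_1]$ and noting $\int K(x,y)\,dy = \Qb{j+k}{\zeta}W_j[\vsig_j,b_1]1(x)$, we get the identity
\[
(A-B)f(x)=\int K(x,y)\bigl[h(y)-h(x)\bigr]\,dy,\qquad h=\Qb{j}{\psi_l}f,
\]
so the kernel of $A-B$ is $\tilde K(x,z)=\int K(x,y)\bigl[\psi_l^{(2^j)}(y-z)-\psi_l^{(2^j)}(x-z)\bigr]\,dy$. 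For $l\le 0$, the frequency support of $\psi_l$ lies at scale $\sim 2^l$, whence $\|\nabla\psi_l\|_1\lesssim 2^l$ and $\|\psi_l^{(2^j)}(\cdot+h)-\psi_l^{(2^j)}\|_{L^1}\lesssim \min(1, 2^{j+l}|h|)$. Combining this with the weighted bound $\int |K(x,y)|(2^j|y-x|)^\eps\,dy\lesssim \|\vsig_j\|_{\cB_\eps}\|b_1\|_\infty\|\zeta\|_\sU$ derived from the $\cB_\eps$-norm (and symmetrically in the $x$-variable), and using $\min(1,t)\le t^\eps$, Schur's test on $\tilde K$ yields
\[
\|A-B\|_{L^2\to L^2}\lesssim 2^{l\eps}\|\vsig_j\|_{\cB_\eps}\|b_1\|_\infty\|\zeta\|_\sU.
\]

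The main obstacle is that this cancellation-based Schur estimate lacks the $n 2^{-k\eps'}$ decay, while the bound from the first paragraph lacks the $2^{l\eps}$ factor. The plan is to interpolate bilinearly between the two bounds with exponent $\theta\in(0,1)$: the geometric mean gives an upper bound of the form $(n 2^{-k\eps'})^{1-\theta}\cdot 2^{l\eps\theta}$ times $\|\vsig_j\|_{\cB_\eps}\|b_1\|_\infty\|\zeta\|_\sU$. Choosing $\theta$ close to $1$ (say $\theta = \tfrac{3}{4}$, which makes $l\eps\theta \ge l\eps/4$ since $l\le 0$ flips the inequality, so actually $\theta$ near $1/4$ suits) and relabelling $\eps'\leftarrow c\eps^2$ for some new constant $c$ absorbs the loss in $k$-exponent. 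Concretely, a simpler way is to prove one Schur bound that directly encodes both factors — namely re-deriving the weighted $\cB_\eps$ Schur estimate above but inserting the $\Qb{j+k}{\zeta}$ operator via Theorem \ref{ThmBasicL2Result} in the form used in Lemma \ref{LemmaBoundT1QbUjL2Bound}, so that the $n 2^{-k\eps'}$ factor is built in ab initio. This makes the final bound direct without an explicit interpolation step.
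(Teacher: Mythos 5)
The overall strategy you propose — dispose of the easy bounds by treating $A$ and $B$ separately with Lemmas \ref{LemmaBoundT1QbUjL2Bound}, \ref{LemmaBoundT1LinfNOfOp}, \ref{LemmaBoundT1QbUj1Pj}, then for $l\le 0$ exploit the cancellation $\int K(x,y)\,dy = \Qb{j+k}{\zeta}W_j[\vsig_j,b_1]1(x)$ via a Schur test on the kernel of $A-B$, and finally combine by a geometric mean — is essentially the route the paper takes. The only structural difference is which mollifier you group where: you keep $P_j$ with $\Qt_{j+l}$ (so the ``rough'' part is $\Qb{j+k}{\zeta}W_j[\vsig_j,b_1]$ alone), while the paper groups $P_0$ with $\Qb{k}{\zeta}W_0[\vsig,b_1]$ and treats $\Qt_l$ as the smooth factor, packaging the Schur argument into the general Lemma \ref{LemmaBoundT1RonS}. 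Both splittings yield the same cancellation structure.

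There is, however, a real computational error in your hard-case estimate. You assert
\[
\|A-B\|_{L^2\to L^2}\lesssim 2^{l\eps}\,\|\vsig_j\|_{\cB_\eps}\,\|b_1\|_\infty\,\|\zeta\|_\sU,
\]
justifying it by a Schur test in which \emph{both} Schur integrals of $\tilde K(x,z)=\int K(x,y)[\psi_l^{(2^j)}(y-z)-\psi_l^{(2^j)}(x-z)]\,dy$ would carry the factor $2^{l\eps}$. But the ``symmetry in the $x$-variable'' you invoke holds for $K$, not for $\tilde K$. Indeed, $\sup_x\int|\tilde K(x,z)|\,dz\lesssim \int|K(x,y)|\min(1,2^{j+l}|x-y|)\,dy\lesssim 2^{l\eps}\|\vsig_j\|_{\cB_\eps}$ works exactly as you say, since for fixed $x$ one integrates out $z$ against the $L^1$-modulus of continuity of $\psi_l^{(2^j)}$. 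But in the other direction, $\sup_z\int|\tilde K(x,z)|\,dx$ involves a double integral in $x$ and $y$ with $z$ fixed, and the naive triangle inequality $|\psi_l^{(2^j)}(y-z)-\psi_l^{(2^j)}(x-z)|\le|\psi_l^{(2^j)}(y-z)|+|\psi_l^{(2^j)}(x-z)|$ followed by Fubini only gives $\lesssim\|\vsig_j\|_{L^1}$, with no $2^{l\eps}$ gain; the cancellation structure pairs the diagonal decay of $K$ with the $L^1$-modulus of $\psi_l^{(2^j)}$ only when you integrate out one variable against the modulus, which is possible in just one of the two Schur directions. Hence Schur's test gives $\sqrt{2^{l\eps}\|\vsig_j\|_{\cB_\eps}\cdot\|\vsig_j\|_{L^1}}\lesssim 2^{l\eps/2}\|\vsig_j\|_{\cB_\eps}$, which is exactly what the paper obtains via Lemma \ref{LemmaBoundT1RonS}. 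This error is non-fatal: after taking the geometric mean with the $n\|\vsig_j\|_{\cB_\eps}2^{-k\eps'}$ bound and relabelling the constant in $\eps'\le c\eps^2$, your final conclusion is unchanged. You should, though, delete your closing suggestion that one can ``build in the $n2^{-k\eps'}$ factor ab initio'' and avoid interpolation; the $n2^{-k\eps'}$ gain in Theorem \ref{ThmBasicL2Result} is an $L^2$-level statement that does not transfer to a kernel-level Schur bound, so the geometric-mean step is genuinely needed.
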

\begin{proof}
We may assume $\sUN{\vsig}=1$ and $\LpN{\infty}{b_1}=1$.
We have \Be\label{PjQj+l}
\LpOpN{2}{P_j \Qt_{j+l}}\lesssim \min\{ 2^{-l}, 1\}\,.\Ee
Now, by Lemma \ref{LemmaBoundT1QbUjL2Bound},
\Be\label{lemma105cite}
\LpOpN{2}{\Qb{j+k}{\zeta} W_j[b_1]}\lesssim \LpN{1}{\vsig_j}
\Ee 
and, by Lemma \ref{LemmaBoundT1LinfNOfOp} and \eqref{PjQj+l},
\Be \label{lemma1010cite}
\|\mathrm{Mult}\{\Qb{j+k}{\zeta} W_j[\vsig_j,b_1]1 \} P_j \Qt_{j+l}\|_{L^2\to L^2}
\lc \min\{1, 2^{-l}\} \|\vsig\|_{L^1};
\Ee
moreover, by Lemma \ref{LemmaBoundT1QbUj1Pj},
\Be\label{lemma1011cite}
\|\mathrm{Mult}\{\Qb{j+k}{\zeta} W_j[\vsig_j,b_1] 1\} P_j \Qt_{j+l}\|_{L^2\to L^2}
\lc n 2^{-k\eps'} \|\vsig\|_{\cB_\eps}.
\Ee
A combination of
 \eqref{lemma105cite},  \eqref{lemma1010cite}, and  \eqref{lemma1010cite} immediately gives the assertion for $l\ge 0$, and also the second estimate for $l<0$. It remains to show that
 \begin{multline}\label{maincasel<0}
 \LpOpN{2}{\q(\Qb{j+k}{\zeta} W_j[\vsig_j, b_1] P_j\Qt_{j+l} - \mathrm{Mult}\{ \Qb{j+k}{\zeta} W_j[\vsig_j,b_1] 1 \} P_j\w\Qt_{j+l} }
\\ \lesssim 
%\sUN{\zeta} \LpN{\infty}{b_1} 
n\|\vsig_j\|_{\cB_\eps} \max \{2^{l\eps/2}, 2^{l/4}\}  \,\text{ if } l\le 0;
 \end{multline}
 indeed the assertion follows by taking a geometric mean of the bounds in
 \eqref{lemma1011cite} and \eqref{maincasel<0}.
 
 By scale invariance (see \eqref{changeofvarQW}) it suffices to show \eqref{maincasel<0} for $j=0$, i.e. 
  \Be\label{maincasel<0j=0}
  \big\|
  (R_1-R_2)\Qt_l \big\|_{L^2\to L^2} \lesssim 
%\sUN{\zeta} \LpN{\infty}{b_1} 
n\|\vsig_j\|_{\cB_\eps} \max \{2^{l\eps/2}, 2^{l/4}\}  \,\text{ if } l\le 0;
\Ee
for  $R_1=\Qb{k}{\zeta} W_0[\vsig, b_1] P_0$ and $R_2=\mathrm{Mult}\{ \Qb{k}{\zeta} W_0[\vsig,b_1] 1 \} P_0$.
Let $\rho_1$, $\rho_2$, $\rho$ be the  Schwartz kernels of $R_1$, $R_2$, $R_1-R_2$, and let $\sigma_{-l}$ be the Schwartz kernel of $\Qt_l$. We wish to apply Lemma \ref{LemmaBoundT1RonS} (note the notation  $l=-\ell$ in that lemma).
It is immediate that $\sigma_\ell$ satisfies assumptions  \eqref{EqnBoundSell}
and \eqref{EqnBoundnablaxSell} with $B_1, B_\infty, \widetilde B_\infty\lc 1$.
The function $\rho$ satisfies the crucial cancellation condition \eqref{rhocancel} since \begin{equation*}
\q(\Qb{k}{\zeta} W_0[\vsig,b_1] P_0 - \mathrm{Mult}\{ \Qb{k}{\zeta} W_0[\vsig,b_1] 1 \w\} P_0\w)1 =0\,.
\end{equation*}
It remains to check the size conditions \eqref{EqnBoundR}.
We have
$$|\rho_1(x,y)| \leq \iiint \q| \dil{\zeta}{2^k}(x-x') \w| |\vsig(\alpha, x'-y')| |\phi(y'-y)|\: dx' \:d\alpha\: dy'$$
and thus clearly
$$\sup_y \int |\rho_1(x,y)| dx \le \|\zeta\|_{1} \|\vsig\|_{L^1} \|\phi\|_1 \lc 1$$
since $\|\zeta\|_1\le \|\zeta\|_{\sU}$.
Also for some $M>d+1$,
%for $\eps< 1/2$,
\begin{align*}
&\int|\rho_1(x,y)|  (1+|x-y|)^\eps  dy
\\&\lc\int (1+|x-y|)^\eps \iiint \Big|\frac{2^{kd}} {(1+2^k |x-x'|)^{d+\frac 12}} 
\frac{|\vsig (\alpha, x'-y')|} {(1+|y'-y|)^M} 
 \:dx'\:d\alpha \:dy' \: dy
\\& \lc
\iiint |\vsig(\alpha,x'-y')|(1+|x'-y'|)^\eps
\om(x,x',y') \,d\alpha\:dy' \:dx'\,,
\end{align*} where 
$$\om(x,x',y') =\frac{2^{kd}} {(1+2^k |x-x'|)^{d+\frac 12}} 
\int \frac{1}{(1+|y'-y|)^M} 
\frac{(1+|x-y|)^\eps}{(1+|x'-y'|)^\eps} \:dy\,.$$
We have $$\sup_{x'}\int |\vsig(\alpha,x'-y')|(1+|x'-y'|)^\eps\: d\alpha \:dy'
\le \|\vsig\|_{\cB_\eps}$$
and thus it suffices to show that
\Be\label{omegaxybd} \sup_{x,y} \int \omega(x,x',y) dx' \lc 1.\Ee
Now by the triangle inequality 
$(1+|x-y|)^\eps\le (1+|x-x'|)^\eps(1+|x'-y'|)^\eps(1+|y'-y|)^\eps$
and hence 
\begin{align*}\int \omega(x,x',y) dx' &\le \int \frac{2^{kd}(1+|x-x'|)^\eps}
 {(1+2^k |x-x'|)^{d+\frac 12}} \int 
\frac{1}{(1+|y'-y|)^{M-\eps}}   dy \: dx' 
\\&\lc
\int \frac{2^{kd}(1+|x-x'|)^\eps}
 {(1+2^k |x-x'|)^{d+\frac 12}} \:dx'
 \end{align*}
 and \eqref{omegaxybd} follows easily,  provided that $\eps<1/2$.
 Thus condition \eqref{EqnBoundR} is satisfied for  $\rho_1$. 
 By Lemma \ref{LemmaBoundT1LinfNOfOp}
 it is immediate that condition \eqref{EqnBoundR} is satisfied for  $\rho_2$ as well. Thus we have verified the assumptions of Lemma
 \ref{LemmaBoundT1RonS}  and 
 \eqref{maincasel<0j=0} follows. This completes the proof of the lemma.
 \end{proof} 
 
 \begin{proof}[Proof of Proposition \ref{PropT1Canc}, conclusion]
 We may assume $\sUN{\zeta}=1$, $\|f\|_2=1$, and $\sup_j \|b_1^j\|_\infty=1$.
For $k\geq 0$, define $$R_{k,j}:=\Qb{j+k}{\zeta}W_j[\vsig_j,b_1^j] P_j- \mathrm{Mult}\{ \Qb{j+k}{\zeta} W_j[\vsig_j,b_1^j]1 \} P_j.$$ 
The proof is complete if we can show, for $k\ge 0$, 
\Be \label{sqfctforRkj}
\Big(\sum_j\|R_{j,k} f\|_2^2\Big)^{1/2} 
\lc \sup_j\|\vsig_j\|_{L^1}
 \min \big\{ 2^{-\eps_1 k}n\Gamma_\eps, \,\log (1+n\Ga_\eps)\big\}.
 \Ee
 Lemma \ref{LemmaBoundT1BoundRkj}
implies
\begin{equation*}
\|R_{k,j} \Qt_{j+l}\|_{L^2\to L^2}\lesssim 
\begin{cases}\sup_{j}\|\vsig\|_{L^1} \min \{ n\Ga_\eps 2^{-k\eps'}, 2^{-l}\}, &\text{ if }l\ge 0,
\\
\sup_{j}\|\vsig\|_{L^1} \min \{ n\Ga_\eps  2^{l\eps/4}2^{-k\eps'},\, 1\} &\text{ if }l<0.
\end{cases}
\end{equation*}
Now
\begin{align*}
&\Big(\sum_j\|R_{k,j} f\|_2^2\Big)^{1/2} = \Big(\sum_{j}\Big\|
 R_{k,j} \sum_{l\in \Z} \Qt_{j+l} \Qtt_{j+l} f\Big\|_2^2\Big)^{\frac{1}{2}} 
 \\
 &\lc\sum_{l\in \bbZ} 
  \Big(\sum_{j}\Big\|
 R_{k,j} \Qt_{j+l} \Qtt_{j+l} f\Big\|_2^2\Big)^{\frac{1}{2}} 
 \lc \sum_{l\in \bbZ} \sup_{j'} \|R_{k,j'} \Qt_{j'+l}\|_{L^2\to L^2}
 \Big(\sum_j \|\Qtt_{j+l} f\|_2^2\Big)^{1/2}
 \\
 &\lc \sup_j\|\vsig_j\|_{L^1} \Big[
 \sum_{l\ge 0} \min \{n \Ga_\eps 2^{-k\eps'}, 2^{-l}\} + \sum_{l<0} \min 
 \{n\Ga_\eps 2^{l\eps/4} 2^{-k\eps'}, 1\}\Big]
 \\
 &\lc \sup_j\|\vsig_j\|_{L^1}
 \min \big\{ 2^{-\eps_1 k}n\Gamma_\eps, \,\log (1+n\Ga_\eps)\big\}
 \end{align*}
for some sufficiently small  $\eps_1>0$,  and the proof is complete.
 \end{proof}

\subsection{Proof that Theorem \ref{ThmBoundT1MainRes} implies 
Part %\ref{ItemBoundTwoPts} 
\ref{ItemBoundT1} 
of Theorem \ref{main-parts}} \label{implyBoundTwoPts}
Let $1<p\le 2$. The asserted result follows from 
%\begin{multline} \label{Lal-n+1-Lp}\big|\sum_{j\in \Z} \La[\vsigjj](b_1^j,\ldots, b_{l-1}^j, Q_{j+k} b_{l}^j, P_j b_{l+1}^j,\ldots, P_j b_{n}^j, \q(I-P_j\w) b_{n+1}, P_j b_{n+2}) \\\lc \sup_j\|\vsig_j\|_{L^1} \min \{2^{-\eps_1 k} n \Ga_\eps^2 , \log^{3/2}(1+n\Ga_\eps)\} \sup_j \prod_{\substack{i=1,\dots, n\\i\neq l}}\|b_i^j\|_\infty \|b_l\|_\infty \|b_{n+1} \|_p \|b_{n+2} \|_{p'} \|\end{multline}
%and 
%\begin{multline} \label{Lal-n+2-Lp}\big|\sum_{j\in \Z} \La[\vsigjj](b_1^j,\ldots, b_{l-1}^j, Q_{j+k} b_{l}^j, P_j b_{l+1}^j,\ldots, P_j b_{n}^j, P_j b_{n+1}, (I-P_j) b_{n+2}) \\\lc \sup_j\|\vsig_j\|_{L^1} \min \{2^{-\eps_1 k} n \Ga_\eps^2 , \log^{3/2}(1+n\Ga_\eps)\} \sup_j \prod_{\substack{i=1,\dots, n\\i\neq l}}\|b_i^j\|_\infty \|b_l\|_\infty \|b_{n+1} \|_{p'} \|b_{n+2} \|_{p} \|\end{multline}
\begin{multline} \label{Lal-n+1-Lp}
\Big|\sum_{j\in \Z} \La[\vsigjj](b_1^j,\ldots, b_{l-1}^j, (I-P_j) b_{l}, b_{l+1}^j,\ldots, b_{n}^j, \q(I-P_j\w) b_{n+1}, P_j b_{n+2}) \Big| \\
\lc \sup_j\|\vsig_j\|_{L^1} 
\log^{5/2} ( 1+n\Ga_\eps)
\big(\prod_{\substack{i=1,\dots, n\\i\neq l}}\sup_j 
\|b_i^j\|_\infty \big)
\|b_l\|_\infty \|b_{n+1} \|_p \|b_{n+2} \|_{p'} \end{multline}
and 
\begin{multline} \label{Lal-n+2-Lp}
\Big|\sum_{j\in \Z} \La[\vsigjj](b_1^j,\ldots, b_{l-1}^j, (I-P_j) b_{l},  b_{l+1}^j,\ldots,  b_{n}^j, P_j b_{n+1}, (I-P_j) b_{n+2}) \Big|\\
\lc \sup_j\|\vsig_j\|_{L^1} \log^{5/2} (1+n\Ga_\eps)
\big(\prod_{\substack{i=1,\dots, n\\i\neq l}} \sup_j \|b_i^j\|_\infty \big)
\|b_l\|_\infty \|b_{n+1} \|_{p} \|b_{n+2} \|_{p'}. \end{multline}
Once \eqref{Lal-n+1-Lp} and 
\eqref{Lal-n+2-Lp} are established we use them 
for the choices
$b_i^j=b_i,$ if $i<l$, 
$b_i^j=P_j b_i,$ if $l<i\le n$. Now it is crucial
that $\|P_j\|_{L^\infty\to L^\infty}\le 1$ 
(here $\phi_j\ge 0$, and $\int\phi_j=1$ are used). Hence the two inequalities for 
$\La^1_{l,n+1}$ and $\La^1_{l,n+2}$ claimed in Theorem \ref{main-parts} are an immediate consequence of \eqref{Lal-n+1-Lp} and 
\eqref{Lal-n+2-Lp}.

In order to  establish 
 \eqref{Lal-n+1-Lp} and 
\eqref{Lal-n+2-Lp} we may assume without loss of generality that $l=1$.
This is because we can permute the first $n$ entries of the multilinear form
and replace $\vsig_j$ by $\ell_\vp\vsig_j$ as in \eqref{firstnvar}.
% which leaves the $L^1$ and $\cB_\eps$ norms of $\vsig_j$ invariant.
We may also assume that  $$\|b_1\|_\infty \le 1, \qquad \|b_i^j\|_\infty=1, \,\,2\le i\le n.$$

Now, in what follows let 
$$\tilde \vsig_j(\alpha,v) = \vsig(1-\alpha_1, \dots, 1-\alpha_n,v)$$ 
(as in \eqref{transpostriv}). 
To prove 
\eqref{Lal-n+1-Lp} for $l=1$  we observe
\begin{align*}
\sum_{j} \La[\vsigjj]((I-P_j)b_1,b_2^j,\dots,  b_{n}^j, \q(I-P_j\w) b_{n+1}, P_j b_{n+2}) = \int b_{n+2}(x)  \,{}^t\cT b_{n+1} (x)dx
\end{align*} where
$${}^t\cT = 
\sum_j P_j {}^tW_j[\vsig_j,(I-P_j)b_1](I-P_j) 
=\sum_j P_j W_j[\tilde \vsig_j,(I-P_j)b_1] 
(I-P_j). $$

Now we expand $I-P_j=\sum_{k>0}Q_{j+k}$ and we get 
${}^t\cT= \sum_{k>0} {}^t\cT^k$  where 
$${}^t\cT^k
=\sum_j S_j Q_{j+k}, \text{ with }  S_j= P_j W_j[\tilde \vsig_j,(I-P_j)b_1] .$$
The Schwartz  kernel of $S_j$ is equal to $\Dil_{2^j}s_j$ where
$$s_j(x,y)= \int\phi(x-x') \sigma_j(x',y) dy$$ with
\Be \label{schwartzofsigmaj}
\sigma_j(x,y)= \int\vsig_j(\alpha, x-y)
(I-P_0)b_1(2^{-j}(x-\alpha_i(x-y))
\prod_{i=2}^n b_i(2^{-j}(x-\alpha_i(x-y))\, d\alpha .
\Ee
%is the Schwartz kernel of $P_0 W_0[\tilde \vsig_j, (I-P_0)b_1]$.
We wish to apply
Corollary \ref{CorWeakTypeWithQ}.
It is easy to check that
$$\Sha^1[s_j] \lc \sup_j\|\vsig_j\|_{L^1}=:A, \qquad \Sha_\eps^1[s_j] \lc \sup_j\|\vsig_j\|_{\cB_\eps}=:B.$$ 
Now $\|\sum_j S_jQ_{j+k}\|_{L^2\to L^2}
= \|\cT^k\|_{L^2\to L^2} $
and by Theorem  \ref{ThmBoundT1MainRes}
\begin{align*}
 &\Big\|\sum_j S_jQ_{j+k}\Big\|_{L^2\to L^2} \lc 
\sup_j\|\vsig_j\|_{L^1}   \log^{3/2}(1+n\Ga_\eps) := D_{0}\,,
\\
&2^{\eps_1 k} \|\sum_j S_jQ_{j+k}\|_{L^2\to L^2} \lc 
\sup_j\|\vsig_j\|_{L^1}  n \Ga_\eps^2
:= D_{\eps_1}\,.
\end{align*}
Now   we easily obtain from Corollary \ref{CorWeakTypeWithQ}  that 
$$ \Big\|\sum_{k>0}\sum_j S_jQ_{j+k}\Big\|_{L^p\to L^p} \lc C_p 
\sup_j\|\vsig_j\|_{L^1}   \log^{5/2}(1+n\Ga_\eps) $$
and 
\eqref{Lal-n+1-Lp} is proved.

Finally we turn to \eqref{Lal-n+2-Lp}, for $l=1$. The case $p=2$ 
 follows immediately from 
\eqref{Lal-n+1-Lp}, by duality replacing $\vsig_j$ with $\tilde \vsig_j$.
For $p< 2$ we observe that 
\[
\sum_{j\in \Z} \La[\vsigjj]((I-P_j)b_1, b_2^j ,\ldots,  b_{n}^j, P_j b_{n+1}, (I-P_j) b_{n+2}) =
\int b_{n+2}(x) \, S_jP_j b_{n+1}(x) 
dx
\] with $S_j =(I-P_j) W_j[\vsig_j, b_1]$.
The Schwartz kernel of $S_j$ is equal to $\Dil_{2^j} s_j$ where 
$$s_j(x,y)= \sigma_j(x,y)-\int \phi(x-x')\sigma_j(x',y) dy $$
with $\sigma_j$ as in \eqref{schwartzofsigmaj}. Then 
 $s_j$  satisfies $\Sha^1[s_j]\lc \|\vsig_j\|_{L^1}$ and
 $\Sha_\eps^1[s_j]\lc \|\vsig_j\|_{\cB_\eps}$ 
and  \eqref{Lal-n+2-Lp} for $p<2$ follows immediately from the case $p=2$ and Corollary \ref{ThmWeakTypeWithP}.

\section{Proof of Theorem \ref{main-parts}: Part \ref{ItemBoundTwoPts}}
\label{Sectionpart2pts}
%\input{part2pts}

% !TEX root =  main.tex
Let $n\ge 2$ and  $1\leq l_1<l_2\leq n$.  In this section, we consider the multilinear functional
\begin{equation}\label{partIIIop}
\begin{split}
&\La_{l_1,l_2}^1(b_1,\ldots, b_{n+2}):=
\\&\sum_{j\in \Z} \La[\vsigjj](b_1,\ldots, b_{l_1-1}, (I-P_j)b_{l_1}, P_j b_{l_1+1},\ldots, P_j b_{l_2-1}, (I-P_j) b_{l_2},P_j b_{l_2+1},\ldots, P_j b_{n+2}),
\end{split}
\end{equation}
where, for some fixed $\eps>0$, $\vectsig=\{\vsig_j:j\in \Z\}\subset \sBtp{\eps}{\R^n\times \R^d}$ is a bounded set.  The goal of this section is to prove, for $p\in (1,2]$, $b_1,\ldots, b_n\in L^\infty(\bbR^d)$, $b_{n+1}\in L^{p}(\bbR^d)$, $b_{n+2}\in L^{p'}(\bbR^d)$,
 the inequality 
%\begin{thm}\label{ThmBound2PtsMainThm}
%Let $p\in (1,2]$ and let $p'$ be dual to $p$.  Let 
\begin{equation}\label{partIIIineq}
\begin{split}
&\big| \La_{l_1,l_2}^1(b_1,\ldots, b_{n+2})\big| 
%\\& 
\leq C_{d,p,\eps} \,\sup_j \LpN{1}{\vsig_j}\, \log^3 (1+n \Ga_\eps)
%\frac{\sup_j \sBN{\eps}{\vsig_j}}{\sup_j \LpN{1}{\vsig_j} }\w) 
\big(\prod_{l=1}^n \|b_l\|_\infty\big)\|b_{n+1}\|_p \|b_{n+2}\|_{p'},
\end{split} 
\end{equation}
together with convergence of the sum \eqref{partIIIop} in the operator topology of multilinear functionals. Moreover the operator sum  $T^1_{l_1,l_2}$ associated to 
$\La_{l_1,l_2}^1$ converges in the strong operator topology.
% as operators $L^p\to L^p$.

It will be convenient to
 prove a slightly more general theorem.  Let $\{ b_l^j: 3\leq l\leq n, j\in \Z \}\subset L^\infty(\R^d)$ be
a bounded set, with $\sup_{j\in\Z} \LpN{\infty}{b_l^j}=1$, for $3\leq l\leq n$.  For $b_1,b_2\in L^\infty(\R^d)$ define an operator
$S_j[b_1,b_2]$ by
\begin{equation*}
\int g(x) \q(S_j[b_1,b_2]f\w)(x)\:dx :=\La[\vsigjj]((I-P_j) b_1, (I-P_j) b_2, b_3^j,\ldots, b_n^j, f, g).
\end{equation*}
%and define $S[b_1,b_2]:=\sum_{j\in \Z} P_j S_j[b_1,b_2] P_j$.
\begin{thm}\label{PropBound2PtsRealThm}
With the above assumptions, for $1<p\leq 2$,
the  sums  $\sum_{j=-\infty}^\infty P_j S_j[b_1,b_2] P_j$ converge to
$S[b_1,b_2]$, in the strong operator topology 
as operators $L^p\to L^p$, and  $S[b_1,b_2]$ satisfies 
%converge to an operator 
the estimate
\begin{equation}\label{SNb1n2op}
\LpOpN{p}{S[b_1,b_2]} \leq C_{d,p,\eps} \sup_j \LpN{1}{\vsig_j}\log^3 
(1+n\Ga_\eps) \|b_1\|_\infty \|b_2\|_\infty.
\end{equation}
%uniformly in $N$. Moreover $S_N[b_1,b_2]$ converge to an operator $S(b_1,b_2)$ in the weak operator topology (as operators $L^p\to L^p$) and the operator norm of $S(b_1,b_2)$ is dominated by the right hand side of \eqref{SNb1n2op}.
\end{thm}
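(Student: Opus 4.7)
The plan is to combine the two-$Q$ almost-orthogonality estimate of Theorem \ref{PropL2BoundFor2Qs} with a Journé-type interpolation to pass from $L^2$ to $L^p$. Normalize $\|b_1\|_\infty = \|b_2\|_\infty = 1$. The operator $\tilde S_j := P_j S_j[b_1,b_2] P_j$ has Schwartz kernel of the form $2^{jd}\tau_j(2^j\cdot, 2^j\cdot)$, where
\[
\tau_j(x,y) = \iint \phi(x-u)\, s_j(u,v)\,\phi(v-y)\,du\,dv,
\]
and $s_j$ is the rescaled kernel of $S_j[b_1,b_2]$, given essentially by $\int \vsig_j(\alpha,u-v)\prod_{i} c_i^j\,d\alpha$ with $c_i^j$ equal to $(I-P_0)[b_i(2^{-j}\cdot)]$ for $i=1,2$ and $b_i^j(2^{-j}\cdot)$ for $i\ge 3$. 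Using $\|(I-P_j)b_i\|_\infty\le 2$ together with the $\cB_\eps$ bounds on $\vsig_j$, and transferring regularity from the outer smoothing by $\phi$ via Lemma \ref{PSP-lemma}, I would establish
\[
\sup_j \|\tau_j\|_{\Op_0} \lesssim \sup_j \|\vsig_j\|_{L^1}, \qquad \sup_j \|\tau_j\|_{\Op_\eps} \lesssim \sup_j \|\vsig_j\|_{\cB_\eps}.
\]

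The core $L^2$ bound comes from writing $(I-P_j)b_l = \sum_{k_l>0} Q_{j+k_l}b_l$ for $l=1,2$, so
\[
\sum_j P_j S_j[b_1,b_2] P_j = \sum_{k_1,k_2>0} \Sigma^{k_1,k_2},
\]
where $\int g\, \Sigma^{k_1,k_2} f\,dx = \sum_j \La[\vsig_j^{(2^j)}](Q_{j+k_1}b_1,\, Q_{j+k_2}b_2,\, b_3^j,\ldots, b_n^j,\, P_j f,\, P_j g)$. Applying Theorem \ref{ThmOpResAdjoints} to swap positions $(1, n{+}1)$ and $(2, n{+}2)$ (at the cost of replacing $\vsig_j$ by $\ell_\vp\vsig_j$, whose $L^1$ norm is preserved and whose $\cB_{\eps'}$ norm is controlled by $\|\vsig_j\|_{\cB_\eps}$ independently of $n$), the operator $\Sigma^{k_1,k_2}$ fits exactly into the framework of Theorem \ref{PropL2BoundFor2Qs} with $l_1=1,\,l_2=2$. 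This gives
\[
\|\Sigma^{k_1,k_2}\|_{L^2\to L^2} \lesssim \sup_j \|\vsig_j\|_{L^1}\,\min\{1,\; n\, 2^{-(k_1+k_2)c\eps'}\Ga_\eps\}.
\]
Since the number of pairs with $k_1 + k_2 = m$ is $m-1$, the tail $m \gtrsim \log(n\Ga_\eps)$ contributes $O(\log(n\Ga_\eps))$ while the initial range contributes $O(\log^2(n\Ga_\eps))$, yielding
\[
\Big\|\sum_{j=-N}^N P_j S_j[b_1,b_2] P_j\Big\|_{L^2\to L^2} \lesssim \sup_j \|\vsig_j\|_{L^1}\,\log^2(1+n\Ga_\eps),
\]
uniformly in $N$, with strong-operator convergence inherited from the equiconvergence built into Theorem \ref{PropL2BoundFor2Qs}.

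To pass from $L^2$ to $L^p$, I would invoke Theorem \ref{ThmJourne2}(ii): the $L^2$ bound together with the $\Op_\eps, \Op_0$ estimates produces an $H^1\to L^1$ bound of the same order $\sup\|\vsig_j\|_{L^1}\log^2(1+n\Ga_\eps)$, up to an additive $\sup\|\vsig_j\|_{L^1}\log(1+\Ga_\eps)$ term. Interpolation between $H^1\to L^1$ and $L^2\to L^2$ gives the desired $L^p$ bound for $1<p\le 2$; the extra log factor in $\log^3$ is slack that absorbs the interpolation exponent and the sum over the $k$-decomposition in Corollary \ref{CorWeakTypeWithP}. Strong operator convergence on $L^p$ then follows from Proposition \ref{CorWeakTypeInterp}(ii). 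The main technical difficulty will be the bookkeeping in the second step: cleanly identifying $\Sigma^{k_1,k_2}$ with an instance of Theorem \ref{PropL2BoundFor2Qs} via the adjoint machinery of Theorem \ref{ThmOpResAdjoints}, and checking that the double sum over $(k_1,k_2)$ produces only two logarithms—any inefficiency here would push the final exponent beyond the stated $\log^3$.
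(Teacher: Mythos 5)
Your identification of the $\Op_\eps$/$\Op_0$ bounds and the overall Journ\'e-interpolation strategy is sound, but the central step — the direct $L^2\to L^2$ bound for $\Sigma^{k_1,k_2}$ — has a genuine gap. Theorem \ref{PropL2BoundFor2Qs} with $l_1=1,l_2=2$ bounds a form in which the two functions \emph{in the $L^2$ slots} are hit by the $\overline Q$-type operators $\Qb{j+k_i}{\zeta_i}$, with $\zeta_i\in\sU$ having vanishing integral, while the remaining slots carry fixed $L^\infty$ families. In your $\Sigma^{k_1,k_2}$, by contrast, the $Q$-operators hit the fixed $L^\infty$ functions $b_1,b_2$, whereas the $L^2$ test functions $f,g$ sit in slots $n+1,n+2$ under $P_j$ (which is not of the form $\Qb{j}{\zeta}$, since $\int\phi=1\ne 0$). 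Applying Theorem \ref{ThmOpResAdjoints} to swap $(1,n{+}1)$ and $(2,n{+}2)$ only relabels the slots: after the swap the $Q$'s still sit over $b_1,b_2$ and $f,g$ are still under $P_j$, so the hypotheses of Theorem \ref{PropL2BoundFor2Qs} are not met. What that theorem would produce, if you tried to force the roles, is a bound proportional to $\|b_1\|_2\|b_2\|_2$, which is useless since $b_1,b_2$ are only in $L^\infty$.

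The paper circumvents exactly this mismatch. Instead of aiming for an unconditional $L^2\to L^2$ bound, it verifies a Carleson-type testing condition \eqref{carlesonforsumsmod} (Theorem \ref{ThmJourne3}), localizing $b_{n+1},b_{n+2}$ to a ball $B^d(x_0,r)$; by translation and scaling this reduces to $B^d(0,1)$ (Proposition \ref{PropBound2PtSumPos}). With $b_{n+1},b_{n+2}$ compactly supported, it then splits $b_1,b_2$ into ``near'' and ``far'' pieces $b_{l,0}^{k_1,k_2}+b_{l,\infty}^{k_1,k_2}$ at scale $\max\{5,\beta 2^{(k_1+k_2)\delta}\}$: the far pieces are controlled by a support-separation estimate (the lemma preceding Lemma \ref{LemmaBound2ptsForBigsupp}), and only the near pieces — which now have $\|b_{l,0}^{k_1,k_2}\|_2\lesssim R^{d/2}$ — are fed into Theorem \ref{PropL2BoundFor2Qs} (via Proposition \ref{PropBound2PtsBasicL2}), with $P_jb_{n+1},P_jb_{n+2}$ playing the role of the $L^\infty$ families. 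The spatial truncation radius is tuned against the exponential gain $2^{-(k_1+k_2)\eps'}$ to keep the sum at $\log^3$. Without this localization and near/far split, one cannot make the two-$Q$ theorem applicable, and your scheme does not close.
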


\begin{proof}[Proof of \eqref{partIIIineq}  given Theorem \ref{PropBound2PtsRealThm}]
Using Theorem   \ref{ThmOpResAdjoints} we see that 
Theorem \ref{PropBound2PtsRealThm} also implies the inequality 
\begin{multline*}
\Big|\sum_j\La[\vsigjj](b_1^j,\ldots, b_{l_1-1}^j, (I-P_j)b_{l_1}, b_{l_1+1}^j,\ldots,  b_{l_2-1}^j, (I-P_j) b_{l_2}^j,b_{l_2+1}^j,\ldots,b_{n}^j, b_{n+1}, b_{n+2})\Big|
\\ \lc 
 \sup_j \LpN{1}{\vsig_j}\log^3 
(1+n\Ga_\eps) \|b_{l_1} \|_\infty \|b_{l_2}\|_\infty \big(\prod_{\substack{1\le i\le n\\i\neq l_1, l_2}} \|b_i^j\|_\infty\big) \|b_{n+1}\|_p\|b_{n+2}\|_{p'}.
\end{multline*}
Since  $\|P_j b_l\|_q\leq \|b_l\|_q$   we may replace 
$b_i$ by $P_j b_i$ for   $l_1+1\le i\le l_2-1$, $i\ge l_2+1$, and 
if we use also 
$P_j={}^t\!P_j$ then 
\eqref{partIIIineq} follows.
\end{proof}

The rest of this section is devoted to the proof of Theorem \ref{PropBound2PtsRealThm}.  Thus, we consider
sequences $b_l^j\in L^\infty(\R^d)$ fixed ($3\leq l\leq n$) with $\sup_{j} \LpN{\infty}{b_l^j}=1$.
The $L^2$ estimates in \S\ref{Sectionpartrough} will be crucial. We restate them as
\begin{prop}\label{PropBound2PtsBasicL2}
There is $C\lc 1$ such that for $\eps'\le \eps/C$, and for all 
collections  $$\{ b_{n+1}^j : j\in \Z\}, \{b_{n+2}^j:j\in \Z\}\subset L^\infty(\R^d),
\text{ with } \sup_{j} \|b_{n+1} ^j\|_\infty=1, \quad
 \sup_{j} \|b_{n+2} ^j\|_\infty=1, \quad
$$ 
%bounded in $L^\infty(\bbR^d)$ 
%with $\sup_{j} \|b_l^j\|_\infty=1$ for  $l=n+1,n+2$, 
we have for $f,g\in L^2(\R^d)$ and $k_1,k_2\in \N$,
\begin{multline*}
\Big|\sum_{j\in \Z} \La[\vsigjj](Q_{j+k_1} f, Q_{j+k_2} g, b_3^j,\ldots, b_{n+2}^j)\Big|\\  \lesssim \|f\|_2\|g\|_2\min\big\{2^{-\eps' k_1-\eps' k_2} n \sup_j \sBN{\eps}{\vsig_j},\, \sup_j \LpN{1}{\vsig_j}\big\}.
\end{multline*}
Let  $\cT_{k_1,k_2}$ be defined  by
%Moreover, the corresponding operator sums  defined by
\Be\label{Tk1k2j} \La[\vsigjj](Q_{j+k_1} f, Q_{j+k_2} g, b_3^j,\ldots, b_{n+2}^j)=
\int g(x)\,\cT_{k_1,k_2,j} f(x)\,dx.
\Ee 
Then 
$\sum_j \cT_{k_1,k_2,j}$ and  $\sum_j {}^t\cT_{k_1,k_2,j}$
 converge in the strong operator topology as operators $L^2\to L^2$, with equiconvergence with respect to $b_3^j,\ldots, b_{n+2}^j$.
\end{prop}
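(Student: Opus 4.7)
The plan is to observe that Proposition \ref{PropBound2PtsBasicL2} is essentially a restatement of Theorem \ref{PropL2BoundFor2Qs} (already proved in \S\ref{Sectionpartrough}), specialized to the choices $l_1=1$, $l_2=2$ and $\zeta_1=\zeta_2=\psi$, where $\psi$ is the standard Littlewood--Paley function satisfying $Q_j=\Qb{j}{\psi}$. Note that $\psi\in\sU$ with $\|\psi\|_\sU\lesssim 1$. With these identifications, the operators $\cT_{k_1,k_2,j}$ defined by \eqref{Tk1k2j} coincide exactly with the operators $S^{1,2}_{k_1,k_2,j}$ of \S\ref{Sectionpartrough}, and the sum $\sum_j \cT_{k_1,k_2,j}$ converges in the strong operator topology to a bounded $L^2\to L^2$ operator with norm satisfying
\[
\|S^{1,2}_{k_1,k_2}\|_{L^2\to L^2}\lesssim \sup_j\|\vsig_j\|_{L^1}\,\min\bigl\{1,\,n\,2^{-(k_1+k_2)\eps'}\Gamma_\eps\bigr\}.
\]
Using the identity $\Gamma_\eps\,\sup_j\|\vsig_j\|_{L^1}=\sup_j\|\vsig_j\|_{\cB_\eps}$, the bilinear form estimate claimed in the proposition follows from this operator norm bound by duality.

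For the transpose, I would note that ${}^t\cT_{k_1,k_2,j}$ coincides (after swapping the roles of the two test functions) with $S^{2,1}_{k_2,k_1,j}$, which is precisely the operator provided by Theorem \ref{PropL2BoundFor2Qs} with $l_1=2$, $l_2=1$ and the two Littlewood--Paley parameters interchanged. Hence $\sum_j {}^t\cT_{k_1,k_2,j}$ converges in the strong operator topology on $L^2$ as well, with the same quantitative bounds.

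The only minor point requiring attention is the \emph{equiconvergence} with respect to the family $\{b_3^j,\ldots,b_{n+2}^j\}$. This is not explicitly stated in Theorem \ref{PropL2BoundFor2Qs} but is implicit in its proof: the argument goes through the almost-orthogonality Lemma \ref{ThmAlmostOrthogonal} applied to the pieces $\Qt_{k_1'} S^{1,2}_{j+k_1',k_1,k_2} \Qt_{j+k_1'+k_2'}$, where the operator-norm bounds \eqref{LemmaBoundRoughestPieces} depend on the $b_l^j$ only through the uniform $L^\infty$ bounds $\sup_j\|b_l^j\|_\infty$. Since the summable quantity $A_{j,k_2}$ in Lemma \ref{ThmAlmostOrthogonal} is therefore independent of the particular sequences $\{b_l^j\}$ (as long as their $L^\infty$ norms remain bounded by $1$), the conclusion \eqref{Ajk2concl} of Lemma \ref{ThmAlmostOrthogonal} immediately yields convergence with equiconvergence with respect to the family of such sequences. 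No new substantive work is required beyond these citations, so the main obstacle is essentially bookkeeping: identifying the operators defined here with those in \S\ref{Sectionpartrough} and verifying that the equiconvergence statement follows from the uniformity of the estimates underlying the proof of Theorem \ref{PropL2BoundFor2Qs}.
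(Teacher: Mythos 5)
Your proposal is correct and matches the paper's approach exactly: the paper's own proof of Proposition~\ref{PropBound2PtsBasicL2} is the single line ``This follows from Theorem~\ref{PropL2BoundFor2Qs},'' and you correctly identify $\cT_{k_1,k_2,j}$ with $S^{1,2}_{k_1,k_2,j}$ (taking $\zeta_1=\zeta_2=\psi$) and supply the dictionary $\Gamma_\eps\,\sup_j\|\vsig_j\|_{L^1}=\sup_j\|\vsig_j\|_{\cB_\eps}$ to convert between the two normalizations. Your extra remarks on equiconvergence are sound and even slightly more careful than the paper: the bounds $A_{j,k_2}$ fed into Lemma~\ref{ThmAlmostOrthogonal} indeed depend on the $b_l^j$ only through their uniform $L^\infty$ bounds, so the equiconvergence conclusion \eqref{Ajk2concl} applies with the family of admissible sequences as the index set $\cI$.
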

\begin{proof}
This follows from Theorem \ref{PropL2BoundFor2Qs}.
\end{proof}

%{\bf REMOVE}\begin{lemma}For $a\in L^\infty(B^d(1))$ and $j\leq 0$, $\LpN{2}{P_j a}\lesssim 2^{jd/2} \LpN{\infty}{a}$.\end{lemma}
%\begin{proof} We clearly have $\LpN{1}{P_j a}\lesssim \LpN{1}{a}\lesssim \LpN{\infty}{a}$. Also, 
%\begin{equation*} \q|\int \dil{\phi}{2^j}(x-y) a(y)\: dy\w| \leq \LpN{\infty}{\phi} 2^{jd} \int |a(y)|\: dy\lesssim 2^{jd} \LpN{\infty}{a},
%\end{equation*}
%and so $\LpN{\infty}{P_j a}\lesssim 2^{jd} \LpN{\infty}{a}$.  Interpolation yields the result. \end{proof}

%{\bf REMOVE} \begin{lemma}
%For $b_{1},b_2\in L^\infty(\R^d)$, $b_{n+1},b_{n+2}\in L^\infty(B^d(1))$, and $j\leq 0$, we have \begin{equation*}
%\q|\La[\vsigjj]{b_1, b_2, b_3^j,\ldots, b_n^j, P_j b_{n+1},P_j b_{n+2}}\w|\lesssim  \q[\prod_{l\in \{ 1,2,n+1,n+2 \}} \LpN{\infty}{b_l}\w]2^{jd} \LpN{1}{\vsig_j}. 
%\end{equation*} \end{lemma}
%\begin{proof} This follows by combining the previous lemma and Lemma \ref{LemmaBasicL2BasicLpEstimate}. \end{proof}

\begin{prop}\label{PropBound2PtSumNeg}
Let $\{ b_1^j, b_2^j : j\leq -1\}\subset L^\infty(\R^d)$ be a bounded set with $\sup_{j\leq -1} \LpN{\infty}{b_l^j}=1$, $l=1,2$, and let 
$b_{n+1},b_{n+2}$ be $L^\infty$  functions supported in $\{y:|y|\le 1\}$.
\begin{equation*}
\sum_{j=-\infty}^{-1}\big| \La[\vsigjj](b_1^j,\ldots, b_{n}^j, P_j b_{n+1}, P_j b_{n+2})\big| \lesssim \LpN{\infty}{b_{n+1}} \LpN{\infty}{b_{n+2}} \sup_{j} \LpN{1}{\vsig_j}.% \log\q(1+\frac{\sup_j \sBN{\eps}{\vsig_j} }{\sup_j \LpN{1}{\vsig_j} }\w).
\end{equation*}
\end{prop}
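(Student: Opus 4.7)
The plan is to show that the sum is in fact an absolutely convergent geometric series, with each summand bounded directly via the basic $L^p$ estimate of Lemma \ref{LemmaBasicL2BasicLpEstimate}. The key observation is that for $j \leq -1$ the dilate $\phi^{(2^j)}$ has very wide scale $2^{-j} \geq 2$, while $b_{n+1}$ and $b_{n+2}$ are supported in the unit ball. This smoothing crushes the $L^\infty$ norm of $P_j b_{n+1}$ (and similarly $P_j b_{n+2}$) by a factor of $2^{jd}$, at the cost of spreading the support, and the resulting geometric factor $2^{jd}$ will sum over $j \leq -1$.

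First I would apply Lemma \ref{LemmaBasicL2BasicLpEstimate} with the exponent choice $p_1 = \cdots = p_n = \infty$, $p_{n+1} = 1$, $p_{n+2} = \infty$ (so $\sum p_i^{-1} = 1$):
\begin{equation*}
\big|\La[\vsigjj](b_1^j,\ldots, b_n^j, P_j b_{n+1}, P_j b_{n+2})\big| \leq \|\vsig_j\|_{L^1}\Big(\prod_{i=1}^n \|b_i^j\|_\infty\Big) \|P_j b_{n+1}\|_1 \|P_j b_{n+2}\|_\infty\,.
\end{equation*}
Next I would estimate the two factors involving $P_j$ using that $\phi$ is a fixed $C^\infty_0$ function with $\int \phi = 1$ and that $b_{n+1}, b_{n+2}$ are supported in $\{|y| \leq 1\}$. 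By Young's inequality,
\begin{equation*}
\|P_j b_{n+1}\|_1 \leq \|\phi\|_1 \|b_{n+1}\|_1 \lesssim \|b_{n+1}\|_\infty,
\end{equation*}
since the support of $b_{n+1}$ has bounded measure. For the $L^\infty$ factor,
\begin{equation*}
\|P_j b_{n+2}\|_\infty \leq \|\dil{\phi}{2^j}\|_\infty \|b_{n+2}\|_1 = 2^{jd}\|\phi\|_\infty \|b_{n+2}\|_1 \lesssim 2^{jd} \|b_{n+2}\|_\infty,
\end{equation*}
again using the compact support of $b_{n+2}$.

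Combining these and using $\|b_i^j\|_\infty \le 1$ for $1 \leq i \leq n$, each term in the sum is $\lesssim 2^{jd} \sup_j \|\vsig_j\|_{L^1} \|b_{n+1}\|_\infty \|b_{n+2}\|_\infty$. Since $d \geq 1$, the geometric series $\sum_{j \leq -1} 2^{jd}$ converges, yielding the claimed estimate. No obstacle arises in this argument; the proposition is essentially a scale-bookkeeping observation that exploits both the $j \leq -1$ restriction and the compact support of $b_{n+1}, b_{n+2}$, so the proof should be a few lines.
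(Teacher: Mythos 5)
Your argument is correct and takes essentially the same route as the paper: both invoke Lemma \ref{LemmaBasicL2BasicLpEstimate}, use the compact support of $b_{n+1}, b_{n+2}$ to trade $L^p$ norms, and extract a factor $2^{jd}$ from the dilated $\phi^{(2^j)}$ that sums geometrically over $j\le -1$. The only (cosmetic) difference is the exponent split — you use $(p_{n+1},p_{n+2})=(1,\infty)$ with $\|P_j b_{n+2}\|_\infty\lc 2^{jd}\|b_{n+2}\|_1$, while the paper uses $(2,2)$ with $\|P_j\|_{L^1\to L^2}\lc 2^{jd/2}$ applied to both factors — yielding the identical $2^{jd}$ gain.
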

\begin{proof}
We may assume $\LpN{\infty}{b_{n+1}}=\LpN{\infty}{b_{n+2}}=1$.  
Then by Lemma \ref{LemmaBasicL2BasicLpEstimate}
\begin{align*}
&\big|\La[\vsigjj](b_1^j,\ldots, b_{n}^j, P_j b_{n+1}, P_j b_{n+2})\big| 
\lesssim   \sup_j \LpN{1}{\vsig_j} \|P_j b_{n+1}\|_2  \|P_j b_{n+2}\|_2
\\
&\lesssim   \sup_j \LpN{1}{\vsig_j}2^{jd}  \| b_{n+1}\|_1  \|P_j b_{n+2}\|_1
\lesssim   \sup_j \LpN{1}{\vsig_j}2^{jd}  
\end{align*}
where we have used $\|P_j\|_{L^1\to L^2}\lc 2^{jd/2}$ and then the support assumption on $b_{n+1}$, $b_{n+2}$. Now sum over $j\le -1$ and the proof is complete.
\end{proof}

\begin{lemma} Let $0<\eps\le 1$.  For all $R\ge 5$, all 
$j\geq 0$, $b_{n+1}, b_{n+2}\in L^\infty$ supported in $\{x:|x|\le 4\}$, $b_1,b_2\in L^\infty(\R^d)$ with $\supp{b_1}\subseteq \{v: |v|\geq R\}$, we have
\begin{equation*}
\q|\La[\vsigjj](b_1,b_2,b_3^j,\ldots, b_n^j, b_{n+1},b_{n+2})\w|\lesssim \min\big\{ 
(2^j R)^{-\eps/2} 
 \sBN{\eps}{\vsig_j},
 \, \LpN{1}{\vsig_j} \big\} 
\prod_{l\in \{1,2,n+1,n+2\}} \|b_l\|_\infty.
\end{equation*}
%The same result holds if instead $\supp{a_2}\subseteq \{|v|\geq \gamma\}$.
\end{lemma}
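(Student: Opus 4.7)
\medskip

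\noindent\textbf{Proof sketch.} Without loss of generality, I normalize $\|b_l\|_\infty = 1$ for $l \in \{1,2,n+1,n+2\}$ and $\sup_j\|b_l^j\|_\infty = 1$ for $3 \le l \le n$. The trivial estimate with the $L^1$ norm of $\vsig_j$ follows immediately from Lemma \ref{LemmaBasicL2BasicLpEstimate} applied with exponents $p_{n+1}=1$, $p_{n+2}=\infty$, $p_l=\infty$ otherwise, after using the support hypothesis to see $\|b_{n+1}\|_1 \lesssim \|b_{n+1}\|_\infty$. So the main work is to establish the decaying estimate $(2^jR)^{-\eps/2}\|\vsig_j\|_{\cB_\eps}$.

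For this, I expand
\[
\big|\La[\vsigjj](b_1,\ldots,b_{n+2})\big| \le \iiint \big|\vsigjj(\alpha,v)\big|\,\big|b_1(x-\alpha_1 v)\big|\,\big|b_{n+1}(x-v)\big|\,\big|b_{n+2}(x)\big|\,dx\,dv\,d\alpha,
\]
using $v=x-y$ and bounding the remaining factors by $1$. The supports of $b_{n+1}$ and $b_{n+2}$ force $|x|\le 4$ and $|v|\le 8$, and the support of $b_1$ forces $|\alpha_1 v|\ge R-4 \ge R/5$. Integrating out the $x$-variable (which ranges over a set of measure $O(1)$), making the change of variable $w=2^jv$, and rewriting the conditions in the rescaled coordinates reduces the estimate to bounding
\[
I := \iint_{\substack{|w|\le 2^{j+3}\\|\alpha_1|\ge c\,2^jR/|w|}} \big|\vsig_j(\alpha,w)\big|\,d\alpha\,dw
\]
by $(2^jR)^{-\eps/2}\|\vsig_j\|_{\cB_\eps}$.

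The key idea is an AM--GM style split at the threshold $T:=(2^jR)^{1/2}$. On the domain of $I$ we have $|\alpha_1||w|\gtrsim 2^jR = T^2$, so either $|\alpha_1|\gtrsim T$ or $|w|\gtrsim T$. Accordingly I bound $I \le I_1 + I_2$ where
\begin{align*}
I_1 &= \iint_{|\alpha_1|\gtrsim T} |\vsig_j(\alpha,w)|\,d\alpha\,dw \lesssim T^{-\eps}\iint (1+|\alpha_1|)^\eps |\vsig_j(\alpha,w)|\,d\alpha\,dw \lesssim T^{-\eps}\|\vsig_j\|_{\cB_{\eps,1}},\\
I_2 &= \iint_{|w|\gtrsim T} |\vsig_j(\alpha,w)|\,d\alpha\,dw \lesssim T^{-\eps}\iint (1+|w|)^\eps |\vsig_j(\alpha,w)|\,d\alpha\,dw \lesssim T^{-\eps}\|\vsig_j\|_{\cB_{\eps,4}}.
\end{align*}
Since $T^{-\eps} = (2^jR)^{-\eps/2}$, adding the two estimates gives $I \lesssim (2^jR)^{-\eps/2}\|\vsig_j\|_{\cB_\eps}$, which is the desired bound.

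The main (and essentially only) obstacle is the bookkeeping of the support constraints: correctly tracking the interplay between $x$, $v$, and $\alpha_1$ to isolate the geometric condition $|\alpha_1 v|\gtrsim R$ after rescaling. Once this is in place, the weighted decay built into the $\cB_{\eps,1}$ and $\cB_{\eps,4}$ seminorms absorbs the large-$\alpha_1$ and large-$w$ regions symmetrically, which is why the split at $T=(2^jR)^{1/2}$ is natural and produces exactly $(2^jR)^{-\eps/2}$.
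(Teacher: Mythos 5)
Your proof is correct and follows essentially the same route as the paper's: reduce to the integral $I$ over the region $|\alpha_1||w|\gtrsim 2^jR$ and then engage the $\cB_{\eps,1}$ and $\cB_{\eps,4}$ weights to get the two complementary decay factors. The only difference is cosmetic: the paper decomposes dyadically in $|w|$ and chooses, for each annulus, whichever of the two weighted estimates is smaller before summing, whereas you observe directly that the constraint forces $\max(|\alpha_1|,|w|)\gtrsim (2^jR)^{1/2}$ and split into two regions accordingly — a tidier packaging of the same AM--GM idea that avoids the dyadic sum.
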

\begin{proof}
We may assume $\LpN{\infty}{b_l}=1$, $l=1,2,n+1,n+2$.  The bound
\begin{equation}\label{EqnBound2PtsSimpleBoundGamma}
\q|\La[\vsigjj](b_1,b_2,b_3^j,\ldots, b_n^j, b_{n+1},b_{n+2})\w|\lesssim \LpN{1}{\vsig_j}
\end{equation}
follows immediately from Lemma \ref{LemmaBasicL2BasicLpEstimate} and the assumptions on the supports of $b_{n+1}$ and $b_{n+2}$.

In order to establish the bound  $(2^j R)^{-\eps/2} 
 \sBN{\eps}{\vsig_j}$ we estimate, using the assumption on $\text{supp}(b_1)$,
\begin{equation*}
\begin{split}
&\big|\La[\vsigjj](b_1,b_2,b_3^j,\ldots, b_n^j, b_{n+1},b_{n+2})\big| 
\\&= \Big|\iiint \dil{\vsig_j}{2^j}(\alpha,v) b_1(x-\alpha_1 v) b_2(x-\alpha_2 v) 
\big(\prod_{i=3}^n
b_i^j(x-\alpha_3 v)\big) b_{n+1}(x-v) b_{n+2}(x)\: dx\: d\alpha\: dv\Big|
\\&\leq \int_{|x|\leq 4} \int_{|v|\leq 8}\int_{|\alpha_1|\ge \frac{R-|x|}{|v|}} \q|\dil{\vsig_j}{2^j}(\alpha,v)\w| |b_1(x-\alpha_1 v)| \: d\alpha\: dv\:dx
\\
\\&\lc \int_{|w|\leq 2^{j+3}}\int_{|\alpha_1|\ge \frac{R-|4|}{2^{-j} |w|}} \q|\vsig_j(\alpha,w)\w| \,
 d\alpha\: dw\,;
 \end{split}\end{equation*}
 here we have used $R\ge 5$.
Let $m\le j+3$. Clearly 
\Be\label{smallmB1}
\int_{2^{m-1}\le |w|\leq 2^{m}}\int_{|\alpha_1|
\ge \frac{R-|4|}{2^{-j} |w|}} \q|\vsig_j(\alpha,w)\w| 
\,d\alpha\,dw\,
\lc (2^{j-m} R)^{-\eps} \|\vsig_j\|_{\cB_{\eps,1}}
\lc 2^{m\eps}
(2^{j} R)^{-\eps} \|\vsig_j\|_{\cB_{\eps}}.
\Ee
Also 
\Be\label{largemB4}
\int_{2^{m-1}\le |w|\leq 2^{m}}\int_{|\alpha_1|\ge \frac{R-|4|}{2^{-j} |w|}} \q|\vsig_j(\alpha,w)\w| 
\,d\alpha\,dw\,\lc 2^{-m\eps} \|\vsig_j\|_{\cB_{\eps,4}} \lc 2^{-m\eps} \|\vsig_j\|_{\cB_{\eps}}.
\Ee
We use \eqref{smallmB1} for $2^m< (2^j R)^{1/2}$ and 
\eqref{largemB4} for  $2^m\ge (2^j R)^{1/2}$, and sum. The assertion follows.
\end{proof}

\begin{lemma}\label{LemmaBound2ptsForBigsupp}
For $l=1,2,n+1,n+2$, let $\{ b_l^{j,k_1,k_2} :j,k_1,k_2\in\N\} \subset L^\infty(\R^d)$ be bounded sets with $\sup_{j,k_1,k_2} \LpN{\infty}{b_l^{j,k_1,k_2}} =1$.
Let $\beta>0$, $\delta>0$ and assume
\begin{equation}\label{EqnBound2Ptsa1jksupp}
\supp{b_1^{j,k_1,k_2}}\subseteq \big\{ v:|v|\geq \max\{5, \beta2^{k_1 \delta +k_2\delta}\}\big\}, \quad \forall j,k_1,k_2\in \N
\end{equation}
and for $l=n+1,n+2$,
$$\supp{b_l^{j,k_1,k_2}} \subseteq \{ v:|v|\leq 4\}, \quad \forall j,k_1,k_2\in \N.$$
Then
%For $0<\beta\leq 1$ and $0<\delta<1$ we have
\begin{equation*}
\sum_{j,k_1,k_2\in \N} |\La[\vsigjj](b_1^{j,k_1,k_2}, b_2^{j,k_1,k_2}, b_3^j,\ldots, b_n^j, b_{n+1}^{j,k_1,k_2}, b_{n+2}^{j,k_1,k_2})|\lesssim \sup_j \LpN{1}{\vsig_j} \log^3(1+\beta^{-1}\Ga_\eps).
%\log^3 \q(1+c^{-1} \frac{ \sup_j \sBN{\eps}{\vsig_j} }{\sup_j \LpN{1}{\vsig_j} }\w).
\end{equation*}
Here the implicit constant depends on $\delta$, but not on $\beta$.
The same result holds if instead of \eqref{EqnBound2Ptsa1jksupp} we have
\begin{equation}\label{EqnBound2Ptsa2jksupp}
\supp{b_2^{j,k_1,k_2}}\subseteq\big \{ |v|\geq\max\{ 5, \beta 2^{k_1 \delta +k_2\delta}\}\big\}, \quad \forall j,k_1,k_2\in \N.
\end{equation}
\end{lemma}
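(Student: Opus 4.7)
The plan is to apply the preceding unnamed lemma (the one bounding $|\La[\vsigjj](b_1,b_2,b_3^j,\ldots,b_n^j,b_{n+1},b_{n+2})|$ under a support assumption on $b_1$) term-by-term, and then sum the resulting pointwise estimates in the three parameters $j,k_1,k_2$. Under the first support hypothesis \eqref{EqnBound2Ptsa1jksupp} on $b_1^{j,k_1,k_2}$, setting $R_{k_1,k_2}:=\max\{5,\beta 2^{(k_1+k_2)\delta}\}$, that lemma yields
\[
|\La[\vsigjj](b_1^{j,k_1,k_2},b_2^{j,k_1,k_2},b_3^j,\ldots,b_n^j,b_{n+1}^{j,k_1,k_2},b_{n+2}^{j,k_1,k_2})|
\lesssim \min\bigl\{(2^j R_{k_1,k_2})^{-\eps/2}\sup_{j'}\|\vsig_{j'}\|_{\cB_\eps},\ \sup_{j'}\|\vsig_{j'}\|_{L^1}\bigr\}.
\]
Writing $A=\sup_{j'}\|\vsig_{j'}\|_{L^1}$ and recalling $\Ga_\eps=\sup_{j'}\|\vsig_{j'}\|_{\cB_\eps}/A$, the right side is $A\cdot\min\{X_{j,k_1,k_2},1\}$ with
\[
X_{j,k_1,k_2}=\beta^{-\eps/2}\Ga_\eps\, 2^{-j\eps/2}2^{-k_1\delta\eps/2}2^{-k_2\delta\eps/2}.
\]

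Next I would sum $\min\{X_{j,k_1,k_2},1\}$ over $(j,k_1,k_2)\in\N^3$. Split the sum according to whether $X_{j,k_1,k_2}\le 1$ or $>1$. On the region $X_{j,k_1,k_2}\le 1$ the minimum is $X_{j,k_1,k_2}$, which is a product of three geometric series (in $j,k_1,k_2$) and contributes an absolute constant depending only on $\eps,\delta$. On the region $X_{j,k_1,k_2}>1$ the minimum equals $1$, so this contribution is the cardinality of
\[
\{(j,k_1,k_2)\in\N^3:\ j+\delta k_1+\delta k_2\le C_\eps\log(1+\beta^{-\eps/2}\Ga_\eps)\},
\]
which is $O(\log^3(1+\beta^{-\eps/2}\Ga_\eps))$, with implicit constants depending on $\delta$ and $\eps$. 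Combining, and using $\log(1+\beta^{-\eps/2}\Ga_\eps)\lesssim \log(1+\beta^{-1}\Ga_\eps)$ (valid for $\beta\le 1$, which is the regime of interest in the application where $\beta\sim (n\Ga_\eps)^{-1/d}$), the total sum is bounded by $C\, A\log^3(1+\beta^{-1}\Ga_\eps)$ as required.

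For the variant with the support hypothesis \eqref{EqnBound2Ptsa2jksupp} on $b_2^{j,k_1,k_2}$ rather than $b_1^{j,k_1,k_2}$, I would appeal to Theorem \ref{ThmOpResAdjoints}, applied to the permutation $\vp$ of $\{1,\ldots,n+2\}$ that interchanges the first two entries. By \eqref{firstnvar}, this replaces $\vsig_j$ by $\ell_\vp\vsig_j$, which simply permutes $\alpha_1,\alpha_2$, so that $\|\ell_\vp\vsig_j\|_{\cB_\eps}\approx\|\vsig_j\|_{\cB_\eps}$ and $\|\ell_\vp\vsig_j\|_{L^1}=\|\vsig_j\|_{L^1}$. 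Applying the first case to the rewritten form, with $b_1$ and $b_2$ swapped, recovers the same estimate.

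The main obstacle I anticipate is bookkeeping rather than analysis: one must verify that summing a three-parameter geometric-versus-saturated minimum really produces \emph{exactly three} logarithmic factors and no more. The key observation is that the exponent driving the decay is a positive linear combination of $j$, $k_1$, $k_2$, so the sum factors cleanly, and each of the three sums at the crossover threshold contributes a single $\log(1+\beta^{-1}\Ga_\eps)$. Once this is confirmed, all other steps are direct consequences of the preceding lemma and of Theorem \ref{ThmOpResAdjoints}.
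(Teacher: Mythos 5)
Your overall strategy matches the paper's: apply the preceding pointwise lemma term-by-term, then sum the resulting $\min\{(2^j R_{k_1,k_2})^{-\eps/2}\Gamma_\eps,1\}$ estimates over $(j,k_1,k_2)\in\N^3$, and obtain the second variant by the symmetry between $b_1$ and $b_2$ (the paper simply invokes symmetry; your appeal to case (i) of Theorem \ref{ThmOpResAdjoints}, which leaves the $\cB_\eps$-norm unchanged, is an equivalent justification). The accounting of the $X_{j,k_1,k_2}>1$ region is correct: a tetrahedron $\{j+\delta k_1+\delta k_2 \lesssim \log(1+C)\}$ with $C=\beta^{-\eps/2}\Gamma_\eps$ contains $O(\log^3(1+C))$ lattice points, and $\log(1+\beta^{-\eps/2}\Gamma_\eps)\lesssim\log(1+\beta^{-1}\Gamma_\eps)$.

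There is, however, an arithmetic slip in your treatment of the region $X_{j,k_1,k_2}\le 1$. You claim this portion ``contributes an absolute constant,'' reasoning that $X$ is a product of three geometric series. But $X_{j,k_1,k_2}=C\cdot 2^{-j\eps/2}2^{-k_1\delta\eps/2}2^{-k_2\delta\eps/2}$ carries the large prefactor $C$, so the unrestricted triple sum is $\asymp C$, not $O(1)$. When you do restrict to $\{X\le 1\}$, the correct bound is $O(\log^2(1+C))$: for each of the $\asymp\log^2(1+C)$ pairs $(k_1,k_2)$ on the ``near-boundary'' side, the tail sum in $j$ is $\asymp X_{j_0,k_1,k_2}\le 1$, and for pairs far past the threshold the contribution decays geometrically. (Equivalently, collapsing to the level sets of $m=j+\delta k_1+\delta k_2$ with weight $\asymp m^2$, the tail $\sum_{m>M} m^2\,2^{-(m-M)\eps/2}\asymp M^2$.) Since $\log^2(1+C)\lesssim\log^3(1+\beta^{-1}\Gamma_\eps)$, the error does not propagate to the conclusion, but the claim that this region contributes $O(1)$ is false as stated and should be corrected.

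One small additional remark: you suggest that $\beta\le 1$ is needed for the logarithm comparison at the end. The lemma as stated allows $\beta>1$; in that regime one has $C=\beta^{-\eps/2}\Gamma_\eps\le \Gamma_\eps$ and (since $\Gamma_\eps\ge 1$) the sum $\sum\min\{X,1\}\lesssim \min\{C,\log^3(1+C)\}\lesssim 1+\log^3(1+\Gamma_\eps)\lesssim\log^3(1+\beta^{-1}\Gamma_\eps)$, but only after noting that when $C<1$ the entire sum is $\lesssim C\le 1$. It is worth a sentence to close this case rather than restricting to $\beta\le 1$.
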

\begin{proof}
Because our definitions are symmetric in $b_1$ and $b_2$, the result with \eqref{EqnBound2Ptsa2jksupp} in place of \eqref{EqnBound2Ptsa1jksupp} follows
from the result with \eqref{EqnBound2Ptsa1jksupp}.  Thus, we may focus only on the proof with the assumption \eqref{EqnBound2Ptsa1jksupp}.
Applying the previous lemma, we have
\begin{equation*}
\begin{split}
&\sum_{j,k_1,k_2\in \N} \big|\La[\vsigjj](b_1^{j,k_1,k_2}, b_2^{j,k_1,k_2}, b_3^j,\ldots, b_n^j, b_{n+1}^{j,k_1,k_2}, b_{n+2}^{j,k_1,k_2})\big|
\\&\lesssim
\sum_{j,k_1,k_2\in \N} \min\big\{ 2^{-j\eps/2} \big(\max\{5 ,\beta 2^{k_1\delta+k_2\delta}\} \big)^{-\eps/2} \sup_j \sBN{\eps}{\vsig_j} , \, \sup_j \LpN{1}{\vsig_j}\big\}
%\\&\leq \sum_{j, k_1,k_2\in \N} 2^{-\eps' j - k_1\delta \eps' - k_2 \delta\eps'} c^{-\eps'} \sup_j \sBN{\eps}{\vsig_j} \wedge \sup_j \LpN{1}{\vsig_j}
\\&\lesssim \sup_j \LpN{1}{\vsig_j} \log^3 (1+\beta^{-1} \Ga_\eps),
%\frac{\sup_j \sBN{\eps}{\vsig_j}}{ \sup_j \LpN{1}{\vsig_j}}\w),
\end{split}
\end{equation*}
completing the proof.
\end{proof}

\begin{prop}\label{PropBound2PtSumPos}
Let $b_1,b_2, b_{n+1}, b_{n+2} \in L^\infty(\R^d)$.
Let 
$\fS_j $ be defined by
 \Be \La[\vsigjj]((I-P_j) b_1, (I-P_j) b_2,b_3^j,\ldots, b_n^j, P_j b_{n+1}, P_j b_{n+2})
 =\int b_{n+2}(x)\, \fS_j b_{n+1}(x) \,dx.
 \Ee
 Consider $\fS_j$ as a bounded operator mapping $L^\infty$  functions supported in $B^d(0,1)$ to $L^1(B^d(0,1))$.
Then the sum $\sum\fS_j$ converges in the strong operator topology as 
bounded operators $L^\infty(B^d(0,1))$ to $L^1(B^d(0,1))$ 
and we have for 
% assuming 
$ \supp{b_{n+1}}, \supp{b_{n+2}}\subseteq\{y:|y|\le 1\}$,
\begin{multline*}
%\begin{split}
\Big|\sum_{j\in \bbZ} \La[\vsigjj]((I-P_j) b_1, (I-P_j) b_2,b_3^j,\ldots, b_n^j, P_j b_{n+1}, P_j b_{n+2})\Big|
\\ \lesssim
\sup_j \LpN{1}{\vsig_j} \log^3\q(1+n\Ga_\eps\w)
% \Big(
\prod_{l\in \{1,2,n+1,n+2\}} \|b_l\|_\infty.
%\Big).
%\end{split}
\end{multline*}
\end{prop}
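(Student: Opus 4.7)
The plan is to split $\sum_{j\in\Z}$ into $\sum_{j<0}$ and $\sum_{j\ge 0}$. For $j<0$, I would apply Proposition \ref{PropBound2PtSumNeg} directly with $b_l^j := (I-P_j)b_l$ for $l=1,2$; since $\|P_j\|_{L^\infty\to L^\infty}\le 1$ one has $\|b_l^j\|_\infty\le 2\|b_l\|_\infty$, yielding a contribution $O\big(\sup_j\|\vsig_j\|_{L^1}\prod_{l\in\{1,2,n+1,n+2\}}\|b_l\|_\infty\big)$. For $j\ge 0$, expand $(I-P_j)b_l=\sum_{k_l>0}Q_{j+k_l}b_l$ for $l=1,2$, so the remaining sum becomes a triple sum over $(j,k_1,k_2)$ of
\[
\La[\vsigjj]\big(Q_{j+k_1}b_1,\;Q_{j+k_2}b_2,\;b_3^j,\ldots,b_n^j,\;P_j b_{n+1},\;P_j b_{n+2}\big).
\]

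The key step is a secondary decomposition of $b_1,b_2$ depending on $(k_1,k_2)$. Fix $\delta\in(0,1)$ and $\beta\in(0,1]$ (to be chosen), set $R_{k_1,k_2}:=\max\{10,\beta 2^{(k_1+k_2)\delta}\}$, and write $b_l=b_l^{\mathrm{nr}}+b_l^{\mathrm{fr}}$ with $b_l^{\mathrm{fr}}:=b_l\,\bbone_{\{|v|\ge R_{k_1,k_2}\}}$. Since the convolution kernel of $Q_{j+k_l}$ is supported in $B^d(0,2^{1-j-k_l})\subseteq B^d(0,2)$ for $j,k_l\ge 0$, the function $Q_{j+k_l}b_l^{\mathrm{fr}}$ is supported in $\{|v|\ge R_{k_1,k_2}/2\}$, so each of the three ``at least one far'' contributions (nr-fr, fr-nr, fr-fr) meets the support hypothesis of Lemma \ref{LemmaBound2ptsForBigsupp} and is thereby bounded by $\lesssim\sup_j\|\vsig_j\|_{L^1}\,\log^3(1+\beta^{-1}\Ga_\eps)\prod\|b_l\|_\infty$. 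For the ``both near'' contribution, I would use $\|b_l^{\mathrm{nr}}\|_2\lesssim R_{k_1,k_2}^{d/2}\|b_l\|_\infty$ and apply Proposition \ref{PropBound2PtsBasicL2} with $f=b_1^{\mathrm{nr}}$, $g=b_2^{\mathrm{nr}}$, and the remaining entries normalized to have $L^\infty$-norm $1$, obtaining
\[
\sup_j\|\vsig_j\|_{L^1}\sum_{k_1,k_2>0}R_{k_1,k_2}^{d}\,\min\{2^{-\eps'(k_1+k_2)}n\Ga_\eps,\,1\}\prod\|b_l\|_\infty.
\]

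Now choose $\delta=\eps'/(2d)$ and $\beta=(n\Ga_\eps)^{-1/d}$. Writing $s:=k_1+k_2$ gives $R_{k_1,k_2}^{d}\approx(n\Ga_\eps)^{-1}2^{s\eps'/2}$, and splitting the $s$-sum at $s_0\approx(\eps')^{-1}\log(n\Ga_\eps)$ shows the ``both near'' total is $O(1)\sup_j\|\vsig_j\|_{L^1}$ (the factor $s$ from counting pairs with $k_1+k_2=s$ is absorbed). Since $\beta^{-1}\Ga_\eps\lesssim(n\Ga_\eps)^{1+1/d}$, the ``far'' contribution is controlled by $\log^3(1+n\Ga_\eps)\sup_j\|\vsig_j\|_{L^1}$, giving the claimed estimate. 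Convergence of $\sum_j\fS_j$ in the strong operator topology $L^\infty(B^d(0,1))\to L^1(B^d(0,1))$ follows from two ingredients: first, the $j<0$ part and the ``far'' contributions converge absolutely in $L^1(B^d(0,1))$, which is seen by realizing the $L^1$-norm with a test function $b_{n+2}$ of $L^\infty$-norm $\le 1$ supported in $B^d(0,1)$ and using that all the pointwise bounds above are uniform in such $b_{n+2}$; second, the ``both near'' part inherits convergence from the strong-operator convergence of $\sum_j\cT_{k_1,k_2,j}$ on $L^2$ given in Proposition \ref{PropBound2PtsBasicL2}, transferred to $L^1(B^d(0,1))$ via Cauchy--Schwarz on the bounded domain and then summed geometrically over $(k_1,k_2)$. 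The main obstacle will be the calibration of $R_{k_1,k_2}$: it must be large enough that the ``far'' Lemma~\ref{LemmaBound2ptsForBigsupp}-bound contributes only a $\log^3(1+n\Ga_\eps)$ factor, yet small enough that the $R^d$ volume weight in the ``near'' $L^2$-estimate is fully absorbed by the $\min$-factor without leaking an extra power of $n\Ga_\eps$ into the final estimate.
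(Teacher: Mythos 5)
Your proposal follows essentially the same route as the paper: handle $j<0$ via Proposition~\ref{PropBound2PtSumNeg}, expand $(I-P_j)b_l=\sum_{k_l>0}Q_{j+k_l}b_l$ for $l=1,2$, introduce a $(k_1,k_2)$-dependent near/far cutoff $R_{k_1,k_2}=\max\{10,\beta 2^{(k_1+k_2)\delta}\}$, handle the ``at least one far'' pieces by Lemma~\ref{LemmaBound2ptsForBigsupp} (the paper groups these as terms $(I)$, $(II)$ rather than four cases, but the coverage is identical), handle the ``both near'' piece by Proposition~\ref{PropBound2PtsBasicL2} with $\|b_l^{\mathrm{nr}}\|_2\lesssim R_{k_1,k_2}^{d/2}$, and calibrate $\beta,\delta$ so that the near bound decays geometrically in $k_1+k_2$ while $\beta^{-1}\Gamma_\eps$ remains a power of $n\Gamma_\eps$. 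Your parameter choice $\delta=\eps'/(2d)$, $\beta=(n\Gamma_\eps)^{-1/d}$ is tuned to the correct volume factor $R^d$ from the product of two $L^2$ norms, whereas the paper's displayed choice $\delta=\eps'/(4d)$, $\beta=(n\Gamma_\eps)^{-1/(2d)}$ matches the exponent $R^{2d}$ written in \eqref{EqnBound2PtsPropBoundsecond}; this is an internal inconsistency in the paper (the ``max'' there should also be ``min'', as in the statement of Proposition~\ref{PropBound2PtsBasicL2}), and your version is the consistent one. The convergence argument — absolute summability for $j<0$ and the far terms, plus strong-operator convergence of $\sum_j\cT_{k_1,k_2,j}$ on $L^2$ transferred to $L^1(B^d(0,1))$ via Cauchy–Schwarz and geometric summability in $(k_1,k_2)$ — also matches what the paper does. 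Your proof is correct and in substance the same as the paper's.
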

\begin{proof}
We may assume $\LpN{\infty}{b_l}=1$, $l=1,2,n+1,n+2$. 

By Proposition  \ref{PropBound2PtSumNeg} the required estimate holds for the sum over
 negative $j$ and thus we only bound
 \begin{multline}\label{positivejLinfty}
%  \begin{split}
\Big|\sum_{j\ge 0} \La[\vsigjj]((I-P_j) b_1, (I-P_j) b_2,b_3^j,\ldots, b_n^j, P_j b_{n+1}, P_j b_{n+2})\Big|
\\ \lesssim
\sup_j \LpN{1}{\vsig_j} \log^3\q(1+n\Ga_\eps\w).
%\end{split}
\end{multline}

 Let $0<\beta\leq 1$, $0<\delta<1$ be constants, to be chosen later (see \eqref{betadeltasec}).
Implicit constants below are allowed to depend on $\delta$, but do not depend on $\beta$.
For $l=1,2$ and $k_1,k_2>0$ define
\begin{equation*}
b_{l,\infty}^{k_1,k_2}(v) :=
\begin{cases} b_l(v) &\text{ if }  |v| > \max\{10, \beta\cdot 2^{k_1\delta+k_2\delta+1}\} \\
0 &\text{ otherwise }
\end{cases} 
\end{equation*} 
and
\[b_{l,0}^{k_1,k_2}(v) := b_l(v)- b_{l,\infty}^{k_1,k_2}(v).\]
We have, by \eqref{EqnAuxOpId} and Remark \ref{RmkAuxOpSublety},
\begin{equation*}
\begin{split}
&\Big|\sum_{j\geq 0} \La[\vsigjj]{(I-P_j) b_1, (I-P_j) b_2,b_3^j,\ldots, b_n^j, P_j b_{n+1}, P_j b_{n+2}}\Big|
\\&=\Big|\sum_{k_1,k_2>0}\sum_{j\geq 0} \La[\vsigjj](Q_{j+k_1} b_1, Q_{j+k_2} b_2,b_3^j,\ldots, b_n^j, P_j b_{n+1}, P_j b_{n+2})\Big|\, \le (I)+(II)+(III)
\end{split}
\end{equation*} 
where
\begin{align*}
\\&(I):=
 \sum_{k_1,k_2>0}
 \sum_{j\geq 0}\big|
  \La[\vsigjj](Q_{j+k_1} b_{1,\infty}^{k_1,k_2}, Q_{j+k_2} b_2,b_3^j,\ldots, b_n^j, P_j b_{n+1}, P_j b_{n+2})\big|\,,
\\&(II):= \sum_{k_1,k_2>0}
 \sum_{j\geq 0}\big| \La[\vsigjj](Q_{j+k_1} b_{1,0}^{k_1,k_2}, Q_{j+k_2} b_{2,\infty}^{k_1,k_2},b_3^j,\ldots, b_n^j, P_j b_{n+1}, P_j b_{n+2})\big|\,,
\\&(III):= 
\sum_{k_1,k_2>0}
\Big|\sum_{j\geq 0} \La[\vsigjj](Q_{j+k_1} b_{1,0}^{k_1,k_2}, Q_{j+k_2} b_{2,0}^{k_1,k_2},b_3^j,\ldots, b_n^j, P_j b_{n+1}, P_j b_{n+2})\Big|\,.
\end{align*}
Because $j,k_1,k_2\geq 0$, and by the supports of the functions in question, we have
\[
\supp{Q_{j+k_1} b_{1,\infty}^{k_1,k_2}}, \,\supp{Q_{j+k_2} b_{2,\infty}^{k_1,k_2}}\subseteq\big\{v: |v| > \max\{5, \beta\cdot 2^{k_1\delta +k_2\delta}\}\big\}, 
\] and 
\[ \supp{P_j b_{n+1}}, \supp{P_j b_{n+2}}\subseteq \{ v: |v| \le 4 \}\,.
\]
% 5\vee c\cdot 2^{k_1\delta +k_2\delta}\},  \subset B^d(4).$$
Lemma \ref{LemmaBound2ptsForBigsupp} applies to show
\begin{equation}\label{EqnBound2PtsIandIIBound}
|(I)|+ |(II)| \lesssim \sup_j \LpN{1}{\vsig_j} \log^3\q(1+\beta^{-1} \Ga_\eps).
%\frac{\sup_j \sBN{\eps}{\vsig_j}} {\sup_j \LpN{1}{\vsig_j}}\w).
\end{equation}

We now apply  the $L^2$ result in Proposition \ref{PropBound2PtsBasicL2}.
Let $\cT_{k_1,k_2,j}$ be as in \eqref{Tk1k2j}. 
%defined  by $$
%\int b_2(x) \,T_{k_1,k_2,j} b_{1}(x)\, dx=
%\La[\vsigjj](Q_{j+k_1} b_{1}, Q_{j+k_2} b_{2} ,b_3^j,\ldots, b_n^j, P_j b_{n+1}, P_j b_{n+2}).
%$$
Then 
$\sum_{j\ge 0} \cT_{k_1,k_2,j}$ converges in the strong operator topology as operators $L^2\to L^2$, with
equiconvergence with respect to bounded choices of $b_{n+1}, b_{n+2}\in L^\infty (B^d(0,1))$, moreover the operator norms involve some exponential deacy in $k_1$, $k_2$. 
If we apply this to
$b_{1,0}^{k_1,k_2}, b_{2,0}^{k_1,k_2}$, we may replace the $L^2$ norms with $L^\infty$-norms.
Hence if we define operators 
$\fS_{k_1,k_2,j}$ by 
 $$
\int b_{n+2}(x)\, \fS_{k_1,k_2,j} b_{n+1}(x) \,dx
=
\La[\vsigjj](Q_{j+k_1} b_{1}, Q_{j+k_2} b_{2} ,b_3^j,\ldots, b_n^j, P_j b_{n+1}, P_j b_{n+2})
$$
we  see that 
$\int \sum_j b_{n+2}(x) \fS_{k_1,k_2,j} b_{n+1}(x) dx$ converges with equiconvergence in the choice of 
$b_{n+2}$ with $\|b_{n+2}\|_\infty \le 1$ and $\text{supp}(b_{n+2})\subset B^d(0,1)$. Thus we get convergence of $\sum_{j=0}^\infty  \fS_{k_1,k_2,j}$ in the strong operator topology as operators 
$L^\infty(B^d(0,1))\to L^1(B^d(0,1))$.
For the quantitative estimates
we apply  the $L^2$ result in Proposition \ref{PropBound2PtsBasicL2} 
 and use the supports of $b_{1,0}^{k_1,k_2}, b_{2,0}^{k_1,k_2}$ to get 
for $\eps'<c\eps^2$
\begin{equation}\label{EqnBound2PtsPropBoundsecond}
\begin{split}
(III) &
%= \q|\sum_{\substack{j\geq 0\\ k_1,k_2>0}} \La[\vsigjj](Q_{j+k_1} b_{1,0}^{k_1,k_2}, Q_{j+k_2} b_{2,0}^{k_1,k_2},b_3^j,\ldots, b_n^j, P_j b_{n+1}, P_j b_{n+2})\w|
\lesssim \sum_{k_1,k_2>0} 
\max\big\{2^{-\eps' k_1-\eps' k_2} n \sup_j \sBN{\eps}{\vsig_j},\, \sup_j \LpN{1}{\vsig_j}\big\}\|b_{1,0}^{k_1,k_2}\|_2 \|b_{2,0}^{k_1,k_2}\|_2
\\&\lesssim  \sum_{k_1,k_2>0} 
\max\big\{2^{-\eps' k_1-\eps' k_2} n \sup_j \sBN{\eps}{\vsig_j},\, \sup_j \LpN{1}{\vsig_j}\big\}
\q(\max\{5 ,\, \beta\cdot 2^{k_1\delta+k_2\delta}\}\w)^{2d}.
\end{split}
\end{equation}
Set
\begin{equation}\label{betadeltasec}
\delta=\frac{\eps'}{4d}, \quad \beta = (n\Ga_\eps)^{-\frac{1}{2d}}.
\end{equation}
Note that
\begin{equation*}
\q(\beta\cdot 2^{k_1\delta +k_2\delta}\w)^{2d} \q(2^{-\eps' k_1-\eps' k_2} n \sup_j \sBN{\eps}{\vsig_j}\w) = 2^{-\eps' k_1/2 - \eps' k_2/2} \sup_j \LpN{1}{\vsig_j}.
\end{equation*}
Using this in  \eqref{EqnBound2PtsPropBoundsecond}, we obtain
\begin{equation*}
\begin{split}
(III)&\lc \sup \|\vsig_j\|_{L^1}\sum_{k_1,k_2>0} \max\{ 2^{-\eps' k_1-\eps' k_2} n\Ga_\eps,\, 1\} (1+\beta  \cdot 2^{k_1\delta+k_2\delta})^{2d}
%\\&\lesssim \q(\sup_j \LpN{1}{\vsig_j}\w)\log^2\q(1+n\frac{\sup_j \sBN{\eps}{\vsig_j}}{\sup_j \LpN{1}{\vsig_j}}\w)+ \sum_{k_1, k_2>0} 2^{-\eps' k_1/2 -\eps' k_2/2} \sup_j \LpN{1}{\vsig_j}
\\&\lesssim  \q\sup_j \LpN{1}{\vsig_j}\w\log^2(1+n\Ga_\eps).
%\frac{\sup_j \sBN{\eps}{\vsig_j}}{\sup_j \LpN{1}{\vsig_j}}\w).
\end{split}
\end{equation*}
Plugging the choice of $\beta$ into \eqref{EqnBound2PtsIandIIBound} completes the proof
of \eqref{positivejLinfty}.

Finally, we reexamine the proof to get the asserted convergence in the strong operator topology.
This is immediate for the sums corresponding to the terms $(I)$, $(II)$ in view of the decay estimates in the proof of Lemma  \ref{LemmaBound2ptsForBigsupp}.
For (III) we easily get the assertion from the above statements about convergence of 
$\sum_{j\ge 0}\fS_{k_1,k_2,j}$ and the exponential decay estimates in $k_1, k _2$.
\end{proof}

\subsubsection*{Proof of Theorem \ref{PropBound2PtsRealThm}, conclusion.}
We shall apply Theorem \ref{ThmJourne3}. 
We need to verify 
that for every ball $B^d(x_0,r)$, $b_{n+1}\in L^\infty (B^d(x_0,r))$, $\|b_{n+1}\|_\infty=1$, 
\[
 \int_{B_d(x_0,r)}
\Big| \sum_{|j|>N} P_j S_j[b_1, b_2] P_j b_{n+1}(x)\Big|dx \to 0
\]
as $N\to \infty$ and
\begin{multline}\label{PjSjPjNest}
 \sup_N r^{-d} \int_{B_d(x_0,r)}
\Big| \sum_{|j|\le N} P_j S_j[b_1, b_2] P_j b_{n+1}(x)\Big|\, dx \\ \lc
\sup_j \LpN{1}{\vsig_j}\log^3 
(1+n\Ga_\eps) \|b_1\|_\infty \|b_2\|_\infty \,.
\end{multline}
For $x_0=0$ and $r=1$ these statements follow from 
Proposition \ref{PropBound2PtSumPos}.
We argue by rescaling to obtain  the same statement for other balls.
Let $\ell$ be such that $2^{\ell-1}\le r\le 2^\ell$.
Let $\widetilde b_i(x)= b_i(x_0+2^\ell x)$, $i=1,2, n+1, n+2$ and 
$\widetilde b_i^j(x)= b_i^{j-\ell}(x_0+2^\ell x)$, $3\le i\le n$.
Then by  changes of variables
\begin{multline*}
\int b_{n+2}(x) \, S_j[b_1,b_2] b_{n+1}(x) \: dx
\\=
2^{\ell d} \La[ \vsig^{(2^{j+\ell})}]\big( (I-P_{j+\ell}) \widetilde b_1,
(I-P_{j+\ell}) \widetilde b_2, \widetilde b_3^{j+\ell}, \dots , \widetilde b_n^{j+\ell},
\widetilde b_{n+1}, \widetilde b_{n+2}\big).
\end{multline*}
%\iint \vsig_j^{(2^{j+\ell})}(\alpha,v) (I
%Using that the class of operators under consideration is translation and dilation invariant, it suffices to prove the result for $x_0=0$ and $r=1$.
We use
% that ${}^t\!P_j=P_j$ and
the fact that the functions 
$\widetilde b_{n+1}$, $\widetilde b_{n+2}$ are supported in the unit ball centered at the origin. Then  the result follows immediately from the statement for $x_0=0$, $r=1$.
% is just a restatement of Proposition 
%Corollary \ref{PropBound2PtsFullSum}.

%%Theorem \ref{ThmJourne3}. 
In order to verify the  $\Op_\eps$-assumptions 
in Theorem \ref{ThmJourne3} we use Lemma
 \ref{PSP-lemma}  with $C_0\lesssim \sup_j \LpN{1}{\vsig_j}$ and $C_\eps\lesssim \sup_j \sBN{\eps}{\vsig_j}$. Now Theorem \ref{ThmJourne3} yields  
 \begin{equation*}
\LpOpN{2}{S[b_1,b_2]} \lesssim \LpN{\infty}{b_1} \LpN{\infty}{b_2} \q(\sup_j \LpN{1}{\vsig_j}\w) 
\log^3(1+n \Ga_\eps).
%\log^3\q(1+n \frac{\sup_j \sBN{\eps}{\vsig_j}}{\sup_j \LpN{1}{\vsig_j}}\w).
\end{equation*} 
 Finally we combine this inequality with 
 Corollary  \ref{ThmWeakTypeWithP},   with  the choices 
$A\lesssim \sup_j \LpN{1}{\vsig_j}$ and $B\lesssim \sup_j \sBN{\eps}{\vsig_j}$.
This yields the asserted $L^p$ bound. \qed

\section{Proof of Theorem \ref{main-parts}: Part \ref{ItemBound1IminusPt}}
\label{Sectionpart1iminus}
%\input{part1iminus}

% !TEX root =  main.tex
Let $1\leq l\leq n+2$.  In this section, we consider the multilinear form
\begin{equation*}
\La_l^2(b_1,\ldots, b_{n+2}):=
\sum_{j\in \cJ} \La[\vsigjj](P_j b_1,\ldots, P_j b_{l-1}, (I-P_j) b_l, P_j b_{l+1},\ldots, P_j b_{n+2}),
\end{equation*}
where $\cJ\subset\bbZ$ is a finite set, and, given  some fixed $\epsilon>0$, $\vec\vsig=\{\vsig_j : j\in \Z\}\subset \sBtp{\epsilon}{\R^n\times \R^d}$ is a bounded set with $\int \vsig_j(\alpha,v) \: dv=0$, $\forall \alpha,j$.  Our task is to  show that for 
%The goal of this section is to prove the following theorem.
%\begin{thm}\label{ThmPart1IMinus}
$p\in (1,2]$,
% and let $p'$ be dual to $p$.  Let $b_1,\ldots, b_n\in L^\infty(\R^d)$ and $b_{n+1}\in L^{p'}(\R^d)$, $b_{n+2}\in L^p(\R^d)$.  Then,
\begin{equation}\label{Lambda-l-2est}
\q|\La^2_l (b_1,\ldots, b_{n+2})\w| \leq C_{d,p,\epsilon} n \sup_j \LpN{1}{\vsig_j}
%\log^3\q(1+n\frac{\sup_j \sBN{\epsilon}{\vsig_j}}{\sup_j \LpN{1}{\vsig_j}}\w) 
\log^3\q(1+n\Ga_\eps)
\big[\prod_{i=1}^n \|b_i\|_\infty\big]  \|b_{n+1}\|_p \|b_{n+2}\|_{p'}
%\LpN{p'}{b_{n+1}} \LpN{p}{b_{n+2}}.
\end{equation}
%\end{thm}
where the implicit constant is independent of $\cJ$. Moreover
we wish to show that the sum defining the operator $T^2_l$ associated to $\La_l^2$ via \eqref{operatorassoc}  converges in the strong  operator topology as operators bounded 
on $L^p$.  The heart of the proof 
%\eqref{Lambda-l-2est}
%Theorem \ref{ThmPart1IMinus} 
lies in the next theorem   which we shall prove first. Let $\Gamma_\eps\equiv \Gamma_\eps(\vec\vsig)$ be as in \eqref{Gammaeps}.

\begin{thm}\label{PropPart1IMinus}
Let $b_1,\ldots, b_n\in L^\infty(\R^d)$, $b_{n+1},b_{n+2}\in L^2(\R^d)$.   Then,
$$\lim_{N\to\infty}\sum_{j=-N}^N \La[\vsigjj](P_j b_1,\ldots, P_j b_{l-1}, (I-P_j) b_l, P_j b_{l+1},\ldots, P_j b_{n+2}) =\La_{n+2}^2(b_1,\ldots, b_{n+2})$$
and $\La_{n+2}^2$ satisfies
\begin{equation*}
\q|\La_{n+2}^2(b_1,\ldots, b_{n+2})
\w|\leq C_{d,\epsilon} n \q(\sup_j \LpN{1}{\vsig_j}\w) \log^3(1+n\Gamma_\eps) \big[\prod_{m=1}^n \LpN{\infty}{b_m}\w\big] \LpN{2}{b_{n+1}} \LpN{2}{b_{n+2}}.
\end{equation*}
Moreover the sums defining the operator $T^2_{n+2} $ associated with 
 $\La^2_{n+2} $ converge in the strong operator topology as operators $L^2\to L^2$.
 \end{thm}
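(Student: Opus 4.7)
The plan is to recast $\La^2_{n+2}(b_1,\dots,b_{n+2}) = \int b_{n+2}(x)\,\cT b_{n+1}(x)\,dx$, where $\cT = \sum_{j\in\bbZ}(I-P_j)W_j P_j$ and $W_j = W_j[\vsig_j, P_jb_1,\dots,P_jb_n]$ is the operator defined by $\int g\cdot W_j f\,dx = \La[\vsigjj](P_jb_1,\dots,P_jb_n,f,g)$. Since $\phi\ge 0$ and $\int\phi=1$, $\|P_jb_l\|_\infty\le\|b_l\|_\infty$, so the families $b_l^j:=P_jb_l$ are uniformly bounded in $L^\infty$ and fit into the framework of \S\ref{Sectionpartt1}. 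Expanding $I-P_j = \sum_{k>0}Q_{j+k}$ yields the formal decomposition $\cT = \sum_{k>0}\cT^k$ with $\cT^k := \sum_{j\in\bbZ}Q_{j+k}W_j P_j$; the goal is to prove, for each $k>0$, that the partial sums of $\cT^k$ converge in the strong operator topology on $L^2$ to a bounded operator satisfying
\begin{equation*}
\|\cT^k\|_{L^2\to L^2}\lesssim \sup_j\|\vsig_j\|_{L^1}\min\bigl\{2^{-\eps_1 k}n\Ga_\eps^2,\,n\log^2(1+n\Ga_\eps)\bigr\}
\end{equation*}
for some $\eps_1>0$ independent of $n$. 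Summing in $k>0$ via Lemma \ref{LemmaBasicSum} then delivers the claimed bound $n\sup_j\|\vsig_j\|_{L^1}\log^3(1+n\Ga_\eps)$ on $\|\cT\|_{L^2\to L^2}$, and the exponential decay in $k$ transfers the strong operator convergence from each $\cT^k$ to $\cT$.

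For fixed $k>0$, Proposition \ref{PropAuxQb} reduces both the $L^2$ bound on $\cT^k$ and the convergence of its partial sums to a uniform (for $\zeta\in\sU$ with $\|\zeta\|_\sU=1$) square function estimate for $\Qb{j+k}{\zeta}W_j P_j f$. I then split
\begin{equation*}
W_j P_j f = \mathrm{Mult}\{W_j 1\}\,P_j f + \bigl(W_j P_j f - \mathrm{Mult}\{W_j 1\}\,P_j f\bigr)
\end{equation*}
and bound the two resulting square functions separately. The cancellation piece is handled directly by Proposition \ref{PropT1Canc}, which is formulated for arbitrary bounded sequences $b_1^j$ (and hence applies to $b_1^j = P_j b_1$), yielding the desired $\min\{2^{-\eps_1 k}n\Ga_\eps^2,\log(1+n\Ga_\eps)\}$-type decay.

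The main obstacle is the Carleson term $\sum_j\|\Qb{j+k}{\zeta}(W_j 1)\cdot P_j f\|_2^2$. By the Carleson embedding theorem this reduces to proving that $w_k(x,j) := \Qb{j+k}{\zeta}(W_j 1)(x)$ is a Carleson function with norm of at most the claimed size. My proof would adapt that of Proposition \ref{PropT1Carleson} but is technically lighter: the absence of an $(I-P_j)$ on the first entry removes the outer Littlewood--Paley sum over $k_2$. Using the rescaling identity \eqref{changeofvarQW} the Carleson estimate reduces to bounding the local square function
\begin{equation*}
\Bigl(\sum_{j\ge 0}\int_{|x|\le 1}\bigl|\Qb{j+k}{\zeta}(W_j[\vsig_j,b_1^j,\dots,b_n^j]\,1)(x)\bigr|^2\,dx\Bigr)^{1/2}
\end{equation*}
on the fixed unit ball. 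I split $b_1^j = b_{1,\infty}^{j,k} + b_{1,0}^{j,k}$ by support, where $b_{1,\infty}^{j,k}$ vanishes on $|v|\le R_k := \max\{5,\beta 2^{k\delta}\}$: the contribution of $b_{1,\infty}^{j,k}$ is controlled by the far--field estimate (an adaptation of Lemma \ref{LemmaBoundT13On22}), which supplies polynomial $(2^j R_k)^{-\eps/4}$ decay, while the contribution of $b_{1,0}^{j,k}$ is bounded by duality combined with a Khinchine square function argument as in Lemma \ref{CorBoundT1V} together with the trivial bound $\|b_{1,0}^{j,k}\|_2\lesssim R_k^{d/2}$. Optimizing the parameters as in \eqref{betadelta} (i.e.\ $\beta\sim(n\Ga_\eps)^{-1/d}$, $\delta\sim\eps/d$) balances the two contributions and yields the Carleson bound. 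Inserting the two square function estimates into the reduction supplied by Proposition \ref{PropAuxQb} completes the proof of the $L^2$ bound and the strong operator convergence for each $\cT^k$, and hence for $\cT$.
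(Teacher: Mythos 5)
Your plan is to reuse the Carleson machinery of \S\ref{Sectionpartt1}, splitting $W_jP_jf$ into a multiplication-by-$(W_j1)$ term plus a cancellation term, applying Proposition \ref{PropT1Canc} verbatim for the latter and an ``adaptation'' of Proposition \ref{PropT1Carleson} for the former. That is a genuinely different route from the paper, which proves Theorem \ref{PropPart1IMinus} via the telescoping $Q$-expansion \eqref{Q-expansion}, the operators $S_j^{m_1,m_2}$, $T_j^{m_1,m_2}$, the $\Op_\eps$ estimates of Proposition \ref{OpestST}, and Theorem \ref{ThmJourne2}. Unfortunately there is a structural gap in your Carleson step that you appear not to have noticed.

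The crucial point is that Proposition \ref{PropT1Carleson} is stated for
\begin{equation*}
w_k(x,j)=\Qb{j+k}{\zeta}\,W_j[\vsig_j,(I-P_j)b_1]\,1(x),
\end{equation*}
and the \emph{entire} mechanism of its proof depends on the $(I-P_j)$ in the $b_1$ slot: the expansion $(I-P_j)=\sum_{k_2>0}Q_{j+k_2}$ introduces a second Littlewood--Paley cut, which is what makes Lemma \ref{CorBoundT1V} (and its engine, Theorem \ref{PropL2BoundFor2Qs}) applicable. In your setting the first entry is $P_jb_1$, and the analogue of $V_{j,k_1,k_2}$ would have $P_jg$ in the first slot and only one $\Qb{\,\cdot\,}{\zeta}$ in the last. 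Theorem \ref{PropL2BoundFor2Qs} needs two cuts to give the $L^2$ decay, and the almost-orthogonality condition of Lemma \ref{ThmAlmostOrthogonal} fails as well: writing out $\Qt_{k_1}\tilde V_{j+k_1,k}\Qt_{j+k_1+k_2}$, the composition $P_{j+k_1}\Qt_{j+k_1+k_2}$ provides decay only for $k_2>0$, so $\sum_{j,k_2}A_{j,k_2}$ diverges. Consequently the Khinchine square-function bound you invoke for the near-field term is not available. Your remark that removing $(I-P_j)$ ``removes the outer Littlewood--Paley sum over $k_2$'' and makes the argument ``technically lighter'' has it exactly backwards: the $(I-P_j)$ on $b_1$ is what supplies the second cut; without it the argument is harder, not easier, and requires the hypothesis $\int\vsig_j(\alpha,v)\,dv=0$ to create an effective second cut (which the paper does systematically through the $Q$-expansion, producing $n+1$ telescoping terms -- this is also where the factor $n$ comes from, a point you assert in your bound for $\cT^k$ but never account for).

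A second, secondary problem: your near-field cutoff $b_{1,0}^{j,k}=(P_jb_1)\mathbbm{1}_{|v|\le R_k}$ depends on $j$, whereas the near-field function $b_{1,0}^{k_1,k_2}$ in the paper's proof of Proposition \ref{PropT1Carleson} is $j$-independent; the Khinchine/randomization argument in Lemma \ref{CorBoundT1V} is applied to a \emph{fixed} function and does not apply directly to a $j$-dependent family. Even if one rewrites $b_{1,0}^{j,k}=(P_jc)\mathbbm{1}_{|v|\le R_k}$ for a $j$-independent $c=b_1\mathbbm{1}_{|v|\le 2R_k}$, one is left with terms of the form $\Qb{j+k}{\zeta}W_j[\vsig_j,P_jc,\dots,P_jb_n]1$, i.e.\ precisely the original $\La^2$-type configuration with all slots carrying $P_j$, so the reduction is circular. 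To close these gaps you would essentially have to reproduce the paper's $Q$-expansion or the equivalent Leibniz-rule telescoping in the $b_l$ variables.
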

The full proof of \eqref{Lambda-l-2est} will be given in \S\ref{sect12concl} below.
%The remainder of this section is devoted to the proof of Proposition \ref{PropPart1IMinus}.

\subsection{Outline of the proof of Theorem \ref{PropPart1IMinus}}\label{partfouroutline}
We give an outline of the steps and refer to  \S\ref{Opepssect} for some technical details.

We first describe the basic decomposition of 
$\La_{n+2}^2(b_1,\ldots, b_{n+2})$ which is derived from a decomposition 
of $\La[\vsigjj](P_jb_1,\dots, P_j b_{n+1}, (I-P_j)b_{n+2}$, for fixed $j$. Write 
\begin{equation*} \begin{split}
& \La[\vsigjj](P_j b_1,\ldots, P_j b_{n+1}, (I-P_j) b_{n+2})
\\
&=\lim_{M\to\infty}\Big(
 \La[\vsigjj](P_{j+M}P_j b_1,\ldots, P_{j+M}P_j b_{n+1}, (I-P_j) b_{n+2})
\\&\qquad\qquad\qquad \qquad-
 \La[\vsigjj](P_{j-M}P_j b_1,\ldots, P_{j-M}P_j b_{n+1}, (I-P_j) b_{n+2})\Big)
\\
&=\lim_{M\to\infty}\sum_{m=-M+1}^M\Big(
 \La[\vsigjj](P_{j+m}P_j b_1,\ldots, P_{j+m}P_j b_{n+1}, (I-P_j) b_{n+2})
\\&\qquad\qquad\qquad \qquad
-
 \La[\vsigjj](P_{j+m-1}P_j b_1,\ldots, P_{j+m-1}P_j b_{n+1}, (I-P_j) b_{n+2})\Big)
\end{split}
\end{equation*}
and use the multilinearity 
to obtain the decomposition
\Be\label{Q-expansion}
\begin{aligned}
 \La[\vsigjj](P_j b_1,\ldots, P_j b_{n+1}, &(I-P_j) b_{n+2})\,
\\
=\sum_{l=1}^{n+1} \sum_{m=-\infty}^{\infty} 
\La[\vsigjj]\big(&P_{j+m-1}P_jb_1,\dots, 
P_{j+m-1}P_jb_{l-1} , Q_{j+m} P_j b_l,
\\
&P_{j+m}P_j b_{l+1},\dots, P_{j+m}P_jb_{n+1}, (I-P_j)b_{n+2}\big).
\end{aligned}
\Ee
The terms for $l=1,\dots,n$ are handled in a similar fashion,
in fact the estimates can be reduced to the case $l=1$ by using Theorem \ref{ThmOpResAdjoints}, permuting the first and the ${l}^{\text {th}}$ entry, and accordingly changing the family $\{\vsig_ j\}$.

Now let  
\Be \label{Xsub-def} X_{k}^i \in \{ P_{k}, P_{k-1}\}.\Ee
Then we need to show
\begin{multline} \label{Q-n+1pos-XP-estimate}
%\begin{split}&
\Big|\sum_{j=-N}^N \sum_{m=-\infty}^\infty
\La[\vsigjj] (X_{j+m}^1P_j b_1, X_{j+m}^2P_j b_2, \dots, Q_{j+m} P_j b_{n+1}, (I-P_j)b_{n+2})\Big|
\\
\lc \sup_j\|\vsig_j\|_{L^1} \log^2(1+n\Ga_\eps) \big(\prod_{i=1}^n\|b_i\|_\infty\big) \|b_{n+1}\|_2 \|b_{n+2}\|_2
%\end{split}
\end{multline}
and
\begin{multline} \label{Q-firstpos-XP-estimate}
%\begin{split}&
\Big|\sum_{j=-N}^N \sum_{m=-\infty}^\infty
\La[\vsigjj] (Q_{j+m}P_j b_1, X_{j+m}^2P_j b_2, \dots, X_{j+m}^{n+1} P_j b_{n+1}, (I-P_j)b_{n+2})\Big|
\\
\lc \sup_j\|\vsig_j\|_{L^1} \log^2(1+n\Ga_\eps) \big(\prod_{i=1}^n\|b_i\|_\infty\big) \|b_{n+1}\|_2 \|b_{n+2}\|_2
%\end{split}
\end{multline}
with implicit constants uniform in $N$; moreover we need to show the existence of the limits as $N\to \infty$,
for the corresponding operator sums in the strong operator topology.  By another application of Theorem 
 \ref{ThmOpResAdjoints}
(this time permuting the entries $(1, n+1)$), with the corresponding change of the family $\{\vsig_j\}$),
we see that \eqref{Q-n+1pos-XP-estimate} can be deduced from
\begin{multline} \label{Q-1-n+1-XP-estimate}
%\begin{split}&
\Big|\sum_{j=-N}^N \sum_{m=-\infty}^\infty
\La[\vsigjj] (Q_{j+m}P_j b_1,  X_{j+m}^2P_j b_2, \dots, X^{n+1}_{j+m}P_j b_{n+1}, (I-P_j)b_{n+2})\Big|
\\
\lc \sup_j\|\vsig\|_{L^1} \log^2(1+n\Ga_\eps) \big(\prod_{i=2}^{n+1}\|b_i\|_\infty\big) \|b_{1}\|_2 \|b_{n+2}\|_2.
%\end{split}
\end{multline}
It remains to prove \eqref{Q-firstpos-XP-estimate}, \eqref{Q-1-n+1-XP-estimate}.
We shall also decompose further using $(I-P_j)b_{n+2}= \sum_{m_2\in \bbN} Q_{j+m_2} b_{n+2}$. This  leads to the following  definition.
\begin{defn}
Let $m, m_1\in \bbZ$, $m_2>0$. 

For $b_{n+1}\in L^\infty(\bbR^d)$ the  operators 
%$S^m_j[b_{n+1}]$ and
$S^{m_1,m_2}_j[b_{n+1}]$ are defined by
%\begin{multline}\label{Sjmdef}
%\int g(x) S_{j}^{m} [b_{n+1}] f(x)\: dx \\
%:=\La[\vsigjj](Q_{j+m_1}P_j g, X_{j+m_1}^2 P_j b_2, \cdots, 
%X_{j+m_1}^n P_j b_n, 
%X_{j+m_1}^{n+1} P_j b_{n+1}, (I-P_j) f)
%\end{multline} and
\begin{multline}
\label{Sjmmdef}
\int g(x) S_{j}^{m_1,m_2} [b_{n+1}] f(x)\: dx \\
:=\La[\vsigjj](Q_{j+m_1}P_j g, X_{j+m_1}^2 P_j b_2, \cdots, 
X_{j+m_1}^n P_j b_n, 
X_{j+m_1}^{n+1} P_j b_{n+1}, Q_{j+m_2} f).
\end{multline}

For $b_1\in L^\infty(\bbR^d)$ 
 the operators 
 %$T^{m}_j[b_1]$ and
 $T^{m_1,m_2}_j[b_1]$ are defined by 
 %\begin{multline}
% \label{Tjmdef}\int g(x)T_{j}^{m} [b_{1}] f(x)\: dx \\ :=\La[\vsigjj](Q_{j+m_1}P_j b_1, X_{j+m_1}^2 P_j b_2, \cdots,X_{j+m_1}^n P_j b_n,  X_{j+m_1}^{n+1} P_j g, (I-P_j) f). 
%\end{multline} and
\begin{multline}
 \label{Tjmmdef}\int g(x)T_{j}^{m_1,m_2} [b_{1}] f(x)\: dx \\
 :=\La[\vsigjj](Q_{j+m_1}P_j b_1, X_{j+m_1}^2 P_j b_2, \cdots,X_{j+m_1}^n P_j b_n,  X_{j+m_1}^{n+1} P_j g, Q_{j+m_2} f). 
\end{multline}
\end{defn}

We formulate an auxiliary result.
%needed in the proof of Proposition \ref{mainpropforST}.
It gives bounds in the $\Op(\eps)$-classes
defined in \eqref{EqnJourneBoundCep}
for suitable normalizing dilates of the operators
$S^{m_1,m_2}_j[b_{n+1}]$, $T^{m_1,m_2}_j[b_{1}]$. We use the same notation for these operators and their Schwartz kernels.
\begin{prop}\label{OpestST}
Let
%\begin{align*} 
\Be \label{sigmammdef}
\sigma^{m_1,m_2}_j = \begin{cases}
\Dil_{2^{-j}} ( S^{m_1,m_2}_j[b_{n+1}]) &\text{ if } m_1\ge 0,
\\
\Dil_{2^{-j-m_1}} ( S^{m_1,m_2}_j[b_{n+1}]) &\text{ if } m_1< 0,
\end{cases}\Ee 
and 
\Be \label{taummdef}
\tau^{m_1,m_2}_j = \begin{cases}
\Dil_{2^{-j}} ( T^{m_1,m_2}_j[b_{1}]) &\text{ if } m_1\ge 0,
\\
\Dil_{2^{-j-m_1}} ( T^{m_1,m_2}_j[b_{1}]) &\text{ if } m_1< 0.
\end{cases} 
\Ee There  exists $\eps'>c(\eps)$ (independent of $n$)  such that,  for $m_2>0$,
\Be \label{sigmammest}\begin{aligned}
&\big\| \sigma^{m_1,m_2}_j
\big\|_{\Op_\eps} \lc 2^{-\eps' (|m_1|+m_2)} n^2 \|\vsig_j\|_{\cB_\eps} \|b_{n+1}\|_\infty,
\\
&\big\| \sigma^{m_1,m_2}_j
\big\|_{\Op_0} \lc \|\vsig_j\|_{L^1} \|b_{n+1}\|_\infty,
\end{aligned}
\Ee
and
\Be\label{taummest}
\begin{aligned}
&\big\| \tau^{m_1,m_2}_j
\big\|_{\Op_\eps} \lc 2^{-\eps' (|m_1|+m_2)} n^2 \|\vsig_j\|_{\cB_\eps} \|b_{1}\|_\infty,
\\
&\big\| \tau^{m_1,m_2}_j
\big\|_{\Op_0} \lc 
\|\vsig_j\|_{L^1} \|b_{1}\|_\infty\,.
\end{aligned}
\Ee
\end{prop}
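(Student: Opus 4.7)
By the $L^\infty$-homogeneity of the definitions, I may normalize $\|b_\ell\|_\infty=1$ for all fixed $b_\ell$. The scale invariance of the multilinear form (Lemma \ref{scalinglemma}) permits a conjugation by a dilation reducing to the case $j=0$ when $m_1\ge 0$, and to $j+m_1=0$ when $m_1<0$; this is exactly the dilation built into the definitions of $\sigma^{m_1,m_2}_j$ and $\tau^{m_1,m_2}_j$. The operators $\sigma$ and $\tau$ differ only in the position (position $1$ vs.\ position $n+1$) where the test function $g$ enters the form. For $\sigma$ I apply Theorem \ref{ThmOpResAdjoints} with the Case~(iii) transposition $\vp=(1,\,n+1)$, replacing $\vsig_j$ by $\ell_\vp\vsig_j$, which satisfies $\|\ell_\vp\vsig_j\|_{\cB_{\eps_1}}\lesssim n\|\vsig_j\|_{\cB_\eps}$ and $\|\ell_\vp\vsig_j\|_{L^1}=\|\vsig_j\|_{L^1}$; this moves $g$ into the canonical position $n+1$. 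For $\tau$ only a Case~(ii) swap of positions $n+1$ and $n+2$ is needed, costing no $n$-factor.

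\textbf{The $\Op_0$ bound} is immediate: Lemma \ref{LemmaBasicL2BasicLpEstimate} yields the trivial bound $|\La[\vsigjj]|\lesssim\|\vsig_j\|_{L^1}$, and since $P_j,Q_{j+m_1},X^i_{j+m_1},Q_{j+m_2}$ are uniformly $L^1\to L^1$ and $L^\infty\to L^\infty$ bounded with constants $\lesssim 1$, the composed Schur norms $\Sha^1,\Sha^\infty$ of $\sigma^{m_1,m_2}_j$ and $\tau^{m_1,m_2}_j$ are controlled by $\|\vsig_j\|_{L^1}$ (with constants depending only on $d$).

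\textbf{The $\Op_\eps$ bound with decay.} Without the $2^{-\eps'(|m_1|+m_2)}$ decay, each of the six seminorms comprising $\Op_\eps$ admits a direct bound by $\|\vsig_j\|_{\cB_\eps}$: $\Sha^1_\eps,\Sha^\infty_\eps$ follow from the $\cB_{\eps,1}+\cB_{\eps,4}$ pieces of $\|\vsig\|_{\cB_\eps}$, while the four regularity seminorms $\Zhe^{1,\infty}_{\eps,\rml/\rmr}$ are handled by splitting difference-quotients into small-$h$ parts (controlled by $C^\infty$-smoothness of $P_j$) and large-$h$ parts (controlled by the $\cB_{\eps,2}+\cB_{\eps,3}$ regularity of $\vsig_j$). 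To extract the decay I apply the Leibniz-type cancellation identity
\begin{equation*}
\prod_{i=1}^n f_i(w_i)-\prod_{i=1}^n f_i(w'_i)=\sum_{i=1}^n\Big(\prod_{k<i}f_k(w'_k)\Big)\bigl(f_i(w_i)-f_i(w'_i)\bigr)\Big(\prod_{k>i}f_k(w_k)\Big),
\end{equation*}
as in the proof of Theorem \ref{ThmBasicL2Resultfirst}, separately for the mean-zero factor $Q_{j+m_2}$ (paired with the $v$-Hölder regularity from $\cB_{\eps,3}$ of $\vsig_j$ and the Lipschitz regularity of $P_j b_i$) and for the mean-zero factor $Q_{j+m_1}$ (paired with the smoothness of the $P_j b_\ell$'s or, in the subcase $m_1<0$, with the $\alpha$-regularity from $\cB_{\eps,2}$ and the weighted integrability from $\cB_{\eps,1}$). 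Each expansion produces $n+1$ summands, each carrying a factor $2^{-m_2\eps}$ or $2^{-|m_1|\eps}$; combining the two yields the claimed $n^2 \cdot 2^{-\eps'(|m_1|+m_2)}$ decay, with $\eps'>0$ depending only on $\eps$ and $d$.

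\textbf{Main obstacle.} The principal technical difficulty is combining the two Leibniz-cancellations uniformly across all six $\Op_\eps$ seminorms without double-counting $n$'s, and in particular handling the subcase $m_1<0$, where no smoothness of $\vsig_j$ in $v$ is directly usable for the $Q_{j+m_1}$-cancellation and one must route the decay through the $\alpha$-integrability and $\alpha$-regularity pieces of $\cB_\eps$. A careful case-by-case bookkeeping is required to confirm that the two expansions factor as $n\cdot n=n^2$, rather than coupling to produce $n^3$ or worse, which in turn hinges on the fact that the Leibniz expansion for the $Q_{j+m_1}$-cancellation involves differences that are \emph{independent} of the variables of integration against $Q_{j+m_2}$.
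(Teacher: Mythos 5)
Your overall blueprint is in the right spirit (trivial Schur bound for $\Op_0$, two rounds of cancellation producing a factor $n^2$ and exponential decay in $|m_1|+m_2$), and your identification of the main obstacle — the subcase $m_1<0$ — is correct. However, the mechanism you propose for that subcase does not work, and the argument omits the ingredient on which the paper's proof actually hinges: the moment cancellation $\int\vsig_j(\alpha,v)\,dv=0$ (hypothesis \eqref{cancvsigj} / \eqref{vsigcanc}). In the paper, the decay in $|m_1|$ for $m_1<0$ is extracted via Lemma~\ref{PropPart1IMinusOpProp1}, whose proof starts by using $\int\vsig\,dv=0$ to write $K_{\ell,0}$ as an integral of $\vsig^{(2^\ell)}$ against a \emph{difference} of products, and only then applies the Leibniz rule; the two auxiliary Lemmas~\ref{LemmaPart1MinusSecondOpInt1} and~\ref{LemmaPart1MinusSecondOpInt2} each estimate a built-in difference $|\phi(\cdot-\alpha_1 v)-\phi(\cdot)|$, from which the factor $2^{-\ell\eps/2}$ is obtained by combining the scaling of $\vsig^{(2^\ell)}$ with the weighted integrability $\cB_{\eps,1}$, $\cB_{\eps,4}$. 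Your proposal tries to route the decay through ``the mean-zero factor $Q_{j+m_1}$,'' but when $m_1<0$ the kernel of $Q_{j+m_1}P_j$ lives at the \emph{coarser} scale $2^{-j-m_1}$ while the relevant profiles $P_jb_\ell$ only have Lipschitz control at the finer scale $2^{-j}$, so the cancellation of $Q_{j+m_1}$ against the smoothness of $P_jb_\ell$ produces a factor $\lesssim 2^{-m_1}$ which is \emph{large}, not small. Without invoking the cancellation of $\vsig$ there is no evident way to repair this, and neither $\cB_{\eps,1}$ nor $\cB_{\eps,2}$ alone supplies the missing subtraction.

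Two smaller points. First, the decay in $m_2$ in the paper does not come from a Leibniz expansion against $v$-Hölder regularity of $\vsig$; it comes from writing $Q_{j+m_2}=2^{-(j+m_2)}\sum_\mu \partial_{x_\mu} R^\mu_{j+m_2}$ (Lemma~\ref{LemmaAuxDecompPsi}) and integrating by parts, which produces the factor $2^{-m_2+m_1}$ (resp.\ $2^{-m_2-m_1}$ for $m_1>0$) together with $n+1$ terms — so one of your two ``Leibniz rounds'' is really an integration by parts, and the other ($n$) factor is the one internal to Lemma~\ref{PropPart1IMinusOpProp1}. Second, the reduction of $\sigma$ to $\tau$ via a Case~(iii) transposition through $\ell_\vp$ is a detour the paper does not take (it treats $S$ and $T$ symmetrically by relabeling, without $\ell_\vp$), and as stated it is not cost-free: a single Case~(iii) transposition costs a factor of $n$ in the $\cB_{\eps}$-norm, which on top of your claimed $n^2$ from the two expansions would give $n^3$, exceeding the bound in \eqref{sigmammest}. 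You would need to explain how that extra $n$ is absorbed rather than multiplied.
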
 
The proof will be given in \S\ref{Opepssect} below. Note that we have the trivial estimate
$\|\cdot\|_{\Op_0} \le  \|\cdot\|_{\Op_\eps}$, and therefore the $\Op_0$ bounds stated in Proposition 
\ref{OpestST} will only be used for  $2^{\eps(|m_1|+ m_2)}\lc n^2\Gamma_\eps$.

%\big\| \Dil_{2^{-j}} ( S^{m_1,m_2}_j[b_{n+1}])\big\|_{\Op_0} \lc \|\vsig\|_{L^1} \|b_{n+1}\|_\infty\end{align*} and\begin{align*}&\big\| \Dil_{2^{-j}} ( T^{m_1,m_2}_j[b_{1}])\big\|_{\Op_\eps} \lc 2^{-\eps' m_1-\eps'm_2} n^2 \|\vsig\|_{\cB_\eps} \|b_{1}\|_\infty\\&\big\| \Dil_{2^{-j}} ( T^{m_1,m_2}_j[b_{n+1}])\big\|_{\Op_0} \lc \|\vsig\|_{L^1} \|b_{1}\|_\infty

%(ii) Let  $m_1< 0$. Then\begin{align*}&\big\| \Dil_{2^{-j-m_1}} ( S^{m_1,m_2}_j[b_{n+1}])\big\|_{\Op_\eps} \lc 2^{-\eps' |m_1|-\eps'm_2} n^2 \|\vsig\|_{\cB_\eps} \|b_{n+1}\|_\infty\\&\big\| \Dil_{2^{-j-m_1}} ( S^{m_1,m_2}_j[b_{n+1}])\big\|_{\Op_0} \lc \|\vsig\|_{L^1} \|b_{n+1}\|_\infty\end{align*} and
%\begin{align*}&\big\| \Dil_{2^{-j-m_1}} ( T^{m_1,m_2}_j[b_{1}])\big\|_{\Op_\eps} \lc 2^{-\eps' |m_1|-\eps'm_2} n^2 \|\vsig\|_{\cB_\eps} \|b_{1}\|_\infty\\&\big\| \Dil_{2^{-j-m_1}} ( T^{m_1,m_2}_j[b_{n+1}])\big\|_{\Op_0} \lc \|\vsig\|_{L^1} \|b_{1}\|_\infty\,.\end{align*} 

The estimates   \eqref{Q-firstpos-XP-estimate}, \eqref{Q-1-n+1-XP-estimate}
and the asserted existence  of the  limits follow easily from the following  Proposition.
%\footnote{Edit this sentence}

\begin{prop}\label{mainpropforSmm}
Let $b_2, \dots, b_n\in L^\infty(\bbR^d)$, with $\|b_i\|_\infty\le 1$, $i=2,\dots, n$.
Let $\vec \vsig=\{\vsig_j\}$ be  a bounded family in $\cB_\eps$, $\cJ\subset \bbZ^d$ with $
\#\cJ<\infty$  and let $m_1\in \bbZ$, $m_2\in \bbN$.

Then there exist $\eps'>0$ so that the  following estimates hold, uniformly in $\cJ$.

(i) If $b_{n+1}\in L^\infty(\bbR^d)$, 
%$b_{n+1}, b_{n+2}\in L^2(\bbR^d)$
\Be \label{SNmmbound}\Big\|\sum_{j\in \cJ}  S_j^{m_1,m_2}[b_{n+1}]\Big\|_{L^2\to L^2}
\lc \min\big\{ 2^{-\eps' (|m_1|+ m_2)} n^2 \sup_j \|\vsig_j\|_{\cB_\eps},\, \sup_j\|\vsig_j\|_{L^1}\big\}
\|b_{n+1}\|_\infty.
\Ee

(ii) We have $\lim_{N\to \infty} \sum_{j=-N}^N   S_j^{m_1,m_2}[b_{n+1}]
= S^{m_1,m_2}[b_{n+1}]$ in the strong operator topology (as operators $L^2\to L^2$) and the bound \eqref{SNmmbound} remains true for the limit $S^{m_1,m_2}$.

(iii) We have $\sum_{m_1\in \bbZ}\sum_{m_2>0} S^{m_1,m_2}[b_{n+1}]\to S[b_{n+1}]$ with absolute convergence in $\cL(L^2, L^2)$. 
Also $\sum_{j=-N}^N S_j[b_{n+1}] $ converges to  an operator $S[b_{n+1}]$ in the strong operator topology as operators $L^2\to L^2$ and 
$$ \|S[b_{n+1}]\|_{L^2\to L^2} \lc \sup_j \|\vsig_j\|_{L^1} \log^2  (1+n \Gamma_\eps)\, \|b_{n+1}\|_\infty.$$

(iv) In (ii), (iii) the convergence in the strong operator topology is equicontinuous with respect to
$\{b_{n+1}: \|b_{n+1}\|_\infty \le 1\}$.
\end{prop}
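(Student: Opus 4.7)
The plan is to apply the almost orthogonality Lemma \ref{ThmAlmostOrthogonal} to the family $V_j^\nu := S_j^{m_1,m_2}[b_{n+1}]$, with $\nu$ parameterizing the choice of $b_{n+1}$ satisfying $\|b_{n+1}\|_\infty \le 1$ (all other data $b_2,\dots,b_n$ and $\vec\vsig$ being fixed). The quantitative input is exactly Proposition \ref{OpestST}: the dilated kernels $\sigma_j^{m_1,m_2}$ satisfy $\|\sigma_j^{m_1,m_2}\|_{\Op_\eps} \lesssim 2^{-\eps'(|m_1|+m_2)} n^2\|\vsig_j\|_{\cB_\eps}\|b_{n+1}\|_\infty$ and $\|\sigma_j^{m_1,m_2}\|_{\Op_0} \lesssim \|\vsig_j\|_{L^1}\|b_{n+1}\|_\infty$, and $V_j$ has Schwartz kernel $\Dil_{2^{j'}}\sigma_j^{m_1,m_2}$ with $j'=j$ if $m_1\ge 0$ and $j'=j+m_1$ otherwise.

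For the main bound in (i), I would estimate the matrix elements $\|\cQ_{k_1} V_{j+k_1} \cQ_{j+k_1+k_2}\|_{L^2\to L^2}$ required by Lemma \ref{ThmAlmostOrthogonal}. Writing $V_{j+k_1} = \Dil_{2^{j'}} W_0 \Dil_{2^{-j'}}$, where $W_0$ is the operator with kernel $\sigma_{j+k_1}^{m_1,m_2}$, and using the commutation identity $\cQ_a\Dil_{2^s} = \Dil_{2^s}\cQ_{a-s}$, one reduces this to $\|\cQ_{a(j,k_1)} W_0 \cQ_{b(k_1,k_2)}\|_{L^2\to L^2}$ for exponents $a,b$ linear in $j,k_2$. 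A standard Schur/cancellation argument (as in Lemma \ref{LemmaBoundT1RonS}, Lemma \ref{PS-SP-lemma}) using the $\Op_\eps$ bound on $\sigma_{j+k_1}^{m_1,m_2}$ then yields $A_{j,k_2}\lesssim 2^{-\eps'(|j|+|k_2|)}\|\sigma_{j+k_1}^{m_1,m_2}\|_{\Op_\eps}$, whose sum in $(j,k_2)$ gives the factor $2^{-\eps'(|m_1|+m_2)} n^2 \sup_j\|\vsig_j\|_{\cB_\eps}$. For the alternative $\sup_j\|\vsig_j\|_{L^1}$ bound in (i), the same framework applies using only the $\Op_0$ estimate: here the decay in $|j|$ and $|k_2|$ must be extracted entirely from the built-in Littlewood-Paley factors $Q_{j+m_1}P_j$ and $Q_{j+m_2}$ inside $V_j$ (their frequency localization provides the needed almost-orthogonality, independent of any $\eps$-smoothness of the kernel). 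The equiconvergence assertion in Lemma \ref{ThmAlmostOrthogonal} immediately yields (ii) and the equicontinuity in $b_{n+1}$ in (iv).

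For (iii), combining the two bounds of (i) yields
\begin{equation*}
\|S^{m_1,m_2}[b_{n+1}]\|_{L^2\to L^2} \lesssim \sup_j\|\vsig_j\|_{L^1}\|b_{n+1}\|_\infty \min\{ 2^{-\eps'(|m_1|+m_2)} n^2 \Gamma_\eps,\, 1\}.
\end{equation*}
Splitting the sum over $(m_1,m_2)\in\Z\times\N$ at the threshold $|m_1|+m_2\sim\eps'^{-1}\log_2(1+n^2\Gamma_\eps)$, the above-threshold piece is a geometric series contributing $O(\sup_j\|\vsig_j\|_{L^1}\|b_{n+1}\|_\infty)$, while the below-threshold piece contains $O(\log^2(1+n\Gamma_\eps))$ terms each bounded by $\sup_j\|\vsig_j\|_{L^1}\|b_{n+1}\|_\infty$. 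This gives absolute convergence in $\cL(L^2,L^2)$, the stated $\log^2$ bound, and (via (ii)) strong-operator-topology convergence of $\sum_{j=-N}^N S_j[b_{n+1}]$.

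The hard part is the matrix-element estimate in the almost orthogonality step. One must carefully track the scale bookkeeping, particularly the distinction between $m_1\ge 0$ and $m_1<0$ (where the dilation scale is $2^{j+m_1}$ rather than $2^j$), and, more delicately, perform the $\Op_0$ variant of the estimate without any $\eps$-decay in the kernel—the full orthogonality must be produced by the cancellation of the $Q$-projections flanking $V_j$, together with the uniform Schur bounds $\|\sigma_j^{m_1,m_2}\|_{\Op_0}\lesssim\|\vsig_j\|_{L^1}$.
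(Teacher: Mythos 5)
Your overall strategy is exactly the paper's: apply the almost-orthogonality Lemma \ref{ThmAlmostOrthogonal} to the family $S_j^{m_1,m_2}[b_{n+1}]$, using Proposition \ref{OpestST} to control the matrix elements, then sum in $m_1,m_2$ with a threshold split for (iii), and read off (ii) and (iv) from the equiconvergence assertion in Lemma \ref{ThmAlmostOrthogonal}. Parts (ii)--(iv) are handled just as in the paper.

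The one place you diverge is the matrix-element bound needed for Lemma \ref{ThmAlmostOrthogonal}. You claim a two-sided Calder\'on--Zygmund cancellation argument yields $A_{j,k_2}\lesssim 2^{-\eps'(|j|+|k_2|)}\|\sigma_{j+k_1}^{m_1,m_2}\|_{\Op_\eps}$. This is more than the paper uses, and it is not directly furnished by the lemmas you cite: Lemma \ref{LemmaBoundT1RonS} produces one-sided decay from one flanking cancelling operator and kernel regularity, and getting full two-sided decay in both $j$ and $k_2$ at the claimed exponent would require additional work. Moreover the decay should be centered at $j=-m_1$ and $k_2=-m_2$ (the paper has $2^{-|j+m_1|-|m_2+k_2|}$), not at the origin as you wrote, though this does not change the sum. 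The paper avoids these issues entirely by putting \emph{no} decay in the $\Op_\eps$-side entry of the min: it uses Proposition \ref{OpestST} only to get the uniform $L^2$ bound $\|S^{m_1,m_2}_{j+k_1}\|_{L^2\to L^2}\lc 2^{-\eps_1(|m_1|+m_2)}n^2\sup_j\|\vsig_j\|_{\cB_\eps}$ via the Schur test ($\Op_0\le\Op_\eps$), while the $j,k_2$-decay is extracted solely from the built-in Littlewood--Paley factors $Q_{j+m_1}P_j$ and $Q_{j+m_2}$ paired against $\cQ_{k_1},\cQ_{j+k_1+k_2}$ (via $\|\Qt_k Q_l\|_{L^2\to L^2}\lesssim 2^{-|k-l|}$) together with the trivial $L^1$-kernel bound of Lemma \ref{LemmaBasicL2BasicLpEstimate}. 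This gives the slightly weaker summed bound $(|m_1|+m_2)^2\, 2^{-\eps_1(|m_1|+m_2)}\,n^2\sup_j\|\vsig_j\|_{\cB_\eps}$, but the polynomial factor is harmless and the argument is simpler. So your proof would go through if you carried out the two-sided cancellation estimate you sketch, but the route the paper takes is cleaner and relies on less.
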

\begin{proof} [Proof of Proposition \ref{mainpropforSmm}, given Proposition \ref{OpestST}]
For the proof of (i) we apply the almost orthogonality Lemma \ref{ThmAlmostOrthogonal}.
To this end we need to derive
the estimate
\begin{multline}\label{Ajk2}
\big\|\cQ_{k_1} S^{m_1,m_2}_{j+k_1}[b_{n+1}] \cQ_{j+k_1+k_2}\big\|_{L^2\to L^2}  \\ \lc A_{j,k_2}^{m_1,m_2}:=
\min\|b_{n+1}\|_\infty \big\{2^{-\eps_1(|m_1|+m_2)} n^2 \sup_j \|\vsig_j\|_{\cB_\eps}, \,2^{-|j+m_1| -|m_2+k_2|} \sup_j\|\vsig_j\|_{L^1}\big\}
\end{multline}
for some $\eps_1>0$.
To see this we note that the bound
\[
\big\| S^{m_1,m_2}_{j+k_1}[b_{n+1}] \big\|_{L^2\to L^2} \\ \lc
\min\|b_{n+1}\|_\infty \big\{2^{-\eps_1(|m_1|+m_2)} n^2 \sup_j \|\vsig_j\|_{\cB_\eps}\big\}
\]
(and hence the corresponding estimate for 
$\cQ_{k_1} S^{m_1,m_2}_{j+k_1}[b_{n+1}] \cQ_{j+k_1+k_2}$) follows from Proposition \ref{OpestST}. The bound 
\[\big\|\cQ_{k_1} S^{m_1,m_2}_{j+k_1}[b_{n+1}] \cQ_{j+k_1+k_2}\big\|_{L^2\to L^2} \lc 2^{-|j+m_1| -|m_2+k_2|} \sup_j\|\vsig_j\|_{L^1}\]
follows from the fact that $\LpOpN{2}{\Qt_{k} Q_l}, \LpOpN{2}{Q_l \Qt_k} \lesssim 2^{-|k-l|}$, the definition of $S_{j+k}^{m_1,m_2}$, and 
Lemma \ref{LemmaBasicL2BasicLpEstimate}.

We now observe that for   $A_{j,k_2}^{m_1,m_2}$  as in \eqref{Ajk2} we have
$$\sum_{j,k_2} A_{j,k_2}^{m_1,m_2}\lc \|b_{n+1}\|_\infty \min\big\{  \sup_j\|\vsig_j\|_{L^1},\,
2^{-\eps_1(|m_1|+m_2)} (|m_1|+m_2)^2 n^2 \sup_j \|\vsig_j\|_{\cB_\eps}\big\}.
$$
By an application of Lemma \ref{ThmAlmostOrthogonal} this yields 
\eqref{SNmmbound} and the convergence result in (ii), with equiconvergence with respect to 
$b_{n+1}$ in the unit ball of $L^\infty(\bbR^d)$. Summing in $m_1, m_2$ yields (iii).
\end{proof}

\begin{prop}\label{mainpropforTmm}
Let $b_2, \dots, b_n\in L^\infty(\bbR^d)$, with $\|b_i\|_\infty\le 1$, $i=2,\dots, n$.
Let $\vec \vsig=\{\vsig_j\}$ be  a bounded family in $\cB_\eps$, $\cJ\subset \bbZ^d$ with $
\#\cJ<\infty$  and let $m_1\in \bbZ$, $m_2\in \bbN$.

(i) If $b_{1}\in L^\infty(\bbR^d)$, 
%$b_{}, b_{n+2}\in L^2(\bbR^d)$
\begin{multline} \label{TNmmbound}\Big\|\sum_{j\in \cJ}  T_j^{m_1,m_2}[b_{1}]\Big\|_{L^2\to L^2}
\\
\lc \min\big\{ 2^{-\eps' |m_1|-\eps' m_2} n^2 \sup_j \|\vsig_j\|_{\cB_\eps}, \,\sup_j\|\vsig_j\|_{L^1}
\log(1+n^2 \Ga_\eps)
\big\}
\|b_{1}\|_\infty.
\end{multline}

(ii) We have $\lim_{N\to \infty} \sum_{j=-N}^N   T_j^{m_1,m_2}[b_{1}]= T^{m_1,m_2}[b_{1}]$ in the strong  operator topology (as operators $L^2\to L^2$) and the bound \eqref{TNmmbound} remains true for the limit $T^{m_1,m_2}$.

(iii) We have $\sum_{m_1\in \bbZ}\sum_{m_2>0} T^{m_1,m_2}[b_{1}]\to T[b_{1}]$ with absolute convergence in $\cL(L^2, L^2)$. 
%\footnote{Define this notation earlier}
Moreover  $\sum_{j=-N}^N T_j[b_{1}] $ converges to  an operator $T[b_{1}]$ in the strong  
%\footnote{If in Lemma 9.2. we add uniformity we may get equiconvergence with respect to  the choices of $b_{n+1}$ in  the above proposition for $S^{m_1,m_2}$, and then also in the statements for $H^1-L^1$. This implies convergence in the strong operator topology  for the $\sum T_j^{m_1,m_2}$}
operator topology as operators $L^2\to L^2$ and 
$$ \|T[b_{1}]\|_{L^2\to L^2} \lc \sup_j \|\vsig_j\|_{L^1} \log^3  (1+n \Gamma_\eps)\, \|b_{1}\|_\infty.$$
\end{prop}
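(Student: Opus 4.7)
The structure parallels the proof of Proposition~\ref{mainpropforSmm}, with one essential difference. In $S_j^{m_1,m_2}$ the factor $Q_{j+m_1}$ sits on the output slot $P_j g$, permitting two--sided Littlewood--Paley orthogonality via Lemma~\ref{ThmAlmostOrthogonal}; by contrast, $T_j^{m_1,m_2}[b_1]$ carries $Q_{j+m_2}$ on the input $f$ and $Q_{j+m_1}$ on the fixed function $b_1$, but only the smoothing $X^{n+1}_{j+m_1}P_j$ on the output slot paired with $g$. The absence of a $Q$-factor on the output forces a Journ\'e-type detour: one invokes Theorem~\ref{ThmJourne1} to obtain the $L^2$-bound in Part~(i) of the Proposition, which costs an extra logarithmic factor (producing the $\log^3$ in Part~(iii) as against the $\log^2$ obtained for $S$).

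For Part~(i), Proposition~\ref{OpestST} supplies the $\Op_\epsilon$ and $\Op_0$ bounds of the dilated kernel $\tau_j^{m_1,m_2}$, with $\cC_\epsilon \lesssim 2^{-\epsilon'(|m_1|+m_2)}n^2\sup_j\|\vsig_j\|_{\cB_\epsilon}\|b_1\|_\infty$ and $\cC_0 \lesssim \sup_j\|\vsig_j\|_{L^1}\|b_1\|_\infty$. It remains to verify the Carleson hypothesis \eqref{carlesonforsums} with constant $A \lesssim \sup_j\|\vsig_j\|_{L^1}\|b_1\|_\infty$. By the translation and dilation identities of Lemma~\ref{scalinglemma} one reduces to $f,g \in L^\infty$ supported in $B^d(0,1)$; the contribution of $j<0$ is controlled directly by Proposition~\ref{PropBound2PtSumNeg}, while for $j\ge 0$ one exploits the cancellation $\int Q_{j+m_2}f\,dx=0$ together with the spatial decay of $\dil{\vsig_j}{2^j}$ (as in the proof of Lemma~\ref{LemmaBound2ptsForBigsupp}) to obtain geometric decay outside a range of size $O(\log(1+n\Gamma_\epsilon))$, each piece in that range contributing $O(\sup_j\|\vsig_j\|_{L^1}\|b_1\|_\infty)$ via Lemma~\ref{LemmaBasicL2BasicLpEstimate}. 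Theorem~\ref{ThmJourne1} then gives $\|\sum_{j\in\cJ}T_j^{m_1,m_2}[b_1]\|_{L^2\to L^2} \lesssim A + \cC_0\log(1+\cC_\epsilon/\cC_0) \lesssim \sup_j\|\vsig_j\|_{L^1}\|b_1\|_\infty \log(1+n^2\Gamma_\epsilon)$. Taking the minimum with the direct bound $\cC_\epsilon$ (or equivalently, using Theorem~\ref{ThmBasicL2Result} for a single $j$ summed trivially) yields \eqref{TNmmbound}.

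Part~(ii) follows by applying Lemma~\ref{ThmAlmostOrthogonal} to $V_j^\nu := T_j^{m_1,m_2}[b_1]$, parametrized by $b_1$ in the unit ball of $L^\infty$. The required bound
\[
\|\cQ_{k_1}T_{j+k_1}^{m_1,m_2}[b_1]\cQ_{j+k_1+k_2}\|_{L^2\to L^2}\lesssim A_{j,k_2}^{m_1,m_2}
\]
combines: (a) the frequency localization $\cQ_{k_1}X^{n+1}_{j+k_1+m_1}P_{j+k_1}$, which vanishes unless $-k_1+(j+k_1+\min(m_1,0))\ge -C$, i.e.~$j\ge -\min(m_1,0)-C$, with rapid decay otherwise; (b) the estimate $\|Q_{j+k_1+m_2}\cQ_{j+k_1+k_2}\|_{L^2\to L^2}\lesssim 2^{-|m_2-k_2|}$ on the right; and (c) the a priori uniform bound from Part~(i). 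The resulting $A_{j,k_2}^{m_1,m_2}$ is summable over $j,k_2$, giving both convergence in the strong operator topology (with equiconvergence in $\|b_1\|_\infty \le 1$) and the bound of \eqref{TNmmbound} in the limit. Part~(iii) is a direct computation: the number of lattice points with $|m_1|+m_2=k$ is $O(k)$, and splitting $\sum_k k\min\{2^{-\epsilon' k}n^2\Gamma_\epsilon,\ \log(1+n^2\Gamma_\epsilon)\}$ at $k_0\approx \epsilon'^{-1}\log_2(n^2\Gamma_\epsilon/\log(1+n^2\Gamma_\epsilon))$ yields $\lesssim \log^3(1+n\Gamma_\epsilon)$. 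Absolute summability in $(m_1,m_2)$ then gives convergence of $\sum_{j=-N}^N T_j[b_1]$ in the strong operator topology via Part~(ii).

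\textbf{Main obstacle.} The principal technical difficulty is the Carleson-condition verification in Part~(i), since the cancellation and localization arguments of \S\ref{Sectionpartt1} (Proposition~\ref{PropT1Carleson}) and \S\ref{Sectionpart2pts} must be reorganized to accommodate the new positioning: here the $Q$-factors act on the fixed $b_1$ and on the input $f$, rather than on the input--output pair. I expect this to be achievable by adapting the Carleson-function construction of \S\ref{Sectionpartt1}, possibly after permuting entries via Theorem~\ref{ThmOpResAdjoints} to map the problem into a configuration already handled.
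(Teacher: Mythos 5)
There is a genuine gap here, and it is precisely the one you flagged as the ``main obstacle.'' Your almost-orthogonality argument in Part~(ii) fails for $T_j^{m_1,m_2}[b_1]$. For $S_j^{m_1,m_2}[b_{n+1}]$ the output slot (the slot paired with $g$) carries $Q_{j+m_1}P_j g$, whose high-frequency cancellation is exactly what produces the factor $2^{-|j+m_1|}$ in $A_{j,k_2}$ in the proof of Proposition~\ref{mainpropforSmm}, giving summability in $j$. For $T_j^{m_1,m_2}[b_1]$ the output slot is $n+1$ and carries only $X_{j+m_1}^{n+1}P_j g$ -- a composition of low-pass averages with no cancellation at all. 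Consequently $\cQ_{k_1}\cdot{}^t\bigl(X^{n+1}_{j+k_1+m_1}P_{j+k_1}\bigr)$ decays only as $j\to-\infty$, not as $j\to+\infty$, and the decay of $Q_{j+k_1+m_2}\cQ_{j+k_1+k_2}$ is in $|m_2-k_2|$, not in $j$; the $A_{j,k_2}$ you produce would not be $\ell^1$ in $j$. Likewise, the Carleson verification you invoke for Theorem~\ref{ThmJourne1} in Part~(i) is only sketched, and you yourself say you ``expect'' it to work; that verification is nontrivial precisely because the two $Q$-factors now sit on the fixed function $b_1$ and on the input $f$, not on the tested pair.

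The paper sidesteps both difficulties at once with a single duality observation that your ``possibly after permuting entries'' instinct gestured toward but did not land on. Comparing the definitions \eqref{Sjmmdef} and \eqref{Tjmmdef}, the multilinear expressions defining $S_j^{m_1,m_2}$ and $T_j^{m_1,m_2}$ are literally the same; only the labelling of which $L^\infty$ function is fixed and which is tested against changes. Hence
$$\int b_1(x)\,S_j^{m_1,m_2}[b_{n+1}]f(x)\,dx \;=\; \int b_{n+1}(x)\,T_j^{m_1,m_2}[b_1]f(x)\,dx.$$
Proposition~\ref{mainpropforSmm} together with Proposition~\ref{OpestST} and Theorem~\ref{ThmJourne2}(ii) already gives an $H^1\to L^1$ bound, with strong operator convergence uniform over $\|b_{n+1}\|_\infty\le 1$, for $\sum_j S_j^{m_1,m_2}[b_{n+1}]$. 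Since the $L^1$ norm is computed by pairing against $L^\infty$ functions, the identity above transfers this $H^1\to L^1$ bound (and the strong convergence) immediately to $\sum_j T_j^{m_1,m_2}[b_1]$, with no new Carleson verification and no almost-orthogonality argument for $T$. Theorem~\ref{ThmJourne2}(i) then converts the $H^1\to L^1$ bound back to $L^2\to L^2$, which is Parts~(i)--(ii); Part~(iii) follows by summing over $(m_1,m_2)$ exactly as you describe. The moral is that, once the $S$-case is in hand, $T$ should not be attacked from scratch: the two are transposes of each other in the pair $(b,g)$, and the $H^1$--$L^1$--$L^2$ bootstrap of Theorem~\ref{ThmJourne2} is designed to make this transposition cost only another factor of $\cC_0\log(1+\cC_\eps/\cC_0)$.
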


\begin{proof}
Use  Propositions \ref{mainpropforSmm} and \ref{OpestST}, together with Theorem 
\ref{ThmJourne2} to deduce that 
$S^{m_1, m_2}[b_{n+1}]= \sum_jS^{m_1, m_2}_j[b_{n+1}]$ converges in the strong operator topology as operators $H^1\to L^1$, with uniformity in $b_{n+1}$, $\|b_{n+1}\|_\infty\le 1$, and we get the estimate
\[ \big\|S^{m_1, m_2}[b_{n+1}]\big\|_{H^1\to L^1} \lc 
\sup\|\vsig_j\|_{L^1}\min \big \{  \log(1+n^2\Gamma_\eps),\,
2^{-\eps'(|m_1|+m_2)} n^2 \Gamma_\eps\big\} \|b_{n+1}\|_\infty
\]
Now for $b_1\in L^\infty$, $b_{n+1}\in L^\infty$ we have  by \eqref{Sjmmdef}, \eqref{Tjmmdef}
\[
\int b_1(x) \,S_j^{m_1, m_2} [b_{n+1}] f(x)\,dx=
\int b_{n+1}(x) \,T_j^{m_1, m_2} [b_{1}] f(x)\,dx\,.
\]
The uniformity with respect to $b_{n+1}$ in the strong operator convergence of 
$\sum_jS^{m_1, m_2}_j[b_{n+1}]$  now implies that  
$T^{m_1, m_2}[b_{1}]= \sum_jT^{m_1, m_2}_j[b_{1}]$ 
converges in the strong operator topology as operators $H^1\to L^1$ and we have the estimate
\[ \big\|T^{m_1, m_2}[b_{1}]\big\|_{H^1\to L^1} \lc \|b_1\|_\infty
\sup\|\vsig_j\|_{L^1}\min \big \{  \log(1+n^2\Gamma_\eps),\,
2^{-\eps'(|m_1|+m_2)} n^2 \Gamma_\eps\big\}\,.
\]
From Theorem 
\ref{ThmJourne2} we then get 
\[ \big\|T^{m_1, m_2}[b_{1}]\big\|_{L^2\to L^2} \lc \|b_1\|_\infty
\sup\|\vsig_j\|_{L^1}\min \big \{  \log(1+n^2\Gamma_\eps),\,
2^{-\eps'(|m_1|+m_2)} n^2 \Gamma_\eps\big\}\,
\]
which is (ii).  
Statement (iii) follows after summing in $m_1, m_2$.
\end{proof}

\subsection{${\Op_\eps}$-bounds and the proof of Proposition \ref{OpestST}}
\label{Opepssect}
\begin{lemma}\label{LemmaPart1MinusSecondOpInt1}
Let $\eps>0$, $\phi_0\in C^1$, supported in $\{y:|y|\le 10\}$, $\vsig\in \sBtp{\epsilon}{\R^n\times \R^d}$. For $\ell\ge 0$  define
\begin{equation*}
F_\ell(x,y):=\iiint\limits_{|x-v-y|\leq 100} \q| \dil{\vsig}{2^\ell}(\alpha,v)\w| |\phi_0(y-\alpha_1v -y')-\phi_0(y-y')|\: dv\: d\alpha\: dy'.
\end{equation*}
Then,
\begin{equation*}
\sup_x \int (1+|x-y|)^{\eps/2} |F_\ell(x,y)|\: dy + \sup_y\int(1+|x-y|)^{\eps/2} |F_\ell(x,y)|\: dx\lesssim 2^{-\ell\eps/2} \CoN{\phi_0}\sBN{\eps}{\vsig}. 
\end{equation*}
\end{lemma}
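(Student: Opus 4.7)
The plan is to first reduce the problem to estimating a scalar function of $u=x-y$, and then do a careful three–piece decomposition in the new Fourier-side variable.

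\textbf{Step 1 (reduce the $y'$-integral and exploit translation invariance).} I would compute the innermost integral first. Since $\phi_0\in C^1$ is compactly supported, a standard $L^1$ modulus of continuity argument yields
\begin{equation*}
\int\q|\phi_0(y-\alpha_1 v-y')-\phi_0(y-y')\w|\: dy' \;\lesssim\; \CoN{\phi_0}\,\min\{|\alpha_1||v|,\,1\}.
\end{equation*}
Consequently $F_\ell(x,y)\le \CoN{\phi_0}\, H_\ell(x-y)$ where
\begin{equation*}
H_\ell(u):=\iint_{|u-v|\le 100}\q|\dil{\vsig}{2^\ell}(\alpha,v)\w|\min\{|\alpha_1||v|,\,1\}\: d\alpha\: dv.
\end{equation*}
Because this bound depends only on $u=x-y$, the change of variable $u=x-y$ reduces both $\sup_x\int(1+|x-y|)^{\eps/2}F_\ell(x,y)dy$ and $\sup_y\int(1+|x-y|)^{\eps/2}F_\ell(x,y)dx$ to a single integral $\CoN{\phi_0}\int(1+|u|)^{\eps/2}H_\ell(u)\: du$.

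\textbf{Step 2 (rescale and integrate in $u$).} Changing variable $w=2^\ell v$ replaces $\dil{\vsig}{2^\ell}$ by $\vsig$, $|\alpha_1||v|$ by $2^{-\ell}|\alpha_1||w|$, and the constraint $|u-v|\le 100$ by $|u-2^{-\ell}w|\le 100$. Doing the $u$-integration first (the ball has volume $O(1)$, and on it $1+|u|\approx 1+2^{-\ell}|w|$) gives
\begin{equation*}
\int(1+|u|)^{\eps/2}H_\ell(u)\: du\;\lesssim\; \iint (1+2^{-\ell}|w|)^{\eps/2}\q|\vsig(\alpha,w)\w|\min\{2^{-\ell}|\alpha_1||w|,\,1\}\: d\alpha\: dw.
\end{equation*}
The crucial improvement here is the weight $(1+2^{-\ell}|w|)^{\eps/2}$ instead of $(1+|w|)^{\eps/2}$.

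\textbf{Step 3 (split $|w|\le 2^\ell$ versus $|w|> 2^\ell$).} On $|w|\le 2^\ell$ the weight $(1+2^{-\ell}|w|)^{\eps/2}$ is $O(1)$, and from $\min(a,1)\le a^{\eps/2}$ one has $\min(2^{-\ell}|\alpha_1||w|,1)\le 2^{-\ell\eps/2}|\alpha_1|^{\eps/2}|w|^{\eps/2}$; combining this with the AM–GM inequality $|\alpha_1|^{\eps/2}|w|^{\eps/2}\le \tfrac12[(1+|\alpha_1|)^\eps+(1+|w|)^\eps]$ bounds this piece by $C\,2^{-\ell\eps/2}(\|\vsig\|_{\cB_{\eps,1}}+\|\vsig\|_{\cB_{\eps,4}})\le C\,2^{-\ell\eps/2}\sBN{\eps}{\vsig}$. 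On $|w|>2^\ell$ we instead use $(1+2^{-\ell}|w|)^{\eps/2}\lesssim 2^{-\ell\eps/2}|w|^{\eps/2}\le 2^{-\ell\eps/2}(1+|w|)^\eps$ together with $\min\{\cdot,1\}\le 1$, bounding this piece by $C\,2^{-\ell\eps/2}\|\vsig\|_{\cB_{\eps,4}}$. Summing gives the desired bound $2^{-\ell\eps/2}\CoN{\phi_0}\sBN{\eps}{\vsig}$.

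\textbf{Expected main difficulty.} There is no deep obstacle, only a bookkeeping one: the $\cB_\eps$ norm controls the weights $(1+|\alpha_1|)^\eps$ and $(1+|w|)^\eps$ separately but not their product, so one cannot simply pair $(1+|\alpha_1|)^{\eps/2}$ with $(1+|w|)^{\eps/2}$ by a single interpolation. Splitting $|w|$ at the natural threshold $2^\ell$ and peeling one power of $\min\{2^{-\ell}|\alpha_1||w|,1\}$ with the exponent $\eps/2$ are both needed to produce the exact decay $2^{-\ell\eps/2}$ claimed.
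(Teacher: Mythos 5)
Your proof is correct and uses essentially the same ingredients as the paper: the $L^1$ modulus-of-continuity bound $\int|\phi_0(\cdot-\alpha_1 v)-\phi_0|\: dy'\lesssim\CoN{\phi_0}\min\{|\alpha_1 v|,1\}$, the support constraint $|x-v-y|\le 100$ to trade $(1+|x-y|)^{\eps/2}$ for $(1+|v|)^{\eps/2}$, and control by the $\cB_{\eps,1}$ and $\cB_{\eps,4}$ components of the Besov norm. The only organizational differences are that you collapse both suprema to a single integral in $u=x-y$ (the paper estimates them separately) and you rescale first and split the domain at $|w|=2^\ell$, whereas the paper splits pointwise via $(1+|v|)^{\eps/2}\min\{1,|\alpha_1 v|^{\eps/2}\}\lesssim|v|^{\eps/2}+|\alpha_1 v|^{\eps/2}$ before rescaling each piece; these are equivalent reorganizations of the same estimate.
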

\begin{proof}
%Take $\epsilon'=\epsilon/2$.  
We may assume $\CoN{\phi}=1$.
We estimate, for each $y$,
\begin{equation*}
\begin{split}
&\int (1+|x-y|)^{\eps/2} |F_\ell(x,y)|\: dx
\\&= \iiiint\limits_{|x-v-y|\leq 100} (1+|x-y|)^{\eps/2} \q|\dil{\vsig}{2^\ell}(\alpha,v)\w| |\phi_0(y-\alpha_1v-y')-\phi_0(y-y')|\: dv\: d\alpha\: dy'\: dx
\\&\lesssim \iiint (1+|v|)^{\eps/2} \q|\dil{\vsig}{2^\ell}(\alpha,v)\w| |\phi_0(y-\alpha_1 v-y')-\phi_0(y-y')|\: dv\: d\alpha\: dy'
%\\&\lesssim \iiint (1+|v|)^{\eps/2} \q|\dil{\vsig}{2^\ell}(\alpha,v)\w| (1\wedge |\alpha_1 v|^{\epsilon'}) \chi_{\{ |x-\alpha_1 v-y'|\leq 10\text{ or } |x-y'|\leq 10 \} }\: dv\: d\alpha\: dy'
\\&\lesssim \iint (1+|v|)^{\eps/2} \q|\dil{\vsig}{2^\ell}(\alpha,v)| \min\{1, |\alpha_1 v|^{\eps/2}\}\: dv\: d\alpha
\\
&\lesssim \iint |v|^{\eps/2} \q|\dil{\vsig}{2^\ell}(\alpha,v)| \: dv\: d\alpha
+\iint |\alpha_1v|^{\eps/2} \q|\dil{\vsig}{2^\ell}(\alpha,v)| \: dv\: d\alpha\,.
\end{split}
\end{equation*}
%The integral on the right hand side is bounded by a constant times a sum of two terms.  The first,
Now 
\begin{equation*}
\iint |v|^{\eps/2} \q|\dil{\vsig}{2^\ell}(\alpha,v)\w|\: d\alpha\: dv = 2^{-\ell\eps/2} \iint |v|^{\eps/2} |\vsig(\alpha,v)|\: d\alpha\: dv\lesssim 2^{-\ell\eps/2} \|\vsig\|_{\cB_{\eps/2}}
\lesssim 2^{-\ell\eps/2} \sBN{\eps}{\vsig},
\end{equation*}
and 
\begin{align*}
&\iint |\alpha_1 v|^{\eps/2} \q|\dil{\vsig}{2^\ell}(\alpha,v)\w|\: d\alpha\: dv = 2^{-\ell\eps/2} \iint |\alpha_1 v|^{\eps/2} |\vsig(\alpha,v)|\: d\alpha\: dv
\\
&\le 2^{-\ell\eps/2} \iint (|\alpha_1|+| v|)^{\eps} |\vsig(\alpha,v)|\: d\alpha\: dv
\lesssim 2^{-\ell\eps/2} \sBN{\epsilon}{\vsig}.
\end{align*}
This completes the proof that $\sup_y \int (1+|x-y|)^{\eps/2} |F_\ell(x,y)|\: dx
\lesssim 2^{-\ell\eps/2}\sBN{\epsilon}{\vsig}.$

Next we estimate  for $x\in \bbR^d$,
\begin{equation*}
\begin{split}
&\int(1+|x-y|)^{\eps/2} |F_\ell(x,y)|\: dy
\\& =  \iiiint\limits_{|x-v-y|\leq 100} (1+|x-y|)^{\eps/2} \q|\dil{\vsig}{2^\ell}(\alpha,v)\w| |\phi_0(y-\alpha_1v-y')-\phi_0(y-y')|\: dv\: d\alpha\: dy'\: dy
\\&\lesssim \iiiint\limits_{|x-v-y|\leq 100} (1+|v|)^{\eps/2} \q|\dil{\vsig}{2^\ell}(\alpha,v)|
\min\{1,|\alpha_1 v|^{\eps/2}\}
 \bbone_{\{ |y-\alpha_1v-y'|\leq 10 \text{ or } |y-y'|\leq 10\} }\: dv\: d\alpha\: dy'\: dy
\\&\lesssim \iiint\limits_{|x-v-y|\leq 100} (1+|v|)^{\eps/2} \q|\dil{\vsig}{2^\ell}(\alpha,v)|
\min\{1,|\alpha_1 v|^{\eps/2}\} \: dv\: d\alpha\: dx
\\&\lesssim \iint  (1+|v|)^{\eps/2} \q|\dil{\vsig}{2^\ell}(\alpha,v)|
\min\{1,|\alpha_1 v|^{\eps/2}\} \: dv\: d\alpha
%\\&\lesssim 2^{-j\epsilon'} \sBN{\epsilon}{\vsig},
\end{split}
\end{equation*}
and above the last quantity has  already been shown to be $\lc 
2^{-\ell \eps/2} \sBN{\epsilon}{\vsig}$. 
 This completes the proof of the lemma.
\end{proof}

\begin{lemma}\label{LemmaPart1MinusSecondOpInt2}
Let $\epsilon>0$.  For $\phi\in C^1$, supported in $\{y:|y|\le 10\}$, $\vsig\in \sBtp{\eps}{\R^d\times \R^n}$,
$j\geq 0$, 
let 
\[g_j(x,y)= \int \q|\dil{\vsig}{2^j}(\alpha,v)\w| |\phi(x-v-y)-\phi(x-y)|\: d\alpha\: dv.\]
Then \[\sup_x \int g_j(x,y)\,dy+\sup_y \int g_j(x,y)\,dx
 \lesssim 2^{-\eps j} \sBN{\eps}{\vsig}\CoN{\phi}\,.\]
%we have
%\begin{equation*}\sup_x \iiint(1+|x-y|)^{\eps} \q|\dil{\vsig}{2^j}(\alpha,v)\w| |\phi(x-v-y)-\phi(x-y)|\: d\alpha\: dv\: dy \lesssim 2^{-\eps j} \sBN{\eps}{\vsig}\CoN{\phi},\end{equation*}
%\begin{equation*}\sup_y \iiint(1+|x-y|)^{\epsilon} \q|\dil{\vsig}{2^j}(\alpha,v)\w| |\phi(x-v-y)-\phi(x-y)|\: d\alpha\: dv\: dx \lesssim 2^{-\epsilon j} \sBN{\epsilon}{\vsig}\CoN{\phi}.\end{equation*}
\end{lemma}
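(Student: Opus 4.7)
\textbf{Proof plan for Lemma \ref{LemmaPart1MinusSecondOpInt2}.}
The key observation is that the difference $|\phi(x-v-y)-\phi(x-y)|$ does not depend on $\alpha$, so the $\alpha$-integration separates off. I would define the auxiliary function
\[
V_j(v):=\int|\dil{\vsig}{2^j}(\alpha,v)|\,d\alpha = 2^{jd}\int|\vsig(\alpha,2^jv)|\,d\alpha,
\]
so that $g_j(x,y) = \int V_j(v)\,|\phi(x-v-y)-\phi(x-y)|\,dv$. Then Fubini gives
\[
\sup_x\int g_j(x,y)\,dy + \sup_y\int g_j(x,y)\,dx \le \int V_j(v)\,\Psi(v)\,dv,
\]
where $\Psi(v) := \sup_x\int|\phi(x-v-y)-\phi(x-y)|\,dy + \sup_y\int|\phi(x-v-y)-\phi(x-y)|\,dx$.

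Next I would estimate $\Psi(v)$ by the standard two-sided bound: using $\|\phi\|_1\lesssim \CoN{\phi}$ and $\|\nabla\phi\|_1\lesssim \CoN{\phi}$ (valid because $\phi$ is $C^1$ with support in $\{|y|\le 10\}$), and the mean-value theorem for the differences, one gets $\Psi(v)\lesssim \CoN{\phi}\min\{1,|v|\}\le \CoN{\phi}\min\{1,|v|^\eps\}$ (the last step uses $\eps\le 1$; the case $\eps>1$ can be handled by replacing $\eps$ with $\min(\eps,1)$ and noting $\sBN{\eps}{\vsig}$ only gets larger).

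The remaining step is to bound $\int V_j(v)\min\{1,|v|^\eps\}\,dv$ using only the $\cB_{\eps,4}$ piece of the norm. Changing variables $w=2^jv$ and splitting at $|w|=2^j$:
\begin{align*}
\int_{|v|\le 1} V_j(v)|v|^\eps\,dv + \int_{|v|>1} V_j(v)\,dv
&= 2^{-j\eps}\!\iint_{|w|\le 2^j}|\vsig(\alpha,w)||w|^\eps\,d\alpha\,dw + \iint_{|w|>2^j}|\vsig(\alpha,w)|\,d\alpha\,dw\\
&\le 2^{-j\eps}\,\sBN{\eps,4}{\vsig} + 2^{-j\eps}\!\iint_{|w|>2^j}(1+|w|)^\eps|\vsig(\alpha,w)|\,d\alpha\,dw,
\end{align*}
where in the last integral I used $j\ge 0$ so that $1\le 2^{-j\eps}(1+|w|)^\eps$ on $\{|w|>2^j\}$. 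Both terms are $\lesssim 2^{-j\eps}\sBN{\eps}{\vsig}$, finishing the bound.

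There is no real obstacle here; the lemma reduces via Fubini to a one-variable estimate, and the condition $j\ge 0$ is exactly what is needed to absorb the outer part of the $v$-integral into the $\cB_{\eps,4}$ norm. The proof is shorter and cleaner than that of Lemma \ref{LemmaPart1MinusSecondOpInt1} precisely because $\alpha_1 v$ is replaced here by $v$, so no weighting in $\alpha$ is needed.
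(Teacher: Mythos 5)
Your proof is correct and establishes the lemma as stated, by a slightly different route from the paper's: you factor the $\alpha$-integration out via Fubini and translation invariance of the inner $y$-integral, reducing to the one-variable estimate $\Psi(v)\lesssim\CoN{\phi}\min\{1,|v|^\eps\}$ and then to a bound on $\int V_j(v)\Psi(v)\,dv$; the paper estimates the triple integral directly, without isolating a $\Psi$. Two remarks on the comparison. First, the paper's displayed proof in fact establishes the stronger \emph{weighted} estimate
\[
\sup_x\int(1+|x-y|)^{\eps}\,g_j(x,y)\,dy + \sup_y\int(1+|x-y|)^{\eps}\,g_j(x,y)\,dx\lesssim 2^{-\eps j}\sBN{\eps}{\vsig}\,\CoN{\phi},
\]
parallel to the form of Lemma \ref{LemmaPart1MinusSecondOpInt1}, and it is this weighted form that is actually invoked in proving \eqref{EqnPart1IMinusSecondOpIntToShow} in Lemma \ref{PropPart1IMinusOpProp1}; the lemma statement understates its own proof. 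Your approach extends to the weighted version with no new idea: $\int(1+|x-y|)^{\eps}|\phi(x-v-y)-\phi(x-y)|\,dy$ is still independent of $x$ after substituting $z=x-y$, and on the support of the integrand the weight contributes at most $(1+|v|)^{\eps}$, so the weighted $\Psi(v)\lesssim(1+|v|)^{\eps}\min\{1,|v|^{\eps}\}\CoN{\phi}\lesssim|v|^{\eps}\CoN{\phi}$, exactly the quantity the paper reduces to. Second, your closing assertion that ``the condition $j\ge 0$ is exactly what is needed to absorb the outer part'' is misplaced: the case-split at $|v|=1$ is unnecessary, since $\min\{1,|v|^{\eps}\}\le|v|^{\eps}$ gives $\int V_j(v)|v|^{\eps}\,dv=2^{-j\eps}\iint|\vsig(\alpha,w)|\,|w|^{\eps}\,d\alpha\,dw\le 2^{-j\eps}\sBN{\eps,4}{\vsig}$ for \emph{every} $j\in\bbZ$ — and indeed even your own inequality $1\le 2^{-j\eps}(1+|w|)^{\eps}$ on $\{|w|>2^j\}$ holds for all $j$, as $1+2^j\ge 2^j$ always. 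So the hypothesis $j\ge 0$ plays no role in this lemma (nor in the paper's own proof of it).
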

\begin{proof}
%The two inequalities are equivalent, so we prove only the first.  
We may assume $\CoN{\phi}=1$.  For any $x$, we have
\begin{equation*}
\begin{split}
&\iiint (1+|x-y|)^{\eps} \q|\dil{\vsig}{2^j}(\alpha,v)\w| |\phi(x-v-y)-\phi(x-y)|\: d\alpha\: dv\: dy
\\&\lesssim\iiint (1+|x-y|)^{\eps} \q|\dil{\vsig}{2^j}(\alpha,v)|\min\{1, |v|^{\eps}\} 
\chi_{\{ |x-v-y|\leq 10\text{ or }|x-y|\leq 10 \}}\: d\alpha\: dv\: dy
%\\&\lesssim\iiint (1+|v|)^{\epsilon} \q|\dil{\vsig}{2^j}(\alpha,v)\w|(1\wedge |v|^{\epsilon}) \chi_{\{ |x-v-y|\leq 10\text{ or }|x-y|\leq 10 \}}\: d\alpha\: dv\: dy
\\&\lesssim\iint (1+|v|)^{\eps} \q|\dil{\vsig}{2^j}(\alpha,v)|\min\{1, |v|^{\eps}\}  \: d\alpha\: dv
\\&\lesssim\iint |v|^{\eps} \q|\dil{\vsig}{2^j}(\alpha,v)| \: d\alpha\: dv
\lesssim 2^{-j\epsilon}\sBN{\eps}{\vsig},
\end{split}
\end{equation*}
where the last inequality has already been used  in the proof of Lemma \ref{LemmaPart1MinusSecondOpInt1}.  By symmetry we also get the corresponding second inequality with the roles of $x$ and $y$ reversed.
%This completes the proof.
\end{proof}

\begin{lemma}\label{PropPart1IMinusOpProp1}
For  $\epsilon>0$ there is $\eps'>0$ such that the following holds. Let $\phi_1,\ldots, \phi_{n+1}\in C^2$ supported in $\{y:|y|\le 10\}$ and 
%\Czip{B^d(10)}$ be
such that for all but at most two $l$, $\phi_l\geq 0$ and $\int \phi_l=1$.  For $k\in \Z$ set $Y_k^l f=f*\dil{\phi_l}{2^k}$.
For $b_1,\ldots, b_n\in L^\infty(\R^d)$, $\vsig\in \sBtp{\epsilon}{\R^n\times \R^d}$ with 
\Be\label{vsigcanc}\int \vsig(\alpha,v)\: dv=0,\Ee and  define a kernel 
$K_{j,k}\equiv K_{j,k}[b_1,\dots, b_n]$
by
\begin{equation*}
\int g(x) \int K_{j,k}(x,y)f(y) \,dy\, dx
= \La[\vsigj](Y_k^1 b_1,\ldots, Y_k^n b_n, Y_k^{n+1}g, f).
\end{equation*}
%where $\dil{S_{j,k}}{2^k}(x,y) = 2^{kd} S_{j,k}(2^k x, 2^k y)$. and we have suppressed the dependance on $b_1,\ldots, b_n$.  
Then, for $j\ge k$, 
\begin{align*}
\|\Dil_{2^{-k} }K_{j,k}\|_{\Op_{\eps'}}&\lesssim 2^{-\epsilon' (j-k)} n \sBN{\epsilon}{\vsig} \prod_{i=1}^n
 \|{b_i}\|_\infty,
\\
\|\Dil_{2^{-k}} K_{j,k}\|_{\Op_{0}}&\lesssim \|\vsig\|_{L^1} \prod_{i=1}^n  \|{b_i}\|_\infty.
% \quad \OpzN{S_{j,k}}\lesssim \LpN{1}{\vsig} \q[\prod_{l=1}^n \LpN{\infty}{b_l}\w], 
 %\quad j\geq k.
\end{align*}
Here, the implicit constants may depend on $\max\limits_{i_1,i_2,i_3,i_4\in \{1,\ldots, n+1\}} \CtN{\phi_{i_1}}\CtN{\phi_{i_2}} \CtN{\phi_{i_3}}\CtN{\phi_{l_4}}$.
\end{lemma}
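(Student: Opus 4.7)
The plan is to transport the problem to scale-invariant form by writing out $\widetilde{K}_{j,k} := \Dil_{2^{-k}} K_{j,k}$ explicitly. A direct computation (using the substitution $v=y-w$ and $B_i(z):=b_i(2^{-k}z)$) gives
\begin{equation*}
\widetilde{K}_{j,k}(x,y) \,=\, \iint \vsig^{(2^\ell)}(\alpha,v)\,\phi_{n+1}(y-v-x) \prod_{i=1}^n (B_i*\phi_i)(y-\alpha_i v)\,d\alpha\,dv,
\end{equation*}
where $\ell := j-k \ge 0$. Writing $\widetilde{b}_i := B_i*\phi_i$, we have $\|\widetilde{b}_i\|_\infty \le \|b_i\|_\infty\|\phi_i\|_1$ and analogous $C^1$, $C^2$ bounds inherited from $\phi_i\in C^2$. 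The bounds to prove become $L^1_{x}$ and $L^1_y$ integrability estimates on $\widetilde{K}_{j,k}$ and its differences, with the requisite $(1+|x-y|)^{\eps'}$ weights.

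The $\Op_0$ bound requires no cancellation: Schur's test applied via Fubini gives $\int|\widetilde{K}_{j,k}(x,y)|dx + \int|\widetilde{K}_{j,k}(x,y)|dy \lesssim \|\phi_{n+1}\|_1\|\vsig\|_{L^1}\prod_i\|b_i\|_\infty$, and the two $\Zhe^{\cdot}_{0,\cdot}$ pieces follow by trivial differencing.

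For the $\Op_{\eps'}$ bound the key is to use $\int\vsig(\alpha,v)\,dv=0$ to subtract $G(\alpha,0;x,y):=\phi_{n+1}(y-x)\prod_i\widetilde{b}_i(y)$ inside the $v$-integral, then Leibniz-decompose the bracket as $D_0+\sum_{i_0=1}^n D_{i_0}$ where $D_0$ captures the difference in $\phi_{n+1}$ and $D_{i_0}$ captures the difference in $\widetilde{b}_{i_0}(y-\alpha_{i_0}v)-\widetilde{b}_{i_0}(y)$. This produces the pointwise bounds $|D_0|\lesssim \min(|v|,1)\|\phi_{n+1}\|_{C^1}\prod_i\|b_i\|_\infty$ supported in $\{|y-v-x|\le 10\}\cup\{|y-x|\le 10\}$, and $|D_{i_0}|\lesssim \min(|\alpha_{i_0}||v|,1)\|\phi_{i_0}\|_{C^1}\prod_i\|b_i\|_\infty$ supported in $\{|y-x|\le 10\}$; in both cases the localization forces $(1+|x-y|)^{\eps'}\lesssim (1+|v|)^{\eps'}$ after integration in $x$ (or $y$). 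The whole estimate thus reduces to the two model integrals
\begin{equation*}
\int \min(|v|,1)(1+|v|)^{\eps'}|\vsig^{(2^\ell)}|\,d\alpha dv, \qquad \int \min(|\alpha_{i_0}||v|,1)|\vsig^{(2^\ell)}|\,d\alpha dv,
\end{equation*}
each of which I would bound by $2^{-\ell\eps'}\|\vsig\|_{\cB_\eps}$ for $\eps'\le\eps/2$: use $\min(a,1)\le a^{\eps'}$ for $\eps'\le 1$, substitute $u=2^\ell v$, and split against the two Besov weights via the AM-GM-type bound $(|\alpha_{i_0}||u|)^{\eps'}\le (1+|\alpha_{i_0}|)^{2\eps'}+(1+|u|)^{2\eps'}$ (and similarly $|u|^{\eps'}(1+|u|)^{\eps'}\le (1+|u|)^{2\eps'}$). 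Summing the $n+1$ Leibniz terms produces the $n$ factor, yielding both $\Sha^1_{\eps'}$ and $\Sha^\infty_{\eps'}$ bounds.

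For the four $\Zhe^{\cdot}_{\eps',\cdot}$ norms, the $x$-difference affects only $\phi_{n+1}(y-v-x)$; the argument above carries over if this factor is replaced by its $h$-difference, contributing an extra $\min(|h|,1)\|\phi_{n+1}\|_{C^2}\le|h|^{\eps'}\|\phi_{n+1}\|_{C^2}$. For the $y$-difference ($\Zhe_{\eps',\rmr}$), $y$ appears in $\phi_{n+1}$ and each $\widetilde{b}_i$, so one applies Leibniz once more on the $h$-difference and uses the $C^2$ bound on $\widetilde{b}_i=B_i*\phi_i$ (which controls $\|\nabla\widetilde{b}_i\|_\infty\lesssim\|b_i\|_\infty\|\phi_i\|_{C^1}$). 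The expected main obstacle is the systematic book-keeping across the six constituents of $\|\cdot\|_{\Op_{\eps'}}$: keeping the factor $2^{-\eps'\ell} n$ intact (no hidden $n$-inflation) requires the AM-GM distribution of the Besov weight between the $\alpha$- and $v$-directions, and absorbing the $C^2$-norms of up to four distinct $\phi_l$'s (as allowed by the statement) comes precisely from iterated Leibniz/$y$-differencing and the $\widetilde{b}_i$-regularization; the assumption that at most two of the $\phi_l$'s lack the mean-one nonnegativity property is used only through the $L^1$ bounds $\|\phi_l\|_1\lesssim \|\phi_l\|_{C^2}$ on those exceptional indices.
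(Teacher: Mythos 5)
Your proposal follows essentially the same route as the paper's proof: rescale to a scale-invariant kernel (the paper reduces to $k=0$, you define $\Dil_{2^{-k}}K_{j,k}$, which amounts to the same thing), write out the explicit formula, use the cancellation $\int\vsig\,dv=0$ to subtract the $v=0$ value, apply a Leibniz-type decomposition of the resulting bracket into $n+1$ differences, and bound each piece against the $\cB_\eps$-norm with the Besov weight distributed between the $\alpha$- and $v$-directions (your ``AM-GM'' step is exactly what the paper's Lemmas~\ref{LemmaPart1MinusSecondOpInt1} and \ref{LemmaPart1MinusSecondOpInt2} do internally). Your explicit formula for $\Dil_{2^{-k}}K_{j,k}$ is correct (up to a $v\mapsto -v$ reflection, which is harmless), the pointwise bounds on $D_0$ and $D_{i_0}$ are correct, and the model integrals are bounded correctly with the $\eps'\le\eps/2$ constraint.

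One cosmetic difference: the paper handles the four $\Zhe_{\eps',\cdot}$ norms by observing that $\partial_{x_m}K$ and $\partial_{y_m}K$ are of the same form as $K$ (with some $\phi_l$ replaced by $\partial\phi_l$, hence mean-zero rather than mean-one, which is why the weakened ``up to three exceptional $\phi_l$'' hypothesis appears in the paper's intermediate inequality) and then integrating in $h$; you instead take finite $h$-differences directly and re-apply the cancellation-Leibniz to the difference. These are morally equivalent.

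One small caution about your claim that ``keeping the factor $2^{-\eps'\ell}n$ intact'' hinges only on the AM-GM step: for the $\Zhe_{\eps',\rmr}$ piece, the $y$-variable enters every one of the $n+1$ factors, so the $h$-difference forces a second Leibniz on top of the cancellation Leibniz, producing $(n+1)^2$ terms. The paper's version has the same feature ($\partial_{y_m}K$ is a sum of $n+1$ forms, each of whose Schur bound already carries a factor $n$), so the honest count for $\Zhe_{\eps',\rmr}$ is $O(n^2)$, not $O(n)$, in both your write-up and the paper's. This does not matter: the lemma is only used to feed a polynomial-in-$n$ bound into a logarithm (cf. Proposition~\ref{OpestST} and the way $\Gamma_\eps$ enters the final estimates), so the discrepancy is harmless, but it is not the AM-GM that resolves it. Also note that $\Op_0$ as defined in \eqref{schurassu} consists only of the two Schur norms and contains no regularity terms, so the ``two $\Zhe_{0,\cdot}$ pieces'' you mention are not part of that norm.
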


\begin{proof}
The bound for the $\Op_0$ norm is  immediate so we focus only on the bound for the $\Op_\eps$-norms.
%$\OpeN{\epsilon}{S_{j,k}}$.  
Note that by scaling (see  Lemma \ref{scalinglemma})
$$\La[\vsigj](Y_k^1 b_1,\ldots, Y_k^n b_n, Y_k^{n+1}g, f)=2^{-kd}
\La[\vsig^{(2^{j-k})}] (Y_0 b^k_1, \dots, Y_0 b^k_n, Y_0 g^k, f^k)$$
where $b^k_i= b_i(2^{-k}\cdot)$,
$f^k= f(2^{-k}\cdot)$, $g^k= g(2^{-k}\cdot)$. This leads to
$$ K_{j,k}[b_1,\dots, b_n](x,y)  = 2^{kd} K_{ j-k,0}[ b^k_1,\dots, b^k_n](2^k x, 2^k y).$$ 

Now $\|b_i^k\|_\infty=\|b_i\|_\infty$, $i=1,\dots,n$,  and hence 
after replacing the functions $b_i$ by $b_i^k$, $i=1,\dots,n$, it suffices to check the case $k=0$. That is, we need to prove, for $\ell\ge 0$,
\Be\label{rescaledk=0}
\|K_{\ell,0}[b_1,\dots,b_n]\|_{\Op_\eps} \lc 2^{-\eps'\ell} n \|\vsig\|_{\cB_\eps} \prod_{i=1}^n \|b_i\|_\infty.
\Ee

In what follows we may assume
$\LpN{\infty}{b_i}=1$, $i=1,\ldots, n$. 
We will prove, under the assumption that all but at most {\it three} of the $\phi_i$ satisfy $\phi_i\geq 0$, $\int \phi_i=1$ we have
\begin{equation}\label{EqnPart1IMinusSecondOpIntToShow}
\sup_x \int (1+|x-y|)^{\eps'} |K_{\ell,0}(x,y)|\: dy + \sup_y \int (1+|x-y|)^{\eps'} |K_{\ell,0}(x,y)|\: dx\lesssim 2^{-\eps'\ell} n\sBN{\eps}{\vsig},
\end{equation}
where the implicit constant is allowed to depend on the $C^1$ norms of up to three of $\phi_i$ (instead of the $C^2$ norms).

First we see why \eqref{EqnPart1IMinusSecondOpIntToShow} yields the result.
The explicit formula for the kernel is
%\begin{align*} 
\Be \label{Kell0}
K_{\ell,0}(x,y)= 
%\int \phi_{n+1}(w-x) \int \vsig^{(2^\ell)}(\alpha, y-w) \prod_{i=1}^n Y_0^ib_i(y-\alpha_i(y-w) )\,d\alpha\, dw\,
%\\
\int \phi_{n+1}(y-v-x) \int \vsig^{(2^\ell)}(\alpha, v) \prod_{i=1}^n Y_0^ib_i(y-\alpha_i v )\,d\alpha\, dv.
\Ee It implies 
that
$\partial_{x_m} K_{\ell,0}(x,y) $ is a term of the form covered by \eqref{EqnPart1IMinusSecondOpIntToShow}  (with  $\phi_{n+1}$ replaced by $-\partial_{x_m} \phi_{n+1}$). Moreover, $\partial_{y_m} K_{\ell,0}(x,y) $ is a sum of  $n+1$ terms of the form covered by \eqref{EqnPart1IMinusSecondOpIntToShow}, indeed differentiating 
\eqref{Kell0} yields 
(setting $b_{n+1}:=g$)
\begin{multline*}
\int b_{n+1}(x) \int \partial_{y_m} K_{\ell,0}(x,y) f(y) \,dy \, dx
% \q(\q(\partial_{x_\nu} S_{j,0}\w)b_{n+2}\w)(x)\: dx 
\\
= \sum_{i=1}^{n+1} \La [\vsig^{(2^\ell)}] (Y_0^1 b_1,\ldots, Y_0^{i-1} b_{i-1} , \partial_{{x}_m} Y_0^l b_i, Y_0^{i+1} b_{i+1},\ldots, Y_0^{n+1} b_{n+1}, f).
\end{multline*}
%where $\partial_{x_m} S_{j,0}$ denotes the operator whose Schwartz kernel is $\partial_{x_m} S_{j,0}(x,y)$.
Thus, $\partial_{x_m} K_{\ell,0}(x,y) $ is a sum of  $n+1$ terms of the form covered by \eqref{EqnPart1IMinusSecondOpIntToShow}.
%Also,
%\begin{equation*}
%-\int g(x) \int \partial_{y_m} K_{\ell,0} (x,y) f(y) dy\, dx =  \La[\vsig^{(2^\ell)}](Y_0^1 b_1,\ldots, Y_0^{n} b_n , \partial_{x_m}Y_0^{n+1} g,f),
%\end{equation*}
%and so $\partial_{y_m} K_{\ell,0}(x,y)$ is of the form covered by \eqref{EqnPart1IMinusSecondOpIntToShow}.
From these remarks, it follows, given
 \eqref{EqnPart1IMinusSecondOpIntToShow}, 
 that the expressions
\begin{align*}
&\sup_{\substack{y\\ 0<|h|\leq 1}} |h|^{-1} \int |K_{\ell,0}(x,y+h)-K_{\ell,0}(x,y)|\: dx,
\\&\sup_{\substack{x\\ 0<|h|\leq 1}} |h|^{-1} \int |K_{\ell,0}(x,y+h)-K_{\ell,0}(x,y)|\: dy,
 \\&\sup_{\substack{y\\ 0<|h|\leq 1}} |h|^{-1} \int |K_{\ell,0}(x+h,y)-K_{\ell,0}(x,y)|\: dx,\quad
 \\&\sup_{\substack{x\\ 0<|h|\leq 1}} |h|^{-1} \int |K_{\ell,0}(x+h,y)-K_{\ell,0}(x,y)|\: dy
 \end{align*}
 are all bounded by a constant times $2^{-\ell\eps'} n \sBN{\epsilon}{\vsig}.$

It remains to prove \eqref{EqnPart1IMinusSecondOpIntToShow}. We first compute,
with $\tilde \vsig(\alpha,v)= \vsig(1-\alpha_1, \dots, 1-\alpha_n,v)$,
\begin{align*}
&\La[\vsig^{(2^\ell)}](Y_0^1b_1,\dots, Y_0^n, Y_0^{n+1} g, f)=
\La[\vsig^{(2^\ell)}](Y_0^1b_1,\dots, Y_0^n, f, Y_0^{n+1} g)
\\&= \iiint \tilde \vsig^{(2^\ell)}
(\alpha, w-y) f(y) 
\int\phi_{n+1} (w-x) g(x) dx\,  \prod_{i=1}^n Y_0^ib_i (w(1-\alpha_i)+\alpha_i y)\,
d\alpha\,dw \,dy
\\&=
 \iint g(x) f(y) \iint \tilde  \vsig^{(2^\ell)}(\alpha,v) \phi_{n+1}(y+v-x) \prod_{i=1}^n 
 Y_0^ib_i(y+(1-\alpha_i)v) \,dv \, d\alpha \, dx\, dy
  \end{align*}
  and changing variable in $\alpha$ again we get
\begin{align*}   K_{\ell,0}(x,y)&= 
%\int \dil{\vsig}{(2^\ell)}(\alpha,v) 
  \iint \vsig^{(2^\ell)}(\alpha,v) \phi_{n+1}(y+v-x) \prod_{i=1}^n Y_0^ib_i(y+\alpha_i v) \,dv \, d\alpha
  \\
&=    \iint
     \vsig^{(2^\ell)}(\alpha,v) 
     \Big[\phi_{n+1}(y+v-x) \prod_{i=1}^n Y_0^ib_i(y+\alpha_i v) 
     -
     \phi_{n+1}(y-x) \prod_{i=1}^n Y_0^ib_i(y) \Big]\,
     dv \, d\alpha;
     \end{align*}
here we have used the cancellation condition \eqref{vsigcanc}.
Now 
\[ |K_{\ell,0}(x,y)| \le I(x,y)+\sum_{i=1}^n II_i(x,y)\]
where
\begin{align*}  I(x,y)&= \iint |
 \vsig^{(2^\ell)}(\alpha,v)| |\phi_{n+1}(y+v-x) -\phi_{n+1}(y-x)| \, dv \, d\alpha\,,
 \\
 II_i (x,y)&= \iint
  |\vsig^{(2^\ell)}(\alpha,v)| |\phi_{n+1}(y-x)| \int| \phi_i(y+\alpha_i v-w) -
  \phi_i(y-w)| dw\, dv\, d\alpha\,.
  \end{align*}
  Now apply 
  Lemma \ref{LemmaPart1MinusSecondOpInt1} to the expessions $II_i$ and Lemma \ref{LemmaPart1MinusSecondOpInt2} to $I$, and \eqref{EqnPart1IMinusSecondOpIntToShow}  follows. This completes the proof.
  \end{proof}

\begin{proof} [Proof of Proposition \ref{OpestST}, conclusion]
We  focus on the estimates for $S^{m_1,m_2}_j[b_{n+1}]$ as the estimates for 
$T^{m_1,m_2}_j[b_1]$  are analogous (switch the roles of $b_1$ and $b_{n+1}$).
We may assume $\|b_{n+1}\|_\infty=1$.

In what follows we identify operators with their  Schwartz kernels.
For an operator $R$ we denote by $\partial_{x_\mu}R$ the operator with Schwartz kernel
$\partial_{x_\mu} R(x,y)$.

We use Lemma  \ref{LemmaAuxDecompPsi} to write
$Q_{j+m_2} = \sum_{\mu=1}^d 2^{-(j+m_2)} \partial_{x_\mu} R_{j+m_2}^\mu$, where $R_{j+m_2}^\mu  = f*\dil{\phit_\mu}{2^{j+m_2}}$, and $\phit_l\in C^\infty_0$ supported in $\{x:|x|\le 2\}$.
Now
%Integrating by parts, we have
\begin{align*}
&\La[\vsigjj](Q_{j+m_1}P_j b_1, X^2_{j+m_1 }P_j b_2, \dots, X^{n+1}_{j+m_1} P_j b_{n+1}, Q_{j+m_2} f)
\notag
\\
%&= 2^{-(j+m_2)}\sum_{\mu=1}^d 
%\La[\vsigjj](Q_{j+m_1}P_j b_1, X^2_{j+m_1 }P_j b_2, \dots, X^{n+1}_{j+m_1} P_j b_{n+1}, 
%\partial_{x_\mu} R_{j+m_2}^\mu f)
&=2^{-(j+m_2)}\sum_{\mu=1}^d\iint \vsigjj(\alpha, v) 
\int \partial_{x_\mu} R^\mu_{j+m_2} f(x)  X^{n+1}_{j+m_1}P_j b_{n+1}(x-v) 
\,\times
\\
&\qquad\qquad\qquad\qquad\qquad\qquad\qquad
Q_{j+m_1}P_j b_1(x-\alpha_1 v)
\prod_{i=2}^nQ_{j+m_1}P_j b_i(x-\alpha_i v)\, dx\,  \,dv\, d\alpha\,.
\end{align*} 
Integrating by parts we see that this expression equals
\begin{multline}\label{intbypartsform}
- 2^{-(j+m_2)} \sum_{\mu=1}^d 
\biggl(
\La[\vsigjj](\partial_{x_\mu}Q_{j+m_1}P_j b_1, X^2_{j+m_1 }P_j b_2, 
\dots, X^{n+1}_{j+m_1} P_j b_{n+1}, 
R_{j+m_2} ^\mu f)
\\  +
 \sum_{\nu=2}^{n+1}
\La[\vsigjj](Q_{j+m_1}P_j b_1, X^2_{j+m_1 }P_j b_2, \dots,
\partial_{x_\mu} X^\nu_{j+m_1}P_j b_\nu
\dots, X^{n+1}_{j+m_1} P_j b_{n+1}, 
R_{j+m_2} ^\mu f)\biggr) .
%\notag
\end{multline}
We distinguish the cases $m_1\le 0$ and $m_1\ge 0$.

For $m_1\le 0$ we write \eqref{intbypartsform} as
\begin{align*}
&\La[\vsigjj](Q_{j+m_1}P_j b_1, X^2_{j+m_1 }P_j b_2, \dots, X^{n+1}_{j+m_1} P_j b_{n+1}, Q_{j+m_2} f)
\\
&= -\, 2^{-m_2+m_1}
\sum_{\mu=1}^d\sum_{\nu=1}^{n+1}
\Lambda[\vsigjj]( Y^{1,\mu,\nu}_{j+m_1,j} b_1,\dots,
Y^{n+1,\mu,\nu}_{j+m_1,j} b_{n+1}, R^{\mu}_{j+m_2}f)
\end{align*} 
where,  for $m_1\le 0$, the operators $Y^{i,\mu,\nu}_{j+m_1,j}$ are given by 
\[
Y^{1,\mu,\nu}_{j+m_1,j}= 
\begin{cases}
2^{-j-m_1}\partial_{x_\mu} (Q_{j+m_1}P_j) &\text{ if }\nu=1,
\\
Q_{j+m_1} P_j &\text{ if } \nu\in \{2, \dots,n+1\}
\end{cases}
\]
if $i=1$, and by
\[
Y^{i,\mu,\nu}_{j+m_1,j}= 
\begin{cases}
2^{-j-m_1}\partial_{x_\mu} (P_{j+m_1}P_j) &\text{ if }\nu=i,
\\
P_{j+m_1} P_j &\text{ if } \nu\in \{1, \dots,n+1\}\setminus\{i\}\,
\end{cases}
\]
if  $2\le i \le n+1$.

Hence for $m_1\le 0$
\begin{align*}
&\La[\vsigjj](Q_{j+m_1}P_j b_1, X^2_{j+m_1 }P_j b_2, \dots, X^{n+1}_{j+m_1} P_j b_{n+1}, Q_{j+m_2} f)
\\
&= 2^{-m_2+m_1} \sum_{\mu=1}^d\sum_{\nu=1}^n \int b_{n+1}(x) K^{\mu,\nu}_{j+m_1, j}(x,y)R^\mu_{j+m_2} f(y) dy
\end{align*}
and by Lemma \ref{PropPart1IMinusOpProp1} 
$$\|\Dil_{2^{-j-m_1}} K^{\mu,\nu}_{j+m_1,j}\|_{Op_{\eps'} } \lc \|\vsig_j\|_{\cB_\eps}$$
for some $\eps'\le \eps$.
This, together with  Lemma \ref{PS-SP-lemma}, implies the asserted bound \eqref{sigmammest}, for $m_1\le 0$.

We now consider the case $m_1> 0$. Now use the cancellation and support properties of $Q_{j+m_1}$ to write 
\[Q_{j+m_1} P_j = 2^{-m_1}Z_{j,m_1}\] where $Z_{j,m_1}=f*\upsilon_{j,m}^{(2^j)}$ and
$\{\upsilon_{j,m}: j\in \bbZ, m_1\in \bbN\}$ is a bounded family of $C^\infty_c$ functions supported in $\{y:|y|\le 2\}$.

We now write  \eqref{intbypartsform} as
\begin{align*}
&\La[\vsigjj](Q_{j+m_1}P_j b_1, X^2_{j+m_1 }P_j b_2, \dots, X^{n+1}_{j+m_1} P_j b_{n+1}, Q_{j+m_2} f)
\\
&= -\, 2^{-m_2-m_1}
\sum_{\mu=1}^d\sum_{\nu=1}^{n+1}
\Lambda[\vsigjj]( Y^{1,\mu,\nu}_{j+m_1,j} b_1,\dots,
Y^{n+1,\mu,\nu}_{j+m_1,j} b_{n+1}, R^{\mu}_{j+m_2}f)
\end{align*} 
where (now for $m_1>0$)
\[
Y^{1,\mu,\nu}_{j+m_1,j}= 
\begin{cases}
2^{-j}\partial_{x_\mu} Z_{j,m_1} &\text{ if }\nu=1,
\\
Z_{j,m_1} &\text{ if } \nu\in \{2, \dots,n+1\},
\end{cases}
\]
and for $2\le i \le n+1$
\[
Y^{i,\mu,\nu}_{j+m_1,j}= 
\begin{cases}
2^{-j}\partial_{x_\mu} (P_{j+m_1}P_j) &\text{ if }\nu=i,
\\
P_{j+m_1} P_j &\text{ if } \nu\in \{1, \dots,n+1\}\setminus\{i\}\,.
\end{cases}
\]
We see, using Lemma \ref{PropPart1IMinusOpProp1}, that for $m_1>0$ 
\begin{align*}
&\La[\vsigjj](Q_{j+m_1}P_j b_1, X^2_{j+m_1 }P_j b_2, \dots, X^{n+1}_{j+m_1} P_j b_{n+1}, Q_{j+m_2} f)
\\
&= 2^{-m_2-m_1} \sum_{\mu=1}^d\sum_{\nu=1}^n \int b_{n+1}(x) K^{\mu,\nu, m_1}_{ j}(x,y)R^\mu_{j+m_2} f(y) \,dy
\end{align*}
with \[ \big\|\Dil_{2^{-j}} K^{\mu,\nu, m_1}_j\big\|_{\Op_\eps}\lc \|\vsig_j\|_{\cB_\eps}\,.\]
Using also
Lemma \ref{PS-SP-lemma} we obtain the asserted bound \eqref{sigmammest}, for $m_1> 0$.
\end{proof}

\subsection{Proof of  the bound \eqref{Lambda-l-2est}, concluded}\label{sect12concl}
The following proposition will conclude the proof of part 
\ref{ItemBound1IminusPt}
 in Theorem \ref{main-parts}. 
%Before we prove Proposition \ref{PropPart1IMinus}, we see how it implies Theorem \ref{ThmPart1IMinus}.
\begin{prop}\label{PropPart1IMinusRed}
Let $1\leq l_1\ne l_2 \leq n+2$.  Then, for $p\in (1,2]$ and $p'=p/(p-1)$ 
\begin{multline*}
%\begin{split}
\q|\sum_{j\in \Z} \La[\vsigjj](P_j b_1,\ldots, P_j b_{n+1}, (I-P_j) b_{n+2} )\w|
\\ \leq C_{d,p,\epsilon} n \q(\sup_j \LpN{1}{\vsig_j}\w) \log^3\q(1+n\Ga_\eps\w) \big(\prod_{l\ne l_1,l_2} \|{b_l}\|_\infty\big) \|b_{l_1}\|_p \|b_{l_2}\|_{p'}\,.
%\LpN{p'}{b_{n+1}} \LpN{p}{b_{n+2}}.
%\end{split}
\end{multline*}
\end{prop}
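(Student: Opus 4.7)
The plan is to split on the location of the pair $(l_1,l_2)$ and in each sub-case use the $L^2$ theory of \S\ref{pfofmainL2est10}--\S\ref{Opepssect} together with $\Op_\eps$-control and Journé-type interpolation to pass from $L^2$ to $L^p$. I would first dispatch the base case $l_1=n+1$, $l_2=n+2$: Theorem~\ref{PropPart1IMinus} gives strong $L^2\to L^2$ convergence of the associated operator $T^2_{n+2}[b_1,\dots,b_n]$ with norm $\lc n\sup_j\|\vsig_j\|_{L^1}\log^3(1+n\Ga_\eps)$. Writing $T^2_{n+2}=\sum_{m_1\in\bbZ,\,m_2>0}\sum_j(\text{pieces})$ exactly as in the outline of \S\ref{partfouroutline}, Proposition~\ref{OpestST} furnishes uniform $\Op_\eps$-bounds on the Schwartz kernels of the $S^{m_1,m_2}_j$'s and $T^{m_1,m_2}_j$'s; Theorem~\ref{ThmJourne2}(ii) then upgrades strong $L^2\to L^2$-convergence to strong $H^1\to L^1$-convergence with the same order of bound (the additional logarithm is absorbed into the $\log^3$), and Corollary~\ref{CorWeakTypeInterp} interpolates to the $L^p\to L^p$ estimate for $p\in(1,2]$.

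For the sub-cases where one of $l_1,l_2$ equals $n+2$, say $l_2=n+2$ and $l_1\in\{1,\dots,n\}$, I would apply Theorem~\ref{ThmOpResAdjoints} with $\vp$ the transposition $(l_1,n+1)$; because $\vp$ fixes position $n+2$, the $(I-P_j)$-decoration on the last slot is preserved, and the resulting form
\begin{equation*}
\La[(\ell_\vp\vsig_j)^{(2^j)}](P_jb_{\vp(1)},\dots,P_jb_{\vp(n+1)},(I-P_j)b_{n+2})
\end{equation*}
places $b_{l_1}\in L^p$ at position $n+1$ and $b_{n+2}\in L^{p'}$ at position $n+2$ --- the base-case configuration for the new family $\{\ell_\vp\vsig_j\}$. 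Since $\|\ell_\vp\vsig_j\|_{L^1}=\|\vsig_j\|_{L^1}$ and $\Ga_{\eps'}(\{\ell_\vp\vsig_j\})\lc n^2\Ga_\eps$ by Theorem~\ref{ThmOpResAdjoints}, the extra $n^2$ is swallowed by $\log^3(1+n\cdot n^2\Ga_\eps)\lc \log^3(1+n\Ga_\eps)$; the symmetric case $l_1=n+2$, $l_2\le n$ is identical.

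The remaining ``hard'' situation is $\{l_1,l_2\}\subset\{1,\dots,n+1\}$; a further permutation within $\{1,\dots,n\}$ reduces it to $l_1=n+1$, $l_2\in\{1,\dots,n\}$. Since permutation cannot move the $(I-P_j)$-decoration onto either the $L^p$- or the $L^{p'}$-entry, I would instead imitate the program of Theorem~\ref{PropPart1IMinus} with the distinguished pair shifted from $(n+1,n+2)$ to $(n+1,l_2)$: decompose $(I-P_j)b_{n+2}=\sum_{m_2>0}Q_{j+m_2}b_{n+2}$ at slot $n+2$, use the telescoping identity \eqref{Q-expansion} at the remaining $P_j$-slots to expose a $Q$-factor at a position $l\in\{1,\dots,n+1\}$, and bound the resulting operator --- which then carries a $Q$ at position $l$ and a $Q$ at position $n+2$ --- via Theorem~\ref{PropL2BoundFor2Qs} with pair $(l,n+2)$. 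Lemma~\ref{ThmAlmostOrthogonal} sums in $j$ to yield the $L^2\to L^2$ estimate, and rerunning the Leibniz/integration-by-parts computation of Proposition~\ref{OpestST} with $(n+1,n+2)$ replaced by $(n+1,l_2)$ delivers the analogous $\Op_\eps$ bounds; Theorem~\ref{ThmJourne2} and Corollary~\ref{CorWeakTypeInterp} then complete the $L^p$ estimate. The main technical obstacle is this last step: when the $L^{p'}$-entry sits at an interior slot $l_2\le n$ rather than at $n+2$, derivatives in the integration by parts fall on a factor $b_{l_2}(x-\alpha_{l_2}(x-y))$ via the chain rule, producing an $\alpha_{l_2}$-weight that must be controlled by the $\cB_\eps$-norm of $\vsig_j$ (rather than by the simpler spatial weight exploited in Proposition~\ref{OpestST}) while keeping the $n$-dependence polynomial so that the triple-log bound survives.
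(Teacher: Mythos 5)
Your handling of the first two sub-cases is correct and matches the paper: for $(l_1,l_2)=(n+1,n+2)$ invoke Theorem~\ref{PropPart1IMinus} and interpolate, and when exactly one of $l_1,l_2$ is $n+2$ permute within $\{1,\dots,n+1\}$ to reduce to the base case (absorbing the $n^2$ loss in the triple log). The dual configuration $(n+2,n+1)$ is a transpose of the base case; you gloss over this, but it is routine.

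The genuine gap is in the ``hard'' case $\{l_1,l_2\}\subset\{1,\dots,n+1\}$. You correctly observe that no permutation can move the $(I-P_j)$-decoration onto either the $L^p$- or $L^{p'}$-slot, and conclude that you must re-derive the $L^2\to L^2$ bound from scratch with the distinguished pair shifted to $(n+1,l_2)$ --- re-running the telescoping identity, Theorem~\ref{PropL2BoundFor2Qs} for the new pair, Lemma~\ref{ThmAlmostOrthogonal}, and a modified Proposition~\ref{OpestST}. You flag the obstacle yourself: in the integration-by-parts step the derivative now falls via the chain rule on $b_{l_2}(x-\alpha_{l_2}(x-y))$ and produces a bare $\alpha_{l_2}$-weight, which the $\cB_\eps$-norm controls only through $(1+|\alpha_{l_2}|)^\eps$, not linearly. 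You do not resolve this, and without doing so the plan does not close.

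The paper takes a cleaner route that sidesteps the obstacle entirely: it never tries to move a \emph{pair} of $L^2$-slots. Instead it first converts the base-case $L^2\to L^2$ bound to an $H^1\to L^1$ bound for $\sum_j S_{1,j}$, using the $\Op_\eps$-control and Theorem~\ref{ThmJourne2}; crucially this is a bound with only \emph{one} distinguished slot (the $H^1$-entry at position $n+1$), all others being $L^\infty$. One-slot bounds \emph{can} be permuted freely by Theorem~\ref{ThmOpResAdjoints} --- you just change the family $\{\vsig_j\}$ into $\{\tilde\vsig_j=\ell_\vp\vsig_j\}$, at the cost of a polynomial-in-$n$ factor in $\Gamma_\eps$ that is again absorbed by $\log^3$. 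Applying the one-slot $H^1\to L^1$ bound to the permuted family yields the $H^1\to L^1$ bound for $\sum_j P_j S_{2,j}P_j$, which is exactly the $(1,n+1)$ configuration. Then Theorem~\ref{ThmJourne2} (via the $\Op_\eps$-bounds for $\Dil_{2^{-j}}S_{2,j}$ and Lemma~\ref{PSP-lemma}) recovers $L^2\to L^2$, and Corollary~\ref{ThmWeakTypeWithP} interpolates to $L^p$. This buys you the hard case without touching the Leibniz/integration-by-parts calculus at all. You should replace your third bullet with this argument; the key step you are missing is that Journ\'e's theorem lets you trade a two-slot $L^2$ bound for a one-slot $H^1\to L^1$ bound and back, and one-slot bounds commute with the adjoint machinery of \S\ref{SectionAdjoints}.
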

\begin{proof}
By symmetry of the roles of $b_1,\ldots, b_{n+1}$, via Theorem \ref{ThmOpResAdjoints}, it suffices to prove
the result for three cases: $(l_1,l_2)=(n+1,n+2)$, $(l_1,l_2)=(n+2,n+1)$, and $(l_1,l_2)=(1,n+1)$.

We begin with the case $(l_1,l_2)=(n+1,n+2)$. 
For this we define an operator
$S_{1,j}\equiv S_{1,j}[b_1,\ldots, b_n]$ by
\begin{equation*}
\int g(x) \q(S_{1,j}[b_1,\ldots, b_n] f\w)(x)\: dx := \La[\vsigjj](P_j b_1,\ldots, P_j b_{n+1}, (I-P_j) b_{n+2}).
\end{equation*}
It is straightforward  to verify
the inequalities
\begin{align*}
\|\Dil_{2^{-j} } S_{1,j} \|_{\Op_\eps}&\lc
 n(\sup_{j\in \Z} 
 \|\vsig_j\|_{\cB_\eps} \prod_{i=1}^n \|b_i\|_\infty,
 \\
 \|\Dil_{2^{-j} } S_{1,j} \|_{\Op_0}&\lc
   \q(\sup_{j\in \Z} \LpN{1}{\vsig_j}\w) \prod_{i=1}^n \|b_i\|_\infty;
\end{align*}
here $ \eps\le 1$ and the $\Op_\eps$, $\Op_0$ norms are as in \eqref{EqnJourneBoundCep}, \eqref{schurassu}.

% that if $C_{\epsilon,1}$ and $C_{0,1}$ are defined by the formulas \eqref{EqnJourneBoundCep} and \eqref{EqnJourneBoundC0}, respectively, with $T_j$ replaced by $S_{1,j}[b_1,\ldots, b_n]$, we have
%\begin{equation*}C_{\epsilon,1} \lesssim n\q(\sup_{j\in \Z} \sBN{\epsilon}{\vsig_j}\w)\prod_{l=1}^n \LpN{\infty}{b_l},\quad C_{0,1} \lesssim \q(\sup_{j\in \Z} \LpN{0}{\vsig_j}\w) \prod_{i=1}^n \|b_i\|_\infty.\end{equation*}
%(As an immediate consequence, the same bounds hold for the constants $C_\epsilon$ and $C_0$ in \eqref{EqnCZWeaktypeCep} and \eqref{EqnCZWeaktypeC0}.)
Theorem \ref{PropPart1IMinus} shows
\begin{equation*}
\Big\|\sum_{j\in \Z}S_{1,j}[b_1,\ldots, b_n]\Big\|_{L^2\to L^2}
 \lesssim n (\sup_j \LpN{1}{\vsig_j})\log^3\q(1+n\Ga_\eps)  \prod_{i=1}^n \|b_i\|_\infty.
\end{equation*}
with convergence in the strong operator topology.
By Proposition \ref{CorWeakTypeInterp} we get, for  $1<p\leq 2$,
\begin{equation*}
\Big\|\sum_{j\in \Z} S_{1,j}[b_1,\ldots, b_n]\Big\|_{L^p\to L^p} \leq C_{d,p,\epsilon} n \q(\sup_j \LpN{1}{\vsig_j}\w) \log^3\q(1+n\Ga_\eps) \prod_{i=1}^n \|b_i\|_\infty,
\end{equation*}
and
\begin{equation*}
\Big\|\sum_{j\in \Z} {}^t\!S_{1,j}[b_1,\ldots, b_n]\Big\|_{L^p\to L^p} \leq C_{d,p,\epsilon} n \q(\sup_j \LpN{1}{\vsig_j}\w) \log^3\q(1+n\Ga_\eps) \prod_{i=1}^n \|b_i\|_\infty,
\end{equation*}
which are equivalent to the statement of the proposition in the cases $(l_1,l_2)=(n+1,n+2)$ and $(l_1,l_2)=(n+2,n+1)$, respectively. 
The convergence is in the sense of the strong operator topology (as operators bounded on $L^p$).

We now turn to the case $(l_1, l_2)=(1, n+1)$. If we apply  Theorem \ref{ThmJourne2} 
to $\sum {}^t\!S_{1,j}$ we also get an $H^1\to L^1$ bound 
\begin{equation*}
\Big\|\sum_{j\in \Z} {}^t\!S_{1,j}[b_1,\ldots, b_n]\Big\|_{H^1\rightarrow L^1} \lesssim n \q(\sup_j \LpN{1}{\vsig_j}\w) \log^3\q(1+n\Ga_\eps) \prod_{i=1}^n \LpN{\infty}{b_i}.
\end{equation*}
This means that  for $b_1,\ldots, b_n\in L^\infty(\R^d)$, $b_{n+2}\in L^\infty(\R^d)$, $b_{n+1}\in H^1(\R^d)$, we have
\begin{multline}\label{EqnPart1IMinusRedBound}
%\begin{split}&
\Big|\sum_{j\in \Z} \La[\vsigjj](P_j b_1,\ldots, P_j b_{n+1}, (I-P_j) b_{n+2})\Big|
\\ \lesssim n\q(\sup_j \LpN{1}{\vsig_j}\w) \log^3\q(1+n\Ga_\eps ) \big(\prod_{i=1}^n \|b_i\|_\infty\big)\q\|b_{n+1}\w\|_{H^1} \|b_{n+1}\|_\infty.
%\end{split}
\end{multline}
For $j\in \Z$, define an operator $S_{2,j}[b_2,\ldots, b_n,b_{n+2}]$ by
\begin{equation*}
\int g(x) \q(S_{2,j}[b_2,\ldots, b_n, b_{n+2}] 
f\w)(x)\: dx:= \La[\vsigjj](g, P_j b_2, \ldots, P_j b_n, f, (I-P_j)b_{n+2}).
\end{equation*}
Since $\tr P_j=P_j$ the case  $(l_1,l_2)=(1,n+1)$ is equivalent to the inequality
\Be \label{1-n+1-case}
\Big\|\sum_{j\in \bbZ} P_j S_{2,j}[b_2,\ldots, b_n, b_{n+2}] P_j\Big\|_{L^p\to L^p}
\lc n \sup_j\|\vsig\|_{L^1} (1+n\Ga_\eps) \prod_{l\in \{2,\dots, n, n+2\} }\|b_l\|_\infty.
\Ee
To show \eqref{1-n+1-case} we first observe that 
by Theorem \ref{ThmOpResAdjoints}, there is a $c>0$ (independent of $n$) such that
for $\eps'<c\eps$ there are $\vsigt_j\in \cB_{\eps'}(\R^n\times \R^d)$ with
$\|\vsigt_j\|_{\cB_{\eps'}}\lesssim n \|\vsig\|_{\cB_\eps}$ and  $\LpN{1}{\vsigt_j}=\LpN{1}{\vsig_j}$ such that
\begin{equation*}
\int b_1(x) \q(S_{2,j}[b_2,\ldots, b_n, b_{n+2}] b_{n+1}\w)(x)\: dx = \La[\vsigt_j^{(2^j)}] 
(P_j b_2,\ldots, P_j b_n, (I-P_j) b_{n+2}, b_1, b_{n+1}).
\end{equation*}
If we apply \eqref{EqnPart1IMinusRedBound}  with the family $\{\tilde \vsig_j\}$ in place of $\{\vsig_j\}$ and $\eps'$ in place of $\eps$) we 
get
\begin{align*}
%\begin{split}&
&\Big|\sum_{j\in \Z} \La[\tilde \vsig_j^{(2^j)}](P_j b_1,\ldots, P_j b_{n+1}, (I-P_j) b_{n+2})\Big|
\\ &\quad\lesssim n\q(\sup_j \LpN{1}{\vsig_j}\w) \log^3
\big(1+n \frac{\sup_j\|\tilde \vsig_j\|_{\cB_\eps}}{\sup_j\|\tilde \vsig_j\|_{L^1}}\big) \big(\prod_{i=1}^n \|b_i\|_\infty\big)\q\|b_{n+1}\w\|_{H^1} \|b_{n+1}\|_\infty
\\ &\quad\lesssim n\q(\sup_j \LpN{1}{\vsig_j}\w) \log^3(1+n\Ga_\eps)
\big(\prod_{i=1}^n \|b_i\|_\infty\big)\q\|b_{n+1}\w\|_{H^1} \|b_{n+1}\|_\infty
\end{align*} which (in view of
${}^t\!P_j=P_j$) can be rephrased as 
\begin{equation*}
\Big\| \sum_{j} P_j S_{2,j}[b_2,\ldots, b_n, b_{n+2}]P_j \Big\|_{H^1\rightarrow L^1} \lesssim  n\q(\sup_j \LpN{1}{\vsig_j}\w) \log^3\q(1+n\Ga_\eps)\prod_{l\in \{2,\ldots, n, n+2\}} \|b_l\|_\infty.
\end{equation*}
%\end{split}
We wish to apply Lemma \ref{PSP-lemma} to the kernels 
 $\sigma_j= \Dil_{2^{-j}} S_{2,j}$. Observe that the Schur integrability norms for these
 kernels satisfy the uniform estimates 
\begin{equation*}
\Sha^1_\eps[\sigma_j]+\Sha^\infty_\eps[\sigma_j]
\lesssim  \sBN{\epsilon'}{\vsigt_j} \prod_{l\in \{2,\ldots, n, n+2\}} \LpN{\infty}{b_l}\lesssim n \sup_j \sBN{\epsilon}{\vsig_j}\prod_{l\in \{2,\ldots, n, n+2\}} \|b_l\|_\infty,
\end{equation*}
and 
\begin{equation*}
\Sha^1_\eps[\sigma_j]+\Sha^\infty_\eps[\sigma_j]
\lesssim  \LpN{1}{\vsigt_j} \prod_{l\in \{2,\ldots, n, n+2\}} \|b_l\|_\infty\le \sup_j \LpN{1}{\vsig_j} 
\prod_{l\in \{2,\ldots, n, n+2\}}\|b_l\|_\infty.
\end{equation*}

Now Theorem \ref{ThmJourne2} 
in conjunction with 
Lemma \ref{PSP-lemma} applies to show
\begin{equation*}
\Big\| \sum_{j} P_j S_{2,j}[b_2,\ldots, b_n, b_{n+2}]P_j \Big\|_{L^2\rightarrow L^2} \lesssim  n\q(\sup_j \LpN{1}{\vsig_j}\w) \log^3(1+n\Ga_\eps) \prod_{l\in \{2,\ldots, n, n+2\}} \|{b_l}\|_\infty,
\end{equation*}
with convergence in the strong operator topology.
Finally \eqref{1-n+1-case} follows by interpolation (see Corollary \ref{ThmWeakTypeWithP}).
This completes the proof.
% we get for  $1<p\leq 2$,
%\begin{equation*}\q\| \sum_{j\in \Z} P_j S_{2,j}[b_2,\ldots, b_n, b_{n+2}]P_j \w\|_{L^p\rightarrow L^p} \lesssim  n\q(\sup_j \LpN{1}{\vsig_j}\w) \log^3\q(1+n\frac{\sup_j \sBN{\epsilon}{\vsig_j}}{\sup_j \LpN{1}{\vsig_j}}\w) \q[\prod_{l\in \{2,\ldots, n, n+2\}} \LpN{\infty}{b_l}\w],\end{equation*}
%which is equivalent to the statement of the proposition in the case $(l_1,l_2)=(1,n+1)$.  
\end{proof}

\section{Proof of Theorem \ref{main-parts}: Part \ref{itemBoundAllPts}}
\label{Sectionpartallpts}
%\input{partallpts}

% !TEX root =  main.tex
In this section, we consider the multilinear form
\begin{equation*}
\La^3(b_1,\ldots, b_{n+2}):=\sum_{j} \La[\vsigjj](P_j b_1,\ldots, P_j b_{n+2}),
\end{equation*}
where the summation is a priori extended over a finite subset of $\bbZ$, and where,  for some fixed $\epsilon>0$, $\{\vsig_j : j\in \Z\}\subset \sBtp{\epsilon}{\R^n\times \R^d}$ is a bounded set with $\int \vsig_j(\alpha,v)\: dv=0,$ for  all $j$ and almost every $\alpha$.
To prove part \ref{itemBoundAllPts} of Theorem \ref{main-parts} we need to establish for $1<p\le 2$
the inequality
\begin{equation}\label{partfiveest}
\q|\La^3(b_1,\ldots, b_{n+2})\w|\leq C_{d,p,\epsilon} n^2 \q(\sup_{j} \LpN{1}{\vsig_j}\w) \log^3 \q(1+n\Ga_\eps) \big(\prod_{i=1}^n\|b_i\|_\infty \big) \|b_{n+1}\|_{p'} \|b_{n+2}\|_p.
\end{equation}

%The goal of this section is to prove the following theorem.
%\begin{thm}\label{ThmPartAllPtsMainThm}Let $p\in (1,2]$ and let $p'$ be dual to $p$.  Let $b_1,\ldots, b_n\in L^\infty(\R^d)$ and $b_{n+1}\in L^{p'}(\R^d)$, $b_{n+2}\in L^p(\R^d)$.  Then,\end{thm}

As in the previous section the  heart of the proof lies in the case $p=2$ which we state as a theorem.
\begin{thm}\label{ThmPartAllPtsMainProp}
Let $b_1,\ldots, b_n\in L^\infty(\R^d)$ and $b_{n+1},b_{n+2}\in L^{2}(\R^d)$.  Then,
$$\lim_{N\to\infty}
\sum_{j=-N}^N \La[\vsigjj](P_j b_1,\ldots, P_j b_{n+2})= \La^3(b_1,\ldots, b_{n+2})
$$ and $\La^3$ satisfies 
\begin{equation*}
\q|\La^3(b_1,\ldots, b_{n+2})
\w|\leq C_{d,\epsilon} n^2 \sup_{j} \LpN{1}{\vsig_j} \log^3 \q(1+n\Ga_\eps)
%\frac{\sup_j \sBN{\epsilon}{\vsig_j} }{\sup_j \LpN{1}{\vsig_j} }\w)
\big( \prod_{i=1}^n \|b_i\|_\infty \big)\|b_{n+1}\|_2 \|b_{n+1}\|_2.
%\LpN{\infty}{b_m} \w] \LpN{2}{b_{n+1}} \LpN{2}{b_{n+2}}.
\end{equation*} 
The sum defining the operator $T^3[n_1,\dots, b_n]$ associated to $\La^3$ converges in the strong operator topology as bounded operators $L^2\to L^2$.
\end{thm}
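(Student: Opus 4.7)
The plan is to extend the strategy of Theorem \ref{PropPart1IMinus} (Part IV) by performing a \emph{second} telescoping expansion. Whereas Part IV exploited the single $(I-P_j)b_{n+2}$ via the Littlewood--Paley decomposition $(I-P_j)b_{n+2}=\sum_{m_2>0}Q_{j+m_2}b_{n+2}$, Part V features $P_j$ in every slot; since both $b_{n+1},b_{n+2}\in L^2$, we can telescope twice, each step contributing a factor of $n$ that jointly accounts for the $n^2$ in the target bound.

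\textbf{Double telescoping.} By Lemma \ref{LemmaBasicL2BasicLpEstimate} and the observation that $\|P_{j-M}P_jb_{n+1}\|_2\to 0$ as $M\to\infty$ (dominated convergence on the Fourier side), one has $\La[\vsigjj](P_{j-M}P_jb_1,\ldots,P_{j-M}P_jb_{n+2})\to 0$. Combined with $P_{j+M}P_jb_i\to P_jb_i$, the Leibniz telescoping argument of \eqref{Q-expansion} applies to all $n+2$ slots:
\[
\La[\vsigjj](P_jb_1,\ldots,P_jb_{n+2})=\sum_{l_1=1}^{n+2}\sum_{m_1\in\bbZ}\La[\vsigjj]\bigl(\ldots Q_{j+m_1}P_jb_{l_1}\ldots\bigr),
\]
with non-$l_1$ slots decorated by $X^i_{j+m_1}P_j\in\{P_{j+m_1}P_j,P_{j+m_1-1}P_j\}$. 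Since the remaining $n+1$ slots still contain one of $b_{n+1},b_{n+2}$, a second application of the same identity, justified by the same $L^2$-decay argument, yields
\[
=\sum_{l_1\ne l_2}\sum_{m_1,m_2\in\bbZ}\La[\vsigjj]\bigl(\ldots Q_{j+m_1}P_jb_{l_1}\ldots Q_{j+m_1+m_2}P_{j+m_1}P_jb_{l_2}\ldots\bigr).
\]

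\textbf{Reduction, kernel bounds, and summation.} By Theorem \ref{ThmOpResAdjoints}, permutation symmetry (at polynomial-in-$n$ cost in the $\cB_{\eps'}$-norm) reduces the analysis of the $O(n^2)$ pairs $(l_1,l_2)$ to a single representative pair, e.g. $(l_1,l_2)=(n+1,n+2)$. For this pair, define operators $\fS_j^{m_1,m_2}$ analogous to \eqref{Sjmmdef} with $m_1,m_2\in\bbZ$. Adapting the kernel analysis of \S\ref{Opepssect}, using the cancellation $\int\vsig_j(\alpha,v)\,dv=0$ together with the two $Q$-factors already present from the double telescoping, one obtains uniform bounds $\|\fS_j^{m_1,m_2}\|_{\Op_\eps}\lesssim 2^{-\eps'(|m_1|+|m_2|)}n^\kappa\|\vsig_j\|_{\cB_\eps}$ and $\|\fS_j^{m_1,m_2}\|_{\Op_0}\lesssim\|\vsig_j\|_{L^1}$ for some fixed polynomial exponent $\kappa$. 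Lemma \ref{ThmAlmostOrthogonal} then gives strong-operator convergence of $\sum_j\fS_j^{m_1,m_2}$ on $L^2$ with norm $\min\{2^{-\eps'(|m_1|+|m_2|)}n^\kappa\sup\|\vsig_j\|_{\cB_\eps},\sup\|\vsig_j\|_{L^1}\}$; Theorem \ref{ThmJourne2} promotes this to an $H^1\to L^1$ bound at logarithmic cost. Summing over $(m_1,m_2)\in\bbZ^2$ via Lemma \ref{LemmaBasicSum} yields $\sup\|\vsig_j\|_{L^1}\log^3(1+n\Gamma_\eps)$ per pair, and the $O(n^2)$ distinct pairs produce the claimed $n^2\log^3$ bound.

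\textbf{Main obstacle.} The principal technical burden is verifying the $\Op_\eps$/$\Op_0$ estimates for the double-telescoped operators when $m_1,m_2\in\bbZ$ (including $m_i\le 0$, absent in Part IV where $m_2>0$), handled by using $\int Q_{j+m}=0$ directly rather than Fourier support localization, and checking that the auxiliary low-pass compositions $P_{j+m_1}P_j$ introduced by the second telescoping do not degrade the kernel estimates from Lemma \ref{PropPart1IMinusOpProp1}. The bookkeeping of strong-operator convergence at each summation step, with equiconvergence in the parameters $b_1,\ldots,b_n$, follows the pattern developed in Sections \ref{Sectionltwo}--\ref{Sectionpart1iminus}.
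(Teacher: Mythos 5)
Your double-telescoping setup, the recognition that the $n^2$ factor comes from the $(l_1,l_2)$ pairs, the use of the cancellation of $Q$ for the $\Op_\eps/\Op_0$ bounds, and the invocation of almost orthogonality plus Theorem~\ref{ThmJourne2} are all in line with what the paper does. However, there is a genuine gap in your reduction step, and it is the crux of why Part V is harder than Part IV.

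You claim that ``permutation symmetry (Theorem~\ref{ThmOpResAdjoints}) reduces the analysis of the $O(n^2)$ pairs $(l_1,l_2)$ to the single pair $(n+1,n+2)$.'' That permutation does put the two $Q$-factors on slots $n+1$ and $n+2$, but it simultaneously moves the two $L^2$-functions $b_{n+1},b_{n+2}$ away from those slots (into whatever positions $\vp^{-1}(n+1),\vp^{-1}(n+2)$ are). The almost-orthogonality argument of Lemma~\ref{ThmAlmostOrthogonal}, which is what gives you the $L^2\to L^2$ bound, exploits that the $Q$'s sit on the operator's \emph{input and output} slots; it therefore produces a bound only for the bilinear form in the slots that carry the $Q$'s, i.e. for the operator $T_{j,n+1,n+2}^{m_1,m_2}$ tested against functions in slots $n+1,n+2$. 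For a general pair the $L^2$ functions end up in slots $\le n$, where the almost-orthogonality machinery does not apply directly, so the $L^2$ bound for that pair must be \emph{derived}, not obtained for free by symmetry. The paper achieves this in Proposition~\ref{CorPartAllPtsL2Allls} by a bootstrap chain through Theorem~\ref{ThmJourne2}: the $L^2$ bound for $T_{j,n+1,n+2}$ is upgraded to $H^1\to L^1$; because in the $H^1\to L^1$ regime all slots except one carry $L^\infty$-norms and are hence interchangeable, this is equivalent to the $H^1\to L^1$ bound for $T_{j,l_2,n+2}$; Theorem~\ref{ThmJourne2} then gives $L^2$ for $T_{j,l_2,n+2}$; transposing gives $L^2$ for $T_{j,n+2,l_2}$; and one more pass through $H^1\to L^1$ and back yields the $L^2$ bound for the fully general $T_{j,l_1,l_2}$. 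Your proposal invokes Theorem~\ref{ThmJourne2} but only to ``promote'' the $L^2$ bound to $H^1\to L^1$, never closing the loop back to $L^2$ for the other pairs -- which is precisely where the result is actually proved. Without this chain your argument does not bound the general pair and does not yield the theorem.

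A smaller but real issue: when you state the $\Op_\eps$ bound for the double-telescoped operators you omit the normalizing dilation. In the paper's Lemma~\ref{LemmaPartAllPtsFinalOpN} the bound is for $\Dil_{\rho^{-1}_{j,m_1,m_2}}T_{j,l_1,l_2}^{m_1,m_2}$ with $\rho_{j,m_1,m_2}=\min\{2^j,2^{j+m_1},2^{j+m_2}\}$, and the choice of scale changes depending on the signs of $m_1,m_2$ (cf.\ the case split in its proof and the analogous rescaling in Proposition~\ref{OpestST}). An $\Op_\eps$ bound at the wrong scale does not feed into the summation machinery (Proposition~\ref{ShaZheimplSI}, Theorem~\ref{ThmJourne2}); this needs to be made explicit before the rest of the argument can go through.
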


\begin{proof}[Proof of \eqref{partfiveest} 
given Theorem \ref{ThmPartAllPtsMainProp}]
We may assume $\LpN{\infty}{b_l}=1$, $l=1,\ldots, n$.
For $j\in \Z$ define the operator $T_j$ by 
\begin{equation*}
\int g(x)  \,{T_j}f(x)\: dx := \La[\vsigjj](P_j b_1,\ldots, P_j b_n, P_j g, {f}).
\end{equation*}
 Theorem \ref{ThmPartAllPtsMainProp} is equivalent to
\begin{equation*}
\Big\|\sum_{j\in \Z} T_jP_j \Big\|_{L^2\to L^2}
 \lesssim n^2 \sup_{j} \LpN{1}{\vsig_j} \log^3 \q(1+n\Ga_\eps)
 \prod_{i=1}^n \|b_i\|_p
\end{equation*}
Corollary  \ref{ThmWeakTypeWithP} applies since 
$\sup_j \Sha_\eps^1[ \Dil_{2^{-j} }T_j] \lesssim \sup_j \|\vsig_j\|_{\cB_\eps} $,
$\sup_j \Sha_0^1[ \Dil_{2^{-j} }T_j] \lesssim \sup_j \|\vsig_j\|_{L^1} $. This completes the proof.
\end{proof}
We now turn to the proof of Theorem \ref{ThmPartAllPtsMainProp}. The argument is analogous to the
arguments in the previous section and therefore we shall be brief.

\subsection{Basic decompositions}
%We complete this section with a proof of Proposition \ref{PropPartAllPtsMainProp}.\begin{proof}[Proof of Proposition \ref{PropPartAllPtsMainProp}]
We argue as in \S \ref{partfouroutline}
and decompose
%Let $b_1,\ldots, b_n\in L^\infty(\R^d)$ with $\LpN{\infty}{b_j}$ ($1\leq j\leq n$), and let $b_{n+1},b_{n+2}\in L^2(\R^d)$ with 
%$\LpN{2}{b_{n+1}}=\LpN{2}{b_{n+2}}=1$.  We have
\begin{equation*} \begin{split}
& \La[\vsigjj](P_j b_1,\ldots, P_jb_{n+2})
\\
&=\lim_{M\to\infty}\Big(
 \La[\vsigjj](P_{j+M}P_j b_1,\ldots, P_{j+M}P_j b_{n+1}, P_{j+M}P_j b_{n+2})
\\&\qquad\qquad\qquad \qquad-
 \La[\vsigjj](P_{j-M}P_j b_1,\ldots, P_{j-M}P_j b_{n+1}, P_{j-M}P_j b_{n+2})\Big)
\\
&=\lim_{M\to\infty}\sum_{m=-M+1}^M\Big(
 \La[\vsigjj](P_{j+m}P_j b_1,\ldots, P_{j+m}P_j b_{n+2})
\\&\qquad\qquad\qquad \qquad
-
 \La[\vsigjj](P_{j+m-1}P_j b_1,\ldots, P_{j+m-1}P_j b_{n+1}, P_{j+m-1}P_j b_{n+2})\Big)
\end{split}
\end{equation*}
and thus 
\begin{multline*}\label{Q-expansion}
 \La[\vsigjj](P_j b_1,\ldots, P_j b_{n+2})\,=
\\
\sum_{l=1}^{n+1} \sum_{m=-\infty}^{\infty} 
\La[\vsigjj](P_{j+m-1}P_jb_1,\dots, 
P_{j+m-1}P_jb_{l-1} , Q_{j+m} P_j b_l,
P_{j+m}P_j b_{l+1},\dots, P_{j+m}P_jb_{n+2}).
\end{multline*}
We repeat  the same procedure to each term 
and write,  for fixed $m\in \bbZ$
%\begin{multline*}
\[ \La[\vsigjj](P_{j+m-1}P_jb_1,\dots, 
P_{j+m-1}P_jb_{l-1} , Q_{j+m} P_j b_l,
P_{j+m}P_j b_{l+1},\dots, P_{j+m}P_jb_{n+2})\]
as the limit (as $M\to \infty$) of the differences
\begin{align*}
\La[\vsigjj]\big(&P_{j+M}P_{j+m-1}P_jb_1,\dots, 
P_{j+M}P_{j+m-1}P_jb_{l-1} ,
\\
&\quad P_{j+M}Q_{j+m} P_j b_l,
P_{j+M}P_{j+m}P_j b_{l+1},\dots, P_{j+M}P_{j+m}P_jb_{n+2}\big)
\\-
\La[\vsigjj](&P_{j-M}P_{j+m-1}P_jb_1,\dots, 
P_{j-M}P_{j+m-1}P_jb_{l-1} , 
\\& \quad P_{j-M}Q_{j+m} P_j b_l,
P_{j-M}P_{j+m}P_j b_{l+1},\dots, P_{j-M}P_{j+m}P_jb_{n+2})\,.
\end{align*}
We continue as above, writing each difference as a
collapsing
%telescoping 
sum, and than expanding each summand using the multilinearity of the functionals.
The limit of the expressions in the last display becomes
% and write this limit as 
$$
\La[\vsigjj](P_j b_1,\ldots, P_jb_{n+2})
=\sum_{\substack {(l_1,l_2)\\ 1\le l_1\neq l_2\le n+2}}
\sum_{(m_1,m_2)\in \bbZ^2} \la_{j,l_1,l_2}^{m_1,m_2}(b_1,\dots,b_{n+2})
$$
where, for $l_1<l_2$,  
%the expression  ...  is given by
\begin{align*} 
%\sum_{\substack{1\leq l_1\ne l_2\leq n+2}} \bigg|
\la_{j,l_1,l_2}^{m_1,m_2}(&b_1,\dots,b_{n+2}):=
\\ \La[\vsigjj]\big(&P_{j+m_2-1} P_{j+m_1-1} P_j b_1,\ldots, P_{j+m_2-1}P_{j+m_1-1} P_j b_{l_1-1}, 
\\&
P_{j+m_2-1}Q_{j+m_1} P_j b_{l_1}, 
P_{j+m_2-1} P_{j+m_1} P_j b_{l_1+1}, \ldots, P_{j+m_2-1}P_{j+m_1} P_j b_{l_2-1}, 
\\ & 
Q_{j+m_2} P_{j+m_1} P_j b_{l_2}, P_{j+m_2} P_{j+m_1} P_j b_{l_2+1},\ldots, 
P_{j+m_2} P_{j+m_1} P_j b_{n+2}\big ).
\end{align*}
For $l_1>l_2$ there is an obvious modification.

There are $(n+2)(n+1)=O(n^2)$ terms in the sum $\sum_{\substack{1\leq l_1\ne l_2\leq n+2}}$. 
It is therefore our task to show that
\Be \label{fixedl1l2termest}
\Big|\sum_{m_1,m_2} \sum_j\la_{j,l_1,l_2}^{m_1,m_2}(b_1,\dots,b_{n+2})\Big| \lc
\sup_j \LpN{1}{\vsig_j}\log^3(1+n \Ga_\eps)\big(\prod_{i=1}^n\|b_i\|_\infty \big)
\|b_{n+1}\|_2\|b_{n+2}\|_2;
\Ee
then summing the  $O(n^2)$ terms will complete the proof.

%\begin{equation*}
%\begin{split}
%&|T^3(b_1,\ldots, b_{n+2})| = \q|\sum_{j\in \Z} \La[\vsigjj](P_j b_1,\ldots, P_j b_{n+2})\w|\\
%&=\lim_{N\rightarrow \infty} \q| \sum_{j\in \Z} \La[\vsigjj](P_N P_j b_1,\ldots, P_N P_j b_{n+2} )- \La[\vsigjj](P_{-N} P_j b_1,\ldots, P_{-N} P_j b_{n+2})\w|\\
%&\leq \sum_{l_1=1}^{n+2}\q|\sum_{j,k_1\in \Z} \La[\vsigjj](P_{k_1-1} P_j b_1,\ldots, P_{k_1-1} P_j b_{l_1-1}, Q_{k_1} P_j b_{l_1}, P_{k_1} P_j b_{l_1+1},\ldots, P_{k_1} P_j b_{n+2}  ) \w|
%\\&=\lim_{N\rightarrow \infty} \sum_{l_1=1}^{n+2} \bigg| \sum_{j,k_1\in \Z} \La[\vsigjj](P_N P_{k_1-1} P_j b_1,\ldots, P_N P_{k_1-1} P_j b_{l_1-1}, Q_{k_1} P_j b_{l_1}, P_N P_{k_1} P_j b_{l_1+1},\ldots, P_N P_{k_1} P_j b_{n+2}  )
%\\&\quad\quad\quad\quad -  \La[\vsigjj](P_{-N} P_{k_1-1} P_j b_1,\ldots, P_{-N} P_{k_1-1} P_j b_{l_1-1}, Q_{k_1} P_j b_{l_1}, P_{-N} P_{k_1} P_j b_{l_1+1},\ldots, P_{-N} P_{k_1} P_j b_{n+2}  )
%\bigg|
%\\&\leq \sum_{\substack{1\leq l_1\ne l_2\leq n+2}} \bigg|\sum_{j,k_1,k_2\in \Z} \La[\vsigjj](P_{k_2-1} P_{k_1-1} P_j b_1,\ldots, P_{k_2-1}P_{k_1-1} P_j b_{l_1-1}, Q_{k_1} P_j b_{l_1}), 
%\\&\quad\quad\quad\quad\quad\quad P_{k_2-1} P_{k_1} P_j b_{l_1+1}, \ldots, P_{k_2-1}P_{k_1} P_j b_{l_2-1}, Q_{k_2} P_{k_1} P_j b_{l_2}, P_{k_2} P_{k_1} P_j b_{l_2+1},\ldots, P_{k_1} P_{k_2} P_j b_{n+2} )\bigg|,
%\end{split}\end{equation*}
%where in the last line one needs to make an obvious modification when $l_1>l_2$.

\subsection{Proof of the bound  \eqref{fixedl1l2termest}}
%We turn to proving Proposition \ref{PropPartAllPtsMainProp}.  
%The arguments will be close to those in the previous section and therefore some details will be left to the reader. 
For $k\in \Z$, $1\leq l\leq n+2$, let $$X_{k}^{1,l},X_{k}^{2,l}\in \{P_k, P_{k-1}\}.$$
For $1\leq l_1,l_2\leq n+2$, $j,k_1,k_2\in \Z$, define the operator
\begin{multline*}
\int b_{l_1}(x) \, T_{j,l_1,l_2}^{m_1,m_2}
 b_{l_2}(x)\: dx\\
:= \La[\vsigjj](X_{j+m_1}^{1,1} X_{j+m_2}^{2,1} P_j b_1,\ldots, X_{j+m_1}^{1,n} X_{j+m_2}^{2,n} P_j b_{n}, X_{j+m_1}^{1,n+1} Q_{j+m_2} P_j  b_{n+1}, Q_{j+m_1} P_j b_{n+2}),
\end{multline*}
where
%as usual, $\dil{S}{2^l}(x,y)= 2^{ld} S(2^l x, 2^l y)$, and 
we have suppressed the dependance of $T_{j,l_1,l_2}^{m_1,m_2}$
 on $b_l$, $l\ne l_1,l_2$.

\begin{lemma}\label{LemmaPartAllPtsFinalOpN} Let $\rho_{j, m_1,m_2}= \min\{2^j, 2^{j+m_1}, 
2^{j+m_2}\}$.  There is a $c>0$ (independent of $n$ so that
for  $\eps'>c\eps$ 
\begin{equation}\label{EqnPartAllPtsToShowOpN}
\big\| \Dil_{\rho_{m_1,m_2, j}^{-1}} T_{j,l_1,l_2}^{m_1,m_2}
\big\|_{\Op_{\eps'}} \lesssim 
\min\{2^{-\eps'|m_1|}, 2^{-\eps'|m_2|}\}
%2^{-\epsilon' (\ma|m_1|-\epsilon'|j-k_2|-\epsilon'|k_1-k_2|} 
n^2\sBN{\epsilon}{\vsig_j}
\prod_{l\ne l_1,l_2} \LpN{\infty}{b_l},
\end{equation} and 
\begin{equation*}
\|\Dil_{\rho_{m_1,m_2, j}^{-1}} T_{j,l_1,l_2}^{m_1,m_2}\|_{\Op_{0}}  
\lesssim \LpN{1}{\vsig_j} \prod_{l\ne l_1,l_2} \|b_l\|_\infty\,.
\end{equation*}
\end{lemma}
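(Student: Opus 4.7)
The bound is a direct generalization of Proposition \ref{OpestST}; the key new observation is that Lemma \ref{PropPart1IMinusOpProp1} accommodates up to \emph{two} non-$P$-type convolution factors, so that after handling one of the two $Q$ operators we may still apply it to the remaining $Q$. By the symmetric appearance of $(l_1,m_1)$ and $(l_2,m_2)$ in the definition of $T_{j,l_1,l_2}^{m_1,m_2}$, it suffices to prove the bound with the decay factor $2^{-\eps'|m_1|}$; the companion bound with $2^{-\eps'|m_2|}$ follows by swapping the roles of the two indices, and taking the better of the two gives the $\min$.

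First I would apply Lemma \ref{scalinglemma} to rescale so that the kernel of $\Dil_{\rho_{j,m_1,m_2}^{-1}} T_{j,l_1,l_2}^{m_1,m_2}$ lives at unit scale. Since $\rho \le 2^{j+m_i}$ for $i=1,2$, both $Q$-factors remain at non-negative relative frequency after rescaling and $\vsig$ appears at scale $\rho/2^j \le 1$, which is exactly the regime $j\ge k$ of Lemma \ref{PropPart1IMinusOpProp1}. Next, I extract a factor of $2^{-|m_1|}$ from $Q_{j+m_1}$ at slot $l_1$ by mimicking the argument in the proof of Proposition \ref{OpestST}: when $m_1 \ge 0$, use $Q_{j+m_1}P_j = 2^{-m_1} Z_{j,m_1}$ for a bounded family of nice convolution operators; when $m_1 < 0$, decompose $Q_{j+m_1} = 2^{-(j+m_1)}\sum_\mu \partial_{x_\mu} R^\mu_{j+m_1}$ via Lemma \ref{LemmaAuxDecompPsi} and integrate by parts. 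Either way, the result is a decomposition
\[
T_{j,l_1,l_2}^{m_1,m_2} \;=\; 2^{-|m_1|}\sum_{\mu=1}^d\sum_{\nu} T_j^{\mu,\nu,m_2},
\]
where the $\nu$-sum ranges over the $O(n)$ possible locations at which the derivative can land, and each $T_j^{\mu,\nu,m_2}$ is of the form handled by Lemma \ref{PropPart1IMinusOpProp1}: the only surviving non-$P$-type convolutions are the $Q_{j+m_2}$ at slot $l_2$ and, if the derivative was absorbed onto a $P$-factor, that one differentiated factor --- precisely the two-exception budget allowed by that lemma.

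Invoking Lemma \ref{PropPart1IMinusOpProp1} on each term then yields
\[
\big\|\Dil_{\rho^{-1}} T_j^{\mu,\nu,m_2}\big\|_{\Op_{\eps'}} \;\lesssim\; n \,\|\vsig_j\|_{\cB_\eps}\prod_{l\ne l_1,l_2}\|b_l\|_\infty,
\]
and combining with the $O(n)$ summation over $(\mu,\nu)$ produces the claimed $n^2$ factor together with the $2^{-\eps'|m_1|}$ decay. The $\Op_0$ bound is simpler: it follows directly from Schur's test applied to the explicit kernel representation, using only $\|\vsig_j\|_{L^1}$, $\prod\|b_l\|_\infty$, and the uniform $L^1$- and $L^\infty$-boundedness of the convolution operators $P_j$, $Q_{j+m_i}$, $X^{\cdot,\cdot}_{j+m_i}$; no cancellation from the $Q$'s is needed.

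The main obstacle is the bookkeeping needed to verify that integration by parts produces the full decay $2^{-|m_1|}$ rather than only $2^{-m_1^+}$. This requires recognizing that $\partial_{x_\mu}(P_{j+m_1}P_j)$ and $\partial_{x_\mu}(Q_{j+m_1}P_j)$ have kernels that are bounded families once rescaled by $\min(2^j,2^{j+m_1})$ (not by the larger of the two scales). This calculation was carried out in the proof of Proposition \ref{OpestST} and transfers verbatim, the only new feature being that the surviving $Q_{j+m_2}$ at slot $l_2$ must be accommodated inside the exception budget of Lemma \ref{PropPart1IMinusOpProp1} --- which, fortunately, allows exactly two such slots.
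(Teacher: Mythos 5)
Your overall strategy (rescale to the smallest scale, peel a factor off the $Q$ on slot $n+2$, apply Lemma~\ref{PropPart1IMinusOpProp1} to the rest, and take a $\min$ over the two indices) is the same as the paper's, which likewise invokes Lemma~\ref{PropPart1IMinusOpProp1}, Lemma~\ref{LemmaAuxDecompPsi}, and the factorization $Q_{j+m}P_j=2^{-m}Z_{j,m}$. However, there is a concrete error in the decay accounting that breaks the argument as written. You claim that for $m_1<0$ the integration by parts yields $T_{j,l_1,l_2}^{m_1,m_2}=2^{-|m_1|}\sum_{\mu,\nu}T_j^{\mu,\nu,m_2}$, but this is false whenever $\rho_{j,m_1,m_2}=2^{j+m_1}$, i.e.\ precisely in the regime $m_1\le\min\{0,m_2\}$. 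There the decomposition contributes $2^{-(j+m_1)}$, and the derivative lands on a convolution factor at scale $\rho=2^{j+m_1}$, producing a compensating gain of $2^{j+m_1}$; the product is exactly $1$, not $2^{-|m_1|}$. (Contrast with Proposition~\ref{OpestST}: there the decay in $|m_1|$ does \emph{not} come from decomposing $Q_{j+m_1}$ at all, but from decomposing the higher-frequency $Q_{j+m_2}$ with $m_2>0$, so the loss $2^{-(j+m_2)}$ is only partially offset by the gain $2^{j+m_1}$, giving $2^{m_1-m_2}$.) Since you also invoke Lemma~\ref{PropPart1IMinusOpProp1} only in the weakened form $\lesssim n\|\vsig_j\|_{\cB_\eps}$, dropping its crucial built-in factor $2^{-\eps'(j-k)}$, your combined bound in the case $m_1\le\min\{0,m_2\}$ produces no decay whatsoever.

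The reasoning "by symmetry, swap $m_1\leftrightarrow m_2$" does not rescue this, both because the definition of $T_{j,l_1,l_2}^{m_1,m_2}$ is not symmetric in $(m_1,m_2)$ (the extra $X_{j+m_1}^{1,n+1}$ sits in front of $Q_{j+m_2}$ on slot $n+1$ but has no counterpart on slot $n+2$) and, more fundamentally, because the choice of which $Q$ to decompose must track which of $2^{j},2^{j+m_1},2^{j+m_2}$ realizes the minimum. This is exactly the three-way case split that the paper carries out: when $m_1,m_2\ge0$ use $Q_{j+m_i}P_j=2^{-m_i}Z$; when $m_1\le\min\{0,m_2\}$ use the composition estimate $X_{j+m_1}^{1,n+1}Q_{j+m_2}=2^{-(m_2-m_1)}X_{j,m_1,m_2}$ (not an integration by parts on $Q_{j+m_1}$), paired with Lemma~\ref{PropPart1IMinusOpProp1} at scale $k=j+m_1$ which already supplies the factor $2^{-\eps'|m_1|}$; and when $m_2\le\min\{0,m_1\}$, perform the integration by parts on $Q_{j+m_1}$, which now genuinely yields $2^{m_2-m_1}<1$ because the derivative lands on the smaller scale $2^{j+m_2}$. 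The moral is that the decay always comes from the \emph{gap} between the two scales plus the decay $2^{-\eps'(j-k)}$ native to Lemma~\ref{PropPart1IMinusOpProp1}, not from the $Q$ being decomposed; your proposal identifies the right tools but attributes the decay to the wrong place, and thereby loses it entirely in one of the three regimes.
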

\begin{proof}
The bound for
$\|\Dil_{\rho_{m_1,m_2, j}^{-1}} T_{j,l_1,l_2}^{m_1,m_2}\|_{\Op_{0}}  $,
and,  equivalently, for
$\| T_{j,l_1,l_2}^{m_1,m_2}\|_{\Op_{0}}  $  is immediate, so 
 we focus only on the bound for 
$ \| \Dil_{\rho_{m_1,m_2, j}^{-1}} T_{j,l_1,l_2}^{m_1,m_2}
\|_{\Op_{\eps'}} $. 
Fix $l_1,l_2$.  We may assume $\LpN{\infty}{b_l}=1$, $l\ne l_1,l_2$.
We distinguish the cases (i) $m_1, m_2\ge 0$,
 (ii) $m_1\le \min \{0, m_2\}$,
(iii) $m_2 \le \min\{0, m_1\}$.

\smallskip

{\it (i)  The case $m_1, m_2\ge 0$.} Now $\rho_{j,m_1,m_2}=2^j$. 
%Suppose first $j= j\wedge k_1\wedge k_2$.
One uses that, for $m\ge 0$,  $Q_{j+m} P_j = 2^{-m} X_{m,j}$, where $X_{m,j} f = f*\dil{\phi_{m,j}}{2^j}$ and $\{\phi_{m,j}:m\ge 0\}$ is a bounded subset of $C^\infty$ functions supported in $\{|y|\le 2\}$.
 Then the bound
 \begin{equation*}
\big\|\Dil_{2^{-j}} T_{j,l_1,l_2}^{m_1,m_2}\big\|_{\Op_{\eps_1}} \lesssim 2^{-m_1-m_2} \sBN{\epsilon}{\vsig_j}
\end{equation*}
follows quickly.  \eqref{EqnPartAllPtsToShowOpN} follows in this case.
%e case $j=j\wedge k_1\wedge k_2$.
%Similarly, if we suppose further that $k_1\geq k_2\geq j$, using that  $X^{1,n+1}_{k_1} Q_{k_2} = 2^{-(k_1-k_2)} X_{k_1,k_2} f$, where
%$X_{k_1,k_2} f = f*\dil{\phi_{k_1,k_2}}{2^{k_2}}$ and $\{\phi_{k_1,k_2}: k_1\geq k_2\}\subset \Czip{B^d(2)}$ is a bounded set, the bound
%\begin{equation*}
%\OpeN{\epsilon'}{T_{j,l_1,l_2,k_1,k_2}} \lesssim 2^{-(k_1-k_2)} \sBN{\epsilon}{\vsig_j}
%\end{equation*}

\smallskip

{\it  (ii) The case   $m_1\le \min \{0, m_2\}$,} that is,  $\rho_{j,m_1,m_2}=2^{j+m_1}$. 
Lemma  \ref{PropPart1IMinusOpProp1} (combined with Theorem \ref{ThmOpResAdjoints}) shows that we have
%\footnote{check powers of $n$}
\begin{equation*}
\big\|\Dil_{2^{-j-m_1}} T_{j,l_1,l_2}^{m_1,m_2}\big\|_{\Op_{\eps_2}}
\lesssim 2^{-\epsilon_2 m_1} n^2 \sBN{\epsilon}{\vsig_j}.
\end{equation*}
%Suppose in addition  that $m_2\geq m_1\geq 0$.  
Using that  $X^{1,n+1}_{j+m_1} Q_{j+m_2} = 2^{-(m_2-m_1)} X_{j,m_1,m_2} f$, where
$X_{j,m_1,m_2} f = f*\dil{\phi_{j,m_1,m_2}}{2^{j+m_1}}$ and $\{\phi_{j,m_1,m_2}: m_2\geq m_1\}\subset \Czip{B^d(2)}$ is a bounded set, the bound
\begin{equation*}
\big\|\Dil_{2^{-j-m_1}} 
T_{j,l_1,l_2}^{m_1,m_2}\big\|_{\Op_{\eps_3}}
 \lesssim 2^{-(m_2-m_1)} \sBN{\epsilon}{\vsig_j}
\end{equation*}
follows easily.  Combining these two estimates, \eqref{EqnPartAllPtsToShowOpN} follows.

\smallskip

{\it  (iii) The case $m_2\le \min\{0, m_1\}$}, that is 
 $\rho_{j,m_1,m_2}=2^{j+m_2}$. 
%Finally, we suppose $k_2=j\wedge k_1\wedge k_2$.  
Now we use an integration by parts argument as in the proof of Proposition \ref{OpestST}
%Lemma \ref{PropPart1IMinusOpProp1} 
to  obtain
\begin{equation*}
\big\|\Dil_{2^{-j-m_2}} T_{j,l_1,l_2}^{m_1,m_2}\big \|_{\Op_{\eps_4}}
\lesssim 2^{-(m_1-m_2)} \sBN{\epsilon}{\vsig_j}.
\end{equation*}
Using Lemma  \ref{PropPart1IMinusOpProp1} (combined with Theorem \ref{ThmOpResAdjoints}), as above, we have
\begin{equation*}
\big\|\Dil_{2^{-j-m_2}} T_{j,l_1,l_2}^{m_1,m_2}\big \|_{\Op_{\eps_5}}
\lesssim 2^{-\epsilon' m_1} n^2 \sBN{\epsilon}{\vsig_j}.
\end{equation*}
Combining these two estimates 
yields \eqref{EqnPartAllPtsToShowOpN} in this last case and the proof is complete.
\end{proof}

\begin{prop}\label{PropPartAllPtsL2Sumn1n2}
For each $m_1, m_2$, 
$\sum_{j\in \Z} T_{j,n+1,n+2}^{m_1,m_2}$ converges in the strong operator topology as operators $L^2\to L^2$ (with equiconvergence with respect to the $\{(b_1,\dots, b_n): \|\|b_i\|_\infty\le C\}$) and the estimates
\begin{equation} \label{Tm1m2L2est}
\Big\|\sum_{j\in \bbZ}
 T_{j,n+1,n+2}^{m_1,m_2}
\Big\|_{L^2\to L^2}\lesssim  \min\big\{ 2^{-\eps'(|m_1|+|m_2|)} n^M \sup_j\|\vsig_j\|_{\cB_\eps},\,
\sup_j\|\vsig_j\|_{L^1}\big\}
\prod_{i=1}^n\|b_i\|_\infty,
\end{equation}
for suitable $M\lc 1$, and
\begin{equation}\label{sumTm1m2L2est}
\sum_{m_1=-\infty}^\infty\sum_{m_2=-\infty}^\infty
\Big\|\sum_{j\in \bbZ}
 T_{j,n+1,n+2}^{m_1,m_2}
\Big\|_{L^2\to L^2}\lesssim  
\sup_j \LpN{1}{\vsig_j}\log^2(1+n \Ga_\eps)\prod_{i=1}^n\|b_i\|_\infty.
\end{equation}
hold.
\end{prop}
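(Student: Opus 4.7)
\smallskip

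The plan is to reduce \eqref{Tm1m2L2est} to a standard application of the almost orthogonality Lemma \ref{ThmAlmostOrthogonal}, with the family $V_j^\nu = T_{j,n+1,n+2}^{m_1,m_2}$ indexed over $\nu = (b_1,\ldots, b_n)$ with $\|b_i\|_\infty \le 1$. To this end, I would derive two complementary bounds on
\[
\big\|\cQ_{k_1}\, T_{j+k_1,n+1,n+2}^{m_1,m_2}\, \cQ_{j+k_1+k_2}\big\|_{L^2\to L^2}.
\]
The first bound, producing the decay in $(m_1,m_2)$, is obtained by writing the $\Op_{\eps'}$ norm of $\Dil_{\rho_{m_1,m_2,j+k_1}^{-1}} T_{j+k_1,n+1,n+2}^{m_1,m_2}$ and using Lemma \ref{LemmaPartAllPtsFinalOpN} together with the elementary Schur test inequality $\|\cdot\|_{L^2\to L^2} \lesssim (\Sha^1[\cdot]\cdot\Sha^\infty[\cdot])^{1/2}\le \|\cdot\|_{\Op_0}$; using also $\|\cQ_k\|_{L^2\to L^2}\lesssim 1$ gives
\[
\min\{2^{-\eps'|m_1|},\,2^{-\eps'|m_2|}\}\,n^2 \sup_j\|\vsig_j\|_{\cB_\eps}\,\prod_{i=1}^n\|b_i\|_\infty.
\]
The second bound, producing decay in the relative scale parameters $j$ and $k_2$, comes from factoring
\[
T_{j+k_1,n+1,n+2}^{m_1,m_2} = {}^t\!\bigl(X^{1,n+1}_{j+k_1+m_1} Q_{j+k_1+m_2} P_{j+k_1}\bigr)\,\circ\,\mathcal W_{j+k_1}^{m_1,m_2}\,\circ\,\bigl(Q_{j+k_1+m_1} P_{j+k_1}\bigr),
\]
where $\mathcal W_{j+k_1}^{m_1,m_2}$ is the integral operator obtained after pulling the two $Q$-factors onto the $(n+1)$st and $(n+2)$nd entries; the core operator $\mathcal W^{m_1,m_2}_{j+k_1}$ satisfies $\|\mathcal W^{m_1,m_2}_{j+k_1}\|_{L^2\to L^2}\lesssim \|\vsig_{j+k_1}\|_{L^1} \prod_i\|b_i\|_\infty$ by Lemma \ref{LemmaBasicL2BasicLpEstimate}. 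Combining this with the standard estimates $\|\cQ_{k_1}\,{}^t\!Q_{j+k_1+m_2}\|_{L^2\to L^2} \lesssim 2^{-|j+m_2|}$ and $\|Q_{j+k_1+m_1} P_{j+k_1}\cQ_{j+k_1+k_2}\|_{L^2\to L^2}\lesssim 2^{-|k_2|}$ yields
\[
\lesssim 2^{-|j+m_2|-|k_2|}\sup_j\|\vsig_j\|_{L^1}\,\prod_{i=1}^n\|b_i\|_\infty.
\]

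\smallskip

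Taking $A_{j,k_2}^{m_1,m_2}$ to be the minimum of these two bounds, I have $\sum_{j,k_2} A_{j,k_2}^{m_1,m_2}$ bounded by the right-hand side of \eqref{Tm1m2L2est} (with $M\lesssim 1$ absorbing logarithmic losses), so Lemma \ref{ThmAlmostOrthogonal} gives both convergence of $\sum_j T_{j,n+1,n+2}^{m_1,m_2}$ in the strong operator topology (with equiconvergence over $\|b_i\|_\infty \le C$) and the operator norm estimate \eqref{Tm1m2L2est}. The bound \eqref{sumTm1m2L2est} then follows by summing \eqref{Tm1m2L2est} over $(m_1,m_2)\in\Z^2$: split at the threshold $|m_1|+|m_2| \approx (\eps')^{-1}\log(1+n^M\Gamma_\eps)$; in the regime of large $|m_1|+|m_2|$ the geometric decay yields a convergent series in $\sup_j\|\vsig_j\|_{L^1}$, while in the complementary regime the lattice count is $O(\log^2(1+n\Gamma_\eps))$, each term contributing at most $\sup_j\|\vsig_j\|_{L^1}$.

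\smallskip

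The main technical obstacle will be the clean derivation of the second bound, in particular verifying the decay factor $2^{-|j+m_2|-|k_2|}$ (or its natural analogue) uniformly across the different sign-combinations of $m_1, m_2$. The operators $X^{1,i}_{j+k_1+m_1}$, $X^{2,i}_{j+k_2+m_2}$ may be either $P_{j+k_1+m_1}$ or $P_{j+k_1+m_1-1}$ (and similarly for the $m_2$-index), so one must verify case-by-case that the composition of the ``output-side'' factor with $\cQ_{k_1}$ and of the ``input-side'' factor with $\cQ_{j+k_1+k_2}$ both produce genuine decay. I would handle this by separately examining the sub-cases $m_1, m_2 \ge 0$, $m_1 \le \min\{0,m_2\}$, $m_2\le\min\{0,m_1\}$, paralleling the case-analysis already used in the proof of Lemma \ref{LemmaPartAllPtsFinalOpN}, and in each sub-case reducing the estimate to the elementary compositions $\|\Qt_a Q_b\|_{L^2\to L^2} \lesssim 2^{-|a-b|}$ together with the smoothing effect of $P$-factors.
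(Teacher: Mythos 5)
Your proposal follows essentially the same route as the paper's: the paper's own proof simply refers back to the argument of Proposition~\ref{mainpropforSmm}, which combines the $\Op_{\eps'}$ bound of Lemma~\ref{LemmaPartAllPtsFinalOpN} (for decay in $m_1,m_2$) with an elementary Littlewood--Paley mismatch bound (for decay in $j,k_2$), feeds these into the almost orthogonality Lemma~\ref{ThmAlmostOrthogonal}, and then sums in $(m_1,m_2)$ by splitting at a logarithmic threshold -- exactly your plan.

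One small slip to correct: the claimed inequality $\|Q_{j+k_1+m_1}P_{j+k_1}\cQ_{j+k_1+k_2}\|_{L^2\to L^2}\lesssim 2^{-|k_2|}$ fails when $m_1$ and $k_2$ are both negative and comparable (take $m_1=k_2=-K$: the Fourier multipliers of $Q_{j+k_1-K}$ and $\cQ_{j+k_1-K}$ overlap, and $\hat\phi(2^{-j-k_1}\xi)\approx 1$ on this low-frequency band, so the norm is $\approx 1$, not $2^{-K}$). The correct decay factor coming out of the $Q$-$\cQ$ composition is $2^{-|m_1-k_2|}$; the $P$-factor contributes additional rapid decay only for $k_2>0$. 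This is precisely the form appearing (modulo an apparent sign typo, $|m_2+k_2|$ for $|m_2-k_2|$) in \eqref{Ajk2}. Since $\sum_{k_2}2^{-|m_1-k_2|}\lesssim 1$ uniformly in $m_1$, the corrected bound still yields the required summability of $A_{j,k_2}^{m_1,m_2}$, so the structure of your argument is intact -- you flagged exactly this point as needing case-by-case verification, and this is the place where that verification matters.
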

\begin{proof}
With Lemma \ref{LemmaPartAllPtsFinalOpN} in hand, 
\eqref{Tm1m2L2est}
%\eqref{EqnPartAllPtsToShowSk1k2}
is based on  almost orthogonality (Lemma  \ref{ThmAlmostOrthogonal}) and 
 follows just as in the proof of Proposition \ref{mainpropforSmm}. \eqref{sumTm1m2L2est} follows after summing in $m_1$, $m_2$.
 %\ref{LemmaPart1IMinusBoundedS}.
\end{proof}
We combine the above results with several applications of Theorem \ref{ThmJourne2} to prove our last proposition.
\begin{prop}\label{CorPartAllPtsL2Allls}
For $1\leq l_1,l_2\leq n+2$,
\begin{equation*}
\Big\|\sum_{j,m_1,m_2} T_{j,l_1,l_2}^{m_1,m_2}\Big\|_{L^2\to L^2}
%\LpOpN{2}{\sum_{j,k_1,k_2\in \Z} \dil{T_{j,l_1,l_2,k_1,k_2}}{2^{j\wedge k_1\wedge k_2}}}
\lesssim  
\sup_j \LpN{1}{\vsig_j}\log^3\q(1+n \Ga_\eps)
\prod_{i=1}^n \|b_i\|_\infty.
\end{equation*}
The sum converges in the strong operator topology, with equiconvergence with respect to
$\{(b_1,\dots, b_n): \|b_i\|_\infty \le C\}$.
\end{prop}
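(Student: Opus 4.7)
The plan is to bootstrap from the special case $(l_1,l_2)=(n+1,n+2)$ already handled in Proposition \ref{PropPartAllPtsL2Sumn1n2}, using adjoint symmetry (Theorem \ref{ThmOpResAdjoints}) together with Journé's equivalence of $L^2\!\to\!L^2$ and $H^1\!\to\!L^1$ boundedness (Theorem \ref{ThmJourne2}) to cover all other choices of $(l_1,l_2)$.

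First I would upgrade the $L^2\!\to\!L^2$ bound of order $\sup_j\|\vsig_j\|_{L^1}\log^2(1+n\Gamma_\eps)$ from Proposition \ref{PropPartAllPtsL2Sumn1n2}, valid for $(l_1,l_2)=(n+1,n+2)$, to an $H^1\!\to\!L^1$ bound of the same order. Lemma \ref{LemmaPartAllPtsFinalOpN} supplies the uniform $\Op_\eps$- and $\Op_0$-bounds on the dilated kernels needed to apply Theorem \ref{ThmJourne2}(ii), yielding this promotion at the cost of an additive $\sup_j\|\vsig_j\|_{L^1}\log(1+n\Gamma_\eps)$ term. Taking transposes disposes of $(l_1,l_2)=(n+2,n+1)$ and produces the dual $L^\infty\!\to\!BMO$ bound; together with the uniform $\Op_\eps$-estimates these serve as the input for the remaining cases.

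For a general pair $(l_1,l_2)$, I select a permutation $\vp$ of $\{1,\ldots,n+2\}$ sending $(l_1,l_2)$ to $(n+1,n+2)$ and invoke Theorem \ref{ThmOpResAdjoints} to rewrite the associated form as $\La[\ell_\vp\vsig_j]$ with $\|\ell_\vp\vsig_j\|_{L^1}=\|\vsig_j\|_{L^1}$, $\|\ell_\vp\vsig_j\|_{\cB_{\eps'}}\lc n^2\|\vsig_j\|_{\cB_\eps}$, and the zero-mean condition $\int\ell_\vp\vsig_j(\alpha,v)\,dv=0$ preserved. Since $\log(1+n\cdot n^2\Gamma_\eps)\lc\log(1+n\Gamma_\eps)$, applying the endpoint $H^1\!\to\!L^1$ (or $L^\infty\!\to\!BMO$) estimate from Step~1 to the permuted family $\{\ell_\vp\vsig_j\}$ yields a comparable bound for the original $(l_1,l_2)$-operator; a final application of Theorem \ref{ThmJourne2}(i) converts this back to an $L^2\!\to\!L^2$ estimate, adding at most one more logarithmic factor and so producing the asserted $\log^3$. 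Strong-operator convergence transfers from the equiconvergent partial sums in Proposition \ref{PropPartAllPtsL2Sumn1n2} via the same reduction, uniformly with respect to $\{b_1,\ldots,b_n\}$ in $L^\infty$-balls.

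The main obstacle will be keeping careful track of how the projections $P_j$, $Q_{j+m_1}$, $Q_{j+m_2}$ are shuffled under $\vp$: the permutation can move $Q$-factors out of the $L^2$-paired slots and into $L^\infty$-slots, so the permuted operator is no longer literally of the form $\sum P_j S_j P_j$ treated in Proposition \ref{PropPartAllPtsL2Sumn1n2}. Fortunately Lemma \ref{LemmaPartAllPtsFinalOpN} is already stated for arbitrary $(l_1,l_2)$, and the almost-orthogonality scheme of Lemma \ref{ThmAlmostOrthogonal} is insensitive to this rearrangement once the uniform $\Op_\eps$-bounds are in place; the bookkeeping is routine but constitutes the technical heart of the argument, and it introduces no further logarithmic losses beyond those recorded above.
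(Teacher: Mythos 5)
Your reduction to the $(l_1,l_2)=(n+1,n+2)$ case via the permutation invariance of Theorem \ref{ThmOpResAdjoints} has a genuine gap. That theorem rewrites $\La[\vsigjj](c_{\vp(1)},\ldots,c_{\vp(n+2)})$ as $\La[(\ell_\vp\vsig_j)^{(2^j)}](c_1,\ldots,c_{n+2})$, but in the present form the slots are pre-composed with specific projections ($X^{1,i}_{j+m_1}X^{2,i}_{j+m_2}P_j$ on slots $i\le n$, $X^{1,n+1}_{j+m_1}Q_{j+m_2}P_j$ on slot $n+1$, $Q_{j+m_1}P_j$ on slot $n+2$), and those projections stay attached to their original $b_i$'s when you relabel. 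Thus permuting a pair $(l_1,l_2)$ with $l_1,l_2\le n$ to $(n+1,n+2)$ produces a form $\La[(\ell_\vp\vsig_j)^{(2^j)}]$ whose $Q$-factors now sit in the $L^\infty$-slots while slots $n+1,n+2$ (which you wish to pair in $L^2$) carry only $P$-type projections. This is \emph{not} the operator $T^{m_1,m_2}_{j,n+1,n+2}[\ell_\vp\vsig_j]$, so Proposition \ref{PropPartAllPtsL2Sumn1n2} does not apply to it; the almost-orthogonality argument behind that proposition relies precisely on having $Q$-operators in the two $L^2$-paired slots to generate Littlewood--Paley gains. You flag this as ``the main obstacle'' but then assert that Lemma \ref{LemmaPartAllPtsFinalOpN} plus almost orthogonality resolves it. Lemma \ref{LemmaPartAllPtsFinalOpN} supplies only the $\Op_\eps$/$\Op_0$ kernel bounds (uniform in $(l_1,l_2)$), not the $L^2$ almost-orthogonality estimates with decay in $|m_1|,|m_2|$; the decay you need in those parameters to sum the triple series is exactly what fails without $Q$'s in the paired slots.

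The paper circumvents this entirely by a different and simpler mechanism. After regrouping the triple sum into $S_{r,l_1,l_2}=\sum_{\min\{j,j+m_1,j+m_2\}=r}T^{m_1,m_2}_{j,l_1,l_2}$ (a step your proposal omits but which is needed to bring Theorem \ref{ThmJourne2} to bear on a single-scale dilated family), one exploits that the $H^1\to L^1$ bound for $\sum_r S_{r,l_1,l_2}$ is, by duality against $L^\infty$ in the $l_1$-slot, precisely the multilinear estimate $|\La(\ldots)|\lc\|b_{l_2}\|_{H^1}\prod_{l\ne l_2}\|b_l\|_\infty$. This inequality depends only on which slot carries the $H^1$ norm, not on $l_1$. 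So from $H^1\to L^1$ for $(n+1,n+2)$ one gets, for free and without any permutation of kernels, $H^1\to L^1$ for every $(l_1',n+2)$; Theorem \ref{ThmJourne2} demotes to $L^2$, transposition swaps the indices, another promotion to $H^1\to L^1$ frees up the first index again, and a final demotion closes the loop for general $(l_1,l_2)$ with only one extra logarithm in total. That ``slot-relabelling'' observation is what you would need to discover to make your plan work; as written, the adjoint route you propose cannot be patched without it.
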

\begin{proof}
For $r\in \Z$ define
\begin{equation*}
S_{r,l_1,l_2} := \sum_{\substack{j,m_1,m_2:\\ \min\{j,j+m_1, j+m_2\}=r }}
 T_{j,l_1,l_2}^{m_1,m_2}.
\end{equation*}
Note that $\sum_{r\in \Z} S_{r,l_1,l_2} = \sum_{j,m_1,m_2\in \Z} T_{j,l_1,l_2}^{m_1,m_2}$,
and Lemma \ref{LemmaPartAllPtsFinalOpN} shows
\begin{equation}\label{EqnPartAllPtsOpNT1}
\big\|\Dil_{2^{-r}} S_{r,l_1,l_2}\big\|_{\Op_\eps'} 
\lesssim n^M \sup_j \sBN{\epsilon}{\vsig_j}
\prod_{l\ne l_1,l_2} \|b_l\|_\infty,
\end{equation}
and
\begin{equation}\label{EqnPartAllPtsOpNT2}
\big\|\Dil_{2^{-r}} S_{r,l_1,l_2}\big\|_{\Op_0} 
\lesssim  
\log^2(1+n\Ga_\eps) \sup_j \LpN{1}{\vsig_j}
\prod_{l\ne l_1,l_2} \|b_l\|_\infty.
\end{equation}
By Proposition \ref{PropPartAllPtsL2Sumn1n2},
\begin{equation*}
\Big\|\sum_{r\in \Z} S_{r,n+1,n+2}\Big\|_{L^2\to L^2} \lesssim 
\sup_j \LpN{1}{\vsig_j}\log^2\q(1+n \Ga_\eps)
\prod_{i=1}^n \|b_i\|_\infty
\end{equation*}
and using \eqref{EqnPartAllPtsOpNT1},  \eqref{EqnPartAllPtsOpNT2}, Theorem \ref{ThmJourne2}
shows
\begin{equation*}
\Big\| \sum_{r\in \Z} S_{r,n+1,n+2}\Big\|_{H^1\rightarrow L^1} \lesssim
\q(\sup_j \LpN{1}{\vsig_j}\w)\log^3\q(1+n \Ga_\eps)
\prod_{i=1}^n \|b_i\|_\infty.
 \end{equation*}
 Here we have convergence in the strong operator topology (as operators $H^1\to L^1$),
 with equicontinuity with respect to $b_1,\dots, b_n$ in bounded subsets of $L^\infty(\bbR^d)$. 
Using the definition of $S_{r,l_1,l_2}$, this is equivalent to
\begin{equation*}
\Big\| \sum_{r\in \Z} S_{r,l_2,n+2}\Big\|_{H^1\rightarrow L^1} \lesssim 
\sup_j \LpN{1}{\vsig_j}\log^3\q(1+n \Ga_\eps)
\prod_{l\ne l_2,n+2} \|b_l\|_\infty,
\end{equation*}
with convergence in the strong operator topology (as operators $H^1\to L^1$)
 with equicontinuity with respect to $b_l$, $l\notin \{l_2,n+2\}$, 
 in bounded subsets of $L^\infty(\bbR^d)$.  This argument will now be used repeatedly. 
Using this $L^1\to L^1$ result together with  \eqref{EqnPartAllPtsOpNT1} and \eqref{EqnPartAllPtsOpNT2}, Theorem \ref{ThmJourne2}
shows
\begin{equation*}\Big\|
\sum_{r\in \Z} S_{r,l_2,n+2} \Big\|_{L^2\to L^2} \lesssim 
\sup_j \LpN{1}{\vsig_j}\log^3\q(1+n \Ga_\eps)
\prod_{l\ne l_2,n+2} \|b_l\|_\infty.
\end{equation*}
Taking transposes, this shows
\begin{equation*}\Big\|
\sum_{r\in \Z} S_{r,n+2,l_2} \Big\|_{L^2\to L^2} \lesssim 
\sup_j \LpN{1}{\vsig_j}\log^3\q(1+n \Ga_\eps)
\prod_{l\ne l_2,n+2} \|b_l\|_\infty.
\end{equation*}
Using this,  \eqref{EqnPartAllPtsOpNT1} and \eqref{EqnPartAllPtsOpNT2}, Theorem \ref{ThmJourne2}
shows
\begin{equation*}\Big\|
\sum_{r\in \Z} S_{r,n+2,l_2} \Big\|_{H^1\to L^1} \lesssim 
\sup_j \LpN{1}{\vsig_j}\log^3\q(1+n \Ga_\eps)
\prod_{l\ne l_2, n+2} \|b_l\|_\infty.
\end{equation*}
Using the definition of $S_{r,l_1,l_2}$, this is equivalent to
\begin{equation*}\Big\|
\sum_{r\in \Z} S_{r,l_1,l_2} \Big\|_{H^1\to L^1} \lesssim 
\sup_j \LpN{1}{\vsig_j}\log^3\q(1+n \Ga_\eps)
\prod_{l\ne l_1,l_2} \|b_l\|_\infty.
\end{equation*}
Finally, using this again with \eqref{EqnPartAllPtsOpNT1} and \eqref{EqnPartAllPtsOpNT2}, one last application of Theorem \ref{ThmJourne2}
completes the proof of the proposition.
\end{proof}

\section{Interpolation}\label{interpolsect}

We  use complex interpolation to show that  the  $L^{p_1}\times\dots\times L^{p_{n+2}}$ estimates in Theorem 
\ref{ThmOpResBoundGen} follow  from  the special case in Theorem
\ref{ThmOpResBound}, together with Theorem \ref{ThmOpResAdjoints}.

Let  $K= \sum_j \vsig_j^{(2^j)}$ be  as in the assumption of Theorem \ref{ThmOpResBoundGen}  with $\sup\|\vsig_j\|_{\cB_\eps}<\infty$. Define for a permutation $\vp$ of $\{1,\dots, n+2\}$
\[\La^\vp[K](b_1,\dots, b_{n+2})= 
\La[K] (b_{\vp\q(1\w)},\ldots, b_{\vp\q(n+2\w)}) \]
so that $\La^\vp[K]= \La[K^\vp]$ with 
\[K^\vp = \sum_j (\ell_\vp \vsig_j)^{(2^j)} \]
where $\ell_\vp$ is as in Theorem  \ref{ThmOpResAdjoints}.
There is $\eps'>c(\eps)$, $B\ge 1$, both independent of $n$,  such that for all permutations
$\|\ell_\vp\sig\|_{\cB_{\eps'}} \le B n^2\|\vsig\|_{\cB_\eps}$ and
$\|\ell_\vp\sig\|_{L^1}=\|\vsig\|_{L^1}$. 
As a consequence we get for any pair $l_1,l_2 \in \{1,\dots, n+2\}$, $l_1\neq l_2$ the estimate
\begin{align}
&\q|\La[K]\q(b_1,\ldots, b_{n+2}\w)\w|
\notag
\\&\leq C_{\eps',d,\delta}n^2  \q \sup_{j\in \Z} \sBzN{\vsig_j}\w\, \log^3\!\q\big(2 + n\frac{Bn^2 \sup_{j\in \Z} \sBN{\epsilon}{\vsig_j}} {\sup_{j\in \Z} \sBzN{\vsig_j}}\w\big) 
\Big(\prod_{l\notin \{l_1,l_2\} }\|b_l\|_\infty\Big) \|b_{l_1}\|_p\|b_{l_2}\|_{p'}
\notag
\\&\le A
\Big(\prod_{l\notin \{l_1,l_2\} }\|b_l\|_\infty\Big) \|b_{l_1}\|_p\|b_{l_2}\|_{p'}
%\Big(\prod_{l\notin \{l_1,l_2\}} \LpN{\infty}{f_l}\w\Big) \LpN{p}{f_{l_1}} \LpN{p'}{f_{l_2}}
\label{Amultlinearbd}
\end{align}
where $1+\delta\le p\le 2$ and 
\[A:= 
 3^3BC_{\epsilon',d,\delta}n^2  \q \sup_{j\in \Z} \sBzN{\vsig_j}\w\, \log^3\!\q\big(2 + n\frac{ \sup_{j\in \Z} \sBN{\epsilon}{\vsig_j}} {\sup_{j\in \Z} \sBzN{\vsig_j}}\w\big) .\]
 Let
 $\cR$ be the set of points 
 %$(a_1,\dots, a_{n+2})\equiv 
 $(p_1^{-1}, \dots, p_{n+2}^{-1}) \in [0,1]^{n+2}$ for which
 the inequality
 \Be \label{generalpi}
 |\La[K]\q(b_1,\ldots, b_{n+2}\w)\w|
 \le A \prod_{i=1}^{n+2} \|b_i\|_{p_i}
 \Ee 
 holds for all $(b_1,\dots, b_{n+2})\in
 L^{p_1}(\bbR^d)\times \dots \times L^{p_{n+2}}(\bbR^d)$.
 
We note  that if   $P_0=(p_{1,0}^{-1}, \dots, p_{n+2,0}^{-1} )$ 
and
$P_1=(p_{1,1}^{-1}, \dots, p_{n+2,1}^{-1} )$  both belong to $\cR$ then, by complex interpolation for multilinear functionals, we also  have for $0\le \vartheta\le 1$
\[
| \La[K]\q(b_1,\ldots, b_{n+2}\w)\w|
 \le A \prod_{i=1}^{n+2} \|b_i\|_{[L^{p_{i,0}}, L^{p_{i,1}}]_\vartheta}
 \]
 where $[\cdot, \cdot]_\theta$ denotes Calder\'on's complex interpolation method, see 
Theorem 4.4.1 in \cite{bergh-lofstrom}.
By Theorem  5.1.1 in \cite{bergh-lofstrom} (a version of the Riesz-Thorin theorem) we have the identification of the 
complex interpolation norm with the standard $L^p$ norm:
\[ \|f\|_{[L^{p_{i,0}}, L^{p_{i,1}}]_\vartheta}  = \|f\|_{L^p}, \quad   p^{-1}= (1-\vth) p_{i,0}^{-1} +
\vth p_{i,1}^{-1}.\]
We conclude that the set $\cR$ is convex. 
Denote by $e_i$, $i=1,\dots, n+2, $ the standard basis in $\bbR^{n+2}$. 
By \eqref{Amultlinearbd},  $\cR$ contains all points in $\bbR^{n+2}$ of the form 
$$P_{i,j}(\delta)
= \frac{\delta}{1+\delta} e_i+\frac{1}{1+\delta}  e_j, \quad i\neq j.$$
Let
$$\fP_\delta=\Big\{x\in \bbR^{n+2}:  \sum_{i=1}^{n+2} x_i=1, \quad 0\le x_j \le (\delta+1)^{-1}, \, j=1,\dots, n+2\Big\}\,.$$
$\fP_\delta$ is a compact convex subset of $\bbR^{n+2}$,  of dimension $n+1$. It is easy to see that 
$\{P_{i,j}(\delta):i\neq j\}$ is the set of  the extreme points of $\fP_\delta$. 
By Minkowski's theorem (see e.g. Theorem 2.1.9 in \cite{hoermander}) every point in 
%every compact convex set of dimension 
$\fP_\delta$ is a convex combination of (at most $n+2$ of) the extreme points $P_{i,j}(\delta)$.
Thus we can conclude $$ \fP_\delta\subset \cR,$$  
 and we have verified 
\eqref{generalpi} for all $(n+2)$-tuples of  exponents $p_i$, with $\sum_{i=1}^{n+2}p_i^{-1}=1$ and 
$1+\delta\le p_i\le \infty$. This completes the proof.
\qed

%\tableofcontents

\bibliographystyle{amsalpha}

%\bibliography{multilinear}
%\bibliography

%\tableofcontents

%\center{\it{University of Wisconsin-Madison, Department of Mathematics, 480 Lincoln Dr., Madison, WI, 53706}}

%\center{\it{street@math.wisc.edu}}

\end{document}